\documentclass[reqno,11pt]{amsart}

\pdfoutput1

\usepackage[utf8]{inputenc}
\usepackage{pigpen}
\usepackage{multicol}

\DeclareFontEncoding{LS1}{}{}
\DeclareFontSubstitution{LS1}{stix2}{m}{n}
\DeclareSymbolFont{symbols4}      {LS1}{stix2bb}   {m}{it}
\DeclareMathSymbol{\smblkdiamond}{\mathord}{symbols4}{"E4}

\usepackage[bbgreekl]{mathbbol} 
\DeclareSymbolFontAlphabet{\mathbb}{AMSb} 

\usepackage{enumerate}
\usepackage{amsmath}
\usepackage{amssymb}
\usepackage{mathtools}

\usepackage[hidelinks]{hyperref}
\makeatletter
\renewcommand{\eqref}{%
             \@ifstar
                  \eqrefStar%
                  \eqrefNoStar%
}
\newcommand{\eqrefStar}[1]{\textup{\tagform@{\ref*{#1}}}}
\newcommand{\eqrefNoStar}[1]{\textup{\tagform@{\ref{#1}}}}
\makeatother

\usepackage{mathrsfs}

\usepackage{thmtools}
\usepackage{thm-restate}

\usepackage{tikz}
\usetikzlibrary{decorations.markings}

\usepackage{tikz-cd}

\newcommand\mnote[1]{} 

\newcommand{\acomment}[1]{} 

\newcounter{thecounter}
\numberwithin{thecounter}{section}
\newtheorem{lemma}[thecounter]{Lemma}
\newtheorem{prop}[thecounter]{Proposition}
\newtheorem{thrm}[thecounter]{Theorem}
\newtheorem{cor}[thecounter]{Corollary}

\theoremstyle{definition}

\newtheorem{example}[thecounter]{Example}

\newtheorem{rem}[thecounter]{Remark}

\newtheorem{notation}[thecounter]{Notation}
\newtheorem{defn}[thecounter]{Definition}

\numberwithin{equation}{section}

\newcommand{\Map}{\operatorname{Map}}

\DeclareMathOperator{\ho}{Ho}

\DeclareMathOperator*{\colim}{colim}

\newcommand{\Hom}{\operatorname{Hom}}
\DeclareMathOperator{\im}{Im}
\DeclareMathOperator*{\limOne}{lim^1}
\let\ker\undefined
\DeclareMathOperator{\ker}{Ker}

\newcommand{\id}{\mathrm{id}}

\newcommand{\holim}{\operatorname{holim}}

\newcommand{\lcom}{\hat{{}_\ell}}

\newcommand{\Q}{{\mathbb {Q}}}
\newcommand{\R}{{\mathbb {R}}}
\newcommand{\F}{{\mathbb {F}}}
\newcommand{\Z}{{\mathbb {Z}}}

\newcommand{\bbk}{\Bbbk}

\newcommand{\beq}{\begin{eqnarray*}}
\newcommand{\eeq}{\end{eqnarray*}}

\newcommand{\tuborg}{\left\{\begin{array}{ll}}
\newcommand{\sluttuborg}{\end{array}\right.}

\newfont{\bm}{msbm10}

\usepackage[all,cmtip]{xy}

\newcommand{\calc}{{\mathcal{C}}}

\newcommand{\mynote}[1]{}

\let\phi\varphi

\newcommand{\Fun}{\operatorname{Fun}}

\def\co{\colon\thinspace}

\newcommand{\sSet}{\mathbf{sSet}} 
\newcommand{\Cat}{\mathbf{Cat}}
\newcommand{\smCat}{\mathbf{smCat}}
\newcommand{\Fib}{\mathbf{Fib}}

\newcommand{\smFib}{\mathbf{smFib}}
\newcommand{\Spaces}{\mathbf{Spaces}}
\newcommand{\PointedSpaces}{\mathbf{PointedSpaces}}
\newcommand{\Kan}{\mathbf{Kan}}
\newcommand{\QCat}{\mathbf{QCat}}
\newcommand{\Spectra}{\mathbf{Sp}}
\newcommand{\hpSpectra}{\mathbf{hpSp}}
\newcommand{\hpSpaces}{\mathbf{hpSpaces}}
\newcommand{\hpC}{\hp\calC}
\newcommand{\hpD}{\hp\calD}
\newcommand{\hpT}{hp\calT} 
\newcommand{\pT}{p\calT} 
\newcommand{\pC}{\mathbf{p}\calC}
\newcommand{\calTrel}{\calT^{2}}
\newcommand{\pCrel}{\mathbf{p}^{2}\calC}
\newcommand{\hpCrel}{\hp^{2}\calC}

\newcommand{\hpMod}{\mathbf{hpMod}}
\newcommand{\Mod}{\mathbf{Mod}}
\newcommand{\Ab}{\mathbf{Ab}}
\newcommand{\grMod}{\mathbf{grMod}}

\newcommand{\InftyCats}{\mathbf{\Cat_\infty}}

\newcommand{\Ex}{\ensuremath{\mathbb{E}\mathbf{x}}}
\newcommand{\Fr}{\ensuremath{\mathbb{F}\mathbf{r}}}
\newcommand{\frbMod}{\ensuremath{\mathbb{M}\mathbf{od}}}

\newcommand{\grothconstr}{{\textstyle\int}}

\newcommand{\fib}{\mathrm{fib}}

\newcommand{\cofib}{\mathrm{cofib}}
\newcommand{\pt}{\mathrm{pt}}

\newcommand{\rightcone}{\vartriangleright}
\newcommand{\leftcone}{\vartriangleleft}

\newcommand{\cupprod}{\smallsmile} 
\newcommand{\capprod}{\smallfrown} 

\newcommand{\incl}{\hookrightarrow}

\newcommand{\tensor}{\otimes}
\newcommand{\smashprod}{\wedge}
\newcommand{\extsmashprod}{\mathbin{\bar{\wedge}}}
\newcommand{\exttensor}{\mathbin{\bar{\tensor}}}

\newcommand{\isom}{\cong}
\newcommand{\homot}{\simeq}

\newcommand{\homeom}{\approx}

\newcommand{\suspension}{\Sigma}
\newcommand{\loops}{\Omega}

\newcommand{\Sing}{\mathrm{Sing}}

\newcommand{\oc}{\mathrm{oc}}
\newcommand{\op}{\mathrm{op}}
\newcommand{\fw}{\mathrm{fw}}
\newcommand{\fop}{\mathrm{fop}}
\newcommand{\dfop}{\mathrm{u}}
\newcommand{\cart}{\mathrm{cart}}
\newcommand{\opcart}{\mathrm{opcart}}
\newcommand{\internal}{\mathrm{int}}
\newcommand{\bdry}{\partial}
\newcommand{\hp}{\mathbf{hp}}
\newcommand{\pr}{\mathrm{pr}}

\DeclareSymbolFont{bbold}{U}{bbold}{m}{n}
\DeclareSymbolFontAlphabet{\mathbbold}{bbold}

\makeatletter
\def\slashedarrowfill@#1#2#3#4#5{%
  $\m@th\thickmuskip0mu\medmuskip\thickmuskip\thinmuskip\thickmuskip
   \relax#5#1\mkern-7mu%
   \cleaders\hbox{$#5\mkern-2mu#2\mkern-2mu$}\hfill
   \mathclap{#3}\mathclap{#2}%
   \cleaders\hbox{$#5\mkern-2mu#2\mkern-2mu$}\hfill
   \mkern-7mu#4$%
}
\def\rightslashedarrowfill@{%
  \slashedarrowfill@\relbar\relbar\mapstochar\rightarrow}
\newcommand\xhto[2][]{%
  \ext@arrow 1579{\rightslashedarrowfill@}{#1}{#2}}
\makeatother

\newcommand{\hto}{\xhto{}}

\makeatletter
\def\circarrowfill@#1#2#3#4#5{%
  $\m@th\thickmuskip0mu\medmuskip\thickmuskip\thinmuskip\thickmuskip
   \relax#5#1\mkern-7mu%
   \cleaders\hbox{$#5\mkern-2mu#2\mkern-2mu$}\hfill
   \mathclap{#3}\mathclap{#2}%
   \cleaders\hbox{$#5\mkern-2mu#2\mkern-2mu$}\hfill
   \mkern-7mu#4$%
}
\def\rightcircarrowfill@{%
  \circarrowfill@\relbar\relbar{\mkern1.8mu\circ}\rightarrow}
\newcommand\xoto[2][]{%
  \ext@arrow 1579{\rightcircarrowfill@}{#1}{#2}}
\makeatother

\newcommand{\oto}{\xoto{}}
\newcommand{\longoto}{\xoto{\quad}}

\def\circsym{%
  \mathchoice%
	{\raisebox{-2.75pt}[0pt][0pt]{$\displaystyle{\circ}$}}
	{\raisebox{-2.75pt}[0pt][0pt]{$\textstyle{\circ}$}}
    {\raisebox{-2.0pt}[0pt][0pt]{$\scriptstyle{\circ}$}}
    {\raisebox{-1.5pt}[0pt][0pt]{$\scriptscriptstyle{\circ}$}}
}
\def\circdec{\mathclap{\circsym}}

\newcommand{\xto}{\xrightarrow}
\newcommand{\xot}{\xleftarrow}

\newcommand{\longto}{\xto{\quad}}
\newcommand{\longot}{\xot{\quad}}
\newcommand{\longhto}{\xhto{\quad}}

\newcommand{\longrightleftarrows}{\mathrel{\substack{\longrightarrow \\[-.7ex] \longleftarrow}}}

\newcommand{\longincl}{\xhookrightarrow{\quad}}

\newcommand{\bbD}{\mathbb{D}}

\newcommand{\calA}{\mathcal{A}}
\newcommand{\calB}{\mathcal{B}}
\newcommand{\calC}{\mathcal{C}}
\newcommand{\calD}{\mathcal{D}}
\newcommand{\calE}{\mathcal{E}}

\newcommand{\calH}{\mathcal{H}}

\newcommand{\calK}{\mathcal{K}}
\newcommand{\calL}{\mathcal{L}}
\newcommand{\calM}{\mathcal{M}}

\newcommand{\calS}{\mathcal{S}}
\newcommand{\calT}{\mathcal{T}}
\newcommand{\calU}{\mathcal{U}}

\DeclareMathSymbol{\mathinvertedexclamationmark}{\mathord}{operators}{'074}
\DeclareMathSymbol{\mathexclamationmark}{\mathord}{operators}{'041}
\makeatletter
\newcommand{\raisedmathinvertedexclamationmark}{%
  \mathord{\mathpalette\raised@mathinvertedexclamationmark\relax}%
}
\newcommand{\raised@mathinvertedexclamationmark}[2]{%
  \raisebox{\depth}{$\m@th#1\mathinvertedexclamationmark$}%
}
\makeatother
\newcommand{\invshriek}{{\raisedmathinvertedexclamationmark}}

\newcommand{\pb}{\ar@{}[dr]|(0.33){\text{\pigpenfont J}}}
\newcommand{\pbop}{\ar@{}[dl]|(0.33){\text{\pigpenfont L}}}

\allowdisplaybreaks

\newcounter{saveenumi}
\newcommand{\savecounteri}{\setcounter{saveenumi}{\value{enumi}}}
\newcommand{\restorecounteri}{\setcounter{enumi}{\value{saveenumi}}}

\usepackage{xr}

\begin{document}
\title{A theory of twisted umkehr maps}

\date{\today}

\author{Anssi Lahtinen}

\thanks{%
    \begin{tabular*}{0.97\textwidth}{@{}c@{}@{\extracolsep{\fill }}p{0.90\textwidth}@{}}
    \raisebox{-0.66\height}{%
        \begin{tikzpicture}
        	[
            	y=0.80pt, 
    			x=0.8pt, 
    			yscale=-1, 
    			inner sep=0pt, 
    			outer sep=0pt, 
    			scale=0.12
			]
            \definecolor{c003399}{RGB}{0,51,153}
            \definecolor{cffcc00}{RGB}{255,204,0}
            \begin{scope}[shift={(0,-872.36218)}]
            	\path[shift={(0,872.36218)},fill=c003399,nonzero rule] 
					(0.0000,0.0000) rectangle (270.0000,180.0000);
            	\foreach \myshift in 
                	{
    					(0,812.36218), 
						(0,932.36218), 
                		(60.0,872.36218), 
						(-60.0,872.36218), 
                		(30.0,820.36218), 
						(-30.0,820.36218),
                		(30.0,924.36218), 
						(-30.0,924.36218),
                		(-52.0,842.36218), 
						(52.0,842.36218), 
                		(52.0,902.36218), 
						(-52.0,902.36218)
					}
                    \path[shift=\myshift,fill=cffcc00,nonzero rule] 
                    	(135.0000,80.0000) -- 
    					(137.2453,86.9096) -- 
    					(144.5106,86.9098) -- 
    					(138.6330,91.1804) -- 
    					(140.8778,98.0902) -- 
    					(135.0000,93.8200) -- 
    					(129.1222,98.0902) -- 
    					(131.3670,91.1804) -- 
    					(125.4894,86.9098) -- 
    					(132.7547,86.9096) -- 
						cycle;
            \end{scope}
        \end{tikzpicture}%
    }
    &
    Supported by the Danish National Research Foundation grants DNRF92 and DNRF151
    and by the 
    European Union's Horizon 2020 research and 
    innovation programme under grant agreements 
    No 800616 and 682922.   
    \end{tabular*}\nopunct%
}

\address[A. Lahtinen]{Copenhagen, Denmark}
\email{anssi@anssilahtinen.net}

\subjclass[2020]{%
55N20 
(Primary)
55R70,
55M05
(Secondary)}

\begin{abstract} 
We develop a theory of umkehr maps for 
twisted generalized homology theories. In this theory, 
interesting 
umkehr maps, including generalizations of 
important classical ones, 
are induced by cartesian morphisms
of a certain category opfibred over the category 
of spaces and continuous maps, making it possible 
to access them through
universal properties. 
\end{abstract}

\maketitle

\setcounter{tocdepth}{2}
\tableofcontents

\section{Introduction}
Since Hopf's construction of what he called an Umkehrungshomomorphismus \cite{Hopf30},
umkehr maps have played a crucial role in topology. They are, in various guises, also called Gysin maps, wrong-way maps, shriek maps, integration along fibre maps, pushforward maps and dimension-shifting transfer maps. 
They feature in formulations of topological 
Grothendieck--Riemann--Roch theorems
and the Atiyah-Singer index theorem for families,
in the construction of the 
generalized Miller--Morita--Mumford characteristic classes 
of bundles of smooth manifolds,
and provide the key ingredient in the construction 
of string topology operations. 
We refer the reader to \cite{BGHistoryOfDuality}
for a historical perspective on umkehr maps.

In this paper, we develop a theory of umkehr maps
which is applicable to twisted generalized homology theories
and 
which recovers and generalizes important classical examples of 
umkehr maps.
A key feature of the theory is that in it interesting 
umkehr maps, including the classical ones, arise as 
maps induced by cartesian morphisms of a certain category opfibred
over the category of spaces and continuous maps. This allows the 
umkehr maps to be accessed through clean categorically defined universal properties
divorced from the often messy details of particular constructions.
The paper is motivated
by the needs of the author's work with Grodal
on string topology of finite groups of Lie type \cite{stringtop},
but should be useful for other purposes as well.

What do we mean by an umkehr map? In common informal parlance, an umkehr map 
is simply an induced map which goes in the opposite direction from the usual
induced maps in the context. 
For example, if $\bbk$ is a field and $f\colon M^m\to N^n$
is a continuous map between  $\bbk$--oriented smooth closed manifolds,
composing the usual induced maps $f^\ast \colon H^\ast(N;\bbk) \to H^\ast(M,\bbk)$
and $f_\ast \colon H_\ast(M;\bbk) \to H_\ast(N;\bbk)$
with the Poincaré duality isomorphisms for $M$ and $N$
yields umkehr maps
\begin{equation}
\label{eq:pdumkehrmaps}
 	f^! \colon H_{\ast}(N;\bbk) \longto H_{\ast+n-m}(M;\bbk)
	\qquad\text{and}\qquad
	f_! \colon H^\ast(M;\bbk) \longto H^{\ast+m-n}(N;\bbk)
\end{equation}
in ordinary homology and cohomology;
and if $p\colon E\to B$ is a fibre bundle whose fibre 
is a closed $\bbk$--oriented $d$--manifold and whose base space is simply connected,
``integration along the fibre'' yields umkehr maps
\begin{equation}
\label{eq:integrationalgonfibreumkehrmaps}
	p_! \colon H^\ast(E;\bbk) \longto H^{\ast-d}(B;\bbk)
	\qquad\text{and}\qquad
	p^! \colon H_\ast(B;\bbk) \longto H_{\ast+d}(E;\bbk).
\end{equation}
The Becker--Gottlieb transfer maps
$p_\natural \colon H^\ast(E;\bbk) \to H^\ast(B;\bbk)$
and
$p^\natural \colon H_\ast(B;\bbk) \to H_\ast(E;\bbk)$
for $p$ \cite{BeckerGottliebTransfer}
provide further examples, although in this paper our main interest
will lie with umkehr maps like $f^!$, $f_!$, $p_!$, and $p^!$ 
which may shift degrees.
A variety of other methods applicable under different circumstances
exist for constructing such umkehr maps. For example, Boardman 
\cite{BoardmanNotesCh5} considers no fewer than eight constructions 
for umkehr maps, all producing the same result when more than one 
are available for a given continuous map. 
While axiomatizations of certain kinds of umkehr maps have been 
considered -- see e.g.\ \cite{ck2009umkehr} and \cite{Lewis_1983} --
we will not adopt or pursue one here. Instead, we point out that 
our umkehr maps fit the informal notion outlined above, 
and recover and generalize classical examples of umkehr maps,
including those of \eqref{eq:pdumkehrmaps} and 
\eqref{eq:integrationalgonfibreumkehrmaps}, justifying our use of the term.

Umkehr maps on the level of homology and cohomology groups
frequently arise from more fundamental umkehr maps defined
on the level of  underlying homotopy categories. For example, 
the maps $f^!$ and $f_!$ above arise 
from a map
\begin{equation}
\label{eq:nmtntommtm}
	N^{-\tau_N} \longto M^{-\tau_M} 
\end{equation}
of spectra
given by the Pontryagin--Thom construction,
and $p_!$ and $p^!$ from a map
\begin{equation}
\label{eq:btoemtp}
	\suspension^\infty_+ B \longto E^{-\tau_p}
\end{equation}
of spectra
where $\tau_p$ is the vertical tangent bundle of $p$
(see \cite[Lemma V.6.22]{BoardmanNotesCh5}).
Here the Thom spectra  $M^{-\tau_M}$, $N^{-\tau_N}$
and $E^{-\tau_p}$ should be thought of as 
desuspensions of  $M$, $N$ and $E$
twisted by the virtual bundles $-\tau_M$, $-\tau_N$ and $-\tau_p$, respectively,
and the maps $f^!$, $f_!$ and $p_!$, $p^!$ are
obtained by composing the maps induced by 
\eqref{eq:nmtntommtm} and \eqref{eq:btoemtp}
with the Thom isomorphisms arising from
the orientations of  $-\tau_M$, $-\tau_N$,
and $-\tau_p$ determined by the assumed orientations of $M$, $N$ and 
the fibre of $p$. Informally, we may think of the Thom isomorphisms
as a combination of undoing the twisting and applying a suspension isomorphism,
with the orientation data specifying how the twisting is to be undone.
In this paper, we will work on the level 
of homotopy categories. Moreover, as our aim is a theory applicable to 
twisted homology theories, we choose not to undo the twistings, obviating 
the need to choose and keep track of orientations.
Even in cases where the ultimate aim is to work in an untwisted setting, 
this approach has the benefit of enabling a clean separation of orientation 
questions from other aspects of the work.

We now outline the categorical setting for our theory in more detail. 
Let $\calC$ be a presentable symmetric monoidal $\infty$--category
with monoidal product $\tensor$. 
Examples of interest for us include the $\infty$--categories $\Spectra$
of spectra, $\Mod^R$ of $R$--modules for a commutative ring spectrum $R$,
and $\Spectra^\ell$ of $\ell$--complete (that is, $H\F_\ell$--local) spectra
for a prime $\ell$, all with $\tensor$ given by the corresponding version 
of smash product. The twisted homology groups of a space $B$ 
in these examples of interest arise from parametrized $\calC$--objects 
over $B$, and as we wish to let the space $B$ vary, it will be convenient for us 
assemble the homotopy categories $\ho(\calC_{/B})$
of parametrized $\calC$--objects over varying base spaces $B$ into a single category 
$\hpC$ fibred and opfibred over the category $\calT$ of topological spaces.
(The `h' in $\hp\calC$ stands for homotopy and the `p' for parametrized.)
More precisely, the category $\hpC$ is obtained by applying the 
Grothendieck construction (see Definition~\ref{def:grothendieckconstr})
to the pseudofunctor
\[
	\calT^\op \longto \Cat,
	\qquad
	B \longmapsto \ho(\calC_{/B})
	\qquad
	(f \colon A \to B) \longmapsto (f^\ast \colon \ho(\calC_{/B}) \to \ho(\calC_{/A}))
\]
where $\Cat$ denotes the $2$--category of categories and 
$f^\ast$ is the pullback functor for parametrized $\calC$--objects
induced by $f$.
The fibre of the projection $\hpC \to \calT$ over a space $B$ is precisely
the category $\ho(\calC_{/B})$.
Homology in this setting is then given by a functor 
\begin{equation}
\label{eq:hbullethpc}
	H_\bullet \colon \hp\calC \longto \ho(\calC)
\end{equation}
which for $\calC = \Mod^R$ sends a parametrized $R$--module $X$ over $B$
to the generalized Thom spectrum of $X$ in the sense of 
Ando, Blumberg and Gepner \cite[Def.~3.14]{ABGparam}. 
Thinking of a parametrized $\calC$--object $X$ over $B$ as analogous to a local coefficient 
system over $B$, we write $H_\bullet(B;X)$ for the value of $H_\bullet$ on $X$.
The usual $X$--homology groups of a space $B$ for an unparametrized $R$--module $X$ 
are recovered as $X_\ast(B) \isom \pi_\ast H_\bullet(B;\underline{X})$
where $\underline{X}$ is the trivial parametrized $R$--module over 
$B$ defined by $X$. See Example~\ref{ex:hbulletfortrivialcoeffs2}.
Moreover, for  $\calC = \Mod^{H\Z}$, 
Corollary~\ref{cor:homologywithlocalcoefficientscomp}
shows that the ordinary homology groups with local coefficients 
over $B$ can be recovered from the objects $H_\bullet(B;X)$.

The values of the functor $H_\bullet$ on morphisms of $\hpC$ give the ordinary 
induced maps of our theory: applied to a morphism $\phi \colon X \to Y$
in $\hpC$ covering a continuous map $f\colon A \to B$, the functor
$H_\bullet$ gives us a morphism
\[
	(f,\phi)_\bullet \colon H_\bullet(A;X) \longto H_\bullet(B;Y).
\]
The umkehr maps in our theory are induced by morphisms
of another category $\hpC^\dfop$ 
opfibred (but not fibred) over $\calT$.
This category has the same objects as $\hpC$, and is obtained from 
$\hpC$, roughly speaking, by keeping all opcartesian morphisms of 
$\hpC$ while replacing all fibres of the projection $\hpC \to \calT$
by their opposite categories. 
More precisely, the morphisms of $\hpC^\dfop$ covering a continuous
map $f\colon A \to B$ are given by equivalence classes of zigzags
\[
	X \xto{\ \alpha\ } X' \xot{\ \beta\ } Y
\]
where $\alpha$ is an opcartesian morphism of $\hpC$ covering $f$
and $\beta$ is a morphism of $\hpC$ covering the identity map of $B$.
See Definition~\ref{def:dfopdef}.
From $\hpC^\dfop$, we have a
contravariant functor (also called $H_\bullet$)
\[
	H_\bullet \colon \hpC^\dfop \longto \ho(\Spectra)
\]
agreeing with \eqref{eq:hbullethpc} on objects and taking a morphism 
$\theta\colon X \oto Y$ of $\hpC^\dfop$ covering a continuous map 
$f\colon A\to B$ to an umkehr map
\[
	(f,\theta)^\leftarrow \colon H_\bullet(B;Y) \longto H_\bullet(A;X).
\]
The small circle in the notation $\theta\colon X\oto Y$ indicates
that $\theta$ is a morphism of $\hpC^\dfop$ rather than $\hpC$.

While in principle any map of $\hpC^\dfop$ 
induces an umkehr map, it is to be expected 
that interesting umkehr maps will arise from 
maps of $\hpC^\dfop$ 
with special properties. 
We will pay particular attention to cartesian morphisms of $\hpC^\dfop$.
Notice that as the category $\hpC^\dfop$
is opfibred rather than fibred over $\calT$,
not all morphisms of $\calT$ have a plentiful supply 
of cartesian morphisms covering them. 
The following key existence result for cartesian morphisms
connects cartesian morphisms in $\hpC^\dfop$ to Costenoble--Waner duality,
discovered by Costenoble and Waner \cite{CWv1} and named and studied 
further by May and Sigurdsson \cite{MaySigurdsson}, and to Pontryagin--Thom collapse maps.
In the statement, we make use of a pairing $\oslash$ which for 
a cartesian morphism $\phi \colon X \to Y$ of $\hpC$ and any
morphism $\theta\colon Z \oto W$ of $\hpC^\dfop$ covering the 
same continuous map $f\colon A \to B$ yields a morphism 
\[
	\phi \oslash \theta 
	\colon
	X \tensor_{A} Z
	\longoto
	Y \tensor_{B} W
\]
of $\hpC^\dfop$ covering $f$. Here $\tensor_A$ refers to 
the symmetric monoidal product in $\ho(\calC_{/A})$ induced by 
$\tensor$, and similarly for $\tensor_B$.
The monoidal unit for $\tensor_B$ is denoted by $S_B$ or, if
clarity requires, by $S_B^\calC$.
The theorem will be proven 
in Section~\ref{subsubsec:proofsofresultsfromintro}.

\begin{thrm}
\label{thm:thetapc}
Let $\calC$ be a presentable symmetric monoidal $\infty$--category, and 
suppose $p\colon E \to B$ is a continuous map of spaces
which is small-fibred with respect to $\calC$
in the sense that all its homotopy fibres
are Costenoble--Waner dualizable with respect to $\calC$.
Then

\begin{enumerate}[(i)]
\item
\label{it:thetapc}
	There exists a cartesian morphism $\theta_p \colon \omega_p \oto S_B$ 
	of $\hp\calC^\dfop$ covering $p$.
\item
\label{it:phioslashthetap}
	Given a cartesian morphism morphism $\phi \colon Y \to X$ of $\hp\calC$
	covering $p$, the product
	\[
		\phi\oslash \theta_p 
		\colon 
		Y \tensor_{E} \omega_p 
		\longoto 
		X \tensor_{B} S_B 
		\homot 
		X
	\]
	is a cartesian morphism of $\hp\calC^\dfop$ covering $p$. In particular, 
	for any $X \in \ho(\calC_{/B})$, there exists a cartesian morphism of 
	$\hp\calC^\dfop$ covering $p$ and having codomain $X$.
\item
\label{it:thetapformanifoldbundles}
	Suppose $\calC = \Spectra$.
	If $p$ is a bundle of smooth closed manifolds, 
	then $p$ is small-fibred with respect to $\Spectra$ and as 
	$\theta_p$ we may choose a map
	\[
		\theta_p = \theta_p^{\Spectra}\colon S^{-\tau_{p}} \longoto S_B
	\]
	given by a fibrewise Pontryagin--Thom collapse map. Here $\tau_p$
	denotes the vertical tangent bundle of $p$.
\item
\label{it:thetapformanifoldbundlesc}
	Suppose $\calC$ admits a symmetric monoidal $\infty$--functor
	$L\colon \Spectra \to \calC$ which is a left adjoint.
	If $p$ is a bundle of smooth closed manifolds with vertical tangent bundle $\tau_p$,
	then $p$ is small-fibred with respect to $\calC$, and as $\theta_p$
	we may choose the map
	\[
		\theta_p 
		\colon 
		L_\fw (S^{-\tau_{p}})
		\xoto{\ L_\fw(\theta_p^{\Spectra})\ }
		L_\fw(S_B^\Spectra) 
		\homot
		S_B^\calC
	\]
	where 
	\[
		L_\fw \colon \hpSpectra^\dfop \longto \hp\calC^\dfop
	\]
	is the functor induced by $L$ and $\theta_p^\Spectra$ is the map of 
	part (\ref{it:thetapformanifoldbundles}).
\end{enumerate}
\end{thrm}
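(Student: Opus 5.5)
The plan is to make the cartesian condition in $\hp\calC^\dfop$ explicit and reduce it to an adjunction statement about $p_!$. Unwinding Definition~\ref{def:dfopdef}, a morphism $Z \oto W$ of $\hp\calC^\dfop$ covering $f\colon A\to B$ is an equivalence class of zigzags $Z \to f_!Z \leftarrow W$. Since opcartesian lifts are unique up to unique isomorphism, such a morphism is the same thing as a morphism $W \to f_! Z$ in $\ho(\calC_{/B})$. Consequently a morphism $\theta\colon \omega\oto S_B$ covering $p$ is cartesian if and only if the following holds for every $g\colon A\to E$ and every $Z\in\ho(\calC_{/A})$: composition with $\theta$ gives a bijection
\[
\Hom_{E}(\omega, g_!Z) \longto \Hom_B(S_B, p_!g_!Z).
\]
Here the left side is the set of morphisms $Z\oto\omega$ covering $g$, and the right side is the set of morphisms $Z\oto S_B$ covering $pg$. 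It therefore suffices to find an object $\omega_p$ and a map $S_B\to p_!\omega_p$ that corepresent $V\mapsto \Hom_B(S_B,p_!V)$ on $\ho(\calC_{/E})$. I will do this at the $\infty$-categorical level and then pass to $\pi_0$.

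For (\ref{it:thetapc}), I would show that $p_!\colon \calC_{/E}\to\calC_{/B}$ preserves small limits. Since both sides are presentable, $p_!$ then has a left adjoint $p_\natural$, and I set $\omega_p := p_\natural S_B$ with $\theta_p$ the unit $S_B\to p_!p_\natural S_B$.
\begin{itemize}
\item To check that $p_!$ preserves limits, I use that limits in $\calC_{/B}$ are detected after pulling back to points $b\colon \pt\to B$.
\item By base change $b^*p_!\homot r_!\,i^*$, where $r\colon F_b\to \pt$ is the homotopy fibre and $i\colon F_b \to E$ its inclusion. This reduces the claim to the statement that $r_!\colon \calC_{/F_b}\to \calC$ preserves limits when $F_b$ is Costenoble--Waner dualizable.
\item That last statement is where the definition of small-fibredness enters. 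I expect it to be (a reformulation of) the earlier characterization of Costenoble--Waner dualizability, namely $r_!\homot r_*(D_{F}\tensor -)$ with $D_F$ the Costenoble--Waner dual.
\end{itemize}
This step, together with the bookkeeping needed to check that the bijection is natural in all morphisms of $\hp\calC^\dfop$ (not only the opcartesian ones) and compatible with the equivalence relation on zigzags, is what I expect to be the main obstacle.

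For (\ref{it:phioslashthetap}), write $Y\homot p^*X$. It then suffices to produce a natural equivalence
\[
\Map_E(p^*X\tensor_E \omega_p, V)\homot \Map_B(X,p_!V)
\]
under which $\phi\oslash\theta_p$ corresponds to the unit. This is the statement $p_\natural(X)\homot p^*X\tensor_E p_\natural S_B$, a projection formula for $p_\natural$. By adjunction it is equivalent to $p_!\underline{\Hom}_E(p^*X,V)\homot \underline{\Hom}_B(X,p_!V)$. That map always exists, and its being an equivalence can again be checked fibrewise over points of $B$, where it follows from limit preservation of $r_!$ (decompose $X$ as a colimit of shifted units). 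The final sentence of (\ref{it:phioslashthetap}) then follows by taking $\phi$ to be a cartesian lift of $p$ in $\hp\calC$ with codomain $X$.

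For (\ref{it:thetapformanifoldbundles}), closed manifolds are Costenoble--Waner dualizable with dual $S^{-\tau}$ by (parametrized) Atiyah duality. Fibrewise Atiyah duality identifies $p_\natural S_B$ with $S^{-\tau_p}$, and the unit with the fibrewise Pontryagin--Thom collapse $S_B\to p_!S^{-\tau_p}$. To see that the unit is the collapse, one checks that the collapse map exhibits the duality, fibre by fibre.

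For (\ref{it:thetapformanifoldbundlesc}), I would argue that $L_\fw$ commutes with $p_!$ and with pullbacks, since $L$ is a colimit-preserving symmetric monoidal functor. Costenoble--Waner duality data are preserved by symmetric monoidal functors compatible with these operations, so the fibres remain dualizable with respect to $\calC$. Moreover, $L_\fw$ carries the duality unit $S_B\to p_!S^{-\tau_p}$ to a duality unit. By the fibrewise criterion of (\ref{it:thetapc}), that unit is then cartesian.
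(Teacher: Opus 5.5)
Your route differs from the paper's. The paper obtains $\theta_p$ as a \emph{hypercartesian} morphism from a dual pair $(B_p,\omega_p)$ in $\Ex_B(\calC)$ (Theorem~\ref{thm:hypercartexistence}), and reads off (ii)--(iv) from the base-change stability and projection formula that come with it.

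Your argument for (i) is fine. Fibrewise limit preservation of $p_!$ plus the adjoint functor theorem gives $p_\natural\dashv p_!$, and its unit is cartesian by Proposition~\ref{prop:dfopcartinterpretation}. The fibre input is this: for $r\colon F\to\pt$ with Costenoble--Waner dual $T$, the dual pair $(S_F^\op,T)$ gives, via Remark~\ref{rk:dualityadjunction}, an adjunction $T\tensor_F r^\ast(-)\dashv r_!$, that is, $r_!\homot r_\ast F_F(T,-)$. Your formula $r_\ast(D_F\tensor -)$ is not right in general. Also, this adjunction a priori lives on homotopy categories, so you need a short argument to get the $\infty$--categorical limit preservation you use.

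The first genuine gap is in (ii). Cartesianness of $\phi\oslash\theta_p$ for all $X$ is the supercartesian property (Definition~\ref{def:supercart}), i.e.\ the projection formula $p_\natural X\homot p^\ast X\tensor_E p_\natural S_B$. This does not follow formally from the existence of $p_\natural$. Your fibrewise proof, ``decompose $X$ as a colimit of shifted units'', works when $\calC$ is generated under colimits by its unit (or by dualizable objects), as for $\Spectra$, $\Mod^R$, $\Spectra^\ell$. It fails for a general presentable symmetric monoidal $\calC$, which need not be stable and whose unit need not generate. The fix is to keep the information you discarded. Since the left adjoint of $r_!$ is $T\tensor_F r^\ast(-)$, for every $Y$ one gets directly $r_!F_F(r^\ast Y,W)\homot r_\ast F_F(T\tensor_F r^\ast Y,W)\homot F_{\pt}(Y,r_!W)$.

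A second gap affects (iii) and (iv). Write $i_b\colon F_b\to E$ and $r\colon F_b\to\pt$ for the homotopy fibre over $b$. Both parts need a fibrewise test: a map $S_B\to p_!\omega$ is a unit for $p_\natural\dashv p_!$ as soon as each of its restrictions to fibres is a unit for $r_\natural\dashv r_!$. Equivalently, you need a base-change equivalence $i_b^\ast p_\natural\homot r_\natural b^\ast$. This is what ``fibrewise Atiyah duality identifies $p_\natural S_B$ with $S^{-\tau_p}$'' and ``the fibrewise criterion of (i)'' silently assume. But (i) only tested limit preservation fibrewise, and the statement is not formal: taking left adjoints of $b^\ast p_!\homot r_!i_b^\ast$ only yields $p_\natural b_!\homot (i_b)_!r_\natural$.

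In the paper this is exactly what hypercartesianness supplies. Base-change stability (Proposition~\ref{prop:superandhypercartmorprops}(\ref{it:hypercartbc})) gives the fibrewise criterion Proposition~\ref{prop:smallfibcartcrit}. That is how Theorem~\ref{thm:abgpthc2} reduces (iii) to Atiyah duality for a single closed manifold, and Proposition~\ref{prop:hypercartmorpreservation} plays the same role for (iv).

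Within your framework you could supply the missing step by passing to right adjoints, $p_!(i_b)_\ast\homot b_\ast r_!$. Pointwise, after path lifting, this amounts to commuting a colimit over a homotopy fibre with a limit over a path space of $B$. That is again limit preservation of the fibrewise $r_!$. But the argument has to be made; as written, (iii) and (iv) rest on an unproved step.
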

Here  $S^{-\tau_p}$ denotes the monoidal inverse of $S^{\tau_p}$ in $\ho(\Spectra_{/E})$
with respect to $\smashprod_{E}$ 
where  $S^{\tau_p}$ in turn denotes the fibrewise spectrum over $E$ 
defined by the fibrewise one-point-compactification of 
the vector bundle $\tau_p$ over $E$.
See Definition~\ref{def:sxi}.
The reader not yet acquainted with Costenoble--Waner duality may find some of 
the examples and remarks at the beginning of 
Section~\ref{subsec:cwdualityandhypercartesianmorphisms}
illuminating. For $\calC$ such as $\Mod^R$ and $\Spectra^\ell$
admitting a symmetric monoidal $\infty$--functor $\Spectra \to \calC$ 
which is a left adjoint, the theorem in particular applies 
to maps $p$ whose homotopy fibres are weakly equivalent to finite CW complexes.

In combination with Theorem~\ref{thm:thetapc}, the following theorem 
is the key result enabling the applications of our theory to string topology
of finite groups of Lie type in \cite{stringtop}. It is an immediate consequence of 
the slightly more informative Theorem~\ref{thm:ellcptgrpscwdualizable}.

\begin{thrm}
\label{thm:cwdualizibilityofbg}
Let $\ell$ be a prime and let $BG$ be a semisimple $\ell$--compact group. 
Then the space $G = \loops BG$
is Costenoble--Waner dualizable with respect to
the $\infty$--category $\Spectra^\ell$ of $\ell$--complete spectra.
\qed
\end{thrm}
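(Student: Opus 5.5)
The plan is to reduce Costenoble--Waner dualizability of $G=\loops BG$ to a statement about parametrized $\ell$--complete spectra over $BG$. For a space $E$ (here $E=G$, viewed as the homotopy fibre of $\pt\to BG$), Costenoble--Waner dualizability of $E$ with respect to $\calC$ should be equivalent to dualizability of $S_E$ in the bicategory of parametrized $\calC$--objects. Equivalently, in terms of the projection $r\colon E\to\pt$, it should mean that the pushforward $r_!$ commutes with the relevant colimits, or that the relevant transformation $r_!(-)\to r_\ast(D_r\tensor -)$ is an equivalence for a dualizing object $D_r$. Since $G$ is connected-component-wise governed by $\loops BG$, I would instead use the equivalent formulation over $BG$. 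Parametrized $\Spectra^\ell$--objects over $BG$ are modules over the group ring $\Spectra^\ell[G]=L_\ell\Sigma^\infty_+G$, and dualizability of $G$ amounts to $L_\ell\Sigma^\infty_+G$ being a compact (perfect) object of $\ell$--complete $\Spectra^\ell[G]$--modules relative to the base. Put differently, the left and right adjoints of restriction along $\pt\to BG$ should agree up to a twist by an invertible object.

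The key input is the theory of $\ell$--compact groups, specifically the dualizing spectrum and the norm map. By Bauer's and Rognes'/Klein's results, for an $\ell$--compact group the dualizing spectrum $D_G=(L_\ell\Sigma^\infty_+ G)^{hG}$ is an $\ell$--complete sphere $L_\ell S^{d}$ of dimension $d=\dim G$, with a $G$--action. Moreover $G$ is $\ell$--adically ``finite'': $H_\ast(G;\F_\ell)$ is finite. I would proceed in three steps.
\begin{enumerate}[(1)]
\item Show that $L_\ell\Sigma^\infty_+G$ lies in the thick subcategory generated by $L_\ell S$ in $\Spectra^\ell$. This follows from the finiteness of mod $\ell$ homology, using a cell structure built from an $\F_\ell$--homology basis, valid in the $\ell$--complete category.
\item Use $BG$ being $\ell$--complete, with $\pi_1 BG$ a finite $\ell$--group. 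Semisimplicity guarantees that $\pi_1BG$ is finite, which makes the trivial module $L_\ell S$ over $\Spectra^\ell[G]$ built from finitely many cells via the mod $\ell$ nilpotence of the action. From this, deduce that the Costenoble--Waner dual exists, namely $D_G$ with its twisted action.
\item Verify the triangle identities, or equivalently that the norm map $D_G\tensor_{\Spectra^\ell[G]}(-)\to(-)^{hG}$ is an equivalence on all $\ell$--complete modules. This is Bauer's ``$\ell$--adic Poincaré duality'' statement.
\end{enumerate}

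The main obstacle is step (2)/(3): showing the norm map is an equivalence on all modules, not just compact ones. This is where semisimplicity enters; for a torus $BG=K(\Z_\ell^n,2)$ the group $\pi_1$ part fails, and the fundamental group must be handled carefully. I would first try to reduce, via the fibration $\tilde BG\to BG\to B\pi_1BG$ with $\tilde BG$ the universal cover, to the simply connected case plus a finite $\ell$--group. For the simply connected case, the Eilenberg--Moore/nilpotence argument shows that $L_\ell S$ is a finite $\Spectra^\ell[G]$--module. For the finite $\ell$--group, I would use the fact that classifying spaces of finite $\ell$--groups are handled by the Tate vanishing in the $K(n)$--local/$\ell$--complete sense, with Theorem~\ref{thm:ellcptgrpscwdualizable} packaging this.
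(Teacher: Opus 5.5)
There is a genuine gap: your reformulation moves the problem to the wrong base space.

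\emph{What the statement actually asks.} Costenoble--Waner dualizability of $G$ treats $G$ itself as the base. It asks for right dualizability of $S_G^\op\colon \pt\hto G$ in $\Ex(\Spectra^\ell)$, and the dual is an object of $\ho(\Spectra^\ell_{/G})$ (it turns out to be $\suspension^{-d}_G S_{G,\ell}$). In module language, since $G$ is connected, this concerns modules over $L_\ell\suspension^\infty_+\loops G$, not over $\Spectra^\ell[G]=L_\ell\suspension^\infty_+G$.

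\emph{What steps (2) and (3) actually assert.} You aim for three things: the trivial module $L_\ell S$ built from finitely many free $\Spectra^\ell[G]$--cells, the norm map $D_G\tensor_{\Spectra^\ell[G]}(-)\to(-)^{hG}$ being an equivalence on all modules, and a dual ``$D_G$ with its twisted action'', which is an object over $BG$. Together these say that $BG$ is Costenoble--Waner dualizable with dual Klein's $D_G$ (cf.\ Example~\ref{ex:kleindualizingspectrum} and Remark~\ref{rk:kleindualizingspectrum}).

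\emph{Why that fails.} The assertion is false for $d>0$. Applying $L_\ell S\tensor_{\Spectra^\ell[G]}(-)$ to a finite free cell structure on $L_\ell S$ would build $L_\ell\suspension^\infty_+BG$ from finitely many shifts of $L_\ell S$. That would make $L_\ell\suspension^\infty_+BG$ dualizable in $\ho(\Spectra^\ell)$ (compare Remark~\ref{rk:cwdualityasrefinement}), hence give it finite mod-$\ell$ homology, which it does not have. Bauer's results ($D_G\homot L_\ell S^d$, and $L_\ell\suspension^\infty_+G$ self-dual up to a shift) do not give Tate vanishing on all modules.

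\emph{Other problems.}
\begin{itemize}
\item You have misplaced semisimplicity. A connected $\ell$--compact group has $\pi_1BG=0$. The hypothesis is that $\pi_1G\isom\pi_2BG$ is finite, so your reduction through the universal cover of $BG$ is vacuous. For the torus, the relevant group is $\pi_1G=\Z_\ell^n$.
\item Step (1) is correct but insufficient. It only gives dualizability of $L_\ell\suspension^\infty_+G$ in $\ho(\Spectra^\ell)$, the unparametrized shadow of Costenoble--Waner duality. It holds equally for $(S^1)\lcom$, which is not Costenoble--Waner dualizable (Remark~\ref{rk:s1lcomnotcwdualizable}). So a genuinely fibrewise input that uses semisimplicity is still needed.
\end{itemize}

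\emph{The missing idea} is to model $G$ fibrewise by a finite complex.
\begin{itemize}
\item Take an $\F_\ell$--homology equivalence $\lambda\colon X\to G$ with $X$ a finite complex (Bauer's Lemma~7), and replace it by a fibration.
\item $X$ is Costenoble--Waner dualizable in $\Spectra$, hence in $\Spectra^\ell$ (Example~\ref{ex:cwdualityinspectra}, Proposition~\ref{prop:cwdualitytransport}).
\item Pushing its dual pair $(S_{X,\ell}^\op,T)$ forward along $\lambda$ gives a dual pair $((\lambda_!S_{X,\ell})^\op,\lambda_!T)$ (Corollary~\ref{cor:shriekduals}).
\item So it suffices that the counit $\lambda_!\lambda^\ast S_{G,\ell}\to S_{G,\ell}$ be an equivalence. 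Checked on the fibre over the basepoint of the connected space $G$, this says the fibre $F$ of $\lambda$ has the mod-$\ell$ homology of a point.
\end{itemize}
This last step is where semisimplicity enters. Since $\pi_1G$ is a finite abelian $\ell$--group:
\begin{itemize}
\item the isomorphism on $H_1(-;\F_\ell)$ forces $\lambda$ to be surjective on $\pi_1$, so $F$ is connected;
\item $\pi_1G$ acts nilpotently on $H_\ast(F;\F_\ell)$;
\item the mod-$\ell$ fibre lemma, together with $\lambda\lcom$ being an equivalence, gives $F\lcom\homot\pt$.
\end{itemize}
For the torus the argument breaks down exactly here, since $\Z_\ell$ is not a finite $\ell$--group.
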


Our next goal is to illustrate how Theorem~\ref{thm:thetapc} enables the 
construction of various twisted umkehr maps,
including those of equations~\eqref{eq:nmtntommtm} and \eqref{eq:btoemtp}
and therefore those of equations~\eqref{eq:pdumkehrmaps}
and \eqref{eq:integrationalgonfibreumkehrmaps}.
Let $\calC$ be a presentable symmetric monoidal $\infty$--category, and 
suppose 
\begin{equation}
\label{eq:fp1p2new}
\vcenter{\xymatrix@R+2ex{
	E_1
	\ar[rr]^{f}
	\ar[dr]_{p_1}
	&&
	E_2
	\ar[dl]^{p_2}
	\\
	&
	B
}}
\end{equation}
is a commutative triangle of spaces where $p_1$ and $p_2$ are small-fibred
with respect to $\calC$
in the sense of Theorem~\ref{thm:thetapc}.
Choosing cartesian morphisms
$\theta_{p_1} \colon \omega_{p_1} \oto S_B$
and 
$\theta_{p_2} \colon \omega_{p_2} \oto S_B$
as in Theorem~\ref{thm:thetapc}(\ref{it:thetapc}),
basic properties of cartesian morphisms yield a unique cartesian morphism
$\kappa \colon \omega_{p_1} \oto \omega_{p_2}$ making
\begin{equation}
\label{diag:kappadef} 
\vcenter{\xymatrix@R+2ex{
	\omega_{p_1}
	\ar@{-->}|{\circdec}[rr]^{\kappa\vphantom{f}}_\cart
	\ar[dr]|{\circdec}_{\theta_{p_1}}^\cart
	&&
	\omega_{p_2}
	\ar[dl]|{\circdec}^{\theta_{p_2}}_\cart
	\\
	&
	S_B
}}
\end{equation}
a commutative triangle in $\hpC^\dfop$ covering \eqref{eq:fp1p2new}.
Applying $H_\bullet$, we obtain an umkehr map
\begin{equation}
\label{eq:fkappaumk}
	(f,\kappa)^{\leftarrow}
	\colon 
	H_\bullet(E_2;\omega_{p_2}) 
	\longto
	H_\bullet(E_1;\omega_{p_1}).
\end{equation}
Moreover, given an object $X \in \ho(\calC_{/E_2})$, taking the $\oslash$--product of 
$\kappa$ and the canonical cartesian morphism $\phi\colon f^\ast X \to X$
covering $f$ gives a morphism 
\begin{equation}
\label{eq:kappax}
	\kappa_X = \phi \oslash \kappa
	\colon
	f^\ast X \tensor_{E_1} \omega_{p_1} 
	\longoto
	X \tensor_{E_2} \omega_{p_2}
\end{equation}
yielding the following $X$--twisted version of \eqref{eq:fkappaumk}:
\begin{equation}
\label{eq:fkappaxumk}
	(f,\kappa_X)^{\leftarrow} 
	\colon 
	H_\bullet(E_2; X \tensor_{E_2} \omega_{p_2}) 
	\longto
	H_\bullet(E_1; f^\ast X \tensor_{E_1} \omega_{p_1}).
\end{equation}
Finally, in case $\omega_{p_2}$ is invertible as an object of $\ho(\calC_{/E_2})$, substituting 
$X  \tensor_{E_2} \omega_{p_2}^{-1}$ for $X$ in \eqref{eq:kappax}
yields a morphism
\begin{equation}
\label{eq:tildekappax}
    \tilde{\kappa}_X
    \colon
    f^\ast X \tensor_{E_1} f^\ast \omega_{p_2}^{-1} \tensor_{E_1} \omega_{p_1}
    \longoto
    X
\end{equation}
and thereby an umkehr map
\begin{equation}
\label{eq:fkappatildexumk}
	(f,\tilde{\kappa}_X)^{\leftarrow} 
	\colon 
	H_\bullet(E_2; X) 
	\longto
	H_\bullet(
		E_1; f^\ast X \tensor_{E_1} f^\ast \omega_{p_2}^{-1} \tensor_{E_1} \omega_{p_1}
	)
\end{equation}
where the twistings by $\omega_{p_1}$ and $\omega_{p_2}$ only appear on the 
codomain.
The maps $p_2$ for which $\omega_{p_2}$ is invertible can be
characterized as those continuous maps whose homotopy fibres  $F$
are not only Costenoble--Waner dualizable but in fact ``$\calC$--Poincaré 
duality spaces'' in the sense that the Costenoble--Waner
dual of $F$ is invertible as an object of $\ho(\calC_{/F})$.
See Proposition~\ref{prop:omegapinvertibility}.

Specializing to the 
setting of Theorem~\ref{thm:thetapc}(\ref{it:thetapformanifoldbundles})
where $\calC = \Spectra$
and $p_1$ and $p_2$ are bundles of smooth closed manifolds,
the maps $\kappa$, $\kappa_X$ and $\tilde{\kappa}_X$ 
of diagram \eqref{diag:kappadef} and equations \eqref{eq:kappax} and \eqref{eq:tildekappax} take the form
\[
	\kappa \colon S^{-\tau_{p_1}} \longoto S^{-\tau_{p_2}},
\]
\[
	\kappa_X = \phi\oslash \kappa
	\colon
	f^\ast X \smashprod_{E_1} S^{-\tau_{p_1}}
	\longoto
	X \smashprod_{E_2} S^{-\tau_{p_2}}
\]
and
\[
	\tilde{\kappa}_X
	\colon 
	f^\ast X \smashprod_{E_1} S^{f^\ast \tau_{p_2}-\tau_{p_1}} \longoto X,
\]
yielding umkehr maps
\begin{equation}
\label{eq:fkappaumkmfld}
	E_2^{-\tau_{p_2}}
	=
	H_\bullet(E_2;S^{-\tau_{p_2}})
	\xto{\ (f,\kappa)^{\leftarrow}\ }
	H_\bullet(E_1;S^{-\tau_{p_1}})
	=
	E_1^{-\tau_{p_1}},
\end{equation}
\begin{equation}
\label{eq:fkappaxumkmfld}
	H_\bullet(E_2;X \smashprod_{E_2} S^{-\tau_{p_2}})
	\xto{\ (f,\kappa_X)^{\leftarrow}\ }
	H_\bullet(E_1;f^\ast X \smashprod_{E_1} S^{-\tau_{p_1}})
\end{equation}
and
\[
	H_\bullet(E_2; X)
	\xto{\ \ (f, \tilde{\kappa}_X)^\leftarrow\ }
	H_\bullet(E_1; f^\ast X \smashprod_{E_1} S^{f^\ast \tau_{p_2} - \tau_{p_1}}).
\]
We emphasize that for the construction of the umkehr map 
\eqref{eq:fkappaxumkmfld}, it is essential to work in the parametrized setting
and use the map $\kappa$; while one may (correctly) view 
\eqref{eq:fkappaxumkmfld} as an $X$--twisted variant of 
the unparametrized umkehr map \eqref{eq:fkappaumkmfld}, 
it is not easily constructible from \eqref{eq:fkappaumkmfld}.

If $\bbk$ is a field and the vector bundles $\tau_{p_1}$ and $\tau_{p_2}$
are $\bbk$--oriented, 
composing the maps induced by \eqref{eq:fkappaumkmfld} 
on $\bbk$--homology and cohomology 
with Thom isomorphisms yields umkehr maps
\begin{equation}
\label{eq:homologyumkfromkappa}
\xymatrix@!0@C=4.6em{
	H_{\ast+m_2}(E_2;\bbk) 
	\ar[rr]^-{\mathrm{Thom}^{-1}}_-\isom
	&&
	H_\ast (E_2^{-\tau_{p_2}};\bbk)
	\ar[rrr]^{H_\ast((f,\kappa)^\leftarrow;\,\bbk)}
	&&&
	H_\ast (E_1^{-\tau_{p_1}};\bbk)
	\ar[rr]^-{\mathrm{Thom}}_-\isom
	&&
	H_{\ast+m_1} (E_1;\bbk)
}
\end{equation}
and
\begin{equation}
\label{eq:cohomologyumkfromkappa}
\xymatrix@!0@C=4.6em{
	H^{\ast+m_1}(E_1;\bbk) 
	\ar[rr]^-{\mathrm{Thom}}_-\isom
	&&
	H^\ast (E_1^{-\tau_{p_1}};\bbk)
	\ar[rrr]^{H^\ast((f,\kappa)^\leftarrow;\,\bbk)}
	&&&
	H^\ast (E_2^{-\tau_{p_2}};\bbk)
	\ar[rr]^-{\mathrm{Thom}^{-1}}_-\isom
	&&
	H^{\ast+m_2} (E_2;\bbk)
}
\end{equation}
where $m_1$ and $m_2$ are the dimensions of $\tau_{p_1}$ and $\tau_{p_2}$,
respectively.
Part (\ref{it:pdumkcomparison}) of the following result
is a special case of the more general Theorem~\ref{thm:pdumkcomp}
proven in Section~\ref{subsec:pdcomp}
while part (\ref{it:iafumkcomparison})
is proven as Theorem~\ref{thm:integrationalongfibremfldbdlecomp}
in Section~\ref{subsec:iafcomp}.

\begin{thrm}
\label{thm:classicalumkehrmapscomparison}
Suppose $\bbk$ is a field and 
$p_1$ and $p_2$ in \eqref{eq:fp1p2new} are bundles of smooth 
closed manifolds of dimensions $m_1$ and $m_2$, respectively, 
such that the vertical tangent bundles $\tau_{p_1}$ and $\tau_{p_2}$
are $\bbk$--oriented.
Then the umkehr maps \eqref{eq:homologyumkfromkappa} and 
\eqref{eq:cohomologyumkfromkappa}
recover the classical umkehr maps arising from Poincaré duality 
and integration along fibre as follows:
\begin{enumerate}[(i)]
\item\label{it:pdumkcomparison}
	When $B = \pt$, the maps \eqref{eq:homologyumkfromkappa} and 
	\eqref{eq:cohomologyumkfromkappa}
	agree with the umkehr maps \eqref{eq:pdumkehrmaps}
	arising from Poincaré duality.
\item\label{it:iafumkcomparison}
	When $E_2 = B$ and $p_2 = \id_B$, 
	the maps \eqref{eq:homologyumkfromkappa} and 
	\eqref{eq:cohomologyumkfromkappa}
	agree with the umkehr maps \eqref{eq:integrationalgonfibreumkehrmaps}
	given by integration along fibre.
\end{enumerate}
\end{thrm}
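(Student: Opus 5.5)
The plan is to reduce both parts to statements about the underlying maps of spectra and then invoke the classical identifications in Boardman \cite{BoardmanNotesCh5}. The first step is to identify $\kappa$ explicitly. Theorem~\ref{thm:thetapc}(\ref{it:thetapformanifoldbundles}) lets us take $\theta_{p_i}$ to be the fibrewise Pontryagin--Thom collapse $S^{-\tau_{p_i}} \oto S_B$. Since $\kappa$ is the unique morphism making \eqref{diag:kappadef} commute, it is enough to exhibit some morphism $\kappa'\colon S^{-\tau_{p_1}} \oto S^{-\tau_{p_2}}$ covering $f$ with $\theta_{p_2}\circ\kappa' = \theta_{p_1}$, and this $\kappa'$ must then be cartesian and equal to $\kappa$.

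To build $\kappa'$, factor $f$ fibrewise as an embedding $E_1 \hookrightarrow E_2\times_B (B\times\R^k)$ followed by the projection. The embedding gives a fibrewise Pontryagin--Thom collapse onto the Thom spectrum of its normal bundle $\nu$, and $\nu \cong f^\ast\tau_{p_2}\oplus\underline{\R}^k - \tau_{p_1}$ stably. Desuspending and twisting by $-\tau_{p_2}$ then defines $\kappa'$. The compatibility $\theta_{p_2}\circ\kappa'\simeq\theta_{p_1}$ is the standard fact that Pontryagin--Thom collapses compose, namely that the collapse for a composite of embeddings is the composite of the collapses (tubular neighbourhoods of nested embeddings). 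This has to be done fibrewise over $B$ and translated into the zigzag description of morphisms of $\hpC^\dfop$ from Definition~\ref{def:dfopdef}.

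Given this, applying $H_\bullet$ to $\kappa'$ produces exactly the classical stable map $E_2^{-\tau_{p_2}}\to E_1^{-\tau_{p_1}}$. For part (\ref{it:pdumkcomparison}), with $B=\pt$, this is the Pontryagin--Thom umkehr map \eqref{eq:nmtntommtm}. Composing with Thom isomorphisms recovers $f^!$ and $f_!$, by the standard compatibility of Atiyah duality $N^{-\tau_N}\simeq D(N_+)$ with Poincaré duality (the Thom class of $-\tau_N$ corresponding to the fundamental class). For part (\ref{it:iafumkcomparison}), with $E_2=B$, we have $\tau_{p_2}=0$ and $\kappa=\theta_{p_1}$, so the map is the fibrewise collapse \eqref{eq:btoemtp}. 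Boardman's Lemma V.6.22 identifies the resulting maps, after Thom isomorphisms, with $p_!$ and $p^!$.

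The main obstacle I expect is the careful identification of $H_\bullet(\kappa')$ with the explicit spectrum-level collapse map. This means checking that $H_\bullet$ on a zigzag $X\to X'\leftarrow Y$, which combines pushforward along $f$ with the homology of a fibrewise map, really computes the naive Thom spectrum map, including orientation and sign conventions for the Thom isomorphisms. I would handle this by first treating the two elementary cases separately: an opcartesian morphism, where the map is induced by $f_!$ and the counit, and a fibre morphism. Only after that would I treat the composite.
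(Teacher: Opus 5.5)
The gap is in your first step, the identification of $\kappa$ with an explicit $\kappa'$.

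\textbf{Why the construction of $\kappa'$ fails.} A morphism $S^{-\tau_{p_1}}\oto S^{-\tau_{p_2}}$ of $\hpSpectra^\dfop$ covering $f$ amounts to a vertical map $\bar\kappa\colon S^{-\tau_{p_2}}\to f_!S^{-\tau_{p_1}}$ in $\ho(\Spectra_{/E_2})$ (Definition~\ref{def:dfopmordata}). That is, it is data parametrized over $E_2$.

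The Pontryagin--Thom collapse of a fibrewise embedding $E_1\incl E_2\times\R^k$ is only parametrized over $B$. For $B=\pt$ it is just a map of spectra $E_2^{-\tau_{p_2}}\to E_1^{-\tau_{p_1}}$. So it is not a morphism covering $f$, the composite $\theta_{p_2}\circ\kappa'$ is not defined, and there is nothing to ``translate into the zigzag description''.

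Pushed down to $B$, the triangle \eqref{diag:kappadef} only says $\lambda\circ\tilde\eta_2=\tilde\eta_1$. Here $\lambda=(p_2)_!\bar\kappa$, and $\tilde\eta_i\colon S_B\to(p_i)_!S^{-\tau_{p_i}}$ are the collapses. The uniqueness in Proposition~\ref{prop:dfopcartinterpretation} is among maps over $E_2$; over $B$ this equation does not determine $\lambda$. For instance, take $B=\pt$ and $f=\id_{S^1}$, and write $(S^1)^{-\tau_{S^1}}\homot S^{-1}\vee S^0$. Then both the identity and the identity plus twice the projection onto the $S^{-1}$ summand satisfy the equation, since $[S,S^{-1}]\isom\Z/2$.

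Nor is there an evident collapse over $E_2$, since $f$ need not be a submersion. Take $B=\pt$ and $f\colon\pt\incl S^1$. Every fibre of $\bar\kappa$ is a map $S^{-1}\to\bigvee_{\Z}S$, hence null. Yet $\bar\kappa\neq0$, because $r^{S^1}_!(\bar\kappa)$ precomposed with the collapse $S\to(S^1)^{-\tau_{S^1}}$ is an equivalence. So the claim that $H_\bullet(\kappa)$ is ``exactly the classical stable map'' is not established. The obstacle you anticipate, about how $H_\bullet$ treats zigzags and signs, presupposes a $\kappa'$ that you do not have.

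\textbf{What is missing.} You need a characterization of the map induced by $\kappa$ purely in terms of data over $B$. This is what the paper supplies for part~(i), via Theorem~\ref{thm:pdumkcomp}. By Proposition~\ref{prop:dualmaps}, with respect to the dual pairs $((p_i)_!S_{E_i},(p_i)_!\omega_{p_i})$ of Proposition~\ref{prop:hypercartandfwduality}, the map $(p_2)_!\omega_{p_2}\to(p_1)_!\omega_{p_1}$ induced by $\kappa$ is dual to $t_\calC(\id_B,f)$. For $B=\pt$ this says $(f,\kappa)^\leftarrow$ is the Spanier--Whitehead dual of $\suspension^\infty_+f$ for Atiyah duality. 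The unit of that duality is the collapse followed by the Thom diagonal (Example~\ref{ex:unitfactorsthroughthomdiagonal}). The paper then finishes with the squares \eqref{eq:fandfumkandb}. It also verifies directly, via Lemma~\ref{lm:thomandregulardiag}, that the duality isomorphism commutes with the Thom isomorphisms (square \eqref{eq:bsandthomisos}); this is the precise content of your ``standard compatibility''. Given Proposition~\ref{prop:dualmaps}, you could still route through the classical Pontryagin--Thom umkehr map, since it is classically the Spanier--Whitehead dual of $f_+$. But no explicit $\kappa'$ is needed.

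\textbf{Part~(ii).} Your part~(ii) is unaffected. There $E_2=B$, so $\kappa\isom\theta_{p_1}$, which Theorem~\ref{thm:abgpthc2} identifies with the Ando--Blumberg--Gepner transfer. Reducing to \cite[Lemma~V.6.22]{BoardmanNotesCh5} is then a legitimate alternative route.

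The paper instead defines integration along the fibre via the parametrized Serre spectral sequence, with explicit orientation data (Definitions~\ref{def:intalongfibrehomology} and~\ref{def:mapsop}). It then compares directly in Theorem~\ref{thm:integrationalongfibremfldbdlecomp}. This avoids matching Boardman's construction and sign conventions with $r^B_!\mathrm{PT}_{/B}(p)$ and the Thom isomorphisms used here. It also carries over to Theorem~\ref{thm:umkehrmapcomparison}, where no manifold structure is available.
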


In addition to Theorem~\ref{thm:classicalumkehrmapscomparison},
we will prove the following comparison result explaining how 
to recover the twisted umkehr maps of \cite{ABGparam}
from our theory. It will be proven in Section~\ref{subsec:abgcomparison}.
\begin{thrm}
\label{thm:abgumkehrcomparison}
Let $p\colon E \to B$ be a bundle of smooth closed manifolds,
let $R$ be a commutative ring spectrum,
and let $X \in \ho(\Mod^R_{/B})$ be a parametrized $R$--module over $B$.
Let 
\begin{equation}
\label{eq:abgumkehrcomparisontheta}
	\theta \colon p^\ast X \smashprod_E S^{-\tau_p} \longoto X
\end{equation}
be the cartesian morphism of $(\hpMod^R)^\dfop$ covering $p$ afforded
by Theorem~\ref{thm:thetapc}. Then the umkehr map 
\[
	(p,\theta)^\leftarrow
	\colon
	H_\bullet(B;X) \longto H_\bullet(E;p^\ast X \smashprod_E S^{-\tau_p})
\]
agrees with the twisted Pontryagin--Thom transfer map of 
\cite[Prop.~4.18]{ABGparam}. Consequently, the map
\[
	((p,\theta)^\leftarrow)^\ast
	\colon	
	R^\ast H_\bullet(E;p^\ast X \smashprod_E S^{-\tau_p}) \longto R^\ast H_\bullet(B;X)
\]
agrees with the twisted umkehr map of 
\cite[Prop.~4.18]{ABGparam}.
\end{thrm}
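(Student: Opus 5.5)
The plan is to unwind the definitions on both sides and reduce the comparison to the untwisted Pontryagin--Thom collapse, that is, to the case $X = S_B$, using the multiplicativity of $H_\bullet$.

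\emph{Step 1: identify $\theta$.} By Theorem~\ref{thm:thetapc}(\ref{it:phioslashthetap}) and (\ref{it:thetapformanifoldbundlesc}), applied with $L = R\smashprod(-)\colon \Spectra\to\Mod^R$, the morphism \eqref{eq:abgumkehrcomparisontheta} may be taken to be $\phi\oslash L_\fw(\theta_p^\Spectra)$. Here $\phi\colon p^\ast X\to X$ is the canonical cartesian morphism, and $\theta_p^\Spectra$ is the fibrewise Pontryagin--Thom collapse. Cartesian morphisms with a fixed codomain are unique up to unique isomorphism over the identity. Any other choice of $\theta$ therefore differs from this one by an isomorphism of the domain, and the statement is to be read up to that identification.

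\emph{Step 2: compute $(p,\theta)^\leftarrow$.} Unwind the definition of the contravariant $H_\bullet$ on a zigzag $X\xto{\alpha}X'\xot{\beta}Y$ representing a morphism of $\hpC^\dfop$. It is the composite of $H_\bullet(\beta)$ with the inverse of the identification $H_\bullet(A;X)\homot H_\bullet(B;p_!X)$ coming from the opcartesian morphism $\alpha$. For $\theta$ as in Step 1 the zigzag reads
\[
	p^\ast X\smashprod_E S^{-\tau_p}\longto p_!(p^\ast X\smashprod_E S^{-\tau_p}) \longot X ,
\]
where the backward arrow is $X\homot X\smashprod_B S_B\to X\smashprod_B p_!(S^{-\tau_p})\homot p_!(p^\ast X\smashprod_E S^{-\tau_p})$. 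This composite is built from the fibrewise collapse $S_B\to p_!S^{-\tau_p}$ tensored with $X$, followed by the projection formula. To justify this, I will check that the $\oslash$-product is computed on representing zigzags by exactly this tensoring-plus-projection-formula recipe, which should follow directly from the construction of $\oslash$.

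\emph{Step 3: compare with \cite[Prop.~4.18]{ABGparam}.} There, the twisted transfer is built the same way: the (parametrized) Pontryagin--Thom map $B\to E^{-\tau_p}$ over $B$, smashed with the twist $X$, and then the Thom spectrum functor applied. The ABG Thom spectrum functor is identified with $H_\bullet$ (the left adjoint $r_!$ to pullback along $B\to\pt$), compatibly with $p_!$ and with the projection formula. Granting this, the two maps agree after checking that the two fibrewise collapse maps coincide. Both are the fibrewise Pontryagin--Thom collapse for an embedding $E\subset B\times\R^N$ over $B$, desuspended. The cohomological statement then follows by applying $R^\ast$ and functoriality.

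\emph{Main obstacle.} I expect the hardest part to be matching models, not the formal bookkeeping. The ABG construction is phrased $\infty$-categorically via $\Pic(R)$-valued twists and Thom spectra, while $H_\bullet$ and $\theta_p^\Spectra$ are defined via $\hpC$. I will first establish that the identification of $H_\bullet$ with the ABG Thom spectrum functor, stated for objects in the introduction, is natural with respect to $p_!$ and compatible with the projection-formula equivalence. Once that naturality is in place, the remaining check reduces to the untwisted case $X=S_B$, which is the classical Pontryagin--Thom construction in both frameworks.
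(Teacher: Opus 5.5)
Your proposal is correct and is essentially the paper's argument. The paper also identifies ABG's twisted transfer with the umkehr map of $\phi\oslash(R\smashprod_\fw\xi)$, where $\xi$ is the morphism determined by $\mathrm{PT}_{/B}(p)$, by comparing definitions, and concludes by uniqueness of cartesian morphisms. The only difference is that it checks cartesianness directly via Theorem~\ref{thm:abgpthc2} and Proposition~\ref{prop:hypercartmorpreservation}, rather than citing Theorem~\ref{thm:thetapc}(\ref{it:phioslashthetap}) and~(\ref{it:thetapformanifoldbundlesc}), whose proofs rest on exactly those facts.

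The model-matching you single out as the main obstacle is not really an issue. The paper is built on ABG's foundations, so $H_\bullet=r_!$ already is their Thom spectrum functor, and in the proof of Theorem~\ref{thm:thetapc}(\ref{it:thetapformanifoldbundles}) the map $\theta_p^{\Spectra}$ is precisely the morphism determined by $\mathrm{PT}_{/B}(p)$.
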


Recall that the map $\kappa$ of diagram \eqref{diag:kappadef}
is cartesian in $\hpC^\dfop$.
The following theorem shows that the map $\kappa_X$ 
of equation \eqref{eq:kappax}
frequently also is. 
We will prove
the result in Section~\ref{subsubsec:proofsofresultsfromintro}.
\begin{thrm}
\label{thm:kappaxcartcritc}
The map $\kappa_X$ of equation \eqref{eq:kappax} is cartesian 
if either of the following conditions holds:
\begin{enumerate}
\item \label{it:xdualizable}
	$X$ is dualizable as an object of the symmetric monoidal category 
	$(\ho(\calC_{/E_2}),\tensor_{E_2})$.
\item \label{it:xpulledbackfromb}
	$X$ is pulled back along $p_2$ from an object of $\ho(\calC_{/B})$,
	that is, there exists a cartesian morphism $X \to Y$ of $\hpC$
	covering $p_2$.
\end{enumerate}
\end{thrm}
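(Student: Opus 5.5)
We want to show that $\kappa_X=\phi\oslash\kappa$ is cartesian in $\hpC^\dfop$, where $\phi\colon f^\ast X\to X$ is the canonical cartesian morphism of $\hpC$ covering $f$. The plan is to reduce cartesianness to an explicit condition on the underlying zigzag. Recall that a morphism $\theta\colon Z\oto W$ of $\hpC^\dfop$ covering $g\colon A\to C$ is represented by a zigzag $Z\to g_!Z\leftarrow W$. I would first record the criterion, presumably established where $\oslash$ and cartesian morphisms of $\hpC^\dfop$ are studied: $\theta$ is cartesian if and only if the adjoint map $Z\to g^\ast W$ exhibits, for every $V\in\ho(\calC_{/A})$, a natural bijection
\[
\Hom_A(V,Z)\isom\Hom_C(g_!V,W)\ \text{(contravariantly)}.
\]
Equivalently, $\theta$ corepresents the functor $V\mapsto\Hom_{C}(W, g_!V)$ in a suitable opposite sense. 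Concretely, $\theta$ is cartesian exactly when the induced natural transformation
\[
\Hom_{\ho(\calC_{/C})}(W,g_!(-))\longto\Hom_{\ho(\calC_{/A})}(Z,-)
\]
is an isomorphism. With this criterion, $\kappa$ being cartesian says that $\Hom(\omega_{p_2},f_!(-))\isom\Hom(\omega_{p_1},-)$.

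For condition (\ref{it:xdualizable}), let $DX$ be the dual of $X$. Then $f^\ast X$ is dualizable with dual $f^\ast DX$, since $f^\ast$ is strong monoidal. For any $V$ we compute
\[
\Hom(X\tensor\omega_{p_2},f_!V)\isom\Hom(\omega_{p_2},DX\tensor f_!V)\isom\Hom(\omega_{p_2},f_!(f^\ast DX\tensor V)).
\]
The second step uses the projection formula $DX\tensor f_!V\homot f_!(f^\ast DX\tensor V)$. Applying cartesianness of $\kappa$ turns the right-hand side into
\[
\Hom(\omega_{p_1},f^\ast DX\tensor V)\isom\Hom(f^\ast X\tensor\omega_{p_1},V).
\]
The main check is that this chain of isomorphisms coincides with the map induced by $\phi\oslash\kappa$. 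I would verify this by unwinding the definition of $\oslash$, which should be built from exactly the projection-formula map $f_!(f^\ast X\tensor Z)\to X\tensor f_!Z$.

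For condition (\ref{it:xpulledbackfromb}), write $X\homot p_2^\ast Y$. Then $\kappa_X$ factors as follows. By Theorem~\ref{thm:thetapc}(\ref{it:phioslashthetap}), both
\[
\psi_2\oslash\theta_{p_2}\colon p_2^\ast Y\tensor\omega_{p_2}\oto Y
\qquad\text{and}\qquad
\psi_1\oslash\theta_{p_1}\colon p_1^\ast Y\tensor\omega_{p_1}\oto Y
\]
are cartesian, where the $\psi_i$ are the canonical cartesian morphisms covering $p_i$. Using functoriality of $\oslash$, I would establish the interchange law
\[
(\psi_2\circ\phi)\oslash(\theta_{p_2}\circ\kappa)=(\psi_2\oslash\theta_{p_2})\circ(\phi\oslash\kappa).
\]
Here $\psi_2\circ\phi$ is the cartesian morphism $\psi_1$, and $\theta_{p_2}\circ\kappa=\theta_{p_1}$. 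Thus the composite $(\psi_2\oslash\theta_{p_2})\circ\kappa_X$ equals $\psi_1\oslash\theta_{p_1}$, and both it and $\psi_2\oslash\theta_{p_2}$ are cartesian. The standard cancellation property of cartesian morphisms then shows that $\kappa_X$ is cartesian.

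I expect the main obstacle to be in case (\ref{it:xdualizable}): identifying the abstract composite of adjunction and projection-formula isomorphisms with the map induced by $\phi\oslash\kappa$. Case (\ref{it:xpulledbackfromb}) only needs the compatibility of $\oslash$ with composition, which should be a formal coherence check.
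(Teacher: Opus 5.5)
Your proposal is correct and is essentially the paper's proof. Case (1) reproduces the argument of Proposition~\ref{prop:cartoslashproperty}, which the paper simply cites and which includes the same ``tedious inspection'' identifying the composite bijection. Case (2) is the same factorization $(\psi\oslash\theta_{p_2})\circ\kappa_X=\psi\phi\oslash\theta_{p_1}$ followed by Proposition~\ref{prop:cartmorprops}(\ref{it:cartfactor}).

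Only your preliminary formulation of the criterion is off: a bijection $\Hom_A(V,Z)\isom\Hom_C(g_!V,W)$ via a map $Z\to g^\ast W$ is not equivalent to the criterion you actually use. The correct criterion is Proposition~\ref{prop:dfopcartinterpretation}: $\beta\mapsto g_!\beta\circ\eta$ is a bijection $\Hom(Z,V)\isom\Hom(W,g_!V)$, where $\eta\colon W\to g_!Z$ is the vertical morphism determined by $\theta$.
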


In the case where $\omega_{p_2}$ is invertible,
we have the following complement to Theorem~\ref{thm:kappaxcartcritc}.
It too will be proven in
Section~\ref{subsubsec:proofsofresultsfromintro}.
\begin{thrm}
\label{thm:tildekappaxcart}
Suppose $\omega_{p_2}$ is invertible. Then for any $X \in \ho(\calC_{/E_2})$,
the map $\tilde{\kappa}_X$ of equation \eqref{eq:tildekappax}
is cartesian if and only if the map $\kappa_X$ of equation \eqref{eq:kappax} is.
\end{thrm}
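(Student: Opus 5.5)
The plan is to realize $\tilde{\kappa}_X$ as $\kappa_{X'}$ for $X' = X \tensor_{E_2} \omega_{p_2}^{-1}$, followed by an isomorphism in the fibre over $E_2$. Cartesianness is invariant under composition with isomorphisms, and $X \mapsto X\tensor_{E_2}\omega_{p_2}^{-1}$ is an autoequivalence of $\ho(\calC_{/E_2})$. So the question becomes whether $\kappa_{X'}$ is cartesian exactly when $\kappa_X$ is, as $X$ ranges over all objects.

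First I unwind the definition. We have $\tilde\kappa_X = \kappa_{X'}$ composed with the isomorphism $X' \tensor_{E_2}\omega_{p_2} \homot X$ in the fibre, which is a morphism of $\hpC^\dfop$ covering the identity. Hence $\tilde\kappa_X$ is cartesian iff $\kappa_{X'}$ is. I will also record the identification $f^\ast X' \tensor_{E_1}\omega_{p_1} \homot f^\ast X\tensor_{E_1} f^\ast\omega_{p_2}^{-1}\tensor_{E_1}\omega_{p_1}$, using that $f^\ast$ is symmetric monoidal.

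The key step is to relate $\kappa_{X'}$ to $\kappa_X$ through the $\oslash$-pairing. I expect $\oslash$ to be associative up to coherent isomorphism: for cartesian $\phi,\phi'$ of $\hpC$ covering $f$ and $\theta$ in $\hpC^\dfop$, one should have $(\phi\tensor\phi')\oslash\theta \homot \phi\oslash(\phi'\oslash\theta)$. This is the property I would establish or cite. It gives
\[
	\kappa_{X'} \homot \phi_X \oslash \kappa_{\omega_{p_2}^{-1}}, \qquad \kappa_X \homot \phi_{X'}\oslash \kappa_{\omega_{p_2}},
\]
where $\phi_X\colon f^\ast X\to X$ is the canonical cartesian morphism, and using $X \homot X'\tensor_{E_2}\omega_{p_2}$.

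The main obstacle is showing that the operation $\phi_V\oslash(-)$ with $V$ invertible preserves and reflects cartesianness. I would argue that $\phi_V\oslash(-)$ and $\phi_{V^{-1}}\oslash(-)$ are mutually inverse up to the isomorphisms $\phi_{V^{-1}}\tensor\phi_V\homot\phi_{S}$ and $\phi_{S}\oslash\theta\homot\theta$; the latter uses unitality of $\oslash$. Together with the fact that these operations are compatible with composition over identities, this makes $\phi_V\oslash(-)$ a bijection on morphisms of $\hpC^\dfop$ covering $f$ with given twists. Then I would check the cartesian universal property directly. A lifting problem against $\phi_V\oslash\theta$ transports, via $\phi_{V^{-1}}\oslash(-)$ applied to the composite morphisms over $g$ and $fg$, to a lifting problem against $\theta$, and uniqueness transports likewise. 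Here I need $\oslash$ to be functorial in composable pairs, which I expect the paper establishes where $\oslash$ is defined.

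Finally, $\kappa$ itself is cartesian, so $\kappa_{\omega_{p_2}^{\pm1}}$ are cartesian by Theorem~\ref{thm:kappaxcartcritc}(\ref{it:xdualizable}), since invertible objects are dualizable. Applying the invertible-twist equivalence with $V=\omega_{p_2}^{-1}$ then turns the statement "$\kappa_X$ cartesian" into "$\kappa_{X'}$ cartesian". Combined with the first step, this gives the equivalence.
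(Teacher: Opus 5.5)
Your argument is correct, and it rests on the same identification the paper uses: up to vertical isomorphisms, $\tilde\kappa_X \isom \psi_-\oslash\kappa_X$ and $\kappa_X\isom\psi_+\oslash\tilde\kappa_X$, where $\psi_\pm\colon f^\ast\omega_{p_2}^{\pm1}\to\omega_{p_2}^{\pm1}$ are the canonical cartesian morphisms. This follows from the associativity isomorphism in \eqref{eq:oslashassocunit} applied to $\kappa_X=\phi_X\oslash\kappa$.

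Two small misdirections. First, the identities you display, $\kappa_{X'}\homot\phi_X\oslash\kappa_{\omega_{p_2}^{-1}}$ and $\kappa_X\homot\phi_{X'}\oslash\kappa_{\omega_{p_2}}$, are not the ones your last sentence uses. What that sentence needs is $\kappa_{X'}\homot\phi_{\omega_{p_2}^{-1}}\oslash\kappa_X$. Second, the cartesianness of $\kappa_{\omega_{p_2}^{\pm1}}$ via Theorem~\ref{thm:kappaxcartcritc} plays no role in the argument.

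Where you genuinely diverge from the paper is the key lemma. You prove that $\phi_V\oslash(-)$ preserves and reflects cartesian morphisms for invertible $V$ by transporting lifting problems along $\phi_{V^{-1}}\oslash(-)$. This uses only the module structure of $\hpC^\dfop$ over $\hpC^\cart$: functoriality of $\oslash$ in composable pairs, and naturality and coherence of the associativity and unit isomorphisms.

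The paper instead invokes Proposition~\ref{prop:cartoslashproperty}, which is already established. It shows preservation for any cartesian $\phi$ with dualizable target, via a Hom-set computation combining the duality, the projection formula and the characterization of Proposition~\ref{prop:dfopcartinterpretation}. Applying it once with $\omega_{p_2}^{-1}$ and once with $\omega_{p_2}$ gives both implications, so reflection never has to be proved separately.

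Your transport argument is more formal and would work wherever such a module structure exists. However, it applies only to invertible twists, and it needs some coherence bookkeeping to get strict equalities in the lifting problem. The paper's route is shorter here because the needed lemma is already available and covers merely dualizable twists.
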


The paper is structured as follows. In Section~\ref{sec:parametrized-homotopy-theory},
we recall basic material from parametrized homotopy theory and interpret it 
in terms of fibred category theory. The main goal of the section 
is to set up the categories $\hpC$ fibred and opfibred over the
category of topological spaces.  In Section~\ref{sec:hbullet},
we introduce the homological functor $H_\bullet \colon \hpC \to \ho(\calC)$.
The technical heart of the paper is Section~\ref{sec:umkehrmaps},
where we define the category $\hpC^\dfop$ and the functor
$H_\bullet \colon \hpC^\dfop \to \ho(\calC)$; study cartesian morphisms in 
$\hpC^\dfop$, singling out for special scrutiny two classes of 
cartesian morphisms we term supercartesian and hypercartesian; 
and relate the existence of hypercartesian morphisms to Costenoble--Waner duality.
In Section~\ref{sec:cwdualizabilityofsemisimplegrps}, we prove
the Costenoble--Waner dualizability of the based loop spaces of semisimple 
$\ell$--compact groups with respect to 
the $\infty$--category of $\ell$--complete spectra.
In Section~\ref{sec:serress}, we construct the Serre spectral sequence
for parametrized $R$--modules where $R$ is a commutative ring spectrum.
Finally, in Section~\ref{sec:comparisonofumkehrmaps}, we relate the umkehr maps 
constructed in this paper to various  umkehr maps appearing 
in the existing literature. The paper concludes with two appendices.
In Appendix~\ref{app:exbc}, we construct the framed bicategories
$\Ex_B(\calC)$ which in Section~\ref{sec:umkehrmaps} play a key role in 
relating the existence of hypercartesian morphisms to Costenoble--Waner duality,
while Appendix~\ref{app:dualizability} contains a short discussion of 
duality in bicategories.

\subsection*{Notations and conventions}
We denote by $\calT$ the category of compactly generated weak Hausdorff spaces.
Whenever working with topological spaces, we work within $\calT$.

\subsection*{Acknowledgements}
The author would like to thank 
his coauthor on \cite{stringtop}, Jesper Grodal, 
for his patience and helpful discussions; 
John Lind for fruitful conversations 
on the bicategory $\Ex(\Spectra)$;
and Dustin Clausen for sharing insights 
which helped resolve the question of whether 
$(S^1)\lcom \homot \loops (BS^1)\lcom$
is Costenoble--Waner dualizable in $\Spectra^\ell$.
See Remark~\ref{rk:s1lcomnotcwdualizable}.

\section{Parametrized homotopy theory and fibred category theory}
\label{sec:parametrized-homotopy-theory}

Our aim in this section is to recall basic parametrized homotopy theory
needed in the paper, and to reinterpret the material in the language of
fibred category theory. In particular, we will construct the 
categories $\hpC$ fibred and opfibred over the category of topological spaces
serving as the domains for the functors $H_\bullet \colon \hpC \to \ho(\calC)$.
Throughout the section, $\calC$ will denote 
a presentable symmetric monoidal $\infty$--category
with tensor product $\tensor$.
Examples of interest for us include:
\begin{enumerate}
\item the $\infty$--category $\Spaces$ of spaces equipped with 
	the cartesian product $\times$ of spaces;
\item the $\infty$--category $\PointedSpaces$ of pointed spaces
	equipped with the smash product $\smashprod$ of such; 	
\item the $\infty$--category $\Spectra$ of spectra equipped with 
	the smash product $\smashprod$ of spectra;
\item for $\ell$ a prime, the $\infty$--category $\Spectra^\ell$ of 
	$\ell$--complete spectra and the smash product $\smashprod^\ell$
	of such; and
\item for a commutative ring spectrum $R$, the $\infty$--category
	$\Mod^R$ of $R$--modules equipped with the smash product $\smashprod^R$ of such.
\end{enumerate}
For our discussion, we will rely on the $\infty$--categorical 
foundations for parametrized homotopy theory developed by
Ando, Blumberg, Gepner, Hopkins and Rezk \cite{ABGHR-infinity,ABGparam}.
For more information about parametrized homotopy theory, 
see also May and Sigurdsson's monograph \cite{MaySigurdsson}.
In this paper, by an $\infty$--category we will mean a quasicategory,
so that $\infty$--categories are particular kinds of simplicial sets.
We refer the reader to \cite{HTT}, \cite{kerodon} and \cite{HA}
for further information on this view on $\infty$--categories.

The section is structured as follows. In Section~\ref{subsec:paramhtpybasics2},
we recall basic parametrized homotopy theory phrased in terms of 
individual homotopy categories $\ho(\calC_{/B})$ of parametrized $\calC$--objects
over a space $B$ and base change functors between them.
In Section~\ref{subsec:fibredcats}, we continue by recalling fibred category 
theory, including cartesian and opcartesian morphisms,
the Grothendieck construction, internal and external
tensor products, and the fibrewise opposite of a fibred category.
In Section~\ref{subsec:hpcffw}, we construct the fibred categories
$\hpC$ and use these categories to interpret the parametrized homotopy of 
Section~\ref{subsec:paramhtpybasics2} in terms of 
fibred category theory.
In Section~\ref{subsec:pointsetmodelsforhpspaces}, we 
relate the abstractly defined category $\hpSpaces$ to more concrete
categories defined in terms of point-set level data
and show how parametrized spaces in the sense of
continuous maps give rise to objects in the categories $\hpC$.
Although for most of our purposes it suffices to work with the 
ordinary categories $\hpC$, that is not quite true for the construction 
of the Serre spectral sequences in Section~\ref{sec:serress}.
Therefore in Section~\ref{subsec:pc}, we refine the fibred and 
opfibred category $\hpC \to \calT$ into a cartesian and cocartesian fibration
of $\infty$--categories $\pC \to N\calT$. 
Finally, in Section~\ref{subsec:cosheafpc} we prove a result,
Theorem~\ref{thm:cosheafpc},
showing that objects of $\pC$ covering a space $B$
can be recovered as the $\infty$--categorical 
colimits of their restrictions to the members of
an open cover of $B$.
This result, through Theorem~\ref{thm:cosheaf}, 
plays a role in establishing the excision property for the 
functor~$H_\bullet$.

\subsection{Parametrized homotopy theory basics}
\label{subsec:paramhtpybasics2}

In this subsection, we will introduce the homotopy categories
$\ho(\calC_{/B})$
of parametrized $\calC$--objects over $B$,
the base change functors $f_!$, $f^\ast$ and $f_\ast$ connecting them,
and functors $T_B \colon \ho(\calC_{/B}) \to  \ho(\calD_{/B})$
induced by an $\infty$--functor $T \colon \calC\to \calD$.

\subsubsection{Parametrized \texorpdfstring{$\calC$}{C}--objects and 
the categories \texorpdfstring{$\ho(\calC_{/B})$}{Ho(C\_/B)}}
\label{subsubsec:hocoverb}

Given a space $B\in\calT$, a \emph{parametrized $\calC$--object over $B$} 
is an $\infty$--functor 
\[
	X\co (\Pi_\infty B)^\op \longto \calC
\]
where $\Pi_\infty B$ is the $\infty$--groupoid determined by $B$;
explicitly, $\Pi_\infty B = \Sing_\bullet B$,
the singular simplicial set of $B$.
The value  $X_b$  on the object of $(\Pi_\infty B)^\op$
given by a point $b\in B$ is called the \emph{fibre} 
of $X$ over $b$. We write 
\[
	\calC_{/B} = \Fun((\Pi_\infty B)^\op,\calC)
\]
for the $\infty$--category of such $\infty$--functors, 
and note that we can identify $\calC_{/\pt}$ with $\calC$.
Given an object $X\in \calC$, we sometimes write $\underline{X}$
for the  parametrized $\calC$--object over a space $B$ given by the constant
functor $\Pi_\infty(B)^\op \to \calC$ defined by $X$. 
Which space $B$ the object $\underline{X}$ lives over is
determined by context.

As for most of our purposes it is sufficient
to work on the level of homotopy categories, 
for simplicity we will
focus our discussion on the homotopy categories
$\ho(\calC_{/B})$, which are ordinary rather 
than $\infty$--categories, although most of the structure
discussed lifts to the level $\infty$--categories.
The symmetric monoidal structure on $\calC$ induces 
a closed symmetric monoidal structure on 
$\ho(\calC_{/B})$, and we write $\tensor_B$ for the 
tensor product, $S_B$ for the identity object, 
and $F_B(-,-)$ for the internal hom in $\ho(\calC_{/B})$,
where $\tensor$ is the symbol used for the symmetric monoidal
product in $\calC$.
Thus we have, in particular, tensor products
$\times_B$ on $\ho(\Spaces_{/B})$,
$\smashprod_B$ on $\ho(\Spectra_{/B})$,
$\smashprod^\ell_B$ on $\ho(\Spectra^\ell_{/B})$,
and
$\smashprod^R_B$ on $\ho(\Mod^R_{/B})$
when $R$ is a commutative ring spectrum.
In some cases, we will often employ a more specific notation:
\begin{enumerate}
\item When $\calC = \Spectra^\ell$, we write 
	$S_{B,\ell}$ for $S_B$.
\item When $\calC = \Mod^R$ for a commutative ring spectrum $R$,
	we write  $R_B$ for $S_B$ and $F^R_B(-,-)$ for $F_B(-,-)$.
\item When $\calC = \Spaces$, we will usually write $B$ or $(B,\id)$
	for the object $S_B$.
\end{enumerate}
We note that $S_B = \underline{S}$ for $S$ the identity object of $\calC$.

\subsubsection{The base change functors \texorpdfstring{$f_!$}{f\_!}, \texorpdfstring{$f^\ast$}{f\textasciicircum *}, and \texorpdfstring{$f_\ast$}{f\_*}}
\label{subsubsec:basechangefunctors}

A key feature of the parametrized theory are the various
base change functors relating the categories $\ho(\calC_{/B})$
for varying base spaces $B$. 
Given a map of spaces $f\co A\to B$,
precomposition with $(\Pi_\infty f)^\op$ induces a 
functor 
\[
	f^\ast \co \ho(\calC_{/A}) \longto \ho(\calC_{/B})
\]
which has both a left adjoint $f_!$ and a right adjoint $f_\ast$,
and which is an equivalence of categories 
if $f$ is a weak equivalence. In the case
where $\calC$ is symmetric monoidal, the functor $f^\ast$
is a closed symmetric monoidal functor, giving together with
its adjoints rise to 
a ``Wirthmüller context'' \cite{FauskHuMay}.
A number of natural equivalences then relate the base change
functors and the closed symmetric monoidal structures on 
$\ho(\calC_{/A})$ and $\ho(\calC_{/B})$: for 
$X\in \ho(\calC_{/A})$ and $Y,Z\in \ho(\calC_{/B})$, we have
\begin{align}
    \label{eq:bc-unit}
    f^\ast S_B &\homot S_A\\
    \label{eq:bc-product}
    f^\ast(Y\tensor_B Z) &\homot f^\ast Y\tensor_A f^\ast Z\\
    \label{eq:bc-staradj}
    F_B(Y,f_\ast X) &\homot f_\ast F_A(f^\ast Y,X) \\
    \label{eq:bc-homs}
    f^\ast F_B(Y,Z) &\homot F_A(f^\ast Y, f^\ast Z)\\
    \label{eq:bc-projformula}
    f_! (f^\ast Y\tensor_A X) &\homot Y \tensor_B f_! X\\    
    \label{eq:bc-shriekadj}
    F_B(f_! X, Y) &\homot f_\ast F_A(X,f^\ast Y).
\end{align}
where we have written $S_B$ for the 
unit object and $F_B(-,-)$ for the internal hom 
in $\ho(\calC_{/B})$.
See \cite[Prop.~6.8]{ABGparam}.
Equivalence \eqref{eq:bc-projformula}
is called the \emph{projection formula}. 
Equivalences 
\eqref{eq:bc-unit}
and
\eqref{eq:bc-product}
are part of the data that make $f^\ast$ into a symmetric monoidal functor,
and \eqref{eq:bc-staradj} is obtained from \eqref{eq:bc-product}
as explained in \cite[\S3]{FauskHuMay}. Equivalence~\eqref{eq:bc-homs}
is given by the adjoint of the composite
$f^\ast F_B(Y,Z)\tensor_B f^\ast Y 
\xto{\ \homot\ }
f^\ast (F_B(Y,Z)\tensor_A Y)
\to
f^\ast Z$,
where the first map is an instance  of \eqref{eq:bc-product}
and the second is induced by the evaluation map;
that this adjoint is an equivalence is part of the assertion that $f^\ast$
is a closed symmetric monoidal functor. 
\eqref{eq:bc-projformula} and \eqref{eq:bc-shriekadj}
follow from \eqref{eq:bc-homs} in the manner explained in \cite{FauskHuMay}.
In particular, one can work out that the projection formula
\eqref{eq:bc-projformula}
is given by the composite
\begin{equation}
\label{eq:projformulaexpanded}
	f_! (f^\ast Y \tensor_A X)
	\longto
	f_! (f^\ast Y \tensor_A f^\ast f_! X)
	\xto{\ \homot\ }
	f_! f^\ast (Y \tensor_B f_! X)
	\longto
	Y \tensor_B f_! X
\end{equation}
where the first and last maps are
induced by the unit and the counit of the  of the $(f_!,f^\ast)$
adjunction, respectively,
and where the middle map is an instance of 
\eqref{eq:bc-product}; cf. \cite[Prop.~4.5]{FauskHuMay}.

The base change functors also satisfy certain commutation relations.
Call a commutative square in $\calT$ \emph{homotopy cartesian}
if it is a homotopy fibre square in the sense of 
\cite[Def.~13.3.12]{Hirschhorn} with respect to 
the Quillen model structure on $\calT$.
Given a homotopy cartesian square
\newcommand{\hcsquarecontent}{\xymatrix{
	C
	\ar[r]^{g}
	\ar[d]_{h}
	&
	D
	\ar[d]^{k}
	\\
	A
	\ar[r]^{f}
	&
	B}}
\begin{equation}
\label{eq:hcsquare}
	\vcenter{\hcsquarecontent}
\end{equation}
in $\calT$, the composites
\begin{equation}
\label{eq:shriekastcomm}
	h_! g^\ast
	\xto{\ 	\eta\ }
	h_! g^\ast k^\ast k_!
	\xto{\ \homot\ }
	h_! h^\ast f^\ast k_!
	\xto{\ \varepsilon\  }	
	f^\ast k_!	 
\end{equation}
and
\begin{equation}
\label{eq:astastcomm}
	k^\ast f_\ast
	\xto{\ \eta\ }
	k^\ast f_\ast h_\ast h^\ast	
	\xto{\ \homot\ }	
	k^\ast k_\ast g_\ast h^\ast	
	\xto{\ \varepsilon\ }
	g_\ast h^\ast
\end{equation}
induced by units and counits of adjunctions and the 
equivalences 
$g^\ast k^\ast \homot  (kg)^\ast = (fh)^\ast \homot h^\ast f^\ast$
and $f_\ast h_\ast \homot (fh)_\ast = (kg)_\ast \homot  k_\ast g_\ast$
give  natural equivalences
\begin{align}
	\label{eq:commrelshriek}
	h_! g^\ast
	& \xto{\ \homot\ }
	f^\ast k_!	
\intertext{and}
	\label{eq:commrelstar}
	k^\ast f_\ast
	& \xto{\ \homot\ }
	g_\ast h^\ast
\end{align}
as follows from \cite[Cor.~5.12]{ABGparam}.
Equivalences between parametrized objects are detected on fibres:
a map $\phi \co X \to Y$ in $\ho(\calC_{/B})$ is an equivalence
if and only if the map $\phi_b \co X_b \to Y_b$
is for every $b\in B$, where we have written 
$\phi_b = i_b^\ast (\varphi)$, $X_b = i_b^\ast X$ and $Y_b = i_b^\ast Y$
for  $i_b \co \{b\} \incl B$ the inclusion.

\subsubsection{The functors \texorpdfstring{$T_B$}{T\_B}}
\label{subsubsec:tb}

Constructions for unparametrized objects generalize to parametrized
objects by performing them fibrewise. Formally, 
an $\infty$--functor $T\co \calC\to \calD$ induces, 
via postcomposition, functors
\[
	T_B \co \ho(\calC_{/B}) \longto \ho(\calD_{/B})
\]
commuting with the pullback functors $f^\ast$.
In particular, we have
\begin{equation}
\label{eq:tbonfibres}
	T_B(X)_b = T(X_b)
\end{equation}
for all $X \in \ho(\calC_{/B})$ and $b\in B$.
If $T$ is symmetric monoidal,
so are the functors $T_B$. Moreover,
an adjoint pair of $\infty$--functors
$T\co \calC \longrightleftarrows \calD \co V$
induces an adjoint pair of functors
\[
	T_B \co \ho(\calC_{/B}) \longrightleftarrows \ho(\calD_{/B}) \co V_B.
\]

\begin{example}
From the adjunction
\[
	\suspension_+^\infty \co \Spaces 
	\longrightleftarrows 
	\Spectra \co \loops^\infty
\]
we obtain an adjunction
\[
	\suspension_{+B}^\infty \co \ho(\Spaces_{/B})
	\longrightleftarrows 
	\ho(\Spectra_{/B}) \co \loops_B^\infty
\]
with $\suspension_{+B}^\infty$ symmetric monoidal.
\end{example}

\subsection{Fibred category theory}
\label{subsec:fibredcats}

It will be convenient to assemble the 
categories $\ho(\calC_{/B})$ for varying $B \in \calT$
into a single category $\hpC$ fibred over $\calT$.
In this subsection, we will review the fibred category theory
we need, referring the reader to
\cite[Exposé VI]{SGA1}, \cite[Chapter 8]{Borceux2}
and \cite{Shulman}
for more detailed discussions.

\subsubsection{Basic definitions}

\begin{defn}[Fibred categories]
\label{def:fibredcats}
Consider a functor $\Phi \colon \calE \to \calB$.
A morphism $\alpha\colon Y\to X$ is said to \emph{cover} a morphism
$f$ of $\calB$ if $\Phi(\alpha) = f$. 
A morphism covering an identity map is called \emph{vertical}.
The \emph{fibre} of $\Phi$ over an object $B \in \calB$
is the subcategory $\calE_B$ of $\calE$ 
consisting of the objects mapping to $B$ under $\Phi$ and
the morphisms of $\calE$ covering the identity map of $B$.

A morphism $\alpha\colon Y\to X$ in $\calE$ 
covering a morphism $f$ in $\calB$ is called \emph{cartesian}
if for all morphisms $\beta \colon Z \to X$ in $\calE$
and $g \colon \Phi(Z) \to \Phi(Y)$ in $\calB$
such that 
$\Phi(\beta) = f \circ g$, there exists a unique morphism
$\gamma \colon Z \to Y$ such that $\beta = \alpha\circ \gamma$ and 
$\Phi(\gamma) = g$:
\[\xymatrix@C-1em@R-1em{
	Z
	\ar@{|->}[dd]
	\ar@{-->}[dr]^{\exists! \gamma}
	\ar@/^1em/[drrr]^\beta
	\\
	&
	Y
	\ar@{|->}[dd]
	\ar[rr]^\alpha
	&&
	X
	\ar@{|->}[dd]
	\\
	\Phi(Z)
	\ar[dr]^{g}
	\ar@/^1em/[drrr]^{f\circ g }|!{[ur];[dr]}{\hole}
	\\
	&
	\Phi(Y)
	\ar[rr]^{f}
	&&
	\Phi(X)
}\]
The functor $\Phi$ is called a \emph{fibration over} $\calB$
and the category $\calE$ a \emph{category fibred over} $\calB$
if for every 
morphism $f \colon C \to B$ of $\calE$ and every 
object $X$ of $\calE$ such that $\Phi(X) = B$, there exists
a cartesian morphism $\alpha \colon Y \to X$ covering $f$.
A \emph{cleavage} on $\Phi$ is the choice for every morphism
$f \colon C\to B$ of $\calE$ and object $X$ in $\calE_B$
of a cartesian morphism $Y\to X$ covering $f$.

Dually, a morphism $\alpha$ in $\calE$ is called \emph{opcartesian}
if it is a cartesian morphism for  the functor
$\Phi \colon \calE^\op \to \calB^\op$,
and the functor $\Phi$ is called an \emph{opfibration} if 
$\Phi \colon \calE^\op \to \calB^\op$ is a fibration.
An \emph{opcleavage} on $\Phi\colon \calE \to \calB$
is a cleavage on $\Phi\colon \calE^\op \to \calB^\op$.
A functor $\Phi \colon \calE \to \calB$ which is 
both a fibration and an opfibration is called a \emph{bifibration}.
\end{defn}

We list useful basic facts about cartesian morphisms.

\begin{prop}[Properties of cartesian morphisms]
\label{prop:cartmorprops}
Suppose $\Phi \colon \calE \to \calB$ is a functor.
\begin{enumerate}[(i)]
\item\label{it:compiscart}
	The composite of cartesian morphisms of $\calE$ is cartesian.
\item\label{it:isosarecart}
	All isomorphisms in $\calE$ are cartesian.
\item\label{it:cartfactor}
	If $\phi$ and $\psi$ are composable morphisms of $\calE$
	such that $\phi$ and $\phi\circ \psi$ are cartesian, then
	$\psi$ is cartesian.
\item\label{it:cartoveriso}
	A cartesian morphism covering an isomorphism is an isomorphism.
\item\label{it:cartsourceiso}
	If $\phi_1\colon Y_1\to X$ and $\phi_2\colon Y_2 \to X$
	are two cartesian morphisms of $\calE$ covering the same 
	morphism of $\calB$, there exists a unique vertical
	isomorphism $\theta\colon Y_1 \to Y_2$
	such that $\phi_1 = \phi_2\circ \theta$.
	\qed
\end{enumerate}
\end{prop}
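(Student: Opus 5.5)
Each of the five items follows directly from the universal property in Definition~\ref{def:fibredcats}. The plan is to verify them in the order listed, using existence of factorizations for existence claims and uniqueness of factorizations for uniqueness claims.

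For (\ref{it:compiscart}), let $\alpha\colon Y\to X$ cover $f$ and $\alpha'\colon W\to Y$ cover $f'$. Suppose $\beta\colon Z\to X$ is given with $\Phi(\beta)=f f' g$. We first factor $\beta$ through $\alpha$ via the morphism $f'g$, obtaining a unique $\gamma\colon Z\to Y$. We then factor $\gamma$ through $\alpha'$ via $g$, obtaining $\delta\colon Z\to W$. Uniqueness of $\delta$ comes from applying the uniqueness clauses in the same two steps.

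For (\ref{it:isosarecart}), given an isomorphism $\alpha$, the required factorization of $\beta$ is $\alpha^{-1}\beta$. Its image under $\Phi$ is $\Phi(\alpha)^{-1}f g=g$. Uniqueness holds because $\alpha$ is a monomorphism.

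For (\ref{it:cartfactor}), let $\phi$ cover $f$ and $\psi$ cover $h$, and let $\beta$ be given with $\Phi(\beta)=hg$.
\begin{itemize}
\item[(a)] Factor $\phi\beta$ through the cartesian morphism $\phi\psi$ via $g$, obtaining $\gamma$ with $\phi\psi\gamma=\phi\beta$.
\item[(b)] Both $\psi\gamma$ and $\beta$ are factorizations of $\phi\beta$ through $\phi$ covering $hg$. Uniqueness for $\phi$ therefore gives $\psi\gamma=\beta$.
\item[(c)] If $\gamma'$ also satisfies $\psi\gamma'=\beta$ and covers $g$, then $\phi\psi\gamma'=\phi\beta$. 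Uniqueness for $\phi\psi$ gives $\gamma'=\gamma$.
\end{itemize}
The only delicate point here is that uniqueness must be applied to two different cartesian morphisms, which (b) and (c) do in turn.

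For (\ref{it:cartoveriso}), let $\alpha\colon Y\to X$ be cartesian over an isomorphism $f$. Factor $\id_X$ through $\alpha$ via $f^{-1}$, obtaining $\gamma$ with $\alpha\gamma=\id_X$. Then $\gamma\alpha$ and $\id_Y$ both factor $\alpha$ through $\alpha$ over $\id$, so uniqueness gives $\gamma\alpha=\id_Y$.

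For (\ref{it:cartsourceiso}), factor $\phi_1$ through $\phi_2$ over the identity to obtain $\theta$, and factor $\phi_2$ through $\phi_1$ to obtain $\theta'$. Both $\theta'\theta$ and $\id$ factor $\phi_1$ through $\phi_1$ over the identity, so uniqueness gives $\theta'\theta=\id$; symmetrically $\theta\theta'=\id$. Uniqueness of $\theta$ itself is part of the universal property of $\phi_2$.

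No step is a genuine obstacle. The one place requiring care is the bookkeeping in (\ref{it:cartfactor}).
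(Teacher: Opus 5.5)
Your proposal is correct: each of the five items is a direct verification from the universal property in Definition~\ref{def:fibredcats}, and you apply the existence and uniqueness clauses exactly where they are needed, including the two separate uniqueness applications in part (iii). The paper gives no proof of this proposition and treats these as standard facts of fibred category theory; your argument is the routine verification it leaves implicit.
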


For a functor $\Phi \colon \calE \to \calB$, we write
$\calE^{\cart}$ (resp.\ $\calE^{\opcart}$) 
for the subcategory of $\calE$
consisting of all the objects of $\calE$ and 
all cartesian (resp.\ opcartesian) 
morphisms of $\calE$.

\begin{rem}[Fibrations and base change]
\label{rk:fibsandbasechange}
Fibrations provide a useful context for studying base change phenomena.
Suppose $\Phi \colon \calE \to \calB$ is a functor.
We think of a cartesian morphism $Y\to X$ in $\calE$ 
covering a morphism $f$ in $\calB$ as
exhibiting $Y$ as a base change of $X$ along $f$.
Notice that if $Y'\to X$ is another cartesian morphism in $\calE$
covering $f$, Proposition~\ref{prop:cartmorprops}(\ref{it:cartsourceiso})
ensures that $Y$ and $Y'$ are canonically isomorphic.
When $\Phi$ is a fibration, a choice of a cleavage on $\Phi$
yields for every morphism $f\colon A \to B$ in $\calB$
 a ``base change functor''
\begin{equation}
\label{eq:fibbasechange}
	f^\ast \colon \calE_B \longto \calE_A
\end{equation}
which on objects is defined by sending 
an object $X$ in $\calE_B$ to the source of the
chosen cartesian morphism covering $f$ with target $X$,
and which on morphisms is defined 
by the universal property of cartesian arrows.
The functor $f^\ast$ so constructed depends on the 
cleavage chosen, but any two choices of cleavage give functors which 
a canonically isomorphic. Also, we note that
while it is in general \emph{not} true that
$(gf)^\ast = f^\ast g^\ast$
for composable
maps $f$ and $g$, the two functors are nevertheless
always canonically isomorphic.

Dually, we think of an opcartesian morphism 
$X\to Y$ in $\calE$ covering $f$ as
exhibiting $Y$ as a cobase change of $X$ along $f$,
and, if $\Phi$ is an opfibration,
choosing  an opcleavage on $\Phi$ 
yields a ``cobase change functor''
\begin{equation}
\label{eq:fibcobasechange}
	f_! \colon \calE_A \longto \calE_B
\end{equation}
for every morphism $f\colon A\to B$ in $\calB$.
If $\Phi$ is a bifibration, the functor $f_!$ so constructed
is a left adjoint to $f^\ast$, as $\calE_B(f_! Y, X)$
and $\calE_C(Y,f^\ast X)$ are both in bijection
with the set of morphisms from $Y$ to $X$ covering $f$.
Conversely, it is 
easy to see that a fibration is a bifibration 
if all base change functors $f^\ast$ have a left adjoint.
\end{rem}

Later, in Section~\ref{subsec:hpcffw},
we will construct a bifibration $\hpC \to \calT$
whose fibre over $B \in \calT$ is $\ho(\calC_{/B})$
and for which the functors \eqref{eq:fibbasechange},
\eqref{eq:fibcobasechange} of the above discussion
recover the base change functors
of the same names discussed in 
Section~\ref{subsubsec:basechangefunctors}.

\subsubsection{The Grothendieck construction}

Let $\Cat$ be the $2$--category of categories.
A fibration $\Phi\colon \calE \to \calB$ 
turns out to be essentially the same
data as the pseudofunctor $\calB^\op \to \Cat$
sending each object $B$ to the fibre $\calE_B$
and each morphism $f\colon A \to B$
to the functor $f^\ast \colon \calE_B \to \calE_A$
obtained from a choice of cleavage on $\Phi$.
Our goal now is to explain the inverse 
construction producing a fibration $\calE \to \calB$ 
from a pseudofunctor $\calB^\op \to \Cat$.

\begin{defn}[{The $2$--category $\Fib_\calB$ of fibrations}]
\label{def:mortransfib}
Suppose $\Phi_1\colon \calE_1 \to \calB$ and
$\Phi_2\colon \calE_2 \to \calB$ 
are fibrations. A \emph{morphism of fibrations} over $\calB$ 
from $\Phi_1$ to $\Phi_2$
is a functor $F\colon \calE_1 \to \calE_2$
preserving cartesian morphisms and making the triangle
\[\xymatrix{
	\calE_1
	\ar[dr]_{\Phi_1}
	\ar[rr]^F
	&&
	\calE_2
	\ar[dl]^{\Phi_2}
	\\
	&
	\calB
}\]
commute. If $F,G \colon \Phi_1 \to \Phi_2$
are morphisms of fibrations over $\calB$,
a \emph{transformation of fibrations} over $\calB$
from $F$ to $G$ is 
natural transformation $\eta\colon F \to G$
such that $\Phi_2(\eta_X) = \id_{\Phi_1(X)}$ for all objects 
$X$ of $\calE_1$.
We write $\Fib_\calB$ for the  $2$--category
of fibrations over $\calB$ and morphisms and transformations of such.
\end{defn}

\begin{defn}[{The $2$--category $[\calB,\calS]$ of pseudofunctors}]
\label{def:pseudofunctoretc}
Let $\calB$ be an ordinary category, and let $\calS$ be a $2$--category,
such as the $2$--category $\Cat$ of categories, functors and natural
transformations, or the $2$--category $\smCat$ of symmetric 
monoidal categories, symmetric monoidal functors, and symmetric
monoidal transformations. 
Recall that a \emph{pseudo\-functor} $\calB^\op \to \calS$
is then a contravariant ``functor'' from $\calB$ to $\calS$
such that the identities $(g\circ f) ^\ast= f^\ast \circ g^\ast$
and $\id_B^\ast = \id_{F(B)}$ 
for induced maps are only required to hold up to 
coherent $2$--isomorphisms.
See e.g.\ \cite[\S 7.5]{Borceux1}.
Also recall that a \emph{pseudo natural transformation}
$\eta\co F \to G$ 
between pseudofunctors $F,G\co \calB^\op \to \calS$
is a ``natural transformation''
from $F$ to $G$ such that the identity 
$G(f)\circ \eta_C = \eta_B \circ F(f)$
for morphisms $f\co B \to C$ in $\calB$
is only required to hold up to a coherent $2$--isomorphism.
See e.g.\ \cite[\S 7.5]{Borceux1}. Finally,
recall that a \emph{modification} $\xi\co \eta\to \theta$
between pseudo natural transformations $\eta,\theta\co F\to G$
consists of a $2$--morphism $\xi_A \co \eta_A \to \theta_A$
for each object $A$ of $\calB$ satisfying a compatibility 
condition with the coherence isomorphisms of $\eta$ and $\theta$.
See e.g.\ \cite[Def.~A.8]{SchommerPriesThesis}.
We write $[\calB^\op,\calS]$ for the $2$--category of 
pseudofunctors, pseudo natural transformations and modifications
from $\calB^\op$ to $\calS$.
\end{defn}

\begin{defn}
\label{def:grothendieckconstr}
The \emph{Grothendieck construction} 
(see e.g.\ \cite[Thm.~8.3.1]{Borceux2})
is the $2$--functor 
\[
	\grothconstr\co [\calB^\op, \Cat] \longto  \Fib_\calB %
\]
constructed in the following way.
For a pseudofunctor $F\co \calB^\op \to \Cat$, 
the category $\grothconstr F$ over $\calB$ is constructed as follows:
\begin{itemize}
\item The objects of $\grothconstr F$
	are pairs $(B,X)$ where $B$ is an object of 
	$\calB$ and $X$ is an object of $F(B)$.
\item A morphism from $(B,X)$ to $(C,Y)$ in $\grothconstr F$ is a pair 
	$(f,\phi)$ where $f\co B \to C$ is a morphism in $\calB$ and
    $\phi\co X \to f^\ast(Y)$ is a morphism in $F(B)$.
\item The composite of two such morphisms
    $(f,\phi)\co (B,X) \to (C,Y)$ and
	$(g,\psi)\co (C,Y) \to (D,Z)$ is 
	the pair $(g\circ f,\psi * \phi)$ 
	where $\psi * \phi \co X \to F(g\circ f)(Z)$
    is the composite
    \[
    	X
    	\xto{\ \phi\ }
    	f^\ast(Y) 
    	\xto{\ f^\ast(\psi)\ }
    	f^\ast g^\ast(Z) 
    	\xto{\ \isom\ }
    	(g\circ f)^\ast(Z).
    \]
    Here the last arrow is the coherence isomorphism that is 
    part of the data of a pseudofunctor.
\item The functor $\grothconstr F \to \calB$ is projection onto first coordinate.
\end{itemize}
Notice that a morphism $(f,\phi)$ in $\grothconstr F$
is cartesian if and only if $\phi$ is an isomorphism.
In particular, the category $\grothconstr F$ is fibred over $\calB$,
as desired.

For a pseudo natural transformation
$\eta \co F \to G$ between pseudofunctors
$\calB^\op \to \Cat$,
the functor
\[
	\grothconstr \eta \co \grothconstr F \longto \grothconstr G
\]
is defined as follows:
\begin{itemize}
\item On objects, $(\grothconstr \eta)(B,X) = (B,\eta_B(X))$.
\item On morphisms, $\grothconstr \eta$ sends a morphism
	$(f,\phi)\co (B,X) \to (C,Y)$
	to the morphism $(f,\zeta)$ where $\zeta$ is the composite
    \[
    	\eta_B(X) 
    	\xto{\ \eta_B(\phi)\ }
    	\eta_B F(f)(Y)
    	\xto{\ \isom\ }
    	G(f) \eta_C (Y).
    \] 
	Here the last arrow is the coherence isomorphism that is part of 
	the data of a pseudo natural transformation.
\end{itemize}
Finally, for a modification $\xi\co \eta \to \theta$
between pseudo natural transformations $\eta,\theta\co F \to G$,
the natural transformation
\[
	\grothconstr \xi \co \grothconstr \eta \longto \grothconstr \theta
\]
is defined by letting $(\grothconstr \xi)_{(B,X)} = (\id_B,\xi_B')$
with $\xi_B'$ is the composite
\[
	\eta_B(X)
	\xto{\ 	\xi_B\ }
	\theta_B(X)
	\xto{\ \isom\ }
	\id_B^\ast \theta_B(X)
\]
where the last morphism is the coherence isomorphism that is 
part of the data of a pseudofunctor.
\end{defn}

The following result is well known; 
see e.g.\ \cite[Thm.~8.3.1]{Borceux2}.
\begin{thrm}
\label{thm:grothconstr}
The Grothendieck construction gives an equivalence of $2$--categories
\[
	\pushQED{\qed} 
	\grothconstr\co [\calB^\op, \Cat] \xto{\ \homot\ } \Fib_\calB. 
    \qedhere
    \popQED
\]
\end{thrm}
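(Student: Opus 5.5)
The plan is to build an explicit 2-functor in the opposite direction, $\Phi \mapsto F_\Phi$, using a choice of cleavage. Then show that the two composites are equivalent to the identities. By the axiom of choice, every fibration $\Phi\colon\calE\to\calB$ admits a cleavage. Fix one, and use it as in Remark~\ref{rk:fibsandbasechange} to define $F_\Phi(B)=\calE_B$ and $F_\Phi(f)=f^\ast$.

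The coherence isomorphisms $f^\ast g^\ast\isom (gf)^\ast$ and $\id_B^\ast\isom\id$ are the unique vertical isomorphisms provided by Proposition~\ref{prop:cartmorprops}(\ref{it:compiscart}), (\ref{it:isosarecart}) and (\ref{it:cartsourceiso}). The pseudofunctor coherence axioms then follow from the uniqueness clause in the definition of cartesian morphisms. Both sides of each coherence equation are vertical maps $X\to Y$ whose composites with a fixed cartesian morphism agree, so the two sides coincide.

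Next, a morphism of fibrations $G\colon\calE_1\to\calE_2$ restricts to functors $G_B\colon(\calE_1)_B\to(\calE_2)_B$. Since $G$ preserves cartesian morphisms, $G$ sends the chosen cartesian lift $f^\ast X\to X$ to a cartesian lift of $f$ with target $G(X)$. Proposition~\ref{prop:cartmorprops}(\ref{it:cartsourceiso}) then gives a canonical isomorphism $G_A f^\ast\isom f^\ast G_B$, and these are the pseudonaturality data. A transformation of fibrations, having vertical components, gives a modification. All required compatibilities again reduce to uniqueness of factorizations through cartesian morphisms.

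For the unit on the pseudofunctor side, start from $F\colon\calB^\op\to\Cat$. In $\grothconstr F$, the morphisms $(f,\id)\colon(A,f^\ast Y)\to(B,Y)$ are cartesian, as observed in Definition~\ref{def:grothendieckconstr}, so we may choose them as the cleavage. The fibre of $\grothconstr F$ over $B$ is isomorphic to $F(B)$ via $(\id_B,\phi)\mapsto \phi$ composed with the unit coherence isomorphism. Under this identification the induced base change functor is $F(f)$, which yields an invertible pseudonatural transformation $F\simeq F_{\grothconstr F}$.

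For the counit on the fibration side, start from $\Phi\colon\calE\to\calB$. Define $\grothconstr F_\Phi\to\calE$ by $(B,X)\mapsto X$ and $(f,\phi)\mapsto \alpha_{f,X'}\circ\phi$, where $\alpha_{f,X'}\colon f^\ast X'\to X'$ is the chosen cartesian morphism. This functor is bijective on objects. It is also fully faithful: by the cartesian property, every morphism $X\to X'$ covering $f$ factors uniquely as a vertical map followed by $\alpha$. Functoriality uses the coherence isomorphisms, and the functor preserves cartesian morphisms by Proposition~\ref{prop:cartmorprops}(\ref{it:compiscart}) and (\ref{it:isosarecart}). So it is an isomorphism in $\Fib_\calB$.

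Finally, check that the unit and counit are natural in 1- and 2-morphisms. I expect this last bookkeeping to be the main obstacle. It is a long verification, but every step is again an instance of the uniqueness of lifts through cartesian morphisms. Concluding, $\grothconstr$ is a 2-functor with a pseudo-inverse up to equivalence, hence a 2-equivalence. Alternatively, it suffices to show that $\grothconstr$ is essentially surjective, which is the counit step, and locally an equivalence of hom-categories; the latter follows from the constructions above restricted to fixed $F$ and $G$.
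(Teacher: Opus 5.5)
Your proposal is correct and is essentially the standard cleavage-based argument of the reference the paper cites for this result (\cite[Thm.~8.3.1]{Borceux2}); the paper itself records the theorem without giving a proof. One point needs more careful phrasing. Your inverse 2-functor fixes a cleavage on $\grothconstr F$ in advance, so you cannot simply ``choose'' the lifts $(f,\id)$ afterwards. Instead, compare those lifts with the fixed ones via Proposition~\ref{prop:cartmorprops}(\ref{it:cartsourceiso}); this changes $F_{\grothconstr F}$ only by a canonical invertible pseudonatural transformation, so the argument is unaffected.
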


\begin{rem}
\label{rk:gcfibres}
Notice that the fibre of the functor $\grothconstr F \to \calB$ over 
an object $B$ of $\calB$ is canonically isomorphic to the category $F(B)$.
In this way, we may think of the category $\grothconstr F$ as 
obtained by assembling the categories $F(B)$ for varying $B$
together.
Moreover, notice that morphisms of the form $(f,\id)$ give
$\grothconstr F$ a cleavage such that the resulting base change functor
\[
	f^\ast \colon (\grothconstr F)_B \longto (\grothconstr F)_A
\]
agrees with the functor $F(f) \colon F(B) \to F(A)$
for every morphism $f\colon A \to B$ in $\calB$.
\end{rem}

\subsubsection{Symmetric monoidal fibrations and the external tensor product \texorpdfstring{$\exttensor$}{}}

The Grothendieck construction also lifts to 
the symmetric monoidal context, at least when 
the category $\calB$ is 
cartesian symmetric monoidal.

\begin{defn}[cf.~{\cite[\S12]{Shulman}}]
\label{def:smfibs}
Let $\calB$ be a  symmetric monoidal category.
A \emph{symmetric monoidal fibration} over $\calB$ is a 
fibration $\Phi\colon \calE \to \calB$ such that 
$\calE$ is a symmetric monoidal category, the tensor
product on $\calE$ preserves cartesian morphisms, and $\Phi$ is a 
strict symmetric monoidal functor. 
Given symmetric monoidal fibrations 
$\Phi_1 \colon \calE_1 \to \calB$
and
$\Phi_2 \colon \calE_2 \to \calB$,
a \emph{symmetric monoidal morphism of fibrations} 
$F\colon \Phi_1 \to \Phi_2$
is a morphism of the underlying fibrations
such that the functor $F\colon \calE_1 \to \calE_2$ 
is symmetric monoidal
and $\Phi_1 = \Phi_2 \circ F$ as symmetric monoidal functors.
Finally, a \emph{symmetric monoidal transformation of fibrations}
is a transformation of fibrations which is a symmetric monoidal
natural transformation.
We write $\smFib_\calB$ for the $2$--category of 
symmetric monoidal fibrations over $\calB$
and morphisms and transformations of such.
\end{defn}

\begin{thrm}[{\cite[Thm.~12.7]{Shulman}}]
\label{thm:smgrothendieckconstr}
Let $\calB$ be a cartesian monoidal category. 
Then the Grothendieck construction 
of Theorem~\ref{thm:grothconstr}
lifts to an equivalence of $2$--categories
\[
	\pushQED{\qed} 
	\grothconstr\colon [\calB^\op,\smCat] \xto{\ \homot\ } \smFib_\calB.
    \qedhere
    \popQED
\]
\end{thrm}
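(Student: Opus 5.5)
The statement to prove is Theorem~\ref{thm:smgrothendieckconstr}, Shulman's monoidal Grothendieck construction for a cartesian monoidal base $\calB$. The plan is to bootstrap from the plain equivalence of Theorem~\ref{thm:grothconstr}. The key observation is that, because $\calB$ is cartesian, a symmetric monoidal structure on a fibration over $\calB$ amounts to two pieces of fibrewise data:
\begin{itemize}
\item an \emph{external} tensor product $\exttensor\colon \calE_A\times\calE_B\to\calE_{A\times B}$;
\item equivalently, via pullback along diagonals $\Delta_B\colon B\to B\times B$ and along maps to the terminal object, an \emph{internal} tensor product $X\tensor_B Y=\Delta_B^\ast(X\exttensor Y)$ on each fibre, with unit $t_B^\ast I$.
\end{itemize}
A pseudofunctor $\calB^\op\to\smCat$ supplies precisely such internal tensor products, together with symmetric monoidal structures on the reindexing functors $f^\ast$.

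\textbf{From pseudofunctors to fibrations.} Given $F\colon\calB^\op\to\smCat$, I equip $\grothconstr F$ with a tensor product
\[
(A,X)\tensor(B,Y)=(A\times B,\ \pi_1^\ast X\tensor_{A\times B}\pi_2^\ast Y).
\]
On a pair of morphisms $(f,\phi)$ and $(g,\psi)$, the product is $(f\times g,\ \pi_1^\ast\phi\tensor\pi_2^\ast\psi)$ composed with the coherence isomorphisms $\pi_i^\ast f^\ast\cong(f\times g)^\ast\pi_i^\ast$ and the monoidality isomorphism of $(f\times g)^\ast$. The unit is $(\ast,I_\ast)$, and the associator, unitors and symmetry come from those of $\calB$ together with those of the fibres. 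One then checks three things:
\begin{itemize}
\item the product preserves cartesian morphisms, since the product of isomorphisms is an isomorphism (using the criterion after Definition~\ref{def:grothendieckconstr});
\item the projection to $\calB$ is strict monoidal;
\item the coherence axioms hold; they reduce to coherence for pseudofunctors and monoidal functors.
\end{itemize}
Pseudo natural transformations and modifications are handled analogously, using the symmetric monoidal structure on each component $\eta_B$.

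\textbf{From fibrations to pseudofunctors.} Given a symmetric monoidal fibration $\calE\to\calB$ with a cleavage, I define $X\tensor_B Y=\Delta_B^\ast(X\tensor Y)$ for $X,Y\in\calE_B$, with unit $t_B^\ast I$. Each $f^\ast$ becomes strong monoidal using the universal property of cartesian arrows, since $f^\ast X\tensor f^\ast Y\to X\tensor Y$ is cartesian over $f\times f$. Associativity and the other constraints are obtained by the same uniqueness argument.

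\textbf{Equivalence and main obstacle.} I then show that the two constructions are mutually inverse up to the equivalences of Theorem~\ref{thm:grothconstr}. The needed natural isomorphism is
\[
\pi_1^\ast X\tensor_{A\times B}\pi_2^\ast Y\cong X\tensor Y,
\]
which comes from the monoidality of the diagonal pullback and the identities $\pi_i\circ\Delta=\id$ in a cartesian category. This is the step where cartesianness of $\calB$ is essential: the comparison uses that $\Delta_{A\times B}$ followed by $\pi_1\times\pi_2$ factors as required. The main obstacle I expect is the verification of the coherence diagrams, in particular the symmetry and hexagon axioms for the internal product, along with the bookkeeping of the 2-cells. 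I would handle this by working throughout with the external product and transporting only at the end.
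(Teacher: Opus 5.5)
Your outline is correct, but it is organized differently from the argument the paper relies on. The paper gives no proof of its own. It cites Shulman, whose argument has two steps:
\begin{enumerate}
\item He establishes a monoidal Grothendieck construction over an \emph{arbitrary} monoidal base. Symmetric monoidal fibrations over $\calB$ correspond to pseudofunctors $\calB^\op\to\Cat$ equipped with a lax symmetric monoidal structure $F(A)\times F(C)\to F(A\times C)$, namely the restriction of $\tensor$ to fibres.
\item He then uses only that $\calB^\op$ is cocartesian. Every object is then canonically a commutative monoid, so lax symmetric monoidal pseudofunctors out of $\calB^\op$ are the same as pseudofunctors into symmetric pseudomonoids in $\Cat$, i.e.\ into $\smCat$. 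The pseudomonoid obtained on $F(B)$ is exactly your $\Delta_B^\ast(-\exttensor-)$, with unit pulled back along $B\to\pt$.
\end{enumerate}
You instead unpack this correspondence by hand.

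Shulman's factorization isolates exactly where cartesianness enters and does the coherence bookkeeping once, in a general setting. Your route yields directly the explicit formulas \eqref{eq:exttensordef}, \eqref{eq:exttensoronmors} and \eqref{eq:internaltensorformula} that the paper uses later. It also makes the fibration-to-pseudofunctor direction essentially free of coherence checks, since every constraint there is the unique vertical comparison between two cartesian lifts over the same map. Likewise, $\pi_1^\ast X\tensor_{A\times B}\pi_2^\ast Y\cong X\tensor Y$ is just the observation that the composite cartesian arrow lies over $(\pi_1\times\pi_2)\circ\Delta_{A\times B}=\id$.

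The price is that the pentagon and hexagon for the external product on $\grothconstr F$ admit no such uniqueness argument: two cartesian arrows with the same source, target and base map may differ by a vertical automorphism. So, as you anticipate, these really must be reduced to coherence for the monoidal functors $f^\ast$ and the pseudofunctor constraints.

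For an equivalence of $2$--categories you should also treat $1$-- and $2$--cells in both directions, not just from pseudofunctors to fibrations. The relevant point is that $\Phi_1=\Phi_2\circ F$ as symmetric monoidal functors forces the constraints of $F$ to be vertical. That is what lets $F$ restrict to symmetric monoidal functors on fibres whose pseudonaturality isomorphisms are monoidal.
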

Explicitly, 
given a pseudofunctor $F\colon \calB^\op \to \smCat$
with $\calB$ cartesian monoidal,
the tensor product on $\grothconstr F$ is given by the formula
\begin{equation}
\label{eq:exttensordef}
	(A,X)\exttensor (C,Y) 
	= 
	(\pi^{AC}_A)^\ast X \tensor_{AC} (\pi^{AC}_C)^\ast Y
\end{equation}
where the maps $\pi$ are projections with the indicated 
sources and targets, and we have omitted all $\times$--signs.
Here $\tensor_{AC}$ refers to the 
tensor product in $F(A\times C)$.
On morphisms, $\exttensor$ is given as follows:
if
$\phi = (f,\bar{\phi}) \colon (A,X) \to (B,X')$
and
$\psi = (g,\bar{\psi}) \colon (C,Y) \to (D,Y')$
are morphisms
in $\grothconstr F$, the tensor product of $\phi$ and $\psi$ is given by
$\phi \exttensor \psi = (f\times g, \theta)$
where $\theta$ is the composite
\begin{equation}
\label{eq:exttensoronmors}
\vcenter{\xymatrix@!0@C=4.5em{
	X\exttensor Y 
	= 
	(\pi^{AC}_A)^\ast X \tensor_{AC} (\pi^{AC}_C)^\ast Y
	\\
	\ar[r]^{\bar{\phi} \tensor_{AC} \bar{\psi}}
	&
	*!L{\;
		(\pi^{AC}_A)^\ast f^\ast X' 
		\tensor_{AC} 
		(\pi^{AC}_C)^\ast g^\ast Y'
	}
	\\
	\ar[r]^{\homot}
	&
	*!L{\;
    	(f\times g)^\ast (\pi^{BD}_B)^\ast X' 
    	\tensor_{AC} 	
    	(f\times g)^\ast (\pi^{BD}_D)^\ast Y'
	}
	\\
	\ar[r]^{(f\times g)^\ast_\tensor}_\homot
	&
	*!L{\;
    	(f\times g)^\ast \big(
    		(\pi^{BD}_B)^\ast X'\tensor_{BD} (\pi^{BD}_D)^\ast Y'
    	\big)
    	=
    	(f\times g)^\ast (X'\exttensor Y').
	}
}}
\end{equation}
Here $(f\times g)^\ast_\tensor$ denotes the 
monoidality constraint of $(f\times g)^\ast$.
The identity object 
for $\exttensor$ is the pair $(\pt,I_\pt)$ where $\pt$
is the terminal object of $\calB$ and $I_\pt$ is the 
identity object of $F(\pt)$.
Notice that the symmetric monoidal structure on 
$F(B)$ can be recovered from that on $\grothconstr F$: for $X,Y\in F(B)$,
we have a natural isomorphism
\begin{equation}
\label{eq:internaltensorformula}
X\tensor_B Y \isom \Delta^\ast (X\exttensor Y)
\end{equation}
where $\Delta \co B \to B \times B$ is the diagonal map.
We refer to the tensor product $\exttensor$ on $\grothconstr F$
as the \emph{external tensor product} on $\grothconstr F$.

\subsubsection{The internal tensor product \texorpdfstring{$\tensor_\internal$}{}}

Given a pseudofunctor $F \colon \calB^\op \to \smCat$,
in addition to the external tensor product 
$\exttensor \colon \grothconstr F  \times \grothconstr F  \to  \grothconstr F$,
we may also assemble the symmetric monoidal structures on 
the categories $F(B)$ into a structure on $\grothconstr F$
that is internal to $\Fib_\calB$. Recall that a 
\emph{symmetric pseudomonoid}
in a $2$--category $\mathbf{C}$ with finite products 
is the notion obtained by substituting objects, $1$--morphisms,
and $2$--morphisms in $\mathbf{C}$ for categories,
functors, and natural transformations in the definition
of a symmetric monoidal category.
Thus a symmetric pseudomonoid in 
$\Cat$ is simply a symmetric monoidal category.
With $F$ as above, we define functors
\[
	\tensor_\internal
	\colon
	(\grothconstr F) \times_\calB (\grothconstr F)
	\longto
	\grothconstr F
	\qquad\text{and}\qquad
	I_\internal
	\colon 
	\calB \longto \grothconstr F	
\]
as follows. On objects, we set
\[
	(B,X) \tensor_\internal (B,Y) = (B, X\tensor_B Y)
	\qquad\text{and}\qquad
	I_\internal (B) = I_B,
\]
where $I_B$ denotes the identity object of $F(B)$.
On morphisms, we define
\[
	(f,\bar{\phi}) \tensor_\internal (f,\bar{\psi})
	=
	(
		f, 
		X\tensor_A Y 
		\xto{\ \bar{\phi}\tensor_A \bar{\psi}\ } 
		f^\ast X' \tensor_A f^\ast Y'
		\xto[\isom]{\ f^\ast_\tensor\ }
		f^\ast (X' \tensor_B Y')
	)
\]
for morphisms
$(f,\bar{\phi}) \colon (A,X) \to (B,X')$
and 
$(f,\bar{\psi}) \colon (A,Y) \to (B,Y')$ 
in $\grothconstr F$
and 
\[
	I_\internal (f) = f^{\ast}_I \colon I_A \xto{\ \isom\ } f^\ast I_B
\]
for a morphism $f\colon A \to B$ in $\calB$.
Here $f^\ast_\tensor$ and $f^\ast_I$ denote the monoidality
and unitality constraints for 
the symmetric monoidal functor $f^\ast \colon F(B) \to F(A)$.
The functors $\tensor_\internal$ and $I_\internal$,
together with the transformations constructed in the evident way
from the associativity and unit constraints of 
the symmetric monoidal categories $F(B)$, now make
$\grothconstr F$ into a symmetric pseudomonoid 
in the $2$--category $\Fib_B$.
We call $\tensor_\internal$ the \emph{internal tensor product} on $\grothconstr F$.

\subsubsection{The fibrewise opposite of a fibration}
\label{subsubsec:fop}

Given a pseudofunctor $F\colon \calB^\op \to \Cat$,
the \emph{fibrewise opposite} of the fibration 
$\grothconstr F \to \calB$ can be described quickly as the fibration
$\grothconstr F' \to \calB$ where $F'$ is the composite
\[
	F' \colon \calB^\op \xto{\ F\ } \Cat \xto{\ \op\ } \Cat
\]
of $F$ and the functor sending a category to its opposite.
Our aim now is to give a description of the fibrewise opposite 
which does not involve the Grothendieck construction.

\begin{defn}[Fibrewise opposite of a fibration]
\label{def:fop}
Suppose $\Phi \colon \calE \to \calB$ is a fibration.
Then the \emph{fibrewise opposite} 
$\Phi^\fop \colon \calE^\fop \to \calB$ of $\Phi$ is
constructed as follows. The objects of $\calE^\fop$ 
are the objects of $\calE$, while a morphism 
$X\to Y$ in $\calE^\fop$ is an equivalence class of zigzags
\[
	X \xot{\ \alpha\ } Y' \xto{\ \beta\ } Y
\]
of morphisms of $\calE$ 
where $\alpha$ is vertical and $\beta$ is cartesian.
Two such zigzags
\[
	X \xot{\ \alpha\ } Y' \xto{\ \beta\ } Y
	\qquad\text{and}\qquad
	X \xot{\ \alpha'\ } Y'' \xto{\ \beta'\ } Y
\]
are equivalent if there exists a vertical isomorphism
$\theta\colon Y' \to Y''$ such that 
$\alpha = \alpha' \circ \theta$ and $\beta = \beta' \circ \theta$.
The composite of the morphisms 
$[X \xot{\ \alpha\ } Y' \xto{\ \beta\ } Y]$
and 
$[Y \xot{\ \gamma\ } Z' \xto{\ \delta\ } Z]$
is the morphism represented by the 
composites along the two sides of the 
commutative diagram
\[\xymatrix@-1em{
    &&
    Z''
    \ar[dl]_{\gamma'}
    \ar[dr]^{\beta'}
    \\
    &
    Y'
    \ar[dl]_{\alpha}
    \ar[dr]^{\beta}
    &&
    Z'
    \ar[dl]_\gamma
    \ar[dr]^\delta
    \\
    X
    &&
    Y
    &&	
    Z
}\]
where $\beta'$ is a cartesian morphism covering
$\Phi(\beta)$ and $\gamma'$ is obtained by 
using the universal property of the cartesian morphism $\beta$.
We define $\Phi^\fop \colon \calE^\fop \to \calB$ by setting
$\Phi^\fop(X) = \Phi(X)$ on objects and 
\[
	\Phi^\fop [X \longot Y' \xto{\ \beta\ } Y]
	=
	\Phi(\beta) 
\]
on morphisms.
\end{defn}

It is easy to show that a morphism 
$[X \xot{\ \alpha\ } Y' \xto{\ \beta\ } Y]$
in $\calE^\fop$ is
cartesian precisely when  $\alpha$ is an isomorphism.
Thus $\Phi^\fop$ is a fibration.
Notice that for an object $B$ of $\calB$,
the functor sending a morphism $\alpha \colon Y \to X$
to $[X\xot{\alpha} Y \xto{\id} Y]$
provides an isomorphism 
$(\calE_B)^\op \xto{\isom} (\calE^\fop)_B$ 
between fibres over $B$
while the functor sending
a morphism $\beta\colon X\to Y$ to 
$[X\xot{\id} X \xto{\beta} Y]$
provides an isomorphism 
$\calE^\cart \xto{\ \isom\ } (\calE^\fop)^\cart$
between subcategories of cartesian morphisms.

When the fibration $\Phi\colon \calE \to \calB$ is symmetric monoidal,
so is $\Phi^{\fop}\colon \calE^{\fop} \to \calB$: the identity
object in $\calE^\fop$ is given by the identity object in $\calE^\fop$,
and the tensor product on $\calE^{\fop}$ agrees with that on $\calE$
on objects and is given on morphism by the formula
\[
	[X_1 \xot{\ \alpha_1\ } Y'_1 \xto{\ \beta_1\ } Y_1]
	\tensor
	[X_2 \xot{\ \alpha_2\ } Y'_2 \xto{\ \beta_2\ } Y_2]
	=
	[
		X_1 \tensor X_2 
		\xot{\ \alpha_1 \tensor \alpha_2\ } 
		Y'_1 \tensor Y'_2 
		\xto{\ \beta_1 \tensor \beta_2\ } 
		Y_1 \tensor Y_2
	].
\]
As in the non-monoidal case, for a pseudofunctor 
$F\colon \calB^\op \to \smCat$, the symmetric monoidal
fibration $(\grothconstr F)^\fop \to \calB$ is isomorphic to 
$\grothconstr F' \to \calB$ where $F'$ is the composite
\[
	F' \colon \calB^\op \xto{\ F\ } \smCat \xto{\ \op\ } \smCat
\]
of $F$ and the functor sending a symmetric monoidal category to its opposite.

It is straightforward to extend the definition of $(-)^{\fop}$
to morphisms and transformations of fibrations and symmetric monoidal 
fibrations, so that altogether we have a $2$--functors 
\[
	(-)^\fop \colon \Fib_\calB \longto \Fib_\calB
	\qquad\text{and}\qquad
	(-)^\fop \colon \smFib_\calB \longto \smFib_\calB
\]
reversing the direction of $2$--morphisms.
See \cite[\S 5.3]{HL15} for some more detail 
in the symmetric monoidal case.

\subsection{Parametrized homotopy theory in the language of fibred category theory}
\label{subsec:hpcffw}

In this subsection, we will reinterpret the parametrized homotopy theory 
discussed in Section~\ref{subsec:paramhtpybasics2}
in terms of fibred category theory.
Using the Grothendieck construction,
we will assemble the  
categories $\ho(\calC_{/B})$ 
and functors
$F_B\colon  \ho(\calC_{/B}) \to \ho(\calD_{/B})$
for varying 
$B\in \calT$ into a fibration $\hpC \to \calT$
and a morphism
$F_\fw \colon \hpC \to \hpD$
of fibrations over $\calT$.
In addition, we will present a number of results concerning
preservation of cartesian and opcartesian morphisms
in the fibrations $\hpC\to \calT$,
in particular reinterpreting
the commutation relations
\eqref{eq:commrelshriek}
and
\eqref{eq:commrelstar}
and the relations~\eqref{eq:bc-unit} 
through \eqref{eq:bc-shriekadj}
for a Wirthmüller context
in terms of cartesian and opcartesian morphisms.

\begin{defn}[The categories $\hpC$]
\label{def:hpconstr}
We let $\hpC \to \calT$ be the symmetric monoidal fibration
obtained by applying the 
Grothendieck construction of Theorem~\ref{thm:smgrothendieckconstr}
to the functor $\calT^\op \to \smCat$
given by $B \mapsto \ho(\calC_{/B})$, $f \mapsto f^\ast$.
We think of $\hpC$ as the category of parametrized 
$\calC$--objects over varying base spaces,
and will often write just $X$ for an object $(B,X)$
of $\hpC$, leaving the base space $B$ as implicitly 
understood. 
The letters h and p stand for `homotopy' and `parametrized,'
respectively.
\end{defn}

Notice that given $f\colon A\to B$,
Remark~\ref{rk:gcfibres}
ensures that 
under the identifications $\hpC_A = \ho(\calC_{/A})$ and
$\hpC_B = \ho(\calC_{/B})$,
the base change functor $f^\ast$
of Remark~\ref{rk:fibsandbasechange}
arising from the fact that $\hpC \to \calT$ is a fibration
agrees with the base change functor $f^\ast$
of Section~\ref{subsubsec:basechangefunctors}.
As the latter functor $f^\ast$ admits a left adjoint,
Remark~\ref{rk:fibsandbasechange}
implies that $\hpC\to \calT$ is a bifibration.
Moreover, the uniqueness of left adjoints ensures that
the functors $f_!$ of
Remark~\ref{rk:fibsandbasechange}
and 
Section~\ref{subsubsec:basechangefunctors}
agree up to canonical natural equivalence.

\begin{defn}[The functors $F_\fw$]
\label{def:fwfun}
For a (not necessarily symmetric monoidal)
$\infty$--functor $F\co \calC \to \calD$
between presentable symmetric monoidal $\infty$--categories, we write 
\[
	F_\fw \co \hpC \longto \hpD
\]
for the morphism of fibrations over $\calT$
 obtained by the Grothendieck construction 
from the pseudo natural transformation with components 
$F_B \co \ho(\calC_{/B}) \to \ho(\calD_{/B})$.
(Thus on fibres over $B$, 
the functor $F_\fw$ restricts to the functor $F_B$.)
The subscript $\fw$ stands for `fibrewise': intuitively,
$F_\fw$ is simply the functor obtained by applying 
$F$ to a parametrized object fibrewise.
\end{defn}

Notice that an adjoint pair
$T\co \calC \longrightleftarrows \calD \co V$
of $\infty$--functors 
gives rise to an adjoint pair 
\[
	T_\fw \co \hpC \longrightleftarrows \hpD \co V_\fw.
\]
of morphisms in the $2$--category $\Fib_\calT$ of fibrations over $\calT$,
and that Theorem~\ref{thm:smgrothendieckconstr} ensures that 
$F_\fw\co \hpC \to \hpD$
is symmetric monoidal 
if $F\co \calC \to \calD$ is.
If $\tensor$ denotes the 
symmetric monoidal product in $\calC$, 
we write $\exttensor \colon \hpC \times \hpC \to \hpC$
for the symmetric monoidal product in $\hpC$
and 
$\tensor_\internal \colon \hpC \times_\calT \hpC \to \hpC$ 
for the symmetric pseudomonoid multiplication 
on $\hpC$ in $\Fib_\calT$.
In particular, we obtain products
$\extsmashprod$ and $\smashprod_\internal$ on $\hpSpectra$,
$\extsmashprod^\ell$ and $\smashprod^\ell_\internal$ on $\hpSpectra$,
and
$\extsmashprod^R$ and $\smashprod^R_\internal$ on $\hpMod^R$
when $R$ is a commutative ring spectrum.

By the definition of a morphism of fibrations over $\calT$, 
the functor $F_\fw$ 
of Definition~\ref{def:fwfun}
preserves cartesian morphism.
It is formal to verify that the left adjoint in an adjunction 
in the $2$--category $\Fib_\calB$ 
preserves opcartesian morphisms,
so we also have the following result.

\begin{prop}
\label{prop:opcartpreservation}
Suppose $\calC$ and $\calD$
are presentable symmetric monoidal $\infty$--categories,
and suppose $F\co \calC \to \calD$
is a (not necessarily symmetric monoidal) $\infty$--functor
admitting a right adjoint.
Then the functor $F_\fw \colon \hpC\to \hpD$
preserves opcartesian morphism. \qed
\end{prop}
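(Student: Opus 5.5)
The plan is to prove the general fact stated just before the proposition: in any adjunction $F \dashv G$ in the $2$--category $\Fib_\calT$, the left adjoint preserves opcartesian morphisms. We then apply it to $F_\fw \dashv G_\fw$, where $G$ is the right adjoint of $F$. First I check that $F_\fw \dashv G_\fw$ really is an adjunction in $\Fib_\calT$. The adjunction $F \dashv G$ of $\infty$--functors gives, for each $B$, an adjunction $F_B \dashv G_B$ on homotopy categories by postcomposition. Its unit and counit are given by whiskering with the $\infty$--categorical unit and counit, so they commute strictly with precomposition by $(\Pi_\infty f)^\op$. Hence these units and counits form modifications between pseudo natural transformations. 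Applying the $2$--functor $\grothconstr$ of Theorem~\ref{thm:grothconstr} then yields an adjunction $F_\fw \dashv G_\fw$ in $\Fib_\calT$, whose unit $\eta$ and counit $\varepsilon$ consist of vertical morphisms.

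Next, the formal argument. Let $\alpha\colon X\to Y$ be opcartesian in $\hpC$, covering $f\colon A\to B$. We must show that for every morphism $\beta\colon F_\fw X \to Z$ in $\hpD$ covering $g\circ f$, there is a unique $\gamma\colon F_\fw Y \to Z$ covering $g$ with $\gamma\circ F_\fw(\alpha) = \beta$. Adjunction in $\Fib_\calT$ gives a bijection $\hpD(F_\fw X, Z) \isom \hpC(X, G_\fw Z)$, namely $\beta \mapsto G_\fw(\beta)\circ \eta_X$. This bijection preserves the covering morphism in $\calT$, because $\eta$ is vertical and $F_\fw$, $G_\fw$ commute with the projections. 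It is also natural in $X$, so it converts precomposition with $F_\fw(\alpha)$ into precomposition with $\alpha$. Restricting to morphisms over a prescribed map of $\calT$, the universal property of $\alpha$ therefore transports to the required universal property for $F_\fw(\alpha)$. Concretely, $\gamma$ is the adjoint transpose of the unique factorization of $G_\fw(\beta)\circ\eta_X$ through $\alpha$.

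The only step needing care is the first one. The rest is a routine naturality check. There we must confirm that the pointwise adjunctions assemble coherently, i.e.\ that the unit and counit are compatible with the pseudonaturality isomorphisms $F_A f^\ast \isom f^\ast F_B$. Since these isomorphisms are identities coming from the associativity of composition of $\infty$--functors, I expect this to be immediate. Alternatively, I could bypass the Grothendieck construction and argue directly on fibres. By Remark~\ref{rk:fibsandbasechange}, $\alpha$ opcartesian means that its vertical part $f_!X \to Y$ is an isomorphism, so it suffices to show that the canonical map $f_! F_A \to F_B f_!$ is an isomorphism. This follows by taking left adjoints (mates) of the equality $G_A f^\ast = f^\ast G_B$.
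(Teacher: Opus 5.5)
Your proposal is correct and is essentially the paper's argument. The paper likewise assembles the fibrewise adjunctions $F_B \dashv G_B$ into an adjunction $F_\fw \dashv G_\fw$ in $\Fib_\calT$ and then invokes the ``formal'' fact that a left adjoint in $\Fib_\calT$ preserves opcartesian morphisms; your transposition argument with the vertical unit is exactly that verification, and your alternative fibrewise route runs Remark~\ref{rk:opcartpreservation2} in reverse (there you would still need to identify the canonical map $f_!F_A \to F_B f_!$ with the conjugate of $G_A f^\ast = f^\ast G_B$, which uses the same compatibility of units with $f^\ast$ as your first step).
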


\begin{rem}
\label{rk:opcartpreservation2}
Phrased in terms of base change functors, 
Proposition~\ref{prop:opcartpreservation}
implies that when $F$ admits a right adjoint,
there exists a canonical natural equivalence
\begin{equation}
\label{eq:fshriektcomm}
	f_! F_A \homot F_B f_!.
\end{equation}
for every continuous map $f\colon A \to B$.
\end{rem}

The following proposition is part of Lemma~\ref{lm:hpbcprops} below,
but we record it here separately for clarity and emphasis. 
The proposition implies that $\hpC \to \calT$
is a symmetric monoidal bifibration in the sense of 
\cite[Def.~12.1]{Shulman}.
We remind the reader that $\exttensor$ preserves
cartesian morphisms by the definition
of a symmetric monoidal fibration.
\begin{prop}
\label{prop:exttensoropcarts}
Suppose $\phi$ and $\psi$ are opcartesian morphisms in $\hpC$.
Then their external tensor product $\phi \exttensor \psi$ 
is opcartesian.
\qed 
\end{prop}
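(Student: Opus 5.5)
The statement to prove is Proposition~\ref{prop:exttensoropcarts}: the external tensor product of two opcartesian morphisms of $\hpC$ is opcartesian. I would translate opcartesianness into base change functors and reduce to the projection formula \eqref{eq:bc-projformula} together with the commutation relation \eqref{eq:commrelshriek}.

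First, by Proposition~\ref{prop:cartmorprops}, every opcartesian morphism covering $f\colon A\to B$ with source $X$ is a composite of the canonical opcartesian morphism $X\to f_!X$ and a vertical isomorphism. Since $\exttensor$ is a functor, it preserves isomorphisms, and isomorphisms are opcartesian. Composites of opcartesian morphisms are also opcartesian. We may therefore assume $\phi\colon X\to f_!X$ and $\psi\colon Y\to g_!Y$ are the canonical ones, with $f\colon A\to B$ and $g\colon C\to D$. Factoring $\phi\exttensor\psi=(\phi\exttensor\id)\circ(\id\exttensor\psi)$, and using the symmetry of $\exttensor$, it suffices to treat $\phi\exttensor\id_Y$ covering $f\times\id_C$.

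Next, a morphism $X\exttensor Y\to f_!X\exttensor Y$ covering $f\times \id_C$ is opcartesian if and only if its adjoint vertical map
\[
	(f\times\id)_!\big(\pi_A^\ast X\tensor_{AC}\pi_C^\ast Y\big)\longto \pi_B^\ast f_!X\tensor_{BC}\pi_C^\ast Y
\]
is an isomorphism in $\ho(\calC_{/B\times C})$. Write $\pi_C^\ast Y=(f\times\id)^\ast \pi_C^{BC\ast}Y$. The projection formula \eqref{eq:bc-projformula}, applied to $f\times\id$, then identifies the source with $(f\times\id)_!\pi_A^\ast X\tensor_{BC}\pi_C^\ast Y$. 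The square with sides $f\times\id$, $\pi_A$, $\pi_B$, $f$ is a pullback along a product projection, hence homotopy cartesian. Relation \eqref{eq:commrelshriek} therefore gives $(f\times\id)_!\pi_A^\ast X\homot \pi_B^\ast f_!X$.

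The main obstacle is checking that the composite of these two equivalences is the specific adjoint map above, rather than merely some abstract isomorphism. I would check this by expanding both maps into units and counits, using \eqref{eq:projformulaexpanded} and \eqref{eq:shriekastcomm}, and then comparing them using the triangle identities and the naturality of the monoidality constraints \eqref{eq:bc-product}. As a fallback, I would test the comparison map on fibres over points $(b,c)$, since equivalences are detected fibrewise. By the commutation relations, the fibre of the source becomes $i_b^\ast f_!X\tensor Y_c$, and this agrees with the fibre of the target, reducing the question to the same identification over a point.
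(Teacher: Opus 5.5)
Your argument is correct, but it is organized differently from the paper's. The paper does not verify any comparison map by hand. It records this proposition as part of Lemma~\ref{lm:hpbcprops} (take $B=\pt$) and obtains it from Shulman's framework in four steps:
\begin{enumerate}
\item Since each $f^\ast$ has both adjoints, the fibration is a $\ast$--bifibration.
\item Closedness of the fibres and of the functors $f^\ast$ makes it internally closed.
\item The commutation relation \eqref{eq:commrelshriek} makes it weakly BC, so \cite[Prop.~13.15]{Shulman} gives external closedness.
\item \cite[Prop.~13.17]{Shulman} turns external closedness into preservation of opcartesian morphisms by $\exttensor$.
\end{enumerate}
You instead reduce to $\phi\exttensor\id_Y$ with $\phi$ canonical. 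You then show directly that the comparison map $(f\times\id)_!(X\exttensor Y)\to f_!X\exttensor Y$ is invertible, as the composite of the projection formula \eqref{eq:bc-projformula} for $f\times\id$ and the Beck--Chevalley equivalence \eqref{eq:commrelshriek} for the product square.

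The inputs are essentially the same: Beck--Chevalley for product squares, and the projection formula coming from closedness of $f^\ast$. What each route buys is different. Yours is self-contained and never uses $f_\ast$, internal homs or external closedness. The paper's route outsources all coherence bookkeeping to Shulman, and at the same time establishes the external closedness it needs anyway to construct $\Ex_B(\calC)$.

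The identification you flag as the main obstacle does go through by the mate calculation you propose. Under $(f\times\id)_!\dashv(f\times\id)^\ast$, the adjoint of the projection-formula map is $\eta_{f\times\id}\tensor 1$ followed by the monoidality constraint of $(f\times\id)^\ast$. By naturality of that constraint and a triangle identity, the adjoint of your composite is $\pi_A^\ast(\eta_f)\tensor 1$ followed by coherence isomorphisms. This is precisely the vertical component of $\phi\exttensor\id_Y$ given by \eqref{eq:exttensoronmors}. Your fibrewise fallback does not remove this issue; it only moves the same identification to a point. So the mate computation is the argument to rely on.
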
 
 
\begin{rem}
Phrased in terms of base change functors, 
Proposition~\ref{prop:exttensoropcarts}
implies that given continuous maps 
$f\colon A\to C$ and $g\colon B\to D$, there
exists a canonical equivalence
\begin{equation}
	(f\times g)_! (X \exttensor  Y) \homot f_! X \exttensor g_! Y
\end{equation}
for all $X\in \ho(\calC_{/A})$ and $Y \in \ho(\calC_{/B})$.
\end{rem}
 
We proceed to reformulate the relations~\eqref{eq:bc-unit}
through \eqref{eq:bc-shriekadj}
and the commutation relations
\eqref{eq:commrelshriek}
and
\eqref{eq:commrelstar}
in the language of fibred category theory.
For this end, notice that
in addition to the base change functors $f_!$ and $f^\ast$,
the base change functor $f_\ast$ can also be understood
in terms of a fibration:
it is straightforward to verify that given a map
$f\colon A \to B$ in $\calT$ and an object $X \in \ho(\calC_{/A})$,
the zigzag 
\[\xymatrix@1{
	X 
	&
	\ar[l]_-{\varepsilon}
	f^\ast f_\ast X
	\ar[r]^-{\cart}
	&
	f_\ast X
}\]
given by the counit of the $(f^\ast,f_\ast)$ adjunction and 
the canonical cartesian morphism 
$f^\ast f_\ast X \to f_\ast X$
defines an opcartesian morphism in $\hpC^\fop$ covering $f$,
so $\hpC^\fop \to \calT$ is a bifibration whose 
opcartesian morphisms encode the functor $f_\ast$.
We now have the following reformulations of the natural
equivalences \eqref{eq:bc-unit} through \eqref{eq:bc-shriekadj}.
The reformulation of the projection formula \eqref{eq:bc-projformula}
as number (\ref{it:bc-projformula-ref}) below 
will be especially useful for us.

\begin{prop}
\label{prop:reformulations}
Let 
\[
	F_\internal 
	\colon 
	\hpC^\fop 
	\times_\calT
	\hpC
	\longto
	\hpC
\]
be the functor sending a pair of objects $(X,Y)$ over $B\in\calT$ 
to $F_B(X,Y)$, and let
\[
	F_\internal^\fop 
	\colon 
	\hpC
	\times_\calT
	\hpC^\fop 
	\longto
	\hpC^\fop 
\]
be the fibrewise opposite of $F_\internal$.
Then the functors
$F_\internal$, $F_\internal^\fop$,
the internal tensor product
\[
	\tensor_\internal 
	\colon 
	\hpC\times_\calT \hpC 
	\longto 
	\hpC
\]
and the unit
\[
	I_\internal\colon \calT \longto \hpC
\]
have the following properties:
\begin{enumerate}[(i)]
\item \label{it:bc-unit-ref}
	$I_\internal(f)$ is cartesian for all morphisms $f$.
\item \label{it:bc-product-ref}
	If $\phi$ and $\psi$ are cartesian, 
	$\phi \tensor_\internal \psi$ is cartesian.
\item \label{it:bc-projformula-ref}
	If $\phi$ is cartesian and $\psi$ is opcartesian, 
	$\phi \tensor_\internal \psi$ is opcartesian.
\item \label{it:bc-homs-ref1}
	If $\phi$ and $\psi$ are cartesian,
	$F_\internal(\phi,\psi)$ is cartesian.
\item \label{it:bc-homs-ref2}
	If $\phi$ and $\psi$ are cartesian,
	$F_\internal^\fop(\phi,\psi)$ is cartesian.
\item \label{it:bc-staradj-ref}
	If $\phi$ is cartesian and $\psi$ is opcartesian,
	$F_\internal^\fop(\phi,\psi)$ is opcartesian.
\item \label{it:bc-shriekadj-ref}
	If $\phi$ is opcartesian and $\psi$ is cartesian,
	$F_\internal^\fop(\phi,\psi)$ is opcartesian.
\end{enumerate}
\end{prop}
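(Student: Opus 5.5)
The plan is to translate each item into a statement about the component maps in the Grothendieck construction, and then match it with the corresponding equivalence among \eqref{eq:bc-unit}--\eqref{eq:bc-shriekadj}. The translation rests on two facts. First, a morphism $(f,\bar\phi)$ of $\hpC$ is cartesian exactly when $\bar\phi$ is an isomorphism in $\ho(\calC_{/A})$. Second, a morphism $X \to Y$ of $\hpC$ covering $f$ is opcartesian exactly when its adjoint $f_!X \to Y$ under the $(f_!,f^\ast)$ adjunction of Remark~\ref{rk:fibsandbasechange} is an isomorphism. Dually, in $\hpC^\fop$ a morphism is cartesian exactly when its vertical leg is an isomorphism. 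It is opcartesian exactly when the induced map to $f_\ast$ of the source, obtained through the $(f^\ast,f_\ast)$ adjunction, is an isomorphism; this is the description of opcartesian morphisms of $\hpC^\fop$ recalled just before the proposition. All properties in question are stable under composition with isomorphisms by Proposition~\ref{prop:cartmorprops}. So in each case I may reduce to the canonical cleavage (or opcleavage) morphisms, such as $(f,\id)\colon f^\ast X\to X$, the unit-induced morphism $X \to f_!X$, or the zigzag defining $f_\ast$.

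Items (\ref{it:bc-unit-ref}) and (\ref{it:bc-product-ref}) then follow immediately. The component of $I_\internal(f)$ is the unit constraint $f^\ast_I$, which is \eqref{eq:bc-unit}. The component of $\phi\tensor_\internal\psi$ is $(\bar\phi\tensor_A\bar\psi)$ followed by $f^\ast_\tensor$, a composite of isomorphisms by \eqref{eq:bc-product}. For (\ref{it:bc-homs-ref1}) and (\ref{it:bc-homs-ref2}), I first write out how $F_\internal$ acts on a pair consisting of a cartesian morphism of $\hpC^\fop$ and a cartesian morphism of $\hpC$ over $f$. The component comes out as the map $F_A(f^\ast X', f^\ast Y')\leftarrow f^\ast F_B(X',Y')$ of \eqref{eq:bc-homs}, pre- and postcomposed with isomorphisms, so it is cartesian. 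Applying $(-)^\fop$, which preserves cartesian morphisms (these correspond under the isomorphism $\calE^\cart\isom(\calE^\fop)^\cart$), gives (\ref{it:bc-homs-ref2}).

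For (\ref{it:bc-projformula-ref}), reduce to $\phi=(f,\id)\colon f^\ast Y\to Y$ and $\psi\colon X\to f_!X$ the unit-induced opcartesian morphism. The morphism $\phi\tensor_\internal\psi\colon f^\ast Y\tensor_A X\to Y\tensor_B f_!X$ is opcartesian exactly when its adjoint $f_!(f^\ast Y\tensor_AX)\to Y\tensor_Bf_!X$ is an isomorphism. Unwinding the definition of $\tensor_\internal$ on morphisms, I need to check that this adjoint is the composite \eqref{eq:projformulaexpanded}: the $\tensor_A$ of the identity with the unit $X\to f^\ast f_!X$, then $f^\ast_\tensor$, then the counit. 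That composite is the projection formula \eqref{eq:bc-projformula}, which is an equivalence.

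Items (\ref{it:bc-staradj-ref}) and (\ref{it:bc-shriekadj-ref}) go the same way, using \eqref{eq:bc-staradj} and \eqref{eq:bc-shriekadj}. The main task in each is to identify the adjoint of the relevant morphism with the composite that realizes those equivalences as in \cite{FauskHuMay}. For (\ref{it:bc-staradj-ref}), take $\phi=(f,\id)\colon f^\ast Y\to Y$ and $\psi$ the canonical opcartesian zigzag $X\leftarrow f^\ast f_\ast X\to f_\ast X$ of $\hpC^\fop$. The morphism $F^\fop_\internal(\phi,\psi)\colon F_A(f^\ast Y,X)\to F_B(Y,f_\ast X)$ in $\hpC^\fop$ has vertical leg $F_A(f^\ast Y, X)\leftarrow F_A(f^\ast Y,f^\ast f_\ast X)\homot f^\ast F_B(Y,f_\ast X)$. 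Its adjoint is $F_B(Y,f_\ast X)\to f_\ast F_A(f^\ast Y,X)$, which is \eqref{eq:bc-staradj}. Case (\ref{it:bc-shriekadj-ref}) is symmetric, with $\phi$ the unit-induced opcartesian morphism $X\to f_!X$ and $\psi=(f,\id)$, and gives \eqref{eq:bc-shriekadj}.

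I expect the main obstacle to be this bookkeeping: checking that the morphisms produced formally by the Grothendieck construction and by $(-)^\fop$ coincide with the specific comparison maps whose invertibility \cite{ABGparam,FauskHuMay} asserts, rather than with some other map between the same objects. I would handle it by the triangle identities plus the description of \eqref{eq:bc-homs} as adjoint to evaluation, and I would do the projection formula case in full and treat the two $f_\ast$ cases as its mates.
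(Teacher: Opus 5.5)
Your proposal is correct and takes the same approach as the paper. The paper's proof just states that (i)--(ii) are encoded in $I_\internal$ and $\tensor_\internal$ being morphisms in $\Fib_\calT$, that (iv)--(v) are reformulations of \eqref{eq:bc-homs}, and that (iii), (vi), (vii) are reformulations of \eqref{eq:bc-projformula}, \eqref{eq:bc-staradj}, \eqref{eq:bc-shriekadj}; your write-up supplies the translation the paper leaves implicit, namely (op)cartesian morphisms detected by invertibility of components or adjoints, reduction to canonical (op)cleavage morphisms, and matching the adjoints with comparison maps such as \eqref{eq:projformulaexpanded}.
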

\begin{proof}
(\ref{it:bc-unit-ref}) and (\ref{it:bc-product-ref})
are reformulations of 
\eqref{eq:bc-unit} and \eqref{eq:bc-product} and
are encoded in the assertion that 
$I_\internal$ and $\tensor_\internal$
are morphisms in $\Fib_\calT$. See 
Definition~\ref{def:mortransfib}. 
On the other hand,
(\ref{it:bc-projformula-ref}),
(\ref{it:bc-staradj-ref}),
and
(\ref{it:bc-shriekadj-ref})
are reformulations of 
\eqref{eq:bc-projformula},
\eqref{eq:bc-staradj},
and 
\eqref{eq:bc-shriekadj},
respectively, while
(\ref{it:bc-homs-ref1}) and
(\ref{it:bc-homs-ref2})
are both reformulations of 
\eqref{eq:bc-homs}.
\end{proof}

The following proposition provides a ``non-algebraic'' way to phrase the 
commutation relation~\eqref{eq:commrelshriek}. 
See e.g.\ \cite[Lemma~16.1]{Shulman} for the proof.
The same statement with $\hpC$ replaced by $\hpC^\fop$
provides a reformulation of the commutation relation
\eqref{eq:commrelstar}.

\begin{prop}
\label{prop:commrelshriekinterpretation2}
Let
	\[\xymatrix{
		Z
		\ar[r]^{\beta}
		\ar[d]_{\mu}
		& 
		W
		\ar[d]^{\nu}
		\\
		X
		\ar[r]^{\alpha}
		& 
		Y	
	}\]
be a commutative square in $\hpC$ covering a 
homotopy cartesian square in $\calT$,
and assume that $\beta$ is cartesian and $\nu$ is opcartesian.
Then $\alpha$ is cartesian 
if and only if $\mu$ is opcartesian.
\qed
\end{prop}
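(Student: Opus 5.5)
The plan is to reduce the statement to the commutation relation \eqref{eq:commrelshriek} by factoring $\mu$ and $\alpha$ through canonical opcartesian and cartesian morphisms, respectively. Write the underlying homotopy cartesian square as in \eqref{eq:hcsquare}, so that $Z,W,X,Y$ lie over $C,D,A,B$ and $\beta,\mu,\alpha,\nu$ cover $g,h,f,k$. By Proposition~\ref{prop:cartmorprops}(\ref{it:cartsourceiso}) and its dual, I may replace $Z$ by $g^\ast W$ with $\beta$ the canonical cartesian morphism $g^\ast W\to W$. Likewise I may replace $Y$ by $k_!W$ with $\nu$ the canonical opcartesian morphism $W\to k_!W$. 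Replacing objects along vertical isomorphisms does not affect whether $\alpha$ is cartesian or $\mu$ is opcartesian.

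Next I factor $\mu$ as $Z\xto{\ \eta\ } h_!Z\xto{\ \hat\mu\ } X$, with $\eta$ the canonical opcartesian morphism and $\hat\mu$ vertical. I factor $\alpha$ as $X\xto{\ \bar\alpha\ } f^\ast Y\xto{\ \epsilon\ } Y$, with $\epsilon$ the canonical cartesian morphism and $\bar\alpha$ vertical. Then $\mu$ is opcartesian iff $\hat\mu$ is an isomorphism, and $\alpha$ is cartesian iff $\bar\alpha$ is an isomorphism. This follows from Proposition~\ref{prop:cartmorprops}(\ref{it:compiscart}),(\ref{it:isosarecart}),(\ref{it:cartfactor}),(\ref{it:cartoveriso}) and their duals: a vertical (op)cartesian morphism is an isomorphism. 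It therefore suffices to show that the vertical composite
\[
	\bar\alpha\circ\hat\mu\colon h_!g^\ast W \longto f^\ast k_! W
\]
is an isomorphism. Once that is known, $\hat\mu$ is an isomorphism iff $\bar\alpha$ is, which is the claim.

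The key step is to identify $\bar\alpha\circ\hat\mu$ with the Beck--Chevalley map \eqref{eq:shriekastcomm}. By the universal properties of $\eta$ (opcartesian) and $\epsilon$ (cartesian), the vertical morphism $\bar\alpha\circ\hat\mu$ is the unique vertical morphism $\gamma$ with $\epsilon\circ\gamma\circ\eta=\alpha\circ\mu$. By commutativity of the square, $\alpha\circ\mu=\nu\circ\beta$, which is built entirely from canonical morphisms. It thus remains to check that the composite \eqref{eq:shriekastcomm} also satisfies $\epsilon\circ\gamma\circ\eta=\nu\circ\beta$. I will unwind \eqref{eq:shriekastcomm} using the description of the $(f_!,f^\ast)$ adjunctions in Remark~\ref{rk:fibsandbasechange}, where the hom-sets $\calE_B(f_!Y,X)$ and $\calE_A(Y,f^\ast X)$ are both identified with the morphisms $Y\to X$ covering $f$. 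Under this identification, units and counits correspond to composites of canonical cartesian and opcartesian morphisms. The coherence isomorphism $g^\ast k^\ast\homot h^\ast f^\ast$ corresponds to the equality $k\circ g=f\circ h$ of composites of cartesian morphisms.

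This bookkeeping is the main, though routine, obstacle. I would check it by a diagram chase, tracking each of the three arrows of \eqref{eq:shriekastcomm} as a morphism of $\hpC$ covering an identity and then composing with $\eta$ and $\epsilon$. Granting the identification, $\bar\alpha\circ\hat\mu$ is the equivalence \eqref{eq:commrelshriek} of \cite[Cor.~5.12]{ABGparam}, and the proof is complete.
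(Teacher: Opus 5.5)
Your proof is correct and is the standard argument the paper defers to (the paper gives no proof itself, citing \cite[Lemma~16.1]{Shulman}): you factor through the canonical (op)cartesian morphisms and reduce to the Beck--Chevalley equivalence \eqref{eq:commrelshriek}. The bookkeeping you postpone does go through. Let $u\colon W\to k^\ast k_!W$ be the unit and $\tau\colon g^\ast k^\ast\homot h^\ast f^\ast$ the coherence isomorphism, so that $c_f c_h\tau=c_k c_g$ for the canonical cartesian morphisms $c$; then the composite $\gamma$ of \eqref{eq:shriekastcomm} satisfies $\epsilon\gamma\eta=c_f c_h\tau\, g^\ast u=c_k c_g\, g^\ast u=c_k u\, c_g=\nu\beta$.
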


We conclude the subsection with results
concerning the behaviour of morphisms
covering weak equivalences.
The following result is a reformulation of the fact 
that the adjunction $(f_!,f^\ast)$ is an adjoint 
equivalence of categories when $f$ is a weak equivalence. 
\begin{prop}
\label{prop:wecartiffopcart}
Suppose $f$ is a weak equivalence in $\calT$. 
Then a morphism of $\hpC$
covering $f$ is cartesian 
if and only if it is opcartesian. \qed
\end{prop}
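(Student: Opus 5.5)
We have to prove Proposition~\ref{prop:wecartiffopcart}: if $f\colon A\to B$ is a weak equivalence, a morphism of $\hpC$ covering $f$ is cartesian if and only if it is opcartesian.

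Since $\hpC$ is the Grothendieck construction of $B\mapsto \ho(\calC_{/B})$, a morphism $\alpha\colon X\to Y$ covering $f$ is a pair $(f,\bar\alpha)$ with $\bar\alpha\colon X\to f^\ast Y$ in $\ho(\calC_{/A})$. By the remark after Definition~\ref{def:grothendieckconstr}, $\alpha$ is cartesian if and only if $\bar\alpha$ is an isomorphism. So the task is to characterize opcartesianness in the same terms.

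First I describe opcartesian morphisms. By Remark~\ref{rk:fibsandbasechange}, morphisms $X\to Z$ covering $f$ correspond bijectively, and naturally in $Z\in\ho(\calC_{/B})$, to morphisms $f_!X\to Z$ in $\ho(\calC_{/B})$, via the adjunction $(f_!,f^\ast)$. Under this correspondence $\alpha$ corresponds to its adjoint $\alpha^\flat\colon f_!X\to Y$. The canonical opcartesian morphism $X\to f_!X$ corresponds to $\id_{f_!X}$, and $\alpha$ factors through it as the vertical morphism $\alpha^\flat$ following it. The opcartesian analogue of Proposition~\ref{prop:cartmorprops}(\ref{it:cartsourceiso}), together with parts (\ref{it:compiscart}) and (\ref{it:cartfactor}) dualized, gives: $\alpha$ is opcartesian if and only if $\alpha^\flat$ is an isomorphism. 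Here I use that a vertical opcartesian morphism is an isomorphism, which is part (\ref{it:cartoveriso}) dualized. It is routine to check that opcartesianness may be tested only against morphisms covering maps out of $B$ that factor through identities; I will simply cite the standard fibred-category argument.

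Next, when $f$ is a weak equivalence, the adjunction $(f_!,f^\ast)$ is an adjoint equivalence, so the unit $\eta$ and the counit $\varepsilon$ are isomorphisms (Section~\ref{subsubsec:basechangefunctors}). We have $\bar\alpha = f^\ast(\alpha^\flat)\circ\eta_X$. Since $\eta_X$ is an isomorphism and $f^\ast$ is an equivalence, hence conservative and reflecting isomorphisms, $\bar\alpha$ is an isomorphism exactly when $\alpha^\flat$ is. Combining the two characterizations proves the claim.

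I expect the only delicate point to be the bookkeeping that identifies $\bar\alpha$ with the adjoint transpose of $\alpha^\flat$ through the cleavage and opcleavage. This is just the definition of the bijection in Remark~\ref{rk:fibsandbasechange}, so there is no real obstacle.
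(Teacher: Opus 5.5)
Your argument is correct and is essentially the paper's own. The paper simply notes that the proposition is a reformulation of the fact that $(f_!,f^\ast)$ is an adjoint equivalence when $f$ is a weak equivalence, and your characterizations make that reformulation explicit: $\alpha$ is cartesian iff $\bar\alpha$ is invertible, $\alpha$ is opcartesian iff $\alpha^\flat$ is invertible, and the two are linked by $\bar\alpha = f^\ast(\alpha^\flat)\circ\eta_X$ with $\eta_X$ invertible. The aside about testing opcartesianness only against special morphisms is not needed, since the opcartesian analogues of Proposition~\ref{prop:cartmorprops} already give the characterization of opcartesian morphisms covering $f$.
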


Finally, we have the following propositions.

\begin{prop}
\label{prop:wecancel}
Suppose $\phi \colon X\to Y$ and $\psi \colon Y\to Z$ 
are morphisms
of $\hpC$ such that $\phi$ is an opcartesian 
morphism covering a weak equivalence. Then
$\psi$ is cartesian if and only if 
$\psi\circ \phi$ is.
\end{prop}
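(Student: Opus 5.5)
The plan is to reduce to the basic facts about cartesian morphisms in Proposition~\ref{prop:cartmorprops}, together with Proposition~\ref{prop:wecartiffopcart}. Let $\phi\colon X\to Y$ cover a weak equivalence $f\colon A\to B$, and let $\psi\colon Y\to Z$ cover $g\colon B\to C$. By Proposition~\ref{prop:wecartiffopcart}, the opcartesian morphism $\phi$ is also cartesian.

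For the forward direction, suppose $\psi$ is cartesian. Then $\psi\circ\phi$ is a composite of two cartesian morphisms, hence cartesian by Proposition~\ref{prop:cartmorprops}(\ref{it:compiscart}).

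The converse is where the weak equivalence hypothesis really matters, since Proposition~\ref{prop:cartmorprops}(\ref{it:cartfactor}) only cancels on the other side. So suppose $\psi\circ\phi$ is cartesian. First choose a cartesian morphism $\psi'\colon Y'\to Z$ covering $g$; this exists because $\hpC\to\calT$ is a fibration. By the universal property of $\psi'$, there is a unique vertical morphism $\gamma\colon Y\to Y'$ with $\psi=\psi'\circ\gamma$. It then suffices to show that $\gamma$ is an isomorphism, because isomorphisms are cartesian and composites of cartesian morphisms are cartesian.

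To see that $\gamma$ is an isomorphism:
\begin{enumerate}[(a)]
\item The morphism $\psi'\circ\gamma\circ\phi=\psi\circ\phi$ is cartesian, and $\psi'$ is cartesian. By Proposition~\ref{prop:cartmorprops}(\ref{it:cartfactor}), the morphism $\gamma\circ\phi$ is cartesian.
\item The morphism $\gamma\circ\phi$ covers the weak equivalence $f$, so by Proposition~\ref{prop:wecartiffopcart} it is also opcartesian.
\item Now both $\phi$ and $\gamma\circ\phi$ are opcartesian and cover the same morphism $f$. By the dual of Proposition~\ref{prop:cartmorprops}(\ref{it:cartsourceiso}), there is a unique vertical isomorphism $\theta\colon Y\to Y'$ with $\theta\circ\phi=\gamma\circ\phi$.
\item Since $\phi$ is opcartesian, a vertical morphism out of $Y$ is uniquely determined by its precomposite with $\phi$. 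Hence $\gamma=\theta$, so $\gamma$ is an isomorphism.
\end{enumerate}

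It follows that $\psi=\psi'\circ\gamma$ is cartesian. The only genuine obstacle is this converse direction: recognizing that one must route through an auxiliary cartesian lift $\psi'$ and then use opcartesianness of $\phi$ for the uniqueness step. Everything else is a routine application of Proposition~\ref{prop:cartmorprops} and Proposition~\ref{prop:wecartiffopcart}.
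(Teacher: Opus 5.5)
Your proof is correct and follows essentially the paper's argument: choose a cartesian lift $\psi'$ of the map covered by $\psi$, use Propositions~\ref{prop:cartmorprops}(\ref{it:cartfactor}) and~\ref{prop:wecartiffopcart} to obtain a second opcartesian morphism out of $X$ covering the weak equivalence, and conclude with the opcartesian analogue of Proposition~\ref{prop:cartmorprops}(\ref{it:cartsourceiso}) together with the uniqueness property of $\phi$. The only cosmetic difference is that you factor $\psi$ itself as $\psi'\circ\gamma$ and show $\gamma$ is invertible, whereas the paper factors $\psi\circ\phi$ as $\psi'\circ\phi'$ and then deduces $\psi=\psi'\circ\theta$.
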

\begin{proof}
If $\psi$ is cartesian, then so is 
$\psi\circ \phi$ by Proposition~\ref{prop:wecartiffopcart}
and Proposition~\ref{prop:cartmorprops}(\ref{it:compiscart}).
Suppose conversely that $\psi\circ \phi$ is cartesian.
Choose a cartesian morphism $\psi'\colon Y'\to Z$ covering the 
same morphism as $\psi$ does. Using the universal property
of $\psi'$, we may factor $\psi\circ\phi$ as a composite
$\psi\circ\phi = \psi'\circ\phi'$ where $\phi'\colon X\to Y'$ covers
the same morphism as $\phi$. By 
Proposition~\ref{prop:cartmorprops}(\ref{it:cartfactor}),
the morphism $\phi'$ is cartesian, so by 
Proposition~\ref{prop:wecartiffopcart} $\phi'$ is also 
opcartesian. By the analogue of 
Proposition~\ref{prop:cartmorprops}(\ref{it:cartsourceiso})
for opcartesian morphism, there exists a unique
vertical isomorphism
$\theta\colon Y \to Y'$ such that $\phi' = \theta \circ \phi$.
From the uniqueness part of the universal property of
the opcartesian morphism $\phi$ it now follows that 
$\psi =  \psi' \circ \theta$. 
Thus $\psi$ is cartesian by parts
(\ref{it:isosarecart})
and (\ref{it:compiscart}) of 
Proposition~\ref{prop:cartmorprops}.
\end{proof}

\begin{prop}
\label{prop:wecancel2}
Suppose $\phi \colon X\to Y$ and $\psi \colon Y\to Z$ 
are morphisms
of $\hpC$ such that $\psi$ is an opcartesian 
morphism covering a weak equivalence. Then
$\phi$ is cartesian if and only if 
$\psi\circ \phi$ is.
\end{prop}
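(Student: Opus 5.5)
Both directions reduce to facts already established: Propositions~\ref{prop:wecartiffopcart} and \ref{prop:cartmorprops}. This mirrors the proof of Proposition~\ref{prop:wecancel}, with the roles of source and target exchanged. Write $f = \Phi(\phi)$ and $g = \Phi(\psi)$, where $g$ is a weak equivalence.

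For the forward direction, suppose $\phi$ is cartesian. By Proposition~\ref{prop:wecartiffopcart}, the opcartesian morphism $\psi$ covering the weak equivalence $g$ is also cartesian. Hence $\psi\circ\phi$ is cartesian by Proposition~\ref{prop:cartmorprops}(\ref{it:compiscart}).

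For the converse, suppose $\psi\circ\phi$ is cartesian. Again $\psi$ is cartesian by Proposition~\ref{prop:wecartiffopcart}. Now $\psi$ and $\psi\circ\phi$ are both cartesian, so Proposition~\ref{prop:cartmorprops}(\ref{it:cartfactor}) applies directly and gives that $\phi$ is cartesian.

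The only point needing care is to confirm that Proposition~\ref{prop:cartmorprops}(\ref{it:cartfactor}) is stated in the needed form: if $\phi$ and $\phi\circ\psi$ are cartesian, then $\psi$ is. This is the right-cancellation form, and here the cartesian outer factor is $\psi$. So there is no real obstacle. In contrast with Proposition~\ref{prop:wecancel}, no auxiliary choice of cartesian lift and no opcartesian uniqueness argument is needed, because the factor being tested sits on the correct side for the cancellation property.
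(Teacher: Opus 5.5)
Your proposal is correct and is essentially the paper's argument. The paper proves this in one line from Proposition~\ref{prop:wecartiffopcart} together with parts (\ref{it:compiscart}) and (\ref{it:cartfactor}) of Proposition~\ref{prop:cartmorprops}, exactly as you do: first $\psi$ is cartesian, then composition gives one direction and cancellation gives the other.
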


\begin{proof}
The claim is immediate from 
Proposition~\ref{prop:wecartiffopcart}
and parts 
(\ref{it:compiscart})
and
(\ref{it:cartfactor})
of Proposition~\ref{prop:cartmorprops}.
\end{proof}

\subsection{Point set models for \texorpdfstring{$\hpSpaces$}{hpSpaces}}
\label{subsec:pointsetmodelsforhpspaces}

In this subsection, we will relate the 
rather abstractly defined category $\hpSpaces$
to much more concrete ones built out of point set level data. 
Moreover, in Proposition~\ref{prop:tcalc2} we show 
how a parametrized space over $B$ in the sense of a 
continuous map $\alpha \colon X \to B$ 
defines, for any $\calC$, an object of $\hpC$
over $B$. 
Recall that $\calT$ denotes the category of topological
spaces (assumed to be compactly generated and weak Hausdorff)
and continuous maps.

\begin{defn}
We write $\calT/B$ for the usual category of spaces
over a space $B$, so that an object in $\calT/B$
is a continuous map $X \to B$,
and a map in $\calT/B$ from $X\to B$ to $Y\to B$
is a continuous map $\phi\co X \to Y$ making the triangle
\begin{equation}
\label{diag:caltoverbmor}
\vcenter{\xymatrix{
	X 
	\ar[rr]^\phi
	\ar[dr]%
	&&
	Y
	\ar[dl]%
	\\
	&
	B
}}
\end{equation}
commutative. 
\end{defn}

We equip the categories $\calT/B$ with  
the standard model structure induced by the 
Quillen model structure on $\calT$, so that
a map in $\calT/B$ as in \eqref{diag:caltoverbmor} 
is a fibration, cofibration, or a weak equivalence
if and  only if the underlying map $\phi\colon X\to Y$ in $\calT$ is so
in the Quillen model structure on $\calT$.
(This model structure on $\calT/B$ is Quillen equivalent to 
May and Sigurdsson's \cite{MaySigurdsson} preferred model structure on 
$\calT/B$ and suffices for our purposes.)
We have a Quillen adjunction
\[
	f_! \colon \calT/A \longrightleftarrows \calT/B \colon f^\ast
\]
where the right adjoint is given by pullback along $f$
and the left adjoint by composition with $f$,
and therefore an induced adjunction
\[
	f_! \colon \ho(\calT/A) \longrightleftarrows \ho(\calT/B) \colon f^\ast
\]
on the level of homotopy categories.
We equip $\ho(\calT/B)$ with the cartesian symmetric monoidal structure,
and write $\times_B$ for the tensor product on $\ho(\calT/B)$.

\begin{defn}
Define 
\[
	\hpT \longto \calT
\] 
to be the symmetric monoidal fibration 
obtained by applying the Grothendieck construction of 
Theorem~\ref{thm:smgrothendieckconstr}
to the pseudofunctor
\begin{equation}
\label{eq:hptpf}
	\calT^\op \longto \smCat,
	\quad
	B \longmapsto \ho(\calT/B),
	\quad
	f \longmapsto f^\ast;
\end{equation}
cf.\ Definition~\ref{def:hpconstr}. 
\end{defn}

From the comparison given in \cite[App.~B]{ABGparam}, 
we have for every space $B$ an
equivalence of symmetric monoidal categories
\begin{equation}
\label{eq:hocaltoverbvshospacesoverb} 
	\ho(\calT/B) \homot \ho(\Spaces_{/B}),
\end{equation}
and these equivalences furthermore commute with the base change
functors $f_!$ and $f^\ast$.
Applying the Grothendieck construction to these 
equivalences, we now obtain
\begin{prop}
\label{prop:hpthomothpspaces}
There is an equivalence 
\begin{equation}
\label{eq:hpthomothpspaces}
	\hpT \homot \hpSpaces 
\end{equation}
in $\smFib_{\calT}$ 
preserving both cartesian and opcartesian morphisms.\qed
\end{prop}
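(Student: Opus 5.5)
The statement to prove is Proposition~\ref{prop:hpthomothpspaces}: an equivalence $\hpT \homot \hpSpaces$ in $\smFib_\calT$ that preserves cartesian and opcartesian morphisms. The plan is to obtain it from the equivalences \eqref{eq:hocaltoverbvshospacesoverb} by applying the symmetric monoidal Grothendieck construction of Theorem~\ref{thm:smgrothendieckconstr}.

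\emph{Step 1: a pseudo natural equivalence.} We assemble the equivalences $\Psi_B\colon \ho(\calT/B) \xto{\homot} \ho(\Spaces_{/B})$ of \cite[App.~B]{ABGparam} into a pseudo natural equivalence between the pseudofunctors \eqref{eq:hptpf} and $B\mapsto \ho(\Spaces_{/B})$, both viewed as functors $\calT^\op\to\smCat$. The data needed are:
\begin{itemize}
\item the symmetric monoidal equivalences $\Psi_B$;
\item natural isomorphisms $\Psi_A f^\ast \isom f^\ast \Psi_B$ for each $f\colon A\to B$, which are the commutation with base change supplied by the cited comparison, taken as symmetric monoidal transformations.
\end{itemize}
We must also check the coherence axioms for composites and identities. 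The cleanest route is to note that both pullback functors are derived from strictly functorial constructions: point-set pullback on one side and precomposition with $\Pi_\infty f$ on the other. The comparison of \cite{ABGparam} comes from a natural zigzag of Quillen equivalences, via $\Sing$ and straightening, that is natural in $B$, so the constraint isomorphisms are canonical and coherence follows from that naturality. This coherence check is the main obstacle, and we expect it to be largely bookkeeping.

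\emph{Step 2: apply the Grothendieck construction.} Applying $\grothconstr$ to the pseudo natural equivalence yields a morphism $\hpT\to\hpSpaces$ in $\smFib_\calT$. Since $\grothconstr$ is a $2$--equivalence by Theorem~\ref{thm:smgrothendieckconstr}, it sends equivalences to equivalences, so this morphism is an equivalence in $\smFib_\calT$. As a morphism of fibrations, it preserves cartesian morphisms by definition.

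\emph{Step 3: opcartesian morphisms.} It remains to show that opcartesian morphisms are preserved. Since the functor is an equivalence in $\Fib_\calT$, its pseudo-inverse $G$ is a morphism of fibrations that is simultaneously left and right adjoint to it, with vertical unit and counit. By the formal fact used in Proposition~\ref{prop:opcartpreservation}, left adjoints in $\Fib_\calT$ preserve opcartesian morphisms. Hence both the equivalence and its inverse preserve opcartesian morphisms.

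An alternative check for Step 3 is to verify that the mate $f_!\Psi_A \to \Psi_B f_!$ of the base change isomorphism is invertible, which is the statement that the comparison commutes with $f_!$ as asserted in \cite{ABGparam}. This is equivalent to preservation of opcartesian morphisms, by the description of $f_!$ in Remark~\ref{rk:fibsandbasechange}.
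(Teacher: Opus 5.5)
Your proposal is correct and is essentially the paper's argument: the paper obtains the proposition by applying the symmetric monoidal Grothendieck construction to the equivalences $\ho(\calT/B)\homot\ho(\Spaces_{/B})$ of \cite[App.~B]{ABGparam}, citing their compatibility with $f^\ast$ and $f_!$. Your Step~3 is a small refinement: it shows that preservation of opcartesian morphisms (equivalently, compatibility with $f_!$) follows formally from the functor being an equivalence in $\Fib_\calT$, so only the compatibility with $f^\ast$ needs to be imported from the citation.
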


For working with $\hpT$, it is useful to have 
explicit descriptions of the cartesian monoidal 
product $\times_B$ and the base change functors 
$f_!$ and $f^\ast$ on the categories $\ho(\calT/B)$.

\begin{lemma}
\label{lm:prodandbcfundesc}
Choose a functorial factorization of maps $X\to B$ in $\calT$ 
as composites $X \xto{\homot} X' \to B$ where the first map is a 
weak equivalence and the second one is a Serre fibration. Then:
\begin{enumerate}[(i)]
\item\label{it:hotbprod}
	There is a canonical natural equivalence
    \[
    	(X\xto{\,\alpha\,} B)\times_B (Y \xto{\,\beta\,} B) \homot (X'\times_B Y' \to B)
    \]
    where $\times_B$ on the left hand side is the
    cartesian product in $\ho(\calT/B)$ and 
    $\times_B$ on the right hand side refers to
    the cartesian product in $\calT/B$.
\item\label{it:hotbfast}
	Given $f\colon A \to B$, 
	there is a canonical natural equivalence
	\[
    	f^\ast(X\to B) \homot (X'\times_B A  \to A)
    \]
    where  $f^\ast$ is the functor
    $f^\ast \colon \ho(\calT/B) \to \ho(\calT/A)$.
\item\label{it:hotbfshriek}
	Given $f\colon A \to B$,  
	there is a canonical natural equivalence
	\[
		f_! (X\xto{\ \alpha\ } A) \homot (X \xto{\ f\circ \alpha\ } B).
	\]
	where $f_!$ is the functor $f_! \colon \ho(\calT/A) \to \ho(\calT/B)$.	
\end{enumerate}
\end{lemma}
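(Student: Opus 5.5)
We prove the three parts by the standard recipe for computing derived functors in the model categories $\calT/B$: replace objects by fibrant or cofibrant ones, then apply the point-set functors. In the model structure on $\calT/B$ induced from the Quillen model structure on $\calT$, an object $X\to B$ is fibrant exactly when it is a Serre fibration, and every object is cofibrant-replaceable. Moreover, since every space is fibrant in $\calT$, the functorial factorization $X \xto{\homot} X' \to B$ is a functorial fibrant replacement in $\calT/B$. We also use that $\calT$ is right proper, so pullback along a Serre fibration preserves weak equivalences.

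For part (\ref{it:hotbfast}), the point-set pullback $f^\ast$ is a right Quillen functor, so its total right derived functor is computed by applying it to a fibrant replacement. This gives $f^\ast(X\to B) \homot X'\times_B A \to A$. The identification is natural because the factorization is functorial. It is independent of the choice of fibrant replacement: two fibrant replacements are related by a weak equivalence between fibrations over $B$, and right properness shows that the induced map of pullbacks is again a weak equivalence.

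For part (\ref{it:hotbprod}), the product in $\ho(\calT/B)$ is the derived product, which is the homotopy pullback of $X\to B\leftarrow Y$. We compute it as the strict pullback after replacing both maps by fibrations; replacing just one would already suffice by right properness. The result is $X'\times_B Y'$. One then checks that this derived product is indeed the categorical product in $\ho(\calT/B)$. Since $\calT/B$ is a model category with products and fibrant objects are closed under products, the product in the homotopy category is obtained by taking the product of fibrant replacements. Cartesian monoidality then gives the claimed natural equivalence.

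For part (\ref{it:hotbfshriek}), the functor $f_!$ given by postcomposition with $f$ is left Quillen. It also preserves all weak equivalences, since weak equivalences are detected on the underlying maps of spaces. A left Quillen functor that preserves all weak equivalences needs no cofibrant replacement, so its left derived functor is $f_!$ itself. The main obstacle is bookkeeping rather than mathematics. One must confirm that the equivalence \eqref{eq:hocaltoverbvshospacesoverb} is compatible with these descriptions, which is not needed for the lemma as stated, and that the natural equivalences are canonical, meaning independent of the chosen factorization up to unique isomorphism in the homotopy category.
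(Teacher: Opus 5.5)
Your proposal is correct and takes the same approach as the paper. Both identify the fibrant objects of $\calT/B$ with Serre fibrations, so that the chosen factorization is a functorial fibrant replacement computing the product and $f^\ast$. Both then note that $f_!$ preserves all weak equivalences, so no cofibrant replacement is needed. You simply spell out routine model-categorical details the paper leaves implicit: right Quillen functors applied to fibrant replacements, right properness or Ken Brown's lemma, and products of fibrant objects computing products in the homotopy category.
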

\begin{proof}
An object $X\to B$ in $\calT/B$ is fibrant in our model structure
if and only if it is a Serre fibration. Thus
parts (\ref{it:hotbprod}) and (\ref{it:hotbfast}) follow.
Finally, part~(\ref{it:hotbfshriek})
follows from the observation that 
composition with $f$ preserves all weak equivalences, so
it is not necessary to pass to a cofibrant approximation when 
computing $f_!$.
\end{proof}

Our next goal is to exhibit a fibred category over $\calT$
of which $\hpT$ (and therefore $\hpSpaces$) can reasonably be regarded
as the homotopy category.

\begin{defn}
Define a category $\pT$ as follows. The objects of $\pT$ are
continuous maps $\alpha\colon X \to B$ in $\calT$, and a morphism 
in $\pT$ from $\alpha \colon X\to B$ to $\beta \colon Y\to C$ 
is a pair $(f,\bar{f})$ of continuous maps making the square 
\begin{equation}
\label{eq:pcaltmor}
\vcenter{\xymatrix{
	X
	\ar[r]^{\bar{f}}
	\ar[d]_\alpha
	&
	Y
	\ar[d]^{\beta}
	\\
	B
	\ar[r]^f
	&
	C
}}
\end{equation}
commutative. We equip $\pT$ with the symmetric monoidal structure
given by the direct product
\begin{equation}
\label{eq:pcalttensorprod} 
	(X\xto{\ \alpha\ } B) \times (Y \xto{\ \beta\ } C) 
	= 
	(X\times Y \xto{\ \alpha\times \beta\ } B\times C).
\end{equation}
\end{defn}

\begin{rem}[Cartesian and opcartesian morphisms in $\pT$]
It is readily verified that 
the functor $\pT \to \calT$ sending $\alpha \colon X\to B$ to $B$
is a symmetric monoidal fibration where a morphism $(f,\bar{f})$ in $\pT$
is cartesian if and only if the corresponding square \eqref{eq:pcaltmor}
is a pullback square in $\calT$. Moreover, it is not difficult to see 
that a morphism $(f,\bar{f})$ in $\pT$ is opcartesian if and only if
the map $\bar{f}$ is a homeomorphism.
\end{rem}

\begin{notation}
\label{not:xf}
Given a map $f\colon X \to B$ in $\calT$, we  
write $(X,f)$ for the object defined by $f$
in any of the categories
$\calT/B$, $\pT$,  $\ho(\calT/B)$, 
$\hpT$, $\ho(\Spaces_{/B})$, and $\hpSpaces$.
\end{notation}

\begin{prop}
\label{prop:tcalc2}
Suppose $\calC$ is a presentable symmetric monoidal $\infty$--category.
Then, up to canonical natural equivalence, there exists 
a unique functor
\[
	t_\calC \colon \pT \longto \hpC
\]
over $\calT$ which
\begin{enumerate}[\quad\;(a)]
\item \label{it:tcalc2oc}
	preserves opcartesian morphisms,
\item \label{it:tcalc2terminalobject}
	sends the terminal object $(\pt,\id_\pt)$ of $\pT$ to $S_\pt$, and
\item \label{it:tcalc2cweak}
	for every space $B \in \calT$ sends the unique map  
	$(B,\id_B)\to (\pt,\id_\pt)$ in $\pT$  
	to a cartesian morphism in $\hpC$.
\savecounteri
\end{enumerate}
Moreover, this functor 
\begin{enumerate}[\quad\;(a)]
\restorecounteri
\item \label{it:tcalc2sm}
	is symmetric monoidal 
	with respect to the product $\times$ on $\pT$ and 
	the external tensor product $\exttensor$ on $\hpC$,
	and
\item \label{it:tcalc2cstrong}
	sends a morphism $(f,\bar{f})$ of $\pT$ to a cartesian morphism in $\hpC$
    whenever the square \eqref{eq:pcaltmor} corresponding to $(f,\bar{f})$
    is homotopy cartesian.
\end{enumerate}
Explicitly,  $t_\calC$ is given on objects by
\[
	t_\calC (X,\alpha) = \alpha_! S_X
\]
where $S_X$ is the unit object in $\ho(\calC_{/X})$ 
and on morphisms by
\[
	t_{\calC}(f,\bar{f}) = \phi %
\]
where $\phi$ is the unique morphism 
making the square on the left below a commutative square
covering the square on the right.
\[
    \xymatrix{
    	S_X
    	\ar[r]^{\cart}
    	\ar[d]_{\opcart}
    	&
    	S_{Y}
    	\ar[d]^{\opcart}
    	\\
    	\alpha_! S_X
    	\ar@{-->}[r]^\phi
    	&
    	\beta_! S_{Y}
    }
    \qquad\qquad\qquad
    \xymatrix{
    	X
    	\ar[r]^{\bar{f}}
    	\ar[d]_{\alpha}
    	&
    	Y
    	\ar[d]^{\beta}
    	\\
    	B
    	\ar[r]^{f}
    	&
    	C
    }
\]
\end{prop}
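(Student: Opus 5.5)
Most of the work is uniqueness and the explicit formula, with existence obtained by checking that the formula defines a functor with the required properties. The approach is to factor each morphism of $\pT$ into pieces the three defining properties control. Every object $(X,\alpha)$ admits an opcartesian morphism $(\alpha,\id_X)\colon (X,\id_X)\to(X,\alpha)$ in $\pT$, since $\bar f=\id_X$ is a homeomorphism. Property (a) then forces $t_\calC(X,\alpha)\homot \alpha_!\,t_\calC(X,\id_X)$. The map $(X,\id_X)\to(\pt,\id_\pt)$ is sent to a cartesian morphism with target $S_\pt$ by (b) and (c). Combined with \eqref{eq:bc-unit}, this gives $t_\calC(X,\id_X)\homot c^\ast S_\pt\homot S_X$, where $c\colon X\to\pt$, and the identification is canonical by Proposition~\ref{prop:cartmorprops}(\ref{it:cartsourceiso}).

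For a morphism $(f,\bar f)$, consider the square in $\pT$ with top edge $(\bar f,\bar f)\colon (X,\id_X)\to(Y,\id_Y)$ and vertical edges the opcartesian maps above. The top edge is sent to a cartesian morphism: composing it with $(Y,\id_Y)\to(\pt,\id_\pt)$ gives $(X,\id_X)\to(\pt,\id_\pt)$, both of which map to cartesian morphisms, so Proposition~\ref{prop:cartmorprops}(\ref{it:cartfactor}) applies. Hence $t_\calC$ sends the top edge to the essentially unique cartesian $S_X\to S_Y$ covering $\bar f$. Functoriality then forces $t_\calC(f,\bar f)$ to be the morphism $\phi$ determined by the universal property of the opcartesian morphism $S_X\to\alpha_!S_X$. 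This proves uniqueness up to canonical equivalence.

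For existence, define $t_\calC$ by this formula. Functoriality follows from the uniqueness clauses of the universal properties of cartesian and opcartesian morphisms, together with the fact that composites of cartesian (respectively opcartesian) morphisms are cartesian (respectively opcartesian). For (a), take $\bar f$ a homeomorphism. Then $S_X\to S_Y$ is cartesian over a homeomorphism, hence an isomorphism by Proposition~\ref{prop:cartmorprops}(\ref{it:cartoveriso}) and in particular opcartesian. So $\phi$ composed with the opcartesian $S_X\to\alpha_!S_X$ equals an opcartesian composite, and the opcartesian analogue of Proposition~\ref{prop:cartmorprops}(\ref{it:cartfactor}) shows $\phi$ is opcartesian. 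Properties (b) and (c) are immediate.

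For (d), use Proposition~\ref{prop:exttensoropcarts} and the fact that $\exttensor$ preserves cartesian morphisms. We have $S_X\exttensor S_Y\homot S_{X\times Y}$ by \eqref{eq:exttensordef} and \eqref{eq:bc-unit}, so $\alpha_!S_X\exttensor\beta_!S_Y\homot(\alpha\times\beta)_!S_{X\times Y}$. Alternatively, check that $(X,\alpha),(Y,\beta)\mapsto t_\calC(X,\alpha)\exttensor t_\calC(Y,\beta)$ satisfies the characterizing properties of $t_\calC$ applied to products, and invoke uniqueness. The coherence checks for the symmetric monoidal structure are routine.

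The main obstacle is (e). Given a homotopy cartesian square \eqref{eq:pcaltmor}, form the commutative square in $\hpC$
\[
\xymatrix{
S_X\ar[r]^{\cart}\ar[d]_{\opcart} & S_Y\ar[d]^{\opcart}\\
\alpha_!S_X\ar[r]^{\phi} & \beta_!S_Y
}
\]
covering it. Proposition~\ref{prop:commrelshriekinterpretation2} applies with $\mu$, $\nu$ the opcartesian verticals, $\beta$ the cartesian top edge and $\alpha$ the bottom edge $\phi$; it states that $\phi$ is cartesian if and only if $\mu$ is opcartesian. Since $\mu$ is opcartesian by construction, $\phi$ is cartesian. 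The only care needed is matching the orientation of that proposition, whose horizontal edges cover $\bar f$ and $f$ and whose vertical edges cover $\alpha$ and $\beta$, exactly as here.
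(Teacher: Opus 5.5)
Your proposal is correct and follows essentially the same route as the paper. Uniqueness comes from the zigzag $(\pt,\id_\pt)\leftarrow(X,\id_X)\to(X,\alpha)$ and the square through $(\bar f,\bar f)$; existence comes from the explicit formula, (a) from the homeomorphism description of opcartesian morphisms in $\pT$ plus cancellation, (d) from Proposition~\ref{prop:exttensoropcarts}, and (e) from Proposition~\ref{prop:commrelshriekinterpretation2}. The only cosmetic difference is that you check (c) directly, whereas the paper deduces it as the special case of (e) for the pullback square of $B\to\pt$.
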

\begin{proof}
It is readily verified that the definitions given 
yield a functor $t_\calC\colon \pT \to \hpC$
over $\calT$.
From the construction, we see that the functor $t_\calC$
so defined satisfies (\ref{it:tcalc2terminalobject}),
and our description of the opcartesian morphisms in $\pT$ and 
the analogue of Proposition~\ref{prop:cartmorprops}(\ref{it:cartfactor})
for opcartesian morphisms imply that it satisfies (\ref{it:tcalc2oc}).
Proposition~\ref{prop:commrelshriekinterpretation2} implies that 
$t_\calC$ satisfies (\ref{it:tcalc2cstrong}) and therefore also
(\ref{it:tcalc2cweak}).
Finally, (\ref{it:tcalc2sm}) follows from 
Proposition~\ref{prop:exttensoropcarts}.

To see the uniqueness of $t_\calC$,
notice that for every object $(X,\alpha) \in \pT$
there is a natural zigzag of morphisms in $\pT$
\begin{equation}
\label{eq:ptzigzag}
\xymatrix@C+1em{
	(\pt,\id_\pt) 
	&
	\ar[l]^-{}
	(X,\id_X)
	\ar[r]^-{(\alpha,\id_X)}_-\opcart
	&
	(X,\alpha)
} 
\end{equation}
where the first map is the unique one to the terminal object of $\pT$,
and that for every morphism $(f,\bar{f}) \colon (X,\alpha) \to (Y,\beta)$
in $\pT$ there is a commutative diagram
\begin{equation}
\label{eq:ptdiag}
\vcenter{\xymatrix@R-3ex{
	&
	(X,\id_X) 
	\ar[rr]^-{(\alpha,\id_X)}_-\opcart
	\ar[dd]^{(\bar{f},\bar{f})}
	\ar[dl]_{}
	&&
	(X,\alpha)
	\ar[dd]^{(f,\bar{f})}
	\\
	(\pt,\id_\pt)
	\\
	&
	(Y,\id_Y)
	\ar[rr]^-{(\beta,\id_Y)}_-\opcart
	\ar[ul]^{}
	&&
	(Y,\beta)
}}
\end{equation}
Given functors $t_\calC, t'_\calC \colon \pT \to \hpC$ 
over $\calT$ satisfying (\ref{it:tcalc2oc}) through (\ref{it:tcalc2cweak}),
the universal properties of cartesian and opcartesian morphisms
yield a canonical isomorphism between the images of 
diagram \eqref{eq:ptzigzag} under $t_\calC$ and $t'_\calC$, 
and similarly for diagram \eqref{eq:ptdiag}. 
The isomorphism between the images of \eqref{eq:ptzigzag}
in particular gives an isomorphism 
$t_\calC(X,\alpha) \isom t'_\calC(X,\alpha)$,
and the isomorphism between the images of \eqref{eq:ptdiag}
shows that this isomorphism is natural.
\end{proof}

Our next goal is to assemble the projections $\calT/B \to \ho(\calT/B)$
into a functor $\pT \to \hpT$. We wish to use the Grothendieck construction
to do so, but since the pullback functors $f^\ast\colon \calT/B \to \calT/A$
and $f^\ast\colon \ho(\calT/B) \to \ho(\calT/A)$ fail to commute with the
aforementioned projections, it is more convenient to make use of the 
pushforward functors $f_!$ in the construction. As 
the functors $\pT \to \calT$ and $\hpT \to \calT$ 
are opfibrations, their opposites 
$\pT^\op \to \calT^\op$ and $\hpT^\op \to \calT^\op$ 
are fibrations, so we may obtain them by 
applying the Grothendieck construction to the 
pseudofunctors $(\calT^\op)^\op  = \calT \to \Cat$
given by
\[
	B\longmapsto \calT/B, \ f \longmapsto f_!
	\qquad\text{and}\qquad
	B\longmapsto \ho(\calT/B), \ f \longmapsto f_!
\]
and passing to opposite categories.
In view of Lemma~\ref{lm:prodandbcfundesc}(\ref{it:hotbfshriek}),
the projections $\calT/B \to \ho(\calT/B)$
give a pseudo natural transformation between 
these pseudofunctors. 
We define the projection 
\begin{equation}
\label{eq:pttohpt}
	\pT \longto \hpT 
\end{equation}
 to be the functor
obtained by applying the Grothendieck construction 
to this pseudo natural transformation and passing to 
to opposite categories.

\begin{prop}
The projection 
\[
	\pT \longto \hpT
\]
satisfies conditions (\ref{it:tcalc2oc})---(\ref{it:tcalc2cweak})
of Proposition~\ref{prop:tcalc2}.
Consequently, 
it
satisfies conditions
(\ref{it:tcalc2oc})---(\ref{it:tcalc2cstrong})
of Proposition~\ref{prop:tcalc2}.
In particular, it is symmetric monoidal,
preserves opcartesian morphisms, and
sends a morphism $(f,\bar{f})$ of $\pT$
to a cartesian morphism in $\hpT$ 
whenever the commutative square in $\calT$
corresponding to $(f,\bar{f})$ is 
homotopy cartesian.
\end{prop}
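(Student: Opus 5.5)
The plan is to check conditions (\ref{it:tcalc2oc})--(\ref{it:tcalc2cweak}) of Proposition~\ref{prop:tcalc2} directly for the projection \eqref{eq:pttohpt}, taking $\calC = \Spaces$ and transporting along the equivalence $\hpT \homot \hpSpaces$ of Proposition~\ref{prop:hpthomothpspaces}. That equivalence preserves cartesian and opcartesian morphisms, is symmetric monoidal, and carries the unit $(\pt,\id_\pt)$ to $S_\pt$. Once the three conditions hold, the uniqueness clause of Proposition~\ref{prop:tcalc2} identifies the projection, up to canonical natural equivalence, with $t_{\Spaces}$ composed with the inverse equivalence. Properties (\ref{it:tcalc2sm}) and (\ref{it:tcalc2cstrong}) then transfer from $t_\Spaces$, because they are invariant under natural isomorphism over $\calT$ and under equivalences preserving (op)cartesian morphisms and monoidal structure. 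Strictly, Proposition~\ref{prop:tcalc2} asks for symmetric monoidality only up to canonical equivalence, and I will read the conclusion in that sense.

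Conditions (\ref{it:tcalc2oc}) and (\ref{it:tcalc2terminalobject}) should follow from the construction. The projection is defined by applying the Grothendieck construction to a pseudo natural transformation between the pseudofunctors $B \mapsto \calT/B$, $f\mapsto f_!$ and $B\mapsto\ho(\calT/B)$, $f\mapsto f_!$, and then passing to opposites. Morphisms of fibrations preserve cartesian morphisms, so after taking opposites the projection preserves opcartesian morphisms. The terminal object $(\pt,\id_\pt)$ maps to the class of $\id_\pt$ in $\ho(\calT/\pt)$. This is the terminal object, hence the monoidal unit for the cartesian product $\times_\pt$, and corresponds to $S_\pt$ in $\ho(\Spaces_{/\pt})$.

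Condition (\ref{it:tcalc2cweak}) is the main point. Write $c\colon B\to\pt$. The image of $(B,\id_B)\to(\pt,\id_\pt)$ has the form $(c,\bar\phi)$, where $\bar\phi\colon (B,\id_B)\to c^\ast(\pt,\id_\pt)$ is a morphism of $\ho(\calT/B)$. By the description of the Grothendieck construction, this is cartesian exactly when $\bar\phi$ is an isomorphism. By Lemma~\ref{lm:prodandbcfundesc}(\ref{it:hotbfast}), $c^\ast(\pt,\id_\pt)\homot (\pt'\times_\pt B\to B)$ with $\pt'$ a fibrant replacement of $\pt$. Since $\pt$ is already fibrant, this is $(B,\id_B)$, i.e.\ the terminal object of $\ho(\calT/B)$. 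Since $(B,\id_B)$ is terminal in $\ho(\calT/B)$ too, the comparison $\bar\phi$ is a map between terminal objects and so an isomorphism.

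The expected obstacle is unwinding the opposite-category Grothendieck construction, which builds the functor out of $f_!$ rather than $f^\ast$, to identify exactly what $\bar\phi$ is. I would avoid that by using terminality as above: any vertical map into $c^\ast(\pt,\id_\pt)$ from a terminal object is an isomorphism, so whatever explicit form $\bar\phi$ takes, no further identification is needed.
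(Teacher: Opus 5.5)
Your proposal is correct and follows essentially the same route as the paper. Conditions (\ref{it:tcalc2oc}) and (\ref{it:tcalc2terminalobject}) come from the construction as the opposite of a morphism of fibrations, condition (\ref{it:tcalc2cweak}) from terminality together with Lemma~\ref{lm:prodandbcfundesc}(\ref{it:hotbfast}), and the remaining properties from the uniqueness clause of Proposition~\ref{prop:tcalc2} transported along $\hpT\homot\hpSpaces$. The only difference is bookkeeping in (\ref{it:tcalc2cweak}): the paper uses the $(r^B_!,r_B^\ast)$ adjunction and terminality of $(\pt,\id_\pt)$ in $\ho(\calT/\pt)$ to see that there is exactly one morphism $(B,\id_B)\to(\pt,\id_\pt)$ in $\hpT$, whereas you observe directly that its component over $B$ is a map between terminal objects of $\ho(\calT/B)$. (The functorial factorization need not return $\pt$ itself, but the resulting $\pt'$ is weakly contractible; alternatively, the derived right adjoint $c^\ast$ preserves terminal objects.)
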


\begin{proof}
By construction, $\pT \to \hpT$ is the opposite of a
morphism $\pT^\op \to \hpT^\op$ of fibrations,
and hence preserves opcartesian morphisms.
Thus it satisfies condition (\ref{it:tcalc2oc})
of Proposition~\ref{prop:tcalc2}.
Condition (\ref{it:tcalc2terminalobject})
is also immediate from the construction of 
the projection $\pT \to \hpT$.

To prove condition (\ref{it:tcalc2cweak}),
notice first that maps 
$(B,\id_B) \to (\pt,\id_\pt)$
in $\hpT$ 
are in bijection with 
maps 
$r^B_!  (B,\id_B) = (B,r_B)\to (\pt,\id_\pt)$
in the fibre $\ho(\calT/\pt)$
of $\hpT \to \calT$ over $\pt$.
Here $r^B = r_B \colon B \to \pt$ is the unique continuous map.
Since  $(\pt,\id_\pt)$ is terminal
in $\ho(\calT/\pt)$,
it follows that there is a unique 
morphism
$(B,\id_B)\to (\pt,\id_\pt)$
in $\hpT$. 
This morphism must be therefore be the image
of the unique morphism 
$(B,\id_B)\to (\pt,\id_\pt)$
in $\pT$
under the projection $\pT \to \hpT$.
Using Lemma~\ref{lm:prodandbcfundesc}(\ref{it:hotbfast}),
we see that the pullback $r_B^\ast (\pt,\id_\pt)$
in $\hpT$
is equivalent in the fibre $\ho(\calT/B)$
of $\hpT\to \calT$ over $B$ 
to $(B,\id_B)$,
so the morphism
$(B,\id_B)\to (\pt,\id_\pt)$
in $\hpT$ is cartesian.
Thus condition (\ref{it:tcalc2cweak})
holds.

In view of the equivalence 
$\hpT\homot\hpSpaces$ 
of Proposition~\ref{prop:hpthomothpspaces},
the rest of the claim now follows
from Proposition~\ref{prop:tcalc2}.
\end{proof}

\begin{rem}
\label{rk:tcalcrecognition}
Suppose $\calC$ is a presentable symmetric monoidal $\infty$--category.
Given a symmetric monoidal $\infty$--functor
$F \colon \Spaces \to \calC$
admitting a right adjoint,
the uniqueness part of Proposition~\ref{prop:tcalc2}
implies that the composite 
\[
	\pT \longto \hpT \homot \hpSpaces \xto{\ F_\fw\ } \hpC
\]
of $F_\fw$ with 
\eqref{eq:hpthomothpspaces} and  \eqref{eq:pttohpt}
is naturally equivalent to $t_\calC$. This illustrates the behaviour
of the functor $F_\fw$, and applies for example to the functor
$\suspension^\infty_+ \colon \Spaces \to \Spectra$, 
the composite 
\[
	\Spaces \xto{\ \suspension^\infty_+\ } \Spectra \xto{\ R\smashprod\ } \Mod^R
\]
for a ring spectrum $R$,
and the composite
\[
	\Spaces \xto{\ \suspension^\infty_+\ } \Spectra \xto{\ L\ } \Spectra^\ell
\]
where $L$ is the localization functor.
\end{rem}

We conclude the section by pointing out 
that the categories $\ho(\PointedSpaces_{/B})$
are also equivalent to ones
built from point set level data.

\begin{defn}
\label{def:exspace}
By an \emph{ex-space} over a space $B \in \calT$,
we mean a space $X\in\calT$ 
equipped with a map $s_X \colon B \to X$, called the \emph{section},
and a map $p_X \colon X \to B$, called the \emph{projection}, whose composite
is the identity map of $B$. A morphism $X\to Y$ of ex-spaces over $B$ 
is a continuous map $X\to Y$ commuting with the sections and the projections.
We write $\calT_B$ for the category of ex-spaces over $B$.
\end{defn}

\begin{rem}
Ex-spaces over $B$ are sometimes also called \emph{retractive spaces} over $B$
(since the section and the projection of an ex-space $X$ over $B$
exhibit $B$ as a retract of $X$)
and \emph{parametrized pointed spaces} over $B$ (since the section of an ex-space
can be thought of as a continuous choice of a basepoint for each fibre of the projection).
\end{rem}

By the comparison given in \cite[App.~B]{ABGparam},
there exist equivalences
\begin{equation}
\label{eq:fwpointedspacesandexspaces}
	\ho(\PointedSpaces_{/B})
	\homot
	\ho(\calT_B)
\end{equation}
where $\calT_B$ is the category of ex-spaces over $B$,
and these equivalences preserve the smash products $\smashprod_B$
and the base change functors $f_!$ and $f^\ast$, where $\smashprod_B$,
$f_!$, and $f^\ast$ on the right hand side are as in 
May and Sigurdsson's work \cite{MaySigurdsson}.
A very useful alternative reference for the aforementioned
structure on $\ho(\calT_B)$ is \cite{Malkiewich20}.

\subsection{The \texorpdfstring{$\infty$--}{infinity }category \texorpdfstring{$\pC$}{pC}}
\label{subsec:pc}
Our aim in this subsection is to refine the
fibred and opfibred category
$\hpC \to \calT$
into a cartesian and cocartesian fibration
of $\infty$--categories
$\pC \to N\calT$.
This refinement will be needed 
for two main purposes: first,
for formulating and proving
Theorem~\ref{thm:cosheaf},
which asserts that the 
objects $H_\bullet(B;X)$
studied in Section~\ref{sec:hbullet}
can be computed by assembling together
the objects $H_\bullet(U_i;X|U_i)$, $i\in I$,
when the sets $U_i$, $i\in I$ form a suitable open cover of $B$; 
and second, for constructing
relative versions $H_\bullet(B,B_0;X)$
of the objects $H_\bullet(B;X)$.
These relative objects will be 
needed in the construction of the 
Serre spectral sequence in Section~\ref{sec:serress}.
The reader not interested in 
Theorem~\ref{thm:cosheaf}
or the relative $H_\bullet$ may safely
skip over to Section~\ref{sec:hbullet}.

\begin{defn}
\label{def:pc}
We define
\[
	V\colon \pC \longto N\calT
\]
to be the cartesian fibration of $\infty$--categories
obtained by applying the weighted nerve construction
of \cite[\href{https://kerodon.net/tag/025X}{Def.~025X}]{kerodon}
to the functor
\[
	\calT^\op \longto \sSet, 
	\qquad
	B \longmapsto (\calC_{/B})^\op,
	\quad
	f \longmapsto (f^\ast)^\op
\]
and passing to opposite $\infty$--categories. 
Here $\sSet$ denotes the 
category of simplicial sets.
That $V$ is a cartesian fibration follows from
\cite[\href{https://kerodon.net/tag/046Y}{Cor.~046Y}]{kerodon}.
Explicitly, $n$--simplices in $\pC$ are pairs $(\vec{B},\vec{\sigma})$
where $\vec{B}$ is a functor $[n] \to \calT$, that is, a sequence
\begin{equation}
\label{eq:bseq}
	\vec{B} 
	= 
	\big(	
    	B_0 
    	\xto{\ f_1\ }
    	B_1
    	\xto{\ f_2\ }
    	\cdots
    	\xto{\ f_n\ }
    	B_n 
	\big)
\end{equation}
of spaces and continuous maps, and $\vec{\sigma}$ is a collection of 
simplices  
\[
	\vec{\sigma} 
	= 
	\{
		\sigma_j\colon \Delta^{\{j,\ldots,n\}} \to \calC_{/B_j} 
	\}_{0 \leq j \leq n}
\]
fitting into a commutative diagram
\[\xymatrix{
	\Delta^{\{0,\ldots,n\}}
	\ar[d]_{\sigma_0}
	&
	\ar@{_{(}->}[l] %
	\Delta^{\{1,\ldots,n\}}
	\ar[d]_{\sigma_1}
	&
	\ar@{_{(}->}[l] %
	\quad
	\cdots
	\quad
	&
	\ar@{_{(}->}[l] %
	\Delta^{\{n\}}
	\ar[d]^{\sigma_n}
	\\
	\calC_{/B_0}
	&
	\ar[l]_-{f_1^\ast}
	\calC_{/B_1}
	&
	\ar[l]_-{f_2^\ast}
	\quad\cdots\quad
	&
	\ar[l]_-{f_n^\ast}
	\calC_{/B_n}	
}\]
Here $\Delta^S$ for $S\subset \{0,\ldots,n\}$ denotes the face of $\Delta^n$
spanned by the vertices belonging to $S$.
The map $\alpha^\ast \colon \pC_n \to \pC_m$ associated 
to an order-preserving map $\alpha \colon [m] \to [n]$
is given by
\[
	\alpha^\ast(\vec{B}, \vec{\sigma})
	=
	(\vec{B} \circ \alpha, \vec{\sigma}')
\]
where the $i$--th simplex in $\vec{\sigma}'$ is the composite
\[
	\sigma'_i 
	\colon 
	\Delta^{\{i,\ldots,m\}}
	\xto{\ \alpha\ } 
	\Delta^{\{\alpha(i),\ldots,n\}}
	\xto{\ \sigma_{\alpha(i)}\ }
	\calC_{/B_{\alpha(i)}}.
\]
In particular, an object in $\pC$ is a pair $(B,X)$
where $B$ is a space and $X$ is an object in $\calC_{/B}$,
and a morphism in $\pC$ from $(B,X)$ to $(C,Y)$ is a pair
$(f,\phi)$ where $f\colon B\to C$ is a continuous map and
$\phi$ is a morphism $X \to f^\ast Y$ in $\calC_{/B}$.
Compare with the Grothendieck construction of Definition~\ref{def:grothendieckconstr}.
\end{defn}

We note that the fibre of $V \colon \pC\to N\calT$
over a vertex $B$ of $N\calT$ is canonically isomorphic to $\calC_{/B}$.
Moreover, using the explicit description of $\pC$ 
given in Definition~\ref{def:pc}, it is straightforward to 
verify that applying the homotopy category functor
to the cartesian fibration $V \colon \pC\to N\calT$ recovers
the fibration $\hpC \to \calT$.

\begin{lemma}
The  cartesian fibration 
$V \colon \pC\to N\calT$
is also a cocartesian fibration.
\end{lemma}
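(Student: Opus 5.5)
The plan is to use the standard criterion for a cartesian fibration of $\infty$--categories to also be a cocartesian fibration: a cartesian fibration $V\colon \pC \to N\calT$ is cocartesian precisely when, for every edge $f\colon A\to B$ of $N\calT$, the contravariant transport functor $f^\ast\colon \pC_B \to \pC_A$ admits a left adjoint (see \cite[Cor.~5.2.2.5]{HTT}, or the corresponding statement in \cite{kerodon}). The fibres of $V$ are canonically isomorphic to $\calC_{/A}$ and $\calC_{/B}$, so the work reduces to identifying the transport functor and producing its left adjoint.

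First I will identify the transport functor. From the explicit description of $\pC$ via the weighted nerve in Definition~\ref{def:pc}, a morphism $(f,\phi)\colon (A,Y)\to(B,X)$ with $\phi\colon Y\to f^\ast X$ an equivalence in $\calC_{/A}$ is $V$--cartesian; \cite[Cor.~046Y]{kerodon} essentially gives this. Hence the transport functor along $f$ is (equivalent to) the $\infty$--functor
\[
	f^\ast = (\Pi_\infty f)^{\op,\ast}\colon \Fun((\Pi_\infty B)^\op,\calC)\longto \Fun((\Pi_\infty A)^\op,\calC),
\]
given by precomposition.

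Second, I will produce the left adjoint. Since $\calC$ is presentable, it admits all small colimits, so left Kan extension along $(\Pi_\infty f)^\op$ exists and is left adjoint to precomposition \cite[Prop.~4.3.3.7]{HTT}. This gives an $\infty$--categorical left adjoint $f_!$, which lifts the adjunction $(f_!,f^\ast)$ on homotopy categories from Section~\ref{subsubsec:basechangefunctors}. The criterion then shows that $V$ is a cocartesian fibration. The $V$--cocartesian edges over $f$ are the morphisms $(f,\phi)\colon (A,Y)\to(B,X)$ for which the adjoint $f_!Y\to X$ of $\phi$ is an equivalence.

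The main obstacle is the first step: checking carefully that the transport functor for the weighted-nerve construction, after passing to opposites, really is $f^\ast$ up to equivalence. The opposites involved make it easy to confuse which direction of fibration one has. If citing the criterion in that form is awkward, I would instead verify the cocartesian lifting property directly. Given $(A,Y)$ and $f$, take the edge $(f,\eta)\colon (A,Y)\to(B,f_!Y)$ with $\eta$ the unit. Its cocartesianness amounts to requiring that, for each $Z$ over $C$ and $g\colon B\to C$, the map
\[
	\Map_{\calC_{/B}}(f_!Y,g^\ast Z)\longto \Map_{\calC_{/A}}(Y,f^\ast g^\ast Z)
\]
be an equivalence. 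That holds by the adjunction, using the fibrewise description of mapping spaces in a cartesian fibration.
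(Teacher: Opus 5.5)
Your proposal is correct and is essentially the paper's proof: the paper identifies the transport functor along $f\colon A\to B$ with $f^\ast\colon \calC_{/B}\to\calC_{/A}$, notes that it has a left adjoint, and concludes by \cite[Cor.~5.2.2.5]{HTT}. Your left Kan extension argument for that left adjoint and your fallback direct check of the cocartesian lifting property are both valid, but the paper does not need them.
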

\begin{proof}
Given an edge $f\colon A \to B$ in $N\calT$, the induced functor
between the fibres of $V$ is $f^\ast \colon \calC_{/B} \to \calC_{/A}$.
Since $f^\ast$ admits a left adjoint, the claim follows
from \cite[Cor.~5.2.2.5]{HTT}.
\end{proof}

We wish to relate $V$--cartesian and $V$--cocartesian morphisms
in $\pC$ to cartesian and opcartesian morphisms in $\hpC$.

\begin{defn}
Let $F\colon \calA \to \calB$ be a functor between ordinary categories.
A morphism $f \colon Y \to X$ in $\calA$ is called \emph{precartesian}
with respect to $F$
if for every $g \colon Z \to X$ satisfying $F(g) = F(f)$
there exists a unique morphism $h \colon Z \to Y$ such that 
$g = f h$
and
$F(h) = \id_{F(Y)}$.
\end{defn}

Using the definitions of $\pi$--cartesian 
and locally $\pi$--cartesian morphisms
(\cite[\href{https://kerodon.net/tag/01T5}{Def.~01T5}
and 
\href{https://kerodon.net/tag/01TX}{Def.~01TX}]{kerodon})
and the explicit description of the homotopy category of an 
$\infty$--category
(\cite[\href{https://kerodon.net/tag/0049}{Subsec.~0049}]{kerodon}),
it is straightforward to verify the following lemma. 

\begin{lemma}
\label{lm:locpicartimpliesprecart}
Let $\calE$ be an $\infty$--category and let $\pi\colon \calE \to S$ be a morphism
of simplicial sets. Suppose $e$ is a locally $\pi$--cartesian
edge of $\calE$. Then the image of $e$ in the homotopy category $\ho(\calE)$ of $\calE$
is precartesian with respect to the functor 
$\ho(\pi) \colon \ho(\calE)\to \ho(S)$ induced by $\pi$
on the level of homotopy categories.
\qed
\end{lemma}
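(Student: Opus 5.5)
The statement to prove is Lemma~\ref{lm:locpicartimpliesprecart}. The plan is to unwind both definitions and compare them directly, using the explicit description of $\ho(\calE)$: its morphisms are homotopy classes of edges, and the composite $[g]\circ[f]$ is represented by any edge $h$ for which there is a $2$--simplex with boundary $(g,h,f)$.

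Let $e\colon Y\to X$ be locally $\pi$--cartesian, lying over an edge $\bar e\colon \pi(Y)\to\pi(X)$ of $S$. Recall from \cite[\href{https://kerodon.net/tag/01TX}{Def.~01TX}]{kerodon} what this means. Form the pullback $\calE' = \Delta^1\times_S \calE$ along $\bar e\colon\Delta^1\to S$. Then $e$, viewed as an edge of $\calE'$, is cartesian for the inner fibration $\calE'\to\Delta^1$. Concretely, for every $Z$ in the fibre over $\pi(Y)$, composition with $e$ induces a homotopy equivalence
\[
	\Map_{\calE'_{0}}(Z,Y)\longto \Map_{\calE'}(Z,X)\times_{\Map_{\Delta^1}(0,1)} \{\bar e\},
\]
that is, onto the space of edges $Z\to X$ of $\calE$ lying over $\bar e$.

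Next, take a morphism $[g]\colon Z\to X$ in $\ho(\calE)$ with $\ho(\pi)[g]=\ho(\pi)[e]$. The key step is to replace $g$, within its homotopy class, by an edge lying over exactly $\bar e$, not merely over an edge homotopic to it in $S$. If $S$ is an $\infty$--category, or more generally if $\pi$ is an inner fibration, I would lift the homotopy $\pi(g)\simeq \bar e$ along $\pi$ and change $g$ by a homotopy so that $\pi(g)=\bar e$. After that, $Z$ lies in the fibre over $\pi(Y)$, so $\pi_0$ of the equivalence above produces a class $[h]\colon Z\to Y$ in the fibre $\calE'_0$. This gives $h$ with $[e][h]=[g]$ and $\ho(\pi)[h]=\id$, which settles existence.

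For uniqueness, suppose $[h']$ is a vertical class (so $\pi(h')$ is homotopic to the identity, adjusted again so that $h'$ lies in the fibre) with $[e][h']=[g]$. A homotopy between $e\circ h'$ and $g$ can be taken to lie over $\bar e$; this uses the same lifting step together with the identification of fibres of $\calE'$. Injectivity on $\pi_0$ then gives $[h']=[h]$.

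I expect the main obstacle to be exactly this bookkeeping of "lies over $\bar e$ strictly versus up to homotopy". In a generic simplicial set $S$, homotopy classes in $\ho(S)$ are generated by $2$--simplices and need not be strict. I would handle this by working in the stated generality: the condition $\ho(\pi)[g]=\ho(\pi)[e]$ only demands equality in $\ho(S)$. In the application, however, $S=N\calT$ is the nerve of an ordinary category, where homotopic edges are equal, so the issue disappears. If the general case resists, I would restrict the argument to $S$ the nerve of a category, which is the only case the paper uses.
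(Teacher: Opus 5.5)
Your core argument is right when $S$ is the nerve of an ordinary category. You pull back along $\bar e\colon\Delta^1\to S$ to $\calE'=\Delta^1\times_S\calE$, then read off existence and uniqueness from $\pi_0$ of the equivalence $e\circ-\colon\Map_{\calE'}(Z,Y)\to\Map_{\calE'}(Z,X)$. This is the direct unwinding of definitions the paper has in mind, and in the nerve case it is a complete proof. There $\ho(\pi)[g]=\ho(\pi)[e]$ forces $\pi(g)=\bar e$ on the nose. Crucially, a $2$--simplex of a nerve is determined by its boundary. So any $2$--simplex of $\calE$ forming $e\circ h'$ lies over $s_0(\bar e)$, and any one witnessing $e\circ h'\homot g$ lies over $s_1(\bar e)$; both therefore lie inside $\calE'$. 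This is what makes homotopy in $\calE$ and in $\calE'$ agree for edges over $\bar e$.

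Beyond that case, the two steps you propose are not just unproven but false, because the lemma itself fails there.

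\emph{Uniqueness for $S$ an $\infty$--category.} A $2$--simplex of $\calE$ exhibiting $e\circ h'\homot g$ covers a $2$--simplex of $S$ with $d_0=\id$ and $d_1=d_2=\bar e$. That is a possibly nontrivial loop at $\bar e$ in $\Map_S(\pi Z,\pi X)$, and no horn lifting replaces it by $s_1(\bar e)$. In the fibre sequence $\Map_{\calE'}(Z,X)\to\Map_\calE(Z,X)\to\Map_S(\pi Z,\pi X)$, $\pi_1$ of the base can merge components of the fibre. Concretely:
\begin{itemize}
\item Take $S$ with objects $s,t$, $\Map_S(s,s)\homot\ast$, $\Map_S(t,s)=\emptyset$ and $\Map_S(s,t)\homot S^1$.
\item Let $\pi$ be the right fibration classified by the functor $S^\op\to\Spaces$ with $t\mapsto\ast$, $s\mapsto S^1$, and $\Map_S(s,t)\to S^1$ the identity.
\item Let $e\colon Y\to X$ cover $s\to t$; it is cartesian, like every edge of a right fibration.
\item Then $\Map_\calE(Y,X)\homot\ast$ but $\Map_\calE(Y,Y)\homot\Omega S^1\homot\Z$.
\item These $\Z$ classes are all vertical in $\ho(\calE)$ and all satisfy $[e][h]=[e]$, so $[e]$ is not precartesian.
\end{itemize}

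\emph{Existence when $S$ is not an $\infty$--category.} Your ``more generally, if $\pi$ is an inner fibration'' fails. Take $S=\Delta^2\cup_{\Lambda^2_1}\Delta^2$, with nondegenerate $2$--simplices of faces $(d_0,d_1,d_2)=(x,a,y)$ and $(x,b,y)$; then $[a]=[b]$ in $\ho(S)$. Let $\calE$ be the nerve of the category whose only non-identity arrows are $e\colon Y\to X$ and $g\colon Z\to X$, sent to $b$ and $a$. This is an inner fibration over $S$, since no horn in it sees $x$ or $y$. Here $e$ is locally cartesian, yet there is no morphism $Z\to Y$ at all.

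So do not leave the restriction to nerves as a contingency: state it, and your proof is complete. It is also all the paper needs. The lemma is used only through Proposition~\ref{prop:hcartisfib}, which is applied only to $N\calT$ and $N\calT^\op$ in Corollary~\ref{cor:pccartcocartmor}. Note that Proposition~\ref{prop:hcartisfib}, as stated for arbitrary $\infty$--categories $\calB$, is hit by the same right-fibration example.
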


Using Lemma~\ref{lm:locpicartimpliesprecart}, we can verify

\begin{prop}
\label{prop:hcartisfib}
Let $\calE$ and $\calB$ be $\infty$--categories, and suppose $\pi \colon \calE \to \calB$
is a cartesian fibration. Then the functor $\ho(\pi) \colon \ho(\calE) \to \ho(\calB)$ 
induced by $\pi$ on the level of homotopy categories is a fibration.
Moreover, a morphism in $\calE$ is $\pi$--cartesian if and only if 
its image in $\ho(\calE)$ is cartesian with respect to $\ho(\pi)$. 
\end{prop}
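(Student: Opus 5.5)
The plan is to prove the second assertion first and then deduce the fibration property from it. Let $e\colon y \to x$ be an edge of $\calE$ over $f = \pi(e)$, and write $[e]$ for its image in $\ho(\calE)$.

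\emph{Step 1: $\pi$--cartesian implies cartesian.} Suppose $e$ is $\pi$--cartesian. Then for every object $z$, the square
\[
	\Map_\calE(z,y) \longto \Map_\calE(z,x)\times_{\Map_\calB(\pi z,\pi x)} \Map_\calB(\pi z,\pi y)
\]
is an equivalence, in the sense that the square of mapping spaces is homotopy cartesian (see \cite[Prop.~2.4.4.3]{HTT} or the corresponding Kerodon statement). I will take $\pi_0$ of this square. The components of the homotopy pullback surject onto the set of pairs $([\beta],[g])$ with $[f][g]=\pi[\beta]$ in $\ho(\calB)$. This already gives the existence of a lift $[\gamma]$ with $[e][\gamma]=[\beta]$ and $\ho(\pi)[\gamma]=[g]$.

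For uniqueness, $\pi_0$ of a homotopy pullback is not in general the pullback of the $\pi_0$'s, so a more careful argument is needed. Suppose $\gamma,\gamma'$ satisfy $e\gamma\simeq e\gamma'$ and $\pi\gamma\simeq\pi\gamma'\simeq g$. I will first use that $\pi$ is an inner fibration to replace $\gamma'$ by a homotopic edge with $\pi\gamma'=\pi\gamma$ on the nose. Then both $\gamma$ and $\gamma'$ lie in the fibre $\Map_{\calE}(z,y)\times_{\Map_\calB(\pi z,\pi y)}\{g\}$. Since $e$ is cartesian, postcomposition with $e$ restricts to an equivalence from this fibre to $\Map_\calE(z,x)\times_{\Map_\calB(\pi z,\pi x)}\{fg\}$. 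Because that map is an equivalence, it is injective on $\pi_0$. It remains to see that $e\gamma$ and $e\gamma'$ lie in the same component of this fibre. This is where the main obstacle lies: a homotopy $e\gamma\simeq e\gamma'$ in $\Map_\calE(z,x)$ projects to a loop at $fg$ in $\Map_\calB(\pi z,\pi x)$, which need not be trivial.

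To handle this, I will use precartesianness (Lemma~\ref{lm:locpicartimpliesprecart}) only in vertical situations, and pass to $\ho(\calB)$ through the factorization trick below instead.

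\emph{Cleaner route.} Since $\pi$ is a cartesian fibration, I choose a $\pi$--cartesian lift $e_g\colon y'\to y$ of $g$. Any $\gamma$ over $g$ then factors as $\gamma\simeq e_g\delta$ with $\delta$ vertical, and composites of cartesian edges are cartesian. This reduces uniqueness to the vertical case, $g=\id$ after composing with $e e_g$, which is $\pi$--cartesian and hence locally cartesian. In that case Lemma~\ref{lm:locpicartimpliesprecart} gives exactly the needed uniqueness among morphisms mapping to the identity. For morphisms merely mapping to $[\id]$ in $\ho(\calB)$, I again adjust by lifting the homotopy, using that cartesian fibrations are inner fibrations and isofibrations.

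\emph{Step 2: $\ho(\pi)$ is a fibration.} Every morphism $[f]$ of $\ho(\calB)$ with an object over its target lifts to a $\pi$--cartesian edge, because $\pi$ is a cartesian fibration. By Step 1 this lift is cartesian in $\ho(\calE)$.

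\emph{Step 3: cartesian implies $\pi$--cartesian.} Suppose $[e]$ is cartesian for $\ho(\pi)$. I choose a $\pi$--cartesian edge $e'$ over $f$ with the same target. By Step 1, $[e']$ is cartesian, so Proposition~\ref{prop:cartmorprops}(\ref{it:cartsourceiso}) gives a vertical isomorphism $[\theta]$ with $[e]=[e'][\theta]$. A representative $\theta$ is an equivalence in $\calE$, and an equivalence is $\pi$--cartesian. Moreover, the $2$--simplex witnessing $[e]=[e'][\theta]$ can be chosen over a degenerate or suitably lifted simplex of $\calB$. Hence $e$ is homotopic to a composite of $\pi$--cartesian edges, so it is $\pi$--cartesian, since $\pi$--cartesianness is invariant under homotopy and composition (Kerodon).
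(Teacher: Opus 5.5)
Your Steps~2 and~3 follow the paper. In Step~3 the paper likewise compares with a chosen cartesian lift, lifts the vertical isomorphism to an equivalence, and uses that equivalences, composites and homotopic edges of $\pi$--cartesian edges are $\pi$--cartesian. Your ``cleaner route'' in Step~1 (factoring through a chosen lift $e_g$) is the proof of \cite[Prop.~8.1.7]{Borceux2}, applied, as in the paper, to Lemma~\ref{lm:locpicartimpliesprecart}. The trouble is that the obstruction you isolated in Step~1 is real, and the detour only relocates it into that lemma. After moving $\gamma,\gamma'$ into a fibre, a homotopy $ee_g\delta\homot ee_g\delta'$ in $\calE$ still traces a loop at $fg$ in $\Map_\calB(\pi z,\pi x)$. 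The uniqueness clause of Lemma~\ref{lm:locpicartimpliesprecart} is exactly the claim that such loops are harmless. Adjusting $\gamma,\gamma'$ by lifted homotopies fixes them over $g$ on the nose but does not touch this loop.

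For general $\calB$ the loop does matter, and the statement is false. Take $\calB$ with two objects $b',b$, $\Map_\calB(b',b)\homot S^1$, contractible endomorphism spaces and no maps $b\to b'$. Let $\pi$ be classified by a functor $\calB^\op\to\Cat_\infty$ with $b\mapsto\Delta^0$ and $b'\mapsto B\Z$ (one object $w$, automorphism group $\Z$), inducing an equivalence $\Map_\calB(b',b)\to\Map_{\Cat_\infty}(\Delta^0,B\Z)\homot B\Z$. Write $x$ for the object over $b$. Then $\Map_\calE(w,w)\homot\Z$. The map $\Map_\calE(w,x)\to S^1$ has fibre $\Z$ with monodromy a translation, so $\Map_\calE(w,x)$ is contractible. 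Every edge $e\colon w\to x$ is $\pi$--cartesian. Yet every $n\in\Hom_{\ho(\calE)}(w,w)=\Z$ lies over $\id_{b'}$ and satisfies $[e]\circ n=[e]$, so $[e]$ is not even precartesian. Up to isomorphism $[e]$ is the only lift of the unique morphism $b'\to b$ of $\ho(\calB)$ with target $x$, so $\ho(\pi)$ is not a fibration.

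The paper's proof has this same gap. It is harmless there only because the proposition is applied solely to $V\colon\pC\to N\calT$ and $V^\op$, where $\calB$ is the nerve of an ordinary category. You should add that hypothesis, or assume the mapping spaces of $\calB$ are homotopy discrete. Then $\Map_\calB(\pi z,\pi x)$ is discrete, and $\pi_0$ of the homotopy pullback in your Step~1 is the ordinary fibre product of $\pi_0\Map_\calE(z,x)$ and $\Hom(\pi z,\pi y)$ over $\Hom(\pi z,\pi x)$. So your first argument already yields a bijection
\[
\Hom_{\ho(\calE)}(z,y)\longto\Hom_{\ho(\calE)}(z,x)\times_{\Hom(\pi z,\pi x)}\Hom(\pi z,\pi y),
\]
which is exactly cartesianness of $[e]$ in $\ho(\calE)$. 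This gives existence and uniqueness at once, without the lemma or the factorization detour. Your Steps~2 and~3 then go through as written.
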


\begin{proof}
Since $\pi$--cartesian morphisms are locally $\pi$--cartesian,
it follows from Lemma~\ref{lm:locpicartimpliesprecart}
that passing from $\calE$ to $\ho(\calE)$ sends $\pi$--cartesian morphisms
in $\calE$ to precartesian morphisms in $\ho(\calE)$.
Consequently,
for every morphism $f\colon A \to B$ in $\ho(\calB)$ and object $\bar{B} \in \ho(\calE)$
satisfying $(\ho(\pi))(\bar{B}) = B$, there exists a precartesian morphism
$\phi\colon \bar{A}\to \bar{B}$ in covering $f$,
namely the image in $\ho(\calE)$ of a $\pi$--cartesian morphism 
$\tilde{\phi}\colon \bar{A}\to \bar{B}$ in $\calE$ 
such that the morphism $\pi(\phi)$ in $\calB$ maps to $f$ in $\ho(\calB)$.
Moreover, every precartesian morphism in $\ho(\calE)$ is the image of some 
$\pi$--cartesian morphism in $\calE$.
To see this, suppose
$\phi'\colon \bar{A}'\to \bar{B}$ is another precartesian morphism in $\ho(\calE)$
covering $f$. There then exists an isomorphism 
$\psi\colon \bar{A}' \to \bar{A}$ in $\calE$
such that $\phi' = \phi \psi$ and $(\ho(\pi))(\psi) =\id_{A}$.
Pick a morphism $\tilde{\psi}$ in $\calE$ mapping to $\psi$ in $\ho(\calE)$,
and let $\tilde{\phi}'$ be a composite of $\tilde{\psi}$ and $\tilde{\phi}$.
Then the morphism $\tilde{\phi}'$
in $\calE$ maps to $\phi'$ in $\ho(\calE)$. Moreover, 
the morphism
$\tilde{\psi}'$ is an equivalence in $\calE$, so
\cite[\href{https://kerodon.net/tag/01TT}{Cor.~01TT}]{kerodon}
implies that $\tilde{\phi}'$ is $\pi$--cartesian, showing that $\phi'$
is the image of a $\pi$--cartesian morphism, as desired. We have shown that
precartesian morphisms
in $\ho(\calE)$ are precisely the images of $\pi$--cartesian morphism in $\calE$.
It follows that the class of precartesian morphisms in $\ho(\calE)$ is closed
under composition. Now \cite[Prop.~8.1.7]{Borceux2} implies that 
$\ho(\pi)$ is a fibration. Since for a fibration a morphism is cartesian 
if and only it is precartesian, the claim follows.
\end{proof}

Applying Proposition~\ref{prop:hcartisfib} to 
the cartesian fibrations $V\colon \pC \to N\calT$ and $V^\op \colon \pC^\op \to N\calT^\op$,
we obtain
\begin{cor}
\label{cor:pccartcocartmor}
A morphism in $\pC$ is  $V$--cartesian (resp.\ $V$--cocartesian)
if and only if its image in $\hpC$ is cartesian (resp.\ opcartesian)
with respect to the functor $\hpC\to \calT$. \qed
\end{cor}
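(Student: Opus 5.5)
The plan is to obtain this from Proposition~\ref{prop:hcartisfib} by applying it twice, once to $V$ and once to the opposite of $V$.

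\emph{Cartesian half.} By Definition~\ref{def:pc}, $V\colon \pC\to N\calT$ is a cartesian fibration of $\infty$--categories. Applying the homotopy category functor to $V$ recovers the fibration $\hpC\to\calT$; this was noted after Definition~\ref{def:pc}, using $\ho(N\calT)=\calT$. Proposition~\ref{prop:hcartisfib} then says directly that a morphism of $\pC$ is $V$--cartesian if and only if its image in $\hpC$ is cartesian.

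\emph{Cocartesian half.} By the preceding lemma, $V$ is also a cocartesian fibration. Hence $V^\op\colon \pC^\op\to N\calT^\op$ is a cartesian fibration, and by definition its cartesian edges are exactly the $V$--cocartesian edges of $\pC$. Homotopy categories commute with opposites, $\ho(\calE^\op)=\ho(\calE)^\op$. So $\ho(V^\op)$ is the opposite functor $\hpC^\op\to\calT^\op$. Its cartesian morphisms are, by Definition~\ref{def:fibredcats}, exactly the opcartesian morphisms of $\hpC\to\calT$. Applying Proposition~\ref{prop:hcartisfib} to $V^\op$ therefore gives the cocartesian statement.

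\emph{Where care is needed.} There are two identifications to check, and both are bookkeeping rather than real difficulties. First, $\ho(V)$ must agree with $\hpC\to\calT$ on the nose, or at least via an isomorphism over $\calT$, so that cartesianness transfers. Second, the duality conventions must match: $\infty$--categorical cocartesian edges for $V$ correspond to cartesian edges for $V^\op$, in line with \cite[Def.~01T5]{kerodon} and its dual.
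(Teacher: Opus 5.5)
Your proposal is correct and is essentially the paper's own argument. The paper also obtains the corollary by applying Proposition~\ref{prop:hcartisfib} to the cartesian fibrations $V\colon \pC\to N\calT$ and $V^\op\colon \pC^\op\to N\calT^\op$, using that $\ho(V)$ recovers $\hpC\to\calT$ and that cartesian morphisms for the opposite functor are exactly the opcartesian ones.
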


\subsection{The cosheaf property of \texorpdfstring{$\pC$}{pC}}
\label{subsec:cosheafpc}

Our aim in this subsection is to prove 
Theorem~\ref{thm:cosheafpc} below showing that 
an object of $\pC$ covering a space $B$
can be recovered as the $\infty$--categorical 
colimit of its restrictions to the members of
an open cover of $B$.
This result will be the key ingredient
in the proof of Theorem~\ref{thm:cosheaf}
below. The reader not interested in the proof of 
Theorem~\ref{thm:cosheaf} may safely omit the subsection.

\begin{defn}
\label{def:admissibleindexedopencover}
Suppose $B$ is a space,
and write $\calU(B)$ for the poset of open subsets of $B$.
Let $I$ be a small category.
By an \emph{admissible $I$--indexed open cover} of $B$, we mean 
a functor $U = U_{(-)} \colon I \to \calU(B)$
such that for every point $x\in B$, 
the nerve of the full subcategory of $I$ spanned by 
those $i\in I$ satisfying $x \in U_i$ is weakly contractible.
\end{defn}

\begin{defn}
Given a category $I$, define the \emph{right cone} of $I$ 
and the \emph{left cone} of $I$ 
to be the joins
$I^\rightcone = I \star [0]$
and
$I^\leftcone = [0] \star I$,
respectively,
where $[0]$ is the terminal category.
Thus $I^\rightcone$ is obtained from $I$
by adjoining a final object $\infty$,
and $I^\leftcone$ is obtained by 
adjoining an initial object $-\infty$.
See \cite[Sec.~1.2.8]{HTT}.
Notice that nerve $N(I^\rightcone)$ then agrees with the 
right cone $(NI)^\rightcone$ of the simplicial set
as defined in \cite[Notation~1.2.8.4]{HTT},
and similarly for left cones.
We call the objects $\infty$ of $I^\rightcone$ 
and $-\infty$ of $I^\leftcone$
the \emph{cone points}. 
\end{defn}

\begin{defn}
\label{def:liftxf}
Given a functor $F \colon I^{\rightcone} \to \calT$  
and an object $X$ in $\calC_{/F(\infty)}$,
we write $X_F$ for the evident lift
\[\xymatrix{
	&
	\pC
	\ar[d]^V
	\\
	NI^{\rightcone}
	\ar[r]^{NF}
	\ar[ur]^{X_F}
	&
	N\calT
}\]
defined on vertices by setting $X_F(i) = F(\alpha_i)^\ast X$ 
where $\alpha_i \colon i \to \infty$
is the unique map from $i$ to $\infty$ in $I^{\rightcone}$.
Explicitly, $X_F$ sends the simplex in $NI^{\rightcone}$
given by the sequence of maps
\[
	i_0 \xto{\ f_1\ } i_1 \xto{\ f_2\ } \cdots \xto{\ f_n\ } i_n
\]
to the simplex in $\pC$ given by the sequence
\[
	F(i_0) \xto{\ F(f_1)\ } F(i_1) \xto{\ F(f_2)\ } \cdots \xto{\ F(f_n)\ } F(i_n)
\]
and the simplices 
$
	\vec{\sigma}
	=
	\{
		\sigma_j\colon \Delta^{\{j,\ldots,n\}} \to \calC_{/F(i_j)} 
	\}_{0 \leq j \leq n}
$
where $\sigma_j$ is the composite 
\[
	 \sigma_j 
	 \colon 
	 \Delta^{\{j,\ldots,n\}} 
	 \longto \Delta^0 
	 \xto{\ X\ }
	 \calC_{/F(\infty)}
	 \xto{\ \alpha_i^\ast\ }
	 \calC_{/F(i)}.
\]
\end{defn}

\begin{thrm}
\label{thm:cosheafpc}
Suppose $B$ is a space and $X$ is an object of $\calC_{/B}$.
Let $I$ be a small category, and let $U = U_{(-)}\colon I \to \calU(B)$
be an admissible $I$--indexed open cover of $B$.
Write $\bar{U} \colon I^\rightcone \to \calU(B)$ for the extension of $U$
sending the cone point to $B$, and let $J\colon \calU(B) \to \calT$ be the 
inclusion. Then the map 
\[
	X_{J\bar{U}} \colon NI^\rightcone \longto \pC
\]
is a colimit diagram, so that
\[
	X \homot \colim_{i\in I}\, X | U_i
\]
in $\pC$.
\end{thrm}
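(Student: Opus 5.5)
The plan is to reduce the colimit in the total $\infty$--category $\pC$ to two more familiar colimits: one in the base $N\calT$ (or rather in spaces) and one in a single fibre $\calC_{/B}$. Since $V\colon \pC\to N\calT$ is a cocartesian fibration, HTT Prop.~4.3.1.10 (together with its relative-colimit variant, Prop.~4.3.1.5 and Cor.~4.3.1.11) gives the following criterion. A diagram $\bar p\colon K^\rightcone\to\pC$ whose image under $V$ has cone point $B$ is a colimit diagram, provided two things hold. First, it is a $V$--colimit diagram. Second, its image $V\bar p$ is a colimit diagram in $N\calT$. The first condition is checked by pushing everything forward along the cocartesian edges into the fibre over the cone point $B$: we must show that the induced cone $(\alpha_i)_!\,(X|U_i)\to X$ in $\calC_{/B}$ is a colimit there. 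This uses that the functors $f_!$ between fibres preserve colimits, being left adjoints.

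Step 1, the fibrewise colimit. Write $j_i\colon U_i\incl B$. The cocartesian pushforward of $X_{J\bar U}(i)=j_i^\ast X$ to the fibre over $B$ is $(j_i)_!j_i^\ast X$. By the projection formula \eqref{eq:bc-projformula} this is $X\tensor_B (j_i)_!S_{U_i}$, and the cone map is $\id_X\tensor_B$ the counit. Since $\tensor_B$ preserves colimits in each variable, it suffices to treat $X=S_B$. Moreover $(j_i)_!S_{U_i}=t_\calC(U_i,j_i)$, which by Remark~\ref{rk:tcalcrecognition} is the image of $(U_i\to B)$ under the colimit-preserving functor $\calS_{/B}\to\calC_{/B}$ induced by $\Pi_\infty B$. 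So it suffices to show that $\colim_i U_i\to B$ is an equivalence in $\Spaces_{/B}\simeq\Fun((\Pi_\infty B)^\op,\Spaces)$. We check this on fibres. The fibre over a point $x$ of the colimit is the homotopy colimit over $I$ of the homotopy fibres of $U_i\to B$ at $x$. Here the admissibility hypothesis, weak contractibility of $N I_x$ with $I_x=\{i : x\in U_i\}$, is exactly what is needed. I would invoke the standard result (Dugger--Isaksen / Lurie HA Thm.~A.3.1, Seifert--van Kampen for $\infty$--categories) that for such a cover $\operatorname{hocolim}_I \Sing U_i\to\Sing B$ is a weak equivalence. Straightening over $B$ then gives the parametrized version.

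Step 2, the base. I would avoid proving that $J\bar U$ is a colimit in $N\calT$, which is false in general since $N\calT$ is an ordinary category with no homotopy invariance. Instead I would use the relative criterion: HTT Prop.~4.3.1.5(2) says a $V$--colimit diagram is a colimit diagram once the composite to the base is a colimit in $N\calT$. Point-set, the open cover does give $B=\colim_I U_i$ in $\calT$: each point lies in some $U_i$, and $I_x$ being connected makes the gluing well defined. Continuity follows since the $U_i$ are open, and compact generation is preserved. So $NJ\bar U$ is a colimit diagram in $N\calT$, the nerve of an ordinary category. Combining this with Step 1 gives the theorem.

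The main obstacle is the fibrewise statement in Step 1, namely the $\infty$--categorical van Kampen input identifying $\colim_I \Pi_\infty U_i$ with $\Pi_\infty B$ over an arbitrary small indexing category $I$ (not just a poset of intersections). I would cite Lurie HA Thm.~A.3.1 directly, after checking its hypothesis, which is our admissibility condition. A secondary concern is making sure the $V$--colimit condition really reduces to the pushed-forward cone in the fibre. This requires $V$ to be a cocartesian fibration (shown in the Lemma above) and the fibre transport functors to preserve colimits.
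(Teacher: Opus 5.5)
Your proof is correct, and its first move is the same as the paper's: both use HTT~Prop.~4.3.1.5(2) together with the point-set fact $B=\colim_I U_i$ in $\calT$ to reduce to showing that $X_{J\bar U}$ is a $V$--colimit diagram. After that the routes diverge.

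The paper never uses that $V$ is cocartesian. It proves the general Theorem~\ref{thm:cartfibcolim}: a cone of $V$--cartesian edges over a colimit cone is a colimit cone whenever the transport functor sends the base cone to a limit in $\Cat_\infty$. The proof is a direct lifting argument with marked simplicial sets (Lemma~\ref{lm:markedanodynemors}). The paper then checks this descent hypothesis, $\calC_{/B}\homot\lim_I\calC_{/U_i}$, by combining HA~Thm.~A.3.1 and HTT~Thm.~4.2.4.1 (which give $\Pi_\infty B\homot\colim_I\Pi_\infty U_i$) with the fact that $N^{\mathrm{hc}}\Fun(-,\calC)$ is a right adjoint.

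You instead use the cocartesian structure to push the cone into the fibre over $B$. This reduces everything to the codescent statement $\colim_I (j_i)_!j_i^\ast X\homot X$ in $\calC_{/B}$. That statement follows from the same A.3.1 input, because $(f\colon P\to\Pi_\infty B)\mapsto f_!f^\ast X$ preserves colimits in $P$.

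The paper's route is more general: it works for any cartesian fibration and needs neither left adjoints nor a formula for $f_!f^\ast$. Yours replaces the bespoke combinatorics with standard relative-colimit calculus. The price is that it needs cocartesian transport functors that preserve colimits, which are available here.

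Some tidying is needed:
\begin{itemize}
\item The first sentence of your Step~2 contradicts what you then prove. $NJ\bar U$ \emph{is} a colimit diagram in $N\calT$, since colimits in the nerve of an ordinary category are ordinary colimits, and that is exactly what you use.
\item The projection formula \eqref{eq:bc-projformula} and Remark~\ref{rk:tcalcrecognition} are homotopy-category statements. You need $(j_i)_!j_i^\ast X\homot X\tensor_B (j_i)_!S_{U_i}$ as an equivalence of $NI^\rightcone$--diagrams in the $\infty$--category $\calC_{/B}$. This is easiest via $\calC_{/B}=\Fun((\Pi_\infty B)^\op,\calC)$, where $f_!f^\ast X$ is fibrewise $X_b$ tensored with the homotopy fibre of $f$ over $b$.
\item HTT~Prop.~4.3.1.10 concerns cones lying in a single fibre. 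The passage from the cone over $J\bar U$ to the pushed-forward cone should therefore be justified directly. For $e\colon B\to V(Y)$, identify the space of maps $j_i^\ast X\to Y$ in $\pC$ covering $e\circ j_i$ with $\Map(e_!(j_i)_!j_i^\ast X,Y)$ in the fibre, naturally in $i$.
\item The fibrewise check in Step~1 is a detour. Admissibility does not directly control the homotopy fibres of $U_i\incl B$; it enters only through A.3.1.
\end{itemize}
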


We will deduce Theorem~\ref{thm:cosheafpc}
from the following general result.

\begin{thrm}
\label{thm:cartfibcolim}
Suppose $\calE$ and $\calB$ are $\infty$--categories and $V\colon \calE \to \calB$
is a cartesian fibration with essentially small fibres. 
Let $\bar{p}\colon K^\rightcone \to \calE$
be a diagram such that 
\begin{enumerate}
\item \label{it:barpvcart}
    $\bar{p}$ sends every morphism in 
    $K^\rightcone$ to a $V$--cartesian morphism in $\calE$;
\item \label{it:vbarpcolimdiag}
	the composite $V \bar{p} \colon K^\rightcone \to \calB$ 
	is a colimit diagram in $\calB$; and 
\item \label{it:vbartrlimdiag}
	the composite 
    \begin{equation}
    \label{eq:kopleftconetocatinfty}
    	(K^\op)^\leftcone 
    	=
    	(K^\rightcone)^\op
    	\xto{\ (V \bar{p})^\op\ }
    	\calB^\op
    	\xto{\ \mathrm{Tr}_{\calE/\calB}\ }	
    	\Cat_\infty	 
    \end{equation}
    is a limit diagram in the $\infty$--category $\Cat_\infty$ of 
    small $\infty$--categories.
\end{enumerate}
Here $\mathrm{Tr}_{\calE/\calB}$ denotes the transport 
representation of $V$. 
Then $\bar{p}$ is a colimit diagram in $\calE$.
\end{thrm}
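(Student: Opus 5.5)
The plan is to check the colimit property directly on mapping spaces, using the description of mapping spaces in a cartesian fibration. Write $\bar{p}(\infty) = \bar{X}$ over $\bar{B} = V\bar{p}(\infty)$, and $p = \bar{p}|_K$. We must show that for every object $Z \in \calE$ the restriction map
\[
	\Map_\calE(\bar{X}, Z) \longto \lim_{k\in K^\op} \Map_\calE(p(k), Z)
\]
is an equivalence. The basic tool is \cite[Prop.~2.4.4.2]{HTT}: for $Z$ over $C=V(Z)$, there is a fibre sequence
\[
	\Map_\calE(Y,Z) \longto \Map_\calB(V(Y), C),
\]
whose fibre over $g\colon V(Y)\to C$ is $\Map_{\calE_{V(Y)}}(Y, g^\ast Z)$. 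Since colimits can be detected after passing to fibres over the base, the problem splits into a base part and a fibre part.

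\emph{Step 1 (base).} By hypothesis (\ref{it:vbarpcolimdiag}), the map $\Map_\calB(\bar{B},C)\to \lim_k \Map_\calB(V p(k), C)$ is an equivalence. Since limits commute with fibres, it then suffices to show the following. For each $g\colon \bar{B}\to C$, with restrictions $g_k = g\circ V\bar p(\alpha_k)$, the map
\[
	\Map_{\calE_{\bar{B}}}(\bar{X}, g^\ast Z) \longto \lim_{k} \Map_{\calE_{B_k}}(p(k), g_k^\ast Z)
\]
is an equivalence. To make this rigorous I would straighten: the cartesian fibration $V$ corresponds to $\mathrm{Tr}_{\calE/\calB}\colon\calB^\op\to\Cat_\infty$. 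Mapping spaces in $\calE$ then become the total space of a family of fibrewise mapping spaces, so the limit over $K^\op$ of the fibration-of-spaces diagram can be computed base-then-fibre. Here hypothesis (\ref{it:barpvcart}) ensures that the transition maps in the fibre diagram are induced by the transport functors applied to $p(k)\simeq \alpha_k^\ast \bar X$.

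\emph{Step 2 (fibres).} By hypothesis (\ref{it:barpvcart}), $p(k)\simeq (V\bar p(\alpha_k))^\ast\bar{X}$, and $g_k^\ast Z\simeq (V\bar p(\alpha_k))^\ast g^\ast Z$. So the target is $\lim_k \Map_{\calE_{B_k}}(\alpha_k^\ast \bar X, \alpha_k^\ast g^\ast Z)$. By hypothesis (\ref{it:vbartrlimdiag}), $\calE_{\bar B}\simeq \lim_k \calE_{B_k}$ via the functors $\alpha_k^\ast$. Mapping spaces in a limit of $\infty$--categories are the limits of the mapping spaces \cite[Cor.~5.4.5.?]{HTT} (in general for limits in $\Cat_\infty$). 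Hence the map is an equivalence.

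\emph{Main obstacle.} The hardest part will be making the compatibility in Step 1 precise: identifying the diagram $k\mapsto \Map_\calE(p(k),Z)$, fibred over $k\mapsto\Map_\calB(Vp(k),C)$, with the unstraightened family of fibrewise mapping spaces, coherently in $k$. I would first try to avoid this by passing to the relative slice and using \cite[Prop.~4.3.1.5]{HTT} ($V$--colimits). I would show that $\bar p$ is a $V$--colimit diagram, using the fibrewise criterion for cartesian fibrations in \cite[Cor.~4.3.1.11]{HTT}-style form together with (\ref{it:vbartrlimdiag}). A $V$--colimit whose image is a colimit in $\calB$, which is (\ref{it:vbarpcolimdiag}), is then a colimit in $\calE$ by \cite[Prop.~4.3.1.5(2)]{HTT}. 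The essential smallness of the fibres is needed only so that $\mathrm{Tr}_{\calE/\calB}$ lands in $\Cat_\infty$.
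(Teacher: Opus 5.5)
Your outline is viable and differs from the paper's, and its easy parts are right. By the fibre computation you cite, the fibre of $\Map_\calE(\bar X,Z)\to\Map_\calB(\bar B,C)$ over $g$ is $\Map_{\calE_{\bar B}}(\bar X,g^\ast Z)$. Hypothesis (2) then disposes of the base. Mapping spaces in a limit in $\Cat_\infty$ are indeed limits of mapping spaces. Your fallback reduction to $V$--colimits via \cite[Prop.~4.3.1.5(2)]{HTT} is in fact exactly the paper's first step.

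But the step you call the main obstacle is the entire content of the theorem, and neither of your routes supplies it. To use hypothesis (3), you must identify two $K^\op$--indexed diagrams, compatibly with the cone from $\calE_{\bar B}$:
\begin{itemize}
\item the diagram $k\mapsto\fib_{g_k}\bigl(\Map_\calE(p(k),Z)\to\Map_\calB(B_k,C)\bigr)$, whose transition maps come from composition in $\calE$;
\item the diagram $k\mapsto\Map_{\calE_{B_k}}(\alpha_k^\ast\bar X,\alpha_k^\ast g^\ast Z)$, whose transition maps come from $\mathrm{Tr}_{\calE/\calB}$.
\end{itemize}
The objectwise equivalences $p(k)\homot\alpha_k^\ast\bar X$ supplied by hypothesis (1) do not give an equivalence of diagrams. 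Your workaround does not help either. Fibrewise criteria of the kind you cite from \cite[\S 4.3.1]{HTT} concern diagrams lying in a single fibre, whereas here $V\bar p$ is a nonconstant colimit diagram in $\calB$. Moreover, unwinding the $V$--colimit condition just returns the same fibrewise statement over each $g$, so passing to it restates the problem rather than solving it.

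The missing input is a statement about cartesian sections. By \cite[Thm.~02T8 and Rk.~02T9]{kerodon}, hypothesis (3) says exactly this: for the pullback $\calE''$ of $V$ to $K^\rightcone$, restricting cartesian sections from $K^\rightcone$ to $K$ is a trivial fibration. The paper uses this directly, in three moves:
\begin{itemize}
\item It replaces $K$ by an $\infty$--category and base changes, using \cite[Cor.~4.3.1.15]{HTT}. The $V$--colimit condition then becomes a lifting problem for $K\star\bdry\Delta^n\subset K\star\Delta^n$ over $K^\rightcone$.
\item It embeds these as $T'_{n,0}(K)\subset T_{n,0}(K)$ inside $K^\rightcone\times\Delta^n$. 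Via the marked anodyne inclusions of Lemma~\ref{lm:markedanodynemors}, the problem becomes a relative extension problem for marked maps along $Q_n(K)=(K^\rightcone\times\bdry\Delta^n)\cup(K\times\Delta^n)$. Hypothesis (1) is what guarantees the given data are marked.
\item That extension problem is solved precisely by the trivial fibration above.
\end{itemize}
To rescue your mapping-space route, you would need the same input plus extra bookkeeping:
\begin{itemize}
\item realize $\bar p$, and a cartesian family $k\mapsto g_k^\ast Z$ lifting $V\bar p$ and $g$ so as to end at $Z$, as cartesian sections of $\calE''$;
\item identify $\lim_k$ of the fibre mapping spaces with a mapping space in the $\infty$--category of cartesian sections over $K$;
\item invoke the equivalence between cartesian sections over $K^\rightcone$ and over $K$.
\end{itemize}
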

See \cite[\href{https://kerodon.net/tag/028V}{Def.~028V}]{kerodon}
for the definition of a transport representation of a cocartesian fibration;
the notion of a transport representation of a cartesian fibration is 
obtained by dualizing.

To prove Theorem~\ref{thm:cartfibcolim},
we first need to establish an auxiliary result.
Given a simplicial set $S$,
write 
$T_{n,i}(S) \subset S^\rightcone \times \Delta^n$
for the join
\[
	T_{n,i}(S) 
	= 
	(S \times \Delta^{\{0,\ldots,i\}}) 
		\star 
	(\{\infty\} \times \Delta^{\{i,\ldots,n\}})
	\subset 
	S^\rightcone \times \Delta^n.
\]
Here $\infty$ denotes the cone point in $S^\rightcone$.
Then
\[
	S^\rightcone \times \Delta^n = \bigcup_{i=0}^n T_{n,i}(S).
\]
Write 
\[
	T'_{n,0}(S)
	=
	(S \times \Delta^{\{0\}})
		\star 
	(\{\infty\} \times \bdry\Delta^{\{0,\ldots,n\}})
\]
and let
\[
	P_n(S) 
	=
	T'_{n,0}(S) \cup \bigcup_{i=1}^n T_{n,i}(S)
	\subset
	S^\rightcone \times \Delta^n.
\]
Moreover, let
\[
	Q_n(S) 
	= 
	(S^\rightcone \times \bdry \Delta^n) \cup (S \times \Delta^n)
	\subset 
	S^\rightcone \times \Delta^n
\]
and observe that in fact $Q_n(S) \subset P_n(S)$.
Given a simplicial subset $A \subset S^\rightcone \times \Delta^n$,
we will promote $A$ to a marked simplicial set (see \cite[\S 3.1]{HTT})
by defining an edge of $A$ to be marked
if and only if its image in
$(S^\rightcone)^\sharp \times (\Delta^n)^\flat$
is.
\begin{lemma}
\label{lm:markedanodynemors}
Suppose $S$ is a simplicial set. Then both
\begin{enumerate}[(i)]
\item \label{it:ttos}
	the inclusion $T'_{n,0}(S) \incl P_n(S)$  and 
\item \label{it:qtos}
	the inclusion $Q_n(S) \incl P_n(S)$ 
\end{enumerate}
are marked anodyne for all $n\geq 1$.
\end{lemma}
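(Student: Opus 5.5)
The plan is to build $P_n(S)$ from each of the two smaller simplicial subsets by a finite sequence of pushouts along maps already known to be marked anodyne. The marking is the one inherited from $(S^\rightcone)^\sharp\times(\Delta^n)^\flat$. An edge is therefore marked exactly when its $\Delta^n$--component is degenerate. In particular, every edge of the form $(s,j)\to(\infty,j)$ is marked, and these edges are what make right-horn type fillings admissible.

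The tools are the ones from \cite[\S 2.1.2, \S 3.1]{HTT}:
\begin{enumerate}[(a)]
\item the generating marked anodyne maps, namely the inner horns, the horns $\Lambda^m_m\incl\Delta^m$ with the last edge marked, and the marked $\Lambda^2_1$;
\item closure of marked anodyne maps under pushouts, transfinite composition and retracts;
\item the join lemma \cite[Lemma~2.1.2.3]{HTT} and its marked analogue. This analogue says that for a monomorphism $A\incl A'$ and a right anodyne $B\incl B'$ whose relevant edges are marked, the map $A\star B'\cup A'\star B\incl A'\star B'$ is marked anodyne.
\end{enumerate}
Since $S$ is arbitrary, I will reduce to the case $S=\Delta^k$ by induction over the nondegenerate simplices of $S$. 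The constructions $T_{n,i}$, $T'_{n,0}$, $P_n$ and $Q_n$ all commute with the relevant unions and pushouts in the variable $S$, so the general case follows by closure property (b).

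For part (\ref{it:ttos}), I will adjoin $T_{n,1}(S),\dots,T_{n,n}(S)$ one at a time. Write $P^{(i)} = T'_{n,0}(S)\cup T_{n,1}(S)\cup\dots\cup T_{n,i}(S)$. I will compute the intersection
\[
T_{n,i}(S)\cap P^{(i-1)} = \big((S\times\Delta^{\{0,\ldots,i-1\}})\star(\{\infty\}\times\Delta^{\{i-1,\ldots,n\}})\big)\cap T_{n,i}(S)\ \cup\ \text{(faces missing a vertex)}.
\]
The aim is to identify $P^{(i-1)}\cap T_{n,i}\incl T_{n,i}$ with a join pushout-product
\[
(S\times\Delta^{\{0,\ldots,i\}})\star\bdry\Delta^{\{i,\ldots,n\}} \;\cup\; (\text{subcomplex of } S\times\Delta^{\{0,\ldots,i\}})\star\Delta^{\{i,\ldots,n\}},
\]
where the first factor is a "right-horn-like" inclusion $S\times\Delta^{\{0,\ldots,i-1\}}\cup\ldots\incl S\times\Delta^{\{0,\ldots,i\}}$. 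By (c) this step is then marked anodyne. The needed right anodyne input is that $\Delta^{\{i-1\}}\incl\Delta^{\{i-1,i\}}$ is right anodyne, combined with the fact that the edges $(s,i)\to(\infty,i)$ are marked.

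For part (\ref{it:qtos}), note that $Q_n(S)$ already contains $T'_{n,0}(S)$ apart from the simplices that pass through the whole interior of the first block. I will first show that $Q_n(S)\incl Q_n(S)\cup T'_{n,0}(S)$ is marked anodyne. This is a single join pushout of $S\star\bdry\Delta^n$-type pieces, and it produces horns $\Lambda^m_m$ whose last edge lands on some $(\infty,j)$ via a marked edge of the form $(s,j)\to(\infty,j)$. I will then run the same filtration as in part (\ref{it:ttos}). The steps that were already present in $Q_n(S)$ are trivial, and the remaining steps are pushouts of the same shape as before. Alternatively, I could try to deduce (\ref{it:qtos}) from (\ref{it:ttos}) together with two-out-of-three-type closure, using that $T'_{n,0}(S)\incl Q_n(S)\cup T'_{n,0}(S)$ is also marked anodyne. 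However, marked anodyne maps do not satisfy right cancellation, so the direct filtration is safer.

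The main obstacle is the combinatorial bookkeeping. Concretely, I must write each intersection $T_{n,i}(S)\cap P^{(i-1)}$ precisely as the domain of a join pushout-product, and check that every horn produced is either inner or an outer right horn $\Lambda^m_m$ whose final edge is marked, that is, has degenerate $\Delta^n$--component. I would first carry this out explicitly for $S=\Delta^0$ and $n=1,2$, where $S^\rightcone\times\Delta^n$ is a small prism. That computation should expose the pattern, which I would then generalize via the join formula.
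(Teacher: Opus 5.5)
Your part~(i) rests on a miscomputed intersection and a false anodyne claim, and your part~(ii) fails at its very first step.

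\textbf{Part~(i).} The intersection can be computed exactly. A simplex of $T_{n,j}(S)\cap T_{n,i}(S)$ with $j<i$ has its $S$--part in $\Delta^n$--coordinates $\le j$ and its $\infty$--part in coordinates $\ge i$. Also $T'_{n,0}(S)\cap T_{n,i}(S)=(S\times\Delta^{\{0\}})\star(\{\infty\}\times\Delta^{\{i,\ldots,n\}})$ for $i\ge 1$. Hence
\[
	P^{(i-1)}\cap T_{n,i}(S) = (S\times\Delta^{\{0,\ldots,i-1\}})\star(\{\infty\}\times\Delta^{\{i,\ldots,n\}})
\]
with no further faces. This contains neither $(S\times\Delta^{\{0,\ldots,i\}})\star\bdry\Delta^{\{i,\ldots,n\}}$ nor even $S\times\Delta^{\{0,\ldots,i\}}$. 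So the step is not a join pushout-product of the kind you describe, and (c) does not apply. Besides, (c) asks for right anodyne input in the right-hand factor, while you put it in the left one.

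The input you name is also false. $\Delta^{\{i-1\}}\incl\Delta^{\{i-1,i\}}$ includes the \emph{initial} vertex, so it is left anodyne, not right anodyne. Likewise $\{0\}\incl(\Delta^1)^\flat$ is not marked anodyne: it does not lift against the cartesian fibration $\{0\}\incl\Delta^1$.

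What actually makes the step marked anodyne is that each new vertex $(s,i)$ can be attached to the initial vertex $(\infty,i)$ of the right-hand factor along the marked edge $(s,i)\to(\infty,i)$, a pushout of $\{1\}\incl(\Delta^1)^\sharp$. After that, the prism $S\times\Delta^{\{0,\ldots,i\}}$ still has to be filled simplex by simplex in a suitable order, with some care about which edges come out marked. That ordered filling for $S=\Delta^k$ is the real content of the lemma, and your proposal defers it.

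\textbf{Part~(ii).} The first step is not marked anodyne. The simplices of $T'_{n,0}(S)$ missing from $Q_n(S)$ are those of $(S\times\Delta^{\{0\}})\star(\{\infty\}\times\Delta^{\{1,\ldots,n\}})$ with nonempty $S$--part and $\infty$--part covering $\{1,\ldots,n\}$. So $Q_n(S)\incl Q_n(S)\cup T'_{n,0}(S)$ is a pushout of $(S\star\bdry\Delta^{n-1})\cup\Delta^{n-1}\incl S\star\Delta^{n-1}$, which for $S=\Delta^0$ is $\bdry\Delta^n\incl\Delta^n$. The edges $(s,0)\to(\infty,j)$ with $j\ge 1$ are unmarked, so no right horns with marked last edge occur.

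Take $S=\Delta^0$, $n=1$. Your route first adjoins the edge $(s,0)\to(\infty,1)$ along both endpoints, which creates a loop and so is not even a weak homotopy equivalence. It then adjoins the $2$--simplex $(s,0),(s,1),(\infty,1)$ along its whole boundary. By contrast, $Q_1(\Delta^0)\incl P_1(\Delta^0)$ is just $\Lambda^2_1\incl\Delta^2$.

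The missing part of $T'_{n,0}(S)$ must never be adjoined on its own. The paper reduces to the relative inclusions $T'_{n,0}(\Delta^k)\cup P_n(\bdry\Delta^k)\incl P_n(\Delta^k)$ and $Q_n(\Delta^k)\cup P_n(\bdry\Delta^k)\incl P_n(\Delta^k)$. It indexes the nondegenerate top simplices of $\Delta^{k+1}\times\Delta^n$ by shuffles and adds them one at a time:
\begin{itemize}
\item for~(i), in lexicographic order, never adding the top simplex of $T_{n,0}(\Delta^k)$;
\item for~(ii), in reverse lexicographic order, starting from the $T_{n,n}$ end, so that the rest of $T'_{n,0}$ only ever appears as faces of later simplices.
\end{itemize}
Each single step is then checked to be marked anodyne.
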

\begin{proof}
(\ref{it:ttos}):
Working simplex by simplex in $S$, we see that it is enough to show that 
the inclusion 
\[
	T'_{n,0} (\Delta^k)
	\cup
	P_n(\bdry\Delta^k)
	\longincl
	P_n(\Delta^k)
\]
is marked anodyne for all $k \geq 0$.
The nondegenerate $(n+k+1)$--simplices of 
$(\Delta^k)^\rightcone \times \Delta^n = \Delta^{k+1} \times \Delta^n$
are given by sequences
\[
	(\alpha(0),\beta(0)),\,\ldots,\, (\alpha(n+k+1),\beta(n+k+1))
\]
where $\alpha \colon \{0,\ldots,n+k+1\} \to \{0,\ldots,k+1\}$
and $\beta \colon \{0,\ldots,n+k+1\} \to \{0,\ldots,n\}$
are order-preserving surjections and $(\alpha(i+1),\beta(i+1))$
is either $(\alpha(i) + 1, \beta(i))$ or $(\alpha(i),\beta(i)+1)$
for all $i=0,\ldots,n+k$.
Setting $\gamma(i) = 0$ in the former case and $\gamma(i)=1$
in the latter case, we see that we may alternatively 
parametrize the set of these simplices by the elements of the set
\[
	\Gamma = \bigl\{ 
		\gamma
		\in \{0,1\}^{\{0,\ldots,n+k\}}
		\mid
		|\gamma^{-1}(0)| = k+1
		\text{ and }
		|\gamma^{-1}(1)| = n		
	\bigr\}.
\]
Given $\gamma\in \Gamma$, write $\sigma(\gamma)$ for the corresponding simplex.
Then $\Delta^{k+1} \times \Delta^n = \bigcup_{\gamma \in \Gamma} \sigma(\gamma)$.
Let $N = |\Gamma|$, and let
\[
	\gamma^1 \leq \gamma^2 \leq \cdots \leq \gamma^N
\]
be the elements of $A$ in lexicographical order.
For $1 \leq j \leq N$, write 
\[
	P'(j) 
	= 
	T'_{n,0}(\Delta^k)
		\cup 
	P_n(\bdry\Delta^k) 
		\cup 
	\bigcup_{i=2}^j \sigma(\gamma^j).
\]
Then 
$P'(1) = T'_{n,0}(\Delta^k)\cup P_n(\bdry\Delta^k)$
and
$P'(N) = P_n(\Delta^k)$,
so the claim follows by noticing that the 
inclusion $P'(j) \incl P'(j+1)$ is marked anodyne for all $j=1,\ldots,N-1$.

(\ref{it:qtos}):
As in part (\ref{it:ttos}), it suffices to show that  the inclusion 
\[
	Q_n(\Delta^k) \cup P_n(\bdry \Delta^k) \longincl P_n(\Delta^k)
\]
is marked anodyne for all $k\geq 0$.
Writing
\[
	P''(j) 
	=
	Q_n(\Delta^k) 
		\cup 
	P_n(\bdry \Delta^k) 
		\cup
	\,\bigcup_{\mathclap{i=N-j+2}}^N\, \sigma(\gamma^j),	
	\qquad
	1 \leq j \leq N,
\]
the claim follows by observing that 
$P''(1) = Q_n(\Delta^k)\cup P_n(\bdry \Delta^k)$ and
$P''(N) = P_n(\Delta^k)$
and by noticing that the inclusion
$P''(j) \incl P''(j+1)$ is marked anodyne for all $j=1,\ldots,N-1$.
\end{proof}

\begin{proof}[Proof of Theorem~\ref{thm:cartfibcolim}]
In view of \cite[Prop.~4.3.1.5(2) and Example~4.3.1.3]{HTT}, the assumption 
that $Vp$ is a colimit diagram in $\calB$ implies that it is 
enough to show that $\bar{p}$ is a $V$--colimit diagram.
Factorize $\bar{p} \colon K^\rightcone \to \calE$ as a composite
$
	K^\rightcone 
	\xto{\,\ \mathclap{i^\rightcone}\,\ } 
	\calK^\rightcone 
	\xto{\,\ \mathclap{\tilde{p}}\,\ } 
	\calE
$
where $\calK$ and hence $\calK^\rightcone$ are $\infty$--categories
and $i$ is a map $K\to \calK$. To obtain such a factorization,
we might for example factorize the adjoint $K \to \calE_{/\bar{p}(\infty)}$
of $\bar{p}$ as a composite $K \xto{\,i\,} \calK \to \calE_{/\bar{p}(\infty)}$
of a trivial cofibration followed by a fibration in the Joyal model structure
on simplicial sets and pass to adjoints again.
Here $\infty$ is the cone point in $K^\rightcone$.
Let
\[\xymatrix@C+1em{
	\calE'
	\ar[r]
	\ar[d]_{V'}
	&
	\calE
	\ar[d]^V
	\\
	\calK^\rightcone
	\ar[r]^{V\tilde{p}}
	& 
	\calB
}\]
be a pullback diagram, and let $\bar{p}' \colon K^\rightcone \to \calE'$
be the map induced by $\bar{p}$ and $i^\rightcone$.
By \cite[Cor.~4.3.1.15]{HTT}, it is enough to show that 
$\bar{p}'$ is a $V'$--colimit diagram. 
By the definition of $V'$--colimit diagram \cite[Def.~4.3.1.1]{HTT},  
our task is therefore to show that the map
\[
	\calE'_{\bar{p}'/}
	\longto
	\calE'_{p'/}
	\times_{\calK^\rightcone_{V'p'/}}
	\calK^\rightcone_{V'\bar{p}'/}
\]
is a trivial fibration, where $p' = \bar{p}' | \colon \calK \to \calE'$.
That is, we must show that every commutative square
\[\xymatrix{
	\bdry\Delta^n_{\vphantom{g}}
	\ar[r]
	\ar@{_{(}->}[d]%
	&
	\calE'_{\bar{p}'/}
	\ar[d]
	\\
	\Delta^n
	\ar[r]
	\ar@{-->}[ur]
	&
	\calE'_{p'/}
	\times_{\calK^\rightcone_{V'p'/}}
	\calK^\rightcone_{V'\bar{p}'/}
}\]
($n\geq 0$) can be completed with a dashed arrow, or, equivalently,
that all diagrams of the form
\[\xymatrix@!0@C=4em@R=3.3ex{
	&
	K \star \Delta^{\{0\}}
	\ar[ddr]^{\bar{p}'}
	\ar@{}[ddl]
	\ar@{_{(}->}[]!/3.5ex/;[ddl] %
	\\
	\\
	K\star\bdry\Delta^n_{\vphantom{g}}
	\ar@{_{(}->}[ddd] %
	\ar[rr]^f
	&&
	\calE'
	\ar[ddd]^{V'}
	\\
	\\
	\\
	K \star \Delta^n
	\ar[rr]^g
	\ar@{-->}[uuurr]
	&&
	\calK^\rightcone
}\]
($n\geq 1$) can. %

Suppose we have been given such a diagram.
Then the map $g$ necessarily sends $\Delta^n \subset K \star \Delta^n$
to the cone point of $\calK^{\rightcone}$,
so the map $g$ factors as a composite
\[
	K\star \Delta^n
	\xto{\ \id \star r\ }
	K \star \Delta^0
	=
	K^\rightcone
	\xto{\ g'\ }
	\calK^\rightcone
\]
where $r\colon \Delta^n \to \Delta^0$ is the unique map.
Letting $V'' \colon \calE''\to K^\rightcone$ be the pullback of 
$V'\colon \calE'\to \calK^\rightcone$ along $g'$,
we are reduced to showing that the diagram
\begin{equation}
\label{diag:vdoubleprimesquare}
\vcenter{\xymatrix@!0@C=4em@R=3.3ex{
	K\star\bdry\Delta^n_{\vphantom{g}}
	\ar@{_{(}->}[ddd] %
	\ar[rr]^{f'}
	&&
	\calE''
	\ar[ddd]^{V''}
	\\
	\\
	\\
	K \star \Delta^n
	\ar[rr]^{\id\star r}
	\ar@{-->}[uuurr]
	&&
	K^\rightcone
}}
\end{equation}
can be completed, where $f'$ is the map induced by $f$. 

Embed $K\star \bdry\Delta^n$ and $K \star \Delta^n$ into $K^\rightcone\times \Delta^n$
as the simplicial subsets $T'_{n,0}(K)$ and $T_{n,0}(K)$, respectively, and notice that 
then the map $\id\star r \colon K\star \Delta^n \to K^\rightcone$ 
corresponds to the restriction of the projection map $K^\rightcone\times \Delta^n \to K^\rightcone$ to $T_{n,0}(K)$.
We have the following diagram of simplicial sets
\[\xymatrix{
	\Map^\sharp_{K^\rightcone}(T'_{n,0}(K), (\calE'')^\natural)
	&
	\ar[l]_{\rho_1}
	\Map^\sharp_{K^\rightcone}(P_n(K), (\calE'')^\natural)
	\ar[r]^{\rho_2}
	&
	\Map^\sharp_{K^\rightcone}(Q_n(K) , (\calE'')^\natural)
	\\
	&&
	\ar[u]_{\rho_3}
	\ar[ul]^{\rho_4}
	\Map^\sharp_{K^\rightcone}(K^\rightcone \times \Delta^n, (\calE'')^\natural)
	\ar[d]^{\rho_5}
	\\
	&&
	\Map^\sharp_{K^\rightcone}(T_{n,0}(K), (\calE'')^\natural)
}\]
where all of the morphisms are restriction maps.
By \cite[Rk.~3.1.3.4]{HTT}, Lemma~\ref{lm:markedanodynemors}
implies that the restriction map 
\[
	\Map^\flat_{K^\rightcone}(P_n(K),(\calE'')^\natural)
	\longto
	\Map^\flat_{K^\rightcone}(T'_{n,0}(K),(\calE'')^\natural)
\]
is a trivial fibration, and passing to cores, 
\cite[\href{https://kerodon.net/tag/01EZ}{Prop.~01EZ}]{kerodon}
implies that $\rho_1$ is also a trivial fibration.
Similarly, the map $\rho_2$ is a trivial fibration.
By 
\cite[\href{https://kerodon.net/tag/02T8}{Thm.~02T8}
and \href{https://kerodon.net/tag/02T9}{Rk.~02T9}]{kerodon},
the assumption that 
\eqref{eq:kopleftconetocatinfty}
is a limit diagram
implies that  the restriction map 
\[
	\Map^\flat_{K^\rightcone}((K^\rightcone)^\sharp,(\calE'')^\natural)
	\longto
	\Map^\flat_{K}(K^\sharp,(\calE''')^\natural)
\]
is a trivial fibration where
$\calE'''$ denotes the restriction of $\calE''$ to $K \subset K^{\rightcone}$.
It follows easily that the restriction map
\[
	\Map^\flat_{K^\rightcone}(K^\rightcone\times \Delta^n,(\calE'')^\natural)
	\longto
	\Map^\flat_{K^\rightcone}(Q_n(K),(\calE'')^\natural)	
\]
satisfies the extension and lifting property
characterizing trivial Kan fibrations, 
and passing to cores, 
\cite[\href{https://kerodon.net/tag/01EZ}{Prop.~01EZ}]{kerodon}
implies that the map $\rho_3$ is a trivial fibration.
By \cite[Lemma~3.1.3.6]{HTT} and 
the 2-out-of-3 property of weak equivalences, 
the map $\rho_4$  is also a trivial fibration.
In particular, the composite $\rho_1 \rho_4$ is surjective on 
vertices.
The assumption that $\bar{p}$ sends every morphism in $K^\rightcone$
to a $V$--cartesian morphism in $\calE$ implies that $f'$
is a vertex in 
$\Map^\sharp_{K^\rightcone}(T'_{n,0}(K), (\calE'')^\natural)$,
so we may find a vertex $f'_1$ in 
$\Map^\sharp_{K^\rightcone}(K^\rightcone \times \Delta^n, (\calE'')^\natural)$
such that $\rho_1 \rho_4 (f'_1) = f'$.  Now $\rho_5(f'_1)$
provides the desired completion for diagram
\eqref{diag:vdoubleprimesquare}. 
\end{proof}

\begin{proof}[Proof of Theorem~\ref{thm:cosheafpc}]
The claim follows by applying Theorem~\ref{thm:cartfibcolim}
to the diagram $X_{J\bar{U}}$
once we have verified that $X_{J\bar{U}}$
satisfies the assumptions of that theorem.
It is easily seen from the construction of $X_{J\bar{U}}$ that 
it sends all morphisms in $NI^\rightcone$
to $V$--cartesian morphisms in $\pC$,
so that $X_{J\bar{U}}$ satisfies assumption 
(\ref{it:barpvcart}) of  Theorem~\ref{thm:cartfibcolim}.
Moreover,
it is easy to check that 
$B = \colim_{i\in I} U_i$ in $\calT$,
so that $VX_{J\bar{U}} \colon NI^\rightcone \to N\calT$
is a colimit diagram in $N\calT$
and $X_{J\bar{U}}$ satisfies assumption 
(\ref{it:vbarpcolimdiag}).

To show that $X_{J\bar{U}}$ satisfies assumption 
(\ref{it:vbartrlimdiag}) 
of Theorem~\ref{thm:cartfibcolim}, 
we will first construct a transport representation
for $\pC \to N\calT$ as a cartesian fibration.
Recall that the $\infty$--category $\Spaces$ of spaces can be constructed as
the homotopy coherent nerve
\[
	\Spaces = N^{\mathrm{hc}} \Kan
\]
where $\Kan$ is the full simplicial subcategory of $\sSet$ 
spanned by Kan complexes,
and that the $\infty$--category $\InftyCats$ of small $\infty$--categories
can be constructed as
\[
	\InftyCats = N^{\mathrm{hc}} \QCat
\]
where $\QCat$ is the simplicial subcategory of $\sSet$
whose objects are quasicategories and where the simplicial set of 
morphisms from a quasicategory $\calD$ to a quasicategory $\calE$
is given by the core $\Fun(\calD,\calE)^\homot$.
We equip $\calT$ with the trivial simplicial enrichment 
where the simplicial set of maps from $A$ to $B$ 
is the constant simplicial set given by the set of 
continuous maps from $A$ to $B$.
We then have simplicial functors
\[
	\Pi_\infty^\op \colon \calT \longto \Kan,
	\quad
	B \longmapsto \Pi_\infty(B)^\op
\]
and
\begin{equation}
\label{eq:funkanopqcat}
	\Fun(-,\calC)\colon \Kan^\op \longto \QCat,
	\quad
	X \longmapsto \Fun(X,\calC),
\end{equation}
and 
\cite[\href{https://kerodon.net/tag/027J}{Prop.~027J}]{kerodon}
together with 
\cite[%
	\href{https://kerodon.net/tag/01EA}{Example~01EA}
	and
	\href{https://kerodon.net/tag/0285}{Prop.~0285}%
]{kerodon}
show that the composite
\begin{equation}
\label{eq:ncaltoptoinftycats} 
	N\calT^\op
	\xto{\,N^{\mathrm{hc}}(\Pi_\infty^\op)^\op\,}
	\Spaces^\op
	\xto{\,N^{\mathrm{hc}}\Fun(-,\calC)\,}
	\InftyCats
\end{equation}
is a transport representation
for $\pC \to N\calT$ as a cartesian fibration.
See \cite[\href{https://kerodon.net/tag/028V}{Def.~028V}]{kerodon}.

Recall that $\Pi_\infty$ is another name for the singular simplicial set 
functor $\Sing_\bullet \colon \calT \to \sSet$.
The composite
\[
	(NI^\op)^\leftcone
	\isom
	(NI^\rightcone)^\op
	\xto{\,N\bar{U}^\op\,}
	N\calU(B)^\op
	\xto{\,NJ^\op\,}
	N\calT^\op
	\xto{\,N^{\mathrm{hc}}(\Pi_\infty^\op)^\op\,}
	\Spaces^\op
\]
is now a limit diagram in $\Spaces^\op$ by
\cite[Thm.~A.3.1]{HA} and 
\cite[Thm.~4.2.4.1]{HTT}.
The simplicial functor $\Fun(-,\calC)$ 
of equation \eqref{eq:funkanopqcat}
has a simplicial left adjoint
\[
	\Fun(-,\calC)^\homot \colon \QCat \longto \Kan^\op,
	\quad
	\calD \longmapsto \Fun(\calD,\calC)^\homot.
\]
By \cite[Cor.~5.2.4.5]{HTT},
it follows that the functor $N^{\mathrm{hc}}\Fun(-,\calC)$
of $\infty$--categories in \eqref{eq:ncaltoptoinftycats}
is a right adjoint, and therefore preserves limit diagrams.
We conclude that the composite of 
$(V X_{J\bar{U}})^\op$ and \eqref{eq:ncaltoptoinftycats}
is a limit diagram, so that $X_{J\bar{U}}$
also satisfies assumption 
(\ref{it:vbartrlimdiag}) 
of Theorem~\ref{thm:cartfibcolim}.
\end{proof}

\section{The functor \texorpdfstring{$H_\bullet$}{H.}}
\label{sec:hbullet}

In this section, we will
define and study the functor $H_\bullet$
associating to each parametrized 
$\calC$--object $X$ over a space $B$ 
an object $H_\bullet(B;X)$ of $\calC$, 
the ``homology of $B$ with coefficients in $X$.''
For most of our purposes, it suffices to work on the level 
of homotopy categories, so we start by 
constructing $H_\bullet$ as a functor $\hpC \to \ho(\calC)$
in Section~\ref{subsec:hbullethc}.
This functor on the level of homotopy categories is in particular
sufficient for our needs in Sections~\ref{sec:umkehrmaps}
and~\ref{sec:cwdualizabilityofsemisimplegrps}.
In Section~\ref{subsec:hbulletinftycats},
we will nevertheless refine this functor into an $\infty$--functor
$\pC\to \calC$. This refinement will be needed for 
formulating and proving Theorem~\ref{thm:cosheaf}
as well as for constructing relative versions
$H_\bullet(B,B_0;X)$ of the $H_\bullet$--objects,
which we will do in Section~\ref{subsec:relativehbullet}.
These relative objects will make a brief but crucial appearance
in the construction of the Serre spectral sequences
in Section~\ref{sec:serress}.
Throughout the section, $\calC$ will be a symmetric monoidal
presentable $\infty$--category which in Section~\ref{subsec:relativehbullet}
will in addition be assumed to be pointed.

\subsection{The functor \texorpdfstring{$H_\bullet$}{H.} on the level of homotopy categories}
\label{subsec:hbullethc}

\begin{defn}
\label{def:hbullethc}
Define a functor 
\[
	H_\bullet = H_\bullet^\calC \colon \hpC\longto \ho(\calC)
\]
as follows. 
Identifying, as usual, $\ho(\calC)$ with $\ho(\calC_{/\pt})$,
on objects we set
\[
	H_\bullet (B,X) = H_\bullet(B;X) = r_!^B X
\]
where $r^B \colon B \to \pt$ is the unique continuous map.
For a morphism $\phi \colon X \to Y$ in $\hpC$
covering a map $f\colon A \to B$ in $\calT$,
we define $H_\bullet (\phi)$ to be the unique morphism
\[
	(f,\phi)_\bullet \colon H_\bullet(A;X) \longto H_\bullet(B;Y)
\]
making the square on the left below, with vertical arrows
given by the canonical opcartesian morphisms,
into a commutative square covering the square on the right.
\begin{equation}
\label{diag:hbulletmordef} 
	\vcenter{\xymatrix@!0@C=5.5em@R=9ex{
    	X 
    	\ar[r]^{\phi\vphantom{f}}
    	\ar[d]
    	&
    	Y
    	\ar[d]
    	\\
    	r^A_! X
    	\ar@{-->}[r]^{(f,\phi)_\bullet}
    	&
    	r^B_! Y
    }}
	\qquad\qquad\qquad
	\vcenter{\xymatrix@!0@C=5.5em@R=9ex{
    	A 
		\ar[r]^f
		\ar[d]
		&
		B
		\ar[d]
		\\
		\pt
		\ar@{=}[r]
		&
		\pt_{\vphantom{!}}
    }}
\end{equation}
\end{defn}

The notation $H_\bullet(B;X)$ is intended to suggest
``homology of $B$ with (twisted) coefficients in $X$.''
When $X \in \ho(\calC)$ and 
$\calC$ is $\Spectra$ or more generally $\Mod^{R}$
for a commutative ring spectrum $R$,
the object
$H_\bullet(B; \underline{X}) \in \ho(\calC)$ 
does recover ordinary $X$--homology of $B$.
(We remind that the reader that $\underline{X}$
denotes the constant parametrized $\calC$--object
defined by $X$.)

\begin{example}
\label{ex:hbulletfortrivialcoeffs2}
Suppose $X \in \ho(\calC)$.
Then for all spaces $B$ we have
equivalences (natural in $B$ and $X$)
\begin{equation}
\label{eq:hbullettrivcoeffs2}
	H_\bullet(B; \underline{X})
	=
	r^B_! r_B^\ast X
	\homot
	r^B_! (r_B^\ast X \tensor_B S_B)
	\homot 
	X \tensor r^B_! S_B
	=
	X \tensor t_\calC(B,r_B)
\end{equation}
where the second equivalence follows from the projection formula
and $t_\calC$ is the functor of Proposition~\ref{prop:tcalc2}.
From Remark~\ref{rk:tcalcrecognition}, we see that for $\calC=\Spectra$,
we have
\[
	t_{\Spectra}(B,r_B) \homot \suspension^\infty_+ B,
\]
so in this case we obtain a natural isomorphism
\[
	\pi_\ast H_\bullet(B;\underline{X})
	\isom 
	\pi_\ast (X \smashprod \suspension^\infty_+ B)
	=
	X_\ast(B).
\]
Similarly, when $\calC = \Mod^R$ for a commutative ring spectrum $R$,
we have  $t_{\Mod^{R}}(B,r_B) \homot R \smashprod \suspension^\infty_+ B$,
yielding a natural isomorphism
\[
	\pi_\ast H_\bullet(B;\underline{X})
	\isom
	\pi_\ast (X \smashprod^R (R\smashprod \suspension^\infty_+ B))
	\isom
	\pi_\ast (X \smashprod \suspension^\infty_+ B)
	= 
	X_\ast(B).
\]
A similar remark also applies for $\calC = \Spectra^\ell$,
for example.
\end{example}

\begin{rem}
\label{rk:thomspacefromhbullet}
The functor $H_\bullet$ can be thought of as a generalization 
of the Thom space construction. To understand how, 
recall that given a vector bundle $\alpha$ over a space,
by definition the Thom space $B^\alpha$ 
of $\alpha$ is the pointed space obtained
by first forming the fibrewise one-point compactification
$S^\alpha$ of $\alpha$,
an ex-space over $B$ with section given by the points at infinity,
and by then collapsing all the points at infinity into a single point. 
Translating to the language of point-set level
base change functors, this is to say that 
$B^\alpha$ is the image of $S^\alpha \in \calT_B$ under the functor 
$r_! \colon \calT_B \to \calT_\pt$ induced by the map $r\colon B \to \pt$.
At least when $B$ is a CW complex, $S^\alpha \in \calT_B$ is cofibrant 
in the Quillen model structure on
$\calT_B$, 
so in such cases the Thom space $B^\alpha$ is the image 
of $S^\alpha \in \ho(\calT_B)$ also under the derived functor 
$r_! \colon \ho(\calT_B) \to \ho(\calT_\pt)$.
But under the equivalences \eqref{eq:fwpointedspacesandexspaces},
this functor agrees with the functor 
$H_\bullet \colon \ho(\PointedSpaces_{/B}) \to \ho(\PointedSpaces)$.
Continuing to write $S^\alpha$ and $B^\alpha$ for the 
corresponding objects of $\ho(\PointedSpaces_{/B})$ and $\ho(\PointedSpaces)$
under the equivalences \eqref{eq:fwpointedspacesandexspaces},
we therefore have
\[
	H_\bullet(B; S^\alpha) \homot B^\alpha \in \ho(\PointedSpaces)
\] 
when $B$ is a CW complex.
\end{rem}

Motivated by Remark~\ref{rk:thomspacefromhbullet}, 
we digress for a moment to use 
$H_\bullet$ to define Thom spectra of a virtual bundles.

\begin{defn}[The parametrized spectrum $S^\xi$]
\label{def:sxi}
Given a vector bundle $\alpha$ over a space $B$, 
we continue to write $S^\alpha$ for the fibrewise spectrum
defined by the 
the ex-space $S^\alpha$ of Remark~\ref{rk:thomspacefromhbullet},
that is, for the image of $S^\alpha \in \ho(\PointedSpaces_{/B})$
under the functor 
$\suspension^\infty_B \colon \ho(\PointedSpaces_{/B}) \to \ho(\Spectra_{/B})$.
As the fibres of $S^\alpha \in \ho(\Spectra_{/B})$ are spheres
and therefore invertible in $\ho(\Spectra)$, 
it follows from Corollary~\ref{cor:fwdinvcrit} that $S^\alpha$
is invertible in $(\ho(\Spectra_{/B}), \smashprod_B)$.
We write $S^{-\alpha}$ for an inverse of $S^\alpha$.
Finally, given a virtual bundle $\xi = \alpha-\beta$ over $B$, we let
\[
	S^\xi = S^\alpha \smashprod_B  S^{-\beta} \in \ho(\Spectra_{/B}).
\]
\end{defn}

It is readily verified that $S^\xi$ is functorial with respect to
stable isomorphisms of virtual bundles. 
See e.g.\ \cite[Def.~II.8.2]{CrabbJames} for the definition
of such isomorphisms.

\begin{defn}[The Thom spectrum $B^\xi$]
\label{def:thomspectrumofvirtualbundle}
Given a virtual bundle $\xi$ over $B$, we define
the \emph{Thom spectrum of $\xi$} to be the spectrum
\[
	B^\xi = H_\bullet(B;S^\xi) \in \ho(\Spectra).
\]
\end{defn}

Having defined the Thom spectrum $B^\xi$, we continue 
with our discussion of the properties of the functor $H_\bullet$.

\begin{prop}
\label{prop:opcarttoequiv}
The functor
$H_\bullet \colon \hpC \to \ho(\calC)$
sends opcartesian morphisms to equivalences.
\end{prop}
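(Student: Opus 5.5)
The plan is to unwind Definition~\ref{def:hbullethc} and use the fact that opcartesian morphisms compose and are determined by their codomain up to unique vertical isomorphism. Let $\phi\colon X \to Y$ be an opcartesian morphism of $\hpC$ covering $f\colon A\to B$. Write $\eta_X\colon X \to r^A_! X$ and $\eta_Y\colon Y \to r^B_! Y$ for the canonical opcartesian morphisms covering $r^A$ and $r^B$. By definition, $(f,\phi)_\bullet$ is the unique vertical morphism (covering $\id_\pt$) with
\[
	(f,\phi)_\bullet \circ \eta_X = \eta_Y \circ \phi .
\]
Such a morphism exists and is unique by the universal property of the opcartesian morphism $\eta_X$, since $r^B \circ f = r^A$.

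The key step is the following. Since $\phi$ and $\eta_Y$ are both opcartesian, the dual of Proposition~\ref{prop:cartmorprops}(\ref{it:compiscart}) shows that $\eta_Y\circ\phi$ is an opcartesian morphism $X\to r^B_!Y$ covering $r^B f = r^A$. Thus $\eta_X$ and $\eta_Y\circ\phi$ are two opcartesian morphisms with the same domain $X$ covering the same map $r^A\colon A\to\pt$. The opcartesian analogue of Proposition~\ref{prop:cartmorprops}(\ref{it:cartsourceiso}) then gives a unique vertical isomorphism $\theta\colon r^A_!X \to r^B_!Y$ with $\theta\circ\eta_X = \eta_Y\circ\phi$.

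By the uniqueness in the definition of $(f,\phi)_\bullet$, we get $\theta = (f,\phi)_\bullet$. Hence $(f,\phi)_\bullet$ is an isomorphism in the fibre $\hpC_\pt = \ho(\calC_{/\pt}) \homot \ho(\calC)$, that is, an equivalence. Alternatively, one can argue via Proposition~\ref{prop:cartmorprops}(\ref{it:cartfactor}) dualized: $(f,\phi)_\bullet\circ\eta_X$ is opcartesian and $\eta_X$ is opcartesian, so $(f,\phi)_\bullet$ is opcartesian. It covers the identity, so by the dual of (\ref{it:cartoveriso}) it is an isomorphism.

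I expect no real obstacle. The only care needed is applying the duals of the cited parts of Proposition~\ref{prop:cartmorprops} in the correct direction, i.e.\ to $\Phi\colon\hpC^\op\to\calT^\op$.
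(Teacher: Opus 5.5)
Your proof is correct and follows the paper's: the paper's one-line proof is exactly your ``alternative'' argument, using the opcartesian analogues of parts (\ref{it:compiscart}), (\ref{it:cartfactor}) and (\ref{it:cartoveriso}) of Proposition~\ref{prop:cartmorprops}. Your main argument, via (\ref{it:compiscart}), (\ref{it:cartsourceiso}) and the uniqueness in Definition~\ref{def:hbullethc}, is an equally valid minor variant.
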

\begin{proof}
If $\phi \colon X \to Y$ is opcartesian, the
map $(f,\phi)_\bullet$ in \eqref{diag:hbulletmordef}
is an equivalence by the analogues
of 
Proposition~\ref{prop:cartmorprops}(\ref{it:compiscart}), 
(\ref{it:cartfactor}),
and (\ref{it:cartoveriso}) 
for opcartesian morphisms.
\end{proof}

\begin{cor}
\label{cor:hbulletandshriek}
Given $X \in \ho(\calC_{/A})$ and a continuous map
$f\colon A \to B$, the canonical opcartesian morphism
$X \to f_!X$ covering $f$ induces an equivalence
\[
	\pushQED{\qed} 
	H_\bullet(A;X) \homot H_\bullet(B;f_!X). 
    \qedhere
    \popQED
\]
\end{cor}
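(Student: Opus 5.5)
The statement to prove is Corollary~\ref{cor:hbulletandshriek}. It should follow directly from Proposition~\ref{prop:opcarttoequiv} together with the definition of $H_\bullet$ on morphisms. Let $\psi\colon X \to f_! X$ be the canonical opcartesian morphism of $\hpC$ covering $f$, as supplied by the opcleavage underlying $f_!$ (Remark~\ref{rk:fibsandbasechange}). Applying $H_\bullet$ gives
\[
	(f,\psi)_\bullet \colon H_\bullet(A;X) = r^A_! X \longto r^B_! f_! X = H_\bullet(B;f_!X).
\]
Since $\psi$ is opcartesian, Proposition~\ref{prop:opcarttoequiv} shows that $(f,\psi)_\bullet$ is an equivalence, which is the assertion.

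For a more direct argument, one can unwind Definition~\ref{def:hbullethc}. In that definition, $(f,\psi)_\bullet$ is the unique vertical morphism over $\pt$ making the square with $\psi$ on top commute. The two vertical sides of that square are the canonical opcartesian morphisms $X\to r^A_!X$ and $f_!X\to r^B_!f_!X$.

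First, the composite along the top and right, $X \to f_!X \to r^B_! f_! X$, is opcartesian by Proposition~\ref{prop:cartmorprops}(\ref{it:compiscart}) dualized. It covers $r^B f = r^A$.

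Next, the composite along the left and bottom equals it. Since $X\to r^A_!X$ is opcartesian, the dual of Proposition~\ref{prop:cartmorprops}(\ref{it:cartfactor}) shows that $(f,\psi)_\bullet$ is opcartesian. It covers $\id_\pt$.

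Finally, the dual of part (\ref{it:cartoveriso}) shows that $(f,\psi)_\bullet$ is an isomorphism, as required.

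I do not expect any real obstacle. The only point needing care is that $f_!$ is defined only up to canonical equivalence, and this is harmless because any two opcleavages give canonically isomorphic targets.
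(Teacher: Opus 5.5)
Your proposal is correct and follows the paper's route: the paper records this corollary as an immediate consequence of Proposition~\ref{prop:opcarttoequiv}, applied to the canonical opcartesian morphism $X \to f_!X$. Your ``more direct'' unwinding is exactly the paper's proof of that proposition, via the opcartesian analogues of parts~(\ref{it:compiscart}), (\ref{it:cartfactor}) and~(\ref{it:cartoveriso}) of Proposition~\ref{prop:cartmorprops}.
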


\begin{prop}
\label{prop:hbulletsm}
The functor
$H_\bullet \colon (\hp\calC,\exttensor) \to (\ho(\calC),\tensor)$
is symmetric monoidal, so that in particular we have an equivalence
\begin{equation}
\label{eq:hbulletmonconstr}
	\times 
	\colon
	H_\bullet(A;X) \tensor H_\bullet(B;Y) 
	\xto{\ \homot\ } 
	H_\bullet(A\times B; X\exttensor Y)
\end{equation}
natural in $X$ and $Y$.
\end{prop}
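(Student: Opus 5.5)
The plan is to build the monoidal constraint from the fact that $\exttensor$ preserves opcartesian morphisms, and then check the axioms through the universal property of opcartesian morphisms.

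\textbf{Defining the constraint.} Let $X\in\ho(\calC_{/A})$ and $Y\in\ho(\calC_{/B})$. Take the canonical opcartesian morphisms $X\to r^A_!X$ and $Y\to r^B_!Y$, covering $r^A$ and $r^B$. By Proposition~\ref{prop:exttensoropcarts}, their external product
\[
	X\exttensor Y\longto r^A_!X\exttensor r^B_!Y
\]
is opcartesian and covers $r^A\times r^B = r^{A\times B}$, using the identification $\pt\times\pt=\pt$.

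On objects over $\pt$, the product $\exttensor$ is identified with $\tensor$ on $\ho(\calC)$. This uses formula \eqref{eq:exttensordef} with all projections identities, up to the unit coherence $\pi^\ast\isom\id$. Hence $r^A_!X\exttensor r^B_!Y\homot H_\bullet(A;X)\tensor H_\bullet(B;Y)$.

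The canonical opcartesian morphism $X\exttensor Y\to r^{A\times B}_!(X\exttensor Y)$ covers the same map. By the opcartesian analogue of Proposition~\ref{prop:cartmorprops}(\ref{it:cartsourceiso}), there is a unique vertical isomorphism
\[
	H_\bullet(A;X)\tensor H_\bullet(B;Y)\homot H_\bullet(A\times B;X\exttensor Y)
\]
compatible with the two opcartesian maps out of $X\exttensor Y$. This is the map \eqref{eq:hbulletmonconstr}. For the unit: the monoidal unit of $\hpC$ is $(\pt,S_\pt)$, and $H_\bullet(\pt;S_\pt)=\id_!S_\pt\homot S$.

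\textbf{Naturality and the axioms.} Let $\phi\colon X\to X'$ cover $f$ and $\psi\colon Y\to Y'$ cover $g$. Consider the two composites
\[
	X\exttensor Y\to H_\bullet(A\times B;X\exttensor Y)\to H_\bullet(A'\times B';X'\exttensor Y')
\]
and
\[
	X\exttensor Y\to H_\bullet(A;X)\tensor H_\bullet(B;Y)\xto{(f,\phi)_\bullet\tensor(g,\psi)_\bullet}H_\bullet(A';X')\tensor H_\bullet(B';Y').
\]
By definition \eqref{diag:hbulletmordef} and functoriality of $\exttensor$, both agree with the composite $X\exttensor Y\to X'\exttensor Y'\to r_!(X'\exttensor Y')$. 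The opcartesian morphism out of $X\exttensor Y$ is covering $r^{A\times B}$, followed by an identity of $\pt$. The uniqueness clause in the universal property of opcartesian morphisms therefore forces the two maps out of $H_\bullet(A\times B;X\exttensor Y)$ to coincide.

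The associativity, unit and symmetry coherence diagrams are handled the same way. Each diagram in $\ho(\calC)$ becomes, after precomposition with the relevant opcartesian morphism out of $X\exttensor Y\exttensor Z$ (or $X\exttensor Y$), a diagram whose two sides agree because $\exttensor$ on $\hpC$ is itself symmetric monoidal and $\exttensor$ restricted to the fibre over $\pt$ is $\tensor$ with its own constraints. Uniqueness in the opcartesian universal property then gives commutativity.

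\textbf{Main obstacle.} The only delicate point is the identification of $(\hpC_\pt,\exttensor)$ with $(\ho(\calC),\tensor)$ as symmetric monoidal categories, so that the coherence isomorphisms of $\exttensor$ on the fibre over $\pt$ match those of $\tensor$. I would extract this directly from the Grothendieck construction of Theorem~\ref{thm:smgrothendieckconstr}. There, the constraints of $\exttensor$ are built from those of $\tensor_{AC}$ and the pseudofunctor coherences. Over $\pt$, all base change functors are the identity up to the unit coherence of the pseudofunctor, so the constraints reduce to those of $\tensor$.
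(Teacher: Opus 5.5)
Your proposal is correct and follows essentially the same route as the paper: the constraint comes from comparing the $\exttensor$-product of the canonical opcartesian morphisms $X\to r^A_!X$ and $Y\to r^B_!Y$ (opcartesian by Proposition~\ref{prop:exttensoropcarts}) with the canonical opcartesian morphism out of $X\exttensor Y$, and the unit constraint comes from $S_\pt\to r^\pt_!S_\pt$. The paper does not identify $\pt\times\pt$ with $\pt$; instead it uses the opcartesian map covering the homeomorphism $\pt\times\pt\to\pt$ and the map covering $\Delta\colon\pt\to\pt\times\pt$ from \eqref{eq:internaltensorformula}, and it omits the naturality and coherence checks that you sketch.
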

\begin{proof}
Suppose $(A,X)$ and $(B,Y)$ are objects in $\hp\calC$.
By Proposition~\ref{prop:exttensoropcarts},
the external tensor product $\exttensor$
on $\hp\calC$ preserves opcartesian morphism.
Factoring the canonical opcartesian  morphism
$X\exttensor Y \to r^{A\times B}_! (X\exttensor Y)$
through the $\exttensor$--product of the canonical
opcartesian morphisms $X \to r^A_! X$ and $ Y \to r^B_! Y$
yields an opcartesian morphism
\begin{equation}
\label{eq:mu}
	r^A_! X \exttensor r^B_! Y \to  r^{A\times B}_! (X\exttensor Y)
\end{equation}
which is an equivalence as it covers the homeomorphism 
$\pt \times \pt \to \pt$.
See Proposition~\ref{prop:cartmorprops}(\ref{it:cartfactor})
and~(\ref{it:cartoveriso}).
Composing with the morphism
\begin{equation}
\label{eq:delta}
r^A_! X \tensor_{\pt} r^B_! Y
\to
r^A_! X \exttensor r^B_! Y 
\end{equation}
covering $\Delta\colon \pt \to \pt\times\pt$
afforded by \eqref{eq:internaltensorformula}---also an equivalence---yields 
the monoidality constraint
$
	r^A_! X \tensor r^B_! Y \homot r^{A\times B}_! (X \exttensor Y).
$
The unit constraint is given by the opcartesian 
morphism $S_\pt \to r^\pt_! S_\pt$, which is an equivalence as it
covers the identity map of $\pt$.
\end{proof}

\begin{example}
When $\calC= \Spectra$  and $X,Y \in \ho(\Spectra)$,
the composite
\begin{multline*}
	\pi_\ast H_\bullet(A, \underline{X})
	\tensor
	\pi_\ast H_\bullet(B,\underline{Y})
	\xto{\ \times\ }
	\pi_\ast(H_\bullet(A;\underline{X}) \smashprod H_\bullet(B;\underline{Y}))
	\xto{\ \isom\ } 
	\pi_\ast H_\bullet(A\times B, \underline{X} \extsmashprod \underline{Y})
	\\
	\xto{\ \isom\ }
	\pi_\ast H_\bullet(A\times B, \underline{X \smashprod Y})	 
\end{multline*}
of the cross product on homotopy groups of spectra,
the map induced by \eqref{eq:hbulletmonconstr},
and the map induced by the equivalence 
$\underline{X} \extsmashprod \underline{Y} \homot \underline{X \smashprod Y}$
recovers the usual cross product
\[
	X_\ast(A) \tensor Y_\ast(B) 
	\xto{\ \times\ } 
	(X\smashprod Y)_\ast(A\times B).
\]
A similar remark applies to $\calC = \Spectra^\ell$ 
for a prime $\ell$ and $\calC = \Mod^{R}$
for a commutative ring spectrum~$R$.
\end{example}

Suppose $\calD$ is another symmetric monoidal presentable 
$\infty$--category, and let $F\colon \calC\to \calD$ be 
a (not necessarily symmetric monoidal) $\infty$--functor
which admits a right adjoint.
Then as a special case of Proposition~\ref{prop:opcartpreservation},
we have an equivalence
\begin{equation}
\label{eq:hbulletandfcommute}
\theta \colon H_\bullet(B;F_\fw X) \xto{\ \homot\ } F H_\bullet(B;X)
\end{equation}
natural in the object $(B,X) \in \hpC$.

\begin{prop}
\label{prop:fhsm}
Let $\calD$ be another symmetric monoidal presentable 
$\infty$--category, and let $F\colon \calC\to \calD$ be 
a symmetric monoidal $\infty$--functor admitting a right adjoint.
Then the natural equivalence $\theta$ of equation \eqref{eq:hbulletandfcommute}
is symmetric monoidal.
\end{prop}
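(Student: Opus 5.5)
We have to check that $\theta$ is compatible with the monoidality constraints $\times$ of Proposition~\ref{prop:hbulletsm} (for $\calC$ and $\calD$), with the monoidality constraints of $F$ and of $F_\fw$, and with the unit constraints. The plan is to unwind each constraint into a composite of morphisms in $\hpD$ covering maps into or out of $\pt$. Both composites we need to compare then become morphisms in the fibre over $\pt$, and these are determined by universal properties of opcartesian morphisms.

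\textbf{Step 1 (describe $\theta$).} The canonical opcartesian morphism $X \to r^B_! X$ is sent by $F_\fw$ to an opcartesian morphism $F_\fw X \to F_\fw r^B_! X = F(r^B_! X)$, by Proposition~\ref{prop:opcartpreservation}. Comparing it with the canonical opcartesian morphism $F_\fw X \to r^B_! F_\fw X$ gives $\theta$: it is the unique vertical morphism over $\pt$ that makes the triangle under $F_\fw X$ commute.

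\textbf{Step 2 (monoidal compatibility).} Fix $(A,X)$ and $(B,Y)$. We must show that two maps $F(r^A_!X)\tensor F(r^B_!Y) \to r^{A\times B}_! F_\fw(X\exttensor Y)$ agree:
\begin{itemize}
\item[(a)] $\theta\tensor\theta$, then $\times$ in $\calD$, then the map induced by $F_\fw$'s monoidal structure $F_\fw X\exttensor F_\fw Y\homot F_\fw(X\exttensor Y)$;
\item[(b)] $F$'s monoidal constraint, then $F(\times)$, then $\theta^{-1}$.
\end{itemize}
Precompose both with the morphism $F_\fw X \exttensor F_\fw Y \to F(r^A_!X)\tensor F(r^B_!Y)$ obtained as the $\exttensor$-product of the two opcartesian morphisms from Step~1, composed with the inverse of the equivalence \eqref{eq:delta}. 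This precomposed morphism covers $A\times B\to\pt$ and is opcartesian by Proposition~\ref{prop:exttensoropcarts} and Proposition~\ref{prop:cartmorprops}. Hence two vertical maps out of its target agree as soon as their composites with it agree.

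Each constraint, $\times$ in Proposition~\ref{prop:hbulletsm} and $\theta$, is defined by factoring through canonical opcartesian morphisms. So after precomposition, both (a) and (b) reduce to the composite
\[
F_\fw X\exttensor F_\fw Y \homot F_\fw(X\exttensor Y) \to r^{A\times B}_! F_\fw(X\exttensor Y)
\]
of the monoidality constraint of $F_\fw$ with the canonical opcartesian morphism. For (b) this uses two facts. First, $F_\fw$ is a symmetric monoidal functor on $\hpC$, by Theorem~\ref{thm:smgrothendieckconstr} applied to the symmetric monoidal pseudonatural transformation $(F_B)$. Second, its constraint commutes with $F_\fw(\phi\exttensor\psi)$ for the opcartesian morphisms $\phi,\psi$; this is naturality of the monoidal constraint of $F_\fw$ in $\hpD$. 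The identification of the diagonal morphism \eqref{eq:delta} under $F_\fw$ also requires the monoidality constraint of $F_\fw$ to be compatible with \eqref{eq:internaltensorformula}, i.e.\ with the pseudomonoid structure. This holds because the constraint of $F$ on $\calD_{/\pt}=\calD$ is, by construction, the fibre of the constraint of $F_\fw$ over $\pt$.

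\textbf{Step 3 (units).} Both unit constraints come from $S_\pt\to r^\pt_!S_\pt$ (an isomorphism) and from $F$'s unit constraint, so the same uniqueness argument applies.

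\textbf{Main obstacle.} The main obstacle is the bookkeeping in Step~2: in the Grothendieck construction, $F_\fw$'s monoidal structure on $\exttensor$ must be shown to interact correctly with the diagonal identification \eqref{eq:internaltensorformula}. I would do this with the explicit formula \eqref{eq:exttensoronmors}, using that $F_B$ commutes with pullbacks in a monoidal way.
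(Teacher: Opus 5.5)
Your proposal is correct and takes essentially the same route as the paper. The paper's proof is a single large diagram centred on $F_\fw X \exttensor F_\fw Y$, with opcartesian arrows out of it. That diagram uses the defining triangles for $\theta$ and for the maps \eqref{eq:mu} and \eqref{eq:delta}, the naturality of $(F_\fw)_\tensor$, and the compatibility of $(F_\fw)_\tensor$ with \eqref{eq:delta} that you flag as the main obstacle, which the paper also uses without separate justification.

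One small slip: with the domain you state, path (a) should begin with $(\theta\tensor\theta)^{-1}$ rather than $\theta\tensor\theta$.
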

\begin{proof}
The claim follows from the commutativity of the diagram below:
\[\xymatrix@!0@C=6.5em@R=9ex{
	H_\bullet F_\fw X \tensor H_\bullet F_\fw Y
	\ar[rrrr]^{\theta\tensor\theta}_\homot
	\ar[dr]^\delta_\homot
	\ar[ddd]_{(H_\bullet)_\tensor}^\homot
	&&&&
	F H_\bullet X \tensor F H_\bullet Y
	\ar[dl]_\delta^\homot
	\ar[dd]^{F_\tensor}_\homot
	\\
	&
	H_\bullet F_\fw X \exttensor H_\bullet F_\fw Y
	\ar[ddl]_{\mu}^\homot %
	\ar[rr]^{\theta\exttensor\theta}_{\homot}
	&&
	F H_\bullet X \exttensor F H_\bullet Y
	\ar[dd]_(0.6){(F_\fw)_\tensor}^(0.6){\homot}
	\\
	&&
	F_\fw X \exttensor F_\fw Y
	\ar[dll]_{\oc}
	\ar[ul]|{\oc\exttensor\oc_{\vphantom{\gamma}}}
	\ar[ur]|{F_\fw(\oc)^{\vphantom{h}}_{\vphantom{\gamma}} \exttensor F_\fw(\oc)}
	\ar[dd]_{(F_\fw)_\tensor}^\homot
	&&
	F (H_\bullet X \tensor H_\bullet Y)
	\ar[dl]^{F_\fw(\delta)}_\homot
	\ar[ddd]^{F (H_\bullet)_\tensor}_\homot
	\\
	H_\bullet (F_\fw X \exttensor F_\fw Y)
	\ar[dd]_{H_\bullet (F_\fw)_\tensor}^\homot
	&&&
	F_\fw(H_\bullet X \exttensor H_\bullet Y)
	\ar[ddr]^{F_\fw(\mu)}_\homot %
	\\
	&&
	F_\fw(X\exttensor Y)
	\ar[ur]|{F_\fw(\oc\exttensor^{\vphantom{h}} \oc)} %
	\ar[dll]_{\oc}
	\ar[drr]^{F_\fw(\oc)}
	\\
	H_\bullet F_\fw (X\exttensor Y)
	\ar[rrrr]^\theta_\homot
	&&&&
	F H_\bullet (X\exttensor Y)
}\]
Here $X$ and $Y$ are objects in $\hp\calC$,
and we have, for brevity, omitted base spaces 
from notation for the functor $H_\bullet$;
$\delta$'s refer to maps covering 
the diagonal map $\Delta\colon \pt \to \pt\times\pt$
afforded by~\eqref{eq:internaltensorformula};
$\mu$'s refer to maps given by~\eqref{eq:mu};
$G_\tensor$ for a symmetric monoidal 
functor $G$ denotes the 
monoidality constraint for $G$;
and the various maps $\oc$ 
are  opcartesian morphisms 
evident from the context. 
\end{proof}

Although we will focus on the ``homological'' functor $H_\bullet$, 
we note that there also exists an analogous
``cohomological'' functor $H^\bullet$. The natural domain of
definition for $H^\bullet$ is not $\hp\calC$, however, but 
$\hp\calC^\fop$.

\begin{defn}
Define a contravariant functor 
\[
	H^\bullet = H^\bullet_\calC \colon \hp\calC^\fop \longto \ho(\calC)
\]
as follows. 
On objects we set
\[
	H^\bullet (B,X) = H^\bullet(B;X) = r_\ast^B X
\]
where $r^B \colon B \to \pt$ is the unique continuous map.
To define $H^\bullet$ on morphisms, recall that 
the fibre of
$\hp\calC^\fop$ over the one--point space $\pt$
identifies with the \emph{opposite} category 
$\ho(\calC)^\op$.
For a morphism $\phi \colon X \to Y$ in $\hp\calC^\fop$
covering a map $f\colon A \to B$ in $\calT$,
we define $H^\bullet (\phi)$ to be the morphism
\[
	(f,\phi)^\bullet \colon H^\bullet(B;Y) \longto  H^\bullet(A;X)
\]
in $\ho(\calC)$ corresponding to the unique 
dashed morphism 
making the square on the left below, with vertical arrows
given by the  canonical opcartesian morphisms $\hp\calC^\fop$,
into a commutative square in $\hp\calC^\fop$
covering the square on the right.
\begin{equation}
\label{diag:hupperbulletmordef} 
	\vcenter{\xymatrix@!0@C=5.5em@R=9ex{
    	X 
    	\ar[r]^\phi
    	\ar[d]
    	&
    	Y
    	\ar[d]
    	\\
    	r^A_\ast X
    	\ar@{-->}[r]^{(f,\phi)^\bullet}
    	&
    	r^B_\ast Y
    }}
	\qquad\qquad\qquad
	\vcenter{\xymatrix@!0@C=5.5em@R=9ex{
    	A 
		\ar[r]^f
		\ar[d]
		&
		B
		\ar[d]
		\\
		\pt
		\ar@{=}[r]
		&
		\pt
    }}
\end{equation}
\end{defn}

\subsection{The functor \texorpdfstring{$H_\bullet$}{H.} on the level of \texorpdfstring{$\infty$--}{infinity }categories}
\label{subsec:hbulletinftycats}

Our aim in this subsection is to 
refine the functor $H_\bullet \colon \hpC \to \ho(\calC)$
into an $\infty$--functor $H_\bullet \colon \pC \to \calC$.
To do so, we will construct an adjoint pair of $\infty$--functors
\begin{equation}
\label{eq:caprshriekcaprastadj}
	R_! \colon \pC \longrightleftarrows N\calT \times \calC \colon R^\ast.
\end{equation}
The $\infty$--functor $H_\bullet$ will then be defined in terms of 
the left adjoint $R_!$.

Let 
\[
	c \colon N\calT \times \Delta^1 \isom N(\calT \times [1]) \longto N\calT
\]
be the map induced by the unique natural transformation
from the identity functor of $\calT$ to the constant functor
sending every space to the one-point space $\pt$,
and let $V'\colon \calM \to N\calT \times \Delta^1$ be the pullback
of $V\colon \pC \to N\calT$ along c:
\[\xymatrix{
	\calM 
	\ar[r]^{\tilde{c}}
	\ar[d]_{V'}
	&
	\pC
	\ar[d]^V
	\\
	N\calT \times \Delta^1
	\ar[r]^-c
	&
	N\calT	
}\]
Then $V'$ is a cartesian and cocartesian fibration.
Notice that the restriction of $V'$ to $N\calT \times \{0\} \subset N\calT \times \Delta^1$
identifies with the fibration $\pC\to N\calT$, 
while the restriction of $V'$ to $N\calT \times \{1\} \subset N\calT \times \Delta^1$
identifies with the projection  $\pr \colon N\calT \times \calC \to N\calT$.
Let 
\[
	s \colon N\calT \times \calC \times \Delta^1 \longto \calM
\]
be the map defined by the projection 
$N\calT \times \calC \times \Delta^1 \to N\calT \times \Delta^1$
and the map 
$
	N\calT \times \calC \times \Delta^1 \to \pC
$
which sends the $n$--simplex given by a sequence
\[
	B_0 
	\xto{\ f_1\ }
	B_1
	\xto{\ f_2\ }
	\cdots
	\xto{\ f_n\ }
	B_n,
\]
a simplex $\sigma \colon \Delta^n \to \calC$
and a simplex
\[
	t_k \in \Delta^1_n = \Delta([n],[1]),
	\qquad
	t_k(i) 
	=
    \begin{cases}
    0 & \text{if $i<k$} 
    \\
    1 & \text{otherwise}
    \end{cases}
\]
to the $n$--simplex of $\pC$ given by the sequence
\[
	B_0 
	\xto{\ f_1\ }
	B_1
	\xto{\ f_2\ }
	\cdots
	\xto{\ f_{k-1}\ }
	B_{k-1}
	\xto{\ r_{B_{k-1}}\ }
	\pt
	\xto{\ \id\ }
	\cdots
	\xto{\ \id\ }
	\pt
\]
and the simplices
\[
	\vec{\tau} 
	= 
	\{
		\tau_j \colon \Delta^{\{j,\ldots,n\}} \to \calC_{/B_j} 
		\mid
		\tau_j = r^\ast_{B_j}\sigma | \Delta^{\{j,\ldots,n\}}
		\text{ if $j < k$ and }
		\tau_j = \sigma | \Delta^{\{j,\ldots,n\}} \text{ otherwise}	
	\}_{0 \leq j \leq n}.
\]
We let 
\[
	R^\ast \colon N\calT \times \calC \longto \pC
\]
be the functor defined by the restriction of $s$ to 
$N\calT \times \calC \times \{0\} \subset N\calT \times \calC \times \Delta^1$.

Consider now the commutative diagram of 
marked simplicial sets (\cite[Def.~3.1.0.1]{HTT}) 
given by the 
solid arrows below
\[\xymatrix@R+1ex@C+1.5em{
	(\pC^\op)^\flat_{\vphantom{f}} \times \{0\}
	\ar[r]
	\ar@{_{(}->}[d] %
	&
	(\calM^\op)^\natural
	\ar[d]^{(V')^\op}
	\\
	(\pC^\op)^\flat \times ((\Delta^1)^\op)^\sharp
	\ar[r]_-{V^\op \times \id}
	\ar@{-->}[ur]^{s_1}
	&
	(N\calT^\op)^\sharp \times ((\Delta^1)^\op)^\sharp
}\]
The inclusion $\{0\} \incl ((\Delta^1)^\op)^\sharp$ is isomorphic 
to the inclusion $\{1\} \incl (\Delta^1)^\sharp$, and 
hence is marked anodyne by \cite[Def.~3.1.1.1(2)]{HTT}.
Consequently, the vertical map  on the left is marked anodyne by
\cite[Prop.~3.1.2.3]{HTT}, and \cite[Rk.~3.1.1.10]{HTT}
implies the existence of the dashed morphism $s_1$ completing the diagram.
We let 
\[
	R_! \colon \pC \longto N\calT \times \calC
\]
be the functor defined by the restriction of 
$s_1^\op \colon \pC\times \Delta^1 \to \calM$
to $\pC\times \{1\}$.

It remains to show that $\infty$--functors $R_!$ and $R^\ast$ are adjoint.
The composite 
\[
	V'' \colon \calM \xto{\ V'\ } N\calT \times \Delta^1 \xto{\ \pr\ } \Delta^1
\]
is a cartesian and cocartesian fibration with
the fibre $\calM_{\{0\}}$ of $V''$ over the vertex
$0$ of $\Delta^1$ canonically isomorphic to $\pC$
and the fibre $\calM_{\{1\}}$ over the vertex $1$ canonically isomorphic to 
$N\calT \times \calC$. Using 
\cite[%
	\href{https://kerodon.net/tag/01UF}{Rk.~01UF}, 
	\href{https://kerodon.net/tag/01UL}{Prop.~01UL}
	and
	\href{https://kerodon.net/tag/01T8}{Ex.~01T8}%
]{kerodon},
we see that a morphism in $\calM$ is $V''$--cartesian 
(resp.\ $V''$--cocartesian)
if and only if its image under $\tilde{c}$
is $V$--cartesian (resp.\ $V$--cocartesian)
and the $N\calT$--coordinate of its image under $V'$ is an equivalence.
It follows that the maps $s$ and $s_1$ witness $R^\ast$ and $R_!$ as the functors 
associated to $V''\colon \calM\to \Delta^1$ in the sense of 
\cite[Def.~5.2.1.1]{HTT}.
Thus $R_!$ and $R^\ast$ form an adjoint pair of functors
as claimed.
See \cite[Def.~5.2.2.1]{HTT}.
Notice that by construction, the functors $R_!$ and $R^\ast$ commute with the projections
$V \colon \pC \to N\calT$ and $\pr \colon N\calT\times \calC \to N\calT$:
we have $\pr R_!  = V$ and $V R^\ast = \pr$.

\begin{defn}
We define the $\infty$--functor
$H_\bullet \colon \pC \to \calC$ to be the composite
\begin{equation}
\label{eq:hbulletrefined}
	H_\bullet \colon \pC \xto{\ R_!\ } N\calT \times \calC \xto{\ \pr\ } \calC.
\end{equation}
Following the notation we used for the functor $H_\bullet$ defined on 
the level of homotopy categories, we write $H_\bullet(B;X)$
for the value of $H_\bullet$ on an object $(B,X)$ of $\pC$ and 
$(f,\phi)_\bullet$ for its value on a morphism $(f,\phi)$ of $\pC$.
\end{defn}

\begin{rem}
\label{rk:hbulletvaluescharcocart}
The object $H_\bullet(B;X)$ for an object $(B,X)$ of $\pC$ can be 
characterized up to contractible choice as the target of 
a $V$--cocartesian morphism in $\pC$ having source $(B,X)$ and covering 
the unique map $B \to \pt$. To see this, notice that 
$\tilde{c}s_1^\op | \{(B,X)\}\times \Delta^1$ is a $V$--cocartesian morphism 
$(B,X) \to H_\bullet(B;X)$ covering $B\to \pt$,
and recall that cocartesian morphisms
covering a given map and having a given source  
are parametrized by a contractible Kan complex. Cf.~\cite[Rk.~2.4.1.9]{HTT}.
\end{rem}

\begin{rem}
\label{rk:hbulletvaluescharcolim}
Suppose $B$ is a space. Restricting \eqref{eq:caprshriekcaprastadj}
to fibres over $\{B\}\in N\calT$, we obtain functors
\[
	\calC_{/B} \longrightleftarrows  \calC
\]
which the restriction of $V'\colon \calM \to N\calT \times \Delta^1$ 
to $\{B\} \times \Delta^1\subset N\calT \times \Delta^1$ 
witnesses as adjoint to each other.
By the construction of the functor $R^\ast$, 
the right adjoint in this adjunction is precisely
$r_B^\ast \colon \calC \to \calC_{/B}$, so the left adjoint must be
$r^B_!\colon \calC_{/B} \to \calC$.
Thus we also have
\[
	H_\bullet(B;X) \homot r^B_! X \homot \colim_{\Pi_\infty(B)^\op} X
\]
where the latter equivalence follows by noting that 
the left adjoint of $r_B^\ast$  is the $\infty$--categorical colimit functor
$\calC_{/B} = \Fun(\Pi_\infty(B),\calC) \to \calC$.
See 
\cite[\href{https://kerodon.net/tag/02JL}{Prop.~02JL}]{kerodon}.
\end{rem}

Corollary~\ref{cor:pccartcocartmor} 
and
Remark~\ref{rk:hbulletvaluescharcocart}
imply
\begin{prop}
The functor induced by $H_\bullet \colon \pC \to \calC$
on homotopy categories is canonically naturally equivalent
to the previously defined functor $H_\bullet \colon \hpC \to \ho(\calC)$.
\qed
\end{prop}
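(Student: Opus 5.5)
The plan is to compare the two functors on objects and then on morphisms, using the $V$--cocartesian edges supplied by the construction of $R_!$.

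\textbf{Objects.} For $(B,X)\in\pC$, Remark~\ref{rk:hbulletvaluescharcocart} gives the edge
\[
\tilde{c}s_1^\op|\{(B,X)\}\times\Delta^1\colon (B,X)\longto H_\bullet(B;X).
\]
This edge is $V$--cocartesian and covers $r^B\colon B\to\pt$. By Corollary~\ref{cor:pccartcocartmor}, its image in $\hpC$ is an opcartesian morphism with source $X$ covering $r^B$. The canonical morphism $X\to r^B_!X$ used in Definition~\ref{def:hbullethc} is also such a morphism. The opcartesian analogue of Proposition~\ref{prop:cartmorprops}(\ref{it:cartsourceiso}) then yields a unique vertical isomorphism
\[
\eta_{(B,X)}\colon H_\bullet(B;X)\xto{\ \isom\ } r^B_!X
\]
in $\hpC_\pt=\ho(\calC_{/\pt})=\ho(\calC)$ that is compatible with the two opcartesian morphisms. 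This identification of the fibre over $\pt$ with $\ho(\calC)$ agrees with the one given by $\pr\colon N\calT\times\calC\to\calC$. The reason is that the restriction of $V'$ over $N\calT\times\{1\}$ is the projection $\pr$, and over the vertex $\pt$ it is exactly the fibre $\calC_{/\pt}=\calC$.

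\textbf{Morphisms.} Let $(f,\phi)\colon (A,X)\to(B,Y)$ be a morphism of $\pC$. Since $s_1^\op$ is a map $\pC\times\Delta^1\to\calM$, restricting it along $(f,\phi)\times\Delta^1$ gives a commutative square in $\calM$. Applying $\tilde{c}$ turns this into a commutative square in $\pC$ with the following properties:
\begin{itemize}
\item its top edge is $(f,\phi)$;
\item its vertical edges are the $V$--cocartesian edges from the object step;
\item its bottom edge is $\tilde{c}s_1^\op((f,\phi),\id_1)$, i.e.\ $\tilde{c}\,s_1^\op$ applied to the edge $((f,\phi),\id_1)$ of $\pC\times\Delta^1$, which lies over $\id_\pt$ and so represents $\pr R_!(f,\phi)=H_\bullet(f,\phi)$ in $\calC$.
\end{itemize}
Passing to homotopy categories gives a commutative square in $\hpC$ with opcartesian vertical arrows. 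It covers the square formed by $f\colon A\to B$ over $\id\colon\pt\to\pt$, exactly as in \eqref{diag:hbulletmordef}. Now conjugate the bottom arrow by the isomorphisms $\eta$. The result is a dashed arrow making \eqref{diag:hbulletmordef} commute. By the uniqueness clause in the universal property of the opcartesian arrow $X\to r^A_!X$, this arrow must equal $(f,\phi)_\bullet$. Hence
\[
\eta_{(B,Y)}\circ \ho(H_\bullet)(f,\phi)=(f,\phi)_\bullet\circ\eta_{(A,X)},
\]
which is precisely the naturality of $\eta$.

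\textbf{Expected obstacle.} The only delicate point is the bookkeeping of the two descriptions of the bottom edge. One must check that the edge $\tilde{c}s_1^\op((f,\phi),\id_1)$ of the fibre $\pC_\pt\cong\calC$ is the same edge as $\pr R_!(f,\phi)$ in $\calC$. This follows directly from the pullback description of $\calM$ and the identification of $\calM_{\{1\}}$ with $N\calT\times\calC$. Beyond that, everything is formal from Corollary~\ref{cor:pccartcocartmor} and the uniqueness of factorizations through opcartesian morphisms. The isomorphisms $\eta$ are canonical because they are the unique comparison isomorphisms between opcartesian lifts.
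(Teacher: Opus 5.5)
Your proposal is correct and is the argument the paper has in mind. The paper's proof consists only of citing Corollary~\ref{cor:pccartcocartmor} and Remark~\ref{rk:hbulletvaluescharcocart}. You have spelled out how the $V$--cocartesian edges supplied by $s_1$ become opcartesian morphisms in $\hpC$. These yield the comparison isomorphisms, by uniqueness of opcartesian morphisms with a given source, and naturality, by the uniqueness clause behind \eqref{diag:hbulletmordef}.
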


As a left adjoint, the functor $R_!$ preserves colimits.
See \cite[\href{https://kerodon.net/tag/02KE}{Cor.~02KE}]{kerodon}.
Thus we have

\begin{prop}
\label{prop:hbulletandcolimits}
The functor $H_\bullet \colon \pC \to \calC$ preserves all colimits that
exist in $\pC$. \qed
\end{prop}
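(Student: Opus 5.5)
The statement is Proposition~\ref{prop:hbulletandcolimits}: $H_\bullet \colon \pC \to \calC$ preserves every colimit that exists in $\pC$. I will prove it by writing $H_\bullet$ as the composite \eqref{eq:hbulletrefined}, $\pr \circ R_!$, and showing that each of the two factors preserves colimits.

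First, $R_!$ was shown above to be a left adjoint, with right adjoint $R^\ast$; the maps $s$ and $s_1$ witness the pair as the functors associated to the bicartesian fibration $V''\colon \calM \to \Delta^1$. Left adjoints between $\infty$--categories preserve all colimits that exist in the source (\cite[\href{https://kerodon.net/tag/02KE}{Cor.~02KE}]{kerodon}). Hence, for any diagram $\bar{p}\colon K^\rightcone \to \pC$ that is a colimit diagram, the composite $R_!\bar{p}$ is a colimit diagram in $N\calT \times \calC$.

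Second, I must check that $\pr\colon N\calT\times\calC \to \calC$ carries this colimit diagram to a colimit diagram in $\calC$. This is the step needing care, because a projection out of a product does not preserve colimits unconditionally. A colimit diagram in a product of $\infty$--categories is one whose two components are colimit diagrams, and the converse holds once both components admit the relevant colimits. So I would argue as follows. Suppose $R_!\bar{p} = (q_1, q_2)$ is a colimit diagram in $N\calT\times\calC$. Since $\calC$ is presentable, the diagram $q_2|_K$ has a colimit $\bar{q}_2$ in $\calC$. Since $\calT$ has all small colimits, the diagram $q_1|_K$ has a colimit $\bar{q}_1$ in $N\calT$; here I would check that colimits in the ordinary category $\calT$ give colimits in its nerve, which holds because mapping spaces in a nerve are discrete. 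The pair $(\bar{q}_1,\bar{q}_2)$ is then a colimit cone in the product, computed componentwise via $\Map((x,c),(y,d)) \homot \Map(x,y)\times\Map(c,d)$. By uniqueness of colimits, $(q_1,q_2)$ is equivalent to this cone, so $q_2$ is a colimit diagram in $\calC$.

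Alternatively, one could avoid this general argument by using $\pr\circ R_! $ and $ V = \pr_{N\calT}\circ R_!$. This route would still need the existence of the colimit in $N\calT$. The main subtlety in either route is the one just described, namely that the projection preserves only colimits that exist in the product and whose components exist; here both components exist by presentability of $\calC$ and cocompleteness of $\calT$. Combining the two steps, $H_\bullet = \pr\circ R_!$ preserves all colimits that exist in $\pC$.
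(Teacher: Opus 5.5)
Your proof is correct and follows the paper's own argument. You write $H_\bullet=\pr\circ R_!$ and use that $R_!$, being a left adjoint, preserves every colimit that exists in $\pC$; the paper leaves the projection step implicit.

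Your handling of that step works, but the caveat you attach to it is unfounded. A projection out of a product preserves every colimit that exists in the product. Here it is even simpler: $\pr$ is left adjoint to $c\mapsto(\pt,c)$ because $\pt$ is terminal in $N\calT$. So you need neither the existence of colimits in $\calT$ and $\calC$ nor smallness of the indexing diagram.
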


As an immediate consequence of 
Proposition~\ref{prop:hbulletandcolimits}
and 
Theorem~\ref{thm:cosheafpc},
we have the following result.

\begin{thrm}
\label{thm:cosheaf}
Suppose $B$ is a space and $X$ is an object of $\calC_{/B}$.
Let $I$ be a small category, and let $U = U_{(-)}\colon I \to \calU(B)$
be an admissible $I$--indexed open cover of $B$
in the sense of Definition~\ref{def:admissibleindexedopencover}.
Write $\bar{U} \colon I^\rightcone \to \calU(B)$ for the extension of $U$
sending the cone point to $B$, and let $J\colon \calU(B) \to \calT$ be the 
inclusion.
 Then the composite
\[
	NI^\rightcone \xto{\ X_{J\bar{U}}\ } \pC \xto{\ H_\bullet\ } \calC
\]
of $H_\bullet$ and the map 
$X_{J\bar{U}}$ constructed in Definition~\ref{def:liftxf}
is a colimit diagram, so that 
\[
	H_\bullet (B;X) \homot \colim_{i\in I } H_\bullet (U_i; X | U_i)
\]
in $\calC$.
\qed
\end{thrm}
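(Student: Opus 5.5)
The statement marked \qed is Theorem~\ref{thm:cosheaf}. I would obtain it by composing two earlier results. Theorem~\ref{thm:cosheafpc} says that the diagram $X_{J\bar{U}}\colon NI^\rightcone \to \pC$ is a colimit diagram in $\pC$. Proposition~\ref{prop:hbulletandcolimits} says that $H_\bullet\colon \pC\to\calC$ preserves all colimits that exist in $\pC$. Together these show that $H_\bullet\circ X_{J\bar{U}}$ is a colimit diagram in $\calC$.

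The first step is to record why $H_\bullet$ preserves colimits. By definition $H_\bullet = \pr\circ R_!$. The functor $R_!$ is a left adjoint, with right adjoint $R^\ast$, so it preserves colimits. The projection $\pr\colon N\calT\times\calC\to\calC$ also preserves colimits: colimits in a product of $\infty$--categories are computed coordinatewise, and $\pr$ is itself a left adjoint because $N\calT$ has a terminal object. I would check this second point explicitly, since Proposition~\ref{prop:hbulletandcolimits} is stated for the composite $H_\bullet$ and not for $R_!$ alone.

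The second step is to identify the vertices of the colimit diagram. At a vertex $i\in I$ the diagram $X_{J\bar{U}}$ takes the value $J\bar U(\alpha_i)^\ast X$, which is the restriction $X|U_i$. At the cone point it takes the value $X$. Applying $H_\bullet$ therefore gives the objects $H_\bullet(U_i;X|U_i)$ and $H_\bullet(B;X)$. Hence
\[
	H_\bullet(B;X)\homot\colim_{i\in I} H_\bullet(U_i;X|U_i)
\]
in $\calC$.

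The argument is essentially formal, so I see no real obstacle. All of the substantive work, namely the cosheaf property of $\pC$, is already contained in Theorem~\ref{thm:cosheafpc}. The only point requiring care is the bookkeeping that identifies the restricted objects $X|U_i$ with the values of $X_{J\bar U}$, and this follows directly from Definition~\ref{def:liftxf}.
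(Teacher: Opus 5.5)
Your proposal is correct and is exactly the paper's argument: Theorem~\ref{thm:cosheafpc} gives the colimit diagram in $\pC$, and Proposition~\ref{prop:hbulletandcolimits} transports it to a colimit diagram in $\calC$. Your additional check that $\pr\colon N\calT\times\calC\to\calC$ preserves colimits (it is left adjoint to $c\mapsto(\pt,c)$) fills in a step the paper leaves implicit.
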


\subsection{Relative \texorpdfstring{$H_\bullet$}{H.} and the Eilenberg--Steenrod axioms for \texorpdfstring{$H_\bullet$}{H.}}
\label{subsec:relativehbullet}

Having succeeded in refining the functor $H_\bullet \colon \hpC \to \ho(\calC)$
to an $\infty$--functor $H_\bullet \colon \pC \to \calC$,
we proceed to construct the relative objects $H_\bullet(B,B_0;X)$.
The basic idea is to define
$H_\bullet(B,B_0;X)$ as the cofibre of the map
$H_\bullet(B_0; X | B_0) \to H_\bullet(B; X)$
induced by the inclusion $B_0 \incl B$.
To make this idea precise, it is necessary
for us to work on the level of $\infty$--categories 
instead of their homotopy categories.
Moreover, for the cofibres to make sense,
we need to assume that $\calC$ is pointed, so
\emph{in this subsection, we assume that our 
presentable symmetric monoidal $\infty$--category
$\calC$ is \emph{pointed} in the sense
that $\calC$ contains an object $0$ which is both initial and terminal}.

\begin{defn}
We let $\calTrel$ be the category of pairs of space, so that 
objects in $\calTrel$ are pairs $(B,B_0)$ consisting of a space $B$ and
a subspace $B_0 \subset B$, and a morphism in $\calTrel$ 
from $(A,A_0)$ to $(B,B_0)$ is a continuous map $f\colon A \to B$ 
satisfying $f A_0 \subset B_0$.  
\end{defn}

\begin{defn}
We define the $\infty$--category $\pCrel$ to be the pullback
\[\xymatrix{
	\pCrel
	\ar[r]
	\ar[d]
	&
	\pC
	\ar[d]^V
	\\
	N\calTrel
	\ar[r]^{NF}
	&
	N\calT
}\]
where $F\colon \calTrel \to \calT$ is the forgetful functor sending 
a pair $(B,B_0)$ of spaces to the space $B$.
Explicitly, the $n$--simplices in $\pCrel$ are as in $\pC$, except
that the sequence \eqref{eq:bseq} of spaces and continuous maps
is replaced by a sequence
\begin{equation}
\label{eq:b2seq}
	(\vec{B}, \vec{B}')
	=
	\big(
    	(B_0, B'_0)
    	\xto{\ f_1\ }
    	(B_1, B'_1)
    	\xto{\ f_2\ }
    	\cdots
    	\xto{\ f_n\ }
    	(B_n,B'_n)
	\big)
\end{equation}
of pairs of spaces and maps between them. In particular,
an object in $\pCrel$ is a pair $((B,B_0), X)$
where $(B,B_0)$ is a pair of spaces and $X$ is an object
in $\calC_{/B}$, and a morphism in $\pCrel$ 
from $((B,B_0),X)$ to $((C,C_0),Y)$
is a pair $(f,\phi)$ where $f \colon (B,B_0) \to  (C,C_0)$
is a map of pairs of spaces and $\phi \colon X \to f^\ast Y$
is a morphism in $\calC_{/B}$.
\end{defn}

Let
\[
	T\colon \pCrel \times \Delta^1 \longto \pC
\]
be the map given on $n$--simplices by sending the simplex
of $\pCrel \times \Delta^1$
defined by a sequence 
$(\vec{B}, \vec{B}')$ as in \eqref{eq:b2seq},
simplices 
\[
	\vec{\sigma}
	=
	\{
		\sigma_j\colon \Delta^{\{j,\ldots,n\}} \to \calC_{/B_j} 
	\}_{0 \leq j \leq n},
\]
and a simplex
\[
	t_k \in \Delta^1_n = \Delta([n],[1]),
	\qquad
	t_k(i) 
	=
    \begin{cases}
    0 & \text{if $i<k$} 
    \\
    1 & \text{otherwise}
    \end{cases}
\]
to the simplex of $\pC$ given by the sequence
\[
	B'_0
	\xto{\quad\mathclap{f'_1}\quad}
	\cdots
	\xto{\quad\mathclap{f'_{k-1}}\quad}
	B'_{k-1}
	\xto{\;\;\quad\mathclap{f_k i_{k-1}}\quad\;\;}
	B_k
	\xto{\quad\mathclap{f_{k+1}}\quad}
	\cdots
	\xto{\quad\mathclap{f_n}\quad}
	B_n
\]
and the simplices
\[
	\vec{\tau} 
	= 
	\{
		\tau_j \colon \Delta^{\{j,\ldots,n\}} \to \calC_{/B_j} 
		\mid
		\tau_j = i_j^\ast \sigma_j
		\text{ if $j < k$ and }
		\tau_j = \sigma_j \text{ otherwise}	
	\}_{0 \leq j \leq n}.
\]
Here $i_j \colon B'_j \incl B_j$ is the inclusion 
and $f'_i \colon B'_{i-1} \to B'_i$ is the map defined by $f_i$.

\begin{defn}[Relative $H_\bullet$  on the level of $\infty$--categories]
We define the functor
\[
	H^{\mathrm{rel}}_\bullet \colon \pCrel \longto \calC
\]
to be the composite 
\[
	\pCrel \longto \Fun(\Delta^1,\calC) \xto{\ \cofib\ } \calC
\]
where the first arrow is adjoint to the composite
\[
	\pCrel \times \Delta^1 
	\xto{\ T\ }
	\pC
	\xto{\ H_\bullet\ }
	\calC
\]
and $\cofib$ is the functor sending a morphism in $\calC$ to its cofibre. 
See \cite[Rk.~1.1.1.7]{HA}.
We will write  $H_\bullet(B,B_0;X)$
for the value of $H^{\mathrm{rel}}_\bullet$ at an object $((B,B_0),X)$
and $(f,\phi)_\bullet$
for its value at a morphism $(f,\phi)$.
Moreover, we will usually write $H_\bullet$ for $H^{\mathrm{rel}}_\bullet$.
\end{defn}

\begin{rem}
Notice that the restriction of $T$ to $\pCrel \times \{0\}$
sends an object $((B,B_0),X)$ to the object $(B_0, X | B_0)$;
that the restriction of $T$ to $\pCrel \times \{1\}$
sends  $((B,B_0),X)$ to the object $(B,X)$;
and that the restriction of $T$ to $\{((B,B_0),X)\} \times \Delta^1$
is the morphism $(B_0, X | B_0) \to (B,X)$ in $\pC$
given by the inclusion $B_0 \incl B$ and the identity map of $X | B_0$.
Thus we have a cofibre sequence 
\begin{equation}
\label{eq:hbulletcofibreseq}
	H_\bullet(B_0; X|B_0) \longto H_\bullet(B;X) \longto H_\bullet(B,B_0;X)
\end{equation}
in $\calC$ where the first morphism is the one induced by the inclusion
$B_0\incl B$.
\end{rem}

\begin{rem}
Observe that there is an evident embedding
$J\colon \pC \incl \pCrel$ sending an object $(B,X)$ to the object $((B,\emptyset),X)$.
Since $H_\bullet(\emptyset;\emptyset) \homot 0$, the composite
\[
	\pC
	\xto{\quad\mathclap{J}\quad}
	\pCrel
	\xto{\quad\mathclap{H_\bullet}\quad}
	\calC
\]
is naturally equivalent to the functor $H_\bullet \colon \pC \to \calC$.
In other words, we have a natural equivalence
\begin{equation}
\label{eq:absfromrel}
	H_\bullet (B;X) \homot H_\bullet(B,\emptyset;X) 
\end{equation}
for all objects $(B,X)$ of $\pC$. Moreover, under equivalence
\eqref{eq:absfromrel}, the map 
$H_\bullet(B;X) \to H_\bullet(B,B_0;X)$ of \eqref{eq:hbulletcofibreseq}
corresponds to the one induced by the morphism of $\pCrel$ 
defined by the inclusion $(B,\emptyset)\incl (B,B_0)$
and the identity map of $X$.
\end{rem}

\begin{defn}[Relative $H_\bullet$ on the level of homotopy categories]
We let $\hpCrel \to \calTrel$ be the fibration obtained from
$\pCrel \to N\calTrel$ by passing to homotopy categories.
Explicitly, the objects in the homotopy category $\hpCrel$ of $\pCrel$
are pairs $((B,B_0),X)$ where $(B,B_0)$ is a pair of spaces
and $X$ is an object in $\ho(\calC_{/B})$,
and a morphism in $\hpCrel$ from $((B,B_0),X)$ to 
$((C,C_0),Y)$ is a pair $(f,\phi)$ where $f$ is a map $(B,B_0) \to (C,C_0)$
and $\phi$ is a morphism in $\ho(\calC_{/B})$ from $X$ to $f^\ast Y$.
We continue to write $H_\bullet$ and $H^{\mathrm{rel}}_\bullet$
for the functor $\hpCrel \to \ho(\calC)$ induced by 
$H_\bullet = H^{\mathrm{rel}}_\bullet \colon \pCrel \to \calC$,
and continue to use the notations $H_\bullet(B,B_0;X)$ and $(f,\phi)_\bullet$
for the values of the functor defined on homotopy categories.
\end{defn}

We note that Example~\ref{ex:hbulletfortrivialcoeffs2}
generalizes to the relative situation.

\begin{example}
\label{ex:relativehbulletfortrivialcoeffs}
Suppose  $X \in \ho(\Spectra)$. In view of 
Example~\ref{ex:hbulletfortrivialcoeffs2},
$H_\bullet(B,B_0;\underline{X})$ agrees with
the cofiber of the map
\[
	X \smashprod \suspension^\infty_+ B_0 
	\longto
	X \smashprod \suspension^\infty_+ B 
\]
induced by the inclusion $i_B \colon B_0\incl B$. Thus
\[
	H_\bullet(B,B_0;\underline{X})
	\homot
	X \smashprod \suspension^\infty C(i_B) 
\]
where $C(i_B)$ denotes the homotopy cofiber of the map 
$i_B \colon (B_0)_+ \to B_+$ in the category of pointed spaces. 
In particular,
\[
	\pi_\ast H_\bullet(B, B_0; \underline{X}) \isom X_\ast(B,B_0).
\]
Again, similar remarks also apply to $X \in \ho(\Spectra^\ell)$
and $X\in \ho(\Mod^{R})$ for $R$ a commutative ring spectrum.
\end{example}

Write $\bigvee_{\alpha\in I} X_\alpha$ for the $\infty$--categorical coproduct of a family 
$\{X_\alpha\}_{\alpha \in I}$ of objects in $\calC$.
\begin{thrm}[Eilenberg--Steenrod axioms for $H_\bullet$]
\label{thm:eilenbergsteenrodaxioms}
The objects $H_\bullet(B;X)$ and $H_\bullet(B,B_0;X)$ satisfy the following
analogues of the Eilenberg--Steenrod axioms:
\begin{enumerate}
\item\label{it:homotopyinvariance}
	(Homotopy invariance)
	Suppose $(f,\phi) \colon (A,X) \to (B,Y)$ is a cartesian morphism in $\pC$
	with $f\colon A \to B$ a weak equivalence. Then the induced map
	\[
		(f,\phi)_\bullet \colon H_\bullet (A;X) \longto H_\bullet(B;Y)
	\]
	is an equivalence.
\item\label{it:additivity}
	(Additivity)
	Given spaces $B_\alpha$, $\alpha \in I$, and an object 
	$X\in \calC_{/\bigsqcup_\alpha B_\alpha}$, the map
	\[
		\bigvee_{\alpha\in I} 
		H_\bullet(B_\alpha;X | B_\alpha)
		\longto %
		H_\bullet \Bigl(\bigsqcup_{\alpha \in I} B_\alpha; X\Bigr)     
	\]
	induced by the inclusions 
	$B_\alpha \incl \bigsqcup_{\alpha \in I} B_\alpha$
	is an equivalence. 
\item\label{it:exactness}
	(Exactness)
	Given a pair $(B,B_0)$ of spaces and an object $X \in \calC_{/B}$,
	there is a cofiber sequence
	\[
		H_\bullet(B_0;X|B_0) \longto H_\bullet(B;X) \longto H_\bullet(B,B_0;X)
	\]
	where the maps are induced by the inclusions $B_0\incl B$ and 
	$B = (B,\emptyset) \incl (B,B_0)$.
\item\label{it:excision}
	(Excision)
	Suppose $B$ is a space and 
	$B_0, B_1 \subset B$ are open subspaces
	such that $B = B_0\cup B_1$. Write $B_{01} = B_0\cap B_1$.
	Then for every object $X \in \calC_{/B}$,
	the map
	\[
		H_\bullet(B_1, B_{01}; X | B_1) \longto H_\bullet(B,B_0;X)
	\]
	induced by the inclusion $(B_1, B_{01}) \incl (B,B_0)$
	is an equivalence.
\end{enumerate}
\end{thrm}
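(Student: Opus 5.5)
We treat the four axioms separately, each reducing to a structural fact already established.

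For homotopy invariance, note that by Corollary~\ref{cor:pccartcocartmor} the image of $(f,\phi)$ in $\hpC$ is cartesian. Since $f$ is a weak equivalence, Proposition~\ref{prop:wecartiffopcart} shows it is also opcartesian. Proposition~\ref{prop:opcarttoequiv} then says $H_\bullet$ sends it to an equivalence in $\ho(\calC)$. An equivalence in $\ho(\calC)$ is an equivalence in $\calC$, so $(f,\phi)_\bullet$ is an equivalence.

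Exactness is the cofibre sequence \eqref{eq:hbulletcofibreseq}. The identification of the second map uses the remark following \eqref{eq:absfromrel}, which identifies $H_\bullet(B;X)\to H_\bullet(B,B_0;X)$ with the map induced by $(B,\emptyset)\incl(B,B_0)$.

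For additivity, the plan is to apply Theorem~\ref{thm:cosheaf} with $I$ the discrete category on the index set and $U_\alpha=B_\alpha$. Each $B_\alpha$ is open in $\bigsqcup B_\alpha$, and every point lies in exactly one $U_\alpha$, so the relevant full subcategory is a single object and the cover is admissible. The colimit over a discrete category is the coproduct, and the colimit diagram's legs are the maps induced by the inclusions, which gives the claim. Alternatively one could use Remark~\ref{rk:hbulletvaluescharcolim}: $\Pi_\infty(\bigsqcup B_\alpha)$ is the coproduct of the $\Pi_\infty(B_\alpha)$, and a colimit over a coproduct of diagrams is the coproduct of the colimits.

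Excision is the main step. First, apply Theorem~\ref{thm:cosheaf} to the admissible cover indexed by the poset $I=\{0\leftarrow 01\rightarrow 1\}$ with $U_0=B_0$, $U_1=B_1$, $U_{01}=B_{01}$. A point of $B$ lies in a full subcategory that is either $\{0\}$, $\{1\}$, or all of $I$, each of which has contractible nerve. This exhibits the square
\[
	\vcenter{\xymatrix{
		H_\bullet(B_{01};X|B_{01}) \ar[r]\ar[d] & H_\bullet(B_1;X|B_1)\ar[d]\\
		H_\bullet(B_0;X|B_0)\ar[r] & H_\bullet(B;X)
	}}
\]
as a pushout in $\calC$. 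Second, the inclusion of pairs $(B_1,B_{01})\incl(B,B_0)$ induces, via the functor $T$ and $H^{\mathrm{rel}}_\bullet$, a map of cofibre sequences from the top row to the bottom row. Cofibres of the horizontal maps of a pushout square agree, i.e.\ the induced map $\cofib(\text{top})\to\cofib(\text{bottom})$ is an equivalence. This is a standard pasting-of-pushouts argument in the pointed $\infty$-category $\calC$.

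The delicate point is verifying that the map $H_\bullet(B_1,B_{01};X|B_1)\to H_\bullet(B,B_0;X)$ obtained from the functoriality of $H^{\mathrm{rel}}_\bullet$ agrees with the map on cofibres induced by the pushout square. Both come from the same morphism in $\Fun(\Delta^1,\calC)$, namely the image of the edge of $\pCrel$ under the adjoint of $H_\bullet\circ T$. Its components are the maps induced by $B_{01}\incl B_0$ and $B_1\incl B$, which are the edges of the square above. So the identification should be a matter of unwinding the definition of $T$ on $1$-simplices.
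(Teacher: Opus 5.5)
Your proposal is correct and follows the paper's proof essentially step by step: homotopy invariance via Corollary~\ref{cor:pccartcocartmor} and Propositions~\ref{prop:wecartiffopcart} and~\ref{prop:opcarttoequiv}; additivity from the cosheaf/colimit property, which the paper phrases via Proposition~\ref{prop:hbulletandcolimits}; exactness by construction; and excision by applying $\cofib$ to the pushout square of Theorem~\ref{thm:cosheaf} and pasting pushouts. Your extra care in identifying the functorially induced map with the map on cofibres goes beyond the paper, which simply asserts it; note, however, that matching edges is not enough here, and the identification needs the two squares in $\pC$ to agree. They do agree, because both are squares of cartesian morphisms over the same square in $N\calT$ with terminal vertex $(B,X)$, and such lifts are unique up to contractible choice.
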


\begin{rem}
\label{rk:additivitystrengthening}
The additivity property (\ref{it:additivity})
is a special case of the stronger result 
Proposition~\ref{prop:hbulletandcolimits}
(or Theorem~\ref{thm:cosheaf}).
\end{rem}

\begin{proof}[Proof of Theorem~\ref{thm:eilenbergsteenrodaxioms}]
(\ref{it:homotopyinvariance}):
By Corollary~\ref{cor:pccartcocartmor}, the image of $(f,\phi)$ in $\hpC$
is cartesian. By Proposition~\ref{prop:wecartiffopcart},
it is also opcartesian, so the claim follows from Proposition~\ref{prop:opcarttoequiv}.

(\ref{it:additivity}): 
As pointed out in Remark~\ref{rk:additivitystrengthening},
the claim is a special case of Proposition~\ref{prop:hbulletandcolimits}.

(\ref{it:exactness}):
The claim follows from the construction of $H_\bullet(B,B_0;X)$.

(\ref{it:excision}):
Let $I$ be the opposite of the poset of non-empty subsets of $\{0,1\}$.
Then the functor $U\colon I \to \calU(B)$, $i\mapsto B_i$, is an admissible 
$I$--indexed open cover of $B$ 
in the sense of Definition~\ref{def:admissibleindexedopencover}.
We have a commutative cube 
(or more precisely, a map $\Delta^1 \times \Delta^1 \times \Delta^1 \to \calC$)
\[\xymatrix@!0@C=8em@R=9ex{
	&
	H_\bullet(B_{01}; X|B_{01})
	\ar@{-->}[dddl]
	\ar[rr]
	\ar[dl]
	\ar[dd]|!{[dl];[dr]}\hole
	&&
	H_\bullet(B_{1}; X|B_{1})
	\ar@{-->}[dddl]
	\ar[dd]
	\ar[dl]
	\\
	H_\bullet(B_{0}; X|B_{0})
	\ar[dd]
	\ar[rr]
	&&
	H_\bullet(B; X)
	\ar[dd]
	\\
	&
	0
	\ar[rr]|!{[ur];[dr]}\hole
	\ar[dl]_\homot
	&&
	H_\bullet(B_{1},B_{01}; X|B_{1})
	\ar[dl]
	\\
	0
	\ar[rr]
	&&
	H_\bullet(B,B_{0}; X)
}\]
constructed as follows:
the top square is the pushout square 
obtained by applying  Theorem~\ref{thm:cosheaf}
with the admissible $I$--indexed open cover $U$,
and the rest of the square is obtained by 
applying the cofibre functor 
\[
	\cofib \colon \Fun(\Delta^1,\calC) \longto \Fun(\Delta^1\times \Delta^1,\calC)
\]
to the top square.
In particular, the front and back squares are pushout squares,
and the slanted map at the bottom right is the one we must show
to be an equivalence.
By \cite[Lemma~4.4.2.1]{HTT}, the square indicated with the dashed
arrows is a pushout square, and 
applying \cite[Lemma~4.4.2.1]{HTT} 
again shows that the bottom square is a pushout square.
As the slanted map on the left in the bottom square 
is an equivalence, so is the map on the right.
\end{proof}

We note that the absolute versions of the homotopy invariance
and additivity axioms stated in Theorem~\ref{thm:eilenbergsteenrodaxioms}
readily imply the following relative variants
\begin{cor}[Homotopy invariance, relative version]
\label{cor:relhomotopyinvariance}
	Let $(f,\phi) \colon ((A,A_0),X) \to ((B,B_0),Y)$ 
	be a cartesian morphism in $\pCrel$
	such that $f\colon A \to B$ and $f| \colon A_0 \to B_0$ 
	are weak equivalences. Then the induced map
	\[
		(f,\phi)_\bullet \colon H_\bullet (A,A_0;X) \longto H_\bullet(B,B_0;Y)
	\]
	is an equivalence.
\qed
\end{cor}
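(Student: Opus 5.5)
The plan is to reduce to the absolute homotopy invariance axiom, Theorem~\ref{thm:eilenbergsteenrodaxioms}(\ref{it:homotopyinvariance}), using naturality of the cofibre sequences \eqref{eq:hbulletcofibreseq}. A morphism $(f,\phi)$ of $\pCrel$ is in particular a morphism of $\pC$ after forgetting the subspaces. The construction of $H^{\mathrm{rel}}_\bullet$ as the composite $\pCrel \to \Fun(\Delta^1,\calC) \xto{\cofib} \calC$ then shows that $(f,\phi)_\bullet$ is the map on cofibres induced by a morphism of arrows in $\calC$, namely a commutative square
\[
\xymatrix{
	H_\bullet(A_0;X|A_0) \ar[r] \ar[d] & H_\bullet(A;X) \ar[d] \\
	H_\bullet(B_0;Y|B_0) \ar[r] & H_\bullet(B;Y).
}
\]
This square is the image under $H_\bullet\circ T$ of the square $\Delta^1\times\Delta^1 \to \pCrel\times\Delta^1$ determined by $(f,\phi)$. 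Its right vertical map is $(f,\phi)_\bullet$ computed in $\pC$. Its left vertical map is induced by the morphism $(f|_{A_0}, i_{A_0}^\ast\phi)\colon (A_0,X|A_0)\to(B_0,Y|B_0)$ of $\pC$, which is the restriction $T(-,0)$ applied to $(f,\phi)$.

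It therefore suffices to show that both vertical maps are equivalences, since the cofibre functor sends an equivalence in $\Fun(\Delta^1,\calC)$ to an equivalence. For the right map, cartesianness in $\pCrel$ means $V$-cartesian after pulling back along $N\calTrel\to N\calT$. Cartesian morphisms in a pullback of a cartesian fibration are exactly those mapping to cartesian morphisms, so the image of $(f,\phi)$ in $\pC$ is $V$-cartesian. Since $f$ is a weak equivalence, Theorem~\ref{thm:eilenbergsteenrodaxioms}(\ref{it:homotopyinvariance}) applies.

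For the left map, we need that the restricted morphism $(f|_{A_0},i_{A_0}^\ast\phi)$ is cartesian in $\pC$. Passing to $\hpC$ via Corollary~\ref{cor:pccartcocartmor}, we have a commutative square in $\hpC$ whose horizontal arrows are the canonical cartesian morphisms $X|A_0\to X$ and $Y|B_0\to Y$ covering the inclusions, and whose right vertical arrow $(f,\phi)$ is cartesian. The composite $X|A_0\to Y$ is then cartesian by Proposition~\ref{prop:cartmorprops}(\ref{it:compiscart}). Since it also factors as the cartesian morphism $Y|B_0\to Y$ composed with the left vertical map, Proposition~\ref{prop:cartmorprops}(\ref{it:cartfactor}) shows that the left vertical map is cartesian. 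Because $f|\colon A_0\to B_0$ is a weak equivalence, absolute homotopy invariance applies again.

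The only step needing real care is the bookkeeping in the first paragraph. We must check that $T$ sends the relevant edges as claimed, in particular that $T$ restricted to $\{(f,\phi)\}\times\{0\}$ is the restricted morphism $(f|_{A_0}, i_{A_0}^\ast\phi)$. This is read off from the explicit simplex-level formula for $T$ in the case $k=n+1$ (all $t_k(i)=0$), which uses $i_j^\ast\sigma_j$ and the maps $f'_i$. Everything else is formal.
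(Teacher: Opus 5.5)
Your argument is correct and is essentially the proof the paper has in mind: the paper omits the proof, remarking only that the relative statement follows readily from absolute homotopy invariance, and your reduction (apply Theorem~\ref{thm:eilenbergsteenrodaxioms}(\ref{it:homotopyinvariance}) to both components of the induced map of arrows $H_\bullet(A_0;X|A_0)\to H_\bullet(A;X)$, then pass to cofibres) is exactly that. You could also see cartesianness of the restricted morphism directly, since $i_{A_0}^\ast\phi$ is again an equivalence, but your factorization argument via Proposition~\ref{prop:cartmorprops}(\ref{it:cartfactor}) works equally well.
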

\begin{cor}[Additivity, relative version]
\label{cor:reladditivity}
	Given pairs of spaces $(B_\alpha,B'_\alpha)$, $\alpha \in I$, and an object 
	$X\in \calC_{/\bigsqcup_\alpha B_\alpha}$, the map
	\[
		\bigvee_{\alpha\in I} 
		H_\bullet(B_\alpha,B'_\alpha; X | B_\alpha)
		\longto %
		H_\bullet \Bigl(
			\bigsqcup_{\alpha \in I} B_\alpha, \bigsqcup_{\alpha \in I} B'_\alpha; X
		\Bigr)     
	\]
	induced by the inclusions 
	$(B_\alpha,B'_\alpha) 
	\incl 
	(\bigsqcup_{\alpha \in I} B_\alpha, \bigsqcup_{\alpha \in I} B'_\alpha)$
	is an equivalence. 	
\qed
\end{cor}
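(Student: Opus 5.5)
We deduce Corollary~\ref{cor:reladditivity} from the absolute additivity axiom, Theorem~\ref{thm:eilenbergsteenrodaxioms}(\ref{it:additivity}), together with the exactness axiom (\ref{it:exactness}). Write $B = \bigsqcup_\alpha B_\alpha$ and $B' = \bigsqcup_\alpha B'_\alpha$. For each $\alpha$, the inclusion of pairs $(B_\alpha,B'_\alpha)\incl(B,B')$ is a morphism of $\calTrel$. Paired with the identity of $X|B_\alpha$, it gives a morphism of $\pCrel$. Applying the functor $T$ together with $H_\bullet$, we obtain for each $\alpha$ a map in $\Fun(\Delta^1,\calC)$:
\[
	\bigl(H_\bullet(B'_\alpha;X|B'_\alpha)\to H_\bullet(B_\alpha;X|B_\alpha)\bigr)
	\longto
	\bigl(H_\bullet(B';X|B')\to H_\bullet(B;X)\bigr).
\]
These maps assemble into a single map from the coproduct over $\alpha$ in $\Fun(\Delta^1,\calC)$, which is computed objectwise.

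Since $\cofib\colon\Fun(\Delta^1,\calC)\to\calC$ is a left adjoint (it is a colimit functor), it preserves coproducts. Applying $\cofib$ to the assembled map therefore gives precisely the map of the corollary,
\[
	\bigvee_\alpha H_\bullet(B_\alpha,B'_\alpha;X|B_\alpha)\longto H_\bullet(B,B';X).
\]
To make this precise one needs the naturality of $H^{\mathrm{rel}}_\bullet$, i.e.\ that the map induced by each inclusion of pairs is $\cofib$ applied to the corresponding map of arrows. This holds by the definition of $H^{\mathrm{rel}}_\bullet$ as the composite $\pCrel\to\Fun(\Delta^1,\calC)\xto{\cofib}\calC$.

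It therefore suffices to show that the assembled map of arrows is an equivalence, which can be checked objectwise. At the target vertex $1$, the map is $\bigvee_\alpha H_\bullet(B_\alpha;X|B_\alpha)\to H_\bullet(B;X)$, an equivalence by Theorem~\ref{thm:eilenbergsteenrodaxioms}(\ref{it:additivity}). At the source vertex $0$, the map is $\bigvee_\alpha H_\bullet(B'_\alpha;X|B'_\alpha)\to H_\bullet(B';X|B')$. This is an equivalence by the same axiom applied to the decomposition $B'=\bigsqcup_\alpha B'_\alpha$ and the object $X|B'\in\calC_{/B'}$, using $(X|B')|B'_\alpha\homot X|B'_\alpha$. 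Both restrictions are pullbacks along composable inclusions, so they agree up to the coherence equivalence. One also notes that $B'$ carries the disjoint union topology as a subspace of $B$, since each $B_\alpha$ is open and closed in $B$.

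The main, if mild, obstacle is the bookkeeping in the second paragraph: checking that the maps induced on cofibres by the individual inclusions of pairs really are the components of $\cofib$ applied to the coproduct map. This requires working at the $\infty$-categorical level, since cofibres are not functorial in $\ho(\calC)$. It follows from the functoriality of $T$ and $H_\bullet$ on $\pCrel\times\Delta^1$ together with the universal property of the coproduct in $\Fun(\Delta^1,\calC)$.
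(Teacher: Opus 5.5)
Your proposal is correct and follows essentially the deduction the paper intends; the paper gives no written proof and only remarks that the relative version follows readily from the absolute additivity axiom. Your argument supplies exactly that deduction: the map of arrows out of the coproduct in $\Fun(\Delta^1,\calC)$ is an objectwise equivalence by absolute additivity for $B$ and for $B'$, and $\cofib$, being a left adjoint, preserves coproducts.
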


It is possible to strengthen the excision axiom.
\begin{defn}
Call a commutative square 
\begin{equation}
\label{eq:bsquare}
\vcenter{\xymatrix{
	B_{01}
	\ar[r]^{f_{01,1}}
	\ar[d]_{f_{01,0}}
	&
	B_1
	\ar[d]^{f_{1,\emptyset}}
	\\
	B_0
	\ar[r]^{f_{0,\emptyset}}
	&
	B
}}
\end{equation}
of spaces and continuous maps, viewed as a functor $F\colon I^\rightcone\to \calT$
where $I$ is the opposite of the poset of nonempty subsets of $\{0,1\}$,
\emph{assembling} if for all objects $X \in \calC_{/B}$, the composite 
\[
	NI^\rightcone \xto{\ X_F\ } \pC \xto{\ H_\bullet\ } \calC
\]
of the lift of Definition~\ref{def:liftxf}
and $H_\bullet$
is a colimit diagram,
so that we have a pushout square
\[\xymatrix{
	H_\bullet(B_{01};\, f_{01,\emptyset}^\ast X)
	\ar[r]
	\ar[d]
	&
	H_\bullet(B_1;\, f_{1,\emptyset}^\ast X)
	\ar[d]
	\\
	H_\bullet(B_0;\, f_{0,\emptyset}^\ast X)
	\ar[r]
	&
	H_\bullet(B;\, X)
}\]
where $f_{01,\emptyset} = f_{01,1} f_{1,\emptyset}$.
\end{defn}

\begin{example}
\label{ex:opencoverassembling}
Suppose in \eqref{eq:bsquare} the spaces $B_0$ and $B_1$ are open subsets
of $B$, $B_{01} = B_0 \cap B_1$,  $B = B_0 \cup B_1$, and all 
morphisms are inclusions. Then \eqref{eq:bsquare} is assembling by 
Theorem~\ref{thm:cosheaf}.
\end{example}

The proof of 
Theorem~\ref{thm:eilenbergsteenrodaxioms}(\ref{it:excision})
generalizes to show
\begin{prop}
Suppose \eqref{eq:bsquare} is assembling with $f_{01,1}$ and $f_{0,\emptyset}$ 
inclusions of subspaces. Then the map 
\[
	H_\bullet(B_1, B_{01}; f_{1,\emptyset}^\ast X) \longto H_\bullet(B,B_0;X)
\]
induced by $f_{1,\emptyset}$ is an equivalence. \qed
\end{prop}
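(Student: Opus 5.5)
The plan is to copy the proof of Theorem~\ref{thm:eilenbergsteenrodaxioms}(\ref{it:excision}) verbatim, replacing the pushout square supplied by Theorem~\ref{thm:cosheaf} with the one supplied by the assembling hypothesis. First I identify the relative objects. Because $f_{01,1}\colon B_{01}\incl B_1$ and $f_{0,\emptyset}\colon B_0\incl B$ are inclusions of subspaces, $(B_1,B_{01})$ and $(B,B_0)$ are objects of $\calTrel$. The map $f_{1,\emptyset}$ sends $B_{01}$ into $B_0$, because the square \eqref{eq:bsquare} commutes: $f_{1,\emptyset}f_{01,1}=f_{0,\emptyset}f_{01,0}$. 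So $f_{1,\emptyset}$ is a morphism of pairs. Together with the canonical cartesian morphism $f_{1,\emptyset}^\ast X\to X$, it gives a morphism of $\pCrel$, and that morphism induces the map in question.

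Next I build a commutative cube in $\calC$ (a map $\Delta^1\times\Delta^1\times\Delta^1\to\calC$). The top face is the square $H_\bullet\circ X_F$. By the assembling hypothesis this face is a pushout square with vertices $H_\bullet(B_{01};f_{01,\emptyset}^\ast X)$, $H_\bullet(B_1;f_{1,\emptyset}^\ast X)$, $H_\bullet(B_0;f_{0,\emptyset}^\ast X)$ and $H_\bullet(B;X)$. I then apply $\cofib\colon\Fun(\Delta^1,\calC)\to\Fun(\Delta^1\times\Delta^1,\calC)$ to the top face along the direction $B_{01}\to B_1$ and $B_0\to B$. This produces a bottom face with vertices $0$, $0$, $C_1$ and $C$. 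The front and back faces are pushouts by construction, so $C_1$ and $C$ are the cofibres of $H_\bullet(B_{01};-)\to H_\bullet(B_1;-)$ and $H_\bullet(B_0;-)\to H_\bullet(B;-)$.

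To identify $C_1$ with $H_\bullet(B_1,B_{01};f_{1,\emptyset}^\ast X)$ and $C$ with $H_\bullet(B,B_0;X)$, I would use the exactness cofibre sequence \eqref{eq:hbulletcofibreseq}. I also need the restriction $(f_{1,\emptyset}^\ast X)|B_{01}$ to agree with $f_{01,\emptyset}^\ast X$; this holds up to the canonical equivalence $f_{01,1}^\ast f_{1,\emptyset}^\ast\homot f_{01,\emptyset}^\ast$, and likewise $X|B_0=f_{0,\emptyset}^\ast X$. The point that needs care, and where I expect the only real obstacle to lie, is compatibility of the induced map $C_1\to C$ with the map $H_\bullet(B_1,B_{01};f_{1,\emptyset}^\ast X)\to H_\bullet(B,B_0;X)$ from $H^{\mathrm{rel}}_\bullet$. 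I would handle it by noting that the functor $T\colon\pCrel\times\Delta^1\to\pC$, evaluated on the edge $f_{1,\emptyset}$ of $\pCrel$, gives a square in $\pC$. That square is (equivalent to) the right-hand face of $X_F$ used to form the top face of the cube. Since $H^{\mathrm{rel}}_\bullet$ is defined as $\cofib$ applied to $H_\bullet\circ T$, the two maps agree by naturality of $\cofib$.

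Finally, I apply \cite[Lemma~4.4.2.1]{HTT} twice, exactly as in the proof of excision. The first application shows that the diagonal square formed with the two zeros is a pushout. The second shows that the bottom face is a pushout. The left edge $0\to0$ of the bottom face is an equivalence, so its pushout, the right edge $C_1\to C$, is also an equivalence. This is the claimed map.
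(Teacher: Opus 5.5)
Your proposal is correct and follows the paper's own argument. The paper proves this statement by noting that the proof of the excision axiom, Theorem~\ref{thm:eilenbergsteenrodaxioms}(\ref{it:excision}), goes through once the pushout square from Theorem~\ref{thm:cosheaf} is replaced by the one supplied by the assembling hypothesis: apply $\cofib$ to get the cube, then use \cite[Lemma~4.4.2.1]{HTT} twice. Your extra checks are details the paper leaves implicit, and you handle them correctly: that $f_{1,\emptyset}$ is a map of pairs, that the restrictions of $X$ match the pullbacks in $X_F$, and that the resulting edge of the cube is the map induced via $T$ and $\cofib$.
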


Our next goal is to show that homotopy cocartesian squares in $\calT$ 
are assembling.

\begin{lemma}
\label{lm:assemblingweinvariance}
Suppose 
\[\xymatrix@C-1.1em@R-2.5ex{
	B'_{01}
	\ar[rr]
	\ar[dd]
	\ar[dr]^\sim
	&
	&
	B'_1
	\ar[dd]|!{[dl];[dr]}\hole
	\ar[dr]^\sim
	\\
	&
	B_{01}
	\ar[dd]
	\ar[rr]
	&&
	B_1
	\ar[dd]
	\\
	B'_0
	\ar[rr]|!{[ur];[dr]}\hole
	\ar[dr]_\sim
	&
	&
	B'
	\ar[dr]_\sim
	\\
	&
	B_0
	\ar[rr]
	&&
	B
}\]
is a commutative cube of spaces and continuous maps where all the slanted
arrows are weak equivalences. Then the front square is assembling
if and only if the back square is.
\end{lemma}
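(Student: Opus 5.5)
Write $F, F' \colon I^\rightcone \to \calT$ for the front and back squares; the cube is then a natural transformation $w\colon F' \Rightarrow F$ whose components are weak equivalences. Fix $X \in \calC_{/B}$ and put $X' = w_\infty^\ast X \in \calC_{/B'}$. The plan is to compare the two diagrams $H_\bullet\circ X_F$ and $H_\bullet\circ X'_{F'}$ in $\calC$. By the equivalences of categories $w_\infty^\ast\colon \ho(\calC_{/B}) \to \ho(\calC_{/B'})$ (Section~\ref{subsubsec:basechangefunctors}), every object of $\calC_{/B'}$ is, up to equivalence, of the form $w_\infty^\ast X$. Moreover, being a colimit diagram is invariant under equivalence of diagrams. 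So it suffices to show that $H_\bullet\circ X_F$ and $H_\bullet\circ X'_{F'}$ are equivalent as objects of $\Fun(NI^\rightcone,\calC)$.

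To build the comparison, I would view the cube as a functor $\Delta^1\times I^\rightcone \to \calT$, that is, as a functor $G\colon (\Delta^1\times I)^\rightcone$-like diagram whose terminal vertex is $(1,\infty)\mapsto B$. I would then apply the evident analogue of Definition~\ref{def:liftxf}, with $\Delta^1\times I^\rightcone$ in place of $I^\rightcone$ and the terminal object $(1,\infty)$ in the role of the cone point. This gives a lift $\tilde X\colon N(\Delta^1\times I^\rightcone)\to\pC$ that sends every morphism to a $V$--cartesian edge. Its restriction to $\{1\}\times I^\rightcone$ is $X_F$, and its restriction to $\{0\}\times I^\rightcone$ is equivalent to $X'_{F'}$, since iterated pullbacks agree with pullbacks along composites up to canonical equivalence. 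Composing with $H_\bullet$ and currying produces a natural transformation $H_\bullet\circ X'_{F'} \Rightarrow H_\bullet\circ X_F$ in $\Fun(NI^\rightcone,\calC)$.

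The transformation has components $H_\bullet$ applied to $V$--cartesian edges lying over the weak equivalences $w_i$. By Corollary~\ref{cor:pccartcocartmor}, and then the homotopy invariance statement Theorem~\ref{thm:eilenbergsteenrodaxioms}(\ref{it:homotopyinvariance}), each component is an equivalence in $\calC$. Since equivalences in functor $\infty$--categories are detected pointwise, the two diagrams are equivalent. Consequently one is a colimit diagram if and only if the other is.

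I expect the main obstacle to be the bookkeeping for the lift $\tilde X$. One must check that the construction of Definition~\ref{def:liftxf} works for any index category with a terminal object. One must also check that its restriction to $\{0\}\times I^\rightcone$ agrees with $X'_{F'}$ up to equivalence in $\Fun(NI^\rightcone,\pC)$, not merely objectwise. The first check is formal, since the explicit simplex-level description only uses the unique maps to the terminal object. For the second, I would argue that both are lifts of the same diagram $NF'$ sending all edges to $V$--cartesian edges, and that they agree at the cone point up to the equivalence $X'\simeq w_\infty^\ast X$. Such lifts are determined up to contractible choice by their value at the terminal vertex, by the uniqueness of cartesian lifts (cf.\ \cite[Rk.~2.4.1.9]{HTT} and its dual).
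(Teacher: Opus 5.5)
Your proposal is correct and follows the paper's proof, which combines exactly two ingredients: the homotopy invariance axiom (Theorem~\ref{thm:eilenbergsteenrodaxioms}(\ref{it:homotopyinvariance})), and the fact that pullback along the weak equivalence $B'\to B$ is an equivalence $\calC_{/B}\to\calC_{/B'}$; you simply spell out the comparison of diagrams that the paper leaves implicit. The bookkeeping obstacle you anticipate does not actually arise: $f^\ast$ is precomposition with $(\Pi_\infty f)^\op$, so pullbacks compose strictly, and the restriction of your lift $\tilde X$ to $\{0\}\times I^\rightcone$ is literally $X'_{F'}$, not merely equivalent to it.
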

\begin{proof}
The claim follows from the homotopy invariance axiom (Theorem~\ref{thm:eilenbergsteenrodaxioms}(\ref{it:homotopyinvariance}))
together with the observation that the pullback functor $\calC_{/B} \to \calC_{/B'}$
induced by the weak equivalence $B' \xto{\,\sim\,} B$ is an equivalence
of $\infty$--categories.
\end{proof}

\begin{prop}
Any homotopy cocartesian square in $\calT$ (with respect to the Strøm model structure)
is assembling.
\end{prop}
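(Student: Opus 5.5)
The plan is to reduce to the open-cover case of Example~\ref{ex:opencoverassembling} using Lemma~\ref{lm:assemblingweinvariance}. Start with a homotopy cocartesian square \eqref{eq:bsquare} with respect to the Strøm model structure. First replace it, up to a cube of weak equivalences as in Lemma~\ref{lm:assemblingweinvariance}, by a double mapping cylinder. Let
\[
	M = B_0 \cup_{B_{01}\times\{0\}} (B_{01}\times[0,1]) \cup_{B_{01}\times\{1\}} B_1
\]
be the double mapping cylinder of $B_0 \xot{f_{01,0}} B_{01} \xto{f_{01,1}} B_1$. Inside $M$, take the open sets
\[
	U_0 = B_0 \cup B_{01}\times[0,2/3)
	\qquad\text{and}\qquad
	U_1 = B_{01}\times(1/3,1]\cup B_1.
\]
Their intersection is $U_{01} = B_{01}\times(1/3,2/3)$, and $U_0\cup U_1 = M$.

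Next, build the comparison cube. The back square consists of $U_{01}$, $U_0$, $U_1$, $M$ with inclusions, and the front square is \eqref{eq:bsquare}. The slanted maps are as follows. The map $U_{01}\to B_{01}$ is the projection. The map $U_0\to B_0$ collapses the cylinder via $f_{01,0}$. The map $U_1\to B_1$ is analogous. The map $M\to B$ is induced by $f_{0,\emptyset}$, $f_{1,\emptyset}$, and a homotopy in the square.

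There is a subtlety here. The square \eqref{eq:bsquare} commutes strictly, so the constant homotopy suffices, and the map $M\to B$ is then given by $(x,t)\mapsto f_{0,\emptyset}f_{01,0}(x)$ on the cylinder. With this choice the whole cube commutes strictly, which Lemma~\ref{lm:assemblingweinvariance} requires.

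The first three slanted maps are homotopy equivalences, since each is a deformation retraction. The map $M\to B$ is a weak equivalence precisely because the square is homotopy cocartesian: in the Strøm model structure, where every space is fibrant and cofibrant, the double mapping cylinder computes the homotopy pushout. Strøm equivalences are homotopy equivalences, hence weak equivalences.

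To finish, the back square is assembling by Example~\ref{ex:opencoverassembling}, that is, by Theorem~\ref{thm:cosheaf}. Lemma~\ref{lm:assemblingweinvariance} then transfers this to the front square.

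The main obstacle is checking that $M\to B$ is a weak equivalence from the Strøm homotopy cocartesian hypothesis, which depends on how the paper defines that notion. The approach I would take is to cite the standard fact that the canonical map from the double mapping cylinder to any homotopy pushout is a homotopy equivalence. This follows because the double mapping cylinder is the homotopy colimit obtained from a cofibrant replacement of the span in the Strøm (Reedy) structure.

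A minor point also needs checking: the sets $U_0$ and $U_1$ are open in $M$ even in the compactly generated topology. This holds because their preimages in $B_0\sqcup B_{01}\times[0,1]\sqcup B_1$ are open.
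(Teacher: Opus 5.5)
Your proof is correct and follows the paper's argument. You replace the square by the double mapping cylinder, cover it by two overlapping open halves, apply Example~\ref{ex:opencoverassembling}, and transfer the result along the cube of homotopy equivalences via Lemma~\ref{lm:assemblingweinvariance}. The paper builds the double mapping cylinder as the pushout of $B_0 \leftarrow B_{01} \to M_{f_{01,1}}$ along the cofibration into the mapping cylinder, and gets that the map to $B$ is a homotopy equivalence from \cite[Prop.~13.5.10]{Hirschhorn}; this is exactly the standard fact you invoke.
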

\begin{proof}
Suppose \eqref{eq:bsquare} is homotopy cocartesian.
Let $M_{f_1}$ be the mapping cylinder of $f_1$, and let 
\begin{equation}
\label{eq:bmusq}
\vcenter{\xymatrix{
	B_{01}
	\ar[r]
	\ar[d]
	&
	M_{f_1}
	\ar[d]
	\\
	B_0
	\ar[r]
	&
	U
}}
\end{equation}
be a pushout square.
Since the map $B_{01} \incl M_{f_1}$  is a cofibration, \eqref{eq:bmusq}
is homotopy cocartesian. It follows that the evident map from $U$ to $B$
is a homotopy equivalence; see \cite[Prop.~13.5.10]{Hirschhorn}.
Notice that the space $U$ is homeomorphic to 
\[
 	B_0 \cup_{B_{01}\times\{0\}} (B_{01}\times I) \cup_{B_{01}\times\{1\}} B_1.
\]
Let $U_0, U_1 \subset U$ be the subspaces corresponding to
the subspaces
\[
	B_0 \cup_{B_{01}\times \{0\}} \bigl(B_{01}\times [0,3/4)\bigr)
	\qquad\text{and}\qquad
	\bigl(B_{01}\times (1/4,1]\bigr)  \cup_{B_{01}\times \{1\}} B_1,  %
\]
respectively, and let $U_{01} = U_0 \cap U_1$. Then the square
\begin{equation}
\label{eq:usquare}
\vcenter{\xymatrix{
	U_{01}
	\ar[r]
	\ar[d]
	&
	U_1
	\ar[d]
	\\
	U_0
	\ar[r]
	&
	U
}} 
\end{equation}
is assembling by Example~\ref{ex:opencoverassembling},
so the claim follows from Lemma~\ref{lm:assemblingweinvariance}
by noticing that \eqref{eq:usquare} admits a map to \eqref{eq:bsquare}
where all the morphisms are homotopy equivalences.
\end{proof}

Let $(A,A_0)$ and $(B,B_0)$ be pairs of spaces,
and let $X \in \ho(\calC_{/A})$ and $Y \in \ho(\calC_{/B})$.
We conclude the section by sketching
a generalization of the cross product
\begin{equation}
\label{eq:abshbulletcross}
	\times 
	\colon
	H_\bullet(A;X) \tensor H_\bullet(B;Y) 
	\xto{\ \homot\ } 
	H_\bullet(A\times B; X\exttensor Y)
\end{equation}
in $\ho(\calC)$ to a cross product
\begin{equation}
\label{eq:relhbulletcross}
	\times
	\colon
	H_\bullet(A,A_0;X) \tensor H_\bullet(B,B_0;Y) 
	\longto
	H_\bullet((A,A_0)\times (B,B_0); X\exttensor Y)
\end{equation}
in $\ho(\calC)$ where 
\[
	(A,A_0)\times (B,B_0) = (A\times B, A \times B_0 \cup A_0 \times B).
\]
From the symmetric monoidal structure on $\calC$, we obtain an $\infty$--functor
$\tensor \colon \calC\times \calC \to \calC$ inducing 
the symmetric monoidal product on $\ho(\calC)$, and using this $\infty$--functor,
it is straightforward to construct an $\infty$--functor
$\exttensor \colon \pC \times \pC \to \pC$ inducing the 
external tensor product $\exttensor \colon \hpC \times \hpC \to \hpC$
upon passage to homotopy categories.
Since the product $\exttensor$ on $\hpC$ preserves opcartesian morphisms,
so does by Corollary~\ref{cor:pccartcocartmor}
the $\infty$--functor $\exttensor$ on $\pC$, resulting 
in a refinement of \eqref{eq:abshbulletcross} into 
a natural equivalence between $\infty$--functors $\pC\times \pC \to\calC$.
We have a commutative cube in $\calC$ 
(or more precisely, 
a map $\Delta^1\times \Delta^1 \times \Delta^1 \to \calC$) 
\begin{equation}
\label{eq:ptimesdiag}
\vcenter{\xymatrix@!0@C=3em@R=6ex{
	&
	H_\bullet(A_0;X) \tensor H_\bullet(B_0;Y)
	\ar[ddl]
	\ar[dddrr]
	\ar[rrrrrr]^\times_\homot
	&&&&&&
	H_\bullet(A_0\times B_0; X\exttensor Y)
	\ar[ddl]
	\ar[dddrr]
	\\ \\
	H_\bullet(A;X)\tensor H_\bullet(B_0;Y)
	\ar[dddrr]
	\ar[rrrrrr]^\times_\homot|!{[uur];[drrr]}\hole
	&&&&&&
	H_\bullet(A\times B_0; X\exttensor Y)
	\ar[dddrr]|!{[lld];[drrr]}\hole
	\\
	&&&
	H_\bullet(A_0;X) \tensor H_\bullet(B;Y)
	\ar[ddl]
	\ar[rrrrrr]^\times_\homot
	&&&&&&
	H_\bullet(A_0\times B; X\exttensor Y)
	\ar[ddl]
	\\ \\
	&&
	P
	\ar@{-->}[rrrrrr]^\times
	&&&&&&
	H_\bullet(A\times B_0 \cup A_0 \times B; X\exttensor Y)	
}}
\end{equation}
where we have continued to write $X$, $Y$, and $X\exttensor Y$
for the restrictions of these parametrized objects to subspaces;
where the left face is a pushout square;
where the right face is induced by inclusion maps;
where the two faces meeting along the topmost horizontal edge
arise from the naturality of $\times$; 
and where the rest of the cube is obtained by composing 
the simplices already indicated and by applying the 
universal property of pushouts. In particular, the dashed
morphism arises from the universal property of $P$.
We have an induced map $P \to H_\bullet(A;X)\tensor H_\bullet(B;Y)$,
and from the diagram 
\[\xymatrix@C-1em{
	H_\bullet(A_0;X) \tensor H_\bullet(B_0;Y)
	\ar[r]
	\ar[d]
	&
	H_\bullet(A_0;X) \tensor H_\bullet(B;Y)
	\ar[d]
	\\
	H_\bullet(A;X) \tensor H_\bullet(B_0;Y)
	\ar[r]
	\ar[d]
	&
	P
	\ar[r]
	\ar[d]
	&
	H_\bullet(A;X) \tensor H_\bullet(B;Y)
	\ar[d]
	\\
	0
	\ar[r]
	&
	H_\bullet(A_0;X) \tensor H_\bullet(B,B_0;Y)
	\ar[r]
	\ar[d]
	&
	H_\bullet(A;X) \tensor H_\bullet(B,B_0;Y)
	\ar[d]
	\\
	&
	0
	\ar[r]
	&
	H_\bullet(A,A_0;X) \tensor H_\bullet(B,B_0;Y)
}\]
where all squares are pushouts
we conclude that the cofibre of this map is 
$H_\bullet(A,A_0;X) \tensor H_\bullet(B,B_0;Y)$.
The desired map \eqref{eq:relhbulletcross}
is now obtained by passing to cofibres
\[\xymatrix{
	P
	\ar[r]^(0.47)\times
	\ar[d]
	&
	H_\bullet(A\times B_0 \cup A_0 \times B; X\exttensor Y)	
	\ar[d]
	\\
	H_\bullet(A;X) \tensor H_\bullet(B;Y)
	\ar[r]^(0.47)\times_(0.47)\homot
	\ar[d]
	&
	H_\bullet(A\times B; X\exttensor Y)
	\ar[d]
	\\
	H_\bullet(A,A_0;X) \tensor H_\bullet(B,B_0;Y)
	\ar@{-->}[r]^(0.47)\times
	&
	H_\bullet((A,A_0)\times (B,B_0); X\exttensor Y)
}\]
where the top horizontal map is the dashed morphism from
\eqref{eq:ptimesdiag}.

\begin{prop}
Suppose the square 
\[\xymatrix{
	A_0\times B_0
	\ar[r]
	\ar[d]
	&
	A\times B_0
	\ar[d]
	\\
	A_0\times B
	\ar[r]
	&
	A\times B_0 \cup A_0\times B
}\]
where all the maps are inclusions is assembling.
Then the relative cross product
\[
	\times
	\colon
	H_\bullet(A,A_0;X) \tensor H_\bullet(B,B_0;Y) 
	\longto
	H_\bullet((A,A_0)\times (B,B_0); X\exttensor Y)
\]
is an equivalence.
\end{prop}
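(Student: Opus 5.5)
The strategy is to apply the cofibre functor to the map of vertical cofibre sequences that defines the relative cross product, after showing that the top horizontal map in that diagram is an equivalence. In the last diagram before the statement, the middle horizontal map $H_\bullet(A;X)\tensor H_\bullet(B;Y)\to H_\bullet(A\times B;X\exttensor Y)$ is an equivalence by Proposition~\ref{prop:hbulletsm}. Both columns are cofibre sequences: the left one by the pushout diagram computing the cofibre of $P\to H_\bullet(A;X)\tensor H_\bullet(B;Y)$, the right one by the exactness axiom, Theorem~\ref{thm:eilenbergsteenrodaxioms}(\ref{it:exactness}). Since the dashed map is induced on cofibres, and cofibres preserve equivalences between cofibre sequences, it suffices to show that the top map $P\to H_\bullet(A\times B_0\cup A_0\times B;X\exttensor Y)$ is an equivalence.

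For that, I use the cube \eqref{eq:ptimesdiag}. Its three edges parallel to the dashed edge, other than the dashed edge itself, are cross product maps $\times$ between the absolute objects. These are equivalences by Proposition~\ref{prop:hbulletsm}, applied to $(A_0,X|A_0)$ and $(B_0,Y|B_0)$, and so on. We also need the restrictions of $X\exttensor Y$ to $A_0\times B_0$, $A\times B_0$ and $A_0\times B$ to agree with the external products of the restrictions. This holds because $\exttensor$ preserves cartesian morphisms.

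The left face is a pushout by construction. The right face is the image under $H_\bullet$ of the lift $(X\exttensor Y)_F$ of Definition~\ref{def:liftxf} for the square of inclusions in the statement. By the assembling hypothesis, it is therefore a pushout square in $\calC$. A map between two pushout squares which is an equivalence on the three source vertices is an equivalence on the cone vertices. Hence the dashed map, which is the induced map $P\to H_\bullet(A\times B_0\cup A_0\times B;X\exttensor Y)$, is an equivalence.

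The main obstacle is bookkeeping rather than any new idea. First, the map induced by the universal property of $P$ must be exactly the map that the cube provides, so that the comparison of pushouts applies. Second, the $\infty$--categorical refinement of $\times$ must be natural enough for the faces of the cube to commute coherently, which the text preceding the statement provides. Third, the square of cofibres must be the one in which the middle and top horizontal maps are compatible. I would handle this by working throughout in $\Fun(\Delta^1\times\Delta^1\times\Delta^1,\calC)$ and invoking \cite[Lemma~4.4.2.1]{HTT} together with the fact that $\cofib$ preserves equivalences.
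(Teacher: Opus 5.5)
Your proposal is correct and follows the paper's argument. The paper's proof is the one-line observation that the assembling hypothesis makes the dashed morphism in \eqref{eq:ptimesdiag} an equivalence, after which passing to cofibres gives the claim; you fill in the details the paper leaves implicit, namely that the right face is a pushout, that the three cross-product edges are equivalences by Proposition~\ref{prop:hbulletsm}, and the comparison of the two cofibre sequences.
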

\begin{proof}
The assumption implies that the dashed morphism in 
\eqref{eq:ptimesdiag} is an equivalence,
from which the claim follows.
\end{proof}

\section{Umkehr maps for \texorpdfstring{$H_\bullet$}{H.}}
\label{sec:umkehrmaps}

In this section, we will present our theory of umkehr maps
between the $H_\bullet$--objects.
The theory will be developed in terms of 
the covariant functor
\[
	H_\bullet \colon \hpC \longto \ho(\calC),
	\quad
	(B,X) \longmapsto H_\bullet(B;X),
	\quad
	(f,\phi) \longmapsto (f,\phi)_\bullet
\]
of Section~\ref{subsec:hbullethc}
and a \emph{contravariant} functor
\[
	H_\bullet\colon \hpC^\dfop \longto \ho(\calC),
	\quad
	(B,X) \longmapsto H_\bullet(B;X),
	\quad
	(f,\theta) \longmapsto (f,\theta)^{\leftarrow}
\]
which should be thought of as sending a morphism 
$(f,\theta)$ in $\hpC^\dfop$ to an umkehr map
associated to~$f$.
Here $\hpC^\dfop \to \calT$ is a certain opfibration
with the same objects as $\hpC$. 

The section is structured as follows.
In Section~\ref{subsec:dfopumkehrs},
we will introduce the opfibration
$\hpC^\dfop \to \calT$ and 
the contravariant functor
$H_\bullet\colon \hpC^\dfop \to \calT$.
While \emph{any} morphism in $\hpC^\dfop$
can be thought to give rise to an umkehr
map upon the application of $H_\bullet$,
of particular interest are the umkehr maps arising from 
morphisms of $\hpC^\dfop$ with distinguished properties. 
Therefore 
in Section~\ref{subsec:dfopcartmors},
we will study cartesian morphism in 
$\hpC^\dfop$, focusing in particular on 
two types of cartesian morphism 
called supercartesian and hypercartesian morphisms
(Definitions~\ref{def:supercart} and \ref{def:hypercart}). 
In addition to proving numerous basic properties
of supercartesian and hypercartesian morphisms
(Proposition~\ref{prop:superandhypercartmorprops}),
we relate the existence of 
cartesian, supercartesian, and hypercartesian
morphisms in $\hpC^\dfop$ covering a map $f \colon A\to B$
in $\calT$ to the existence of a left adjoint
$f^\invshriek \colon \ho(\calC_{/B})\to \ho(\calC_{/A})$
for the base change functor $f_!\colon  \ho(\calC_{/A})\to \ho(\calC_{/B})$
(Proposition~\ref{prop:dfopcartinterpretation}, 
Remark~\ref{rk:dfopcartinterpretation},
Definition~\ref{def:dualizingobject},
and
Proposition~\ref{prop:dualizingobjectsandcartesianmorphisms}).
Finally, in Section~\ref{subsec:cwdualityandhypercartesianmorphisms},
we relate hypercartesian morphisms in $\hpC^\dfop$
to dualizability in the framed bicategory $\Ex_B(\calC)$ 
of Appendix~\ref{app:exbc},
and provide a criterion 
for the existence of hypercartesian morphisms
(Theorems~\ref{thm:hypercartdata} and~\ref{thm:hypercartexistence}).
Moreover, we expand on the implications 
of the existence of supercartesian and hypercartesian morphisms
on the existence and relationships between base change functors
(Section~\ref{subsubsec:superhypercartandbasechangefunctors})
and interpret the 
parametrized Pontryagin--Thom transfer map
of Ando, Blumberg and Gepner \cite[Def.~4.14]{ABGparam}
as a hypercartesian morphism in $\hpSpectra^\dfop$ (Theorem~\ref{thm:abgpthc2}).

Throughout the section, $\calC$ will be 
a symmetric monoidal presentable $\infty$--category
with monoidal product $\tensor$ and monoidal unit $S$.

\subsection{The category \texorpdfstring{$\hpC^\dfop$}{hpC\textasciicircum dfop} and umkehr maps for \texorpdfstring{$H_\bullet$}{H.}}
\label{subsec:dfopumkehrs}

In this subsection, we will introduce the opfibration
$\hpC^\dfop \to \calT$ and define the contravariant functor
$H_\bullet \colon \hpC^\dfop \to \ho(\calC)$.

\begin{defn}
\label{def:dfopdef}
Given an opfibration $\Phi\colon\calE \to \calB$
of categories, we define an opfibration 
$
	\Phi^{\dfop}  
	\colon
	\calE^\dfop
	\to 
	\calB
$
by applying the fibrewise opposite construction 
of Definition~\ref{def:fop} to the fibration
$\Phi^\op \colon \calE^\op \to \calB^\op$ and 
passing to opposite categories:
\[
	\calE^\dfop = ((\calE^\op)^\fop)^\op
	\qquad\text{and}\qquad
	\Phi^{\dfop}  
	= ((\Phi^\op)^\fop)^\op
	\colon
	\calE^\dfop
	\longto 
	\calB.
\]
\end{defn}
Explicitly, a morphism $X \to Y$ in $\calE^\dfop$ covering 
a morphism $f \colon A \to B$ in $\calB$ is an equivalence
class of zigzags
\[
	X \xto{\ \alpha\ } X' \xot{\ \beta\ } Y
\]
of morphisms of $\calE$ 
where $\alpha$ is an opcartesian 
morphism covering $f$ and $\beta$ is a vertical morphism
covering $B$,
and the composite of 
$[X \xto{\ \alpha\ } X' \xot{\ \beta\ } Y]$
and
$[Y \xto{\ \alpha\ } Y' \xot{\ \beta\ } Z]$
is given by composites along the two sides in the diagram
\[\xymatrix@-1em{
    &&
    X''
    \ar@{<-}[dl]_{\gamma'}
    \ar@{<-}[dr]^{\beta'}
    \\
    &
    X'
    \ar@{<-}[dl]_{\alpha}
    \ar@{<-}[dr]^{\beta}
    &&
    Y'
    \ar@{<-}[dl]_\gamma
    \ar@{<-}[dr]^\delta
    \\
    X
    &&
    Y
    &&	
    Z
}\]
where $\gamma'$ is an opcartesian morphism covering $\Phi(\gamma)$
and $\beta'$ is the unique vertical map making the
diamond in the middle commute.
Moreover, the assignment 
$\alpha \mapsto [\bullet \xto{\alpha}\bullet \xot{\id} \bullet]$
defines an isomorphism 
$\calE^\opcart \xto{\ \isom\ }(\calE^\dfop)^\opcart$
of subcategories of opcartesian morphisms.

\begin{defn}[The opfibration $\hpC^\dfop \to \calT$]
\label{def:dfopmordata}
The opfibration $\hpC^\dfop \to \calT$
is obtained by applying
the above construction
to  the bifibration $\hpC \to \calT$.
To distinguish morphisms in $\hpC^\dfop$ 
from those in $\hpC$ notationally,
we write $X\oto Y$ for a morphism from $X$ to $Y$ in $\hpC^\dfop$.
Notice that every morphism $\theta\colon X\oto Y$ in $\hpC^\dfop$
covering a map $f\colon A \to B$ in $\calT$
is represented by a unique
zigzag of the form
\[
	X\longto f_! X \xot{\ \beta\ } Y
\]
where the first arrow is the canonical opcartesian morphism
covering $f$, so that morphisms $X\oto Y$ covering 
$f\colon A\to B$ are in bijection with morphisms 
$Y\to f_! X$ over $B$. We call $\beta$ \emph{the vertical morphism}
$Y\to f_! X$ \emph{determined by} $\theta$ and $\theta$
\emph{the morphism} $X\oto Y$ \emph{determined by} $\beta$.
\end{defn}

The $\exttensor$--product on $\hpC$ induces 
on $\hpC^\dfop$ a symmetric monoidal structure
with tensor product $\exttensor'$ which 
on objects is given by $\exttensor$ and 
which on morphism is given by
\[
	[X_1 \xto{\ \alpha_1\ } X'_1 \xot{\ \beta_1\ } Y_1]
	\exttensor'
	[X_2 \xto{\ \alpha_2\ } X'_2 \xot{\ \beta_2\ } Y_2]
	=
	[X_1 \exttensor X_2 
	\xto{\ \alpha_1\exttensor \alpha_2\ }
	X'_1 \exttensor X'_2
	\xot{\ \beta_1\exttensor \beta_2\ }
	Y_1 \exttensor Y_2];
\] 
notice that the map
$\alpha_1\exttensor \alpha_2$
is opcartesian by Proposition~\ref{prop:exttensoropcarts}.

\begin{defn}[Umkehr maps]
\label{def:hprime}
Our symmetric monoidal functor 
$H_\bullet \colon (\hpC,\exttensor) \to (\ho(\calC),\tensor)$
defines a symmetric monoidal \emph{contravariant} functor
\begin{equation}
\label{def:hprimonobjects}
	H'_\bullet \colon (\hpC^\dfop,\exttensor') 
	\longto 
	(\ho(\calC),\tensor).
\end{equation}
On objects, we set
\[
	H'_\bullet(B,X) = H_\bullet(B; X),
\]
and for a morphism 
$\theta = [X \xto{\ \alpha\ } X' \xot{\ \beta\ } Y] \colon X \oto Y$
covering $f\colon A \to B$, we define $H'_\bullet(\theta)$
to be the composite
\[
	(f,\theta)^{\leftarrow} 
	\colon 
	H_\bullet(B;Y) 
	\xto{\ (\id,\beta)_\bullet\ }	
	H_\bullet(B;X')
	\xto[\homot]{\ (f,\alpha)_\bullet^{-1}\ }
	H_\bullet(A;X).
\]
Notice that the morphism $(f,\alpha)_\bullet$
is invertible by Proposition~\ref{prop:opcarttoequiv}. 
We call $(f,\theta)^{\leftarrow}$ the 
\emph{umkehr map} associated to $\theta$.
\end{defn}

The following proposition is immediate from the definition.
\begin{prop}
\label{prop:opcarttoequivdfop}
The functor
$H'_\bullet \colon (\hpC^\dfop)^\op \to \ho(\calC)$
sends opcartesian morphisms in $\hpC^\dfop$
to equivalences. \qed
\end{prop}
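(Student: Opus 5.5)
The claim is that $H'_\bullet$ sends opcartesian morphisms of $\hpC^\dfop$ to equivalences. I will first pin down which morphisms are opcartesian in $\hpC^\dfop$, and then apply Definition~\ref{def:hprime} directly.

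Recall from the discussion after Definition~\ref{def:dfopdef} that $\alpha \mapsto [\bullet \xto{\alpha} \bullet \xot{\id} \bullet]$ gives an isomorphism $\hpC^\opcart \isom (\hpC^\dfop)^\opcart$. Consequently every opcartesian morphism $\theta\colon X \oto Y$ of $\hpC^\dfop$ covering $f\colon A\to B$ has a representative zigzag $X \xto{\alpha} X' \xot{\beta} Y$ with $\alpha$ opcartesian and $\beta$ a vertical isomorphism. This can also be seen directly, in the same way as the remark after Definition~\ref{def:fop} that a morphism of $\calE^\fop$ is cartesian exactly when its vertical leg is an isomorphism. Dualizing that remark through Definition~\ref{def:dfopdef}, $\theta$ is opcartesian in $\hpC^\dfop$ precisely when $\beta$ is an isomorphism in $\ho(\calC_{/B})$.

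With this in hand, Definition~\ref{def:hprime} gives
\[
	(f,\theta)^\leftarrow = (f,\alpha)_\bullet^{-1}\circ(\id,\beta)_\bullet .
\]
The factor $(f,\alpha)_\bullet^{-1}$ is an equivalence by Proposition~\ref{prop:opcarttoequiv}. The factor $(\id,\beta)_\bullet$ is the image of an isomorphism under the functor $H_\bullet$, so it is also an equivalence. Hence their composite is an equivalence.

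The only step that takes any care is the characterization of opcartesian morphisms of $\hpC^\dfop$ as those whose vertical leg is invertible. That is the dual form of a fact the paper already records, so the argument is essentially immediate.
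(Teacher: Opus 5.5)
Your proposal is correct and is the paper's argument. The paper calls the statement immediate from Definition~\ref{def:hprime}: by the isomorphism $\hpC^\opcart \isom (\hpC^\dfop)^\opcart$, an opcartesian $\theta$ is represented by a zigzag whose vertical leg is invertible (indeed an identity), so $(f,\theta)^\leftarrow$ is $(f,\alpha)_\bullet^{-1}$ up to an isomorphism, and this is an equivalence by Proposition~\ref{prop:opcarttoequiv}.
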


The following proposition follows easily from 
Proposition~\ref{prop:opcartpreservation}.
\begin{prop}
\label{prop:fprimefw}
Suppose $\calD$ is another 
symmetric monoidal presentable $\infty$--category,
and suppose $F\colon \calC \to \calD$ is a 
symmetric monoidal $\infty$--functor which 
admits a right adjoint.
Then $F$ induces a symmetric monoidal functor
\[
	F'_\fw 
	\colon 
	(\hpC^\dfop,\exttensor') 
	\longto 
	(\hpD^\dfop,\exttensor').
\]
defined on objects by $F'_\fw (X) = F_\fw(X)$
and on morphisms by
\[
	\pushQED{\qed} 
	F'_\fw([X \xto{\ \alpha\ } X' \xot{\ \beta\ } Y])
	=
	[ 
		F_\fw(X) 
		\xto{\  F_\fw(\alpha)\ }  
		F_\fw(X') 
		\xot{\  F_\fw(\beta)\ }  
		F_\fw(Y)
	].	
    \qedhere
    \popQED
\]
\end{prop}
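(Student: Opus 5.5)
The plan is to check three things in order: that $F'_\fw$ is well defined on equivalence classes of zigzags, that it is functorial, and that it is symmetric monoidal. For the first, Proposition~\ref{prop:opcartpreservation} shows that $F_\fw$ sends the opcartesian morphism $\alpha$ to an opcartesian morphism. Since $F_\fw$ is a morphism of fibrations over $\calT$, it also sends the vertical morphism $\beta$ to a vertical morphism. Hence $[F_\fw(X)\to F_\fw(X')\leftarrow F_\fw(Y)]$ is a legitimate morphism of $\hpD^\dfop$ covering the same map $f$. If two zigzags are equivalent via a vertical isomorphism $\theta\colon X'\to X''$, then $F_\fw(\theta)$ is a vertical isomorphism witnessing the equivalence of their images. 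So $F'_\fw$ is well defined.

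For functoriality, identities go to identities because $F_\fw$ is a functor. For composition, recall from Definition~\ref{def:dfopdef} that the composite of $[X\xto{\alpha}X'\xot{\beta}Y]$ and $[Y\xto{\gamma}Y'\xot{\delta}Z]$ is computed from a diamond. In that diamond, $\gamma'\colon X'\to X''$ is opcartesian covering $\Phi(\gamma)$, and $\beta'\colon Y'\to X''$ is the unique vertical map with $\beta'\gamma=\gamma'\beta$. Applying $F_\fw$ to this diagram gives a commutative diagram in $\hpD$. In it, $F_\fw(\gamma')$ is opcartesian by Proposition~\ref{prop:opcartpreservation} and $F_\fw(\beta')$ is vertical. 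By the uniqueness in the universal property of the opcartesian morphism $F_\fw(\gamma)$, the image diagram is a valid diamond for computing the composite in $\hpD^\dfop$. Thus $F'_\fw$ preserves composites. Only this uniqueness check needs any care.

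For monoidality, $F_\fw$ is symmetric monoidal by Theorem~\ref{thm:smgrothendieckconstr}, and its monoidality constraints $F_\fw(X_1)\exttensor F_\fw(X_2)\homot F_\fw(X_1\exttensor X_2)$ cover the identity of $A_1\times A_2$. They are therefore vertical isomorphisms, in particular opcartesian. We take the constraints of $F'_\fw$ to be the images of these under the isomorphism $\hpD^\opcart\isom(\hpD^\dfop)^\opcart$, i.e.\ the classes $[\mu\colon\bullet\to\bullet\xot{\id}\bullet]$, and similarly for the unit.

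It remains to check naturality of these constraints with respect to $\exttensor'$. For a product $\theta_1\exttensor'\theta_2$, this reduces to naturality of the constraints of $F_\fw$ applied separately to $\alpha_1\exttensor\alpha_2$ and to $\beta_1\exttensor\beta_2$. One then composes zigzags, using that composing with a vertical isomorphism on either side just conjugates the representatives. The coherence axioms (associativity, unit, symmetry) hold because the constraints are images of those of $F_\fw$ under a functor $\hpD^\opcart\to\hpD^\dfop$ that respects $\exttensor$, and those of $F_\fw$ satisfy the axioms.
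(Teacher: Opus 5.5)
Your proposal is correct and follows the paper's route. The paper disposes of this by saying it follows easily from Proposition~\ref{prop:opcartpreservation}, and your argument is a faithful unpacking of exactly that: you use opcartesian-preservation for well-definedness and for the uniqueness step that matches the composition diamonds, and you transport the vertical, hence opcartesian, monoidal constraints of $F_\fw$ along $\hpD^\opcart\isom(\hpD^\dfop)^\opcart$.
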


\begin{rem}
\label{rk:opcartpresfprimefw}
It is immediate from Proposition~\ref{prop:opcartpreservation}
that $F'_\fw$ preserves opcartesian morphisms.
See Proposition~\ref{prop:hypercartmorpreservation} 
for a result concerning 
preservation of cartesian morphisms under $F'_\fw$.
\end{rem}

\begin{notation}
We will usually simply write $H_\bullet$ for $H'_\bullet$
as there seems to be little need to distinguish the two 
functors in notation. Similarly, we will usually write 
$\exttensor$ for $\exttensor'$ and $F_\fw$ for $F'_\fw$.
\end{notation}

\begin{defn}[Mixed commutativity]
\label{def:mixedcomm}
Consider the square on the left 
\begin{equation}
\label{diag:mixedcomm} 
    \vcenter{\xymatrix{
    	X
    	\ar[r]^\phi
    	\ar[d]|{\circdec}_{\theta}
    	&
    	Y	
    	\ar[d]|{\circdec}^{\kappa}
    	\\
    	Z
    	\ar[r]^\psi
    	&
    	W
    }}
    \qquad\qquad\qquad
    \vcenter{\xymatrix@R-2ex{
    	X
    	\ar[r]^\phi
    	\ar[d]_{\alpha}
    	&
    	Y	
    	\ar[d]^{\gamma}
    	\\
    	X'
    	\ar[r]^{\phi'}
    	&
    	Y'	
    	\\
    	Z
    	\ar[u]^\beta
    	\ar[r]^\psi
    	&
    	W
    	\ar[u]_\delta
    }}
\end{equation}
where 
$\theta = [X \xto{\ \alpha\ } X' \xot{\ \beta\ } Z]$,
$\kappa = [Y \xto{\ \gamma\ } Y' \xot{\ \delta\ } W]$,
and $\phi$ and $\psi$ are morphisms in $\hpC$.
We say that the square \emph{commutes} if the 
diagram on the right commutes for some $\phi'$ covering
the same map as $\psi$ does, in which case we also say
that $\phi'$ \emph{witnesses} the commutativity of the square.
\end{defn}

\begin{rem}
It is easy to see that Definition~\ref{def:mixedcomm}
does not depend on the representatives for $\theta$ and $\kappa$
chosen. Moreover,
the universal property of the opcartesian morphism $\alpha$
implies that there is always a unique morphism $\phi'$ which
covers the same map as $\psi$ does and makes the top square
in the diagram on the right commute.
Thus the square on the left commutes precisely when
this map also makes the bottom
square of the diagram on the right commute.
\end{rem}

\begin{rem}
\label{rk:mixedcommunmix}
Of course, commutative squares in the sense of 
Definition~\ref{def:mixedcomm} are usually not commutative
in the ordinary sense of category theory since
the vertical and horizontal morphisms in the square on the left
in \eqref{diag:mixedcomm} belong to different categories 
and hence cannot be composed. However, in view of 
the canonical isomorphism 
$\hpC^\opcart \xto{\ \isom\ }(\hpC^\dfop)^\opcart$,
there are two special cases in which we may interpret
the square on the left
in \eqref{diag:mixedcomm} as an ordinary diagram in some category:
first, when $\theta$ and $\kappa$ 
are opcartesian morphisms in $\hpC^\dfop$,
we may regard it as a square in $\hpC$; and second,
when $\phi$ and $\psi$ are opcartesian morphisms in $\hpC$,
we may regard it as a square in $\hpC^\dfop$.
It is easily verified that in both cases, the square commutes
in the sense of Definition~\ref{def:mixedcomm}
precisely when it is a commutative square in the usual sense 
of category theory.
\end{rem}

The following proposition is straightforward.
\begin{prop}
\label{prop:umkehrprops}
The umkehr maps of 
Definition~\ref{def:hprime}
have the following properties.
\begin{enumerate}[(i)]
\item
Suppose the square on the left is a commutative square
in the sense of Definition~\ref{def:mixedcomm}
covering the square in the middle.
Then the square on the right commutes.
\begin{equation}
\label{diags:unkmehrmixedcomm}
    \vcenter{\xymatrix{
    	X
    	\ar[r]^\phi
    	\ar[d]|{\circdec}_{\theta}
    	&
    	Y	
    	\ar[d]|{\circdec}^{\kappa}
    	\\
    	Z
    	\ar[r]^\psi
    	&
    	W
    }}
    \qquad\qquad\qquad
    \vcenter{\xymatrix{
    	A
    	\ar[r]^f
    	\ar[d]_h
    	& 
    	B
    	\ar[d]^k
    	\\
    	C
    	\ar[r]^g
    	&
    	D
    }}
    \qquad\qquad\qquad
    \vcenter{\xymatrix{
    	H_\bullet(A;X)
    	\ar[r]^{(f,\phi)_\bullet}
    	&
    	H_\bullet(B;Y)
    	\\
    	H_\bullet(C;Z)
    	\ar[r]^{(g,\psi)_\bullet}
    	\ar[u]^{(h,\theta)^{\leftarrow}}
    	&
    	H_\bullet(D;W)
    	\ar[u]_{(k,\kappa)^{\leftarrow}}
    }}
\end{equation}
\item 
Given maps $\theta \colon X' \oto  X$ and 
$\kappa \colon Y' \oto Y$ in $\hpC^\dfop$ covering
$f\colon A' \to A$ and $g\colon B'\to B$, respectively,
the following diagram commutes by monoidality of $H'_\bullet$.
\[\xymatrix@C+3em{
	H_\bullet(A;X) \tensor H_\bullet(B;Y)
	\ar[r]^{(f,\theta)^\leftarrow \tensor (g,\kappa)^\leftarrow}
	\ar[d]_\times^\homot
	&
	H_\bullet(A';X') \tensor H_\bullet(B';Y')
	\ar[d]^\times_\homot
	\\
	H_\bullet(A\times B;X\exttensor Y)
	\ar[r]^{(f\times g, \theta\exttensor \kappa)^\leftarrow}
	&
	H_\bullet(A'\times B';X'\exttensor Y')	
}\]
Here the vertical morphisms are the monoidality constraints of $H'_\bullet$.
\qed
\end{enumerate}
\end{prop}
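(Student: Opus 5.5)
Both parts reduce to unwinding Definitions~\ref{def:mixedcomm} and \ref{def:hprime} and invoking functoriality of the covariant functor $H_\bullet\colon\hpC\to\ho(\calC)$.

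For part (i), write $\theta=[X\xto{\alpha}X'\xot{\beta}Z]$ and $\kappa=[Y\xto{\gamma}Y'\xot{\delta}W]$, and let $\phi'\colon X'\to Y'$ witness the mixed commutativity, so that $\phi'\alpha=\gamma\phi$ and $\phi'\beta=\delta\psi$ in $\hpC$. Here $\phi'$ covers $g$, $\alpha$ covers $h$, $\beta$ and $\delta$ are vertical, and $\gamma$ covers $k$. Applying $H_\bullet$ gives two commutative squares in $\ho(\calC)$:
\[
(g,\phi')_\bullet(h,\alpha)_\bullet=(k,\gamma)_\bullet(f,\phi)_\bullet
\qquad\text{and}\qquad
(g,\phi')_\bullet(\id,\beta)_\bullet=(\id,\delta)_\bullet(g,\psi)_\bullet .
\]
By Proposition~\ref{prop:opcarttoequiv}, $(h,\alpha)_\bullet$ and $(k,\gamma)_\bullet$ are invertible, so the first identity rearranges to
\[
(f,\phi)_\bullet(h,\alpha)_\bullet^{-1}=(k,\gamma)_\bullet^{-1}(g,\phi')_\bullet .
\]
Precomposing with $(\id,\beta)_\bullet$ and using the second identity, we get
\[
(f,\phi)_\bullet(h,\theta)^\leftarrow
=(k,\gamma)_\bullet^{-1}(\id,\delta)_\bullet(g,\psi)_\bullet
=(k,\kappa)^\leftarrow(g,\psi)_\bullet ,
\]
which is the claimed commutativity. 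Before this, one should check that $(h,\theta)^\leftarrow$ does not depend on the chosen representative zigzag. This holds because equivalent zigzags differ by a vertical isomorphism, and functoriality of $H_\bullet$ absorbs it.

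For part (ii), the content is that $H'_\bullet$ really is a contravariant symmetric monoidal functor, since the square in (ii) is just its monoidality constraint being natural. Functoriality of $H'_\bullet$ comes first. The composite in $\hpC^\dfop$ is formed from the diamond with an opcartesian $\gamma'$ and a vertical $\beta'$, and applying $H_\bullet$ to that commutative diamond together with invertibility of the opcartesian images gives $H'_\bullet$ of the composite as the composite of the umkehr maps in reverse order. Identities are clear.

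For naturality of the constraint $\times$ from Proposition~\ref{prop:hbulletsm} with respect to $\exttensor'$, use the fact that $\theta\exttensor'\kappa$ is represented by $\alpha_1\exttensor\alpha_2$ and $\beta_1\exttensor\beta_2$. The constraint $\times$ is natural for morphisms of $\hpC$, so it commutes with $(\id,\beta_1)_\bullet\tensor(\id,\beta_2)_\bullet$ versus $(\id,\beta_1\exttensor\beta_2)_\bullet$, and likewise with the $\alpha$'s. The inverses of the $\alpha$-terms then commute as well. Compatibility of $H'_\bullet$ with the associativity, unit and symmetry constraints is inherited from $H_\bullet$, because the structure constraints of $\exttensor'$ are images of those of $\exttensor$ under the isomorphism $\hpC^\opcart\isom(\hpC^\dfop)^\opcart$, or are vertical isomorphisms.

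The only real bookkeeping obstacle is the functoriality of $H'_\bullet$ under the diamond composition. I expect it to go through directly once one notes that $\beta'$ is characterized by $\beta'\gamma=\gamma'\beta$, and that this identity is preserved by $H_\bullet$.
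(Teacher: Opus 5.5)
Your proposal is correct and is precisely the routine unwinding the paper has in mind. The paper calls the proposition straightforward, gives no further proof, and attributes (ii) to the monoidality of $H'_\bullet$ asserted without proof in Definition~\ref{def:hprime}. Your checks supply exactly what it leaves implicit: independence of representatives, contravariant functoriality via the relation $\beta'\gamma=\gamma'\beta$, and naturality of $\times$ with respect to $\exttensor'$.
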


A key relationship between $\hpC$ and $\hpC^\dfop$
is that the projection formula allows us to form the 
``tensor product'' of a cartesian morphism in $\hpC$
and an arbitary morphism in $\hpC^\dfop$
as long as they cover the same morphism of $\calT$.

\begin{defn}[The product $\oslash$]
\label{def:oslash}
Given morphisms
$\phi \colon X_1 \to X_2$ in $\hpC^\cart$ and 
$\theta = [Y_1 \xto{\alpha} Y'_1 \xot{\beta} Y_2] \colon Y_1 \oto Y_2$
in $\hpC^\dfop$,
both covering $f\colon A \to B$,
we define
\[
	\phi \oslash \theta 
	= 
	[
		X_1 \tensor_A Y_1 
		\xto{\ \phi\tensor_\internal\alpha\ }
		X_2 \tensor_B Y'_1
		\xot{\ \id \tensor_B \beta\ }
		X_2 \tensor_B Y_2
	]
	\colon 
	X_1 \tensor_A Y_1 
	\longoto 
	X_2 \tensor_B Y_2.
\]
Notice that here the morphism
$\phi\tensor_\internal\alpha$ is again opcartesian 
by Proposition~\ref{prop:reformulations}(\ref{it:bc-projformula-ref}),
and that $\phi \oslash \theta$ again covers $f$.
Defining 
\[
	X \oslash Y = X \tensor_\internal Y
\]
on objects, we obtain a functor
\[
	\oslash
	\colon
	\hpC^\cart \times_\calT \hpC^\dfop
	\longto
	\hpC^\dfop
\]
over $\calT$. 
\end{defn}

The category $\hpC^\cart$ inherits
from $\hpC$ the structure of a pseudomonoid in 
the $2$--category $\Cat/\calT$ 
of categories over $\calT$, and 
together with the evident natural 
associativity and unit isomorphisms
\begin{equation}
\label{eq:oslashassocunit} 
	(X\tensor_\internal Y) \oslash Z 
	\isom 
	X\oslash(Y\oslash Z)
	\qquad\text{and}\qquad
	S_B \oslash W
	\isom
	W,
	\quad
	W \in \ho(\calC_{/B}),
\end{equation}
the functor $\oslash$ makes 
$\hpC^\dfop$ into a module over 
$\hpC^\cart$
in the sense that the axioms of an action of a monoidal 
category on a category (see e.g.\ \cite[p.~62]{monCatActions}),
interpreted in the $2$--category $\Cat/\calT$, are satisfied.

\subsection{Cartesian  morphisms in \texorpdfstring{$\hpC^\dfop$}{hpC\textasciicircum dfop}}
\label{subsec:dfopcartmors}

While the behavior of $H_\bullet$ on opcartesian 
morphisms is simple and does not lead to interesting 
umkehr maps (see Proposition~\ref{prop:opcarttoequivdfop}),
its behavior on cartesian morphisms is considerably
more interesting. In this subsection, we will investigate this behavior.

\subsubsection{Ordinary cartesian morphisms in \texorpdfstring{$\hpC^\dfop$}{hpC\textasciicircum dfop}}
\label{subsubsec:dfopordcartmors}

The opfibration $\hpC^\dfop \to \calT$ is usually 
\emph{not} a fibration. Indeed, 
it is straightforward to verify that we have the 
following interpretation of cartesian arrows in 
$\hpC^\dfop$.

\begin{prop}
\label{prop:dfopcartinterpretation}
A morphism $\theta \colon X\oto Y$ 
in $\hpC^\dfop$ covering $f\colon A\to B$
is cartesian if and only if the vertical morphism
$\eta\colon Y \to f_!X$ determined by $\theta$ is universal 
in the sense that for every object 
$Z \in \ho(\calC_{/A})$ 
and morphism 
$\alpha\colon Y\to f_! Z$ in $\ho(\calC_{/B})$,
there exists a unique morphism $\beta\colon X \to Z$ 
in $\ho(\calC_{/A})$
such that the following triangle commutes.
\[
	\pushQED{\qed} 
	\vcenter{\xymatrix{
    	Y
    	\ar[dr]_\alpha 
    	\ar[rr]^\eta
    	&&
    	f_! X
    	\ar[dl]^{f_!\beta}
    	\\
    	&
    	f_! Z
	}}
    \qedhere
    \popQED
\]
\end{prop}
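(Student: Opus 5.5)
The plan is to unwind the definition of a cartesian morphism in $\hpC^\dfop$ and reduce it, using the explicit description of morphisms from Definition~\ref{def:dfopmordata}, to the stated universal property of $\eta$. Recall that a morphism $\rho\colon Z\oto Y$ covering $g\colon C\to B$ corresponds bijectively to a vertical morphism $Y\to g_!Z$ in $\ho(\calC_{/B})$. Composition is computed by the zigzag recipe after Definition~\ref{def:dfopdef}. Concretely, if $\sigma\colon Z\oto X$ covers $h\colon C\to A$ with vertical part $\sigma'\colon X\to h_!Z$, then $\theta\circ\sigma$ covers $fh$. Its vertical part is the composite
\[
	Y\xto{\ \eta\ } f_!X \xto{\ f_!\sigma'\ } f_!h_!Z \isom (fh)_!Z .
\]
I will verify this formula first, by checking that the opcartesian lift of the opcartesian morphism $X\to f_!X$ along $\sigma'$ produces $f_!\sigma'$ under the canonical identification.

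Next I apply the definition of cartesian: $\theta$ is cartesian iff for every $C$, every $h\colon C\to A$, and every $\rho\colon Z\oto Y$ covering $fh$, there is a unique $\sigma\colon Z\oto X$ covering $h$ with $\theta\circ\sigma=\rho$. Translating via the bijection above, this says the following. For every $h$, $Z\in\ho(\calC_{/C})$ and $\alpha\colon Y\to f_!(h_!Z)$, there is a unique $\sigma'\colon X\to h_!Z$ with $f_!\sigma'\circ\eta=\alpha$. Specializing to $C=A$ and $h=\id_A$ gives exactly the stated universal property, so the ``only if'' direction follows.

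For the converse, I observe that the general condition only involves $Z$ through the object $Z'=h_!Z\in\ho(\calC_{/A})$. Hence the universal property for arbitrary objects of $\ho(\calC_{/A})$, applied to $Z'$, yields existence and uniqueness for every $h$. The only care needed is naturality of the isomorphism $f_!h_!\isom(fh)_!$, so that the translated equations match. This is the main (mild) obstacle. I would handle it by working with the opcartesian-morphism description directly, citing the analogue of Proposition~\ref{prop:cartmorprops}(\ref{it:cartsourceiso}) for opcartesian morphisms, rather than manipulating chosen functors $f_!$.
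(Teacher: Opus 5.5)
Your proof is correct and matches the paper's intent: the paper states the proposition with no written proof, calling it straightforward, and you spell out that verification. You identify morphisms $Z\oto Y$ over $fh$ with vertical maps $Y\to f_!h_!Z$, observe that postcomposition with $\theta$ becomes $\sigma'\mapsto f_!\sigma'\circ\eta$, and note that the full cartesian condition is the stated universal property applied to $h_!Z$. Representing morphisms over $fh$ by zigzags through $f_!h_!Z$ itself, via the composite opcartesian morphism $Z\to h_!Z\to f_!h_!Z$, removes the naturality concern you raise.
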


\begin{rem}
\label{rk:dfopcartinterpretation}
Notice that Proposition~\ref{prop:dfopcartinterpretation}
in particular implies that if 
every $Y \in \ho(\calC_{/B})$ admits some cartesian morphism
$\theta_Y \colon f^\invshriek Y \oto Y$ covering $f\colon A \to B$, then 
the assignment $Y \mapsto f^\invshriek Y$ defines a left adjoint for 
$f_!$ with the unit $Y \to f_! f^\invshriek Y$
of the adjunction given by the vertical map determined by $\theta_Y$;
and that conversely, if $f_!$ has a left adjoint $f^\invshriek$, then 
the unit of the adjunction determines for every $Y \in \ho(\calC_{/B})$ 
a cartesian morphism $f^\invshriek Y \oto Y$. 
\end{rem}

The following characterization of cartesian morphisms in $\hpC^\dfop$
reinterprets and generalizes Proposition~\ref{prop:dfopcartinterpretation}.

\begin{prop}
\label{prop:dfopcartmixedprop}
Suppose $\theta \colon X \oto Y$ is a morphism in $\hpC^\dfop$
covering the map $f\colon A \to B$ in $\calT$. Then $\theta$
is cartesian if and only if for every homotopy cartesian square
as on the left below
and all morphisms $\phi$ in $\hpC$
and $\kappa$ in $\hpC^\dfop$ 
as on the right below
covering $v$ and $g$, respectively,
there exists a unique morphism $\psi$ in $\hpC$
making the diagram on the right a commutative
diagram covering the diagram on the left.
\begin{equation}
\label{eq:dfopcartmixeddiags}
	\vcenter{\xymatrix{
		A
		\ar[r]^{\smash{u}}
		\ar[d]_f
		&
		C
		\ar[d]^g
		\\
		B
		\ar[r]^v
		&
		D
	}}
	\qquad\qquad\qquad\qquad
	\vcenter{\xymatrix{
		X
		\ar@{-->}[r]^{\smash{\psi}}
		\ar[d]|\circdec_\theta
		&
		Z
		\ar[d]|\circdec^\kappa
		\\
		Y
		\ar[r]^\phi
		&
		W
	}}
\end{equation}
\end{prop}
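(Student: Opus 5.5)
We reduce to the universal property of Proposition~\ref{prop:dfopcartinterpretation}. The first step is to unwind mixed commutativity in terms of vertical data. Write $\eta\colon Y\to f_!X$ for the vertical morphism determined by $\theta$, and $\delta\colon W\to g_!Z$ for the vertical morphism determined by $\kappa$. The representing zigzags are then $X\to f_!X\xot{\eta}Y$ and $Z\to g_!Z\xot{\delta}W$, whose first arrows are the canonical opcartesian morphisms.

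By the remark following Definition~\ref{def:mixedcomm}, a morphism $\psi\colon X\to Z$ over $u$ determines a unique $\phi'\colon f_!X\to g_!Z$ over $v$ making the top square commute. The right-hand square of \eqref{eq:dfopcartmixeddiags} commutes exactly when, in addition,
\[
\phi'\circ\eta=\delta\circ\phi .
\]

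The second step uses the homotopy cartesian hypothesis. Choose a cartesian morphism $u^\ast Z\to Z$ covering $u$, and the canonical opcartesian morphism $u^\ast Z\to f_!u^\ast Z$. The universal property of the latter gives a morphism $\chi\colon f_!u^\ast Z\to g_!Z$ over $v$ closing up a square over the square on the left of \eqref{eq:dfopcartmixeddiags}. By Proposition~\ref{prop:commrelshriekinterpretation2}, $\chi$ is cartesian.

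We then get the following chain of bijections:
\begin{itemize}
\item morphisms $\psi$ over $u$ correspond to vertical $\beta\colon X\to u^\ast Z$, via the cartesian morphism $u^\ast Z\to Z$;
\item the morphism $\phi'$ associated to $\psi$ is $\chi\circ f_!\beta$, by uniqueness in the universal property of the opcartesian morphism $X\to f_!X$;
\item since $\chi$ is cartesian, $\delta\circ\phi\colon Y\to g_!Z$ over $v$ factors uniquely as $\chi\circ\alpha$ with $\alpha\colon Y\to f_!u^\ast Z$ vertical.
\end{itemize}
Cancelling $\chi$ (uniqueness for cartesian factorizations), the mixed commutativity condition becomes
\[
f_!\beta\circ\eta=\alpha .
\]
Hence unique existence of $\psi$ is equivalent to unique existence of $\beta$ with $f_!\beta\circ\eta=\alpha$. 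This is exactly the universal property of $\eta$ from Proposition~\ref{prop:dfopcartinterpretation}, applied to the object $u^\ast Z$ and the morphism $\alpha$. This proves ``only if''.

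For ``if'', we specialize to the trivial homotopy cartesian square with $C=A$, $D=B$, $u=\id_A$, $v=\id_B$ and $g=f$. Given any $Z\in\ho(\calC_{/A})$ and any $\alpha\colon Y\to f_!Z$, take $W=Y$, $\phi=\id_Y$, and $\kappa\colon Z\oto Y$ the morphism determined by $\alpha$. Then $u^\ast Z=Z$, $\chi=\id$, and the condition above reduces to $f_!\beta\circ\eta=\alpha$. The hypothesis therefore gives precisely the universal property of $\eta$, so $\theta$ is cartesian by Proposition~\ref{prop:dfopcartinterpretation}.

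The main obstacle is the bookkeeping in the second step. One must check that the assignment $\psi\mapsto\phi'$ really is $\beta\mapsto\chi\circ f_!\beta$, and that cancelling the cartesian $\chi$ is legitimate. Both follow from the uniqueness clauses of the cartesian and opcartesian universal properties together with the fact that vertical morphisms are characterized by covering identities. I would verify the first by checking that $\chi\circ f_!\beta$ makes the top square commute and then invoking uniqueness.
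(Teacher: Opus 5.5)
Your proposal is correct and follows essentially the same argument as the paper. Both factor $\psi$ through $u^\ast Z$, use Proposition~\ref{prop:commrelshriekinterpretation2} to see that the comparison map $f_!u^\ast Z\to g_!Z$ is cartesian, and reduce to the universal property of Proposition~\ref{prop:dfopcartinterpretation}; the only cosmetic difference is in the converse, where the paper takes $\kappa$ to be the canonical opcartesian morphism $Z\oto f_!Z$ with $\phi=\alpha$, whereas you take $\phi=\id_Y$ and $\kappa$ determined by $\alpha$.
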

\begin{proof}
Let us first prove the reverse implication.
Applied in the special case where
$g = f$, $u = \id_A$, $v = \id_B$, and $\kappa$ is the 
canonical opcartesian morphism $Z \oto f_! Z$,
the assumption on $\theta$
shows that the vertical morphism $Y\to f_!X$ 
determined by $\theta$ satisfies the 
universal property described in 
Proposition~\ref{prop:dfopcartinterpretation}.
Thus the said proposition shows that $\theta$ 
is cartesian.

Let us now prove the forward implication.
Suppose we have been given a homotopy cartesian square 
as on the left in \eqref{eq:dfopcartmixeddiags}
together with maps $\phi$ and $\kappa$
covering $v$ and $g$ as on the 
right in \eqref{eq:dfopcartmixeddiags}.
Consider the diagram
\begin{equation}
\label{diag:decomp} 
\vcenter{\xymatrix@R-2.5ex{
	\mbox{}
	&&
	\\
	X
	\ar@{-->} `u[rru] `[rr]^\psi [rr]
	\ar@{-->}[r]^{\tilde{\psi}}
	\ar[dd]_{\opcart}
	&
	u^\ast Z
	\ar[r]^-{\cart}
	\ar[dd]^{\opcart}
	&
	Z
	\ar[dd]^{\opcart}
	\\
	\\
	f_! X
	\ar@{-->}[r]^{f_! \tilde{\psi}}
	&
	f_! u^\ast Z
	\ar[r]^-\nu_-{\cart}
	&
	g_! Z
	\\
	\\
	Y
	\ar[uu]^{\eta_{\theta}}
	\ar[rr]^\phi
	\ar[uur]_\alpha
	&&
	W
	\ar[uu]_{\eta_{\kappa}}
}}
\end{equation}
Here the unnamed morphisms are 
the evident cartesian or opcartesian morphisms;
$\eta_\theta$ and $\eta_\kappa$ are the vertical morphisms
determined by $\theta$ and $\kappa$, respectively;
$\nu$ is the unique morphism,
obtained from the universal property of the opcartesian morphism
$u^\ast Z \to f_!u^\ast Z$,
making the square on the top right in the diagram 
a commutative square covering the left hand square in
\eqref{eq:dfopcartmixeddiags};
and 
$\alpha$ is 
the unique morphism over $B$ making the 
trapezoid at the bottom commutative,
obtained by observing that $\nu$
is cartesian by Proposition~\ref{prop:commrelshriekinterpretation2}.
Notice that composition with 
the cartesian morphism $u^\ast Z \to Z$ 
gives a bijection between morphisms $\psi\colon X\to Z$
making the square on the right in 
\eqref{eq:dfopcartmixeddiags}
a commutative square over the square on the left in 
\eqref{eq:dfopcartmixeddiags}
and morphisms $\tilde{\psi} \colon  X \to u^\ast Z$
for which the triangle at the bottom left in 
\eqref{diag:decomp} commutes.
By Proposition~\ref{prop:dfopcartinterpretation},
there exists a unique such $\tilde{\psi}$,
so the claim follows.
\end{proof}

We record the following analogue of 
Proposition~\ref{prop:commrelshriekinterpretation2}
for morphisms in $\hpC^\dfop$.

\begin{prop}
\label{prop:commrelinvshriek}
Let
\begin{equation}
\label{sq:commrelinvshriek} 
\vcenter{\xymatrix{
	Z
	\ar[r]|\circdec^{\beta}
	\ar[d]|\circdec_{\mu}
	& 
	W
	\ar[d]|\circdec^{\nu}
	\\
	X
	\ar[r]|\circdec^{\alpha}
	& 
	Y
}}
\end{equation}
be a commutative square in $\hpC^\dfop$ covering a 
homotopy cartesian square in $\calT$,
and assume that $\beta$ is cartesian and $\nu$ is opcartesian.
Then $\alpha$ is cartesian in $\hpC^\dfop$
if and only if $\mu$ is opcartesian in $\hpC^\dfop$.
\end{prop}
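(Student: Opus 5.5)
The plan is to reduce both directions to statements about vertical morphisms in $\hpC$, and then use the base change relations already available there.

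Fix notation for the square \eqref{sq:commrelinvshriek}. Let it cover the homotopy cartesian square with $Z$ over $C$, $W$ over $D$, $X$ over $A$, $Y$ over $B$, with maps $k\colon C\to D$ (under $\beta$), $h\colon C\to A$ (under $\mu$), $g\colon D\to B$ (under $\nu$) and $f\colon A\to B$ (under $\alpha$).

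First I would use Proposition~\ref{prop:dfopcartmixedprop} to transfer the universal property of $\beta$ to a more convenient form. Since $\nu$ is opcartesian in $\hpC^\dfop$, the isomorphism $\hpC^\opcart\isom(\hpC^\dfop)^\opcart$ identifies it with an opcartesian morphism of $\hpC$. So we may take $Y = g_!W$ and $\nu$ the canonical opcartesian morphism. The vertical morphism $\eta_\alpha\colon Y\to f_!X$ determined by $\alpha$ is then obtained as follows: by commutativity of the square,
\[
	\eta_\alpha\colon g_! W \longto g_! k_! Z \isom f_! h_! Z \longto f_!X.
\]
Here the first map is $g_!(\eta_\beta)$ and the last is $f_!$ applied to the vertical morphism $h_!Z\leftarrow X$ determined by $\mu$. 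More precisely, $\mu$ corresponds to a vertical morphism $\epsilon_\mu\colon X\to h_!Z$. Commutativity of the square says that the composite $\nu\circ\beta$ equals $\alpha\circ\mu$, which unwinds to
\[
	g_!(\eta_\beta)=f_!(\epsilon_\mu)\circ\eta_\alpha
\]
as maps $W\to$ (something over $B$), modulo the identification $g_!k_!\isom f_!h_!$.

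Next I would check the universal property of Proposition~\ref{prop:dfopcartinterpretation} for $\alpha$. Given $Q\in\ho(\calC_{/A})$, morphisms $Y=g_!W\to f_!Q$ correspond by adjunction to morphisms $W\to g^\ast f_!Q$. By the commutation relation \eqref{eq:commrelshriek} for the homotopy cartesian square, $g^\ast f_!Q\homot k_!h^\ast Q$. Since $\beta$ is cartesian, maps $W\to k_!h^\ast Q$ correspond bijectively to maps $Z\to h^\ast Q$, hence to maps $h_!Z\to Q$. Putting these together:
\[
	\Hom(Y,f_!Q)\isom\Hom(h_!Z,Q).
\]
Meanwhile, precomposition with $\eta_\alpha$ gives a map $\Hom(f_!X,f_!Q)\leftarrow\Hom(X,Q)$, and then $\Hom(X,Q)\to\Hom(Y,f_!Q)$. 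Under the bijection above, this should become precomposition with $\epsilon_\mu$:
\[
	\Hom(X,Q)\longto\Hom(h_!Z,Q).
\]
Thus $\alpha$ is cartesian if and only if $\epsilon_\mu^\ast$ is a bijection for all $Q$. By Yoneda this holds if and only if $\epsilon_\mu$ is an isomorphism, which is exactly the statement that $\mu$ is opcartesian in $\hpC^\dfop$ (a morphism there is opcartesian iff its vertical component is invertible).

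The main obstacle is verifying that the composite bijection really intertwines precomposition with $\eta_\alpha$ and precomposition with $\epsilon_\mu$. This requires a diagram chase that identifies the Beck--Chevalley map \eqref{eq:shriekastcomm} with the unit/counit formulas. I would phrase the chase fibrationally, via Proposition~\ref{prop:commrelshriekinterpretation2} and a diagram like \eqref{diag:decomp}. That avoids explicit adjunction algebra: maps $W\to g^\ast f_!Q$ become morphisms $W\to f_!Q$ in $\hpC$ covering $g$, and these factor uniquely through the cartesian morphism $k_!h^\ast Q\to f_!Q$ furnished by that proposition.
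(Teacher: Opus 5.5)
Your strategy is viable and differs from the paper. You build a natural bijection $\Hom(h_!Z,Q)\isom\Hom(Y,f_!Q)$ from adjunction, the commutation relation $k_!h^\ast\homot g^\ast f_!$ and the universal property of $\beta$, then finish with Yoneda. The paper instead proves ``$\mu$ opcartesian $\Rightarrow$ $\alpha$ cartesian'' by a fibrational diagram chase. It then gets the converse formally, by comparing $\mu$ with an opcartesian $\mu'\colon Z\oto X'$ and using uniqueness of cartesian morphisms.

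However, the step you flag as the main obstacle fails as written. The map $A_Q\colon\Hom(X,Q)\to\Hom(Y,f_!Q)$, $\gamma\mapsto f_!(\gamma)\circ\eta_\alpha$, cannot ``become precomposition with $\epsilon_\mu$''. Since $\epsilon_\mu\colon X\to h_!Z$, precomposition gives $\epsilon_\mu^\ast\colon\Hom(h_!Z,Q)\to\Hom(X,Q)$, which goes the other way. Your first display for $\eta_\alpha$ has the same defect: it uses a map $h_!Z\to X$, i.e.\ an inverse of $\epsilon_\mu$, which exists only once you know $\mu$ is opcartesian. What the chase actually gives is that $A_Q$ is precomposition with some $\psi\colon h_!Z\to X$. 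Here $\psi$ is the adjoint of the lift through $\beta$ of the restriction $W\to g^\ast f_!X\homot k_!h^\ast X$ of $\eta_\alpha$. On its own, your Yoneda step would then only show ``$\alpha$ cartesian iff $\psi$ is invertible'', which is not the statement you want.

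The missing ingredient is to feed in the commutativity relation $c\circ g_!(\eta_\beta)=f_!(\epsilon_\mu)\circ\eta_\alpha$, where $c\colon g_!k_!Z\isom f_!h_!Z$.

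1. Check that your bijection is explicitly $\Theta_Q\colon\Hom(h_!Z,Q)\to\Hom(Y,f_!Q)$, $\sigma\mapsto f_!(\sigma)\circ c\circ g_!(\eta_\beta)$. This is the compatibility of the Beck--Chevalley map with units and counits.
2. Commutativity of the square then gives $A_Q\circ\epsilon_\mu^\ast=\Theta_Q$, which is equivalent to $\epsilon_\mu\circ\psi=\id$.
3. Since $\Theta_Q$ is bijective, $A_Q$ is bijective if and only if $\epsilon_\mu^\ast$ is.
4. By Proposition~\ref{prop:dfopcartinterpretation} and Yoneda, $\alpha$ is cartesian iff $\epsilon_\mu$ is invertible, i.e.\ iff $\mu$ is opcartesian.

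With this repair your argument proves both implications at once and shows exactly where commutativity of the square enters. The paper's route, by contrast, needs only one chase and gets the converse from a purely formal uniqueness argument.
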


\begin{proof}
Suppose 
\[\xymatrix{
	C
	\ar[r]^g
	\ar[d]_u
	&
	D
	\ar[d]^v
	\\
	A
	\ar[r]^f
	&
	B
}\]
is the homotopy cartesian square covered by~\eqref{sq:commrelinvshriek}.
Let us first assume that $\mu$ is opcartesian and show 
that $\alpha$ is cartesian.
Expanding out~\eqref{sq:commrelinvshriek},
we obtain the two squares in the back of the diagram
\begin{equation}
\label{diag:commrelinvshriekproof}
\vcenter{\xymatrix@!0@C=5em@R=8ex{
	Z 
	\ar[rr]^(0.3)\opcart
	\ar[dd]_(0.7)\mu^(0.7)\opcart
	\ar[dr]_{\mathrm{(2)}}
	&&
	g_! Z
	\ar[dd]|!{[dl];[dr]}\hole_(0.7){\opcart}
	\ar[dr]_{g_!\mathrm{(2)}}
	&&
	W
	\ar[ll]_(0.3){\eta_\beta}
	\ar[dd]^(0.7)\nu_(0.7)\opcart
	\ar[dl]^{\mathrm{(1)}}
	\\
	&
	u^\ast T
	\ar[dd]^(0.7)\cart
	\ar[rr]^(0.3)\opcart
	&&
	g_! u^\ast Y
	\ar[dd]_(0.7)\cart
	\\
	X
	\ar[rr]|!{[ur];[dr]}\hole^(0.3){\opcart}
	\ar[dr]_{\mathrm{(3)}}
	&&
	f_! X
	\ar[dr]_{f_! \mathrm{(3)}}
	&&
	Y
	\ar[dl]^\xi
	\ar[ll]|!{[ul];[dl]}\hole^(0.3){\eta_\alpha}
	\\
	&
	T
	\ar[rr]_(0.3)\opcart
	&&
	f_! T
}}
\end{equation}
Here $\eta_\alpha$ and $\eta_\beta$ are the vertical 
morphisms determined by $\alpha$ and $\beta$, respectively,
the horizontal morphisms labeled $\opcart$ are
the canonical opcartesian morphisms, 
and the middle vertical morphism is the unique morphism making 
the left hand square commutative. The 
right hand square then commutes by the commutativity 
of~\eqref{sq:commrelinvshriek}.
Notice that  the middle vertical morphism
is opcartesian by
the analogues of Proposition~\ref{prop:cartmorprops}(\ref{it:compiscart})
and
(\ref{it:cartsourceiso})
for opcartesian morphisms.

Suppose now $T \in \ho(\calC_{/A})$
and $\xi \colon Y \to f_! T$ in $\ho(\calC_{/B})$.
In view of Proposition~\ref{prop:dfopcartinterpretation},
our task is to show that 
there is a unique morphism (3) over $A$
so that the bottom triangle in~\eqref{diag:commrelinvshriekproof}
commutes. To construct morphism $(3)$, let us first construct
the front square in diagram~\eqref{diag:commrelinvshriekproof}.
The left hand vertical morphism is the canonical cartesian 
morphism, and two horizontal morphisms are
the canonical opcartesian morphisms. The right hand 
vertical morphism is the unique morphism covering $v$
which makes the square commutative.
Notice that this morphism is cartesian by 
Proposition~\ref{prop:commrelshriekinterpretation2}.
Now, by the universal property of cartesian morphisms, there 
exists a unique morphism~(1) over $D$ 
which makes the parallelogram on the right commutative.
In view of Proposition~\ref{prop:dfopcartinterpretation},
the assumption that $\beta$ is cartesian
implies that there exists a unique morphism~(2) over $C$
making the top of diagram~\eqref{diag:commrelinvshriekproof}
commutative.
Taking~(3) to be the unique morphism over $A$
which makes the parallelogram on the left commutative
now makes the bottom of 
diagram~\eqref{diag:commrelinvshriekproof} commutative.
Moreover, the uniqueness of (2) implies the desired uniqueness of~(3).
This concludes the proof that $\alpha$ is cartesian.

It remains to prove that $\mu$ is opcartesian when $\alpha$ is
cartesian. Choose an opcartesian morphism $\mu' \colon Z \oto X'$
covering $u$. There then exists a unique morphism
$\alpha'\colon X' \oto Y$ covering $f$
with the property that $\alpha'\mu' = \nu \beta$.
By the implication already proven,
$\alpha'$ is cartesian, so by 
Proposition~\ref{prop:cartmorprops}(\ref{it:cartsourceiso})
there exists a vertical isomorphism $\theta \colon X' \xoto{\homot} X$
such that $\alpha' = \alpha \theta$.
Now $\alpha \theta \mu' = \alpha' \mu' = \nu \beta = \alpha \mu$,
so the assumption that $\alpha$ is cartesian implies that 
$\theta\mu' = \mu$, and the claim follows from the 
analogues of 
Proposition~\ref{prop:cartmorprops}(\ref{it:isosarecart})
and  (\ref{it:compiscart})
for opcartesian morphisms.
\end{proof}

\begin{prop}
\label{prop:cartoslashproperty}
Suppose $\theta$ is a cartesian morphism in $\hpC^\dfop$
covering $f\colon A\to B$. Then for any cartesian morphism $\phi$
in $\hpC$ covering $f$ whose target is 
dualizable
in  $\ho(\calC_{/B})$, the product 
$\phi \oslash \theta$ is cartesian.
\end{prop}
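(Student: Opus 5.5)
The plan is to verify the universal property of Proposition~\ref{prop:dfopcartinterpretation} directly for $\phi\oslash\theta$, transporting it along the duality adjunction for $X_2$. Write $\theta = [Y_1 \to f_! Y_1 \xot{\eta} Y_2]$, with $\eta\colon Y_2 \to f_!Y_1$ the vertical morphism determined by $\theta$. Since $\phi\colon X_1\to X_2$ is cartesian, we may identify $X_1 \isom f^\ast X_2$. By Definition~\ref{def:oslash} and Proposition~\ref{prop:reformulations}(\ref{it:bc-projformula-ref}), the vertical morphism determined by $\phi\oslash\theta$ is, up to the projection formula equivalence $\pi\colon X_2\tensor_B f_!Y_1 \homot f_!(f^\ast X_2\tensor_A Y_1)$ of \eqref{eq:bc-projformula}, the map
\[
	\eta' = \pi\circ(\id\tensor_B\eta)\colon X_2\tensor_B Y_2 \longto f_!(X_1\tensor_A Y_1).
\]
So the task is to show: for every $Z\in\ho(\calC_{/A})$, the map $\beta\mapsto f_!(\beta)\circ\eta'$ is a bijection from morphisms $X_1\tensor_A Y_1\to Z$ to morphisms $X_2\tensor_B Y_2\to f_!Z$.

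I will factor this map as a composite of four bijections. Let $DX_2$ be a dual of $X_2$ in $\ho(\calC_{/B})$. Because $f^\ast$ is strong symmetric monoidal by \eqref{eq:bc-unit} and \eqref{eq:bc-product}, $f^\ast X_2\isom X_1$ is dualizable with dual $f^\ast DX_2$; the coevaluation and evaluation are the images under $f^\ast$ of those for $X_2$. The chain is:
\begin{enumerate}[(a)]
\item by duality in $\ho(\calC_{/A})$, morphisms $X_1\tensor_A Y_1\to Z$ correspond to morphisms $Y_1\to f^\ast DX_2\tensor_A Z$;
\item by the universality of $\eta$ (Proposition~\ref{prop:dfopcartinterpretation}, using that $\theta$ is cartesian), these correspond to morphisms $Y_2\to f_!(f^\ast DX_2\tensor_A Z)$ via $\gamma\mapsto f_!(\gamma)\circ\eta$;
\item by the projection formula, these correspond to morphisms $Y_2 \to DX_2\tensor_B f_!Z$;
\item by duality in $\ho(\calC_{/B})$, these correspond to morphisms $X_2\tensor_B Y_2\to f_!Z$.
\end{enumerate}
Each of these is a bijection, so their composite is one too.

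It remains to check that this composite bijection equals $\beta\mapsto f_!(\beta)\circ\eta'$; this compatibility is the main obstacle. Step (b) is natural in the target, so the comparison reduces to a diagram involving the expanded projection formula \eqref{eq:projformulaexpanded}, the unit and counit of $(f_!,f^\ast)$, and the evaluation maps $\mathrm{ev}\colon X_2\tensor_B DX_2\to S_B$ and $f^\ast\mathrm{ev}$. I will first write the composite of (c) and (d) explicitly as
\[
	X_2\tensor_B Y_2 \to X_2\tensor_B f_!(f^\ast DX_2\tensor_A Z) \xto{\pi} f_!(f^\ast X_2\tensor_A f^\ast DX_2\tensor_A Z) \xto{f_!(f^\ast\mathrm{ev}\tensor\id)} f_!Z,
\]
which should follow from the projection formula being compatible with associativity and with the monoidality constraints of $f^\ast$.

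Then naturality of $\pi$ in its second variable lets me move $f_!(\gamma)\circ\eta$ past $\pi$. What remains is $f_!$ applied to the composite $X_1\tensor_A Y_1 \xto{\id\tensor\gamma} X_1\tensor_A f^\ast DX_2\tensor_A Z \xto{\mathrm{ev}\tensor\id} Z$, precomposed with $\eta'$. This composite is exactly the inverse of the bijection in (a), which closes the argument. The only genuinely delicate point is the associativity/monoidality coherence of $\pi$ used in the first step. If writing it out directly gets messy, I would instead derive it from the fibred reformulation, Proposition~\ref{prop:reformulations}(\ref{it:bc-projformula-ref}), together with the uniqueness clauses of the universal properties of opcartesian morphisms.
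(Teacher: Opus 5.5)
Your proposal is correct and follows essentially the same route as the paper: the same chain of four bijections (duality over $A$, the universal property of the vertical morphism determined by $\theta$, the projection formula, duality over $B$), concluded via Proposition~\ref{prop:dfopcartinterpretation}. Your sketch of why the composite bijection is $\beta\mapsto f_!(\beta)\circ\eta'$ (compatibility of the projection formula with associativity and units, then naturality in the second variable) spells out the step the paper dismisses as a ``tedious inspection.''
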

\begin{proof}
Suppose $\theta \colon X \oto Y$ and $\phi \colon Z \to W$. Write
$W^{\vee}$ for the dual of $W$,
and observe that the duality between $W$ and $W^{\vee}$
in $\ho(\calC_{/B})$
pulls back to a duality between $Z$ and the object $Z^{\vee} = f^\ast(W^{\vee})$
in $\ho(\calC_{/A})$.
For every $K \in \ho(\calC_{/A})$  we have 
natural bijections
\begin{align*}
    \Hom_{\ho(\calC_{/A})}(Z \tensor_A X, K)
    &\isom 
    \Hom_{\ho(\calC_{/A})}(X, Z^{\vee} \tensor_A K)
    \\
	&\isom
	\Hom_{\ho(\calC_{/B})}(Y, f_! (Z^{\vee} \tensor_A K))
    \\
    &=
   	\Hom_{\ho(\calC_{/B})}(Y, f_! (f^\ast(W^{\vee}) \tensor_A K))
	\\
	&\isom
	\Hom_{\ho(\calC_{/B})}(Y, W^{\vee} \tensor_A f_! K)
    \\
	&\isom
	\Hom_{\ho(\calC_{/B})}(W \tensor_B Y,  f_! K)
\end{align*}
where the first and last bijections arise from the dual pairs $(Z,Z^{\vee})$
and $(W,W^{\vee})$;
the second one 
is induced by $\theta$ (see Proposition~\ref{prop:dfopcartinterpretation});
and the penultimate one arises from the projection formula.
Moreover, writing $\tilde{\eta} \colon Y \to f_! X$ for the 
map determined by $\theta$, a tedious inspection shows that 
the composite bijection above sends a morphism $\sigma \colon Z \tensor_A X \to K$
to the composite
\[
	W\tensor_B Y 
	\xto{\ \id \tensor \tilde{\eta}\ }
	W\tensor_B f_! X
	\xto{\ \homot\ }
	f_!( f^\ast W\tensor_A X)
	\xto{\ \homot\ }
	f_!( Z\tensor_A X)
	\xto{\ f_! (\sigma)\ }
	f_! K	
\]
where the first equivalence is given by the projection formula and the second
by the isomorphism $f^\ast(W) \isom Z$
of Proposition~\ref{prop:cartmorprops}(\ref{it:cartoveriso}).
But here the composite of the first three morphisms is the morphism 
$W\tensor_B Y  \to 	f_!( Z\tensor_A X)$
determined by $\phi\oslash\theta\colon Z \tensor_A X \oto W \tensor_B Y$,
so the claim follows from Proposition~\ref{prop:dfopcartinterpretation}.
\end{proof}

\subsubsection{Supercartesian and hypercartesian morphisms 
in \texorpdfstring{$\hpC^\dfop$}{hpC\textasciicircum dfop}
}

We are especially interested in two
strong types of cartesian 
morphisms in $\hpC^\dfop$ we call 
supercartesian and hypercartesian morphisms.
Our next goal is to define these morphisms and 
establish many of their basic properties.
The existence of interesting examples of such morphisms 
is demonstrated later in 
Section~\ref{subsec:cwdualityandhypercartesianmorphisms}.

\begin{defn}[Supercartesian morphisms]
\label{def:supercart}
We call a morphism 
$\theta$ of $\hpC^\dfop$
covering a map $f$ of $\calT$
\emph{supercartesian} if $\phi\oslash \theta$ is 
cartesian for \emph{every} cartesian morphism $\phi$ of $\hpC$
covering $f$; cf.\ Proposition~\ref{prop:cartoslashproperty}.
\end{defn}

To define hypercartesian morphisms, we first need to define
base change of morphisms of $\hpC^\dfop$ along homotopy
cartesian squares.

\begin{prop}
\label{prop:basechange}
Suppose the right hand square below is a homotopy cartesian square
in $\calT$, and suppose in the left hand square
$\kappa$ is a morphism of $\hpC^\dfop$ and 
$\phi$ and $\psi$ are cartesian morphisms of $\hpC$
all covering the corresponding morphisms in the right hand square.
Then there exists a unique morphism $\theta$ of $\hpC^\dfop$
making the left hand square a commutative square over 
the right hand square.
\[
	\vcenter{\xymatrix{
    	X
    	\ar[r]^\phi_\cart
    	\ar@{-->}[d]|{\circdec}_{\theta}
    	&
    	Y	
    	\ar[d]|{\circdec}^{\kappa}
    	\\
    	Z
    	\ar[r]^\psi_\cart
    	&
    	W
    }}
	\qquad\qquad\qquad
	\vcenter{\xymatrix{
		D
		\ar[r]^g
		\ar[d]_q
		&
		E
		\ar[d]^p
		\\
		A
		\ar[r]^f_{\phantom{\cart}}
		&
		B
	}}
\]
\end{prop}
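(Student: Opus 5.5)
We reduce the statement to the universal property of the cartesian morphism $\psi$, exactly as in the proof of Proposition~\ref{prop:dfopcartmixedprop}. Write $\kappa = [Y \xto{\gamma} p_! Y \xot{\delta} W]$, with $\gamma$ the canonical opcartesian morphism covering $p$ and $\delta$ the vertical morphism over $B$ determined by $\kappa$. By Definition~\ref{def:mixedcomm} and the remark following it, a morphism $\theta \colon X \oto Z$ covering $q$ is given by a vertical morphism $\beta\colon Z \to q_! X$ over $A$. There is a unique morphism $\phi' \colon q_! X \to p_! Y$ covering $f$ such that $\phi' \circ (X \to q_!X) = \gamma\circ\phi$; it comes from the universal property of the opcartesian morphism $X \to q_! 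X$. The square commutes precisely when $\phi' \circ \beta = \delta \circ \psi$.

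The key step is to show that $\phi'$ is cartesian. We have $\phi' \circ \mathrm{oc} = \gamma \circ \phi$, with $\phi$ cartesian, $\gamma$ opcartesian, and the square of spaces homotopy cartesian. Proposition~\ref{prop:commrelshriekinterpretation2}, applied to this square with $\beta:=\phi$ cartesian and $\nu:=\gamma$ opcartesian, says that $\phi'$ is cartesian if and only if the left vertical map $X\to q_!X$ is opcartesian, and it is. Hence $\phi'$ is cartesian, covering $f$.

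Given this, the composite $\delta\circ\psi\colon W' \to p_!Y$, where $W'$ is the source of $\psi$ (namely $Z$), covers $f\circ \id_A = f$. The universal property of the cartesian morphism $\phi'$ then gives a unique $\beta \colon Z \to q_! X$ covering $\id_A$ with $\phi'\beta = \delta\psi$. The morphism $\theta$ determined by $\beta$ makes the square commute. Uniqueness follows because representatives of the form $[X\to q_!X \xot{\beta} Z]$ are unique (Definition~\ref{def:dfopmordata}), and $\phi'$ is independent of $\theta$.

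I expect the only delicate point to be the bookkeeping that commutativity in the sense of Definition~\ref{def:mixedcomm} can be tested using the canonical representatives. Independence of representatives was noted in the remark after Definition~\ref{def:mixedcomm}, so it suffices to work with the canonical zigzags throughout.
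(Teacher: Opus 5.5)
Your proof is correct and is essentially the paper's argument: both rest on Proposition~\ref{prop:commrelshriekinterpretation2} applied to the square formed by $\phi$, $\gamma$, an opcartesian map out of $X$ covering $q$, and a map $\phi'$ covering $f$, followed by the universal property of the cartesian morphism $\phi'$ to produce $\beta$. The only difference is which side is fixed first. The paper chooses a cartesian $\phi'$ and deduces that the induced $\alpha\colon X\to X'$ is opcartesian, then handles uniqueness via Proposition~\ref{prop:cartmorprops}(\ref{it:cartsourceiso}). You fix the canonical opcartesian $X\to q_!X$ and deduce that $\phi'$ is cartesian, which makes uniqueness immediate from the canonical representatives of Definition~\ref{def:dfopmordata} (the definition you should cite for the parametrization of $\theta$ by $\beta$, rather than Definition~\ref{def:mixedcomm}).
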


\begin{proof}
Suppose $\kappa = [Y \xto{\ \gamma\ } Y' \xot{\ \delta\ } W]$.
To define $\theta$, choose a cartesian $\phi' \colon X' \to Y'$
covering $f$, and use the universal property of cartesian morphisms
to find maps $\alpha \colon X \to X'$ covering $q$
and $\beta \colon Z \to X'$ over $A$
such that $\phi' \alpha = \gamma \phi$
and $\phi' \beta = \delta \psi$.
By Proposition~\ref{prop:commrelshriekinterpretation2},
the map  $\alpha$ is opcartesian. Now 
$\theta = [X \xto{\ \alpha\ } X' \xot{\ \beta\ }Z]$
completes the square on the left as desired.
On the other hand, if 
$\theta' = [X \xto{\ \alpha'\ } X'' \xot{\ \beta'\ }Z]$
is another completion of the left hand square, 
Proposition~\ref{prop:commrelshriekinterpretation2}
implies that 
the morphism $\phi''\colon X'' \to Y'$
witnessing 
commutativity of the completed square is cartesian.
The equivalence $X'' \homot X'$ afforded by 
Proposition~\ref{prop:cartmorprops}(\ref{it:cartsourceiso})
now shows that $\theta' = \theta$, as desired.
\end{proof}

\begin{defn}[Base change]
\label{def:basechange}
In the situation of Proposition~\ref{prop:basechange},
we say that the morphism $\theta$ has been obtained
from $\kappa$ by \emph{base change} along the homotopy 
cartesian square on the right with respect to the cartesian morphisms
$\phi$ and $\psi$, and call the square on the left
a \emph{base change square}. We call $f$ and $g$ the \emph{horizontal morphisms}
of the homotopy cartesian square on the right.
\end{defn}

\begin{defn}[Hypercartesian morphisms]
\label{def:hypercart}
A morphism of $\hpC^\dfop$ is called 
\emph{hypercartesian}
if all morphisms obtained from 
it by base change along homotopy cartesian squares
are supercartesian.
\end{defn}

\begin{prop}[Properties of supercartesian and hypercartesian morphisms]
\label{prop:superandhypercartmorprops}
\mbox{}
\begin{enumerate}[(i)]
\item\label{it:hypercartimpliessupercart}
	Every hypercartesian morphism is supercartesian.
\item\label{it:supercartimpliescart}
	Every supercartesian morphism is cartesian.
\item\label{it:supercartcomp}
	The composite of supercartesian morphisms is supercartesian.
\item\label{it:hypercartcomp}
	The composite of hypercartesian morphisms is hypercartesian.
\item\label{it:isosarehypercart}
	Every isomorphism of $\hpC^\dfop$ is hypercartesian.
\item\label{it:hypercartiffopcart}
	The following are equivalent conditions on a morphism $\theta$
	of $\hpC^\dfop$ covering a weak equivalence of $\calT$:
	(a) $\theta$ is hypercartesian;
	(b) $\theta$ is supercartesian;
	(c) $\theta$ is cartesian;
	(d) $\theta$ is opcartesian.
	(Cf.\ Proposition~\ref{prop:wecartiffopcart}.)
\item\label{it:wecancel}
	Let $\theta_2$ be an opcartesian morphism of $\hpC^\dfop$
	covering a weak equivalence, and let $\theta_1$ be a morphism
	of $\hpC^\dfop$ such that the composite $\theta_1\circ \theta_2$
	is defined. Then $\theta_1$ is 
	cartesian (resp.\ supercartesian or hypercartesian) if and only if
	$\theta_1 \circ \theta_2$ is. (Cf.\ Proposition~\ref{prop:wecancel}.)
\item\label{it:wecancel2}
	Let $\theta_1$ be an opcartesian morphism of $\hpC^\dfop$
	covering a weak equivalence, and let $\theta_2$ be a morphism
	of $\hpC^\dfop$ such that the composite $\theta_1\circ \theta_2$
	is defined. Then $\theta_2$ is 
	cartesian (resp.\ supercartesian or hypercartesian) if and only if
	$\theta_1 \circ \theta_2$ is. (Cf.\ Proposition~\ref{prop:wecancel2}.)
\item\label{it:supercartbc}
	Any morphism obtained from a supercartesian morphism
	by base change along a homotopy cartesian square whose 
	horizontal morphisms are weak equivalences is supercartesian.
\item\label{it:hypercartbc} 
	Any morphism obtained from 
	a hypercartesian morphism by base change
    along a homotopy cartesian square is hypercartesian.
\item\label{it:hypercartpbres}
	A morphism of $\hpC^\dfop$ is hypercartesian
	if and only if morphisms obtained from it by base change
	along pullback squares whose horizontal morphisms are
	fibrations are supercartesian.
\item\label{it:cartoslashhypercart}
	Given a cartesian morphism $\phi$ of $\hpC$ and 
    a supercartesian (resp.\ hypercartesian) morphism $\theta$ 
    of $\hpC^\dfop$
    covering the same morphism of $\calT$, the product 
    $\phi \oslash \theta$ is supercartesian (resp.\ hypercartesian).
\item\label{it:hcexttensorhc}
	If morphisms $\theta_1$ and $\theta_2$ of $\hpC^\dfop$
	are hypercartesian, then so is their
	external tensor product 
	$\theta_1 \exttensor \theta_2$.
\end{enumerate}
\end{prop}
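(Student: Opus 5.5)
The plan is to prove the thirteen items roughly in order, since the later ones depend on the earlier ones. The basic tools are Proposition~\ref{prop:dfopcartinterpretation}, Proposition~\ref{prop:basechange}, Proposition~\ref{prop:commrelinvshriek}, and the module structure of $\hpC^\dfop$ over $\hpC^\cart$ given by $\oslash$ together with \eqref{eq:oslashassocunit}.

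\emph{Items (\ref{it:hypercartimpliessupercart})--(\ref{it:isosarehypercart}).}
\begin{itemize}
\item For (\ref{it:hypercartimpliessupercart}), note that the trivial square with identity horizontal maps and identity cartesian morphisms is homotopy cartesian. Base change along it returns $\theta$ itself.
\item For (\ref{it:supercartimpliescart}), take $\phi=I_\internal(f)\colon S_A\to S_B$, which is cartesian by Proposition~\ref{prop:reformulations}(\ref{it:bc-unit-ref}). Then $\phi\oslash\theta\isom\theta$ via the unit isomorphisms, so $\theta$ is cartesian.
\item For (\ref{it:supercartcomp}), use functoriality of $\oslash$. Any cartesian $\phi$ over $gf$ factors as $\phi_2\phi_1$ with both factors cartesian, and then $\phi\oslash(\theta_2\theta_1)=(\phi_2\oslash\theta_2)(\phi_1\oslash\theta_1)$. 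Now apply Proposition~\ref{prop:cartmorprops}(\ref{it:compiscart}) in $\hpC^\dfop$.
\item For (\ref{it:hypercartcomp}), first show that base change of a composite is the composite of base changes. Given a homotopy cartesian square over $gf$, factor it into two homotopy cartesian squares by forming homotopy pullbacks; uniqueness in Proposition~\ref{prop:basechange} then gives the claim. Conclude with (\ref{it:supercartcomp}).
\item For (\ref{it:isosarehypercart}), an isomorphism covers an isomorphism. Any base change of it covers a weak equivalence, and is therefore opcartesian by Proposition~\ref{prop:commrelinvshriek} or directly. So (\ref{it:isosarehypercart}) reduces to (\ref{it:hypercartiffopcart}).
\end{itemize}

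\emph{Item (\ref{it:hypercartiffopcart}).} When $f$ is a weak equivalence, $f_!$ is an equivalence of categories. By Proposition~\ref{prop:dfopcartinterpretation}, $\theta$ is cartesian iff its determined vertical morphism $Y\to f_!X$ is an isomorphism, and that is exactly the condition for $\theta$ to be opcartesian. The remaining steps are:
\begin{itemize}
\item Opcartesian morphisms are preserved by $\phi\oslash-$, by Proposition~\ref{prop:reformulations}(\ref{it:bc-projformula-ref}). Hence opcartesian implies supercartesian.
\item Base changes of a morphism over a weak equivalence again cover weak equivalences, and by Proposition~\ref{prop:commrelinvshriek} they are opcartesian. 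Hence opcartesian implies hypercartesian.
\end{itemize}

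\emph{Items (\ref{it:wecancel}) and (\ref{it:wecancel2}).} The cartesian case follows from (\ref{it:hypercartiffopcart}) and Proposition~\ref{prop:cartmorprops}, exactly as in Propositions~\ref{prop:wecancel} and~\ref{prop:wecancel2}. For the supercartesian case, factor a cartesian $\phi$ compatibly and use functoriality of $\oslash$, which reduces to the cartesian case. For the hypercartesian case, base-change the composite square and use (\ref{it:hypercartiffopcart}).

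\emph{Items (\ref{it:supercartbc}) and (\ref{it:hypercartbc}).} For (\ref{it:supercartbc}), a base change along a square with horizontal weak equivalences has the form $\psi\circ\theta'=\theta\circ\phi$ in the mixed sense. Since $\phi$ and $\psi$ are cartesian over weak equivalences, they are opcartesian, so the square is an honest square in $\hpC^\dfop$ (Remark~\ref{rk:mixedcommunmix}). Items (\ref{it:wecancel}) and (\ref{it:wecancel2}) then show that $\theta'$ is supercartesian. For (\ref{it:hypercartbc}), base change is transitive: pasting homotopy cartesian squares and using the uniqueness in Proposition~\ref{prop:basechange} shows a base change of a base change is a base change.

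\emph{Item (\ref{it:hypercartpbres}).} Any homotopy cartesian square is weakly equivalent, via a cube of weak equivalences, to a strict pullback along a fibration. Transfer using (\ref{it:supercartbc}) and the cancellation items (\ref{it:wecancel}) and (\ref{it:wecancel2}).

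\emph{Item (\ref{it:cartoslashhypercart}).} The supercartesian case follows from $\phi'\oslash(\phi\oslash\theta)\isom(\phi'\tensor_\internal\phi)\oslash\theta$ and Proposition~\ref{prop:reformulations}(\ref{it:bc-product-ref}). For the hypercartesian case, show that base change commutes with $\oslash$: the base change of $\phi\oslash\theta$ is $\phi'\oslash\theta'$, where $\phi'$ is the pulled-back cartesian morphism. This is checked by uniqueness in Proposition~\ref{prop:basechange}, using that $\tensor_\internal$ preserves cartesian morphisms.

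\emph{Item (\ref{it:hcexttensorhc}).} Write
\[
\theta_1\exttensor\theta_2=(\theta_1\exttensor\id)\circ(\id\exttensor\theta_2).
\]
Next, $\theta_1\exttensor\id_{(C,Y)}$ is the base change of $\theta_1$ along the homotopy cartesian square given by $\times C$, with respect to the cartesian projections $\pi^\ast$. It is then tensored via $\oslash$ with the cartesian morphism $\pi_C^\ast Y\to\pi_C^\ast Y$, using \eqref{eq:exttensordef}. So it is hypercartesian by (\ref{it:hypercartbc}) and (\ref{it:cartoslashhypercart}), and the symmetric argument handles the other factor. Finish with (\ref{it:hypercartcomp}).

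\emph{Main obstacle.} I expect the bookkeeping identifications to be the hardest part: base change commuting with composition, with $\oslash$, and with $\exttensor$. Each of these I would attack by the uniqueness clause of Proposition~\ref{prop:basechange} rather than by explicit zigzag manipulation.
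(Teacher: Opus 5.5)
Your proposal follows the same decomposition as the paper, but the argument you give for (iv) has a step that fails. In $\calT$ you cannot in general factor a strictly commuting homotopy cartesian square through a factorization $E\xto{p_2}E'\xto{p_1}B$ of its right edge. Homotopy pullbacks only give squares that commute up to homotopy, whereas Proposition~\ref{prop:basechange} needs strictly commuting squares.

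For any strictly commuting square with corners $D'$, $A$, $E'$, $B$, the comparison map $D'\to A\times^h_B E'$ factors through the strict pullback $A\times_B E'$. So no such square is homotopy cartesian unless $A\times_B E'\to A\times^h_B E'$ is, for instance, surjective on $\pi_0$. Here is a concrete failure:
\begin{itemize}
\item Take $B=S^1$, $A=\pt$ the basepoint, $E'=\pt\sqcup\R$ mapping to $S^1$ by the basepoint and by $\exp$, $E=\R\subset E'$, and $D=\Z$.
\item The outer square is a pullback along the fibration $\exp$, hence homotopy cartesian.
\item However, $A\times_B E'=\pt\sqcup\Z$ meets only one component of the $\loops S^1$ summand of $A\times^h_B E'$.
\item Hence the middle corner $D'$ your factorization requires does not exist.
\end{itemize}
The same problem is hidden in your hypercartesian case of (vii). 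There you must extend an arbitrary homotopy cartesian square upward along the weak equivalence covered by $\theta_2$, which is the same kind of strict factorization.

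The fix is the ordering the paper uses: prove (xi) before (iv). Your arguments for (ix) and (xi) use only (iii), (vi) and the cartesian and supercartesian cases of (vii) and (viii), none of which depends on (iv). For (xi), the cleanest route is the paper's: factor $f=\tilde f\circ f_\sim$ with $\tilde f$ a fibration, paste horizontally, and apply (ix). Then in (iv), and in the reverse hypercartesian direction of (vii), you only need pullback squares whose horizontal maps are fibrations. These can be factored or extended strictly, because pulling back a fibration gives a fibration and a homotopy cartesian square.
\begin{itemize}
\item In (iv), the resulting base change squares stack vertically, and (iii) finishes.
\item In (vii), the pulled-back map over the weak equivalence is again a weak equivalence, and the base change of $\theta_2$ is opcartesian. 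The supercartesian case of (vii) then finishes.
\item The hypercartesian case of (viii) needs no squares at all. Regard $\theta_1$ as a cartesian morphism of $\hpC$; then $\theta_2$ is literally a base change of $\theta_1\circ\theta_2$, and (x) applies.
\end{itemize}
A smaller point: in (vi), Proposition~\ref{prop:commrelinvshriek} does not apply, because base change squares are mixed. Instead, in the construction of Proposition~\ref{prop:basechange} the vertical part $\beta$ satisfies $\phi'\beta=\delta\psi$ with $\delta$ invertible. So $\beta$ is vertical and cartesian, hence an isomorphism, and base change preserves opcartesianness. The remaining items match the paper's arguments.
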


\begin{rem}
Later in Proposition~\ref{prop:hypercartmorpreservation} 
we will establish a result concerning the 
preservation of hypercartesian morphisms under the functors
$F_\fw \colon \hpC^\dfop \to \hpD^\dfop$  
of Proposition~\ref{prop:fprimefw}.
\end{rem}

\begin{proof}[Proof of Proposition~\ref{prop:superandhypercartmorprops}]
(\ref{it:hypercartimpliessupercart}):
The claim follows by considering base change along the degenerate 
homotopy cartesian square where the horizontal maps are identity maps.

(\ref{it:supercartimpliescart}): 
The claim follows from the observation that 
for a morphism $\theta$ of $\hpC^\dfop$ covering $f\colon A\to B$,
the morphism $\phi\oslash \theta$ is equivalent to $\theta$
when $\phi \colon S_A \to S_B$ is the canonical cartesian morphism covering $f$
by using parts~(\ref{it:isosarecart}) and~(\ref{it:compiscart})
of  Proposition~\ref{prop:cartmorprops}.

(\ref{it:supercartcomp}):
Assume $\theta_1$ and $\theta_2$ are
composable supercartesian morphisms of $\hpC^\dfop$ 
covering $f_1$ and $f_2$, respectively.
Given a cartesian morphism $\phi$ of $\hpC$ covering $f_1\circ f_2$, we may factor
$\phi$ as a composite $\phi = \phi_1 \circ \phi_2$ where $\phi_1$ and $\phi_2$
are cartesian morphisms covering $f_1$ and $f_2$, respectively. Now 
the morphism
\[
	\phi \oslash (\theta_1 \circ \theta_2) 
	=
	(\phi_1\circ \phi_2) \oslash (\theta_1 \circ \theta_2) 
	=
	(\phi_1 \oslash \theta_1) \circ (\phi_2 \oslash \theta_2)
\]
is cartesian by Proposition~\ref{prop:cartmorprops}(\ref{it:compiscart}).

(\ref{it:hypercartiffopcart}): 
The implication 
(a) $\Rightarrow$ (b) follows from (\ref{it:hypercartimpliessupercart}) and
the implication
(b) $\Rightarrow$ (c) from (\ref{it:supercartimpliescart}).
Let us show that
(c) $\Leftrightarrow$ (d). 
Suppose $f\colon A \to B$ is a weak equivalence. 
We then have an adjoint equivalence 
$(f^\ast,f_!)$. Write $\eta$ for the unit of this adjoint equivalence.
In view of Remark~\ref{rk:dfopcartinterpretation}, 
for every $Y\in \ho(\calC_{/B})$ the morphism
$\kappa_Y \colon f^\ast Y \oto Y$ determined by 
$\eta \colon Y \to f_!f^\ast Y$ is cartesian. Moreover,
since $\eta$ is an equivalence,
the morphism $\kappa_Y$ is also opcartesian.
Suppose now $\theta \colon X \oto Y$ is a morphism covering $f$.
If $\theta$ is cartesian, by 
Proposition~\ref{prop:cartmorprops}(\ref{it:cartsourceiso})
$\theta$ is isomorphic to $\kappa_Y$ and hence is also opcartesian.
On the other hand, if $\theta$ is opcartesian, then 
making use of the vertical isomorphism $X \to f^\ast f_! X$ 
afforded by the counit of the adjoint equivalence $(f^\ast,f_!)$
and the analogue of 
Proposition~\ref{prop:cartmorprops}(\ref{it:cartsourceiso})
for opcartesian morphisms, we see that 
$\theta$ is
isomorphic to $\kappa_{f_!X}$, and hence is also cartesian.

In view of the equivalence (c) $\Leftrightarrow$ (d) now proven,
the implication (d) $\Rightarrow$ (b) follows from the 
observation that $\phi \oslash \theta$
is opcartesian when $\phi$ is cartesian and $\theta$ is opcartesian.
It remains to show the implication (d) $\Rightarrow$ (a).
Suppose $p\colon E\to B$ is a weak equivalence, 
and let $\kappa \colon Y\oto W$ be 
an opcartesian morphism covering $p$. 
Suppose the square below on the left 
is a base change square covering 
a homotopy cartesian square displayed on the right.
\begin{equation}
\label{sqs:bchc}
	\vcenter{\xymatrix{
    	X
    	\ar[r]^\phi_\cart
    	\ar[d]|{\circdec}_{\theta}
    	&
    	Y	
    	\ar[d]|{\circdec}^{\kappa}
    	\\
    	Z
    	\ar[r]^\psi_\cart
    	&
    	W
    }}
	\qquad\qquad\qquad
	\vcenter{\xymatrix{
		D
		\ar[r]^g
		\ar[d]_q
		&
		E
		\ar[d]^p
		\\
		A
		\ar[r]^f_{\phantom{\cart}}
		&
		B
	}} 
\end{equation}
From Remark~\ref{rk:mixedcommunmix} and 
Proposition~\ref{prop:commrelshriekinterpretation2},
we conclude that $\theta$ is also opcartesian.
As the square on the right is homotopy cartesian, the map 
$q$ must be a weak equivalence. Therefore the implication
(d) $\Rightarrow$ (b) already established shows that 
$\theta$ is supercartesian. 
Thus $\kappa$ is hypercartesian, as desired.

(\ref{it:isosarehypercart}): 
An isomorphism of $\hpC^\dfop$ covers a homeomorphism of $\calT$
and is therefore cartesian by 
Proposition~\ref{prop:cartmorprops}(\ref{it:isosarecart}).
Thus the claim follows from part~(\ref{it:hypercartiffopcart}) already proven.

(\ref{it:hypercartbc}):
The claim follows from the observation that 
the square obtained by stacking two base change
squares for morphisms of $\hpC^\dfop$ together
horizontally is again a base change square.

(\ref{it:wecancel2}):
By part~(\ref{it:hypercartiffopcart}), the opcartesian morphism $\theta_1$
is also a cartesian, supercartesian and hypercartesian morphism 
of $\hpC^\dfop$.
The forward implication now follows from 
Proposition~\ref{prop:cartmorprops}(\ref{it:compiscart})
and parts~(\ref{it:supercartcomp}) and (\ref{it:hypercartcomp})
of the present proposition. 
The reverse implication in the cartesian case follows from 
Proposition~\ref{prop:cartmorprops}(\ref{it:cartfactor}).
Notice that we may interpret the opcartesian morphism $\theta_1$
as an opcartesian morphism in $\hpC$, and that by 
Proposition~\ref{prop:wecartiffopcart} the morphism
$\theta_1$ is then also 
a cartesian morphism of $\hpC$.
By Remark~\ref{rk:mixedcommunmix},
we therefore have a base change square
\[\xymatrix{
	\bullet
	\ar@{=}[r]
	\ar[d]|(0.47){\circdec}_(0.47){\theta_2}
	&
	\bullet
	\ar[d]|(0.47){\circdec}^(0.47){\theta_1\circ \theta_2}
	\\ 
	\bullet
	\ar[r]^{\theta_1}_\cart
	&
	\bullet
}\]
and the reverse implication in the hypercartesian case 
follows from part~(\ref{it:hypercartbc})
already proven.

It remains to prove the reverse implication in the supercartesian case.
Write $f_1$ and $f_2$ for the morphisms of $\calT$ covered by $\theta_1$ 
and $\theta_2$, respectively. Suppose $\phi_2$ is a cartesian 
morphism in $\hpC$ covering $f_2$.
We may choose an opcartesian morphism $\phi_1$ in $\hpC$
covering $f_1$ such that the composite $\phi_1 \circ \phi_2$ is defined.
By Proposition~\ref{prop:wecartiffopcart},
the morphism $\phi_1$ is also cartesian in $\hpC$.
By Proposition~\ref{prop:cartmorprops}(\ref{it:compiscart}), 
the composite $\phi_1\circ\phi_2$ is cartesian, 
so by the assumption that $\theta_1\circ \theta_2$ is supercartesian,
the composite
\[
	(\phi_1 \oslash \theta_1) \circ (\phi_2 \oslash \theta_2)
	=
	(\phi_1\circ \phi_2) \oslash (\theta_1 \circ \theta_2) 
\]
is cartesian.
Since $\theta_1$ is supercartesian, the morphism
$\phi_1 \oslash \theta_1$ is cartesian, 
and hence $\phi_2 \oslash \theta_2$ is cartesian by
Proposition~\ref{prop:cartmorprops}(\ref{it:cartfactor}).
Thus $\theta_2$ is supercartesian.

(\ref{it:supercartbc}):
Consider squares as in \eqref{sqs:bchc} where 
the square on the right is homotopy cartesian,
the square on the left is a base change square covering the square on the right,
$\kappa$ is supercartesian, and $f$ and $g$ are weak equivalences.
By Proposition~\ref{prop:wecartiffopcart}, the morphisms $\phi$ and $\psi$
are also opcartesian, so by Remark~\ref{rk:mixedcommunmix}
we may interpret the square on the left as a 
commutative square in $\hpC^\dfop$. 
By part~(\ref{it:hypercartiffopcart}),
the map $\phi$, interpreted as a
morphism in  $\hpC^\dfop$,
is supercartesian, so the claim follows from parts~(\ref{it:supercartcomp})
and~(\ref{it:wecancel2}) already proven.

(\ref{it:hypercartpbres}):
Given a homotopy cartesian square as on the right in 
\eqref{sqs:bchc},  starting from a factorization of 
$f$ as a composite
$f = \tilde{f} \circ f_{\sim}$ 
where $\tilde{f}$ is a fibration and $f_{\sim}$
is a homotopy equivalence, we can decompose 
the square as
\[\vcenter{\xymatrix@!0@C=3.8em@R=1.7em{
	\mbox{} 
	&&
	\\
	D
	\ar `u[ur] `[rr]^g [rr]	
	\ar[r]^{g_{\sim}}
	\ar[dd]_q
	&
	\tilde{D}
	\ar[r]^{\tilde{g}}
	\ar[dd]
	&
	E
	\ar[dd]^p
	\\ 
	\\
	A
	\ar `d[dr] `[rr]^f [rr]	
	\ar[r]^{f_{\sim}}
	&
	\tilde{A}
	\ar[r]^{\tilde{f}}
	&
	B
	\\
	\mbox{}
	&&
}}\]
where the square on the right is a pullback square and
$g_{\sim}$ is chosen using the universal property of pullbacks
to make the diagram commutative.
As the outer rectangle  and the small square on the right 
are both homotopy cartesian and $f_{\sim}$ is 
a weak equivalence, it follows that $g_{\sim}$ is also 
a weak equivalence; see \cite[Prop.~13.3.14]{Hirschhorn}.
Moreover, the small square on the left is 
also homotopy cartesian; see \cite[Prop.~13.3.15]{Hirschhorn}.
The claim now follows from part~(\ref{it:supercartbc})
already proven.

(\ref{it:hypercartcomp}):
Suppose $\kappa_1$ and $\kappa_2$ are composable hypercartesian morphisms 
covering $p_1 \colon E' \to B$ and $p_2 \colon E \to E'$, 
respectively. In view of part~(\ref{it:hypercartpbres})
already proven, it suffices to show that the base change
of $\kappa = \kappa_1 \circ \kappa_2$ along any pullback
square whose horizontal morphisms are fibrations is 
supercartesian. Suppose the square on the right 
in \eqref{sqs:bchc} is such a square and that the square on the 
left in \eqref{sqs:bchc} is a base change square covering it.
The two squares then decompose as indicated below
\[
	\vcenter{\xymatrix@!0@C=1.9em@R=2.6em{
	    &
    	X
    	\ar[rr]^\phi_\cart
    	\ar `l[dl] `[dd]|{\circdec}_{\theta} [dd]
		\ar[d]|(0.47){\circdec}_(0.47){\theta_2}
    	&&
    	Y
		\ar[d]|(0.47){\circdec}^(0.47){\kappa_2}
		\ar `r[dr] `[dd]|{\circdec}^{\kappa} [dd]
    	\\
		&
    	X
    	\ar[rr]_{\cart}%
		\ar[d]|(0.47){\circdec}_(0.47){\theta_1}
    	&&
    	Y'
		\ar[d]|(0.47){\circdec}^(0.47){\kappa_1}
		&
		\\
		&
    	Z
    	\ar[rr]^\psi_\cart
    	&&
    	W
	}}
	\qquad\qquad\qquad
	\vcenter{\xymatrix@!0@C=1.9em@R=2.6em{
	    &
    	D
    	\ar[rr]^g
    	\ar `l[dl] `[dd]_q [dd]
		\ar[d]
    	&&
    	E
		\ar[d]^(0.47){p_2}
		\ar `r[dr] `[dd]^p [dd]
    	\\
		&
    	D'
    	\ar[rr]
		\ar[d]
    	&&
    	E'
		\ar[d]^(0.47){p_1}
		&
		\\
		&
    	A
    	\ar[rr]^f_{\phantom{\cart}}
    	&&
    	B
	}}
\]
where the two small squares on the right are pullback squares
whose horizontal morphisms are fibrations and where
the two small squares on the left are base change squares
covering the corresponding squares on the right.
The claim now follows from part (\ref{it:supercartcomp}).

(\ref{it:wecancel}):
The forward implication follows from (\ref{it:hypercartiffopcart})
together with Proposition~\ref{prop:cartmorprops}(\ref{it:compiscart})
and parts~(\ref{it:supercartcomp}) and (\ref{it:hypercartcomp})
of the present proposition.
To prove the reverse implication, write 
$f_1$ %
and 
$f_2$ %
for the maps covered by $\theta_1$
and $\theta_2$, respectively.
The reverse implication follows in the cartesian case 
from the characterization of cartesian morphisms given in 
Proposition~\ref{prop:dfopcartinterpretation} 
by using the fact that the assumption that $f_2$ is a weak equivalence 
ensures that the functor 
$(f_2)_!$ %
is an equivalence of categories.
To prove the reverse implication in the supercartesian case,
assume $\phi_1$ is a cartesian morphism
of $\hpC$ covering $f_1$. Pick a cartesian morphism $\phi_2$
of $\hpC$ covering $f_2$. Since $\theta_1 \circ \theta_2$ 
is supercartesian, the morphism
\[
	(\phi_1 \oslash \theta_1) \circ (\phi_2 \oslash \theta_2)
	=
	(\phi_1 \circ \phi_2) \oslash (\theta_1 \circ \theta_2)
\]
is cartesian. As $\phi_2 \oslash \theta_2$ is opcartesian, it follows that
$\phi_1 \oslash \theta_1$ is cartesian by the cartesian case already
proven, as desired. 
Finally,
in view of part~(\ref{it:hypercartpbres})
already proven,
to prove the 
reverse implication in the hypercartesian case,
it suffices to show that the base change of $\theta_1$
along any pullback square whose horizontal morphism
are fibrations is supercartesian. 
Suppose the solid square on the right below is such a pullback
square, and assume that the solid square on the right
is a base change square covering it.
\[
    \vcenter{\xymatrix@!0@C=3.7em@R=2.6em{
       	\bullet
		\ar@{-->}[r]^{\cart}
		\ar@{-->}[d]|(0.5){\circdec}_(0.5){\theta'_2}
		&
		\bullet
		\ar@{-->}[d]|(0.5){\circdec}^(0.5){\theta_2}
		\\
    	\bullet 
    	\ar[r]^\cart
    	\ar[d]|(0.5){\circdec}_(0.5){\theta'_1}
    	&
    	\bullet
    	\ar[d]|(0.5){\circdec}^(0.5){\theta_1}
    	\\
    	\bullet
    	\ar[r]^\cart
    	&
    	\bullet
    }}
    \qquad\qquad\qquad
    \vcenter{\xymatrix@!0@C=3.7em@R=2.6em{
    	\bullet
		\ar@{-->}[r]^{\phantom{\cart}}
		\ar@{-->}[d]_{f'_2}
		&
		\bullet
		\ar@{-->}[d]^{f_2}
		\\
    	\bullet 
    	\ar[r]
    	\ar[d]_{f'_1}
    	&
    	\bullet
    	\ar[d]^{f_1}
    	\\
    	\bullet
    	\ar[r]
    	&
    	\bullet
    }}
\]
Let the dashed square on the right be a pullback square,
and let the dashed square on the left be a base change
square covering it. Notice that then the map $f'_2$
is a weak equivalence (see \cite[Prop.~13.3.14]{Hirschhorn}),
and the map $\theta'_2$ is opcartesian 
(see Remark~\ref{rk:mixedcommunmix} and 
Proposition~\ref{prop:commrelshriekinterpretation2}).
By the assumption that $\theta_1\circ \theta_2$ is 
hypercartesian, the composite $\theta'_1 \circ \theta'_2$
is supercartesian, so the claim follows
from the supercartesian case already proven.

(\ref{it:cartoslashhypercart}):
In the case of supercartesian morphisms, the claim follows from 
the associativity isomorphism
\eqref{eq:oslashassocunit} together with 
parts~(\ref{it:isosarecart})
and~(\ref{it:compiscart})
if Proposition~\ref{prop:cartmorprops}.
Let us now consider the hypercartesian case. 
Suppose the square on the left hand side below
is a base change square covering 
a homotopy cartesian square $S$.
\begin{equation}
	\left(\vcenter{\xymatrix@!0@C=4em@R=3.6em{
    	\bullet
	   	\ar[r]^\cart
    	\ar[d]|{\circdec}_{\rho}
    	&
    	\bullet
    	\ar[d]|{\circdec}^{\phi \oslash \theta}
    	\\
    	\bullet
		\ar[r]_{\phantom{\cart}}^\cart
    	&
    	\bullet
    }}\right)
    \isom
    \left(\vcenter{\xymatrix@!0@C=4em@R=3.6em{
		\bullet
	   	\ar[r]^\cart
		\ar[d]_\psi^\cart
		&
		\bullet
		\ar[d]^\phi_\cart
		\\
		\bullet
		\ar[r]_{\phantom{\cart}}^\cart
		&
		\bullet
	}}\right)	
	\left(\vcenter{\xymatrix@!0@C=4em@R=3.6em{
    	\bullet
	   	\ar[r]^\cart
    	\ar[d]|{\circdec}_{\nu}
    	&
    	\bullet
    	\ar[d]|{\circdec}^{\theta}
    	\\
    	\bullet
		\ar[r]_{\phantom{\cart}}^\cart
    	&
    	\bullet
    }}\right)
\end{equation}
It is readily verified that, as indicated, 
the square on the left hand side factors,
up to isomorphism, as a product of 
a commutative square of cartesian arrows in $\hpC$
covering $S$ and a base change square covering $S$.
Here the product of the
squares on the right is given by $\tensor_\internal$ on 
the horizontal morphisms and by $\oslash$ on the
vertical morphisms.
By the supercartesian case already proven,
the product $\psi \oslash \nu$ is supercartesian,
so the morphism $\rho$ is supercartesian by 
parts~(\ref{it:isosarehypercart}), 
(\ref{it:hypercartimpliessupercart}),
and~(\ref{it:supercartcomp}).

(\ref{it:hcexttensorhc}): 
The claim follows from part~(\ref{it:hypercartcomp})
by factoring $\theta_1 \exttensor \theta_2$
as $(\theta_1\exttensor\id) \circ (\id\exttensor\theta_2)$
and observing that 
and~(\ref{it:hypercartbc}) 
and~(\ref{it:cartoslashhypercart}) imply 
that the factors 
$(\theta_1\exttensor\id)$ and $(\id\exttensor\theta_2)$
are hypercartesian.
\end{proof}

We also have the following result; 
compare with Propositions~\ref{prop:commrelshriekinterpretation2} 
and \ref{prop:commrelinvshriek}.

\begin{prop}
\label{prop:cartandhypercartrel}
Suppose 
\[\xymatrix{
	X
	\ar[r]^\phi
	\ar[d]|\circdec_\theta
	&
	Y
	\ar[d]|\circdec^\kappa
	\\
	Z
	\ar[r]^\psi
	&
	W
}\]
is a commutative square of morphisms of $\hpC$ and $\hpC^\dfop$
covering a homotopy cartesian square in $\calT$.
Assume moreover that $\psi$ is cartesian and $\kappa$ is hypercartesian.
Then $\phi$ is cartesian if and only if $\theta$ is hypercartesian.
\end{prop}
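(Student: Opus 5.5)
The forward implication is a uniqueness statement, and the reverse implication compares $\theta$ with the base change of $\kappa$ along a chosen cartesian lift. Throughout, write the homotopy cartesian square covered by the given square as in Proposition~\ref{prop:basechange}, with $\phi$ covering $g$, $\psi$ covering $f$, $\theta$ covering $q$ and $\kappa$ covering $p$.

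\emph{Forward implication.} Suppose $\phi$ is cartesian. Then $\phi$ and $\psi$ are both cartesian, and the square commutes in the sense of Definition~\ref{def:mixedcomm}. By the uniqueness part of Proposition~\ref{prop:basechange}, $\theta$ is exactly the morphism obtained from $\kappa$ by base change with respect to $\phi$ and $\psi$. Hence $\theta$ is hypercartesian by Proposition~\ref{prop:superandhypercartmorprops}(\ref{it:hypercartbc}).

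\emph{Reverse implication.} Suppose $\theta$ is hypercartesian.
\begin{enumerate}
\item Choose a cartesian morphism $\phi'\colon X'\to Y$ of $\hpC$ covering $g$.
\item Let $\theta'\colon X'\oto Z$ be the base change of $\kappa$ along $\phi'$ and $\psi$, given by Proposition~\ref{prop:basechange}. By Proposition~\ref{prop:superandhypercartmorprops}(\ref{it:hypercartbc}), $\theta'$ is hypercartesian.
\item Now $\theta$ and $\theta'$ are both cartesian in $\hpC^\dfop$, by parts (\ref{it:hypercartimpliessupercart}) and (\ref{it:supercartimpliescart}) of that proposition. They cover the same map $q$ and have the same target $Z$. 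Proposition~\ref{prop:cartmorprops}(\ref{it:cartsourceiso}) therefore gives a vertical isomorphism $\iota\colon X\oto X'$ in $\hpC^\dfop$ with $\theta=\theta'\circ\iota$.
\item A vertical isomorphism of $\hpC^\dfop$ is opcartesian and covers an identity map. Under the identification $\hpC^\opcart\isom(\hpC^\dfop)^\opcart$ it corresponds to a vertical isomorphism $j\colon X\to X'$ of $\hpC$.
\end{enumerate}

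The main step is to show that $\phi=\phi'\circ j$. For this I would use Proposition~\ref{prop:dfopcartmixedprop} applied to the cartesian morphism $\theta$. Given the bottom morphism $\psi$ and the right-hand morphism $\kappa$, it says there is a \emph{unique} morphism $X\to Y$ of $\hpC$ covering $g$ that makes the mixed square commute. The morphism $\phi$ is one such morphism, by hypothesis. It remains to see that $\phi'\circ j$ is another.

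The square formed by $\phi'\circ j$, $\theta=\theta'\circ\iota$, $\kappa$ and $\psi$ is obtained by pasting the base change square for $\theta'$ on top of the square with horizontal sides $j$ and $\id$ and vertical sides $\iota$ and $\id$. The latter square commutes in the ordinary sense by Remark~\ref{rk:mixedcommunmix}, since $j$ and $\iota$ correspond to each other. I expect this pasting of mixed squares to be the one genuinely fiddly point. It should follow directly from Definition~\ref{def:mixedcomm}: compose the witnessing zigzags and use that $\iota$ is represented by $[X\xto{j}X'\xot{\id}X']$.

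Granting this, uniqueness gives $\phi=\phi'\circ j$. This composite is cartesian by Proposition~\ref{prop:cartmorprops}(\ref{it:isosarecart}) and (\ref{it:compiscart}), so $\phi$ is cartesian.
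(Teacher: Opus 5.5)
Your proof is correct and follows essentially the same route as the paper. The forward direction is Proposition~\ref{prop:superandhypercartmorprops}(\ref{it:hypercartbc}). For the converse, you compare $\theta$ with the base change $\theta'$ along a chosen cartesian $\phi'$, extract a vertical isomorphism, and use the uniqueness in Proposition~\ref{prop:dfopcartmixedprop} to get $\phi=\phi'\circ j$.

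The pasting step you flag is indeed routine. Composing with $\iota=[X\xto{j}X'\xot{\id}X']$ only precomposes the forward leg of the zigzag representing $\theta'$ with $j$. Hence the morphism witnessing commutativity of the base change square also witnesses commutativity for $\phi'\circ j$.
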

\begin{proof}
If $\phi$ is cartesian, $\theta$ is hypercartesian by 
Proposition~\ref{prop:superandhypercartmorprops}(\ref{it:hypercartbc}).
Assume now that $\theta$ is hypercartesian.
Pick a cartesian morphism $\phi' \colon X' \to Y$
covering the same map as $\phi$ does,
and let $\theta' \colon X' \oto Z$ be the 
base change of $\kappa$
along our homotopy cartesian square
with respect to $\psi$ and $\phi'$.
Then $\theta'$ is hypercartesian by 
Proposition~\ref{prop:superandhypercartmorprops}(\ref{it:hypercartbc}).
By uniqueness of cartesian morphisms, we 
can find a vertical equivalence $\alpha \colon X' \xoto{\,\homot\,} X$
in $\hpC^\dfop$ such that $\theta' = \theta \circ \alpha$.
The equivalence $\alpha$ amounts to a
vertical equivalence $\hat{\alpha} \colon X \xto{\,\homot\,} X'$
in $\hpC$. It is easily checked that the composite $\phi'\circ\hat{\alpha}$
fits in place of the dashed morphism to make the square 
\[\xymatrix{
	X
	\ar@{-->}[r]
	\ar[d]|\circdec_\theta
	&
	Y
	\ar[d]|\circdec^\kappa
	\\
	Z
	\ar[r]^\psi
	&
	W
}\]
commutative. Since $\phi$ also has this property,
Proposition~\ref{prop:dfopcartmixedprop} implies that
$\phi = \phi'\circ\hat{\alpha}$. Thus $\phi$ is cartesian, as claimed.
\end{proof}

\subsubsection{Dualizing and strongly dualizing objects}
\label{subsubsec:dualizingobjects}

We conclude the subsection by introducing dualizing and 
strongly dualizing objects, which 
allow us to interpret supercartesian and hypercartesian
morphisms covering a map $f\colon A \to B$ and having target $S_B$ 
in terms of the existence and properties of a left adjoint 
$f^\invshriek$ for the base change
functor $f_!$.

\begin{defn}[Dualizing objects]
\label{def:dualizingobject}
A \emph{dualizing object} (with respect to the symmetric monoidal
presentable $\infty$--category $\calC$) for 
a continous map $p\colon E \to B$ is an
object $\omega_p\in \ho(\calC_{/E})$ together with a map 
$\tilde{\eta} \colon S_B \to p_!\omega_p$ such that the functor
$p^\invshriek \colon \ho(\calC_{/B}) \to \ho(\calC_{/E})$
given by
\begin{equation}
\label{eq:pinvshriekformula}
	p^\invshriek X = \omega_p \tensor_E p^\ast X
\end{equation}
is a left adjoint to 
$p_! \colon \ho(\calC_{/E}) \to \ho(\calC_{/B})$, 
with the unit of the adjunction given by the composite
\begin{equation}
\label{eq:invshriekunit}
	X 
	\xto{\ \homot\ }
	S_B\tensor_B X 
	\xto{\ \tilde{\eta} \tensor_B \id\ }
	p_!\omega_p \tensor X
	\xto{\ \homot\ }
	p_!(\omega_p \tensor p^\ast X)
	= 
	p_!p^\invshriek X 
\end{equation}
where the first equivalence is given 
by the monoidal unit constraint  and the second one by
the projection formula.
A dualizing object $(\omega_p, \tilde{\eta})$ for $p$
is \emph{strong} if for every homotopy cartesian square
\begin{equation}
\label{sq:hcsq}
\vcenter{\xymatrix{
	D
	\ar[r]^g
	\ar[d]_q
	& 
	E
	\ar[d]^p
	\\
	A
	\ar[r]^f
	&
	B
}}
\end{equation}
the composite
\[
	\tilde{\eta}_q
	\colon
	S_A 
	\xto{\ \homot\ } 
	f^\ast S_B
	\xto{\ f^\ast \tilde{\eta}\ }
	f^\ast p_!\omega_p
	\xto{\ \homot\ }
	q_! g^\ast \omega_p
\]
exhibits $g^\ast \omega_p$ as a dualizing object for $q$.
Here the first equivalence is again the monoidal unit constraint
while the second equivalence is an instance of 
\eqref{eq:commrelshriek}.
We say that the pair $(g^\ast\omega_p,\tilde{\eta}_q)$
constructed above is obtained by \emph{base change} 
from $(\omega_p,\tilde{\eta})$
along the homotopy cartesian square \eqref{sq:hcsq}.
\end{defn}

Comparing definitions, we now have
\begin{prop}
\label{prop:dualizingobjectsandcartesianmorphisms}
Let $p\colon E \to B$ be a map in $\calT$.
Then the map sending a morphism 
$\theta\colon \omega_p \oto S_B$ covering $p$
to the pair $(\omega_p,\tilde{\eta})$ where 
$\tilde{\eta} \colon S_B \to p_!\omega_p$ is the 
morphism determined by $\theta$ provides
a bijection between the set of 
supercartesian (res.\ hypercartesian)
morphisms of $\hpC^\dfop$ which cover $p$
and have target $S_B$ and the set of
dualizing objects (resp.\ strong dualizing objects)
for $p$. \qed
\end{prop}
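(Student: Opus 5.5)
The plan is to unwind both notions through Proposition~\ref{prop:dfopcartinterpretation} and its consequence Remark~\ref{rk:dfopcartinterpretation}, and then match the definitions term by term. By Definition~\ref{def:dfopmordata}, morphisms $\theta\colon \omega_p\oto S_B$ covering $p$ correspond bijectively to vertical morphisms $\tilde\eta\colon S_B\to p_!\omega_p$. So it suffices to show the following for a fixed pair $(\omega_p,\tilde\eta)$: the corresponding $\theta$ is supercartesian (resp.\ hypercartesian) exactly when $(\omega_p,\tilde\eta)$ is a dualizing (resp.\ strong dualizing) object.

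For the supercartesian case, take $X\in\ho(\calC_{/B})$ and let $\phi_X\colon p^\ast X\to X$ be the canonical cartesian morphism covering $p$. Every cartesian morphism covering $p$ is isomorphic to some $\phi_X$ by Proposition~\ref{prop:cartmorprops}(\ref{it:cartsourceiso}), and cartesianness is stable under composition with isomorphisms. Hence $\theta$ is supercartesian iff $\phi_X\oslash\theta\colon p^\ast X\tensor_E\omega_p\oto X\tensor_B S_B$ is cartesian for all $X$.

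By Definition~\ref{def:oslash}, the vertical morphism determined by $\phi_X\oslash\theta$ is
\[
X\tensor_B S_B\xto{\id\tensor\tilde\eta}X\tensor_B p_!\omega_p\xto{\homot}p_!(p^\ast X\tensor_E\omega_p),
\]
where the last map is the projection formula; this identification uses Proposition~\ref{prop:reformulations}(\ref{it:bc-projformula-ref}) to recognise $\phi_X\tensor_\internal(\text{opcart})$ as the canonical opcartesian arrow. Precomposing with the unit constraint and using symmetry, this is exactly the composite \eqref{eq:invshriekunit}. Proposition~\ref{prop:dfopcartinterpretation} then says that $\phi_X\oslash\theta$ is cartesian iff this map is universal from $X$ to $p_!$. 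By Remark~\ref{rk:dfopcartinterpretation}, that universality holding for all $X$ is precisely the statement that $X\mapsto\omega_p\tensor_E p^\ast X$, with unit \eqref{eq:invshriekunit}, is left adjoint to $p_!$. The main obstacle is checking that the identification of $\phi_X\oslash\theta$'s vertical map with \eqref{eq:invshriekunit} is compatible with the various coherence isomorphisms; it is routine but careful bookkeeping.

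For the hypercartesian case, take a homotopy cartesian square \eqref{sq:hcsq}. The base change $\theta_q\colon g^\ast\omega_p\oto S_A$ of $\theta$ is formed, as in Proposition~\ref{prop:basechange}, along the cartesian morphisms $g^\ast\omega_p\to\omega_p$ and $S_A\to S_B$; the latter is cartesian by Proposition~\ref{prop:reformulations}(\ref{it:bc-unit-ref}). I would compute the vertical morphism determined by $\theta_q$ by tracing the construction in Proposition~\ref{prop:basechange}: choose the cartesian lift of $p_!\omega_p$ along $f$ and use the description of the opcartesian morphism $g^\ast\omega_p\to f^\ast p_!\omega_p$ given by \eqref{eq:commrelshriek}. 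The result is that this vertical morphism equals $\tilde\eta_q$. So base changes of $\theta$ along squares with target $S_B$ correspond exactly to base changes of $(\omega_p,\tilde\eta)$.

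It remains to handle base changes of $\theta$ along other cartesian morphisms $\psi\colon Z\to S_B$ over $f$. Any such $\psi$ is isomorphic to the canonical $S_A\to S_B$, by Proposition~\ref{prop:cartmorprops}(\ref{it:cartsourceiso}) together with \eqref{eq:bc-unit}, and the uniqueness in Proposition~\ref{prop:basechange} shows the resulting base change changes only by an isomorphism. Supercartesianness is invariant under composing with isomorphisms, by Proposition~\ref{prop:superandhypercartmorprops}(\ref{it:isosarehypercart}) and (\ref{it:supercartcomp}). Therefore $\theta$ is hypercartesian iff every $\theta_q$ is supercartesian. By the first case, that holds iff every $(g^\ast\omega_p,\tilde\eta_q)$ is dualizing, which is the definition of a strong dualizing object.
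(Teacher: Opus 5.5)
Your proposal is correct and is essentially the paper's own argument: the paper gives no proof beyond ``comparing definitions'', and you carry out that comparison. You handle the supercartesian case by applying Proposition~\ref{prop:dfopcartinterpretation} to $\phi_X\oslash\theta$, and the hypercartesian case by identifying the vertical morphism of the base change with $\tilde{\eta}_q$ via \eqref{eq:commrelshriek}. The only point you leave implicit is that changing the cartesian morphism over $g$, and not just the one over $f$, alters the base change only by isomorphisms, and the same argument covers it.
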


Later, in Remark~\ref{rk:kleindualizingspectrum},
we will point out a connection between strong
dualizing objects in the sense of 
Definition~\ref{def:dualizingobject}
and the dualizing spectra defined by 
Klein~\cite{KleinDualizingSpectrum}.
Moreover, we will elaborate further 
on the implications of the existence of supercartesian 
and hypercartesian morphisms for base change functors
in Section~\ref{subsubsec:superhypercartandbasechangefunctors}.

\begin{prop}
\label{prop:dualizingobjectfwdual}
Suppose $\omega_p \in \ho(\calC_{/E})$ 
is a dualizing object for a map $p\colon E\to B$.
Then $p_! S_E$ and $p_! \omega_p$
are dual objects in $(\ho(\calC_{/B}),\tensor_B)$.
\end{prop}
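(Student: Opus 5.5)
We identify the internal hom $F_B(p_!S_E, Y)$ for every $Y\in\ho(\calC_{/B})$ with $p_!\omega_p\tensor_B Y$, via a natural isomorphism induced by a candidate coevaluation map. Recall the standard criterion: objects $D$ and $M$ of a closed symmetric monoidal category are dual precisely when there is a map $\eta\colon S\to D\tensor M$ such that, for every $Y$, the composite
\[
	\Hom(M\tensor T, Y) \longto \Hom(T, D\tensor Y),
	\qquad
	\sigma \longmapsto (\id_D\tensor\sigma)\circ(\eta\tensor \id_T),
\]
is a bijection, naturally in $T$ and $Y$. (Equivalently, $D\tensor(-)$ is right adjoint to $M\tensor(-)$ with unit induced by $\eta$.) The plan is therefore to take $M = p_!S_E$ and $D = p_!\omega_p$, with coevaluation
\[
	S_B \xto{\ \tilde\eta\ } p_!\omega_p \homot p_!(\omega_p\tensor_E S_E)\longto p_!\omega_p\tensor_B p_!S_E .
\]
The last map is the lax comonoidality of $p_!$. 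It is the composite $p_!(\omega_p\tensor_E S_E)\to p_!(\omega_p\tensor_E p^\ast p_!S_E)\homot p_!\omega_p\tensor_B p_!S_E$, built from the unit of $(p_!,p^\ast)$ followed by the projection formula \eqref{eq:bc-projformula}.

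To check the bijection, we compute both hom-sets through the adjunctions available. On one side, the projection formula and the $(p_!,p^\ast)$ adjunction give
\[
	\Hom_B(p_!S_E\tensor_B T, Y)\isom \Hom_B(p_!p^\ast T, Y)\isom\Hom_E(p^\ast T, p^\ast Y).
\]
On the other side, the projection formula together with the dualizing adjunction $p^\invshriek\dashv p_!$ of Definition~\ref{def:dualizingobject} give
\[
	\Hom_B(T, p_!\omega_p\tensor_B Y)\isom\Hom_B(T, p_!(\omega_p\tensor_E p^\ast Y)) = \Hom_B(T,p_!p^\invshriek Y).
\]
We cannot apply $p^\invshriek\dashv p_!$ directly here, since $T$ sits in the source. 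The key move is instead to rewrite
\[
	\Hom_E(p^\ast T,p^\ast Y)\isom\Hom_B(p^\invshriek T \ldots)
\]
This does not work directly either, so we use a different route: we compare both sides with $\Hom_E(\omega_p\tensor_E p^\ast T,\ \omega_p\tensor_E p^\ast Y)$. That comparison is not obviously a bijection, so we switch to a cleaner strategy.

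The cleaner strategy is to regard $p_!\omega_p\tensor_B(-) \homot p_!p^\invshriek$ and $p_!S_E\tensor_B(-)\homot p_!p^\ast$, both by the projection formula. Since $p^\invshriek\dashv p_!$ and $p_!\dashv p^\ast$, the composite $p_!p^\invshriek$ has $p_!p^\ast$ as its right adjoint? No: the adjoints compose the other way, giving $p_!p^\invshriek\dashv p_!p^\ast$? Recheck: $p^\invshriek\dashv p_!$ and $p_!\dashv p^\ast$ give $p_!p^\invshriek$... The correct composition is that the left adjoint of $p_!p^\ast$ is $p_!p^\invshriek$. Indeed, $\Hom(p_!p^\invshriek T, Y)$ is not obviously equal to $\Hom(T,p_!p^\ast Y)$. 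What does hold is $\Hom(p_!p^\ast T, Y)\isom\Hom(p^\ast T,p^\ast Y)$. Computing carefully, we get
\[
	\Hom(T, p_!p^\invshriek Y)\quad\text{versus}\quad\Hom(p_!p^\ast T,Y).
\]
The adjunction we actually have is $p^\invshriek p^\ast$? This needs the right bookkeeping. The intended statement is that $p_!p^\ast\dashv p_!p^\invshriek$. This is plausible because $\Hom(p_!p^\ast T, Y)=\Hom(p^\ast T,p^\ast Y)$, while $\Hom(T,p_!p^\invshriek Y)$ does not simplify by the given adjunctions. Hence the main obstacle is producing this adjunction. The approach we would try first is to apply the dualizing adjunction after base change along $p$: consider the homotopy cartesian square formed by $E\times^h_BE$, using \eqref{eq:commrelshriek} to write $p^\ast p_!\homot q_!g^\ast$. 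This step is where strong dualizability might seem to be needed. However, the statement assumes only that $\omega_p$ is dualizing, so we instead expect a direct argument via triangle identities.

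Concretely, we define the evaluation
\[
	p_!S_E\tensor_B p_!\omega_p \homot p_!(p^\ast p_!S_E\tensor_E \omega_p)\longto p_!(S_E\tensor_E\omega_p)\homot p_!\omega_p \longto S_B .
\]
The middle arrow here is not available, since it would require a map $p^\ast p_!S_E\to S_E$, which is the wrong direction. So we instead use the counit $\epsilon\colon p^\invshriek p_!\to\id$. This gives the evaluation
\[
	p_!(\omega_p\tensor_E p^\ast p_!S_E)=p_!p^\invshriek p_!S_E \xto{\ p_!\epsilon\ } p_!S_E \longto S_B,
\]
where the final map is $p_!$ of the canonical map $S_E\to p^\ast S_B$, transposed through $p_!\dashv p^\ast$. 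We then verify the two triangle identities. Each reduces, via the projection formula and the naturality of \eqref{eq:projformulaexpanded}, to a triangle identity of either $p^\invshriek\dashv p_!$ or $p_!\dashv p^\ast$, using that the unit of $p^\invshriek\dashv p_!$ has the form \eqref{eq:invshriekunit}. The compatibility of the projection-formula isomorphisms with units and counits is the main technical obstacle, and we expect it to consume most of the work.
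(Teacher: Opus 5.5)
Your final plan (write down a coevaluation and an evaluation and check the triangle identities) can be rescued, but as it stands the proposal does not prove the statement. The only substantive verification is postponed. Worse, the middle of the argument rests on a false claim that hides the one idea you need. You write that $\Hom_B(p_!p^\invshriek T,Y)$ is ``not obviously equal'' to $\Hom_B(T,p_!p^\ast Y)$, and conclude that the main obstacle is to produce an adjunction $p_!p^\ast\dashv p_!p^\invshriek$. But composing the two adjunctions you already have gives at once
\[
	\Hom_B(p_!p^\invshriek T,Y)\isom\Hom_E(p^\invshriek T,p^\ast Y)\isom\Hom_B(T,p_!p^\ast Y),
\]
so $p_!p^\invshriek\dashv p_!p^\ast$. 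By the projection formula these functors are $p_!\omega_p\tensor_B(-)$ and $p_!S_E\tensor_B(-)$. The resulting natural bijection $\Hom_B(p_!\omega_p\tensor_B X,Y)\isom\Hom_B(X,p_!S_E\tensor_B Y)$ is the paper's entire proof. You got stuck because your criterion tensored $p_!S_E$ on the source side. Duality in a symmetric monoidal category is symmetric, so you could have put $p_!\omega_p$ there instead, where the available adjunction points the right way. No base change along a homotopy pullback and no strong dualizability are involved.

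The $\eta$ and $\varepsilon$ you end up with are, up to the symmetry isomorphism of $\tensor_B$, exactly the unit and counit at $S_B$ of this composite adjunction. So your deferred triangle identities amount to checking that the composite adjunction is compatible with $\tensor_B$. Concretely, its unit and counit at an arbitrary $T$ must be obtained from those at $S_B$ by tensoring with $T$, under the projection-formula isomorphisms. This check uses two inputs:
\begin{itemize}
\item the particular form \eqref{eq:invshriekunit} of the unit in Definition~\ref{def:dualizingobject};
\item the fact that \eqref{eq:projformulaexpanded} is built from the unit and counit of $(p_!,p^\ast)$.
\end{itemize}
It is a formal mate computation rather than the main obstacle. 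The paper leaves this compatibility implicit, so your instinct to check it is sound. But the proposal only asserts that it ``reduces to'' the triangle identities, after misidentifying which adjunction is available, so the argument has not yet been supplied.
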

\begin{proof}
Suppose  $X, Y \in \ho(\calC_{/B})$.
We have natural equivalences
\[
	p_! \omega_p \tensor_B X 
	\homot 
	p_! (\omega_p \tensor_E p^\ast X) 
	= 
	p_! p^\invshriek X
\]
and
\[
	p_! S_E \tensor_B Y
	\homot 
	p_!(S_E \tensor_E p^\ast Y)
	\homot
	p_! p^\ast Y,
\]
so from the $(p^\invshriek,p_!)$ and $(p_!, p^\ast)$ adjunctions
we obtain a natural bijection between morphism
\[
	p_! \omega_p \tensor_B X \longto Y
\]
and morphisms
\[
	X \longto p_! S_E \tensor_B Y
\]
in $\ho(\calC_{/B})$,
giving the claim. 
\end{proof}

See also Proposition~\ref{prop:hypercartandfwduality}.

\begin{defn}
Call a space $B$ \emph{dualizable} in $\calC$ if 
the object $t_\calC(B,r_B) = r^B_! S_B$ is dualizable in $\ho(\calC)$,
in which situation we call the dual of $r^B_! S_B$ the \emph{dual} of $B$.
\end{defn}

\begin{example}
By Remark~\ref{rk:tcalcrecognition},
a space $B$ is dualizable in $\Spectra$ if $\suspension^\infty_+ B$ is dualizable
in $\ho(\Spectra)$.
\end{example}

\begin{rem}
By Proposition~\ref{prop:dualizingobjectfwdual}, if a map $r\colon B \to \pt$
admits a dualizing object $\omega_r \in \ho(\calC_{/B})$, 
then $B$ is dualizable with dual $r_! \omega_r \in \ho(\calC)$.
We may thus think of the dualizing object $\omega_r$ as a kind of 
fibrewise refinement of the dual of $B$.
\end{rem}

\begin{example}
Combining Example~\ref{ex:cwdualityinspectra} with Theorem~\ref{thm:hypercartdata},
we see that for any closed smooth manifold $M$,
the object $S^{-\tau_M} \in \ho(\Spectra_{/M})$
is a strong dualizing object for $r \colon M \to \pt$,
refining the usual Atiyah duality stating that 
$\suspension^\infty_+ M$ is dualizable in $\ho(\Spectra)$
with dual $M^{-\tau_M}$. (See Definitions~\ref{def:sxi} 
and \ref{def:thomspectrumofvirtualbundle}
for the definitions of $S^{-\tau_M}$ and $M^{-\tau_M}$.)
\end{example}

\begin{example}
Let $BG$ be a connected $d$--dimensional $\ell$--compact group 
which is semisimple (see Definition~\ref{def:semisimple}),
and write $G = \loops BG$.
Combining Example~\ref{ex:cwdualityellcompactgroups}
with Theorem~\ref{thm:hypercartdata},
we see that 
$\suspension_G^{-d} S_{G,\ell} \in \ho(\Spectra^\ell_{/G})$
is a strong dualizing object for the map $G \to \pt$,
refining the duality between $\suspension_+^\infty G$
and $\suspension_+^{\infty-d} G$
in $\ho(\Spectra^\ell)$
\cite[Prop.~22 and Cor.~23]{bauer04}.
\end{example}

\begin{cor}
\label{cor:dualizingobjectdualizablefibres}
Suppose $p\colon E \to B$ admits a dualizing object with respect to $\calC$.
Then all homotopy fibres of $p$ are dualizable in $\calC$.
\end{cor}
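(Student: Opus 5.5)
Let $F$ be a homotopy fibre of $p$ over a point $b\in B$. The plan is to realize $F$ through a homotopy cartesian square and transport the dualizing object along it.

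Take the homotopy cartesian square
\[\xymatrix{
	F
	\ar[r]^g
	\ar[d]_q
	&
	E
	\ar[d]^p
	\\
	\pt
	\ar[r]^{b}
	&
	B
}\]
with $q = r_F$ the unique map. If the given dualizing object for $p$ were strong, Definition~\ref{def:dualizingobject} would hand us a dualizing object $g^\ast\omega_p$ for $r_F$. Proposition~\ref{prop:dualizingobjectfwdual} would then show that $r^F_! S_F$ is dualizable in $\ho(\calC)$, which is exactly the statement that $F$ is dualizable in $\calC$.

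The hypothesis, however, only supplies a (possibly non-strong) dualizing object. So I would not use base change of the object itself. Instead I would use the weaker consequence that dualizability can be tested on fibres.

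By Proposition~\ref{prop:dualizingobjectfwdual}, $p_!S_E$ is dualizable in $(\ho(\calC_{/B}),\tensor_B)$, with dual $p_!\omega_p$. The functor $b^\ast\colon \ho(\calC_{/B})\to\ho(\calC)$ is symmetric monoidal by \eqref{eq:bc-unit} and \eqref{eq:bc-product}, so it preserves dual pairs. Hence $b^\ast p_!S_E$ is dualizable in $\ho(\calC)$.

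The commutation relation \eqref{eq:commrelshriek} for the square above gives
\[
b^\ast p_! S_E \homot q_! g^\ast S_E \homot r^F_! S_F,
\]
using \eqref{eq:bc-unit} for the last step. So $r^F_!S_F$ is dualizable, and $F$ is dualizable in $\calC$. Since every homotopy fibre arises from such a square (up to weak equivalence, which is harmless by the homotopy invariance in Proposition~\ref{prop:wecartiffopcart}), this proves the claim for all homotopy fibres.

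The only point I expect to need care is the identification of the homotopy fibre with a homotopy cartesian square whose commutation isomorphism \eqref{eq:commrelshriek} applies. This is guaranteed by the definition of homotopy fibre together with \cite[Cor.~5.12]{ABGparam}, as already recorded in the paper. The rest is formal.
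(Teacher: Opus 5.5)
Your argument is the paper's argument. Proposition~\ref{prop:dualizingobjectfwdual} makes $p_!S_E$ dualizable in $\ho(\calC_{/B})$, the strong symmetric monoidal functor $b^\ast$ preserves this, and \eqref{eq:commrelshriek} identifies $b^\ast p_!S_E$ with $r^F_!S_F$.

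The one step that fails as written is the opening square. A homotopy cartesian square in this paper is \emph{strictly} commutative, so commutativity forces $g$ to land in the strict fibre $p^{-1}(b)$. For a map that is not a fibration, no such square with $F$ the homotopy fibre need exist. For example, take $p\colon \pt\to S^1$ and $b\notin p(\pt)$: then $p^{-1}(b)=\emptyset$, while the homotopy fibre is $\homot\Z$. The definition of homotopy fibre does not produce your square, and \cite[Cor.~5.12]{ABGparam} only applies once you have one. So your remark about replacing things ``up to weak equivalence'' has to become an actual argument.

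The paper handles this by first reducing to the case where $p$ is a fibration, and the reduction is cheap with your method. Factor $p=\tilde p\circ i$ with $i\colon E\to\tilde E$ a homotopy equivalence and $\tilde p$ a fibration. Since $i^\ast$ is an equivalence with inverse $i_!$, and $i^\ast S_{\tilde E}\homot S_E$, you get $\tilde p_!S_{\tilde E}\homot\tilde p_!i_!S_E\homot p_!S_E$. Hence $\tilde p_!S_{\tilde E}$ is dualizable. Your computation then goes through verbatim for the strict pullback square of $\tilde p$ over $b$. That square is homotopy cartesian, and its corner $\tilde p^{-1}(b)$ is the homotopy fibre of $p$.

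The paper instead transfers the whole dualizing object to $\tilde p$ via Proposition~\ref{prop:superandhypercartmorprops}(\ref{it:wecancel}). That is more than the argument needs, since only dualizability of $p_!S_E$ is used.
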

\begin{proof}
Factor $p$ as a composite $E \xto{\ \homot\ } \tilde{E} \xto{\ \tilde{p}\ } B$
of a homotopy equivalence and a fibration.
Making use of Propositions~\ref{prop:dualizingobjectfwdual} 
and~\ref{prop:superandhypercartmorprops}(\ref{it:wecancel}),
we see that if $p$ admits a dualizing object, then so does $\tilde{p}$.
Thus we may reduce to the case where $p$ is a fibration.
Suppose $p$ admits a dualizing object.
Let $b\in B$, let $i \colon F \to E$ be the inclusion of the fibre
over $b$ into $E$, and continue to write $b$ for the map  $\pt \to B$
defined by $b$.
By Proposition~\ref{prop:dualizingobjectfwdual}, 
the object $p_! S_E$ is dualizable in $\ho(\calC_{/B})$.
Therefore $b^\ast p_! S_E$ is dualizable in $\ho(\calC)$,
and the claim follows by observing that
$b^\ast p_! S_E \homot r^F_! i^\ast S_E \homot r^F_! S_F$
where the first equivalence follows from the commutation relation
\eqref{eq:commrelshriek}.
\end{proof}

\begin{cor}
\label{cor:nodualizingobject}
The inclusion $i\colon \pt \incl S^1$ of a point into  $S^1$ 
does not admit a dualizing object
with respect to $\Spectra$.
\end{cor}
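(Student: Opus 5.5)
We argue by contradiction, using Corollary~\ref{cor:dualizingobjectdualizablefibres}. Suppose the inclusion $i\colon \pt \incl S^1$ admitted a dualizing object with respect to $\Spectra$. That corollary then says every homotopy fibre of $i$ is dualizable in $\Spectra$. In other words, $r^F_! S_F \homot \suspension^\infty_+ F$ would be dualizable in $\ho(\Spectra)$ for every homotopy fibre $F$ of $i$. Here we use Remark~\ref{rk:tcalcrecognition} to identify $t_\Spectra(F,r_F)$ with $\suspension^\infty_+ F$.

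The next step is to identify the homotopy fibre of $i$ over the basepoint. It is the space of paths in $S^1$ from the basepoint back to itself, which is weakly equivalent to the loop space $\loops S^1 \homot \Z$, a discrete space with countably many points. Dualizability only depends on the weak homotopy type, since weak equivalences induce equivalences on $r_!S$ by Proposition~\ref{prop:opcarttoequiv} and Proposition~\ref{prop:wecartiffopcart}. It would therefore follow that $\suspension^\infty_+ \Z \homot \bigvee_{n\in\Z} S$ is dualizable in $\ho(\Spectra)$.

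The main step is to rule this out, i.e.\ to show that an infinite wedge of sphere spectra is not dualizable. Dualizable objects in $\ho(\Spectra)$ are exactly the finite spectra. A more self-contained argument goes through homology: if $X$ is dualizable with dual $DX$, then $H_\ast(X;\F_p)$ is a dualizable, i.e.\ finite-dimensional, graded $\F_p$--vector space. This holds because the symmetric monoidal functor $H\F_p\smashprod(-)\colon \Spectra\to\Mod^{H\F_p}$ preserves duals, and dualizable $H\F_p$--modules have finite total homotopy. But $H_0(\bigvee_\Z S;\F_p) \isom \bigoplus_\Z \F_p$ is infinite dimensional, which is the desired contradiction.

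Alternatively, one can use the characterization that for a dualizable $X$, the functor $[X,-]$ commutes with arbitrary wedges, since $[X,-]\isom \pi_0(DX\smashprod -)$. Apply this to the identity of $\bigvee_\Z S$: it would then factor through a finite subwedge, which is false. The only potentially delicate point is justifying the passage from the fibration replacement to the actual homotopy fibre. This is already handled inside the proof of Corollary~\ref{cor:dualizingobjectdualizablefibres}, so I expect no real obstacle.
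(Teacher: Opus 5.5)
Your proposal is correct and is the paper's own argument: the paper applies Corollary~\ref{cor:dualizingobjectdualizablefibres} and notes that the homotopy fibre $\loops S^1 \homot \Z$ is not dualizable in $\Spectra$. The paper leaves unjustified why $\suspension^\infty_+\Z \homot \bigvee_{\Z} S$ is not dualizable, and both of your arguments for that step (infinite-dimensional mod-$p$ homology, and the identity failing to factor through a finite subwedge) are sound.
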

\begin{proof}
The homotopy fibre $\loops S^1 \homot \Z$ of $i$ is 
not dualizable in $\Spectra$.
\end{proof}

\subsection{Costenoble--Waner duality and the existence of 
hypercartesian morphisms}
\label{subsec:cwdualityandhypercartesianmorphisms}

In this subsection, we demonstrate the existence of 
interesting hypercartesian morphisms
and prove Theorems~\ref{thm:thetapc}, \ref{thm:kappaxcartcritc} and
\ref{thm:tildekappaxcart}.
The main results of the subsection,
Theorem~\ref{thm:hypercartdata} and 
Theorem~\ref{thm:hypercartexistence},
relate hypercartesian morphisms in $\hpC^\dfop$
to dualizability in the framed bicategories $\Ex_B(\calC)$
of Appendix~\ref{app:exbc}, and use that connection
to provide a  criterion for the 
existence of hypercartesian morphisms.
In particular,
Theorem~\ref{thm:hypercartexistence}
shows that 
a continuous map $p\colon E\to B$
whose homotopy fibres are small in the sense that
they are Costenoble--Waner dualizable in $\calC$
admits a hypercartesian morphism
$Y\oto X$ covering $p$ for every $X \in \ho(\calC_{/B})$.
The final theorem of the subsection, Theorem~\ref{thm:abgpthc2},
states that the parametrized Pontryagin--Thom 
transfer map of Ando, Blumberg and Gepner \cite[Def.~4.14]{ABGparam}
for a bundle $p\colon E\to B$ of smooth closed manifolds
defines a hypercartesian morphism 
$S^{-\tau_p} \oto S_B$ in $\hpSpectra^\dfop$, giving a geometric interpretation for 
hypercartesian morphisms $\omega_p \oto S_B$ covering such bundles.

The subsection is structured as follows. In Section~\ref{subsubsec:cwdualizability},
we discuss Costenoble--Waner dualizability of spaces, defining the
notion and illustrating it with remarks and examples.
In Section~\ref{subsubsec:existenceofhypercartesianmorphisms},
we state and prove Theorems~\ref{thm:hypercartdata} and~\ref{thm:hypercartexistence}.
In the sections that follow, 
Sections~\ref{subsubsec:applicationstocwdualizability}
and~\ref{subsubsec:applicationstohypercartesianmorphisms},
we present various results on Costenoble--Waner dualizability
and hypercartesian morphisms whose proofs make use of 
Theorems~\ref{thm:hypercartdata} and~\ref{thm:hypercartexistence}.
In Section~\ref{subsubsec:superhypercartandbasechangefunctors},
we elaborate on the implications of the existence of supercartesian and 
hypercartesian morphisms for the existence of base change functors,
a discussion we already started in Section~\ref{subsubsec:dualizingobjects}.
In Section~\ref{subsubsec:hypercartesianmorphismsandfibrewiseduality},
we present detailed results connecting
hypercartesian morphisms $\omega_p \oto S_B$ covering
a fibration $p \colon E \to B$ to fibrewise duality over $B$.
Finally, in Section~\ref{subsubsec:proofsofresultsfromintro},
we present the proofs of Theorems~\ref{thm:thetapc}, \ref{thm:kappaxcartcritc} and
\ref{thm:tildekappaxcart} and state and prove
Theorem~\ref{thm:abgpthc2} mentioned above.

\subsubsection{Costenoble--Waner dualizability}
\label{subsubsec:cwdualizability}

We start with a discussion of Costenoble--Waner dualizability.
Costenoble--Waner duality was first discovered by Costenoble and Waner \cite{CWv1}.
The name was introduced by May and Sigurdsson \cite{MaySigurdsson}.
The duality is defined in terms of a bicategory $\Ex(\calC)$ where $0$--cells are
spaces, the category $1$--cells from a space $A$ to a space $B$ 
is $\ho(\calC_{/A\times B})$, and the composite of $1$--cells 
$M \colon A \hto B$ and $N \colon B \hto C$ is given in terms of base change
functors as
\[
	M\odot N \homot (\pi_B)_! (\pi_C^\ast M \tensor_{A\times B \times C} \pi_A^\ast N)
\]
where 
$\pi_A \colon A \times B \times C \to B \times C$, 
$\pi_B \colon A \times B \times C \to A \times C$, 
and 
$\pi_C \colon A \times B \times C \to A \times B$
are the projections.
Each  $1$--cell $M \colon A \hto B$ in $\Ex(\calc)$
has an opposite $1$--cell $M^\op  \colon B\hto A$
given by $M^\op = \chi^\ast M$
where $\chi \colon B \times A \to A \times B$ is the coordinate 
exchange map. The detailed construction of $\Ex(\calC)$ and indeed of 
a more general version $\Ex_B(\calC)$ parametrized by a base space 
$B$ is given in Appendix~\ref{app:exbc}.

\begin{defn}
\label{def:cwdualizablespace}
A space $B$ is \emph{Costenoble--Waner dualizable} in $\calC$
if the base change $1$--cell $\pt_{r} \colon \pt \hto B$ 
(see Definition~\ref{def:basechangeobjects})
associated to the unique map $r \colon B \to \pt$
is right dualizable in the sense of Definition~\ref{def:dualpair}
in the bicategory $\Ex(\calC)$ of Definition~\ref{def:exboverpt}.
The right dual is called the \emph{Costenoble--Waner dual} of $B$.
\end{defn}

\begin{rem}
By Remark~\ref{rk:exbcandbasechange}, we may identity
$1$--cells 
$B \hto \pt$
with objects of $\ho(\calC_{/B})$. 
In particular, the Costenoble--Waner
dual of a space $B$ amounts to an object of $\ho(\calC_{/B})$. 
\end{rem}

\begin{rem}
Under the aforementioned identifications, the $1$--cell
$\pt_{r} \colon \pt \hto B$  corresponds to $S_B^\op$.
\end{rem}

\begin{example}
\label{ex:ptiscwdualizable}
In view of formula \eqref{eq:modotbnformula},
the $\odot$--product of $1$--cells $\pt\hto\pt$
in $\Ex(\calC)$ amounts to $\tensor$--product 
of the underlying objects of $\ho(\calC)$.
It follows that the one-point space $\pt$ 
is Costenoble--Waner dualizable in $\calC$
with Costenoble--Waner dual $S\in \ho(\calC_{/\pt})$. 
\end{example}

\begin{example}
\label{ex:cwdualityinspectra}
Every compact ENR and so in particular
every finite CW complex and 
every compact manifold (with or without a boundary)
is Costenoble--Waner dualizable in $\Spectra$
\cite[Theorem 18.5.1 and Proposition 18.3.2]{MaySigurdsson}.
When $M$ is a smooth closed manifold, 
the  Costenoble--Waner dual of $M$ is 
$S^{-\tau_M} \homot S^{\nu_M - \varepsilon^L} \in \ho(\Spectra_{/M})$
where $\nu_M$ is the normal bundle of an embedding of $M$ 
into some high-dimensional Euclidean space $\R^L$
and $\varepsilon^L$ is the $L$--dimensional trivial vector bundle.
See \cite[Theorem 18.6.1 and Proposition 18.3.2]{MaySigurdsson}.
The notation $S^{-\tau_M}$ was established in Definition~\ref{def:sxi}.
\end{example}

\begin{example}
\label{ex:cwdualityellcompactgroups}
By Theorem~\ref{thm:ellcptgrpscwdualizable}, the space
$G = \loops BG$ is Costenoble--Waner dualizable in 
$\Spectra^\ell$ 
with Costenoble--Waner dual
$\suspension_G^{-d} S_{G,\ell}$
when $BG$ is a semisimple
connected $\ell$--compact group
in the sense of Definition~\ref{def:semisimple}.
\end{example}

\begin{rem} %
\label{rk:cwdualityasrefinement}
Suppose $B$ is Costenoble--Waner dualizable 
with Costenoble--Waner dual $T$, so that 
$(S_B^\op,T)$ is a dual pair in $\Ex(\calC)$.
Let $r\colon B \to \pt$ be the unique map.
Then, by Corollary~\ref{cor:shriekduals},
$((r_! S_B)^\op, r_! T)$ is also a dual pair
in $\Ex(\calC)$.
As noted in Example~\ref{ex:ptiscwdualizable},
the $\odot$--product of $1$--cells $\pt\hto\pt$
in $\Ex(\calC)$ amounts to the $\tensor$--product 
of the underlying objects of $\ho(\calC)$.
It follows that  $(r_! S_B, r_! T)$ is a dual pair
in $\ho(\calC)$. We can thus view the dual pair 
$(S_B^\op,T)$ in $\Ex(\calC)$ as a fibrewise refinement
of the dual pair $(r_! S_B, r_! T)$ in $\ho(\calC)$.
\end{rem}

\begin{example}
Applied to the Costenoble--Waner 
dual pair $(S_M^\op,S^{-\tau_M})$
of Example~\ref{ex:cwdualityinspectra},
the procedure of Remark~\ref{rk:cwdualityasrefinement}
yields a dual pair $(\suspension^\infty_+ M,M^{-\tau_M})$
in $\ho(\Spectra)$ recovering the usual 
Atiyah duality for smooth closed manifolds.
\end{example}

\begin{example}
\label{ex:kleindualizingspectrum}
Let $G$ be a Lie group or the realization of a simplicial group.
Inspired by Bieri-Eckmann duality in group cohomology,
Klein~\cite{KleinDualizingSpectrum} has defined and studied 
a $\suspension^\infty_+ G$--module spectrum
$D_G$ associated to $G$ called the \emph{dualizing spectrum} of $G$.
Under the equivalence between $\suspension^\infty_+ G$--module 
spectra and parametrized spectra over $BG$ \cite{LindMalkiewichMorita},
the spectrum $D_G$
corresponds simply to the canonical candidate 
$S_{BG}^\op \vartriangleright U_{BG}$ for the Costenoble--Waner dual of 
$BG$. See Remark~\ref{rk:candidaterightdual}.
\end{example}

Given spaces $B_i$, $i \in I$, notice that the inclusions 
$B_i \incl \bigsqcup_{i\in I} B_i$ 
induce an isomorphism
\begin{equation}
\label{eq:hocdu}
	\ho(\calC_{/\bigsqcup_{i\in I} B_i}) \xto{\ \isom\ } \prod_{i\in I} \ho(\calC_{/B_i}).
\end{equation}

\begin{prop}
Let $\calC$ be a stable presentable 
symmetric monoid
presentable $\infty$--category.
Suppose $B_1,\ldots,B_k$ are spaces Costenoble--Waner dualizable 
in $\calC$ with Costenoble--Waner duals $T_1,\ldots,T_k$,
respectively.
Then the disjoint union $\bigsqcup_{i=1}^k B_i$
is Costenoble--Waner dualizable in $\calC$.
Moreover, the Costenoble--Waner dual
of $\bigsqcup_{i=1}^k B_i$ is given by the object $(T_i)_{i=1}^k \in \prod_{i=1}^k \ho(\calC_{/B_i}) \isom \ho(\calC_{/\bigsqcup_{i} B_i})$.
\end{prop}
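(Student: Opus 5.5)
We verify directly that the object $T = (T_i)_{i=1}^k$ is a right dual of the base change $1$--cell $\pt_r \colon \pt \hto B$ in $\Ex(\calC)$, where $B = \bigsqcup_i B_i$ and $r\colon B \to \pt$. The approach is to transport the evaluation and coevaluation $2$--cells of the individual dual pairs $(S_{B_i}^\op, T_i)$ along the inclusions $j_i \colon B_i \incl B$. We then use the decomposition \eqref{eq:hocdu}, together with stability of $\calC$ (finite coproducts agree with finite products), to assemble them.

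\textbf{Step 1: identify the relevant composites.} Using the formula for $\odot$ in terms of base change functors, we compute the two composites. The composite $\pt_r \odot T$ is a $1$--cell $\pt \hto \pt$, namely $r_!(S_B \tensor_B T) \homot r_!T$. By additivity (Proposition~\ref{prop:hbulletandcolimits}), this equals $\bigvee_i r^{B_i}_! T_i$.

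The composite $T \odot \pt_r$ is a $1$--cell $B \hto B$, that is, an object of $\ho(\calC_{/B\times B})$. Under \eqref{eq:hocdu} applied to $B\times B = \bigsqcup_{i,l} B_i\times B_l$, it decomposes into the blocks $\pi_{B_i}^\ast T_i \tensor \pi_{B_l}^\ast S_{B_l}$, i.e.\ $T_i \exttensor S_{B_l}$. Likewise the unit $U_B$ (the diagonal $1$--cell $\Delta_! S_B$) has only diagonal blocks $U_{B_i}$, with zero off-diagonal blocks.

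\textbf{Step 2: define the duality data.} The coevaluation $U_B \to T\odot \pt_r$ is defined blockwise. On the diagonal blocks we take the coevaluations $\eta_i \colon U_{B_i} \to T_i\odot \pt_{r_i}$. On the off-diagonal blocks the source is $0$, so the map is forced. The evaluation $\pt_r\odot T \homot \bigoplus_i r^{B_i}_! T_i \to S$ is the sum of the individual evaluations $\varepsilon_i$. This step uses the universal property of the coproduct, together with the identification $\bigoplus = \bigvee$ available because $\calC$ is stable.

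\textbf{Step 3: check the triangle identities.} Both triangle composites are maps between $1$--cells $\pt\hto B$ or $B\hto\pt$, i.e.\ objects of $\ho(\calC_{/B}) \isom \prod_i \ho(\calC_{/B_i})$. It therefore suffices to check them after restriction to each $B_i$, via $j_i^\ast$. Doing so requires knowing that $j_i^\ast$ intertwines the $\odot$--composites with the corresponding composites over $B_i$. We would derive this from the commutation relation \eqref{eq:commrelshriek} applied to the homotopy cartesian squares formed by the inclusions of summands. After restriction, the cross terms with $l \neq i$ contribute maps through $0$. The sums then reduce to the $i$-th summand: precisely, composing $\varepsilon_l$ with the $i$-th inclusion vanishes for $l\ne i$. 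Once this is established, the restricted triangle composites become the triangle composites for $(S_{B_i}^\op, T_i)$, which are identities by hypothesis.

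\textbf{Main obstacle.} The hard part is this last bookkeeping: verifying that the composite $2$--cells in $\Ex(\calC)$, whose associativity and unit constraints are constructed in Appendix~\ref{app:exbc}, are compatible with the blockwise decomposition. It also requires that the off-diagonal contributions genuinely vanish rather than merely having zero source. I would first attempt to sidestep this via Theorem~\ref{thm:hypercartexistence}-style criteria, namely that dualizability of $\pt_r$ is detected fibrewise, i.e.\ on homotopy fibres. The homotopy fibres of $B\to\pt$ are just $B$, so that does not help. Instead I would construct the restriction $j_i^\ast$ as a pseudofunctor-compatible operation, using that $j_i$ is both an open and a closed inclusion, so that $j_i^\ast \homot j_i^{\invshriek}$ and it commutes with all the relevant base change functors.
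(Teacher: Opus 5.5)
You have the unit and counit of the duality pointing the wrong way. By Definition~\ref{def:dualpair} and Definition~\ref{def:cwdualizablespace}, the pair to be produced is $(\pt_r, T)$, with $\pt_r\colon \pt\hto B$ in the left slot. So the required data are a unit $\eta\colon U_\pt = S\to \pt_r\odot T\homot r_!T$ and a counit $\varepsilon\colon T\odot \pt_r\to U_B$. Likewise, the hypotheses give you $\eta_i\colon S\to r^{B_i}_!T_i$ and $\varepsilon_i\colon T_i\odot\pt_{r_i}\to U_{B_i}$. The maps your Step~2 uses, $U_{B_i}\to T_i\odot\pt_{r_i}$ and $r^{B_i}_!T_i\to S$, are not part of the given data. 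What Step~2 would exhibit is $(T,\pt_r)$ as a dual pair, i.e.\ $T$ as a \emph{left} dual of $\pt_r$. That cannot hold in general: by Proposition~\ref{prop:basechangeobjdualpair}, the left dual of $\pt_r$ is always ${}_r\pt\homot S_B$. Already for $k=1$ and $B_1$ a closed manifold of positive dimension, $T_1=S^{-\tau_{B_1}}$ is not $S_{B_1}$.

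The same swap explains why your use of stability is misplaced. A map \emph{out of} $\bigvee_i r^{B_i}_!T_i$ needs only the universal property of the coproduct, so in your version stability is never actually used. In the correct direction, the unit is a map $S\to\bigvee_i r^{B_i}_!T_i$ \emph{into} a finite coproduct. It can be assembled from the $\eta_i$ only after identifying $\bigvee_i$ with $\prod_i$, and this is exactly where the paper invokes stability. The counit is assembled blockwise over $B\times B=\bigsqcup_{i,l}B_i\times B_l$, with $\varepsilon_i$ on the diagonal blocks and zero on the off-diagonal ones. The off-diagonal components are maps \emph{into} the zero blocks of $U_B$, and these exist because $\calC$ is pointed; they are not forced by a zero source. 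With these corrections your blockwise construction is the paper's proof. Your Step~3 then supplies the triangle-identity check that the paper leaves implicit: restrict to each $B_i$ and use that the coproduct inclusions $\iota_l$ and product projections $p_l$ satisfy $\sum_l \iota_l p_l=\id$ to dispose of the cross terms.
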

\begin{proof}
Let $\eta_i \colon U_\pt \to S_{B_i}^\op \odot T_i$
and $\varepsilon_i \colon T_i \odot S_{B_i}^\op \to U_{B_i}$
be the unit and counit of the dual pair $(S_{B_i}^\op, T_i)$
in $\Ex(\calC)$. For $1\leq i,j \leq k$, 
let $U_{ij} = U_{B_i}$ and $\varepsilon_{ij} = \varepsilon_i$
when $i=j$ 
and $U_{ij} = 0$ and $\varepsilon_{ij} = 0$ 
when $i\neq j$.
Identifying $\ho(\calC_{/\bigsqcup_i B_i})$
with $\prod_{i\in I} \ho(\calC_{/B_i})$
and $\ho(\calC_{/\bigsqcup_i B_i \times \bigsqcup_i B_i})$
with $\prod_{i,j \in I} \ho(\calC_{B_i\times B_j})$
via the isomorphisms of equation \eqref{eq:hocdu},
the maps
\[
	\eta 
	\colon 
	U_\pt 
	=
	S 
	\xto{\ (\eta_i)_{i=1}^k\ }
	\prod_{i=1}^k \bigl(S_{B_i}^\op \odot T_i\bigr)
	\homot
	\bigvee_{i=1}^k \bigl(S_{B_i}^\op \odot T_i\bigr)
	\homot
	(S_{B_i}^\op)_{i=1}^k \odot (T_i)_{i=1}^k
	=
	S_{\bigsqcup_i B_i}^\op \odot (T_i)_{i=1}^k
\]
and
\[
	\varepsilon
	\colon 
	(T_i)_{i=1}^k \odot S_{\bigsqcup_i B_i}^\op 
	=
	(T_i)_{i=1}^k \odot (S_{B_i}^\op)_{i=1}^k
	\homot
	(T_i \odot S_{B_j}^\op)_{i,j=1}^k
	\xto{\ \prod_{i,j} \varepsilon_{ij}\ }
	(U_{ij})_{i,j=1}^k
	\homot
	U_{\bigsqcup_i B_i \times \bigsqcup_i B_i}
\]
exhibit 
$(S_{\bigsqcup_i B_i}^\op, (T_i)_{i=1}^k)$
as a dual pair in $\Ex(\calC)$.
Here the first equivalence in the definition of $\eta$ is 
given by the fact that finite products and coproducts
in a stable $\infty$--category agree; see \cite[Rk.~1.1.3.5]{HA}.
\end{proof}

Later, in Proposition~\ref{prop:cwdualityproduct},
we will show that the product of Costenoble--Waner
dualizable spaces is Costenoble--Waner dualizable.
Moreover, in Proposition~\ref{prop:cwdualityweyinvariance}
we will show that spaces weakly equivalent to 
Costenoble--Waner dualizable spaces
are Costenoble--Waner dualizable.

\begin{prop}
\label{prop:cwdualitytransport}
Suppose $B$ is Costenoble--Waner dualizable in $\calC$
with Costenoble--Waner dual $T \in \ho(\calC_{/B})$,
and assume there exists a symmetric monoidal functor
$F\colon \calC \to \calD$ between symmetric monoidal
presentable $\infty$--categories which admits a right adjoint.
Then $B$ is Costenoble--Waner dualizable in $\calD$
with Costenoble--Waner dual $F_B(T) \in \ho(\calD_{/B})$.
\end{prop}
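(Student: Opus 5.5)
The plan is to show that the functor $F$ induces a pseudofunctor (a morphism of bicategories) $F_\ast \colon \Ex(\calC) \to \Ex(\calD)$ which is the identity on $0$--cells and sends a $1$--cell $M \colon A \hto B$, i.e.\ an object of $\ho(\calC_{/A\times B})$, to $F_{A\times B}(M) \in \ho(\calD_{/A\times B})$. Pseudofunctors preserve dual pairs: given unit $\eta$ and counit $\varepsilon$ for a dual pair $(M,N)$, the images $F_\ast(\eta)$ and $F_\ast(\varepsilon)$, composed with the structure isomorphisms $F_\ast(M)\odot F_\ast(N)\homot F_\ast(M\odot N)$ and $U \homot F_\ast(U)$, satisfy the triangle identities. (This is the standard argument; I would either cite the discussion of duality in Appendix~\ref{app:dualizability} or verify the triangle identities directly using coherence of the pseudofunctor.) Applying this to the dual pair $(S_B^\op, T)$ gives a dual pair $(F_\ast(S_B^\op), F_B(T))$ in $\Ex(\calD)$, and it remains to identify $F_\ast(S_B^\op)$ with $S^{\calD,\op}_B$, i.e.\ with the base change $1$--cell $\pt_r$ in $\Ex(\calD)$. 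This holds since $F_B$ is symmetric monoidal, so $F_B(S^\calC_B)\homot S^\calD_B$, and the opposite $(-)^\op = \chi^\ast$ commutes with $F_\fw$ because $F_\fw$ preserves cartesian morphisms.

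The main step is constructing the comparison $F_\ast(M)\odot F_\ast(N)\homot F_\ast(M\odot N)$ and the unit comparison. Using the formula $M\odot N \homot (\pi_B)_!(\pi_C^\ast M \tensor \pi_A^\ast N)$, the comparison is assembled from three ingredients established earlier: $F_\fw$ commutes with pullbacks $\pi^\ast$ (it is a morphism of fibrations), $F_\fw$ is symmetric monoidal on fibres so commutes with $\tensor_{A\times B\times C}$, and $F_\fw$ preserves opcartesian morphisms by Proposition~\ref{prop:opcartpreservation}, giving $F(\pi_B)_! \homot (\pi_B)_! F$ as in \eqref{eq:fshriektcomm}. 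The unit $1$--cell $U_B$ is a base change object $\Delta_! S_B$ (per Definition~\ref{def:basechangeobjects}), so $F_{B\times B}(U_B)\homot \Delta_! F_B(S_B) \homot U_B^\calD$ by the same two facts. Since all these comparisons are phrased as preservation of (op)cartesian morphisms and of the monoidal structure, they are canonical and natural.

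I expect the main obstacle to be coherence: checking that these comparison maps are compatible with the associators and unitors of $\Ex(\calC)$ and $\Ex(\calD)$ as built in Appendix~\ref{app:exbc}. I would handle this by noting that those constraints are themselves built out of canonical (op)cartesian morphisms, symmetric monoidal constraints and instances of \eqref{eq:commrelshriek}, all of which $F_\fw$ preserves, so the required diagrams commute by uniqueness in the universal properties of (op)cartesian morphisms. Alternatively, to avoid full coherence, it suffices to check directly that the two triangle-identity composites for $(S^\calD_B{}^\op, F_B(T))$ are the images under $F$ of the corresponding identities in $\Ex(\calC)$, which only uses compatibility of the comparison maps with the specific associator and unitor instances appearing there.
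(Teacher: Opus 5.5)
Your proposal is correct and is essentially the paper's argument. The paper applies the strong framed functor $\Ex(F)$ of Proposition~\ref{prop:exbf} (identity on $0$--cells, $F_{A\times B}$ on $1$--cells) to the dual pair $(S_{B,\calC}^\op,T)$ and notes that it lands on $(S_{B,\calD}^\op,F_B(T))$. The only difference is that the paper does not build the comparison maps by hand: it obtains $\Ex(F)$, including all the coherence you anticipate having to check, from Shulman's Theorem~14.9 applied to the strong monoidal morphism of bifibrations $\hp_B\calC\to\hp_B\calD$, with Proposition~\ref{prop:opcartpreservation} supplying preservation of opcartesian morphisms exactly where you use the right adjoint.
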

\begin{proof}
Writing $S_{B,\calC}$ and $S_{B,\calD}$ for the 
monoidal units in 
$\ho(\calC_{/B})$
and
$\ho(\calD_{/B})$,
respectively,
the functor $\Ex(F)$ of Proposition~\ref{prop:exbf}
takes the dual pair $(S_{B,\calC}^\op, T)$ in $\Ex(\calC)$
to a dual pair $(S_{B,\calD}^\op, F_B(T))$ in $\Ex(\calD)$.
\end{proof}

\subsubsection{The existence of hypercartesian morphisms}
\label{subsubsec:existenceofhypercartesianmorphisms}

\begin{thrm}
\label{thm:hypercartdata}
Suppose $p \colon E\to B$ is a fibration.
Then the following are equivalent conditions on 
a pair $(\omega_p,\tilde{\eta})$ where $\omega_p$ is an object
of $\ho(\calC_{/E})$ 
and $\tilde{\eta} \colon S_B \to p_!\omega_p$ is a morphism
in $\ho(\calC_{/B})$:
\begin{enumerate}[(i)]
\item\label{it:hypercart}
	The morphism 
	$\theta\colon\omega_p \oto S_B$ of $\hpC^\dfop$
	over $p$
	defined by $\tilde{\eta}$ is hypercartesian.
\item\label{it:supercartx2}
	The morphism $\theta$ defined as above 
	as well as its base change along the homotopy cartesian square
	\[\xymatrix{
		E\times_B E
		\ar[r]^-{\pi_1}
		\ar[d]_{\pi_2}
		&
		E
		\ar[d]^p
		\\
		E
		\ar[r]^p
		&
		B		
	}\]
	are supercartesian in $\hpC^\dfop$.
	Here $\pi_1$ and $\pi_2$ are the projections to the
	first and second coordinates of	$E\times_B E$, respectively.
\item\label{it:exdualpair}
	View $\omega_p$ as a $1$--cell 
	$\omega_p \colon E\hto B$ in $\Ex_B(\calC)$,
	and let $\eta \colon U^B_B \hto B_p \odot_B \omega_p$
	be the globular $2$--cell of $\Ex_B(\calC)$
	defined by $\tilde{\eta}$
	under equivalences~\eqref{eq:uformula}
	and~\eqref{eq:basechangeformula1}.
	Then $(B_p, \omega_p)$ is a dual pair in $\Ex_B(\calC)$,
	with the map $\eta$ giving the unit of the duality.
\item\label{it:strongdualizing}
	$(\omega_p,\tilde{\eta})$ is a strong dualizing object for $p$.
\end{enumerate}
\end{thrm}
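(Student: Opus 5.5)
The plan is to prove the cycle (\ref{it:hypercart})$\Rightarrow$(\ref{it:supercartx2})$\Rightarrow$(\ref{it:exdualpair})$\Rightarrow$(\ref{it:hypercart}), and to dispose of (\ref{it:strongdualizing}) separately.

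For (\ref{it:hypercart})$\Leftrightarrow$(\ref{it:strongdualizing}) I will use Proposition~\ref{prop:dualizingobjectsandcartesianmorphisms}, which identifies hypercartesian morphisms $\omega_p\oto S_B$ with strong dualizing objects. The one thing to check is that the base change of $\theta$ along a homotopy cartesian square, with respect to the canonical cartesian morphisms $g^\ast\omega_p\to\omega_p$ and $S_A\to S_B$, is exactly the morphism determined by $\tilde\eta_q$. This follows by unwinding the proof of Proposition~\ref{prop:basechange}, since there the commutation equivalence \eqref{eq:commrelshriek} is produced via Proposition~\ref{prop:commrelshriekinterpretation2}.

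The implication (\ref{it:hypercart})$\Rightarrow$(\ref{it:supercartx2}) is immediate from the definition of hypercartesian together with Proposition~\ref{prop:superandhypercartmorprops}(\ref{it:hypercartimpliessupercart}).

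For (\ref{it:supercartx2})$\Rightarrow$(\ref{it:exdualpair}) I will translate into the bicategory $\Ex_B(\calC)$ using the appendix formulas \eqref{eq:uformula} and \eqref{eq:basechangeformula1}. Supercartesianness of $\theta$, via Proposition~\ref{prop:dfopcartinterpretation} and the projection formula, says that $p^\invshriek=\omega_p\tensor_E p^\ast(-)$ is left adjoint to $p_!$, with unit built from $\tilde\eta$. Supercartesianness of the base change along $E\times_BE$ says the same for $\pi_2$ with dualizing object $\pi_1^\ast\omega_p$. The counit $\varepsilon\colon\omega_p\odot_B B_p\to U^B_E$ should be the counit of the $\pi_2$-adjunction evaluated on the diagonal unit $\Delta_!S_E$, since $\omega_p\odot_B B_p$ is computed as $(\pi_2)_!\pi_1^\ast\omega_p$-type expressions over $E\times_BE$. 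The two triangle identities should then reduce to the triangle identities for the adjunctions $p^\invshriek\dashv p_!$ and $\pi_2^\invshriek\dashv(\pi_2)_!$, compatibly with the commutation relation \eqref{eq:commrelshriek} for the square. I expect this to be the main obstacle: it requires care with the explicit description of $\odot_B$ and of the base change $1$--cells. My first attempt would go through the Yoneda-style criterion for right duals from Appendix~\ref{app:dualizability}, namely that $\omega_p\odot_B-$ is right adjoint to $B_p\odot_B-$ naturally in all $1$--cells, rather than checking the triangle diagrams directly.

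For (\ref{it:exdualpair})$\Rightarrow$(\ref{it:hypercart}) I will use that dual pairs in $\Ex_B(\calC)$ are preserved by base change $A\to B$, which I take to be a pseudofunctoriality property from the appendix. This reduces the claim to showing that a dual pair $(B_p,\omega_p)$ yields a supercartesian $\theta$. By the same adjunction argument as in the previous step, duality gives for every $K\in\ho(\calC_{/B})$ that $-\odot_B$ with the dual pair produces the adjunction $\omega_p\tensor_E p^\ast K\tensor_E(-)\dashv p_!(-)\tensor\cdots$. More precisely, it gives the universal property of Proposition~\ref{prop:dfopcartinterpretation} for $\phi\oslash\theta$ whenever $\phi$ is any cartesian morphism $p^\ast K\to K$, with the $1$--cell $K$ inserted as a coefficient. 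This establishes supercartesianness. Finally, Proposition~\ref{prop:superandhypercartmorprops}(\ref{it:hypercartpbres}) lets me restrict attention to pullbacks along fibrations, where the base-changed pair is again a dual pair.
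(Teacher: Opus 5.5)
Your treatment of (i)$\Leftrightarrow$(iv) and (i)$\Rightarrow$(ii) is the paper's. In (ii)$\Rightarrow$(iii) your counit is also the paper's $\varepsilon$: it is the adjunct of $B_p\homot B_p\odot_B U^B_E$, i.e.\ the counit of $\pi_2^\invshriek\dashv(\pi_2)_!$ at $U^B_E\homot\delta_!S_E$. (Note that $\omega_p\odot_B B_p\homot\pi_1^\ast\omega_p$, with no pushforward.)

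Two corrections are needed in this step.
\begin{itemize}
\item The adjunction is $\omega_p\odot_B-\dashv B_p\odot_B-$, so $\omega_p\odot_B-$ is the \emph{left} adjoint, not the right one.
\item Condition (ii) does not give this adjunction ``naturally in all $1$--cells''. For a fibration $T\to B$ one has $\omega_p\odot_B M\homot\pi_1^\ast\omega_p\tensor_{E\times_BT}\pi_2^\ast M$ and $B_p\odot_B N\homot(\pi_2)_!N$. Hence (ii) yields the adjunction only for $1$--cells with target $T=B$ (from $\theta$) and $T=E$ (from its base change).
\end{itemize}
Those two instances are exactly what is needed, and no commutation relation enters. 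The triangle identity for $B_p$ holds by construction of $\varepsilon$. The one for $\omega_p$ follows because two maps correspond to $\eta$ under the $T=B$ bijection: the composite $\omega_p\odot_BU^B_B\to\omega_p\odot_BB_p\odot_B\omega_p\to U^B_E\odot_B\omega_p\homot\omega_p$, and the unit constraint. They are therefore equal.

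For (iii)$\Rightarrow$(i) your route differs from the paper's. You transport the dual pair to $\Ex_A(\calC)$ along each $f\colon A\to B$ via the strong framed functor $f^{[\ast]}$, and then use only the target-$A$ instance of the resulting adjunction there. The paper never leaves $\Ex_B(\calC)$. By Remark~\ref{rk:dualityadjunction}, the dual pair gives the adjunction for $1$--cells $B\hto T$ for \emph{every} fibration $T\to B$. By the formulas above, that is literally the supercartesianness of the base change of $\theta$ along $E\times_BT\to T$. Proposition~\ref{prop:superandhypercartmorprops}(\ref{it:hypercartpbres}) then finishes. This handles all base changes at once with no coherence bookkeeping.

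Your route also works, since strong framed functors preserve dual pairs and $f^{[\ast]}(B_p)\homot A_q$. But it needs a check you did not mention. You must show that $f^{[\ast]}(\eta)$, conjugated by $f^{[\ast]}_U$ and $f^{[\ast]}_\odot$, is the vertical morphism determined by the base change of $\theta$, namely $\tilde\eta_q$. Universality is a property of a specific map, so this identification is essential rather than cosmetic. It does hold, because $f^{[\ast]}_\odot$ is built from the same Beck--Chevalley cartesian morphisms (Proposition~\ref{prop:commrelshriekinterpretation2}) as the base change of Proposition~\ref{prop:basechange}, but it has to be verified. In exchange, your route lines up directly with the definition of a strong dualizing object in (iv).
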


\begin{rem}
When $B = \pt$, part (\ref{it:exdualpair}) of Theorem~\ref{thm:hypercartdata}
amounts to the assertion that $\omega_p$ is the Costenoble--Waner
dual of $E$ with the unit of the duality defined by $\tilde{\eta}$.
\end{rem}

\begin{proof}[Proof of Theorem~\ref{thm:hypercartdata}]
The equivalence 
(\ref{it:hypercart}) $\Leftrightarrow$ (\ref{it:strongdualizing})
was observed in Proposition~\ref{prop:dualizingobjectsandcartesianmorphisms}.
Moreover,
(\ref{it:hypercart}) $\Rightarrow$ (\ref{it:supercartx2}) 
by Proposition~\ref{prop:superandhypercartmorprops}(\ref{it:hypercartimpliessupercart}) and the definition of 
hypercartesian morphisms. To complete the proof, it suffices to prove
the implications 
(\ref{it:supercartx2}) $\Rightarrow$ (\ref{it:exdualpair})
and 
(\ref{it:exdualpair}) $\Rightarrow$ (\ref{it:hypercart}).
To this end, let us first observe that 
for a fibration $q\colon T\to B$, 
the following conditions are equivalent:
\begin{enumerate}[(a)]
\item \label{it:a}
	The base change of the morphism $\theta$ of part~(\ref{it:hypercart})
	along the homotopy cartesian square
    \begin{equation}
    \label{sq:hcsq2}
    \vcenter{\xymatrix{
    	E\times_B T
    	\ar[r]^-{\pi_1}
    	\ar[d]_{\pi_2}
    	&
    	E
    	\ar[d]^p
    	\\
    	T
    	\ar[r]^q
    	&
    	B
    }}
    \end{equation}
    is supercartesian.
\item \label{it:b}
	The base change 
	$(\pi_1^\ast \omega_p, \tilde{\eta}_{\pi_2})$ 
	of $(\omega_p,\tilde{\eta})$
	along \eqref{sq:hcsq2}
	in the sense of Definition~\ref{def:dualizingobject}
	is a dualizing object for $\pi_2$.
\item \label{it:c}
	For all $1$--cells $M \colon B\hto T$ and $N\colon E\hto T$ in 
	$\Ex_B(\calC)$,	the map
	\[
		\alpha 
		\longmapsto 
		(
			M 
    		\homot 
    		U_B^B \odot_B M 
    		\xto{\ \eta\odot_B \id\ }
    		B_p\odot_B \omega_p\odot_B M
    		\xto{\ \id\odot_B \alpha\ }
    		B_p\odot_B N
		)
	\]
	where $\eta$ is as in (\ref{it:exdualpair})
	gives a bijection from the set of globular $2$--cells 
	$\omega_p \odot_B M \to N$ onto the set of globular $2$--cells
	$M \to B_p\odot_B N$.
\end{enumerate}
Here (\ref{it:a}) $\Leftrightarrow$ (\ref{it:b}) by
Proposition~\ref{prop:dualizingobjectsandcartesianmorphisms},
and (\ref{it:b}) and (\ref{it:c}) both amount to the assertion 
that the functor
$\pi_2^\invshriek \colon \ho(\calC_{/T}) \to \ho(\calC_{/E\times_B T})$
given by
\[
	\pi_2^\invshriek (M) = \pi_1^\ast \omega_p \tensor_{E\times_B T} \pi_2^\ast M
\]
is a left adjoint to $(\pi_2)_!$ with unit of the adjunction 
derived from $\tilde{\eta}_{\pi_2}$ as in \eqref{eq:invshriekunit}.

To prove the implication
(\ref{it:supercartx2}) $\Rightarrow$ (\ref{it:exdualpair}),
let $\varepsilon \colon \omega_p \odot_B B_p \to U^B_E$ now be 
the map corresponding to the equivalence $B_p \xto{\homot} B_p \odot_B U^B_E$
under the bijection of part~(\ref{it:c}) above
for $(T \xto{q} B) = (E \xto{p} B)$.  Then $\eta$ and $\varepsilon$
satisfy one of the triangle identities required to make $(B_p, \omega_p)$
a dual pair by construction, and the other identity follows by
observing that the composite
\[
	\omega_p\odot_B U^B_B 
	\xto{\ \id\odot_B\eta\ }
	\omega_p\odot_B B_p \odot_B \omega_p
	\xto{\ \varepsilon \odot_B \id\ }
	U^B_E \odot_B \omega_p
	\xto{\ \homot\ }
	\omega_p
\]
and the equivalence $\omega_p\odot_B U^B_B \xto{\homot} \omega_p$
both correspond to $\eta \colon U^B_B \to B_p \odot \omega_p$
under the bijection of part~(\ref{it:c}) for 
$(T \xto{q} B) = (B \xto{\id} B)$.
Finally, in view of 
Proposition~\ref{prop:superandhypercartmorprops}(\ref{it:hypercartpbres})
and 
Remark~\ref{rk:dualityadjunction},
the implication 
(\ref{it:exdualpair}) $\Rightarrow$ (\ref{it:hypercart})
follows from the implication
(\ref{it:c}) $\Rightarrow$ (\ref{it:a}).
\end{proof}

We point out parenthetically that Theorem~\ref{thm:hypercartdata}
and Example~\ref{ex:kleindualizingspectrum} imply
that a strong dualizing object for 
a map $BG \to \pt$ is simply Klein's dualizing spectrum $D_G$
for $G$ seen from another point of view.

\begin{rem}[Connection with Klein's dualizing spectrum]
\label{rk:kleindualizingspectrum}
Let $G$ be a Lie group or the realization of a simplicial group,
and suppose the map $BG\to \pt$ admits a strong dualizing object
$\omega_{BG} \in \ho(\Spectra_{/BG})$. By
Theorem~\ref{thm:hypercartdata}, $BG$ is then Costenoble--Waner
dualizable with Costenoble--Waner dual $\omega_{BG}$.
In view of Example~\ref{ex:kleindualizingspectrum}, 
$\omega_{BG}$ therefore agrees with Klein's dualizing spectrum 
$D_G$ of $G$ under the equivalence between parametrized spectra over $BG$ and 
$\suspension^\infty_+ G$--module spectra \cite{LindMalkiewichMorita}.
\end{rem}

\begin{defn}
\label{def:smallfibred}
A continuous map $p\colon E\to B$ is called \emph{small-fibred}
with respect to $\calC$ if all its homotopy fibres are 
Costenoble--Waner dualizable in $\calC$.
\end{defn}

\begin{thrm}
\label{thm:hypercartexistence}
The following are equivalent conditions on a continuous map
$p\colon E\to B$:
\begin{enumerate}[(i)]
\item\label{it:smallfibred}
	$p$ is small-fibred with respect to $\calC$.
\item\label{it:strongdualizingobject}
	$p$ admits a strong dualizing object with respect to $\calC$.
\item\label{it:hypercartesianmorphismfors}
	There exists a hypercartesian morphism
	$\omega_p \oto S_B$ in $\hpC^\dfop$ covering $p$.
\item\label{it:hypercartesianmorphismforall}
	For every $X \in \ho(\calC_{/B})$ there exists
	a hypercartesian morphism $Y\oto X$ in $\hpC^\dfop$
	covering $p$.
\item\label{it:rightdualinex}
	If $E\xto{\ i\ } \tilde{E} \xto{\ \tilde{p}\ } B$
    is a factorization of $p$ into a homotopy equivalence 
    followed by a fibration, then the $1$--cell 
    $B_{\tilde{p}} \colon E \hto B$
    is right dualizable in $\Ex_B(\calC)$.
\end{enumerate}
\end{thrm}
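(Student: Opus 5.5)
We would organize the proof as a cycle of implications, first reducing to the case where $p$ is a fibration. Given a factorization $E\xto{i}\tilde{E}\xto{\tilde{p}}B$ with $i$ a homotopy equivalence, choose an opcartesian morphism of $\hpC^\dfop$ covering $i$. By Proposition~\ref{prop:superandhypercartmorprops}(\ref{it:hypercartiffopcart}) this morphism is hypercartesian. By parts (\ref{it:wecancel}) and (\ref{it:hypercartcomp}) of the same proposition, hypercartesian morphisms covering $p$ with target $X$ then correspond to hypercartesian morphisms covering $\tilde{p}$ with target $X$. Homotopy fibres and strong dualizing objects are likewise unchanged under this replacement, so conditions (\ref{it:smallfibred})--(\ref{it:hypercartesianmorphismforall}) hold for $p$ exactly when they hold for $\tilde{p}$. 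We may therefore assume $p$ is a fibration. In that case condition (\ref{it:rightdualinex}) reads: $B_p$ is right dualizable in $\Ex_B(\calC)$.

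The formal implications come first. (\ref{it:strongdualizingobject})$\Leftrightarrow$(\ref{it:hypercartesianmorphismfors}) is Proposition~\ref{prop:dualizingobjectsandcartesianmorphisms}. For (\ref{it:hypercartesianmorphismfors})$\Rightarrow$(\ref{it:hypercartesianmorphismforall}), take $\theta\colon\omega_p\oto S_B$ and the canonical cartesian $\phi\colon p^\ast X\to X$; then $\phi\oslash\theta\colon p^\ast X\tensor_E\omega_p\oto X$ is hypercartesian by Proposition~\ref{prop:superandhypercartmorprops}(\ref{it:cartoslashhypercart}). (\ref{it:hypercartesianmorphismforall})$\Rightarrow$(\ref{it:hypercartesianmorphismfors}) is the special case $X=S_B$. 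Finally (\ref{it:hypercartesianmorphismfors})$\Leftrightarrow$(\ref{it:rightdualinex}) is Theorem~\ref{thm:hypercartdata}, (\ref{it:hypercart})$\Leftrightarrow$(\ref{it:exdualpair}). For the reverse direction one must observe that any right dual $\omega_p$ of $B_p$, together with its unit $\eta$, arises from some $\tilde{\eta}\colon S_B\to p_!\omega_p$ via the equivalences \eqref{eq:uformula} and \eqref{eq:basechangeformula1}.

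It remains to tie in (\ref{it:smallfibred}). For (\ref{it:hypercartesianmorphismfors})$\Rightarrow$(\ref{it:smallfibred}), let $b\in B$ with fibre inclusion $F\to E$. Base change of $\theta$ along the homotopy pullback square over $b\colon\pt\to B$ is hypercartesian by Proposition~\ref{prop:superandhypercartmorprops}(\ref{it:hypercartbc}). It covers $F\to\pt$ and has target $S_\pt$ (by \eqref{eq:bc-unit}). Theorem~\ref{thm:hypercartdata} applied over the base $\pt$ then says that $F$ is Costenoble--Waner dualizable.

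The main obstacle is (\ref{it:smallfibred})$\Rightarrow$(\ref{it:rightdualinex}): fibrewise dualizability must be assembled into dualizability of $B_p$ in $\Ex_B(\calC)$. The plan is to use the canonical candidate right dual $S_B^\op\vartriangleright U_B$ of Remark~\ref{rk:candidaterightdual}, formed as a right closed structure / internal hom in $\Ex_B(\calC)$, together with its canonical unit. Dualizability holds exactly when a certain canonical comparison $2$--cell is an equivalence. Since equivalences in $\ho(\calC_{/E\times_B T})$ are detected on fibres, it suffices to check this after pulling back along each point $b\colon\pt\to B$. We therefore need the candidate dual, the comparison map, and the $\odot_B$--products to commute with base change $b^\ast$ along homotopy cartesian squares, which should follow from the commutation relations \eqref{eq:commrelshriek}, \eqref{eq:commrelstar} and the projection formula. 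After pulling back, the comparison becomes the corresponding map for $F_b\to\pt$, which is an equivalence because $F_b$ is Costenoble--Waner dualizable. Verifying that the internal-hom construction commutes with base change is the delicate point: right adjoints generally do not commute with $b^\ast$. If this fails, the fallback is to work with $p_\ast$ and \eqref{eq:commrelstar}, which does give a Beck--Chevalley equivalence for the relevant squares.
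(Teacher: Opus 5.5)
Your argument is correct. Apart from how (i) enters, it is the paper's proof.

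\textbf{(ii), (iii), (iv), (v).} The equivalences (ii)$\Leftrightarrow$(iii)$\Leftrightarrow$(iv) and (iii)$\Leftrightarrow$(v) go exactly as you describe. The paper does the fibration replacement only inside (iii)$\Leftrightarrow$(v). When you pass from $\tilde p$ back to $p$, your opcartesian morphism covering $i$ must have a prescribed target. That is where the paper uses that $i_!$ is essentially surjective.

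\textbf{How (i) enters.} The paper gets (i)$\Leftrightarrow$(v) in one step from Proposition~\ref{prop:exbdualizabilitycriterion}. You instead prove (iii)$\Rightarrow$(i) by base changing $\theta$ to each fibre and applying Theorem~\ref{thm:hypercartdata} over $\pt$, as in the proof of Proposition~\ref{prop:omegapinvertibility}. This is valid and avoids the bicategorical criterion in that direction. Your sketch of (i)$\Rightarrow$(v), however, is a reconstruction of Proposition~\ref{prop:exbdualizabilitycriterion} itself.

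\textbf{The delicate step.} The step you flag is that proposition's key input: $b^{[\ast]}$ is closed (Proposition~\ref{prop:fastclosed}). Your ``fallback'' is exactly why it holds, so it is not a fallback. In \eqref{eq:ntrianglerightpformula}, consider the square relating $b^\ast E_1\times b^\ast E_2\times b^\ast E_3\to b^\ast E_1\times b^\ast E_2$ to $\pi_{12}\colon E_1\times_B E_2\times_B E_3\to E_1\times_B E_2$.
\begin{itemize}
\item This square is a pullback of the fibration $\pi_{12}$, hence homotopy cartesian.
\item So \eqref{eq:commrelstar} commutes $b^\ast$ past $(\pi_{12})_\ast$.
\item And \eqref{eq:bc-homs} commutes $b^\ast$ past the internal hom.
\end{itemize}
What remains is a coherence check. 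These identifications must carry $b^{[\ast]}(\mu)$ to the map $\mu$ for $b^{[\ast]}(B_p)\homot \pt_r$, where $r\colon p^{-1}(b)\to\pt$. The paper delegates this check to the author's thesis.

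\textbf{A notational slip.} The candidate right dual of $B_p\colon B\hto E$ is $B_p\vartriangleright_B U^B_E$, not $S_B^\op\vartriangleright U_B$.
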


\begin{proof}
(\ref{it:smallfibred})
$\Leftrightarrow$
(\ref{it:rightdualinex})
by Proposition~\ref{prop:exbdualizabilitycriterion} and 
(\ref{it:strongdualizingobject}) 
$\Leftrightarrow$ 
(\ref{it:hypercartesianmorphismfors})
by Proposition~\ref{prop:dualizingobjectsandcartesianmorphisms}.
Clearly 
(\ref{it:hypercartesianmorphismforall}) 
$\Rightarrow$
(\ref{it:hypercartesianmorphismfors}),
and the converse implication
(\ref{it:hypercartesianmorphismfors})
$\Rightarrow$
(\ref{it:hypercartesianmorphismforall}) 
follows from
Proposition~\ref{prop:superandhypercartmorprops}(\ref{it:cartoslashhypercart}).
To prove the implication 
(\ref{it:hypercartesianmorphismfors})~%
$\Rightarrow$
(\ref{it:rightdualinex}),
assume $\theta\colon \omega_p \oto S_B$ is 
a hypercartesian morphism covering $p$.
Pick an opcartesian morphism
$\theta_{\sim} \colon \omega_p \oto \tilde{\omega}_p$
covering the homotopy equivalence $i\colon E \xto{\homot} \tilde{E}$.
We may then factor $\theta$ as a composite
$\theta = \tilde{\theta} \circ \theta_{\sim}$
where $\tilde{\theta}$ covers $\tilde{p}$.
By 
Proposition~\ref{prop:superandhypercartmorprops}(\ref{it:wecancel}),
the map $\tilde{\theta}$ is hypercartesian,
so (\ref{it:rightdualinex}) follows from 
Theorem~\ref{thm:hypercartdata}.
Finally, to show that
(\ref{it:rightdualinex})
$\Rightarrow$
(\ref{it:hypercartesianmorphismfors}),
assume that $B_{\tilde{p}}$ has a right dual in $\Ex_B(\calC)$.
By Theorem~\ref{thm:hypercartdata}, there then exists
a hypercartesian morphism 
$\tilde{\theta} \colon \tilde{\omega}_p \oto S_B$
covering $\tilde{p}$.
As $i$ is a homotopy equivalence, the 
functor $i_!$ is an equivalence of categories 
and therefore in particular essentially surjective.
It follows that we may find an opcartesian morphism 
$\theta_{\sim} \colon \omega_p \oto \tilde{\omega}_p$
covering $i$. By 
Proposition~\ref{prop:superandhypercartmorprops}(\ref{it:hypercartiffopcart}),
the map $\theta_{\sim}$ is also hypercartesian. 
By Proposition~\ref{prop:superandhypercartmorprops}(\ref{it:hypercartcomp}),
the composite
$\tilde{\theta} \circ \theta_{\sim} \colon \omega_p \oto S_B$ 
covering $p$ is now a hypercartesian morphism, proving 
(\ref{it:hypercartesianmorphismfors}).
\end{proof}

See Theorem~\ref{thm:abgpthc2} for a geometric interpretation
of the hypercartesian morphism $\omega_p \oto S_B$ of
Theorem~\ref{thm:hypercartexistence}(\ref{it:hypercartesianmorphismfors})
and the object $\omega_p$ when $p\colon E \to B$ is a bundle of smooth closed manifolds
and $\calC$ is the $\infty$--category of spectra.

The following corollary illustrates a pattern for constructing 
further cartesian morphisms in $\hpC^\dfop$ starting 
from the hypercartesian morphisms afforded by Theorem~\ref{thm:hypercartexistence}.
As pointed out in 
Example~\ref{eg:cartesianbutnotsupercartesian} below, 
the cartesian morphisms so constructed are in general not hypercartesian
or even supercartesian, however.

\begin{cor}
\label{cor:manifoldscart}
Suppose $M$ and $N$ are smooth closed manifolds and $f\colon M\to N$
is a continuous map. Then there exist cartesian morphisms
\begin{equation}
\label{eq:manifoldscartmaps}
	S^{-\tau_M}
	\longoto 
	S^{-\tau_N}
	\qquad
	\text{and}
	\qquad
	f^\ast  S^{\tau_N} \smashprod_M S^{-\tau_M}
	\longoto 
	S_N	
\end{equation}
in $\hp\Spectra^\dfop$ covering $f$.
\end{cor}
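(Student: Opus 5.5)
We obtain both morphisms by the pattern sketched in the introduction around diagram \eqref{diag:kappadef} and equation \eqref{eq:tildekappax}, specialized to $B = \pt$. Write $r_M\colon M\to\pt$ and $r_N\colon N\to\pt$ for the unique maps, so that $r_N\circ f = r_M$.

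The first step produces hypercartesian morphisms over $r_M$ and $r_N$. By Example~\ref{ex:cwdualityinspectra}, $M$ and $N$ are Costenoble--Waner dualizable in $\Spectra$ with Costenoble--Waner duals $S^{-\tau_M}$ and $S^{-\tau_N}$. Taking $B=\pt$ and $p=r_M$ (a fibration), part (\ref{it:exdualpair}) of Theorem~\ref{thm:hypercartdata} identifies this dual pair in $\Ex_\pt(\Spectra)=\Ex(\Spectra)$ with a hypercartesian morphism $\theta_M\colon S^{-\tau_M}\oto S_\pt$ of $\hpSpectra^\dfop$ covering $r_M$. The same argument gives $\theta_N\colon S^{-\tau_N}\oto S_\pt$ covering $r_N$. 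Alternatively, Theorem~\ref{thm:hypercartexistence} gives existence directly, but we also need the identification of the source with $S^{-\tau}$; the first route supplies it. By Proposition~\ref{prop:superandhypercartmorprops}(\ref{it:supercartimpliescart}) and (\ref{it:hypercartimpliessupercart}), both $\theta_M$ and $\theta_N$ are cartesian.

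The second step constructs the first morphism in \eqref{eq:manifoldscartmaps}. Since $\theta_N$ is cartesian and $r_M = r_N\circ f$, the universal property yields a unique $\kappa\colon S^{-\tau_M}\oto S^{-\tau_N}$ covering $f$ with $\theta_N\circ\kappa=\theta_M$. Proposition~\ref{prop:cartmorprops}(\ref{it:cartfactor}) then shows that $\kappa$ is cartesian.

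The third step constructs the second morphism. Let $\phi\colon f^\ast S^{\tau_N}\to S^{\tau_N}$ be the canonical cartesian morphism of $\hpSpectra$ covering $f$. The target $S^{\tau_N}$ is invertible by Definition~\ref{def:sxi}, and hence dualizable in $\ho(\Spectra_{/N})$. Proposition~\ref{prop:cartoslashproperty} therefore shows that
\[
\phi\oslash\kappa\colon f^\ast S^{\tau_N}\smashprod_M S^{-\tau_M}\longoto S^{\tau_N}\smashprod_N S^{-\tau_N}
\]
is cartesian. Composing with the vertical isomorphism $S^{\tau_N}\smashprod_N S^{-\tau_N}\homot S_N$ gives the required morphism, which remains cartesian by Proposition~\ref{prop:cartmorprops}(\ref{it:isosarecart}) and (\ref{it:compiscart}).

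The only point that needs care is the first step, namely matching the Costenoble--Waner dual pair of Example~\ref{ex:cwdualityinspectra} with the data $(\omega_p,\tilde\eta)$ of Theorem~\ref{thm:hypercartdata} when $B=\pt$. The remark following that theorem states exactly this, so the remaining steps are formal consequences of results already established.
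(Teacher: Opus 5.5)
Your proof is correct and follows the paper's own argument. You obtain hypercartesian, hence cartesian, morphisms $S^{-\tau_M}\oto S$ and $S^{-\tau_N}\oto S$ from Example~\ref{ex:cwdualityinspectra} and Theorem~\ref{thm:hypercartdata}, factor through $\theta_N$ using Proposition~\ref{prop:cartmorprops}(\ref{it:cartfactor}), and take the $\oslash$--product with the cartesian morphism $f^\ast S^{\tau_N}\to S^{\tau_N}$ via Proposition~\ref{prop:cartoslashproperty}. Your explicit checks that $S^{\tau_N}$ is dualizable and that composing with the vertical isomorphism $S^{\tau_N}\smashprod_N S^{-\tau_N}\homot S_N$ preserves cartesianness just spell out details the paper leaves implicit.
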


\begin{proof}
In view of Example~\ref{ex:cwdualityinspectra},
Theorems~\ref{thm:hypercartexistence} and~\ref{thm:hypercartdata}
imply that we have hypercartesian morphisms
$\theta_M \colon S^{-\tau_M} \oto S$ and 
$\theta_N \colon S^{-\tau_N} \oto S$ covering the maps
$M\to\pt$ and $N\to \pt$, respectively.
By Proposition~\ref{prop:superandhypercartmorprops}(\ref{it:hypercartimpliessupercart})
and (\ref{it:supercartimpliescart}),
the morphisms $\theta_M$ and $\theta_N$ are cartesian.
We may therefore find a morphism $\theta \colon S^{-\tau_M} \oto S^{-\tau_N}$
covering $f$ such that $\theta_M = \theta_N \circ \theta$,
and Proposition~\ref{prop:cartmorprops}(\ref{it:cartfactor}) 
implies that $\theta$ is cartesian.
This gives the first map in \eqref{eq:manifoldscartmaps}.
The second map can now be constructed by taking the $\oslash$--product
of $\theta$ with a cartesian morphism $f^\ast S^{\tau_N} \to S^{\tau_N}$
covering $f$; see Proposition~\ref{prop:cartoslashproperty}.
\end{proof}

\begin{example}[Cartesian morphism which is not supercartesian]
\label{eg:cartesianbutnotsupercartesian}
By Corollary~\ref{cor:manifoldscart},
there exists a cartesian morphism of the form $X \oto S_{S^1}$ 
in $\hp\Spectra^\dfop$ covering the inclusion $i\colon \pt \incl S^1$.
By Proposition~\ref{prop:dualizingobjectsandcartesianmorphisms}
and 
Corollary~\ref{cor:nodualizingobject}, $\hpSpectra^\dfop$ 
does not admit supercartesian morphisms of this form, however. 
Thus there are cartesian morphisms of $\hpSpectra^\dfop$ which are not supercartesian.
\end{example}

\begin{rem}
We do not know if there exist supercartesian morphisms that are not hypercartesian.
\end{rem}

\subsubsection{Applications of Theorems~\ref{thm:hypercartdata} and~\ref{thm:hypercartexistence} to Costenoble--Waner dualizability}
\label{subsubsec:applicationstocwdualizability}

Next we will use Theorems~\ref{thm:hypercartdata} and~\ref{thm:hypercartexistence} 
to establish further results on Costenoble--Waner dualizability of spaces.
In view of 
Proposition~\ref{prop:superandhypercartmorprops}(\ref{it:hypercartcomp}),
Theorem~\ref{thm:hypercartexistence} implies
\begin{prop}
\label{prop:smallfibredmaps}
The composite of maps which are small-fibred with respect to $\calC$
is small-fibred with respect to~$\calC$.
\qed
\end{prop}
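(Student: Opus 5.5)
The proof goes through the equivalence (\ref{it:smallfibred}) $\Leftrightarrow$ (\ref{it:hypercartesianmorphismfors}) of Theorem~\ref{thm:hypercartexistence}, which converts the fibrewise smallness condition into the existence of a hypercartesian morphism. Hypercartesian morphisms are closed under composition, so the statement will follow from that closure.

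Let $p\colon E\to B$ and $q\colon B\to C$ be small-fibred with respect to $\calC$. By Theorem~\ref{thm:hypercartexistence}(\ref{it:hypercartesianmorphismfors}), there is a hypercartesian morphism $\theta_q\colon \omega_q\oto S_C$ in $\hpC^\dfop$ covering $q$. By Theorem~\ref{thm:hypercartexistence}(\ref{it:hypercartesianmorphismforall}), applied to $p$ with $X=\omega_q\in\ho(\calC_{/B})$, there is a hypercartesian morphism $\theta\colon Y\oto\omega_q$ covering $p$. The composite $\theta_q\circ\theta\colon Y\oto S_C$ covers $q\circ p$, and it is hypercartesian by Proposition~\ref{prop:superandhypercartmorprops}(\ref{it:hypercartcomp}). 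Applying the implication (\ref{it:hypercartesianmorphismfors}) $\Rightarrow$ (\ref{it:smallfibred}) of Theorem~\ref{thm:hypercartexistence} to $q\circ p$ then shows that $q\circ p$ is small-fibred.

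All the substantive work is in the cited results. The only point to check is that Theorem~\ref{thm:hypercartexistence} is stated for arbitrary continuous maps, not only fibrations, so it applies directly to $p$, $q$ and $q\circ p$ without any factorization. Its statement confirms this, since the fibration replacement is handled inside its proof, so I expect no real obstacle.
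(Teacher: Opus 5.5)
Your proof is correct and is essentially the paper's own argument: the paper deduces the proposition from Theorem~\ref{thm:hypercartexistence} together with the closure of hypercartesian morphisms under composition, Proposition~\ref{prop:superandhypercartmorprops}(\ref{it:hypercartcomp}). Your use of part~(\ref{it:hypercartesianmorphismforall}) to choose the hypercartesian morphism over $p$ with target $\omega_q$ is exactly what makes the composite defined, so the argument is complete.
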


Applying the previous proposition to the maps $E\to B$ and $B\to \pt$,
we obtain

\begin{cor}
\label{cor:totalspacecwdualizable}
Suppose $p\colon E \to B$ is a continuous map 
such that $B$ and all homotopy fibres of $p$ are
Costenoble--Waner dualizable in $\calC$.
Then $E$ is also Costenoble--Waner dualizable in $\calC$. \qed
\end{cor}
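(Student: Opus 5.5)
The plan is to obtain the corollary directly from Proposition~\ref{prop:smallfibredmaps}, by translating Costenoble--Waner dualizability of a space into small-fibredness of its map to the point. First I will check this translation. For a space $X$ with unique map $r_X\colon X\to\pt$, the homotopy fibre of $r_X$ over the unique point of $\pt$ is weakly equivalent to $X$ itself. So $r_X$ is small-fibred with respect to $\calC$ in the sense of Definition~\ref{def:smallfibred} precisely when every space weakly equivalent to $X$ is Costenoble--Waner dualizable in $\calC$.

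This is where the main obstacle lies. The weak invariance of Costenoble--Waner dualizability (Proposition~\ref{prop:cwdualityweyinvariance}) is only stated later in the paper, so I will not invoke it. Instead I will avoid the issue using Theorem~\ref{thm:hypercartexistence}, applied to $r_X$. Condition (\ref{it:rightdualinex}) there may be checked using any factorization of $r_X$ into a homotopy equivalence followed by a fibration. The map $r_X$ is already a fibration, so I take the trivial factorization $X\xto{\id}X\xto{r_X}\pt$. Then (\ref{it:rightdualinex}) reads: the $1$--cell $\pt_{r_X}\colon X\hto \pt$ is right dualizable in $\Ex_\pt(\calC)=\Ex(\calC)$. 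By Definition~\ref{def:cwdualizablespace}, this says exactly that $X$ is Costenoble--Waner dualizable. One caveat is that I must make sure the $1$--cell named in Theorem~\ref{thm:hypercartexistence}(\ref{it:rightdualinex}) corresponds to the one in Definition~\ref{def:cwdualizablespace} under the identification $\Ex_\pt(\calC)=\Ex(\calC)$. This should be immediate from the appendix definitions of base change $1$--cells.

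Combining these steps with (\ref{it:smallfibred})$\Leftrightarrow$(\ref{it:rightdualinex}) of Theorem~\ref{thm:hypercartexistence}, I conclude that $X$ is Costenoble--Waner dualizable in $\calC$ if and only if $r_X$ is small-fibred with respect to $\calC$.

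The argument then runs as follows.
\begin{enumerate}
\item By hypothesis $B$ is Costenoble--Waner dualizable, so $r_B\colon B\to\pt$ is small-fibred.
\item By hypothesis every homotopy fibre of $p$ is Costenoble--Waner dualizable, so $p$ is small-fibred by definition.
\item By Proposition~\ref{prop:smallfibredmaps}, the composite $r_B\circ p=r_E$ is small-fibred.
\item By the equivalence above applied to $X=E$, the space $E$ is Costenoble--Waner dualizable in $\calC$.
\end{enumerate}
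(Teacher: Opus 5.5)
Your proof is correct and is essentially the paper's argument: the paper simply applies Proposition~\ref{prop:smallfibredmaps} to $p$ and $r_B$, leaving implicit the identification ``$X$ is Costenoble--Waner dualizable $\Leftrightarrow$ $r_X$ is small-fibred,'' which you justify explicitly via Theorem~\ref{thm:hypercartexistence}(\ref{it:smallfibred})$\Leftrightarrow$(\ref{it:rightdualinex}) applied to the trivial factorization of $r_X$. One cosmetic point, inherited from a typo in that theorem's statement: by Definition~\ref{def:cwdualizablespace} the relevant $1$--cell is $\pt_{r_X}\colon \pt\hto X$ rather than $X\hto\pt$, which does not affect the argument.
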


The following proposition could also be proved directly
without appealing to Theorems~\ref{thm:hypercartdata}
and~\ref{thm:hypercartexistence}.

\begin{prop}
\label{prop:cwdualityproduct}
Suppose $A$ and $B$ are Costenoble--Waner dualizable in $\calC$
with Costenoble--Waner duals $T_A$ and $T_B$, respectively.
Then $A\times B$ is also Costenoble--Waner dualizable 
in $\calC$ with Costenoble--Waner dual $T_A \exttensor T_B$.
\qed
\end{prop}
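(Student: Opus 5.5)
The plan is to argue through hypercartesian morphisms, following the hint that the proposition can be deduced from Theorems~\ref{thm:hypercartdata} and~\ref{thm:hypercartexistence}.

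\emph{Step 1: translate to morphisms.} By Theorem~\ref{thm:hypercartdata} in the case $B=\pt$ (see the remark following it), Costenoble--Waner dualizability of $A$ with dual $T_A$ gives a hypercartesian morphism $\theta_A\colon T_A \oto S_\pt$ in $\hpC^\dfop$ covering $r_A\colon A\to\pt$. Likewise we obtain $\theta_B\colon T_B\oto S_\pt$ covering $r_B$. These are hypercartesian morphisms determined by the units of the respective dualities.

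\emph{Step 2: form the external product.} By Proposition~\ref{prop:superandhypercartmorprops}(\ref{it:hcexttensorhc}), the product
\[
	\theta_A\exttensor\theta_B\colon T_A\exttensor T_B \longoto S_\pt\exttensor S_\pt \homot S_\pt
\]
is hypercartesian. It covers $r_A\times r_B$, which after identifying $\pt\times\pt$ with $\pt$ is $r_{A\times B}$. We compose with the isomorphism $S_\pt\exttensor S_\pt\isom S_\pt$ covering the homeomorphism $\pt\times\pt\to\pt$. This isomorphism is hypercartesian by part~(\ref{it:isosarehypercart}), and hypercartesian morphisms compose by part~(\ref{it:hypercartcomp}). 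The result is a hypercartesian morphism $T_A\exttensor T_B\oto S_\pt$ covering $A\times B\to\pt$.

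\emph{Step 3: translate back.} The map $A\times B\to\pt$ is a fibration, so Theorem~\ref{thm:hypercartdata} applies with $B=\pt$ to the pair $(T_A\exttensor T_B,\tilde\eta)$. Here $\tilde\eta$ is the vertical morphism determined by the composite of Step~2. The implication (\ref{it:hypercart})$\Rightarrow$(\ref{it:exdualpair}) then says that $(S_{A\times B}^\op, T_A\exttensor T_B)$ is a dual pair in $\Ex(\calC)$. Hence $A\times B$ is Costenoble--Waner dualizable with dual $T_A\exttensor T_B$.

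\emph{Main obstacle.} The only delicate point is bookkeeping in Step~1. Theorem~\ref{thm:hypercartdata}(\ref{it:exdualpair}) is phrased with $B_p$ for $p\colon A\to\pt$, and $\Ex_\pt(\calC)$ must be identified with $\Ex(\calC)$, so that $B_p$ becomes $\pt_{r}=S_A^\op$. I would check this identification against Definition~\ref{def:cwdualizablespace} and the remark that $\pt_r$ corresponds to $S_B^\op$.
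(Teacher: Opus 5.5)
Your proof is correct and is essentially the paper's argument: the paper writes $T_A\exttensor T_B \oto S$ as the hypercartesian composite $\theta_A\circ(\phi\oslash\tilde\theta_B)$, built from a base change of $\theta_B$, an $\oslash$-product with a cartesian morphism, and a composition, and this is exactly how Proposition~\ref{prop:superandhypercartmorprops}(\ref{it:hcexttensorhc}) is proved, so citing that part directly just packages the same steps. The identification you flag as the main obstacle is immediate, since $\Ex(\calC)=\Ex_\pt(\calC)$ by definition and $B_p$ for $B=\pt$ is literally $\pt_p$.
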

\begin{proof}
That $A\times B$ is Costenoble--Waner dualizable in $\calC$
follows by applying Corollary~\ref{cor:totalspacecwdualizable}
to the projection $A\times B \to B$.
By Theorem~\ref{thm:hypercartdata},
there exist hypercartesian morphisms $\theta_A \colon T_A \oto S$
and $\theta_B \colon T_B \oto S$ covering the maps $A\to \pt$ and 
and $B\to \pt$, respectively. Let $\pi_1 \colon A\times B \to A$
and $\pi_2 \colon A\times B \to B$ be the projections. 
By Proposition~\ref{prop:superandhypercartmorprops}(\ref{it:hypercartbc}),
the morphism $\tilde{\theta}_B\colon \pi_2^\ast T_B \oto S_B$ 
covering $\pi_1$ obtained by base change from 
$\theta_B$ is hypercartesian. Choose a cartesian morphism 
$\phi \colon \pi_1^\ast T_A \to T_A$ in $\hpC$
covering $\pi_1$.
By 
Proposition~\ref{prop:superandhypercartmorprops}(\ref{it:cartoslashhypercart}), 
(\ref{it:isosarehypercart})
and 
(\ref{it:hypercartcomp})
the composite
\[
	T_A \exttensor T_B
	=
	\pi_1^\ast T_A \tensor_{A\times B} \pi_2^\ast T_B
	\xto{\ \phi \oslash \tilde{\theta}_B\ }
	T_A \tensor_A S_A
	\xto{\ \homot\ }
	T_A 
	\xto{\ \theta_A\ }
	S
\]
is hypercartesian. It now follows from
Theorem~\ref{thm:hypercartdata}
that $T_A \exttensor T_B$
is the Costenoble--Waner dual of $A\times B$, as desired.
\end{proof}

\begin{prop}
\label{prop:cwdualityweyinvariance}
Any space weakly equivalent to a 
space which is Costenoble--Waner dualizable 
in $\calC$
is Costenoble--Waner  dualizable in $\calC$.
\end{prop}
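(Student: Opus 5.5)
Let $f\colon A \to B$ be a weak equivalence and suppose $B$ is Costenoble--Waner dualizable in $\calC$. We show that $A$ is Costenoble--Waner dualizable as well. The conversion goes through Theorem~\ref{thm:hypercartexistence} and Theorem~\ref{thm:hypercartdata}. Applied to $r^B\colon B\to\pt$ (whose only homotopy fibre is $B$ itself), these turn dualizability into the existence of a hypercartesian morphism $\theta_B\colon \omega_B \oto S_\pt$ in $\hpC^\dfop$ covering $r^B$. Indeed, $r^B$ is already a fibration, so part~(\ref{it:exdualpair}) of Theorem~\ref{thm:hypercartdata} with base $\pt$ is exactly Costenoble--Waner duality for $B$.

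Next we transport $\theta_B$ along $f$. Choose an opcartesian morphism $\theta_f\colon X \oto \omega_B$ of $\hpC^\dfop$ covering $f$. This is possible because $f_!$ is an equivalence of categories, so $\omega_B \homot f_! X$ for some $X\in \ho(\calC_{/A})$; concretely one can take $X = f^\ast \omega_B$. By Proposition~\ref{prop:superandhypercartmorprops}(\ref{it:hypercartiffopcart}), $\theta_f$ is hypercartesian, since it covers a weak equivalence. Then by part~(\ref{it:hypercartcomp}) of that proposition, the composite $\theta_B\circ\theta_f\colon X\oto S_\pt$ is a hypercartesian morphism covering $r^A = r^B\circ f$.

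Finally we apply Theorem~\ref{thm:hypercartdata} again, now to $r^A\colon A\to\pt$, which is also a fibration. The implication (\ref{it:hypercart})$\Rightarrow$(\ref{it:exdualpair}) shows that $(\pt_{r^A}, X)$ is a dual pair in $\Ex(\calC)$. Hence $A$ is Costenoble--Waner dualizable, with dual $X\homot f^\ast\omega_B$. Equivalently, we could invoke Theorem~\ref{thm:hypercartexistence}(\ref{it:hypercartesianmorphismfors})$\Rightarrow$(\ref{it:smallfibred}) for $r^A$.

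The argument is symmetric: the same steps prove the converse direction, so we may assume $B$ is the dualizable space without loss of generality. The only point requiring care is identifying Costenoble--Waner dualizability of a space $B$ with the existence of a hypercartesian morphism over $r^B$. This needs the remark following Theorem~\ref{thm:hypercartdata} for $B=\pt$, together with the observation that $\Ex_\pt(\calC)=\Ex(\calC)$ and that the $1$--cell $\pt_{r^B}$ is the $B_p$ of that theorem. Everything else is formal.
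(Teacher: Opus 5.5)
Your argument is correct when the \emph{target} of the weak equivalence $f\colon A\to B$ is dualizable, and it is exactly the second half of the paper's proof. An opcartesian morphism of $\hpC^\dfop$ over $f$ is hypercartesian by Proposition~\ref{prop:superandhypercartmorprops}(\ref{it:hypercartiffopcart}). Precomposing $\theta_B$ with it gives a hypercartesian morphism over $r^A=r^B\circ f$ by part~(\ref{it:hypercartcomp}). Theorem~\ref{thm:hypercartdata}, applied to the fibration $A\to\pt$, turns this into a Costenoble--Waner dual pair.

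The gap is your closing claim that the argument is symmetric. A weak equivalence $f\colon A\to B$ has no inverse in general, and ``weakly equivalent'' is a zigzag relation. So you must also carry dualizability from the \emph{source} of a weak equivalence to its target, and relabelling is not without loss of generality. Your composition trick cannot do this. Starting from a hypercartesian $\theta_A\colon\omega_A\oto S_\pt$ over $r^A$, getting a morphism over $r^B$ by precomposition would need a morphism of $\hpC^\dfop$ covering a weak equivalence $B\to A$, and such a map need not exist. This missing direction is the one typically needed. A space $A$ weakly equivalent to a finite CW complex $K$ receives a weak equivalence $K\to A$ (via a CW approximation and Whitehead's theorem), but it need not admit one $A\to K$.

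What you need here is a cancellation property, not a composition property. Given $\theta_A$ as above, choose an opcartesian morphism $\kappa\colon\omega_A\oto\omega_B$ of $\hpC^\dfop$ covering $f$; this is possible since $\hpC^\dfop\to\calT$ is an opfibration. Because $r^A=r^B\circ f$, the universal property of $\kappa$ factors $\theta_A=\theta_B\circ\kappa$, with $\theta_B\colon\omega_B\oto S_\pt$ covering $r^B$. Proposition~\ref{prop:superandhypercartmorprops}(\ref{it:wecancel}) then shows that $\theta_B$ is hypercartesian, since $\theta_A$ is. Finally, Theorem~\ref{thm:hypercartdata} applied to $r^B$ shows that $B$ is Costenoble--Waner dualizable. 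This is how the paper handles that direction.
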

\begin{proof}
It is enough to show that given a weak equivalence 
$f\colon A\to B$, 
the space $A$ is Costenoble--Waner dualizable in $\calC$ 
if and only if the space $B$ is or, what in view of 
Theorem~\ref{thm:hypercartexistence} is equivalent,
that the map $r_A \colon A \to \pt$
is covered by a hypercartesian morphism
with target $S \in \ho(\calC)$
if and only if the map $r_B \colon B \to \pt$ is.
Suppose first that $\theta_A\colon \omega_A \oto S$ is a hypercartesian
morphism covering $r_A$. Choose an opcartesian morphism
$\kappa \colon \omega_A \oto \omega_B$ covering $f$, and 
factor $\theta_A$ as a composite $\theta_A = \theta_B \circ \kappa$.
Now $\theta_B$ is a hypercartesian morphism covering $r_B$ by 
Proposition~\ref{prop:superandhypercartmorprops}(\ref{it:wecancel}).
Conversely, suppose $\theta_B \colon \omega_B \oto S$ is
a hypercartesian morphism covering $r_B$.
Choose a cartesian morphism 
$\phi\colon \omega_A \to \omega_B$ in $\hpC$
covering $f$. By Proposition~\ref{prop:wecartiffopcart},
the morphism $\phi$ is also opcartesian in $\hpC$,
so we may interpret it as an opcartesian morphism in $\hpC^\dfop$.
Now the composite $\theta_B \circ \phi$ is a hypercartesian 
morphism covering $r_A$ by 
Proposition~\ref{prop:superandhypercartmorprops}(\ref{it:hypercartiffopcart}) 
and~(\ref{it:hypercartcomp}).
\end{proof}

\subsubsection{Applications of Theorems~\ref{thm:hypercartdata} and~\ref{thm:hypercartexistence} to hypercartesian morphisms}
\label{subsubsec:applicationstohypercartesianmorphisms}

We will now use Theorems~\ref{thm:hypercartdata} and~\ref{thm:hypercartexistence} 
to derive results on hypercartesian morphisms.
We have the following preservation result for hypercartesian morphisms.

\begin{prop}
\label{prop:hypercartmorpreservation}
Suppose $\calD$ is another 
symmetric monoidal presentable $\infty$--category,
and suppose $F\colon \calC \to \calD$ is a 
symmetric monoidal $\infty$--functor which 
admits a right adjoint.  
Then the functor 
$F_\fw \colon \hpC^\dfop \longto \hpD^\dfop$
of Proposition~\ref{prop:fprimefw}
preserves hypercartesian morphisms covering 
continuous maps which are small-fibred with respect to $\calC$
\end{prop}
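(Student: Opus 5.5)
Let $\theta\colon X\oto Y$ be a hypercartesian morphism of $\hpC^\dfop$ covering a small-fibred map $p\colon E\to B$. The plan is to reduce to the universal case of a morphism with target $S_B$ covering a fibration, and to treat that case with Theorem~\ref{thm:hypercartdata}(\ref{it:exdualpair}): dual pairs in $\Ex_B(\calC)$ are preserved by the functor $\Ex_B(F)$ induced by $F$ (cf.\ the proof of Proposition~\ref{prop:cwdualitytransport} via Proposition~\ref{prop:exbf}).

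First I reduce to $p$ a fibration. Factor $p=\tilde p\circ i$ with $i$ a homotopy equivalence and $\tilde p$ a fibration, and factor $\theta=\tilde\theta\circ\theta_\sim$ with $\theta_\sim$ opcartesian covering $i$. By Proposition~\ref{prop:superandhypercartmorprops}(\ref{it:wecancel}), $\tilde\theta$ is hypercartesian. By Remark~\ref{rk:opcartpresfprimefw}, $F_\fw$ preserves opcartesian morphisms, so $F_\fw(\theta_\sim)$ is opcartesian over a weak equivalence and hence hypercartesian by Proposition~\ref{prop:superandhypercartmorprops}(\ref{it:hypercartiffopcart}). It then suffices to show $F_\fw(\tilde\theta)$ is hypercartesian and apply Proposition~\ref{prop:superandhypercartmorprops}(\ref{it:hypercartcomp}). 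Note $\tilde p$ is still small-fibred.

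Next I reduce to target $S_B$. By Theorem~\ref{thm:hypercartexistence} there is a hypercartesian $\theta_p\colon\omega_p\oto S_B$ covering $p$, and by Proposition~\ref{prop:superandhypercartmorprops}(\ref{it:cartoslashhypercart}) so is $\phi\oslash\theta_p\colon p^\ast Y\tensor_E\omega_p\oto Y$ for the canonical cartesian $\phi\colon p^\ast Y\to Y$. Both this and $\theta$ are cartesian with target $Y$, so Proposition~\ref{prop:cartmorprops}(\ref{it:cartsourceiso}) gives $\theta=(\phi\oslash\theta_p)\circ\iota$ for a vertical isomorphism $\iota$. Since $F$ is symmetric monoidal, $F_\fw$ preserves cartesian morphisms, $\tensor_\internal$ and opcartesian morphisms, so $F_\fw$ commutes with $\oslash$ up to isomorphism: $F_\fw(\phi\oslash\theta_p)\isom F_\fw(\phi)\oslash F_\fw(\theta_p)$. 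By Proposition~\ref{prop:superandhypercartmorprops}(\ref{it:cartoslashhypercart}), (\ref{it:isosarehypercart}) and (\ref{it:hypercartcomp}), it therefore suffices to show that $F_\fw(\theta_p)$ is hypercartesian, where $F_\fw(\theta_p)$ is determined by $F_B(\tilde\eta)\colon S_B^\calD\homot F_B(S_B)\to F_B(p_!\omega_p)\homot p_!F_E(\omega_p)$, using \eqref{eq:fshriektcomm}.

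For this key case, Theorem~\ref{thm:hypercartdata}(\ref{it:hypercart})$\Rightarrow$(\ref{it:exdualpair}) makes $(B_p,\omega_p)$ a dual pair in $\Ex_B(\calC)$ with unit $\eta$ corresponding to $\tilde\eta$. I apply the functor $\Ex_B(F)$ from Appendix~\ref{app:exbc}, the $B$-parametrized version of Proposition~\ref{prop:exbf}. It is a pseudofunctor of bicategories because $F$ is symmetric monoidal and commutes with the $(-)_!$ used in $\odot_B$. It therefore sends dual pairs to dual pairs, $B_p$ to $B_p$, $\omega_p$ to $F_E(\omega_p)$, and the unit $\eta$ to the $2$-cell corresponding to $F_B(\tilde\eta)$ under \eqref{eq:uformula} and \eqref{eq:basechangeformula1}. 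Theorem~\ref{thm:hypercartdata}(\ref{it:exdualpair})$\Rightarrow$(\ref{it:hypercart}) applied in $\calD$ then gives that $F_\fw(\theta_p)$ is hypercartesian.

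The main obstacle is this last compatibility check: that $\Ex_B(F)$ exists for the parametrized bicategory, that it carries base change $1$-cells $B_p$ to base change $1$-cells, and that the identifications of units match $F_B(\tilde\eta)$. I expect this to follow by naturality of the equivalences \eqref{eq:fshriektcomm} and of the monoidality constraints of the $F_B$, but it has to be checked against the construction in the appendix.
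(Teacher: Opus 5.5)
Your proposal is correct and follows essentially the same route as the paper: reduce to a fibration using Proposition~\ref{prop:superandhypercartmorprops}(\ref{it:wecancel}) and (\ref{it:hypercartiffopcart}), identify the morphism up to isomorphism with an $\oslash$--product $\phi\oslash\theta_p$ with the hypercartesian $\theta_p\colon \omega_p \oto S_B$, and conclude via Theorem~\ref{thm:hypercartdata} because $\Ex_B(F)$ carries the dual pair $(B_p,\omega_p)$ to a dual pair $(B_p,F_E(\omega_p))$. The compatibility check you flag, that $\Ex_B(F)$ preserves the base change $1$--cell $B_p$ and sends the unit to the one determined by $F_\fw(\theta_p)$, is treated in the paper only as an observation based on Proposition~\ref{prop:exbf}, which is already stated for $\Ex_B$ and so needs no separate parametrized version.
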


\begin{proof}
Let $p\colon E\to B$ be a small-fibred continuous map
and let $\theta\colon X \oto Y$ be a hypercartesian morphism
in $(\hpC)^\dfop$ covering $p$.
Our claim is that $F_\fw(\theta)$ is  hypercartesian in 
$(\hpD)^\dfop$.
Factor $p$ as a composite $p = \tilde{p} \circ i$
where $\tilde{p}\colon \tilde{E} \to B$ 
is a fibration and $i \colon E\to \tilde{E}$ 
is a homotopy equivalence, 
pick an opcartesian morphism
$\theta_{\sim} \colon X \oto \tilde{X}$
in $(\hpC)^\dfop$
covering $i$, 
and factor $\theta$ as a composite
$\theta = \tilde{\theta} \circ \theta_{\sim}$
where $\tilde{\theta}\colon \tilde{X} \to Y$ covers $\tilde{p}$.
By 
Proposition~\ref{prop:superandhypercartmorprops}(\ref{it:wecancel}),
the map $\tilde{\theta}$ is hypercartesian.
By 
Proposition~\ref{prop:opcartpreservation},
the map $F_\fw(\theta_{\sim})$ in 
$(\hpD)^\dfop$ is opcartesian, and
hence by 
Proposition~\ref{prop:superandhypercartmorprops}(\ref{it:hypercartiffopcart})
also hypercartesian.
In view of 
Proposition~\ref{prop:superandhypercartmorprops}(\ref{it:hypercartcomp})
and the factorization
$F_\fw(\theta) = F_\fw(\tilde{\theta})\circ F_\fw(\theta_{\sim})$,
to prove the claim, it is enough to show 
that the morphism
$F_\fw(\tilde{\theta})$ is hypercartesian.

Since the map $p$  is small-fibred with respect to $\calC$, 
the fibration $\tilde{p}$ also is.
Thus 
Theorem~\ref{thm:hypercartexistence}
implies that there exists a hypercartesian morphism
$\theta_{\tilde{p}}\colon 
\omega_{\tilde{p}} \oto S_B$ covering $\tilde{p}$.
Let $c \colon \tilde{p}^\ast (Y) \to Y$ be the canonical
cartesian morphism in $\hpC$ covering $\tilde{p}$.
By 
Proposition~\ref{prop:superandhypercartmorprops}(\ref{it:cartoslashhypercart})
the $\oslash$--product 
\[
	c \oslash \theta_{\tilde{p}}
	\colon 
	p^\ast(Y) \tensor_{\tilde{E}} \omega_{\tilde{p}}
	\longoto
	Y \tensor_{B} S_B
\]
covering $\tilde{p}$
is hypercartesian. From the canonical equivalence $Y\tensor_B S_B \homot Y$
and the uniqueness of cartesian morphisms we deduce that 
$\tilde{\theta}$ 
and the morphism 
$c \oslash \theta_{\tilde{p}}$
are equivalent.
Parts (\ref{it:isosarehypercart}) and (\ref{it:supercartcomp})
of Proposition~\ref{prop:superandhypercartmorprops}
imply that any morphism equivalent to a hypercartesian morphism
is hypercartesian.
Thus it is enough to show that the morphism 
$F_\fw(c \oslash \theta_{\tilde{p}})$ is hypercartesian
in $\hpD^\dfop$.
As $F_\fw(c \oslash \theta_{\tilde{p}})$ is equivalent
to the morphism
$F_\fw(c) \oslash F_\fw(\theta_{\tilde{p}})$,
it follows from
Proposition~\ref{prop:superandhypercartmorprops}(\ref{it:cartoslashhypercart}) 
that it is enough to show that 
$F_\fw(\theta_{\tilde{p}})$
is hypercartesian.
The claim now follows from Theorem~\ref{thm:hypercartdata}
and the observation that the strong framed
functor
\[
	\Ex_B(F) \colon \Ex_B(\calC) \longto \Ex_B(\calD)
\]
of Proposition~\ref{prop:exbf}
takes the dual pair 
$(B_{\tilde{p}}, \omega_{\tilde{p}}) = (S_B^\op, \omega_{\tilde{p}})$
in $\Ex_B(\calC)$
to a dual pair 
$(B_{\tilde{p}}, F_\fw(\omega_{\tilde{p}}))$
in $\Ex_B(\calD)$.
\end{proof}

We also record the following criterion for detecting hypercartesian morphisms of 
$\hpC^\dfop$ covering small-fibred fibrations.
\begin{prop}
\label{prop:smallfibcartcrit}
Suppose $p\colon E \to B$ is a fibration which is small-fibred with respect to $\calC$.
Then a morphism $\kappa \colon Y \oto X$ of $\hpC^\dfop$ covering $p$
is hypercartesian if and only if for all $b\in B$
its base change along the pullback square
\begin{equation}
\label{eq:epbfibresq} 
\vcenter{\xymatrix{
	p^{-1}(b)
	\ar@{^{(}->}[r]^-{i_b} %
	\ar[d]_{r_{b}}
	&
	E
	\ar[d]^p
	\\
	\pt
	\ar[r]^b
	&
	B
}}
\end{equation}
is.
\end{prop}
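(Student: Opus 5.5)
The forward implication is immediate from Proposition~\ref{prop:superandhypercartmorprops}(\ref{it:hypercartbc}), since the square \eqref{eq:epbfibresq} is a pullback along a fibration and hence homotopy cartesian. For the converse, I would reduce to a statement about a fixed hypercartesian morphism. Since $p$ is small-fibred, Theorem~\ref{thm:hypercartexistence}(\ref{it:hypercartesianmorphismforall}) gives a hypercartesian morphism $\theta \colon Y' \oto X$ covering $p$; concretely, we may take $\theta = c \oslash \theta_p$ with $c\colon p^\ast X \to X$ the canonical cartesian morphism and $\theta_p\colon \omega_p \oto S_B$ hypercartesian. Since hypercartesian implies cartesian, there is a unique vertical morphism $\beta\colon Y \oto Y'$ in $\hpC^\dfop$, lying over $\id_E$, with $\kappa = \theta\circ\beta$. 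By Proposition~\ref{prop:superandhypercartmorprops}(\ref{it:hypercartiffopcart}), (\ref{it:hypercartcomp}) and (\ref{it:cartfactor})-type cancellation, $\kappa$ is hypercartesian if and only if $\beta$ is an isomorphism. In one direction this uses that isomorphisms are hypercartesian and composites of hypercartesian morphisms are hypercartesian. In the other it uses Proposition~\ref{prop:cartmorprops}(\ref{it:cartsourceiso}) together with (\ref{it:cartoveriso}). So the task is to show that $\beta$ is an isomorphism.

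The vertical morphism $\beta$ corresponds to a morphism $\hat\beta\colon Y' \to Y$ in $\ho(\calC_{/E})$. Since equivalences are detected on fibres, it suffices to show that $i_b^\ast\hat\beta$ is an equivalence for each $b \in B$. Now base change the factorization $\kappa = \theta\circ\beta$ along \eqref{eq:epbfibresq}. Stacking base change squares, as in the proof of Proposition~\ref{prop:superandhypercartmorprops}(\ref{it:hypercartbc}), shows that the base change $\kappa_b$ of $\kappa$ factors as $\theta_b \circ \beta_b$. Here $\theta_b$ is the base change of $\theta$ and $\beta_b$ is the base change of $\beta$ along the identity square over $i_b$, which amounts to the vertical morphism determined by $i_b^\ast\hat\beta$. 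This identification of the pullback of a vertical morphism of $\hpC^\dfop$ with $i_b^\ast$ applied to the corresponding morphism of $\hpC$ is the step I expect to need the most care. It should follow from the construction in Proposition~\ref{prop:basechange}, because for identity maps the opcartesian leg is an identity and $\beta$ is just the zigzag $[Y\xto{\id}Y\xot{\hat\beta}Y']$; pulling it back along cartesian morphisms over $i_b$ therefore yields $i_b^\ast \hat\beta$.

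By hypothesis $\kappa_b$ is hypercartesian, and $\theta_b$ is hypercartesian by Proposition~\ref{prop:superandhypercartmorprops}(\ref{it:hypercartbc}). Both are then cartesian and cover $r_b$ with the same target $b^\ast X$. By Proposition~\ref{prop:cartmorprops}(\ref{it:cartsourceiso}) together with the uniqueness of factorizations through the cartesian morphism $\theta_b$, the morphism $\beta_b$ is a vertical isomorphism, so $i_b^\ast \hat\beta$ is an equivalence. Since this holds for all $b$, $\hat\beta$ is an equivalence, and hence $\kappa$ is hypercartesian.
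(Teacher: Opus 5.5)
Your proposal is correct and is essentially the paper's proof. You factor $\kappa=\theta\circ\beta$ through a hypercartesian $\theta$ covering $p$, note that $\kappa$ is hypercartesian iff the vertical morphism $\beta$ is an isomorphism, and detect this fibrewise: base change gives $\kappa_b=\theta_b\circ\beta_b$, where $\beta_b$ corresponds to $i_b^\ast\hat\beta$ and $\theta_b$ is hypercartesian by Proposition~\ref{prop:superandhypercartmorprops}(\ref{it:hypercartbc}). The differences are cosmetic: you handle the forward implication directly via (\ref{it:hypercartbc}) rather than reading both directions off one chain of equivalences, and you spell out the identification of the base-changed vertical morphism with $i_b^\ast\hat\beta$, which the paper uses without comment.
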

\begin{proof}
By Theorem~\ref{thm:hypercartexistence}, there exists a hypercartesian morphism
$\theta \colon Y' \oto X$ in $\hpC^\dfop$ covering $p$. Factor $\kappa$ as
a composite 
\[
	\kappa 
	\colon
	Y 
	\xoto{\quad\mathclap{\tilde{\kappa}}\quad} 
	Y' 
	\xoto{\quad\mathclap{\theta}\quad} 
	X
\]
where $\tilde{\kappa}$ covers the identity map of $E$.
Given $b\in B$, 
let $\kappa_b \colon i_b^\ast Y \oto b^\ast X$
and $\theta_b \colon i_b^\ast Y' \oto b^\ast X$
be the base changes of $\kappa$ and $\theta$, respectively,
along \eqref{eq:epbfibresq} with respect to the 
cartesian morphism 
$i_b^\ast Y \to Y$ and $i_b^\ast Y' \to Y'$
covering $i_b$ and $b^\ast X \to X$ covering $b\colon \pt \to B$.
Then $\kappa_b$ factors as a composite
\[
	\kappa_b 
	\colon
	i_b^\ast Y 
	\xoto{\quad\mathclap{i_b^\ast\tilde{\kappa}}\quad} 
	i_b^\ast Y' 
	\xoto{\quad\mathclap{\theta_b}\quad} 
	b^\ast X
\]
We now have
\[
\begin{aligned}
	\text{$\kappa$ is hypercartesian}
	&
	\Longleftrightarrow
	\text{$\tilde{\kappa}$ is an equivalence in $(\hpC^\dfop)_E = \ho(\calC_{/E})^\op$}
	\\
	&
	\Longleftrightarrow
	\text{
		$i_b^\ast(\tilde{\kappa})$ 
		is an equivalence in 
		$\ho(\calC_{/p^{-1}(b)})^\op$ for all $b\in B$
	}
	\\
	&
	\Longleftrightarrow
	\text{$\kappa_b$ is hypercartesian for all $b\in B$}
\end{aligned}
\]
where the first equivalence follows from the fact that 
the map $\theta$ is hypercartesian;
the second from the fact that equivalences
in $\ho(\calC_{/E})^\op$ are detected on fibres;
and the last from the fact that 
the map $\theta_b$ is hypercartesian for all $b$ by 
Theorem~\ref{prop:superandhypercartmorprops}(\ref{it:hypercartbc}).
\end{proof}

\subsubsection{Supercartesian and hypercartesian morphisms and
base change functors}
\label{subsubsec:superhypercartandbasechangefunctors}

Continuing the work we started in Section~\ref{subsubsec:dualizingobjects},
we will now elaborate on the implications the 
existence of supercartesian or hypercartesian morphisms
has for the existence of base change functors.
Suppose $p\colon E\to B$ is a continuous map admitting a supercartesian
morphism $\theta\colon\omega_p \oto S_B$ in $\hpC^\dfop$ 
covering it. Then, as we saw in Section~\ref{subsubsec:dualizingobjects}, 
the usual adjunctions $p_! \dashv p^\ast \dashv p_\ast$ 
extend to a string of adjunctions
\begin{equation}
\label{eq:bcadjunctions}
	p^\invshriek 
	\dashv
	p_!
	\dashv
	p^\ast
	\dashv
	p_\ast 
\end{equation}
where 
\begin{equation}
\label{eq:pinvshriekformula2}
 	p^\invshriek X = \omega_p \tensor_E p^\ast X.
\end{equation}
When $\theta$ is hypercartesian,
the string of adjunctions~\eqref{eq:bcadjunctions} extends one step
to the right. To see this, notice that without loss of generality
we may assume that $p$ is a fibration. 
Then we have a dual pair $(B_p,\omega_p)$ in $\Ex_B(\calC)$,
and applying $(-)^\op$ we obtain another dual pair 
$(\omega_p^\op, B_p^\op) = (\omega_p^\op, {}_p B)$.
In view of Remark~\ref{rk:dualityadjunction},
together with the dual pair $({}_p B, B_p)$ of 
Proposition~\ref{prop:basechangeobjdualpair},
these dual pairs imply that we have a string of adjunctions
\[
	\omega_p \odot_B (-)
	\,\dashv\,
	B_p \,\odot_B (-)
	\,\dashv\,
	{}_p B \odot_B (-)
	\,\dashv\,
	\omega_p^\op \odot_B (-)
	\,\dashv\,
	\,
	(-) \vartriangleleft_B \omega_p^\op,
\]
implying adjunctions
\[
	p^\invshriek 
	\dashv
	p_!
	\dashv
	p^\ast
	\dashv
	p_\ast
	\dashv
	p^{!}
\]
where the functor
$p^! \colon \ho(\calC_{/B}) \to \ho(\calC_{/E})$ is given by
\begin{equation}
\label{eq:puppershriekformula}
	p^! X = F_E(\omega_p,p^\ast X).
\end{equation}
(While the above formula certainly makes sense when $\theta$
is just supercartesian, it does not seem clear how to prove
that it defines a right adjoint to $p_\ast$ without knowing 
that $\theta$ is hypercartesian.)

The base change functors
$p^\invshriek$, $p_!$, $p^\ast$, $p_\ast$, and $p^{!}$
satisfy various 
compatibility relations with respect to each other and 
the tensor products $\tensor_E$ and $\tensor_B$ and 
the hom functors $F_E$ and $F_B$ beyond the ones
listed in \eqref{eq:bc-unit}--\eqref{eq:bc-shriekadj}.
Instead of attempting an
exhaustive treatment here, we will simply point out 
some examples. First, assuming that $\theta$ us
supercartesian, \eqref{eq:pinvshriekformula2}
readily implies that 
\[
	p^\invshriek (X\tensor_B Y) \homot p^\invshriek X \tensor_E p^\ast Y
\]
for $X,Y\in \ho(\calC_{/B})$, and conjugating this identity yields
the identities
\[
	p_\ast F_E(p^\invshriek X, Z) \homot F_B(X, p_! Z)
	\qquad
	\text{and}
	\qquad
	p_! F_E(p^\ast Y, Z) \homot F_B(Y,p_!Z)
\]
for $X,Y\in \ho(\calC_{/B})$ and $Z\in \ho(\calC_{/E})$.
Moreover, conjugating~\eqref{eq:pinvshriekformula2}
yields the formula
\[
	p_! Z \homot p_\ast F_E(\omega_p,Z)
\]
Finally, assuming that $\theta$ is hypercartesian,
conjugating \eqref{eq:puppershriekformula} yields a
``Wirthmüller isomorphism''
\begin{equation}
\label{eq:wirthmulleriso}
	 p_!(\omega_p \tensor_E Z) \homot p_\ast Z 
\end{equation}
for $Z\in \ho(\calC_{/E})$; cf.\ \cite[(19.5.4)]{MaySigurdsson}.
See \cite{BDS} for related work in a slightly different context;
we warn the reader that their $\omega_p$ corresponds to the dual
of ours.

Further assumptions on the dualizing object $\omega_p$ 
imply the existence of further base change functors.
It is common for the dualizing object $\omega_p$ to be invertible
in $\ho(\calC_{/E})$;
in view of Proposition~\ref{prop:omegapinvertibility} below,
Theorem~\ref{thm:hypercartdata},  
Example~\ref{ex:cwdualityinspectra} and
Theorem~\ref{thm:ellcptgrpscwdualizable},
this happens for example when 
$\calC = \Spectra$ and
$p\colon E\to B$ is a bundle of closed smooth manifolds,
and when $\calC = \Spectra^\ell$ and $p\colon E\to B$
is a fibration with fibre $\loops BG$ where $BG$ is a semisimple
$\ell$--compact group. 
When 
$\omega_p$ is invertible, using formula \eqref{eq:pinvshriekformula2},
it is easy to see that 
\eqref{eq:bcadjunctions} extends to an infinite string of adjunctions
\[\xymatrix@C=0.0em@!R=1.4ex{
	\renewcommand{\labelstyle}{\textstyle}
	\cdots 
	& \dashv & p_{(-1)} 
	& \dashv & p^{(1)} 
	& \dashv & p_{(0)} 
	& \dashv & p^{(0)}
	& \dashv & p_{(1)} 
	& \dashv & p^{(-1)} 
	& \dashv & p_{(2)}
	& \dashv &
	\cdots
	\\
	&&&&
	p^\invshriek \ar@{}[u]|{\rotatebox{-90}{$\homot$}}
	&&
	p_! \ar@{}[u]|{\rotatebox{-90}{$\homot$}}
	&&
	p^\ast \ar@{}[u]|{\rotatebox{-90}{$\homot$}}
	&&
	p_\ast \ar@{}[u]|{\rotatebox{-90}{$\homot$}}
	&&
	p^! \ar@{}[u]|{\rotatebox{-90}{$\homot$}}
}\]
where 
\[
	p^{(n)} X \homot \omega_p^{\tensor n}\tensor_E p^\ast X
	\qquad\text{and}\qquad
	p_{(n)} Z \homot p_!(\omega_p^{\tensor  n}\tensor_E Z)
\]
for all $n\in \Z$. Here $\omega_p^{\tensor 0} = S_E$ and 
$\omega_p^{\tensor (-n)} = (D\omega_p)^{\tensor n}$
for $n > 0$. Cf.\ \cite[Thm.~1.9]{BDS}. In fact, for the left adjoint $p_{(-1)}$
of $p^\invshriek$ to exist, it is enough to assume that
$\omega_p$ is dualizable; in that case, one deduces 
from \eqref{eq:pinvshriekformula2} that $p^\invshriek$
is given by $p^\invshriek X = F_E(D\omega_p, p^\ast X)$,
so that the functor $p_{(-1)}$ defined by the above formula
gives a left adjoint for $p^\invshriek$.
Similarly, when $\theta$ is hypercartesian, 
formula \eqref{eq:puppershriekformula} implies that 
dualizability of $\omega_p$  suffices to ensure that 
the right adjoint  $p_{(2)}$ of $p^!$ exists.

In the above discussion, we made use of the following
criterion for the invertibility of a strong dualizing 
object for a map $p$.
\begin{prop}
\label{prop:omegapinvertibility}
Suppose $p\colon E \to B$ is a continuous map which is small-fibred with respect to $\calC$,
and let $\omega_p \oto S_B$ be the hypercartesian morphism covering $p$ afforded by 
Theorem~\ref{thm:hypercartexistence}. Then $\omega_p$ is invertible as
an object of $\ho(\calC_{/E})$ if and only if for each homotopy fibre $F$ of $p$, 
the Costenoble--Waner dual of $F$ is invertible as an object of $\ho(\calC_{/F})$
\end{prop}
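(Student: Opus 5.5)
We reduce the statement to a fibrewise check, in the style of the proof of Proposition~\ref{prop:smallfibcartcrit}.

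First, reduce to the case where $p$ is a fibration. Factor $p=\tilde p\circ i$ with $i\colon E\to\tilde E$ a homotopy equivalence and $\tilde p$ a fibration. As in the proof of Theorem~\ref{thm:hypercartexistence}, we may write $\theta=\tilde\theta\circ\theta_\sim$, where $\theta_\sim\colon\omega_p\oto\tilde\omega_p$ is opcartesian covering $i$. Then $\tilde\theta$ is hypercartesian by Proposition~\ref{prop:superandhypercartmorprops}(\ref{it:wecancel}), and $\omega_p\homot i^\ast\tilde\omega_p$ because $i_!$ and $i^\ast$ are inverse equivalences. Now $i^\ast$ is a symmetric monoidal equivalence, so it preserves and reflects invertibility. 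The homotopy fibres of $p$ and $\tilde p$ also agree, so it suffices to treat $\tilde p$.

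Next, recall that invertibility in $\ho(\calC_{/E})$ is detected on fibres. An object $Z$ is invertible iff the canonical evaluation $F_E(Z,S_E)\tensor_E Z\to S_E$ is an equivalence. Both functors in this map are compatible with pullback along $i_e\colon\{e\}\to E$ when $Z$ is dualizable, by \eqref{eq:bc-homs} and \eqref{eq:bc-product}. So the cleanest formulation is Corollary~\ref{cor:fwdinvcrit}, which is cited in Definition~\ref{def:sxi}: $Z$ is invertible iff all fibres $Z_e$ are invertible in $\ho(\calC)$. A more useful variant is the following: for a fibre inclusion $j\colon F=p^{-1}(b)\incl E$, the object $Z$ is invertible over the points of $F$ iff $j^\ast Z$ is invertible in $\ho(\calC_{/F})$. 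This follows by applying the same fibrewise criterion to $j^\ast Z$, since $(j^\ast Z)_x=Z_{j(x)}$. Every point of $E$ lies in some fibre $p^{-1}(b)$, so $\omega_p$ is invertible iff $j_b^\ast\omega_p$ is invertible for every $b\in B$.

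Finally, identify $j_b^\ast\omega_p$ with the Costenoble--Waner dual of $F_b=p^{-1}(b)$. Base change $\theta\colon\omega_p\oto S_B$ along the pullback square \eqref{eq:epbfibresq}, which is homotopy cartesian since $p$ is a fibration, using the cartesian morphisms $j_b^\ast\omega_p\to\omega_p$ and $S_\pt\to S_B$. By Proposition~\ref{prop:superandhypercartmorprops}(\ref{it:hypercartbc}) this yields a hypercartesian morphism $j_b^\ast\omega_p\oto S$ covering $F_b\to\pt$. Theorem~\ref{thm:hypercartdata} with $B=\pt$ then identifies $j_b^\ast\omega_p$ as the Costenoble--Waner dual of $F_b$; see the remark after that theorem. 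Any homotopy fibre $F$ is weakly equivalent to some $F_b$. Proposition~\ref{prop:cwdualityweyinvariance} and its proof show that the Costenoble--Waner duals correspond under the pullback equivalence, which preserves invertibility. This completes the argument.

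The main obstacle is the fibrewise detection of invertibility, if Corollary~\ref{cor:fwdinvcrit} is not literally available in the needed form. The fallback is to show directly that $Z$ is invertible iff $Z$ is dualizable and the evaluation $DZ\tensor_E Z\to S_E$ is an equivalence. Equivalences are detected on fibres by Section~\ref{subsubsec:basechangefunctors}. Dualizability of $\omega_p$ itself must be argued separately, for instance via local triviality up to homotopy, or by noting that an object whose fibres are all invertible is locally constant with invertible values and hence invertible.
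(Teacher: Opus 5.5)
Your proposal is correct and matches the paper's proof. Both reduce to a fibration via the factorization $\theta=\tilde\theta\circ\theta_\sim$, detect invertibility of $\tilde\omega_p$ on the pullbacks $j_b^\ast\tilde\omega_p$ via Corollary~\ref{cor:fwdinvcrit}, and identify these pullbacks as the Costenoble--Waner duals of the fibres using Proposition~\ref{prop:superandhypercartmorprops}(\ref{it:hypercartbc}) and Theorem~\ref{thm:hypercartdata}; since Corollary~\ref{cor:fwdinvcrit} is available in the paper in exactly the form you need, the fallback in your last paragraph is unnecessary.
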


\begin{proof}
Let $\theta\colon \omega_p \oto S_B$ be the hypercartesian morphism covering $p$.
Factor $p$ as a composite $p = \tilde{p} \circ i$ where $p\colon \tilde{E} \to B$
is a fibration and $i\colon E \to \tilde{E}$ is a homotopy equivalence.
Let $\theta_{\sim} \colon \omega_p \oto \tilde{\omega}_p$
be an opcartesian morphism covering $i$, and factor $\theta$ as 
$\theta = \tilde{\theta} \circ \theta_{\sim}$ where $\tilde{\theta}$ covers $\tilde{p}$.
Since $i$ is a homotopy equivalence, the functor 
$i^\ast \colon \ho(\calC_{/\tilde{E}}) \to \ho(\calC_{/E})$ is a symmetric monoidal
equivalence of categories with inverse $i_!$.
Consequently, the object $\omega_p \homot i^\ast i_! \omega_p$
is invertible if and only if the object $\tilde{\omega}_p\homot i_! \omega_p$ is.
As the homotopy fibres of $p$ are homotopy equivalent to the actual fibres of $\tilde{p}$,
we conclude that it suffices to show that $\tilde{\omega}_p$ is invertible 
if and only if the Costenoble--Waner duals of the fibres of $\tilde{p}$ are 
invertible. 

By Proposition~\ref{prop:superandhypercartmorprops}(\ref{it:wecancel}),
the map $\tilde{\theta} \colon \tilde{\omega}_p \oto S_B$ is hypercartesian.
Given a point $b\in B$, let 
\[\xymatrix{
	F_b 
	\ar[r]^{j_b}
	\ar[d]_{r_b}
	&
	\tilde{E}
	\ar[d]^{\tilde{p}}
	\\
	\pt
	\ar[r]^b
	&
	B
}\]
be a pullback diagram. By 
Proposition~\ref{prop:superandhypercartmorprops}(\ref{it:hypercartbc}),
the morphism $j_b^\ast \tilde{\omega}_p \oto S_\pt$ obtained from 
$\tilde{\theta}\colon\tilde{\omega}_p \oto S_B$
by base change along the above pullback square is hypercartesian,
so by the equivalence of parts (\ref{it:hypercart}) and (\ref{it:exdualpair})
of Theorem~\ref{thm:hypercartdata} the object $j_b^\ast \tilde{\omega}_p$
is the Costenoble--Waner dual of $F_b$. 
On the other hand, as every point of $\tilde{E}$ is in the image of
the map $j_b$ for some $b\in B$, Corollary~\ref{cor:fwdinvcrit} implies that 
$\tilde{\omega}_p$ is invertible if and only if $j_b^\ast\tilde{\omega}_p$
is for all $b\in B$. The claim follows.
\end{proof}

\subsubsection{Hypercartesian morphisms and fibrewise duality}
\label{subsubsec:hypercartesianmorphismsandfibrewiseduality}

We will next present two propositions connecting 
hypercartesian morphisms to fibrewise duality.
Generalizing Remark~\ref{rk:cwdualityasrefinement}, we have
the following slightly more informative version of 
Proposition~\ref{prop:dualizingobjectfwdual}.
\begin{prop}
\label{prop:hypercartandfwduality}
Suppose $p\colon E \to B$ is a fibration admitting a 
hypercartesian morphism $\theta \colon \omega_p \oto S_B$
in $\ho\calC^\dfop$ covering it. Then 
$(p_! S_E, p_! \omega_p)$
is a dual pair in $(\ho(\calC_{/B}),\tensor_B)$
with unit of the duality given by the composite
\begin{equation}
\label{eq:psepomegapunit} 
	S_B 
	\xto{\ \tilde{\eta}\ } 
	p_! \omega_p  
	\xto{\ p_!(\eta)\ }
	p_! p^\ast p_! \omega_p
	\homot
	p_! (S_E \tensor_{E} p^\ast p_! \omega_p)
	\homot
	p_! S_E \tensor_{B} p_! \omega_p.
\end{equation}
Here $\tilde{\eta}$ is the morphism determined by $\theta$,
the second map is induced by the unit of the $(p_!,p^\ast)$ adjunction,
the first equivalence is induced by the monoidal unit constraint,
and the second one is given by the projection formula.
\end{prop}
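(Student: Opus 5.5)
The plan is to refine the proof of Proposition~\ref{prop:dualizingobjectfwdual}, using the hypercartesian hypothesis via its consequence for the base change along the pullback square with corners $E\times_B E$, $E$, $E$, $B$. Proposition~\ref{prop:dualizingobjectfwdual} already gives a natural bijection
\[
	\Hom(p_!\omega_p\tensor_B X, Y)\isom \Hom(X, p_!S_E\tensor_B Y).
\]
Naturality in $X$ and $Y$ makes it an adjunction $p_!\omega_p\tensor_B(-)\dashv p_!S_E\tensor_B(-)$. By the usual Yoneda argument, such an adjunction comes from a duality between $p_!S_E$ and $p_!\omega_p$. The unit of that duality is the image of $\id_{p_!\omega_p}$ under the bijection with $X=S_B$, $Y=p_!\omega_p$. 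So the duality itself is formal. The real content is identifying the unit with the composite \eqref{eq:psepomegapunit}, and then checking that this bijection really is of the form ``tensor with a fixed map'', so that it is a duality and not just an abstract adjunction.

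First I compute the unit. Unwinding the bijection, $\id\colon p_!\omega_p\tensor_B S_B\to p_!\omega_p$ passes through the following steps.
\begin{itemize}
\item[(a)] The identification $p_!\omega_p\tensor_B X\homot p_!p^\invshriek X$.
\item[(b)] The $(p^\invshriek,p_!)$-adjunction, whose unit is \eqref{eq:invshriekunit}, i.e.\ $\tilde\eta$ tensored with $X$ followed by the projection formula.
\item[(c)] The $(p_!,p^\ast)$-adjunction.
\item[(d)] The projection formula $p_!p^\ast Y\homot p_!S_E\tensor_B Y$.
\end{itemize}
Starting from $\id$, step (b) produces $\tilde\eta\colon S_B\to p_!\omega_p$ (with $X=S_B$, so $p^\invshriek S_B=\omega_p$). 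Step (c) adjoins the resulting map $\omega_p\to p^\ast p_!\omega_p$, which is the unit $\eta$, and so yields $p_!(\eta)\circ\tilde\eta$. Step (d) then gives exactly \eqref{eq:psepomegapunit}.

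Next I check that the general bijection is $g\mapsto(\id\tensor g)\circ(\text{unit}\tensor\id_X)$. This follows from naturality of the unit of $(p^\invshriek,p_!)$, of the unit of $(p_!,p^\ast)$, and of the projection formula in the variable $X$, i.e.\ compatibility with $\tensor_B X$. Naturality of the projection formula is standard. The remaining point is the compatibility of the unit \eqref{eq:invshriekunit} with tensoring, which holds by its very definition as $\tilde\eta\tensor_B\id_X$.

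The counit is obtained the same way, by transporting $\id$ in the other direction. The triangle identities then hold automatically because the bijection is built from adjunctions.

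I expect the main obstacle to be the verification that the resulting adjunction is $\tensor_B$-linear, so that it gives a genuine dual pair in $(\ho(\calC_{/B}),\tensor_B)$. This needs the counit to be a map of the form $\varepsilon\tensor\id$. I would first try to get it from the linearity of $p^\invshriek$, namely $p^\invshriek(X\tensor_B Y)\homot p^\invshriek X\tensor_E p^\ast Y$, together with the projection formula. If that fails, I would take the alternative route via Remark~\ref{rk:cwdualityasrefinement} and Corollary~\ref{cor:shriekduals}. In that route, Theorem~\ref{thm:hypercartdata} gives the dual pair $(B_p,\omega_p)$ in $\Ex_B(\calC)$. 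Pushing forward along $p$ gives a dual pair of $1$--cells $B\hto B$ over $B$, whose $\odot_B$ is $\tensor_B$, and whose unit is visibly $\eta$ followed by $p_!$ of a unit. This is where hypercartesianness, rather than mere supercartesianness, is used.
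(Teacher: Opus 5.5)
Your argument is correct. Your fallback route is exactly the paper's proof: the paper takes the dual pair $(B_p,\omega_p)$ in $\Ex_B(\calC)$ from Theorem~\ref{thm:hypercartdata} and composes it with the dual pair $({}_pB,B_p)$ of Proposition~\ref{prop:basechangeobjdualpair}. This gives a dual pair $(B_p\odot_B{}_pB,\,B_p\odot_B\omega_p)$ of $1$--cells $B\hto B$, which it translates via Remark~\ref{rk:exbcandbasechange}. In that route the triangle identities and the shape of the unit come for free from composing dual pairs: the unit is $\tilde\eta$ followed by $p_!$ of the unit of $({}_pB,B_p)$, i.e.\ $p_!(\eta)$. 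Hypercartesianness enters only through Theorem~\ref{thm:hypercartdata}.

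Your primary route is genuinely different. It refines the proof of Proposition~\ref{prop:dualizingobjectfwdual} and uses only that $(\omega_p,\tilde\eta)$ is a dualizing object, so it proves the statement already for supercartesian $\theta$. The price is an explicit coherence check, and your write-up should sharpen three points.

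\emph{The Yoneda sentence overstates.} An abstract adjunction $p_!\omega_p\tensor_B(-)\dashv p_!S_E\tensor_B(-)$ is not by itself a dual pair. What makes it one is exactly the linearity $\Phi(g)=(\id\tensor_B g)\circ(u\tensor_B\id_X)$, so that check carries all the content, not just the identification of $u$.

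\emph{The linearity is not mere naturality.} After naturality of the projection formula in $Y$, what remains is a compatibility statement. The map $p_!W\to p_!p^\ast p_!W\homot p_!S_E\tensor_B p_!W$ given by $p_!$ of the unit must be compatible with the projection formula $p_!(W\tensor_E p^\ast X)\homot p_!W\tensor_B X$. This holds because \eqref{eq:projformulaexpanded} is the mate of the monoidality constraint of $p^\ast$. By doctrinal adjunction, the unit of $(p_!,p^\ast)$ is therefore a $\ho(\calC_{/B})$--linear transformation.

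\emph{The counit needs no separate argument.} Once $\Phi$ has this form, ``built from adjunctions'' is not the reason the triangle identities hold. Define $\varepsilon$ as $\Phi^{-1}$ (at $X=p_!S_E$, $Y=S_B$) of the unit constraint $p_!S_E\homot p_!S_E\tensor_B S_B$; this gives one triangle identity. The other follows from injectivity of $\Phi$ at $X=S_B$, $Y=p_!\omega_p$.
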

\begin{proof}
By Theorem~\ref{thm:hypercartdata}, 
$(B_p,\omega_p) = (S_E^\op, \omega_p)$ is a dual pair in 
$\Ex_B(\calC)$,
as is $({}_p B,B_p)$ by Proposition~\ref{prop:basechangeobjdualpair}.
It follows that $(B_p \odot_B {}_p B, B_p \odot \omega_p)$
is a dual pair in $\Ex_B(\calC)$ with unit given by the composite
\[
	U_B^B 
	\longto 
	B_p \odot_B \omega_p 
	\longto 
	B_p \odot_B ({}_p B \odot_B B_p) \odot_B \omega_p 
	\homot
	(B_p \odot_B {}_p B) \odot_B (B_p \odot_B \omega_p)
\]
of the units for $(B_p,\omega_p)$ and $({}_p B,B_p)$.
Using Remark~\ref{rk:exbcandbasechange} to translate 
from $\Ex_B(\calC)$ to the language of base change functors
now yields the claim.
\end{proof}

Recall that the Thom diagonal map $A^\xi \to A_+ \smashprod A^\xi$
for a vector bundle $\xi$ over $A$ can be obtained by applying the Thom space
functor to the map $\alpha$ of vector bundles fitting into 
\[\vcenter{\xymatrix@R-2ex@C+2em{
	&&
	\\
   	\ar `u[urr] `_d[urr]^{\id} [rr]
	\xi
	\ar@{-->}[r]^(0.45)\alpha
	\ar[dd]
	&
	\pi_2^\ast \xi
	\ar[dd]
	\ar[r]
	&
	\xi
	\ar[dd]
	\\ \\
	A 
	\ar[r]^(0.45){\Delta}
	&
	A\times A
	\ar[r]^-{\pi_2}
	&
	A
}}\]
where $\Delta$ is the diagonal map, $\pi_2$ is the projection onto the 
second coordinate, and the square on the right is a pullback square.
The following remark allows us to reinterpret the map
\[
	p_! \omega_p  
	\longto
	p_! S_E \tensor_{B} p_! \omega_p
\]
in \eqref{eq:psepomegapunit}
as a ``generalized Thom diagonal map.''

\begin{rem}
\label{rk:pshriekeataltdesc}
We wish to give an alternative description of the morphism $p_! (\eta)$
in \eqref{eq:psepomegapunit}.
Suppose  $p\colon E \to B$ is a fibration and $X \in \ho(\calC_{/E})$.
Let us define morphisms $\alpha$ and $\beta$ by 
considering the diagram in $\hpC$ 
\[
    \vcenter{\xymatrix@R-2ex@C+2em{
    	&
    	&
    	\\
    	X
		\ar[ddr]_{\eta}
    	\ar@{-->}[r]^\alpha_{\cart}
    	\ar `u[urr] `_d[urr]^{\id} [rr]
    	&
    	\pi_2^\ast X
    	\ar[r]^{\cart}
    	\ar@{-->}[dd]_\beta^{\opcart}
    	&
    	X
    	\ar[dd]^{\opcart}
    	\\
    	\\
    	&
    	p^\ast p_! X
    	\ar[r]^{\cart}
    	&
    	p_! X
    }}
    \qquad\text{covering}\qquad
    \vcenter{\xymatrix@R-2ex@C+2em{
    	&
    	&
    	\\
    	E
    	\ar[r]^-\delta
		\ar[ddr]_{\id}
    	\ar `u[urr] `_d[urr]^{\id} [rr]
    	&
    	E \times_B E
    	\ar[r]^{\pi_2}
    	\ar[dd]_{\pi_1}
    	&
    	E
    	\ar[dd]^p
    	\\
    	\\
    	&
    	E
    	\ar[r]^p
    	&
    	B	
    }}
\]
Here $\pi_1$ and $\pi_2$ are projections onto the first and second coordinates,
respectively;
$\delta$ is the diagonal map; the unnamed maps labeled $\cart$ and $\opcart$ are
canonical cartesian and opcartesian morphisms; $\eta$ is the unit 
of the $(p_!,p^\ast)$ adjunction;
and  $\alpha$ and $\beta$ are the unique
morphisms making the triangle on the top and the square commute.
The map $\alpha$ is cartesian by Proposition~\ref{prop:cartmorprops}
parts (\ref{it:isosarecart})
and (\ref{it:cartfactor}), and the map $\beta$ is opcartesian by 
Proposition~\ref{prop:commrelshriekinterpretation2}.
Finally, by the choice of $\eta$, the outer diagram on the left commutes,
so by the universal property of the cartesian morphism $p^\ast p_! X \to p_! X$
we must have $\eta = \beta\alpha$. 

Define now maps $\hat{\alpha}$ and $\hat{\beta}$ 
as the unique morphisms making the parallelograms in diagram
\[
	\vcenter{\xymatrix@!0@R=3.6ex@C=2.5em{
		X
		\ar[rrr]^\alpha_{\cart}
		\ar[ddddd]_(0.60){\opcart}
		\ar@(dr,l)[ddrrrrr]_(0.35){\eta}
		&&&
		\pi_2^\ast X
		\ar[ddrr]^\beta_{\opcart}
		\ar[ddddd]^(0.60){\opcart}
		\\
		\\
		&&&&&
		p^\ast p_! X
		\ar[ddddd]^(0.60){\opcart}
		\\
		\\
		\\
		p_! X
		\ar@{-->}[rrr]^{\hat{\alpha}}
		\ar@(dr,l)[ddrrrrr]_(0.35){p_!(\eta)}
		&&&
		(p\pi_1)_! \pi_2^\ast X
		\ar@{-->}[ddrr]^{\hat{\beta}}_{\homot}
		\\
		\\
		&&&&&
		p_! p^\ast p_! X		
	}}
	\qquad\text{covering}\qquad
	\vcenter{\xymatrix@!0@R=3.6ex@C=2.5em{
		E
		\ar[rrr]^\delta
		\ar[ddddd]_(0.60){p}
		\ar@(dr,l)[ddrrrrr]_(0.35){\id}
		&&&
		E \times_B E
		\ar[ddrr]^{\pi_1}
		\ar[ddddd]^(0.60){p\pi_1}
		\\
		\\
		&&&&&
		E
		\ar[ddddd]^(0.60){p}
		\\
		\\
		\\
		B
		\ar[rrr]^{\id}
		\ar@(dr,l)[ddrrrrr]_(0.35){\id}
		&&&
		B
		\ar[ddrr]^{\id}
		\\
		\\
		&&&&&
		B\vphantom{p}	
	}}
\]
commute. Here the vertical maps labeled $\opcart$ are canonical opcartesian morphisms.
By the uniqueness of opcartesian morphisms, the map $\hat{\beta}$ is an equivalence.
Moreover, by the universal property of the opcartesian morphism $X \to p_! X$,
we have 
\[
	p_!(\eta) = \hat{\beta} \hat{\alpha},
\]
giving us the alternative description of $p_!(\eta)$ we were looking for.
The equivalence $\hat{\beta}$ can be described explicitly as the composite
\[\xymatrix{
	(p\pi_1)_! \pi_2^\ast X 
	\ar[r]^-\homot
	&
	p_! (\pi_1)_! \pi_2^\ast X
	\ar[r]^-\homot
	&
	p_! p^\ast p_! X
}\]
of the natural equivalence $(p\pi_1)_! \homot p_! (\pi_1)_!$ 
and the commutation relation \eqref{eq:commrelshriek}.
\end{rem}

\begin{example}
\label{ex:unitfactorsthroughthomdiagonal}
Suppose $M$ is a smooth closed manifold.
In view of Remark~\ref{rk:pshriekeataltdesc},
in the dual pair $(\suspension^\infty_+ M, M^{-\tau_M})$
obtained by  applying
Proposition~\ref{prop:hypercartandfwduality}
to the map $M \to \pt$ and the hypercartesian morphism $S^{-\tau_M} \oto S$
in $\hpSpectra$, the unit of the duality is given by a composite
\[
	S 
	\xto{\ \tilde{\eta}\ } 
	M^{-\tau_M} 
	\xto{\ \hat{\Delta}_{-\tau_M}\ } 
	\suspension^\infty_+ M \smashprod M^{-\tau_M}
\]
where $\hat{\Delta}_{-\tau_M}$ is the Thom diagonal map for $-\tau_M$.
The map $\tilde{\eta}$ is given by the Pontryagin--Thom collapse map;
see \cite[Prop.~18.3.2, Thm.~18.5.1 and bottom of p.~302]{MaySigurdsson}.
\end{example}

Roughly speaking, the following proposition 
asserts that the dual of the map 
$(p_1)_! S_{E_1} \to (p_2)_! S_{E_2}$
induced by $g$ is the map 
$(p_2)_! \omega_{p_2} \to (p_1)_! \omega_{p_1}$
induced by $\theta$.
It will play a key role later in the proof of Theorem~\ref{thm:pdumkcomp}
and also features in Remark~\ref{rk:othetaaltdesc}.

\begin{prop}
\label{prop:dualmaps}
Suppose the diagram on the right below is a commutative triangle
in $\hpC^\dfop$ covering the triangle on the left, and 
assume that $p_1$ and $p_2$ are fibrations and $\theta_1$
and $\theta_2$ are hypercartesian.
\[
\vcenter{\xymatrix{
	E_1 
	\ar[dr]_{p_1}
	\ar[rr]^g
	&&
	E_2
	\ar[dl]^{p_2}
	\\
	&
	B
}}
\qquad\qquad\qquad
\vcenter{\xymatrix{
	\omega_{p_1}
	\ar[dr]|{\circdec}_{\theta_1}
	\ar[rr]|{\circdec}^\theta
	&&
	\omega_{p_2}
	\ar[dl]|{\circdec}^{\theta_2}
	\\
	&
	S_B
}}
\]
Write $\bar{\theta} \colon \omega_{p_2} \to (p_1)_! \omega_{p_1}$
for the morphism in $\ho(\calC_{/E_2})$ determined by $\theta$.
Then with respect to the dual pairs
$((p_1)_! S_E, (p_1)_! \omega_{p_1})$
and 
$((p_2)_! S_E, (p_2)_! \omega_{p_2})$
of Proposition~\ref{prop:hypercartandfwduality},
the composite
\begin{equation}
\label{eq:inducedbytheta} 
	(p_2)_!\omega_{p_2} 
	\xto{\ (p_2)_!\bar{\theta}\ }
	(p_2)_! g_! \omega_{p_1} 
	\xto{\ \homot\ }
	(p_1)_!\omega_{p_1}
\end{equation}
is dual to the map
\begin{equation}
\label{eq:tcalcg}
	t_\calC(\id_B,g) \colon (p_1)_! S_{E_1}\longto (p_2)_! S_{E_2}
\end{equation}
where $t_\calC$ is the functor of Proposition~\ref{prop:tcalc2}.
\end{prop}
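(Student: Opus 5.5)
The plan is to use the standard characterization of dual morphisms in a symmetric monoidal category. Suppose $(A,A^\vee)$ and $(A',A'^\vee)$ are dual pairs with units $\eta_A$ and $\eta_{A'}$, and let $f\colon A\to A'$. Then a morphism $\psi\colon A'^\vee\to A^\vee$ is the dual of $f$ if and only if
\[
	(f\tensor\id)\circ\eta_A=(\id\tensor\psi)\circ\eta_{A'}
	\colon S\longto A'\tensor A^\vee .
\]
This holds because $\psi\mapsto(\id\tensor\psi)\eta_{A'}$ is a bijection $\Hom(A'^\vee,A^\vee)\isom\Hom(S,A'\tensor A^\vee)$. We apply this in $(\ho(\calC_{/B}),\tensor_B)$ with $A=(p_1)_!S_{E_1}$, $A'=(p_2)_!S_{E_2}$, $f=t_\calC(\id_B,g)$, and $\psi$ the composite \eqref{eq:inducedbytheta}. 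The units are those of Proposition~\ref{prop:hypercartandfwduality}, so each unit is $\tilde\eta_i$ followed by the ``generalized Thom diagonal'' $(p_i)_!(\eta)$ and the projection formula. Thus it suffices to compare two maps $S_B\to (p_2)_!S_{E_2}\tensor_B(p_1)_!\omega_{p_1}$.

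The first step handles the collapse maps. From $\theta_1=\theta_2\circ\theta$ and the composition rule in $\hpC^\dfop$ (Definition~\ref{def:dfopdef}), the vertical morphism determined by $\theta_1$ is the composite
\[
	S_B\xto{\ \tilde\eta_2\ }(p_2)_!\omega_{p_2}\xto{\ (p_2)_!\bar\theta\ }(p_2)_!g_!\omega_{p_1}\homot(p_1)_!\omega_{p_1}.
\]
In other words, $\tilde\eta_1=\psi\circ\tilde\eta_2$. The right-hand side of the criterion is $(\id\tensor\psi)\circ D_2\circ\tilde\eta_2$, where $D_2$ denotes the diagonal $(p_2)_!\omega_{p_2}\to (p_2)_!S_{E_2}\tensor_B(p_2)_!\omega_{p_2}$. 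Naturality of $(p_2)_!(\eta)$ and of the projection formula in the object of $\ho(\calC_{/E_2})$ lets us move $(p_2)_!\bar\theta$ to the front. After factoring out $\tilde\eta_1$, the claim reduces to a statement with no duality data in it. For $Z\in\ho(\calC_{/E_1})$ (we need only $Z=\omega_{p_1}$) the following two maps
\[
	(p_1)_!Z\longto (p_2)_!S_{E_2}\tensor_B (p_1)_!Z
\]
must agree:
\begin{enumerate}[(a)]
\item the diagonal for $p_1$, followed by $t_\calC(\id_B,g)\tensor\id$;
\item the identification $(p_1)_!Z\homot(p_2)_!g_!Z$, followed by the diagonal for $p_2$ applied to $g_!Z$.
\end{enumerate}

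The second step, which I expect to be the main obstacle, is proving this identity of ``Thom diagonals''. I would use the description of Remark~\ref{rk:pshriekeataltdesc}, which writes each diagonal as $\hat\beta\hat\alpha$. Here $\hat\alpha$ is induced by the cartesian morphism $Z\to\pi_2^\ast Z$ over $\delta\colon E\to E\times_B E$, and $\hat\beta$ is the canonical opcartesian identification. For (a), map (a) is induced by the morphism over $(g\times\id)\circ\delta_{E_1}\colon E_1\to E_2\times_B E_1$. For (b), I would use the base change square of $g$ along $\pi_2\colon E_2\times_BE_2\to E_2$ (homotopy cartesian since $p_2$ is a fibration). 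By Proposition~\ref{prop:commrelshriekinterpretation2}, map (b) is induced by the morphism over the same composite $E_1\to E_2\times_BE_1\to E_2\times_BE_2$.

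Both maps are then of the form $H_\bullet$-like morphisms $r_!(\text{source})\to r_!(\text{target})$ determined by a morphism in $\hpC$ covering the same map $E_1\to E_2\times_B E_1$. The source is $Z$, and the target is the external-type product $\pi_1^\ast S_{E_2}\tensor\pi_2^\ast Z$, pushed forward to $B$. Each of these morphisms is cartesian and is the identity on underlying objects. By uniqueness of cartesian morphisms (Proposition~\ref{prop:cartmorprops}(\ref{it:cartsourceiso})) the two morphisms coincide. The universal property of the opcartesian morphisms to $B$ (as in Definition~\ref{def:hbullethc}) then gives equality of (a) and (b). The remaining work is bookkeeping: checking that $t_\calC(\id_B,g)\tensor\id$, combined with the projection formula, is exactly the opcartesian transport along $g\times\id$ in Proposition~\ref{prop:tcalc2}. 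This compatibility of the projection formula with $t_\calC$ is where I expect most of the care to go.
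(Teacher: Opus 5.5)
Your proposal is correct in outline, but it takes a different route from the paper. The paper does not work with the explicit unit \eqref{eq:psepomegapunit} in $(\ho(\calC_{/B}),\tensor_B)$. Instead it translates everything into $\Ex_B(\calC)$: there $(p_i)_!$, $g_!$, $g^\ast$ are represented by base change $1$--cells, and the dual pair of Proposition~\ref{prop:hypercartandfwduality} is the composite of $(B_{p_i},\omega_{p_i})$ with $({}_{p_i}B,B_{p_i})$. Since mates compose, the claim reduces to showing that $\bar\theta$ is the mate of the $2$--cell $\kappa$ built from the counit $\varepsilon_g$ of $(g_!,g^\ast)$. That is a short diagram chase using only $\tilde\eta_1=\bar\theta\circ\tilde\eta_2$ (up to $(p_2)_!g_!\homot(p_1)_!$) and the counit of $(B_{p_1},\omega_{p_1})$.

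You use the same key identity $\tilde\eta_1=\psi\circ\tilde\eta_2$. Because you characterize dual maps by units alone, you must additionally prove that the generalized Thom diagonal is natural in $g$, i.e.\ (a)$=$(b). Your argument for this is sound. It goes through Remark~\ref{rk:pshriekeataltdesc} and Proposition~\ref{prop:commrelshriekinterpretation2}: the part of (b) covering $\id\times_B g$ is opcartesian, hence an isomorphism after pushforward. Both remaining morphisms $Z\to\pi_2^\ast Z$ cover $(g,\id)\colon E_1\to E_2\times_B E_1$, and both compose with the cartesian projection $\pi_2^\ast Z\to Z$ to give $\id_Z$. So they agree by the uniqueness in that projection's universal property; this, not Proposition~\ref{prop:cartmorprops}(\ref{it:cartsourceiso}) alone, is the uniqueness you need.

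Your route avoids the representability and pseudo-naturality of $f\mapsto{}_fE\odot_B$ and $f\mapsto E_f\odot_B$, which the paper asserts. The price is coherence bookkeeping that the mate argument sidesteps, and it is needed on both sides:
\begin{itemize}
\item On side (a), $t_\calC(\id_B,g)\tensor\id$ must be identified with the map induced by the cartesian morphism $S_{E_1}\exttensor_B Z\to S_{E_2}\exttensor_B Z$ over $g\times_B\id$, followed by opcartesian pushforward to $B$. It is not an ``opcartesian transport along $g\times\id$'' as you wrote.
\item On side (b), you must check that $\hat\beta$, the projection formula for $p_2$, the opcartesian $\mu$ and $(p_2)_!g_!\homot(p_1)_!$ together induce the same identification $(p_2\pi_1)_!\pi_2^\ast Z\homot(p_2)_!S_{E_2}\tensor_B(p_1)_!Z$ as the external-product identification used in (a).
\end{itemize}
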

\begin{rem}
Explicitly, 
the map \eqref{eq:tcalcg} is given by the composite
\[
	(p_1)_! S_{E_1}
	\xto{\ \homot\ }
	(p_1)_! g^\ast S_{E_2}
	\xto{\ \homot\ }
	(p_2)_! g_! g^\ast S_{E_2}
	\xto{\ (p_2)_!\varepsilon_g\ }
	(p_2)_! S_{E_2}
\]
where $\varepsilon_g$ denotes the counit of the 
$(g_!,g^\ast)$ adjunction, and the unlabeled
equivalences are the evident ones.
\end{rem}

\begin{proof}[Proof of Proposition~\ref{prop:dualmaps}]
For the proof, let us be slighly more precise about the 
connection between the $\odot_B$--product on $\Ex_B(\calC)$
and base change functors. Given fibrations $q \colon D \to B$
and $p\colon E\to B$, we call a functor 
$F \colon \ho(\calC_{/D}) \to \ho(\calC_{/E})$
$\odot_B$--\emph{representable} if there is an natural equivalence
$F \homot M\,\odot_B$ for some $1$--cell
$M\colon E \hto D$ of $\Ex_B(\calC)$, where we are identifying, as usual,
$\ho(\calC_{/D})$  with the category of globular $1$--cells 
$D \hto B$ in $\Ex_B(\calC)$, and similarly for $\ho(\calC_{/E})$.
See Remark~\ref{rk:exbcandbasechange}.
A choice of $M$ and an equivalence is called 
an $\odot_B$--\emph{representation} of  $F$, and $F$ equipped with 
an $\odot_B$--\emph{representation} is called 
$\odot_B$--\emph{represented}.
A natural transformation between 
$\odot_B$--\emph{represented} functors 
is called $\odot_B$--representable
if it arises from a globular $2$--cell between the representing 
$1$--cells, and an adjunction 
\[
	F 
	\colon 
	\ho(\calC_{/D}) 
	\longrightleftarrows 
	\ho(\calC_{/E}) 
	\colon 
	G
\]
between $\odot_B$--represented functors is called
$\odot_B$--representable if it arises from a duality between 
the representing $1$--cells of $F$ and $G$ in the manner of 
Remark~\ref{rk:dualityadjunction}. 
We note that for a continuous map $f\colon D \to E$ satisfying 
$pf = q$, the base change functors  $f_!$ and $f^\ast$
as well as the adjunction between them are representable:
the latter equivalence in \eqref{eq:basechangeformula1}
provides an $\odot$--representation $f^\ast \homot {}_f E\,\odot_B$
for $f^\ast$,
and then Proposition~\ref{prop:basechangeobjdualpair}
and uniqueness of left adjoints yield an 
$\odot$--representation for $f_!$ so that 
$(f_!,f^\ast)$ adjunction is representable.
Moreover, the assignments $f \mapsto f^\ast$ 
and $f \mapsto {}_f E\,\odot_B$  
are pseudofunctorial,
and the equivalence $f^\ast \homot {}_f E\,\odot_B$ is
the $2$--cell of a globular pseudo natural transformation; 
and similarly for 
$f\mapsto f_!$ and  $E_f\,\odot_B$,
with composition constraints given by mates of those
for $f \mapsto f^\ast$ 
and $f \mapsto {}_f E\,\odot_B$.

With the above remarks in mind, 
translating to the language of $\Ex_B(\calC)$,
and carrying over the names of morphisms
in the categories $\ho(\calC_{/E})$
to the corresponding globular $2$--cells in $\Ex_B(\calC)$,
our task is to show that the $2$--cells
\begin{equation}
\label{eq:2cells}
\vcenter{\xymatrix@C+2em@R-1ex{
    &
    E_2
    \ar|@{|}[dd]^{(E_2)_g}
    \ar|@{|}[dr]^{\vphantom{f}\omega_{p_2}}_{{}}="ur"
    \\
    B
    \ar|@{|}[ur]^{B_{p_2}}_{{}}="ul"
    \ar|@{|}[dr]_{B_{p_1}}^{{}}="dl"
    \ar@{}@<1ex>|{\displaystyle\rotatebox{30}{$\Downarrow$}}^{\,\vphantom{f_A}\homot} "ul";"dl"
    &&
    B
    \\
    &
    E_1
    \ar|@{|}[ur]_{\vphantom{f^A}\omega_{p_1}}^{{}}="dr"
    \ar@{}|{\displaystyle\rotatebox{30}{$\Downarrow$}}^{\,\vphantom{f_A}\bar{\theta}} "ur";"dr"
}}
\qquad\text{and}\qquad
\vcenter{\xymatrix@C+2em@R-1ex{
    &
    E_2
    \ar@{<-}|@{|}[dd]^{{}_g E_2}
    \ar@{<-}|@{|}[dr]^{B_{p_2}}_{{}}="ur"
    \\
    B
    \ar@{<-}|@{|}[ur]^{{}_{p_2}\!\!\: B}_{{}}="ul"
    \ar@{<-}|@{|}[dr]_{{}_{p_1}\!\!\: B\vphantom{f^A}}^{{}}="dl"
    \ar@{<-}@{}@<1ex>|{\displaystyle\rotatebox{30}{$\Uparrow$}}^{\,\homot} "ul";"dl"
    &&
    B
    \\
    &
    E_1
    \ar@{<-}|@{|}[ur]_{\vphantom{f^A}B_{p_1}}^{{}}="dr"
    \ar@{}|{\displaystyle\rotatebox{30}{$\Uparrow$}}^{\,\kappa} "ur";"dr"
}}
\end{equation}
are mates in $\Ex_B(\calC)$,
where $\kappa$ is the $2$--cell
\[
\kappa = 
\left(
	\vcenter{\xymatrix@C+2em@R-1.3ex{
        &
        E_2
        \ar|@{|}[dd]^{(E_2)_g}
        \ar@{<-}|@{|}[dr]^{B_{p_2}}_{{}}="ur"
        \\
        E_2
        \ar@{<-}|@{|}[ur]^{U^B_{E_2}}_{{}}="ul"
        \ar@{<-}|@{|}[dr]_{{}_{g} E_2\vphantom{f^A}}^{{}}="dl"
        \ar@{<-}@{}@<1ex>|{\displaystyle\rotatebox{30}{$\Uparrow$}}^{\,\vphantom{f}\varepsilon_g} "ul";"dl"
        &&
        B
        \\
        &
        E_1
        \ar@{<-}|@{|}[ur]_{\vphantom{f^A}B_{p_1}}^{{}}="dr"
        \ar@{}|{\displaystyle\rotatebox{30}{$\Uparrow$}}^{\,\homot} "ur";"dr"
    }}
\right).
\]
Notice that ${}_{p_1}\!B \homot S_{E_1}$ and 
${}_{p_2} \!\!\: B \homot S_{E_2}$.
Here the unlabeled equivalences are composition constraints for 
the pseudofunctors $f \mapsto E_{f}$ and $f\mapsto {}_f E$.
The two $2$--cells on the left in \eqref{eq:2cells}
are mates, so it is enough to show that $\bar{\theta}$ and $\kappa$
are mates.

Observe now that the choice of $\bar{\theta}$ ensures that the diagram
\[\xymatrix{
	S_B 
	\ar[r]^(0.45){\eta_2}
	\ar[d]_{\eta_1}
	&
	(p_2)_! \omega_{p_2}
	\ar[d]^{\bar{\theta}}
	\\
	(p_1)_! \omega_{p_1}
	\ar[r]^(0.45)\homot
	&
	(p_2)_! g_! \omega_{p_1}	
}\]
commutes, where $\eta_1$ and $\eta_2$
are the morphisms determined by $\theta_1$ and $\theta_2$.
Translating  to the language of $\Ex_B(\calC)$,
it follows that the top square in the following diagram commutes:
\[\xymatrix@C-2em{
    B_{p_1} \odot {}_g E_2
    \ar[rrr]^(0.42){\eta_2 \odot \id \odot \id}
    \ar[d]_{\eta_1 \odot \id \odot \id}
    &&&
    B_{p_2} \odot \omega_{p_2} \odot B_{p_1}\odot {}_g E_2
    \ar[d]^{\id \odot \bar{\theta} \odot \id \odot \id}
    \\
    B_{p_1} \odot \omega_{p_1} \odot B_{p_1} \odot {}_g E_2
    \ar[rrr]^(0.42)\homot
    \ar[d]_{\id \odot \varepsilon_1 \odot \id}
    &&&
    B_{p_2} \odot (E_2)_g \odot \omega_{p_1} \odot B_{p_1} \odot {}_g E_2
    \ar[d]^{\id \odot \id \odot \varepsilon_1 \odot \id}
    \\
    B_{p_1} \odot {}_g E_2
    \ar[rrr]^(0.42)\homot
    &&&
    B_{p_2} \odot (E_2)_g \odot {}_g E_2
    \ar[r]^-{\id \odot \varepsilon_g}
	&
	B_{p_2}
}\]
Here we have written $\odot$ for $\odot_B$
and omitted unit and associativity constraints for brevity;
$\varepsilon_1$ denotes the counit for the dual pair
$(B_{p_1},\omega_{p_1})$; the horizontal equivalences
are induced by the equivalence 
$B_{p_1} \homot  B_{p_2} \odot_B (E_2)_g$; and
the bottom square commutes by naturality.
By definition, the composite of the top, right hand vertical, and 
bottom right morphisms is the mate of $\bar{\theta}$.
Moreover, the composite of the left hand vertical morphisms
is the identity. Thus the claim follows.
\end{proof}

\subsubsection{Proofs of Theorems~\ref{thm:thetapc}, \ref{thm:kappaxcartcritc} and
\ref{thm:tildekappaxcart}}
\label{subsubsec:proofsofresultsfromintro}

We finish the section by proving 
Theorems~\ref{thm:thetapc}, \ref{thm:kappaxcartcritc} and
\ref{thm:tildekappaxcart} from the introduction.
In view of the uniqueness of cartesian morphisms, 
the following result gives geometric interpretations for the 
hypercartesian morphism $\omega_p \oto S_B$ of
Theorem~\ref{thm:hypercartexistence}(\ref{it:hypercartesianmorphismfors})
and the object $\omega_p$ when $p\colon E \to B$ is a bundle of 
smooth closed manifolds and $\calC = \Spectra$.

\begin{thrm}
\label{thm:abgpthc2}
Let $p \colon E \to B$  be a bundle of smooth closed manifolds.
Then the morphism 
\[
	S^{-\tau_p} \longoto S_B 
\]
of $\hpSpectra^\dfop$ covering $p$ defined by the
parametrized Pontryagin--Thom transfer map
\[
	\mathrm{PT}_{/B}(p) \colon S_B \longto p_! S^{-\tau_p}
\]
of \cite[Def.~4.14]{ABGparam}
is hypercartesian.
\end{thrm}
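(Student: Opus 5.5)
The plan is to reduce hypercartesianness to a fibrewise statement and then apply classical Atiyah duality on each fibre. A bundle of smooth closed manifolds is a fibration whose fibres are compact manifolds. By Example~\ref{ex:cwdualityinspectra} these fibres are Costenoble--Waner dualizable in $\Spectra$, so $p$ is small-fibred with respect to $\Spectra$. Proposition~\ref{prop:smallfibcartcrit} therefore applies: writing $\theta$ for the morphism $S^{-\tau_p}\oto S_B$ determined by $\mathrm{PT}_{/B}(p)$, it suffices to show that for every $b\in B$ the base change $\theta_b$ of $\theta$ along the fibre square \eqref{eq:epbfibresq} is hypercartesian.

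The first step is to identify $\theta_b$. Write $M=p^{-1}(b)$. Then $i_b^\ast S^{-\tau_p}\homot S^{-\tau_M}$, because $i_b^\ast\tau_p\isom\tau_M$ and $i_b^\ast$ is symmetric monoidal (so it preserves inverses and $S^\alpha$). By the proof of Proposition~\ref{prop:basechange}, the vertical morphism $S\to (r_b)_!\,i_b^\ast S^{-\tau_p}$ determined by $\theta_b$ is $b^\ast\mathrm{PT}_{/B}(p)$ composed with the commutation equivalence \eqref{eq:commrelshriek}. I would then invoke the compatibility of the Ando--Blumberg--Gepner parametrized Pontryagin--Thom transfer with pullback along $b\colon\pt\to B$, which I expect follows from its construction via a fibrewise embedding $E\subset B\times\R^L$ and the fibrewise collapse. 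This compatibility identifies $\theta_b$ with the morphism $S^{-\tau_M}\oto S$ determined by the ordinary Pontryagin--Thom collapse $S\to M^{-\tau_M}$, which is the case $B=\pt$ of the transfer.

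The second step is to show that this unparametrized morphism is hypercartesian. By Theorem~\ref{thm:hypercartdata} with $B=\pt$, this is equivalent to $(\pt_r,S^{-\tau_M})$ being a Costenoble--Waner dual pair whose unit corresponds to the collapse map. That is precisely the May--Sigurdsson description recalled in Example~\ref{ex:cwdualityinspectra} and Example~\ref{ex:unitfactorsthroughthomdiagonal}: the Costenoble--Waner dual of $M$ is $S^{-\tau_M}$, and the unit is given by the Pontryagin--Thom collapse \cite[Thm.~18.6.1, Prop.~18.3.2]{MaySigurdsson}. Since the dual is unique up to unique isomorphism compatible with the unit, $\theta_b$ is hypercartesian, and Proposition~\ref{prop:smallfibcartcrit} gives the theorem.

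The main obstacle is the comparison between the two models. The ABG transfer lives in $\infty$--categorical parametrized spectra, whereas the May--Sigurdsson unit is a point-set construction in ex-spectra, and one must check that under the equivalences of \cite[App.~B]{ABGparam} the ABG collapse for $B=\pt$ agrees with the May--Sigurdsson unit. I would first try to argue that both are the composite of the collapse onto a tubular neighbourhood with the Thom-space identification $M^{\nu}\homot\suspension^L M^{-\tau_M}$, carried through the symmetric monoidal equivalences. Failing that, I would reduce to checking that the ABG map is a unit for \emph{some} duality, using Proposition~\ref{prop:dfopcartinterpretation}, so that only the universal property and not the specific formula needs to be compared.
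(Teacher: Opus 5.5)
Your proposal is correct and takes essentially the same route as the paper: reduce via Proposition~\ref{prop:smallfibcartcrit} to the fibres, identify the base-changed morphism with the one determined by the Pontryagin--Thom collapse $S \to M^{-\tau_M}$ of the fibre, and conclude from Theorem~\ref{thm:hypercartdata}, using May and Sigurdsson's identification of that collapse as the unit of the Costenoble--Waner duality between $M$ and $S^{-\tau_M}$. The two points you single out as the main obstacle are passed over by the paper, which simply asserts that for $B=\pt$ the parametrized transfer ``is equivalent to'' the Pontryagin--Thom map; these points are the compatibility of the Ando--Blumberg--Gepner transfer with pullback along $b\colon \pt \to B$, and the comparison between the $\infty$--categorical and point-set models.
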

\begin{proof}
By Example~\ref{ex:cwdualityinspectra},
the map $p$ is small-fibred with respect to $\Spectra$, so  
Proposition~\ref{prop:smallfibcartcrit}
allows us to reduce to the case $B=\pt$.
But in this case, 
$E$ is just a smooth closed manifold,
and the parametrized Pontryagin--Thom 
transfer is equivalent to the Pontryagin--Thom map
\[
	S \longto p_! S^{-\tau_E}
\]
for $E$.
Since this latter map is the unit for the dual pair
$(\pt_p,  S^{-\tau_E})$
in $\Ex(\Spectra)$
implicit in Example~\ref{ex:cwdualityinspectra}
(see \cite[Prop.~18.3.2, Thm.~18.5.1 and bottom of p.~302]{MaySigurdsson}),
the claim follows from Theorem~\ref{thm:hypercartdata}.
\end{proof}

\begin{proof}[Proof of Theorem~\ref{thm:thetapc}]
Part~(\ref{it:thetapc}) follows from 
the equivalence of parts~(\ref{it:smallfibred}) and (\ref{it:hypercartesianmorphismfors}) 
of Theorem~\ref{thm:hypercartexistence}.
Indeed, this equivalence shows that the map $\theta_p$ 
can be chosen to be hypercartesian, wherefore by the uniqueness
of cartesian morphism any choice for $\theta_p$ is hypercartesian.
Part~(\ref{it:phioslashthetap}) now follows from 
Proposition~\ref{prop:superandhypercartmorprops}(\ref{it:hypercartimpliessupercart}).
Part~(\ref{it:thetapformanifoldbundles}) follows from Example~\ref{ex:cwdualityinspectra} and Theorem~\ref{thm:abgpthc2}
while
Part~(\ref{it:thetapformanifoldbundlesc})
follows from Proposition~\ref{prop:cwdualitytransport}
and Proposition~\ref{prop:hypercartmorpreservation}.
\end{proof}

\begin{proof}[Proof of Theorem~\ref{thm:kappaxcartcritc}]
The map $\kappa_X$ of equation \eqref{eq:kappax} is cartesian under
condition~(\ref{it:xdualizable}) by Proposition~\ref{prop:cartoslashproperty}.
Suppose now condition~(\ref{it:xpulledbackfromb}) holds, that is, that there 
exists a cartesian morphism $\psi \colon X \to Y$ of $\hpC$ covering $p_2$.
We then have the following commutative triangle covering \eqref{eq:fp1p2new}
\begin{equation}
\label{diag:oslashwithkappadef} 
\vcenter{\xymatrix@R+2ex{
	f^\ast X \tensor_{E_1} \omega_{p_1}
	\ar[rr]|{\circdec}^{\kappa_X = \phi \oslash \kappa \vphantom{f}}
	\ar[dr]|{\circdec}_{\psi \phi \oslash \theta_{p_1}}
	&&
	X \tensor_{E_2} \omega_{p_2}
	\ar[dl]|{\circdec}^{\psi \oslash\theta_{p_2}}
	\\
	&
	Y \tensor_B S_B
}}
\end{equation}
where $\theta_{p_1}$ and $\theta_{p_2}$ are are the cartesian morphisms appearing in 
diagram~\eqref{diag:kappadef}.
By Theorem~\ref{thm:hypercartexistence} and the uniqueness of 
cartesian morphisms, the morphisms $\theta_{p_1}$ and $\theta_{p_2}$
are in fact hypercartesian, so by 
Proposition~\ref{prop:superandhypercartmorprops}(\ref{it:hypercartimpliessupercart})
the morphisms $\psi \phi \oslash \theta_{p_1}$ and $\psi \oslash\theta_{p_2}$
in \eqref{diag:oslashwithkappadef} are cartesian.
But now Proposition~\ref{prop:cartmorprops}(\ref{it:cartfactor})
implies that the morphism $\kappa_X$ is cartesian as  claimed.
\end{proof}

\begin{proof}[Proof of Theorem~\ref{thm:tildekappaxcart}]
Let $\psi_\pm \colon f^\ast \omega_{p_2}^{\pm 1} \to \omega_{p_2}^{\pm 1}$
be the canonical cartesian morphisms covering $f$.
The claim now follows from Proposition~\ref{prop:cartoslashproperty}
by observing that the maps $\tilde{\kappa}_X$ and $\kappa_X$ are isomorphic to the maps 
$\psi_{-} \oslash \kappa_X$ and
$\psi_{+} \oslash \tilde{\kappa}_X$, respectively.
\end{proof}

\section{Costenoble--Waner dualizability of semisimple \texorpdfstring{$\ell$}{l}--compact groups}
\label{sec:cwdualizabilityofsemisimplegrps}

\begin{defn}
\label{def:semisimple}
A connected $\ell$--compact group $BG$ is called
\emph{semisimple}
if the group $\pi_1(G) \isom \pi_2(BG)$ is finite. 
\end{defn}

Our aim in this section is to prove the following result.

\begin{thrm}
\label{thm:ellcptgrpscwdualizable}
Suppose $BG$ is a semisimple
connected $\ell$--compact group
of dimension $d$.
Then the space $G = \loops BG$ 
is Costenoble--Waner dualizable in 
$\Spectra^\ell$
 with Costenoble--Waner dual 
$\suspension^{-d}_G S_{G,\ell}$
where $S_{G,\ell}$ denotes the unit object in 
$\ho(\Spectra^\ell_{/G})$.
\end{thrm}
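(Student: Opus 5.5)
The approach is to verify Costenoble--Waner dualizability directly in the bicategory $\Ex(\Spectra^\ell)$. I will produce an explicit candidate dual $T=\suspension^{-d}_G S_{G,\ell}$ together with a unit $\tilde\eta\colon S_\ell \to r_!T$, where $r\colon G\to\pt$. I will then check condition~(\ref{it:supercartx2}) of Theorem~\ref{thm:hypercartdata} for $p=r$ and $B=\pt$. Concretely, I must show that $T$ is a dualizing object for $r$, and that its base change $\pi_1^\ast T$ is a dualizing object for the projection $\pi_2\colon G\times G\to G$. By the equivalence (\ref{it:supercartx2})$\Leftrightarrow$(\ref{it:exdualpair}) this gives the dual pair $(S_G^\op,T)$ in $\Ex(\Spectra^\ell)$, which is exactly Definition~\ref{def:cwdualizablespace}.

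\emph{Step 1: pass to module categories.} Since $G=\loops BG$ is a loop space, each component of $G$ is equivalent to the identity component $G_0$, and $\pi_0G\isom\pi_1BG$ is a finite $\ell$--group. This reduces matters to one component: using the decomposition \eqref{eq:hocdu} and the disjoint-union proposition of Section~\ref{subsubsec:cwdualizability}, it suffices to treat $G_0$ with the twist spread over components. Next I identify $\Spectra^\ell_{/G_0}$ with modules over $A=(\suspension^\infty_+\loops G_0)^\wedge_\ell$, in the spirit of \cite{LindMalkiewichMorita}. Under this identification, $r_!$ becomes $\Spectra^\ell\tensor_A(-)$ and $r^\ast$ becomes restriction along the augmentation. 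Saying that $r^\invshriek=T\tensor_G r^\ast$ is left adjoint to $r_!$ then amounts to two things: the augmentation module $S_\ell$ must be perfect over $A$, and its $A$--linear dual must be $\suspension^{-d}$ of the trivial module. This dual is a Klein-type dualizing spectrum; compare Remark~\ref{rk:kleindualizingspectrum}.

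\emph{Step 2: compute the dual.} I plan to use the known duality for $\ell$--compact groups: $(\suspension^\infty_+G)^\wedge_\ell$ is self-dual up to a $d$--fold shift \cite{bauer04}. Together with the mod $\ell$ Poincaré duality of $G$, which is orientable because $G$ is an $H$--space, this should show that $\Hom_A(S_\ell,A)\homot\suspension^{-d}S_\ell$ with trivial action. The unit $\tilde\eta$ is then the image of the fundamental class. To check the adjunction bijection it suffices, by Corollary~\ref{cor:fwdinvcrit} and because equivalences are detected on fibres, to check that a norm-type comparison map is an equivalence on mod $\ell$ homology. That reduces to finiteness of $H_\ast(G;\F_\ell)$.

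\emph{Step 3: the second condition and the main obstacle.} The condition for $\pi_2\colon G\times G\to G$ is the same adjunction parametrized over the second factor. I expect it to follow from Step 2 by the projection formula \eqref{eq:bc-projformula}, since the base change is a product and $\exttensor$ preserves the relevant morphisms (Proposition~\ref{prop:superandhypercartmorprops}(\ref{it:hcexttensorhc}), Proposition~\ref{prop:cwdualityproduct}). The main obstacle is Step 1, and it is exactly where semisimplicity enters. The problem is that $\pi_1G_0\isom\pi_2BG$ acts on local systems, and for an infinite or non-$\ell$-adically-finite $\pi_1$ the augmentation would fail to be perfect; compare Remark~\ref{rk:s1lcomnotcwdualizable}, which concerns $(S^1)\lcom$. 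With $\pi_2BG$ finite, I will argue that $A$ is built in finitely many steps from $\F_\ell$--cells, and hence that $S_\ell$ lies in the thick subcategory generated by $A$. The route I will try first is a Serre-type filtration on the universal cover of $G_0$ combined with $\ell$--completeness; should that fail, I will fall back on the Gorenstein results for $C^\ast(BG;\F_\ell)$.
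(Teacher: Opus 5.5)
\textbf{There is a genuine gap: the step that actually requires proof is left as a plan.} Everything rests on $G$ being Costenoble--Waner dualizable in $\Spectra^\ell$. In your language, that means the augmentation module $S_\ell$ lies in the thick subcategory generated by $A$, and for this you only list strategies to try.

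The strategy you do state aims at the wrong object, and is false anyway. A finite $\F_\ell$--cell structure on $A$ would say nothing about building $S_\ell$ from finitely many $A$--cells. Moreover, $A$ is usually not finite: for $BG=BSU(2)\lcom$, $H_\ast(\loops S^3;\F_\ell)$ is polynomial.

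Your Step~2 reduction ``to finiteness of $H_\ast(G;\F_\ell)$'' also cannot be right. The space $(S^1)\lcom$ is connected with the mod $\ell$ homology of $S^1$, yet it is not Costenoble--Waner dualizable by Remark~\ref{rk:s1lcomnotcwdualizable}.

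The missing idea is a finite model.
\begin{itemize}
\item By \cite[Lemma~7]{bauer04} there is an $\F_\ell$--homology equivalence $\lambda\colon X\to G$ from a finite complex, which may be taken to be a fibration.
\item Corollary~\ref{cor:shriekduals} pushes the dual pair $(S_{X,\ell}^\op,T)$ forward to $((\lambda_!S_{X,\ell})^\op,\lambda_!T)$. So it suffices to show $\lambda_!S_{X,\ell}\homot S_{G,\ell}$, i.e.\ that the homotopy fibre $F$ of $\lambda$ has the mod $\ell$ homology of a point.
\item This is where semisimplicity enters. Since $\pi_1G$ is a finite abelian $\ell$--group, the cokernel $C$ of $\pi_1X\to\pi_1G$ satisfies $C^{\mathrm{ab}}\tensor\F_\ell=0$, so $C=0$ and $F$ is connected.
\item The action of $\pi_1G$ on $H_\ast(F;\F_\ell)$ is nilpotent, so the Bousfield--Kan mod--$R$ fibre lemma shows $F\lcom$ is contractible.
\end{itemize}
In module terms, this exhibits $S_\ell$ as induced from the perfect augmentation module over $(\suspension^\infty_+\loops X)^\wedge_\ell$. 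Once perfectness is known, the condition for $\pi_2\colon G\times G\to G$ is automatic. As written, however, your appeal to Proposition~\ref{prop:superandhypercartmorprops}(\ref{it:hcexttensorhc}) and Proposition~\ref{prop:cwdualityproduct} is circular, since both presuppose the hypercartesianness or dualizability being proved.

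\textbf{The identification of the dual is also not established.} Mod $\ell$ orientability of the $H$--space $G$ only says that $\pi_1G$ acts trivially on the mod $\ell$ homology of the fibre. That does not make an invertible object of $\ho(\Spectra^\ell_{/G})$ constant. For $\ell=2$, a sign character $\pi_1G\to\{\pm1\}\subset\Z_2^\times$ is already invisible mod $2$, and there are higher obstructions.

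The paper gets constancy from the group structure of $G$.
\begin{itemize}
\item It writes the canonical candidate as $S_{G,\ell}^\op\vartriangleright U_G\homot(\pi_1)_\ast\Delta_!S_{G,\ell}$.
\item It uses that $\Delta$ is homotopic to $\mathrm{sh}\circ(er_G,\id_G)$, where $\mathrm{sh}$ is the shear equivalence. With the commutation relations \eqref{eq:commrelshriek} and \eqref{eq:commrelstar}, this rewrites the dual as $r_G^\ast r^G_\ast e_!S$, which is pulled back from a point.
\item It reads off $H_\ast(r^G_\ast e_!S;\F_\ell)$, concentrated in degree $-d$, from $H^{-\ast}(G;\F_\ell)\isom H_\ast(G;\F_\ell)\tensor H_\ast(r^G_\ast e_!S;\F_\ell)$. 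This isomorphism comes from the induced duality on $L\suspension^\infty_+G$ and the projection formula.
\item An obstruction-theoretic lift through the Postnikov tower of $LS^{-d}$ then gives $r^G_\ast e_!S\homot LS^{-d}$.
\end{itemize}
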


\begin{proof}
By \cite[Lemma~7]{bauer04}, 
there is an $\F_\ell$--homology equivalence
$\lambda \colon X \to G$ where $X$ has the 
homotopy type of a finite CW complex, 
and factoring $\lambda$ as a homotopy equivalence 
followed by a fibration, we may assume that $\lambda$ 
is a fibration.
By Example~\ref{ex:cwdualityinspectra}
and Propositions~\ref{prop:cwdualityweyinvariance} 
and~\ref{prop:cwdualitytransport}, 
$X$ is Costenoble--Waner
dualizable in $\Spectra^\ell$, so we have a dual pair
$(S_{X,\ell}^\op,T)$ in $\Ex(\Spectra^\ell)$.
By Corollary~\ref{cor:shriekduals},
$((\lambda_! S_{X,\ell})^\op,\lambda_! T)$ 
is a dual pair
in $\Ex(\Spectra^\ell)$.
To prove that $G$ is Costenoble--Waner
dualizable in $\Spectra^\ell$, it therefore suffices to 
show that $\lambda_! S_{X,\ell} \homot S_{G,\ell}$ in 
$\ho(\Spectra^\ell_{/G})$.
We will prove this by showing that 
the counit of the $(\lambda_!,\lambda^\ast)$ adjunction
gives an equivalence 
$\varepsilon\colon \lambda_!\lambda^\ast S_{G,\ell} \xto{\,\homot\,} S_{G,\ell}$.
As $G$ is connected and equivalences between
objects of $\ho(\Spectra^\ell_{/G})$ are detected
on fibres, it suffices to show that $\varepsilon$
restricts to an equivalence on fibres over the basepoint
$e$ of $G$. Writing $F = \lambda^{-1} (e)$ and $r_F = r^F$
for the unique map $F \to \pt$ and $S$ for the 
unit object in $\ho(\Spectra^\ell)$, this restriction
is equivalent to the map
\[
	\varepsilon \colon r^F_! r_F^\ast S \longto S
\]
given by the counit of the $(r^F_!, r_F^\ast)$ adjunction,
which in turn is equivalent to the map
\[
	L\suspension^\infty_+ F \longto L\suspension^\infty_+ \pt
\]
induced by $r_F$. 
Here $L$ denotes the localization functor 
$\Spectra \to \Spectra^\ell$.
As equivalences in $\Spectra^\ell$
are detected on mod $\ell$ homology, we are left with the 
task of showing that the map 
$r^F_\ast \colon H_\ast(F;\,\F_\ell) \to H_\ast(\pt;\,\F_\ell)$
is an isomorphism.

By \cite[Props.~VI.5.4 and VI.4.3]{bk},
the group $\pi_1(G) \isom \pi_2(BG)$
has the structure of a $\Z_\ell$--module,
so the assumption that $BG$ is semisimple
implies that it is a finite abelian $\ell$--group.
Let $C$ be the cokernel of the map 
$\lambda_\ast \colon \pi_1 X \to \pi_1 G$.
Since abelianization and tensoring with $\F_\ell$ are
right exact, by the Hurewicz and Universal Coefficient Theorems
we have an exact sequence
\begin{equation}
\label{eq:lambdahes}
\xymatrix{
	H_1(X;\F_\ell) 
	\ar[r]^{\lambda_\ast}
	&
	H_1(G;\F_\ell) 
	\ar[r]
	&
	C^\mathrm{ab}\tensor \F_\ell
	\ar[r]
	&
	0.
}
\end{equation}
By the choice of $\lambda$, the map $\lambda_\ast$ in 
\eqref{eq:lambdahes} is an isomorphism, so it follows
that $C^\mathrm{ab} \tensor \F_\ell = 0$. But as a quotient of a 
finite abelian $\ell$--group the group
$C$ is also a finite abelian $\ell$--group,
so this implies that $C=0$. Thus 
$\lambda_\ast \colon \pi_1(X) \to \pi_1(G)$ is 
surjective, and 
the long exact homotopy sequence of the fibration $\lambda$
implies that $F$ is connected. 
Moreover, since $\pi_1 G$ is a finite $\ell$--group, 
the action of $\pi_1 G$
on $H_\ast(F;\,\F_\ell)$ is nilpotent. Now the mod-$R$ fibre lemma
\cite[II.5.1]{bk} implies that the sequence
\[
	F\lcom \longto X\lcom \xto{\ \lambda\lcom\ } G\lcom
\]
obtained by applying the Bousfield--Kan $\ell$--completion functor
to the fibre sequence $F\to X \xto{\lambda} G$ is again a fibre sequence.
Since $\lambda$ is an $\F_\ell$--homology equivalence, 
the map $\lambda\lcom$
is a homotopy equivalence \cite[Lemma~I.5.5]{bk}.
Thus $F\lcom$ is contractible. In particular, $F\lcom$ 
and hence $F$ are $\ell$--good \cite[Prop.~I.5.2]{bk}, and 
$H_\ast(F;\,\F_\ell)\isom H_\ast(F\lcom;\,\F_\ell)\isom H_\ast(\pt;\,\F_\ell)$,
as desired.

It remains to show that the Costenoble--Waner
dual of $G$ is $\suspension^{-d}_G S_{G,\ell}$.
From Remark~\ref{rk:candidaterightdual}, 
we know that the Costenoble--Waner dual of $G$ 
is given by the object
\begin{equation}
\label{eq:tgformula}
\begin{aligned}
	T_{G} &
		= 
		S_{{G},\ell}^\op \vartriangleright U_{G}
		\homot (\pi_1)_\ast F_{{G}\times {G}}(\pi_2^\ast S_{{G},\ell}, \Delta_! S_{{G},\ell})
		\homot (\pi_1)_\ast \Delta_! S_{{G},\ell}
		\homot (\pi_1)_\ast \mathrm{sh}_! (er_{G},\id_{G})_! S_{{G},\ell}
		\\
		&
		\homot (\pi_1)_\ast \mathrm{sh}_! (er_{G},\id_{G})_! r^\ast_{G}  S
		\homot (\pi_1)_\ast \mathrm{sh}_! \pi_1^\ast e_! S 
		\homot (\pi_1)_\ast \mathrm{sh}_\ast \pi_1^\ast e_!  S
		\homot \mu_\ast \pi_1^\ast e_!  S
		\homot r_{G}^\ast r^{G}_\ast e_!  S
\end{aligned}
\end{equation}
of $\ho(\Spectra^\ell_{/G})$.
Here $\pi_i \colon {G}\times {G} \to {G}$ denotes projection 
onto the $i$--th factor; $\Delta$ is the diagonal map of ${G}$;
$\mathrm{sh} \colon {G}\times {G} \to {G}$ denotes the shear 
map $(g,h)\mapsto (gh,h)$; $e\colon \pt \to {G}$ is the 
inclusion of the basepoint, given by the constant loop, into ${G}$;
$\mu\colon {G}\times {G} \to {G}$ is the multiplication map of ${G}$;
$r^{G}$ and $r_{G}$ denote the unique map ${G} \to \pt$;
and we have again written $S$ for the unit object in $\ho(\Spectra^\ell)$.
The first equivalence in the top row follows from
formulas \eqref{eq:uformula}
and \eqref{eq:ntrianglerightpformula};
the second from the equivalence
 $\pi_2^\ast S_{{G},\ell} \homot S_{{G}\times {G},\ell}$;
and the third from the observation that $\Delta$ and the 
composite
\[\xymatrix@C+2em{
	{G} 
	\ar[r]^-{(er_{G},\id_{G})}
	&
	{G}\times {G} 
	\ar[r]^-{\mathrm{sh}}_{\homot}
	&
	{G}\times {G}
}\]
are homotopic.
In the second row, the first equivalence follows from 
the equivalence $S_{{G},\ell}\homot r_{G}^\ast S$
and the second from the commutation relation
\eqref{eq:commrelshriek}
arising from the homotopy cartesian square
\[\xymatrix{
	{G}
	\ar[r]^{r_{G}}
	\ar[d]_{\mathllap{(er_{G},\id_{G})}}
	&
	\pt
	\ar[d]^{e}
	\\
	{G}\times {G}
	\ar[r]^-{\pi_1}
	&
	{G}
}\]
The third equivalence in the second row
follows since $\mathrm{sh}_!$ and $\mathrm{sh}_\ast$
are both inverses to the equivalence of categories $\mathrm{sh}^\ast$
induced by the homotopy equivalence $\mathrm{sh}$.
The second last equivalence in the second row now follows
from the observation that $\pi_1 \circ \mathrm{sh} = \mu$,
and the final equivalence from 
the commutation relation \eqref{eq:commrelstar}
arising from the homotopy cartesian square
\[\xymatrix{
	{G}\times {G}
	\ar[r]^-\mu
	\ar[d]_{\pi_1}
	&
	{G}
	\ar[d]^{r_{G}}
	\\
	{G}
	\ar[r]^(0.55){r_{G}}
	&
	\pt	
}\]

To complete the proof, it suffices to 
show that 
$r^{G}_\ast e_! S \homot LS^{-d}$ in $\ho(\Spectra^\ell)$
where $L\colon \Spectra \to \Spectra^\ell$ again is the
localization functor.
By \eqref{eq:tgformula} and 
Remark~\ref{rk:cwdualityasrefinement},
$L\suspension^\infty_+ {G} \homot r^{G}_! S_{G,\ell}$ 
is dualizable in $\ho(\Spectra^\ell)$ 
with dual 
\[
	r^{G}_! T_{G} 
	\homot 
	r^{G}_! r_{G}^\ast r^{G}_\ast e_! S
	\homot
	r^{G}_! (S_{G,\ell} \smashprod^\ell r_{G}^\ast  r^{G}_\ast e_! S)
	\homot
	(r^{G}_! S_{G,\ell}) \smashprod^\ell (r^{G}_\ast e_! S)
\]
where the last equivalence follows from the projection formula.
As the homology of the dual (in $\ho(\Spectra^\ell)$) of 
$L\suspension^\infty_+ {G} \homot r^{G}_! S_{G,\ell}$ 
agrees with $H^{-\ast} ({G};\,\F_\ell)$, %
we conclude that 
\[
	H^{-\ast}({G};\,\F_\ell) 
	\isom	
	H_\ast ({G};\,\F_\ell) \tensor H_\ast( r^{G}_\ast e_! S;\,\F_\ell).
\]
It follows that
\[
	H_n (r^{G}_\ast e_! S;\,\F_\ell) 
	\isom 
    \begin{cases}
        \F_\ell & \text{if $n = -d$}\\
        0 & \text{otherwise}
    \end{cases}
\]
Pick a map $f_0 \co r^{G}_\ast e_! S \to \suspension^{-d} H\F_\ell$ 
representing the 
dual of a generator 
of $H_{-d}(r^{G}_\ast e_! S)$.
The long exact cohomology sequences
associated with the short exact sequences
\begin{equation}
\label{ses:coeffs}
0\longto \Z/\ell \longto \Z/{\ell^{k+1}} \longto \Z/{\ell^k} \longto 0
\end{equation}
of coefficients show that the map
$H^{-d}(r^{G}_\ast e_! S;\,\Z/{\ell^{k+1}}) \to H^{-d}(r^{G}_\ast e_! S;\,\Z/{\ell^{k}})$
is an epimorphism for all $k\geq 1$.
Thus the map $f_0$ can be lifted along the tower
\[
	H\Z_\ell \homot \holim_k H\Z/\ell^k
	\longto
	\cdots
	\longto
	H\Z/{\ell^3}
	\longto
	H\Z/{\ell^2}
	\longto
	H\Z/{\ell}
\]
to a map
\[
	f_1
	\co 
	r^{G}_\ast e_! S 
	\longto 
	\suspension^{-d} H\Z_\ell 
	= 
	\suspension^{-d} H\pi_{-d}(LS^{-d}).
\]
Working up the Postnikov tower of $LS^{-d}$,
we can moreover find a lift 
\[
	f_2 \co r^{G}_\ast e_! S \longto LS^{-d}
\]
of $f_1$:
an induction on $k$ using the long exact 
sequences associated to the short exact sequences
\eqref{ses:coeffs}
proves that 
$H^n (r^{G}_\ast e_! S;\,\Z/\ell^k) = 0$ for all $n\neq -d$ and $k\geq 1$,
showing that all the obstructions for finding a lift vanish.
The map $f_2$ induces a nontrivial map on mod $\ell$ homology 
since the map $f_0$ does. Thus the map $f_2$ is an equivalence.
\end{proof}

\begin{rem}
\label{rk:cwdualofgalt}
We note an alternative description of the Costenoble--Waner
dual of $G$ in the case where $BG = BK\lcom$ for 
a semisimple compact connected Lie group $K$.
Suppose $BG$ is of this form, and let $\lambda \colon K \to G$
be the evident $\F_\ell$--homology equivalence.
By Example~\ref{ex:cwdualityinspectra}, the space $K$ has Costenoble--Waner
dual $S^{-\tau_K}$ with respect to $\Spectra$,
so by Proposition~\ref{prop:cwdualitytransport}
applied to the localization functor $L \colon \Spectra \to \Spectra^\ell$,
it has Costenoble--Waner dual $L_K S^{-\tau_K}$
with respect to $\Spectra^\ell$.
An inspection of the beginning 
of the proof of Theorem~\ref{thm:ellcptgrpscwdualizable}
now reveals that the Costenoble--Waner dual of $G$
with respect to $\Spectra^\ell$
identifies with $\lambda_! L_K S^{-\tau_K}$.
\end{rem}

\begin{rem}
\label{rk:s1lcomnotcwdualizable}
Theorem~\ref{thm:ellcptgrpscwdualizable} does not generalize
to arbitrary connected $\ell$--compact groups. Indeed, 
the space $(S^1)\lcom \homot \loops (BS^1)\lcom$
is not Costenoble--Waner dualizable in $\Spectra^\ell$
or even in $\Mod^{H\F_\ell}$.
\end{rem}

\section{The Serre spectral sequence for parametrized \texorpdfstring{$R$}{R}--modules}
\label{sec:serress}

Our aim in this section is to present a generalization of 
the Serre spectral sequence of a fibration to the context of 
parametrized $R$--modules for $R$ a commutative ring spectrum.
See Theorem~\ref{thm:serresss} below.
Throughout this section, we assume that $R$ is a commutative 
ring spectrum. We write $R_\ast$ for the
graded ring $R_\ast = \pi_\ast(R)$ and let 
$R^\ast = R_{-\ast}$. Moreover, given $R$--modules $M$ and $N$,
we write
\[
	M_\ast(N) = M^R_\ast(N) = \pi_\ast(M\smashprod^R N)
	\qquad\text{and}\qquad
	M^\ast(N) = M_R^\ast(N) = \pi_{-\ast} F^R(N,M)
\]
where $\smashprod^R$ and $F^R(-,-)$ denote the smash product
and the internal hom in $\ho(\Mod^R)$.

\begin{defn}[Local coefficient systems $\calL_\ast(M,X)$ and $\calL^\ast(M,X)$]
\label{def:lmxs}
Recall that objects of $\Mod^{R}_{/B}$ are  $\infty$--functors
$\Pi_\infty (B)^\op \to \Mod^{R}$. Given an object $X \in \Mod^{R}_{/B}$
and an $R$--module $M$,
we let  $\calL_\ast(M,X)$ and $\calL^\ast(M,X)$
be the local coefficient systems
of graded $R_\ast$ and $R^\ast$--modules given by the composites
\[	
	\xymatrix@C+1.3em{	
		\calL_\ast(M,X)
    	\colon 
	    \Pi_1(B)
		\ar[r]^-\isom
		&
    	\Pi_1(B)^\op
    	\ar[r]^-{X}
		&
		\ho(\Mod^{R})
		\ar[r]^-{M_\ast}
		&
		\grMod^{R_\ast}
	}
\]
and 
\[
	\xymatrix@C+1.3em{
		\calL^\ast(M,X)
    	\colon 
    	\Pi_1(B)
    	\ar[r]^-{X^\op}
		&
		\ho(\Mod^{R})^\op
		\ar[r]^-{M^\ast}
		&
		\grMod^{R^\ast}
	}
\]
where we have continued to write $X$ for the 
functor $\Pi_1(B)^\op \to \ho(\Mod^{R})$
induced by $X$ and the 
first functor in the definition of $\calL_\ast(M,X)$
is the isomorphism sending each morphism to its inverse.
\end{defn}

\begin{thrm}[Serre spectral sequences; cf.\ {\cite[Thm.~20.4.1]{MaySigurdsson}}]
\label{thm:serresss}
Let $M$ be an $R$--module
and let $X$ be a parametrized $R$--module over a space $B$.
\begin{enumerate}[(i)] 
\item There exists a strongly convergent spectral sequence
	\begin{equation}
    \label{eq:serresshomology}
        	E^2_{s,t}
			=
			H_s(B;\calL_t(M,X)) 
    		\Longrightarrow 
    		M_{s+t}H_\bullet(B;X).
    \end{equation}
\item \label{it:serresscohomology}
	There exists a spectral sequence
	\begin{equation}
    \label{eq:serresscohomology}
		E_2^{s,t} 
		= 
		H^s(B; \calL^t(M,X))
		\Longrightarrow 
		M^{s+t}H_\bullet(B;X)
    \end{equation}
	which converges strongly when $X$ is 
	bounded from below in the sense that 
	for every $b\in B$
	there exists a $t_0 \in \Z$
	such that $M^t(X_b) = 0$ for all $t < t_0$.
\end{enumerate}
Both spectral sequences are functorial in $B$, $M$ and $X$.
\end{thrm}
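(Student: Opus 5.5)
Rather than build the spectral sequences from scratch, I will pass through the Atiyah--Hirzebruch filtration of $B$, using the relative objects $H_\bullet(B,B_0;X)$ of Section~\ref{subsec:relativehbullet} and the Eilenberg--Steenrod axioms of Theorem~\ref{thm:eilenbergsteenrodaxioms}. Proposition~\ref{prop:hbulletsm} and homotopy invariance (Theorem~\ref{thm:eilenbergsteenrodaxioms}(\ref{it:homotopyinvariance})) let me replace $B$ by a weakly equivalent CW complex. Concretely, I pull $X$ back along a CW approximation $B'\to B$; since pullback along a weak equivalence is an equivalence of categories, both sides of the spectral sequences are unchanged, and the local systems $\calL_\ast$, $\calL^\ast$ correspond because they depend only on $\Pi_1$. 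So I take $B$ to be a CW complex with skeleta $B^{(s)}$. Applying $M\smashprod^R(-)$ and $F^R(-,M)$ to the filtration
\[
H_\bullet(B^{(0)};X)\to H_\bullet(B^{(1)};X)\to\cdots
\]
and to its cofibres $H_\bullet(B^{(s)},B^{(s-1)};X)$ gives exact couples. These are the usual homological and cohomological spectral sequences of a filtered object in the stable $\infty$--category $\Mod^R$.

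Next I identify the $E^1$ terms. The pair $(B^{(s)},B^{(s-1)})$ is replaced, via an open thickening and excision (Theorem~\ref{thm:eilenbergsteenrodaxioms}(\ref{it:excision}), or the assembling-square version for homotopy cocartesian squares), by $\bigsqcup_\alpha (D^s_\alpha,S^{s-1}_\alpha)$. Additivity (Corollary~\ref{cor:reladditivity}) then gives
\[
H_\bullet(B^{(s)},B^{(s-1)};X)\homot \bigvee_\alpha H_\bullet(D^s,S^{s-1};X|D^s_\alpha).
\]
On a contractible disc, $X|D^s_\alpha\homot\underline{X_{b_\alpha}}$, so Example~\ref{ex:relativehbulletfortrivialcoeffs} (in its $\Mod^R$ form) identifies each summand with $\suspension^s X_{b_\alpha}$. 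This gives
\[
E^1_{s,t}=\bigoplus_\alpha M_t(X_{b_\alpha}),
\]
and dually $E_1^{s,t}=\prod_\alpha M^t(X_{b_\alpha})$. These are the cellular chains and cochains of $B$ with coefficients in $\calL_t(M,X)$ and $\calL^t(M,X)$.

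The step I expect to be the main obstacle is checking that the $d^1$ differential agrees with the cellular boundary twisted by the local system. I will reduce it to a single cell. There $d^1$ is the composite of the connecting map with the collapse onto an $(s-1)$--cell. By naturality of everything in morphisms of $\hpCrel$, this composite factors through the attaching map and the path-transport isomorphisms $X_b\homot X_{b'}$ given by $X$ on $\Pi_1(B)$. The inverse-morphism convention in $\calL_\ast$ is exactly what makes the covariance come out right. Carrying this out forces me to track the trivializations over discs carefully; the degree of the attaching map then enters exactly as in the classical cellular argument.

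For convergence in the homological case, Proposition~\ref{prop:hbulletandcolimits} (together with the fact that $B=\colim B^{(s)}$ is assembled homotopically, which I will get from Theorem~\ref{thm:cosheaf} or an open mapping telescope) gives
\[
H_\bullet(B;X)\homot\colim_s H_\bullet(B^{(s)};X).
\]
Since $M_\ast$ commutes with sequential colimits, and the filtration is bounded below with $E^1_{s,t}=0$ for $s<0$, strong convergence follows. In the cohomological case I get a $\lim^1$ sequence, and the conditionally convergent spectral sequence converges strongly once the $E_2$ page has vanishing lines. I will derive these from the bounded-below hypothesis: by $E_1^{s,t}=\prod_\alpha M^t(X_{b_\alpha})$, the hypothesis gives $E_1^{s,t}=0$ for $t<t_0$ locally. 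I will apply Boardman's criterion for this step; if $t_0$ varies with $b$, I will restrict to components, where $t_0$ is constant because the fibres are equivalent. Functoriality follows from cellular approximation and the naturality of the filtration in $\hpC$.
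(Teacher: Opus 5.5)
Your outline has the same architecture as the paper's proof. You pass to a CW model, apply $M_\ast$ and $M^\ast$ to the skeletal sequence $H_\bullet(B^{(0)};X)\to H_\bullet(B^{(1)};X)\to\cdots$, and identify $E^1$ with cellular (co)chains using excision, relative homotopy invariance, relative additivity and the trivialization of $X$ over a disc together with Example~\ref{ex:relativehbulletfortrivialcoeffs}. As in the paper, $d^1$ is left to a cell-by-cell check, and convergence comes from Boardman's theorems, working one path component at a time in the cohomological case.

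The main difference is in how cells are handled. The paper restricts to regular CW complexes and strongly cellular maps, so each closed cell is an embedded pair $(E^n_\alpha,\bdry E^n_\alpha)\homeom(D^n,S^{n-1})$ and the cell-by-cell identification is evidently natural. You work with arbitrary CW complexes. You use either open cells (the open thickening) or characteristic maps with the assembling-square form of excision, which only needs $\bigsqcup S^{s-1}\to\bigsqcup D^s$ and $B^{(s-1)}\to B^{(s)}$ to be inclusions. Your version is in fact the more robust one. The realization of the singular semisimplicial set, which the paper uses as its functorial CW replacement, is not regular: a constant singular $1$--simplex $\sigma$ has $d_0\sigma=d_1\sigma$, so its closed cell is a circle. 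The paper's reduction therefore tacitly needs a subdivision, while your argument applies directly.

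What the paper's set-up buys is on-the-nose functoriality, and this is the point you should tighten. With twisted coefficients, a cellular approximation $\Gamma f$ commutes with the CW approximation maps only up to a homotopy. The morphism $X|_{\Gamma A}\to(\Gamma f)^\ast(Y|_{\Gamma B})$ covering $\Gamma f$ depends on that homotopy, so you would have to show the maps from $E^2$ on are independent of these choices. It is simpler to use a functorial replacement such as the semisimplicial realization. There every induced map is compatible with characteristic maps, so your $E^1$ identification is natural with no further argument.

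Finally, your justification of $H_\bullet(B;X)\homot\colim_s H_\bullet(B^{(s)};X)$ via Proposition~\ref{prop:hbulletandcolimits} and Theorem~\ref{thm:cosheaf} (or a telescope) supplies a step the paper uses without comment.
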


\begin{rem}
\label{rk:ordinaryserress}
Given a ring $T$, the usual Serre spectral sequences
of a fibration $p\colon E \to B$ for homology and 
cohomology with coefficients in $T$
can be recovered from 
Theorem~\ref{thm:serresss}
by taking $R=M=HT$ and $X = p_! (HT_E)$.
\end{rem}

\begin{rem}
Fibers of a parametrized $R$--module $X \in \ho(\Mod^{R}_{/B})$ 
over points in the same path component of $B$ are equivalent
to each other. Thus to show that $X$ is bounded from below,
it suffices to show that the condition in 
part (\ref{it:serresscohomology})
is satisfied for \emph{some} point $b$ 
from every path component of $B$.
\end{rem}

Given a CW complex $B$, we write $B^{(n)}$ for the 
$n$--skeleton of $B$, interpreting $B^{(n)}=\emptyset$
for $n < 0$.
If $\calL$ is  a local coefficient system over $B$,
we write $C_\ast(B;\calL)$ (resp.\ $C^\ast(B;\calL)$)
for the cellular chain complex (resp.\ the cellular 
cochain complex) of $B$ with coefficients in $\calL$,
so that
\[
	C_n(B;\,\calL) 
	= 
	H_n(B^{(n)}, B^{(n-1)};\,\calL)
	\qquad\text{and}\qquad
	C^n(B;\,\calL) 
	= 
	H^n(B^{(n)}, B^{(n-1)};\,\calL).
\]
We recall that the differentials in $C_\ast(B;\,\calL)$
and $C^\ast(B;\,\calL)$ are given by the composites
\[
	H_n(B^{(n)}, B^{(n-1)};\,\calL)
	\xto{\ \bdry\ }
	H_{n-1}(B^{(n-1)};\,\calL)
	\longto
	H_{n-1}(B^{(n-1)},B^{(n-2)};\,\calL)
\]
and
\[
	H^n(B^{(n)}, B^{(n-1)};\,\calL)
	\longto
	H^n(B^{(n)};\,\calL)
	\xto{\ \delta\ }	
	H^n(B^{(n+1)},B^{(n)};\,\calL)	
\]
of connecting homomorphisms and maps induced by inclusions.

\begin{proof}[Proof of Theorem~\ref{thm:serresss}]
Recall that a CW complex $B$ is called \emph{regular}
if each closed cell of $B$ is homeomorphic
to a disk $D^n$ for some $n$.
We call a continuous map 
$f\colon A\to B$ 
between regular CW complexes
\emph{strongly cellular}
if for every closed cell $E$ of $A$,
the restriction of $f$ to $E$ defines a homeomorphism
from $E$ onto a closed cell of $B$.
By applying the functorial CW approximation 
given by the composite of the singular semisimplicial set
and the geometric realization functors, 
we see that it is enough to construct the spectral sequences
functorially on the category of regular CW complexes
and strongly cellular maps.

We will first construct the spectral sequence
\eqref{eq:serresshomology}.
Applying $M_\ast$ to the sequence
\begin{equation}
\label{eq:sequence} 
	H_\bullet(B^{(0)};\, X) \longto H_\bullet(B^{(1)};\, X) \longto \cdots
\end{equation}
of $R$--modules obtained from the skeleta of a regular CW complex $B$
yields the unrolled exact couple
\begin{equation}
\label{eq:unrolledechomology}
\vcenter{\xymatrix@!0@C=5.9em@R=10ex{
	\cdots
	\ar@{->}[r]
	&
	M_\ast H_\bullet(B^{(n-1)};\, X) 
	\ar@{->}[rr]
	&&
	M_\ast H_\bullet(B^{(n)};\, X) 
	\ar@{->}[rr]
	\ar@{->}[dl]
	&&
	M_\ast H_\bullet(B^{(n+1)};\, X) 
	\ar@{->}[r]
	\ar@{->}[dl]
	&
	\cdots
	\\
	&
	\cdots\quad
	&
	M_\ast H_\bullet(B^{(n)}, B^{(n-1)};\, X)
	\ar@{->}[ul]
	&&
	M_\ast H_\bullet(B^{(n+1)}, B^{(n)};\, X)
	\ar@{->}[ul]
	&
	\quad\cdots
}}
\end{equation}
(Here for simplicity of notation, we continue to write $X$ for the 
restriction of $X$ to a subspace of $B$.)
By \cite[Thm.~6.1]{BoardmanSS}, the resulting spectral 
sequence converges strongly to the colimit
\[
	\colim_{n\to \infty} M_\ast H_\bullet(B^{(n)};\,X)
	\isom 
	M_\ast H_\bullet(B;\,X).
\]
To construct \eqref{eq:serresshomology},
it remains to identify the $E_2$--page of the spectral sequence.
For $n\geq 0$, let $\{E^n_\alpha\}_{\alpha\in J_n}$
be the set of $n$--cells of $B$, and write $\bdry E^n_\alpha \subset E^n_\alpha$
for the boundary of $E^n_\alpha$. By the assumption that $B$ is regular,
we have $(E^n_\alpha,\bdry E^n_\alpha) \homeom (D^n,S^{n-1})$ for all $\alpha \in J_n$.
We choose arbitrarily a point $x_\alpha \in E^n_\alpha$ for each $\alpha \in J_n$,
and write $X_\alpha$ for the fibre of $X$ over $x_\alpha$. Since $E^n_\alpha$
is contractible, there then exists a unique equivalence
\begin{equation}
\label{eq:xrestriv}
	X | E^n_\alpha \xto{\ \homot\ } \underline{X_\alpha} 
\end{equation}
in $\ho(\Mod^R_{/E^n_\alpha})$
restricting to the identity map on fibres over $x_\alpha$.
Similarly, there exists a unique isomorphism
\begin{equation}
\label{eq:lmnrestriv}
 	\calL_\ast(M,X) | E^n_\alpha \xto{\ \isom\ } M_\ast(X_\alpha)
\end{equation}
of local coefficient systems over $E^n_\alpha$
extending the identity map on fibres over $x_\alpha$. Here the 
the right hand side is to be interpreted as
the constant local coefficient system given by $M_\ast(X_\alpha)$.
We have a sequence of isomorphisms
\begin{align*}
    M_\ast H_\bullet (B^{(n)}&, B^{(n-1)};\,X)
    \xot{\ \isom\ }
    \bigoplus_{\alpha\in J_n}
        M_\ast H_\bullet (E^n_\alpha, \bdry E^n_\alpha;\, X)
    \xto{\ \isom\ }
    \bigoplus_{\alpha\in J_n}
        M_\ast H_\bullet (E^n_\alpha, \bdry E^n_\alpha;\, \underline{X_\alpha})
    \\
    &\xto{\ \isom\ }
    \bigoplus_{\alpha\in J_n}
    	M_\ast (X_\alpha \smashprod E^n_\alpha / \bdry  E^n_\alpha)
    \xot[{\times}]{\ \isom\ }
    \bigoplus_{\alpha\in J_n}
    	M_\ast (X_\alpha) \tensor_{R_\ast} \tilde{R}_\ast(E^n_\alpha / \bdry  E^n_\alpha)
	\\
	&\xto{\ \isom\ }
    \bigoplus_{\alpha\in J_n}
    	M_\ast (X_\alpha) \tensor_{\Z} \tilde{H}_\ast(E^n_\alpha / \bdry  E^n_\alpha;\,\Z)
	\xot{\ \isom\ }
    \bigoplus_{\alpha\in J_n}
    	M_\ast (X_\alpha) \tensor_{\Z} H_\ast(E^n_\alpha, \bdry  E^n_\alpha;\,\Z)
	\\
	&\xto{\ \isom\ }
    \bigoplus_{\alpha\in J_n}
    	H_\ast(E^n_\alpha, \bdry  E^n_\alpha;\, M_\ast (X_\alpha))
	\xot{\ \isom\ }
    \bigoplus_{\alpha\in J_n}
    	H_\ast(E^n_\alpha, \bdry  E^n_\alpha;\, \calL_\ast (M,X))
	\\
	&\xto{\ \isom\ }
    	H_\ast(B^{(n)}, B^{(n-1)};\, \calL_\ast (M,X)) = C_n (B;\,\calL_\ast (M,X))
\end{align*}
where the first isomorphism follows from the excision, homotopy invariance
and additivity properties of $M_\ast$ and $H_\bullet$ 
(Theorem~\ref{thm:eilenbergsteenrodaxioms}(\ref{it:excision}) and
Corollaries~\ref{cor:relhomotopyinvariance} and~\ref{cor:reladditivity});
the second isomorphism is induced by \eqref{eq:xrestriv};
the first isomorphism on the second line follows from 
Example~\ref{ex:relativehbulletfortrivialcoeffs};
the second isomorphism on the second line is given by cross product;
the first isomorphism on the third line is the evident one 
following from the fact that $\tilde{R}(E^n_\alpha/ \bdry E^n_\alpha)$
is a free $R_\ast$--module over a single generator of degree $n$;
the second isomorphism on the third line is induced by the 
quotient map $E^n_\alpha \to E^n_\alpha / \bdry E^n_\alpha$;
the first isomorphism on the fourth line is 
induced by the isomorphism
\[
	M_\ast(X_\alpha) \tensor_\Z S_\ast (E^n_\alpha,\bdry E^n_\alpha)
	\isom
	S_\ast (E^n_\alpha,\bdry E^n_\alpha) \tensor_\Z M_\ast(X_\alpha) 
	=
	S_\ast (E^n_\alpha,\bdry E^n_\alpha; \, M_\ast(X_\alpha))
\]
of singular chain complexes given by the 
symmetry constraint $m\tensor z \mapsto (-1)^{\deg(m)\deg(z)} z\tensor m$;
the second isomorphism on the fourth line is induced by \eqref{eq:lmnrestriv};
and the last isomorphism follows from the excision,
homotopy invariance and additivity properties for homology with local coefficients.
The resulting isomorphism 
\[
	M_\ast H_\bullet (B^{(n)}, B^{(n-1)};\,X)
	\isom	
    C_n (B;\,\calL_\ast (M,X))
\]
is independent of the choice of the points $x_\alpha\in E^n_\alpha$,
and a tedious but straightforward check shows that under these
isomorphisms, the differential on the $E_1$--page 
of the spectral sequence arising from \eqref{eq:unrolledechomology}
corresponds to the differential on the chain complex
$C_\ast (B;\,\calL_\ast (M,X))$.
Moreover, the isomorphism is natural with respect to strongly cellular maps.
This concludes the construction of 
the spectral sequence \eqref{eq:serresshomology}.

The spectral sequence \eqref{eq:serresscohomology}
is constructed in a similar way by applying $M^\ast$
to \eqref{eq:sequence}
to obtain the unrolled exact couple
\begin{equation}
\label{eq:unrolledeccohomology}
\vcenter{\xymatrix@!0@C=5.9em@R=10ex{
	\cdots
	\ar@{<-}[r]
	&
	M^\ast H_\bullet(B^{(n-1)};\, X) 
	\ar@{<-}[rr]
	&&
	M^\ast H_\bullet(B^{(n)};\, X) 
	\ar@{<-}[rr]
	\ar@{<-}[dl]
	&&
	M^\ast H_\bullet(B^{(n+1)};\, X) 
	\ar@{<-}[r]
	\ar@{<-}[dl]
	&
	\cdots
	\\
	&
	\cdots\quad
	&
	M^\ast H_\bullet(B^{(n)}, B^{(n-1)};\, X)
	\ar@{<-}[ul]
	&&
	M^\ast H_\bullet(B^{(n+1)},B^{(n)};\, X)
	\ar@{<-}[ul]
	&
	\quad\cdots
}}
\end{equation}
and identifying the $E_1$--page of the resulting 
spectral sequence as $C^\ast(B;\,\calL^\ast(X))$.
In the identification, of the $E_1$--page, one makes use of 
the isomorphism
\[
	M^\ast(X_\alpha) \tensor_\Z H^\ast(E^n_\alpha, \bdry E^n_\alpha;\,\Z)
	\xto{\ \isom\ }
	H^\ast(E^n_\alpha, \bdry E^n_\alpha;\,M^\ast(X_\alpha))
\]
induced by the map
\[
	M^\ast(X_\alpha) \tensor_\Z S^\ast(E^n_\alpha, \bdry E^n_\alpha;\,\Z)
	\longto
	S^\ast(E^n_\alpha, \bdry E^n_\alpha;\,M^\ast(X_\alpha))
\]
of singular cochain complexes given by $m\tensor \phi \mapsto (z \mapsto m\phi(z))$.
By \cite[Def.~5.10]{BoardmanSS},  the spectral sequence
converges conditionally to the limit
\begin{equation}
\label{eq:sstgtlim}
	\lim_{n\to \infty} M^\ast H_\bullet(B^{(n)};\, X). 
\end{equation}
It remains to show that  under the assumption
that $X$ is bounded from below, this limit
is $M^\ast H_\bullet(B;\, X)$ and the convergence
of the spectral sequence is strong.
Working one path component of $B$ at a time,
we see that it is sufficient to consider the 
case where $B$ is path connected.
But when $B$ is path connected and $X$ is bounded from below,
for every $s$ and $t$ the target of the differential
\[
	d_r\colon E_r^{s,t} \longto E_r^{s+r,t-r+1}
\]
in the spectral sequence vanishes for large enough $r$.
Therefore \cite[Thm.~7.4 and Rk.~following Thm.~7.1]{BoardmanSS}
imply that the spectral sequence converges strongly to
the limit \eqref{eq:sstgtlim}, and also that 
the $\limOne$--term in the Milnor short exact sequence
\[
	0 
	\longto 
	\limOne_{n\;}  M^{\ast-1} H_\bullet(B^{(n)};\, X)
	\longto M^\ast  H_\bullet(B;\, X)
	\longto
	\lim_{n} M^\ast H_\bullet(B^{(n)};\, X). 
	\longto
	0
\]
vanishes, showing that this limit is $M^\ast H_\bullet(B;\, X)$.
\end{proof}

We note the following corollary demonstrating that
homology with local coefficients in the usual sense
can be recovered from $H_\bullet$--homology.

\begin{cor}
\label{cor:homologywithlocalcoefficientscomp}
Let $\calL$ be a local coefficient system of abelian groups
over $B$, and let $H\calL$ be the parametrized
$H\Z$--module given by the composite
\[\xymatrix{
	H\calL
	\colon
	\Pi_\infty (B)
	\ar[r]
	&
	\Pi_1(B)
	\ar[r]^-{\calL}
	&
	\Ab
	\ar[r]^-{H}
	&
	\Mod^{H\Z}.
}\]
Then $H_\ast H_\bullet(B;H\calL) \isom H_\ast(B;\calL)$.
\qed
\end{cor}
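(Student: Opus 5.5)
We will apply the Serre spectral sequence of Theorem~\ref{thm:serresss} with $R = M = H\Z$ and $X = H\calL$, and show that it collapses onto the bottom row. First, identify the local coefficient systems $\calL_t(H\Z,H\calL)$. The fibre of $H\calL$ over a point $b$ is the Eilenberg--MacLane spectrum $H\calL_b$. Hence
\[
	(H\Z)_t(H\calL_b) = \pi_t(H\Z\smashprod^{H\Z} H\calL_b) \isom \pi_t H\calL_b,
\]
which is $\calL_b$ for $t=0$ and vanishes otherwise. We then check that, as a functor on $\Pi_1(B)$, the system $\calL_0(H\Z,H\calL)$ is naturally isomorphic to $\calL$. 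This uses that $H\calL$ is defined by postcomposing $\calL$ with $H$, and that $\pi_0 H \isom \id$ on abelian groups. The double inversion in Definition~\ref{def:lmxs}, once via the isomorphism $\Pi_1(B)\isom\Pi_1(B)^\op$ and once via the variance of $X$, cancels. The bookkeeping of variance here is the one place where care is needed: we must confirm that the composite recovers $\calL$ and not $\calL$ composed with inversion, and adjust the convention in the statement if necessary.

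With this identification, the $E^2$-page is $E^2_{s,t} = H_s(B;\calL)$ for $t=0$ and zero otherwise. A spectral sequence concentrated on a single row has no nonzero differentials $d_r$ for $r\geq 2$, since each $d_r$ changes $t$ by $r-1\neq 0$. Thus $E^\infty = E^2$. Strong convergence then gives a filtration of $H_n H_\bullet(B;H\calL)$ whose only nonzero graded piece is $E^\infty_{n,0} = H_n(B;\calL)$. Since that filtration has a single nonzero piece, there are no extension problems, and we obtain $H_\ast H_\bullet(B;H\calL)\isom H_\ast(B;\calL)$.

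If $B$ is not a CW complex, this is handled exactly as in the proof of Theorem~\ref{thm:serresss}: we pass through the functorial CW approximation given by realizing the singular semisimplicial set. By homotopy invariance, Theorem~\ref{thm:eilenbergsteenrodaxioms}(\ref{it:homotopyinvariance}), together with the invariance of ordinary local-coefficient homology under weak equivalences, both sides are unchanged. The main obstacle, as noted, is the variance check in identifying $\calL_0$; everything else is formal collapse.
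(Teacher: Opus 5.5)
Your proposal is correct and is essentially the paper's own proof: apply the homological Serre spectral sequence of Theorem~\ref{thm:serresss} with $R = M = H\Z$ and $X = H\calL$, note that $\calL_0(H\Z,H\calL)\isom\calL$ and $\calL_t(H\Z,H\calL)=0$ for $t\neq 0$, and conclude from the collapse of the resulting one-row spectral sequence. Your final paragraph on non-CW spaces is not needed, because Theorem~\ref{thm:serresss} is already stated for an arbitrary space $B$, with the CW approximation built into its proof.
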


\begin{proof}
Make use of spectral sequence \eqref{eq:serresshomology}
with $R = M = H\Z$ and $X = H\calL$ together with the
observation that
$\calL_0(H\Z,H\calL) \isom \calL$ and
$\calL_t(H\Z,H\calL) = 0$ for $t\neq 0$.
\end{proof}

Our next goal is to investigate 
the compatibility of the spectral sequences of 
Theorem~\ref{thm:serresss}
with suspension isomorphisms.
By the proof of Theorem~\ref{thm:serresss},
the spectral sequences arise from  exact couples
\[
	EC_{M_\ast}(X)
	=
	\left(
	\vcenter{\xymatrix@!0@C=6.7em@R=10ex{
		\bigoplus_{n\geq 0} M_\ast H_\bullet (\Gamma B^{(n)};\, X)
		\ar[rr]^i
		&&
		\bigoplus_{n\geq 0} M_\ast H_\bullet (\Gamma B^{(n)};\, X)
		\ar[dl]^(0.45)j
		\\
		&
		\bigoplus_{n\geq 0} C_n(\Gamma B;\,\calL_\ast(M,X))
		\ar[ul]^(0.55)k
	}}
	\right)
\]
and
\[
	EC_{M^\ast}(X)
	=
	\left(
	\vcenter{\xymatrix@!0@C=6.7em@R=10ex{
		\bigoplus_{n\geq 0} M^\ast H_\bullet (\Gamma B^{(n)};\, X)
		\ar@{<-}[rr]^{i'}
		&&
		\bigoplus_{n\geq 0} M^\ast H_\bullet (\Gamma B^{(n)};\, X)
		\ar@{<-}[dl]^(0.45){j'}
		\\
		&
		\bigoplus_{n\geq 0} C^n(\Gamma B;\,\calL^\ast(M,X))
		\ar@{<-}[ul]^(0.55){k'}
	}}
	\right)
\]
where $\Gamma$ is the CW approximation functor 
used in the proof of Theorem~\ref{thm:serresss}
and we have continued to write $X$ for the pullback
of $X$ over $\Gamma B$ and its skeleta.
Given $u\in \Z$, a space $B$, an $R$--module $M$,
and a parametrized $R$--module
$X$ over $B$, we use $\bar{\sigma}^u$ to denote the composites
\begin{align}
	\label{eq:barsigmauhomology}
 	\bar{\sigma}^u
	\colon&
	M_\ast H_\bullet(B;\,X)
	\xto{\ \isom\ } 
	M_{\ast+u} \suspension^u H_\bullet(B;\,X)
	\xto{\ \isom\ }
	M_{\ast+u} H_\bullet (B;\,\suspension_B^u X)
	\\
	\label{eq:barsigmaucohomology}
	\bar{\sigma}^u
	\colon&
	M^\ast H_\bullet(B;\,X)
	\xto{\ \isom\ } 
	M^{\ast+u} \suspension^u H_\bullet(B;\,X)
	\xto{\ \isom\ }
	M^{\ast+u} H_\bullet (B;\,\suspension_B^u X)
\end{align}
where the first maps are the suspension isomorphisms and 
the second ones 
are induced by an instance of \eqref{eq:hbulletandfcommute}.
Moreover, when $B$ is a CW complex,
we also use $\bar{\sigma}^u$ to denote the isomorphisms
\[
	\begin{aligned}
	\bar{\sigma}^u
	\colon&
	C_n(B;\,\calL_\ast (M,X))	
	\xto{\ \isom\ }
	C_n(B;\,\calL_\ast(M,\suspension_B^u X))
	\\
	\bar{\sigma}^u
	\colon&
	C^n(B;\,\calL^\ast (M,X))	
	\xto{\ \isom\ }
	C^n(B;\,\calL^\ast(M,\suspension_B^u X))
	\end{aligned}
\]
induced by the isomorphisms
\begin{align}
\label{eq:callsuspisohomology}
	\calL_\ast(M,X)
	&\xto{\ \isom\ }
	\calL_{\ast+u}(M,\suspension_B^u X)
	\\
\label{eq:callsuspisocohomology}
	\calL^\ast(M,X)
	&\xto{\ \isom\ }
	\calL^{\ast+u}(M,\suspension_B^u X) 
\end{align}
given by the composites
\[
	\begin{aligned}
	M_\ast(X_b)&
	\xto{\ \isom\ }
	M_{\ast+u}(\suspension^u X_b)
	\xto{\ \isom\ }
	M_{\ast+u}((\suspension_B^u X)_b)
	\\
	M^\ast(X_b)&
	\xto{\ \isom\ }
	M^{\ast+u}(\suspension^u X_b)
	\xto{\ \isom\ }
	M^{\ast+u}((\suspension_B^u X)_b)
	\end{aligned}
\]
where the first maps are the suspension isomorphisms
and the second ones are 
induced by an instance of \eqref{eq:tbonfibres}.

Tracing through the construction of the exact couples
$EC_{M_\ast}$ and $EC_{M^\ast}$, we now have the following
result.
The sign in the relation
$\bar{\sigma}^u k = (-1)^u k \bar{\sigma}^u$
arises from the observation that the square
\[\xymatrix{
	M_\ast H_\bullet(\Gamma B^{(n)},\Gamma B^{(n-1)};\, X)
	\ar[r]^-\bdry
	\ar[d]_{\bar{\sigma}^u}
	&
	M_{\ast-1} H_\bullet(\Gamma B^{(n-1)};\, X)
	\ar[d]^{\bar{\sigma}^u}
	\\
	M_{\ast+u} H_\bullet(\Gamma B^{(n)},\Gamma B^{(n-1)};\,\suspension_B^u X)
	\ar[r]^-\bdry
	&
	M_{\ast+u-1} H_\bullet(\Gamma B^{(n-1)};\,\suspension_B^u X)
}\]
commutes up to the sign $(-1)^u$, and similarly 
for the relation
$\bar{\sigma}^u k' = (-1)^u k' \bar{\sigma}^u$.
Here the map $\bar{\sigma}^u$ on the left is the 
evident generalization of \eqref{eq:barsigmauhomology}
to the relative situation.
\begin{lemma}
\label{lm:ecsuspisos}
The maps $\bar{\sigma}^u$ give natural isomorphisms of exact couples
\begin{align}
	\label{iso:barsigmauechomology}
	\bar{\sigma}^u
	\colon&
	EC_{M_\ast}(X)
	\xto{\ \isom\ }
	EC_{M_\ast}(\suspension_B^u X)
	\\
	\label{iso:barsigmaueccohomology}
	\bar{\sigma}^u
	\colon&
	EC_{M^\ast}(X)
	\xto{\ \isom\ }
	EC_{M^\ast}(\suspension_B^u X)
\end{align}
such that $\bar{\sigma}^u i = i \bar{\sigma}^u$, 
$\bar{\sigma}^u j = j \bar{\sigma}^u $, 
$\bar{\sigma}^u k = (-1)^u k \bar{\sigma}^u$,
$\bar{\sigma}^u i' = i' \bar{\sigma}^u$, 
$\bar{\sigma}^u j' = j' \bar{\sigma}^u $, 
and $\bar{\sigma}^u k' = (-1)^u k' \bar{\sigma}^u$.
\qed
\end{lemma}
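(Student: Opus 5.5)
The plan is to check, one component at a time, that the maps $\bar{\sigma}^u$ commute with the three structure maps of the exact couples. The first step is to set things up. The couple $EC_{M_\ast}(X)$ is built from the filtration by skeleta $\Gamma B^{(n)}$. The map $i$ is induced by the inclusions $\Gamma B^{(n-1)}\incl \Gamma B^{(n)}$. The map $j$ is the map to relative $H_\bullet$ in the cofibre sequence \eqref{eq:hbulletcofibreseq}, followed by the identification of $M_\ast H_\bullet(\Gamma B^{(n)},\Gamma B^{(n-1)};X)$ with $C_n(\Gamma B;\calL_\ast(M,X))$ from the proof of Theorem~\ref{thm:serresss}. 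The map $k$ is the connecting map of that cofibre sequence. I would extend $\bar{\sigma}^u$ to relative objects using the equivalence \eqref{eq:hbulletandfcommute} for $F=\suspension^u$. This $F$ is an exact left adjoint $\Mod^R\to\Mod^R$, so it commutes with cofibres, and hence its equivalence $H_\bullet(-;\suspension^u_{B}X)\homot \suspension^u H_\bullet(-;X)$ is natural on $\pCrel$. Everything below then reduces to naturality of this equivalence together with the standard behaviour of suspension isomorphisms.

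The relations with $i$ and $j$ come from this naturality. The map $i$ is induced by a morphism of $\hpC$, and the equivalence \eqref{eq:hbulletandfcommute} is natural in morphisms of $\hpC$. The suspension isomorphism $M_\ast(Y)\isom M_{\ast+u}(\suspension^u Y)$ is natural in $Y$. Together these give $\bar{\sigma}^u i=i\bar{\sigma}^u$. For $j$ I would argue in two stages. First, the map $H_\bullet(B^{(n)};X)\to H_\bullet(B^{(n)},B^{(n-1)};X)$ commutes with $\bar{\sigma}^u$, by naturality on $\pCrel$. Second, I would follow the chain of isomorphisms identifying the relative term with $C_n$ and check that $\bar{\sigma}^u$ matches the coefficient isomorphism \eqref{eq:callsuspisohomology} at each step. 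The excision, homotopy invariance and additivity steps are natural maps, so they are compatible. The trivialization \eqref{eq:xrestriv} suspends to the trivialization of $\suspension^u_B X$, by uniqueness. The cross product step $M_\ast(X_\alpha)\tensor \tilde R_\ast(E/\bdry E)\to M_\ast(X_\alpha\smashprod E/\bdry E)$ commutes with suspension in the first variable with no sign, because the suspension acts on the left factor. The remaining steps involve only the constant coefficient system $M_\ast(X_\alpha)$, and there the suspension is simply a relabelling of degrees.

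The relation with $k$ is where I expect the main obstacle: the sign. The map $k$ is, up to the identification of the relative term, the boundary map $\bdry\colon M_\ast H_\bullet(B^{(n)},B^{(n-1)})\to M_{\ast-1}H_\bullet(B^{(n-1)})$, which is the connecting map of a cofibre sequence $A\to Y\to Y/A\to \suspension A$. Applying $\suspension^u$ to this sequence, $\suspension^u$ commutes with $A\to Y\to Y/A$. However, on the connecting map, comparing $\suspension^u(Y/A\to\suspension A)$ with the connecting map of the suspended sequence requires the flip $\suspension^u\suspension\homot\suspension\suspension^u$. That flip acts as $(-1)^u$, since it permutes sphere coordinates. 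After applying $M_\ast$ and the suspension isomorphisms, this produces $\bar{\sigma}^u\bdry=(-1)^u\bdry\bar{\sigma}^u$, and hence $\bar{\sigma}^u k=(-1)^u k\bar{\sigma}^u$. To get the sign right I would pin down the conventions with $u=1$ and $M_\ast$, then induct on $u$, treating negative $u$ by inverses.

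The cohomological case is the same argument with $M^\ast$ and the cochain identifications, using the map $m\tensor\phi\mapsto(z\mapsto m\phi(z))$ in place of the cross product. The same $(-1)^u$ appears for $k'$. Naturality of all the isomorphisms in $(B,M,X)$ follows from the naturality of each ingredient. Finally, the maps $\bar{\sigma}^u$ are isomorphisms componentwise, so they define isomorphisms of exact couples; the sign on $k$ is harmless, since rescaling $k$ by $\pm1$ gives an isomorphic couple.
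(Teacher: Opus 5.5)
Your route matches the paper's. The paper's proof just traces through the construction of the exact couples, and singles out the boundary square that commutes up to $(-1)^u$, for exactly the reason you give: moving $\suspension^u$ past the $\suspension$ in the connecting map.

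\textbf{The gap: the sign in your check of $\bar{\sigma}^u j = j\bar{\sigma}^u$.} You say that on the steps involving only the constant system $M_\ast(X_\alpha)$, suspension is ``simply a relabelling of degrees''. One of those steps is the symmetry isomorphism $m\tensor z\mapsto(-1)^{\deg(m)\deg(z)}z\tensor m$ into $H_\ast(E^n_\alpha,\bdry E^n_\alpha;M_\ast(X_\alpha))$. Replacing $m$ by its $u$--fold suspension raises $\deg(m)$ by $u$, while $\deg(z)=n$, so this sign changes by $(-1)^{un}$.

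Combine this with your correct claim that the cross-product step introduces no sign. Then, if the map $C_n(\Gamma B;\calL_\ast(M,X))\to C_n(\Gamma B;\calL_{\ast+u}(M,\suspension^u_B X))$ is the naive one $z\tensor m\mapsto z\tensor\bar{\sigma}^u(m)$, your argument yields $\bar{\sigma}^u j=(-1)^{un}j\bar{\sigma}^u$ and $\bar{\sigma}^u k=(-1)^{u(n+1)}k\bar{\sigma}^u$ on the $n$-th summand. These are not the stated relations.

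The lemma needs the degree-$u$ coefficient isomorphism \eqref{eq:callsuspisohomology} to act on $C_n$ with the Koszul sign: $z\tensor m\mapsto(-1)^{un}z\tensor\bar{\sigma}^u(m)$. This is the paper's convention for degree-shifting coefficient maps (see the remark after Definition~\ref{def:intalongfibrecohomology}). It is also forced by the lemma itself. Since $d^1=jk$ is identified with the cellular differential, the stated relations say $\bar{\sigma}^u$ commutes with that differential only up to $(-1)^u$; the naive coefficient map commutes with it exactly. With the Koszul convention the two factors $(-1)^{un}$ cancel and your argument goes through. The same bookkeeping is needed for $j'$ and $k'$.

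\textbf{A smaller point.} Your closing claim that rescaling $k$ by $\pm1$ gives an isomorphic couple is not right as stated. Rescaling the components by signs cannot turn $k$ into $-k$ while fixing $i$ and $j$, because compatibility with $i$ and $j$ forces a single common sign. What the relations actually give is an isomorphism onto $EC_{M_\ast}(\suspension^u_B X)$ with $k$ replaced by $(-1)^u k$. That couple has the same derived pages, with every differential multiplied by $(-1)^u$, which is exactly how Proposition~\ref{prop:sssusp} uses the lemma.
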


Let $\{F_{s,\ast}(X)\}_{s\in\Z}$
and $\{F^{s,\ast}(X)\}_{s\in\Z}$
be the filtrations of $M_\ast H_\bullet (B;\,X)$
and $M^\ast H_\bullet (B;\,X)$
associated with the 
spectral sequences of Theorem~\ref{thm:serresss},
so that 
\begin{align*}
 	F_{s,k}(X)
	&= 
	\im(M_{k} H_\bullet(\Gamma B^{(s)}; X) \to M_{k} H_\bullet(B;X))
\intertext{and}
	F^{s,k}(X)
	&=
	\ker(M^{k} H_\bullet(B;X) \to M^{k} H_\bullet(\Gamma B^{(s-1)};X)).
\end{align*}
From Lemma~\ref{lm:ecsuspisos}, we obtain 
\begin{prop}[Suspensions isomorphisms on Serre spectral sequences]
\label{prop:sssusp}
Let $M$ be an $R$--module,
let $X$ be a parametrized $R$--module over $B$,
and let $u\in \Z$.
Then \eqref{iso:barsigmauechomology} 
and \eqref{iso:barsigmaueccohomology} induce
isomorphisms of spectral sequences
\begin{align}
	\label{eq:sssuspensionisohomology}
	E^r_{s,t}(X) &\xto{\ \isom\ } E^r_{s,t+u}(\suspension_B^u X)
\intertext{and}
	\label{eq:sssuspensionisocohomology}
	E_r^{s,t}(X) &\xto{\ \isom\ } E_r^{s,t+u}(\suspension_B^u X),
\end{align}
$1\leq r \leq \infty$,
which  commute with the differential on each page
up to the sign $(-1)^u$.
The isomorphisms are natural in $B$, $M$, and $X$.
Moreover, for $r=\infty$, the isomorphism
\eqref{eq:sssuspensionisohomology}
fits into a commutative diagram
\[\xymatrix{
	E^\infty_{s,t}(X)
	\ar[r]^-\isom
	\ar[d]_\isom
	&
	F_{s,s+t}(X)/F_{s-1,s+t}(X)
	\ar[d]^\isom
	\\
    E^\infty_{s,t+u}(\suspension_B^u X)
    \ar[r]^-\isom
    &
	F_{s,s+t+u}(\suspension_B^u X)/F_{s-1,s+t+u}(\suspension_B^u X),
}\]
where 
the vertical isomorphism on the right is 
induced by
\[
    \bar{\sigma}^u
    \colon
    M_{s+t}H_\bullet(B;\,X)
    \xto{\ \isom\ } 
    M_{s+t+u} H_\bullet(B;\,\suspension_B^u X)
\]
and the horizontal isomorphisms are given by the 
strong convergence of the spectral sequences
involved.
Similarly, when $X$ is bounded from below,
for $r=\infty$,
the isomorphism
\eqref{eq:sssuspensionisocohomology}
fits into a commutative diagram
\[\xymatrix{
	E_\infty^{s,t}(X)
	\ar[r]^-\isom
	\ar[d]_\isom
	&
	F^{s,s+t}(X)/F^{s+1,s+t}(X)
	\ar[d]^\isom
	\\
    E_\infty^{s,t+u}(\suspension_B^u X)
    \ar[r]^-\isom
    &
	F^{s,s+t+u}(\suspension_B^u X)/F^{s+1,s+t+u}(\suspension_B^u X),
}\]
where
the vertical isomorphism on the right is 
induced by
\[
    \bar{\sigma}^u
    \colon
    M^{s+t}H_\bullet(B;\,X)
    \xto{\ \isom\ } 
    M^{s+t+u} H_\bullet(B;\,\suspension_B^u X)
\]
and the horizontal isomorphisms are again given by the 
strong convergence of the spectral sequences
involved. \qed
\end{prop}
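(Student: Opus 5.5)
This is a formal consequence of Lemma~\ref{lm:ecsuspisos}, using the standard fact that a morphism of exact couples induces a morphism of the derived couples, and hence of all pages of the spectral sequences. I would first fix conventions. The exact couple $EC_{M_\ast}(X)$ has $D = \bigoplus_n M_\ast H_\bullet(\Gamma B^{(n)};X)$ and $E = \bigoplus_n C_n(\Gamma B;\calL_\ast(M,X))$, and its pages are given by $E^r = k^{-1}(\operatorname{im} i^{r-1})/j(\ker i^{r-1})$ with differential $d_r = j\circ i^{-(r-1)}\circ k$. The map $\bar{\sigma}^u$ commutes with $i$ and $j$ and anticommutes with $k$ up to the sign $(-1)^u$. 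Hence it preserves $k^{-1}(\operatorname{im} i^{r-1})$, since $k\bar\sigma^u x = \pm\bar\sigma^u kx \in \bar\sigma^u\operatorname{im} i^{r-1} = \operatorname{im} i^{r-1}$. It also preserves $j(\ker i^{r-1})$. So it descends to isomorphisms $E^r(X)\to E^r(\suspension^u_B X)$, because its inverse $\bar\sigma^{-u}$, or simply the inverse isomorphism, has the same properties.

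Next I would track the differential. Given $x$, choose $y$ with $i^{r-1}y = kx$. Then $i^{r-1}\bar\sigma^u y = (-1)^u k\bar\sigma^u x$, so $d_r\bar\sigma^u x = (-1)^u j\bar\sigma^u y = (-1)^u\bar\sigma^u d_r x$. This gives commutation up to $(-1)^u$. The bigrading shift $t\mapsto t+u$ comes from the degree shift of $\bar\sigma^u$ on $M_\ast$. Since $E^\infty$ is defined as the intersection/union of the cycles and boundaries, the isomorphism passes to $r=\infty$. The cohomological case is identical, with $i',j',k'$ in place of $i,j,k$. Naturality in $B$, $M$ and $X$ follows because each ingredient of $\bar\sigma^u$ is natural: the suspension isomorphism, \eqref{eq:hbulletandfcommute}, \eqref{eq:tbonfibres}, and the construction via $\Gamma$ and strongly cellular maps.

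For the $E^\infty$ comparison, I would use the explicit form of the convergence isomorphism in \cite{BoardmanSS}. For $x\in F_{s,s+t}(X)$, lift $x$ to $z\in M_\ast H_\bullet(\Gamma B^{(s)};X)$. Its image under $j$ is a permanent cycle, and this yields the map $F_s/F_{s-1}\to E^\infty_s$. The map $\bar\sigma^u$ commutes with the maps $M_\ast H_\bullet(\Gamma B^{(s)};X)\to M_\ast H_\bullet(B;X)$, by naturality of the suspension isomorphism and of \eqref{eq:hbulletandfcommute} with respect to the inclusion morphisms in $\hpC$. Therefore $\bar\sigma^u$ preserves the filtration. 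Because it commutes with $j$ without a sign, the square commutes. In the cohomological case, for $X$ bounded below, strong convergence holds for both $X$ and $\suspension^u_B X$, since the latter is also bounded below. The filtration $F^{s}$ is defined by kernels of restriction maps, which $\bar\sigma^u$ respects, and the identification $F^s/F^{s+1}\cong E_\infty^s$ is again given via $j'$ and $k'$ in a way that is compatible with $\bar\sigma^u$.

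The main obstacle is this last bookkeeping step. One has to check that the convergence isomorphism uses only $i$ and $j$ (respectively $i'$ and the restriction maps), and not $k$, so that no sign appears. If $k'$ does enter in the cohomological identification, the sign must cancel: either it appears twice, or it is absorbed into the definition of $E_\infty$ as a subquotient of the $E_1$-term through $j'$.
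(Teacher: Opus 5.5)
Your proposal is correct and follows the paper's own approach. The paper deduces the proposition directly from Lemma~\ref{lm:ecsuspisos} by the standard passage from this sign-twisted isomorphism of exact couples to the derived couples and the convergence identifications, which is what you spell out.

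The worry in your last paragraph is harmless. The convergence isomorphisms are $x\mapsto[j(z)]$ in homology, and $x\mapsto[c]$ with $j'(c)$ equal to the restriction of $x$ to the $s$--skeleton in cohomology. So the sign-carrying maps $k$ and $k'$ enter only through the subgroups of permanent cycles and boundaries, and $\bar{\sigma}^u$ preserves these regardless of sign. Note that $k'$ is the coboundary $D\to E$, so in the cohomological couple $d_r = k'(i')^{-(r-1)}j'$; the sign still enters exactly once.
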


Later, in Section~\ref{subsec:thomiso}, we will extend 
the category $\hpMod^R$ with morphisms of non-zero degree.
See Definitions~\ref{def:mapsofdegreek} and \ref{def:gradedmapcomp}.
In essence, a morphism $\phi \colon X\to Y$ of degree $k$ covering 
a map $f\colon A \to B$ is a map $\phi_0 \colon \suspension^k_A X \to Y$
covering $f$. See Remark~\ref{rk:mapsofdegreekalt}.
Proposition~\ref{prop:sssusp} allows us to extend the functoriality
of the Serre spectral sequences to maps of arbitrary degree.

\begin{defn}
Given a degree $k$ morphism of parametrized $R$--modules
$\phi \colon X \to Y$ covering a map $A\to B$,
we define the induced maps on Serre spectral sequences
to be the composites
\[\xymatrix{
	E^r_{s,t}(X) 
	\ar[r]^-{\isom}
	&
	E^r_{s,t+k}(\suspension^k_A X) 
	\ar[r]^-{(\phi_0)_\ast}
	&
	E^r_{s,t+k}(Y)
}\]
and
\[\xymatrix{
	E_r^{s,t+k}(Y)
	\ar[r]^-{(\phi_0)^\ast}
	&
	E_r^{s,t+k}(\suspension^k_A X)
	\ar[r]^-{\isom} 
	&
	E_r^{s,t}(X)
}\]
where $\phi_0\colon \suspension^k_A X \to Y$ is the 
underlying degree $0$ morphism of $\phi$ and 
the unlabeled isomorphisms are given by the suspension isomorphisms 
of Proposition~\ref{prop:sssusp}. 
\end{defn}

\begin{cor}
\label{cor:ssfunctorialityalldegrees}
The spectral sequences
\[
    E^2_{s,t}
	=
	H_s(B;\calL_t(M,X)) 
	\Longrightarrow 
	M_{s+t}H_\bullet(B;X).
\]
and
\[
	E_2^{s,t} 
	= 
	H^s(B; \calL^t(M,X))
	\Longrightarrow 
	M^{s+t}H_\bullet(B;X)
\]
of Theorem~\ref{thm:serresss} are functorial in $X$ 
with respect to morphisms of arbitrary degree. \qed
\end{cor}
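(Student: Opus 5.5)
The statement to prove is Corollary~\ref{cor:ssfunctorialityalldegrees}: the Serre spectral sequences are functorial in $X$ for morphisms of arbitrary degree. The plan is to check that the induced maps just defined are compatible with identities and composition. Compatibility with the abutments and with the identification of the $E_2$--pages then follows from the compatibilities recorded in Proposition~\ref{prop:sssusp}.

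First, identities. A degree $0$ morphism $\phi$ has $\phi_0 = \phi$ up to the identification $\suspension^0_A X \homot X$. Moreover $\bar{\sigma}^0$ is the identity. So for degree $0$ maps the new definition recovers the old functoriality from Theorem~\ref{thm:serresss}, and identities go to identities.

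Second, composition. Let $\phi\colon X\to Y$ have degree $k$ over $f\colon A\to B$, and $\psi\colon Y\to Z$ degree $l$ over $g\colon B\to C$. By Definition~\ref{def:gradedmapcomp} I expect $(\psi\phi)_0$ to be the composite
\[
	\suspension^{k+l}_A X \homot \suspension^l_A\suspension^k_A X \xto{\ \suspension^l_\fw(\phi_0)\ } \suspension^l_B Y \xto{\ \psi_0\ } Z,
\]
where $\suspension^l_\fw(\phi_0)$ is the map covering $f$ induced via the functor $(\suspension^l)_\fw$. On spectral sequences this gives three facts to check.
\begin{enumerate}[(a)]
\item The suspension isomorphisms compose: $\bar{\sigma}^l\bar{\sigma}^k=\bar{\sigma}^{k+l}$ under $\suspension^{k+l}\homot\suspension^l\suspension^k$. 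This comes from the corresponding identity for ordinary suspension isomorphisms together with coherence of \eqref{eq:hbulletandfcommute} and \eqref{eq:tbonfibres} for composite functors.
\item $\bar{\sigma}^l$ is natural with respect to degree $0$ maps covering $f$, so that $\bar{\sigma}^l\circ(\phi_0)_\ast = (\suspension^l_\fw\phi_0)_\ast\circ\bar{\sigma}^l$. This is the naturality in $B$ and $X$ asserted in Proposition~\ref{prop:sssusp}.
\item Degree $0$ functoriality, which is Theorem~\ref{thm:serresss}.
\end{enumerate}
Chaining (a), (b) and (c) gives $(\psi\phi)_\ast=\psi_\ast\phi_\ast$ on every page. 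The cohomological case is dual.

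One point needs care: the isomorphisms of Proposition~\ref{prop:sssusp} commute with differentials only up to the sign $(-1)^u$. The induced maps are therefore maps of spectral sequences in the graded sense, commuting with $d_r$ up to $(-1)^k$. I would state functoriality with this sign convention, so that no extra twisting is needed.

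The main obstacle is step (a): making sure the identification $\suspension^l\suspension^k\homot\suspension^{k+l}$ used in the definition of composition matches the one implicit in composing the suspension isomorphisms on $M_\ast$. Any discrepancy would be a unit-of-$\Z$ sign, and the orientation conventions for $\suspension$ must be fixed consistently to exclude it. I would handle this by working at the level of the exact couples of Lemma~\ref{lm:ecsuspisos}, where the check reduces to the analogous coherence for the suspension isomorphism of the homology theory $M_\ast$.
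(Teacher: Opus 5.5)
Your proposal is correct and matches the paper, which states the corollary without proof as an immediate consequence of the definition of the induced maps and the naturality of the suspension isomorphisms in Proposition~\ref{prop:sssusp}; your check of identities and composition via (a)--(c) is exactly the routine verification the paper leaves implicit. Your remark that the maps induced by a degree $k$ morphism commute with the differentials only up to the sign $(-1)^k$ is a correct refinement, which the paper does not state explicitly.
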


\section{Comparison of umkehr maps}
\label{sec:comparisonofumkehrmaps}

Our aim in this section is to relate the theory of umkehr maps developed 
in the present paper to various umkehr map constructions in the 
existing literature. 
We will consider three such constructions:
first, the umkehr maps 
Ando, Blumberg and Gepner \cite[\S 4]{ABGparam}
asociate to a bundle $p\colon E\to B$ of smooth closed  manifolds;
second, the classical umkehr maps
for maps between closed smooth manifolds 
arising from Poincaré duality;
and finally, the classical ``integration along fibre'' maps
arising from the Serre spectral sequences
of fibrations whose fibres have vanishing high-dimensional
homology.

\subsection{Comparison with the umkehr maps of Ando, Blumberg and Gepner}
\label{subsec:abgcomparison}

The aim of this section is to prove Theorem~\ref{thm:abgumkehrcomparison}
showing how the twisted umkehr maps of Ando, Blumberg and Gepner \cite[\S 4]{ABGparam}
can be recovered from our theory.

\begin{proof}[Proof of Theorem~\ref{thm:abgumkehrcomparison}]
Comparing definitions, we see that the 
twisted Pontryagin--Thom transfer map $\mathrm{PT}(E,X)$
of \cite[Prop.~4.18]{ABGparam} agrees with our umkehr map
\[
	(p,\lambda)^\leftarrow 
	\colon 
	H_\bullet (B;\,X) 
	\longto 
	H_\bullet(E;\, p^\ast X \smashprod_E S^{-\tau_p})
\]
where
\[
	\lambda \colon p^\ast X \smashprod_E S^{-\tau_p} \oto X
\]
is the map obtained by applying the functor 
\[
	R\smashprod_\fw( - ) \colon \hpSpectra^\dfop \longto (\hpMod^R)^\dfop
\]
to the map
$
	\xi \colon S^{-\tau_p} \oto S_B 
$
of $\hpSpectra^\dfop$ covering $p$ defined by the
parametrized Pontryagin--Thom transfer map
\[
	\mathrm{PT}_{/B}(p) \colon S_B \longto p_! S^{-\tau_p}
\]
of \cite[Def.~4.14]{ABGparam}
and taking the $\oslash$--product with the cartesian morphism 
$p^\ast X \to X$ covering $p$.
Consequently, the twisted umkehr map of \cite[Prop.~4.18]{ABGparam}
agrees with the map
\[
	R^\ast((p,\lambda)^\leftarrow)
	\colon 
	R^\ast H_\bullet(E;\, p^\ast X \smashprod_E S^{-\tau_p})
	\longto
	R^\ast 	H_\bullet (B;\,X).
\]
Therefore all that is needed is to relate the map $\lambda$ 
to the map $\theta$ of equation \eqref{eq:abgumkehrcomparisontheta}.
By Theorem~\ref{thm:abgpthc2}, the map $\xi$ is hypercartesian,
so by Proposition~\ref{prop:hypercartmorpreservation} and 
Proposition~\ref{prop:superandhypercartmorprops}(\ref{it:hypercartimpliessupercart})
the map $\lambda$ is cartesian. By the uniqueness of cartesian morphisms,
the morphisms $\lambda$ and $\theta$ are canonically isomorphic, yielding the claim.
\end{proof}

\subsection{The Thom isomorphism}
\label{subsec:thomiso}

In preparation for the comparison of our umkehr maps to umkehr maps
arising from Poincaré duality and to integration along fibre maps,
in this subsection our aim is to show that classical Thom isomorphisms
for a virtual bundle $\xi$ in $R$--cohomology and $R$--homology can be understood 
as maps induced by a trivialization of the $R$--line bundle
$R\smashprod_\fw S^\xi$. The main result is 
Proposition~\ref{prop:thomisoreinterpretations}.

For our discussion of the Thom isomorphism, it is convenient
to augment our category $\hpMod^R$ with morphisms of nonzero degree
along with composites and external smash products of such.
This is easily done by adapting the analogous constructions 
for $\ho(\Spectra)$ \cite[pp.~188--189]{adams1974stable}
to the present setting.
In the following definitions, $e \colon S^i \smashprod S^j \homot S^{i+j}$
for $i,j\in\Z$ denote compatibly chosen equivalences of spectra
afforded by \cite[Prop.~III.4.8]{adams1974stable}.

\begin{defn}
\label{def:mapsofdegreek}
Given an integer $k$ and objects $X$ and $Y$ of $\hpMod^R$ over spaces $A$ and $B$, 
respectively, a \emph{map $\phi$ 
of degree $k$} from $X$ to $Y$ covering a map $f\colon A\to B$ 
is a map
\[
	\phi_0\colon (\underline{R\smashprod S^k}) \smashprod^R_A X \longto Y
\]
of $\hpMod^R$ covering $f$. We sometimes write
\[
	\xymatrix@1{\phi\colon X \ar[r]_-k & Y}
\]
to indicate that $\phi$ is a map of degree $k$ from $X$ to $Y$.
Via the equivalence 
$(\underline{R\smashprod S^0}) \smashprod^R_A X \homot X$
induced by the unit constraints for $\smashprod$ and $\smashprod^R_A$,
we may (and will) identify degree $0$ morphisms $X \to Y$ 
with ordinary morphisms of $X\to Y$.
\end{defn}

\begin{rem}
\label{rk:mapsofdegreekalt}
The object $(\underline{R\smashprod S^k}) \smashprod^R_A X$ is naturally 
equivalent to the object $\suspension^k_A X$, so we may alternatively 
view maps of degree $k$ from $X$ to $Y$ as maps $\suspension^k_A X \to Y$.
\end{rem}

\begin{defn}
\label{def:gradedmapcomp}
Given a degree $k$ map 
$\phi \colon X \to Y$  and a degree $\ell$ map $\psi \colon Y\to Z$
in $\hpMod^R$ covering maps $f\colon A \to B$ and $g \colon B \to C$, 
respectively, the composite $\psi \circ \phi$ is the degree $k+\ell$ map
covering $g\circ f$ given by the composite
\[
	(\psi \circ \phi)_0 
	\colon 
	(\underline{R\smashprod S^{k+\ell}}) \smashprod^R_A X
	\xto{\ \homot\ }
	(\underline{R\smashprod S^{\ell}})
	\smashprod^R_A 
	(\underline{R\smashprod S^{k}}) 
	\smashprod^R_A
	X
	\xto{\ {c \smashprod^R_\internal \phi_0}\ }
	(\underline{R\smashprod S^{\ell}})\smashprod^R_B Y
	\xto{\ \psi_0\ }
	Z
\]
where the first map is induced by the equivalence 
$e^{-1} \colon S^{k+\ell} \homot S^{\ell}\smashprod S^k$ and 
$c \colon (\underline{R\smashprod S^{\ell}}) \to (\underline{R\smashprod S^{\ell}})$
is the canonical cartesian morphism covering $f$.
\end{defn}

\begin{defn}
\label{def:gradedmapextsmashprod}
Suppose $\phi \colon X \to X'$ and $\psi\colon Y \to Y'$
are degree $k$ and $\ell$ maps of $\hpMod^R$ 
covering $f\colon A \to B$ and $g \colon C \to D$,
respectively. Then the external smash product 
$\phi \extsmashprod^R \psi \colon X\extsmashprod^R Y \to X'\extsmashprod^R Y'$
is the degree $k+\ell$ map covering $f\times g \colon A\times C \to B \times D$
given by the composite
\[\xymatrix@!0@C=5.5em{
	(\phi \extsmashprod^R \psi)_0
	\colon	
	(\underline{R\smashprod S^{k+\ell}}) \smashprod_{A\times C} (X \extsmashprod^R Y)
	\\
	\ar[r]^-\homot
	&
	*!L{\;
		(\underline{R\smashprod S^{k}})
    	\smashprod^R_{A\times C} 
    	(\underline{R\smashprod S^{\ell}}) 
    	\smashprod^R_{A\times C} 
		(X \extsmashprod^R Y)
	}
	\\
	\ar@{=}[r]
	&
	*!L{\;
		(\underline{R\smashprod S^{k}})
    	\smashprod^R_{A\times C} 
    	(\underline{R\smashprod S^{\ell}}) 
    	\smashprod^R_{A\times C} 
		(\pi^{AC}_A)^\ast X
    	\smashprod^R_{A\times C} 
		(\pi^{AC}_C)^\ast Y
	}
	\\
	\ar[r]^{1\,\extsmashprod^R \,\chi\, \extsmashprod^R\, 1}
	&
	*!L{\;
		(\underline{R\smashprod S^{k}})
    	\smashprod^R_{A\times C} 
		(\pi^{AC}_A)^\ast X
    	\smashprod^R_{A\times C} 
    	(\underline{R\smashprod S^{\ell}}) 
    	\smashprod^R_{A\times C} 
		(\pi^{AC}_C)^\ast Y
	}
	\\
	\ar[r]^\homot
	&
	*!L{\;
		(\pi^{AC}_A)^\ast ((\underline{R\smashprod S^{k}}) \smashprod^R_A X)
    	\smashprod^R_{A\times C}
		(\pi^{AC}_C)^\ast ((\underline{R\smashprod S^{\ell}}) \smashprod^R_C Y)
	}
	\\
	\ar@{=}[r]
	&
	*!L{\;
		((\underline{R\smashprod S^{k}}) \smashprod^R_A X)
    	\extsmashprod^R
		((\underline{R\smashprod S^{\ell}}) \smashprod^R_C Y)
	}
	\\
	\ar[r]^{\phi_0 \,\extsmashprod^R \,\psi_0}
	&
	*!L{\;
		X' \extsmashprod^R Y' \vphantom{{}_f}
	}
}\]
where $\pi^{AC}_A \colon A\times C \to A$ and
$\pi^{AC}_C \colon A\times C \to C$ are the projections;
the first equivalence is induced by the equivalence
$e^{-1} \colon S^{k+\ell} \homot S^{k}\smashprod S^{\ell}$;
$\chi$ is the symmetry constraint for $\extsmashprod^R$; and
the second equivalence is induced by the canonical cartesian morphisms
$\underline{R\smashprod S^{k}} \to \underline{R\smashprod S^{k}}$
over $f$ and 
$\underline{R\smashprod S^{\ell}} \to \underline{R\smashprod S^{\ell}}$
over $g$ using the fact that the internal smash product $\smashprod^R_\internal$
preserves cartesian morphisms.
\end{defn}

We note that for degree $0$ morphisms, 
Definition~\ref{def:gradedmapcomp}
and \ref{def:gradedmapextsmashprod}
agree with the previously defined composition and external smash product
of $\hpMod^R$. Moreover, 
on the fibre over the one-point space $\pt$, the definitions
recover the usual notions of morphisms of non-zero degree in $\Mod^R$
and composites and smash products of such.

The following proposition explains the compatibility between $\circ$
and $\extsmashprod^R$ for morphisms of nonzero degree. It follows 
from the properties of the equivalences $e$ by unrolling definitions.
The sign appearing in the proposition is given meaning by observing
that automorphisms of $S^k \in \ho(\Spectra)$ act on the set of 
degree $k$ morphisms of $\hpMod^R$.

\begin{prop}
\label{prop:circandextsmashcompat}
Suppose $\phi \colon X \to X'$, $\phi' \colon X' \to X''$,
$\psi \colon Y \to Y'$ and $\psi' \colon Y' \to Y''$
are morphisms of possibly nonzero degree in $\hpMod^R$.
Then 
\[
	(\phi' \extsmashprod^R \psi') \circ (\phi \extsmashprod^R \psi)
	= 
	(-1)^{\deg(\phi) \deg(\psi')} (\phi'\circ\phi) \extsmashprod^R (\psi'\circ\psi)
\]
in keeping with the Koszul sign rule.\qed
\end{prop}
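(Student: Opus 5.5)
The plan is to unroll Definitions~\ref{def:gradedmapcomp} and~\ref{def:gradedmapextsmashprod} for both sides and reduce to a comparison of two maps out of
$(\underline{R\smashprod S^{k+\ell+k'+\ell'}}) \smashprod^R_{A\times C} (X\extsmashprod^R Y)$, where $k=\deg\phi$, $\ell=\deg\psi$, $k'=\deg\phi'$, $\ell'=\deg\psi'$. After unrolling, each side is a composite of three kinds of maps. The first kind are reassociation equivalences built from the $e$'s and the coherence isomorphisms of $\smashprod^R_\internal$ and $\extsmashprod^R$. The second are symmetry constraints $\chi$. The third are the underlying degree $0$ maps $\phi_0,\psi_0,\phi'_0,\psi'_0$, smashed with identities or with canonical cartesian morphisms of the constant objects $\underline{R\smashprod S^i}$.

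First I would use naturality of $\chi$, of the monoidality constraints of pullback functors, and of the interchange between $\smashprod^R_\internal$ and $\extsmashprod^R$ (Section~\ref{subsec:fibredcats}). With these I move all occurrences of $\phi_0,\psi_0,\phi'_0,\psi'_0$ to the end, in the same order on both sides. Every constant object $\underline{R\smashprod S^i}$ is pulled back from $\pt$, and its canonical cartesian morphisms are compatible with the external product. So both sides become $(\phi'_0\extsmashprod^R\psi'_0)\circ(\text{suitable smash with }\phi_0\extsmashprod^R\psi_0)$ precomposed with an automorphism of $\underline{R\smashprod S^{k+\ell+k'+\ell'}}$.

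That automorphism is of the form $R\smashprod(-)$ applied to a self-equivalence of $S^{k+\ell+k'+\ell'}$, pulled back to $A\times C$. The two sphere factors are assembled from the $e$'s and shuffled in the orders $(k',\ell',k,\ell)$ versus $(k',k,\ell',\ell)$. The remaining step is to compare these two self-equivalences, and this can be done entirely in $\ho(\Spectra)$ over $\pt$. Adams' coherence for the $e$'s \cite[Prop.~III.4.8]{adams1974stable}, together with the fact that the symmetry $S^i\smashprod S^j\to S^j\smashprod S^i$ is $(-1)^{ij}$ under $e$, shows that the two differ by transposing the $\ell'$ and $k$ factors. That transposition gives the sign $(-1)^{kl'}=(-1)^{\deg(\phi)\deg(\psi')}$.

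The main obstacle is the bookkeeping needed to show that the parametrized coherence isomorphisms really reduce to the pulled-back unparametrized ones, so that no extra sign is hidden there. To settle this, I would note that all the constants are of the form $r^\ast(R\smashprod S^i)$. The relevant structure maps are then images under the symmetric monoidal functor $r^\ast$ and the external product constraints, which are strictly compatible by Theorem~\ref{thm:smgrothendieckconstr}. Hence the sign computation can be performed on the fibre over $\pt$, where it is Adams' classical computation.
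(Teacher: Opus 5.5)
Your proposal is correct and follows the same route as the paper, which simply says that the identity follows from the properties of the equivalences $e$ by unrolling the definitions. Your reduction does exactly that: after unrolling, the two sides differ only in the sphere decompositions $(k',\ell',k,\ell)$ versus $(k',k,\ell',\ell)$, and by Adams' coherence the transposition of $S^{\ell'}$ and $S^{k}$ contributes the sign $(-1)^{k\ell'}$.
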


The following proposition allows us to interpret an $R$--orientation
(that is, an $R$--module Thom class)
for a virtual bundle $\xi$ 
as a trivialization of the $R$--line bundle $R\smashprod_\fw S^\xi$.
Compare with \cite[Prop.~20.5.5]{MaySigurdsson}.

\begin{prop}
\label{prop:thomclassastrivialization}
Suppose $\xi$ is a virtual bundle of dimension $d$ over a space $B$. 
Then a class $u \in R^d(B^\xi)$, represented by a degree $-d$ $R$--module map
\[\xymatrix{
	u \colon R \smashprod B^\xi \ar[r]_-{-d} & R,
}\]
is an $R$--orientation for $\xi$ if and only if the right adjunct
\[\xymatrix{
	\tilde{u} \colon R \smashprod_\fw S^\xi \ar[r]_-{-d} & R_B
}\]
of $u$ under the $(r_!, r^\ast)$ adjunction for $r$ the unique map $B\to \pt$
is an equivalence.
\end{prop}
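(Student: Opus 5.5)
We first set up the adjunction. By definition $B^\xi = H_\bullet(B;S^\xi)=r_!S^\xi$. By \eqref{eq:hbulletandfcommute} applied to the left adjoint $R\smashprod(-)\colon\Spectra\to\Mod^R$, we have $R\smashprod B^\xi\homot r_!(R\smashprod_\fw S^\xi)$. A degree $-d$ map $u$ is a map $\suspension^{-d}r_!(R\smashprod_\fw S^\xi)\to R$. Suspension commutes with $r_!$: this is \eqref{eq:fshriektcomm} for the functor $\suspension^{-d}$, which has a right adjoint. So $(r_!,r^\ast)$ turns $u$ into $\tilde u\colon \suspension^{-d}_B(R\smashprod_\fw S^\xi)\to r^\ast R=R_B$, and this is the degree $-d$ map in the statement.

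Next we use the characterization of orientations. The classical definition says $u$ is an $R$-orientation iff, for every point $b\in B$, the restriction of $u$ along $b\colon\pt\to B$ generates $R^d(S^d)$. Concretely, the composite
\[
R\smashprod S^d\homot R\smashprod \pt^{\xi_b}\xrightarrow{b}R\smashprod B^\xi\xrightarrow{u}\suspension^d R
\]
should be an equivalence of $R$-modules, or equivalently a unit multiple of the standard generator. For a virtual bundle $\xi=\alpha-\beta$ we first reduce to this fibrewise statement, using that $S^\xi$ has fibres $S^{\xi_b}\homot S^d$.

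We then compare $\tilde u$ on fibres. Equivalences in $\ho(\Mod^R_{/B})$ are detected on fibres, so $\tilde u$ is an equivalence iff $b^\ast\tilde u$ is an equivalence for every $b$. The key step is to identify $b^\ast\tilde u$ with the composite above. For this we use naturality of the $(r_!,r^\ast)$ adjunction along $b$. The right adjunct of $u\circ(b,\id)_\bullet$ under $(r^\pt_!,r_\pt^\ast)=(\id,\id)$ is $b^\ast\tilde u$ precomposed with the identification $b^\ast(R\smashprod_\fw S^\xi)\homot R\smashprod S^{\xi_b}$ coming from \eqref{eq:tbonfibres}. To see this, factor the canonical cartesian morphism $b^\ast Y\to Y$ through the opcartesian morphisms defining $H_\bullet$, as in Definition~\ref{def:hbullethc}. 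Then the right adjunct of $u\circ(b,c)_\bullet$ is $b^\ast$ applied to $\tilde u$, by the triangle identities and the fact that $r^\ast_B$ composed with $b^\ast$ is $r^\ast_\pt$. This matching of adjuncts under base change along $b$, with the suspension identifications handled carefully, is the main obstacle. It is formal but must be checked with the degree conventions of Definitions~\ref{def:mapsofdegreek} and \ref{def:gradedmapcomp}.

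Given the comparison, the conclusion follows. $b^\ast\tilde u$ is a map $R\smashprod S^{\xi_b}\to\suspension^d R$ of invertible $R$-modules. It is an equivalence iff it is a unit times the generator, which is exactly the orientation condition at $b$. Taking this over all $b$ gives the proposition. It suffices to check one $b$ per path component, since fibres over a component are equivalent, but this is not needed.
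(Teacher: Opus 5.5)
Your proposal is correct and follows the paper's proof. Both arguments characterize an $R$--orientation fibrewise: the composite $R\smashprod \{b\}^{\xi_b}\to R\smashprod B^\xi\xrightarrow{u}R$ must be an equivalence for each $b\in B$. Both then identify this composite with the restriction of $\tilde u$ to the fibre over $b$, and conclude because equivalences in $\ho(\Mod^R_{/B})$ are detected on fibres. Your unit/counit argument for $b^\ast\tilde u$ simply spells out a step the paper states as an observation.
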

\begin{proof}
Recall that by definition, $u$ is an $R$--orientation for $\xi$ if and only if
for every point $b\in B$,
the restriction of $u$ to $R^d(\{b\}^{\xi_b})$ is a generator of $R^\ast(\{b\}^{\xi_b})$
as a free $\pi_{-\ast}(R)$--module on one generator, a condition equivalent to
requiring that the composite
\[\xymatrix{
	R \smashprod \{b\}^{\xi_b} \longto R \smashprod B^\xi \ar[r]^-{u}_-{-d} &R
}\]
where the first map is induced by the 
inclusion of the fibre $\xi_b$ over $b$ into $\xi$ 
is an equivalence for every $b \in B$.
The claim now follows
by observing that this composite agrees with the restriction of $\tilde{u}$
to fibres over $b$ and remembering that equivalences in $\ho(\Mod^R_{/B})$
are detected on fibres.
\end{proof}

The following proposition now interprets
the usual Thom isomorphisms for a virtual bundle $\xi$
as the maps induced by the trivialization of $R \smashprod_\fw S^\xi$
defined by the $R$--orientation.
\begin{prop}
\label{prop:thomisoreinterpretations}
Suppose $\xi$ is a virtual bundle of dimension $d$ over a space $B$, 
let $u \in R^d(B^\xi)$ be an $R$--orientation for $\xi$, and let 
$\tilde{u}_\bullet$ be the degree $-d$ equivalence
given by the composite 
\begin{equation}
\label{eq:tildeubullet}
\xymatrix@C+1em{
	\tilde{u}_\bullet
	\colon
	R\smashprod B^\xi 
	\homot
	H_\bullet(B; R\smashprod_\fw S^\xi) 
	\ar[r]^-{(\id,\tilde{u})_\bullet}_-{\homot}
	&
  	H_\bullet(B; R_B)
	\homot
   	R\smashprod B_+
} 
\end{equation}
where $\tilde{u}$ is the equivalence afforded by
Proposition~\ref{prop:thomclassastrivialization};
the first equivalence is given by 
the natural equivalence~\eqref{eq:hbulletandfcommute};
and the last equivalence is given by Example~\ref{ex:hbulletfortrivialcoeffs2}.
\begin{enumerate}[(i)]
\item\label{it:thomisoinrcoh}
	On $R$--cohomology, the usual Thom isomorphism map
	given by $x\mapsto (-1)^{\deg(x) d} x \cupprod u$
	agrees with the precomposition 
	map 
	\[
		(\tilde{u}_\bullet)^\sharp
		\colon 
		R^k(B)\xto{\ \isom\ } R^{k+d}(B^\xi), 
		\quad
		\left[\xymatrix@1{R\smashprod B_+ \ar[r]_-{-k}^-{x} & R}\right]
		\longmapsto
		(-1)^{kd}
		\left[\xymatrix@1{ 
			R\smashprod B^\xi 
			\ar[r]^{\tilde{u}_\bullet}_{-d} 
			& 
			R\smashprod B_+ 
			\ar[r]_-{-k}^-{x} 
			& 
			R
		}\right].
	\]
\item
	On $R$--homology, the usual Thom isomorphism map given by $z \mapsto u \capprod z$
	agrees with the postcomposition map
	\[
		(\tilde{u}_\bullet)_\sharp 
		\colon 
		R_k(B^\xi) \xto{\ \isom\ }  R_{k-d}(B),
		\quad
		\left[\xymatrix@1{R \ar[r]_-{k}^-{z} & R\smashprod B^\xi}\right]
		\longmapsto
		\left[\xymatrix@1{ 
			R
			\ar[r]^-{z}_-{k}
			&
			R\smashprod B^\xi 
			\ar[r]^{\tilde{u}_\bullet}_{-d} 
			& 
			R\smashprod B_+ 
		}\right].
	\]
\end{enumerate}
\end{prop}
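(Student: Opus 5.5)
The plan is to reduce both statements to a single identification: the degree $-d$ equivalence $\tilde{u}_\bullet$ of \eqref{eq:tildeubullet} coincides with the \emph{Thom diagonal followed by $u$}. Concretely, I would aim to show that $\tilde{u}_\bullet$ equals, up to the sign convention fixed by the equivalences $e$, the composite
\[
	R\smashprod B^\xi
	\xto{\ \id\smashprod\hat{\Delta}_\xi\ }
	R\smashprod B_+\smashprod B^\xi
	\xto{\ \homot\ }
	(R\smashprod B_+)\smashprod^R(R\smashprod B^\xi)
	\xto{\ \id\smashprod^R u\ }
	R\smashprod B_+ ,
\]
where $\hat{\Delta}_\xi$ is the Thom diagonal. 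Once this is in place, both parts are formal. In $R$--cohomology, the cup product $x\cupprod u$ is by definition the external product $x\times u$ precomposed with the Thom diagonal. Proposition~\ref{prop:circandextsmashcompat} then converts $x\circ(\id\smashprod^R u)$ into $(x\smashprod^R u)$ up to the Koszul sign $(-1)^{kd}$, which accounts for the sign in (\ref{it:thomisoinrcoh}). In $R$--homology, the cap product $u\capprod z$ is the composite of $z$, the Thom diagonal, and slant with $u$, which is exactly postcomposition with the displayed map.

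To prove the identification, I would work in $\hpMod^R$ using Remark~\ref{rk:pshriekeataltdesc}. First I would write $\tilde{u}$ as the composite of the unit $\eta\colon R\smashprod_\fw S^\xi\to r^\ast r_!(R\smashprod_\fw S^\xi)$ and $r^\ast u$; this is just what being the right adjunct means. Applying $H_\bullet=r_!$ turns $(\id,\tilde{u})_\bullet$ into $r_!(r^\ast u)\circ r_!(\eta)$. Remark~\ref{rk:pshriekeataltdesc}, taken with $p=r\colon B\to\pt$ so that $E\times_B E=B\times B$, identifies $r_!(\eta)$ with $\hat\beta\hat\alpha$. Here $\hat\alpha$ is induced by the cartesian morphism $X\to\pi_2^\ast X$ over the diagonal, and $\hat\beta$ is the projection formula identification $r_!\pi_2^\ast X\homot R\smashprod B_+\smashprod^R r_!X$.

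The next step is to check, via Remark~\ref{rk:tcalcrecognition} and the compatibility of $R\smashprod_\fw$ with $t_\calC$, that for $X=R\smashprod_\fw S^\xi$ the map $\hat\alpha$ is $R$ smashed with the Thom space functor applied to the bundle map $\xi\to\pi_2^\ast\xi$ over $\Delta$. That map is exactly the Thom diagonal. Finally, the term $r_!(r^\ast u)$ becomes $\id\smashprod u$ under Example~\ref{ex:hbulletfortrivialcoeffs2}, again by the projection formula.

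The main obstacle is this last bookkeeping stage: matching the projection formula equivalences and the equivalence \eqref{eq:hbulletandfcommute} with the point-set Thom diagonal, and tracking the degree $-d$ suspension coordinates so that the signs come out as $(-1)^{kd}$ in cohomology and trivially in homology. For this I would argue via Remark~\ref{rk:thomspacefromhbullet} and the ex-space model \eqref{eq:fwpointedspacesandexspaces}, first for an honest vector bundle, and then pass to virtual bundles by smashing with $S^{-\beta}$ and using Proposition~\ref{prop:circandextsmashcompat}.
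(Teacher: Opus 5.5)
At the top level your plan is the paper's. The paper proves exactly your key identity as Lemma~\ref{lm:thomdiagonalcomposite} (with $v=u$): $\tilde u_\bullet$ is $(1\smashprod^R u)\circ\hat\Delta_\xi$ followed by a unit constraint. It then reads off both parts from the definitions of $\cupprod$ and $\capprod$.

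Your route to that identity differs. The paper defines the Thom diagonal parametrically (Definition~\ref{def:thomdiagonal}). There $\tilde\Delta_\xi$ is the unique map over the diagonal factoring the identity of $X=R\smashprod_\fw S^\xi$ through the cartesian map $R_B\extsmashprod^R X\to X$ over $\pi_2$. Then $\hat\Delta_\xi$ is $(\Delta,\tilde\Delta_\xi)_\bullet$ followed by the inverse monoidality constraint of $H_\bullet$, and this is the Thom diagonal that enters $\cupprod$ and $\capprod$. The identity is proved upstairs in $\hpMod^R$: composing with the cartesian $c\colon R_B\to R$ shows $\tilde u=\rho\circ(1\extsmashprod^R c)\circ(1\extsmashprod^R\tilde u)\circ\tilde\Delta_\xi$. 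Applying the symmetric monoidal functor $H_\bullet$, with $H_\bullet(c\circ\tilde u)=u$, finishes the proof. No unit $\eta$, Remark~\ref{rk:pshriekeataltdesc}, projection formula or ex-space model is needed, and nothing is special to vector bundles, since the construction is uniform in $X$.

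Your route is sound: your $\hat\alpha$ is literally $(\Delta,\tilde\Delta_\xi)_\bullet$ under $\pi_2^\ast X\homot R_B\extsmashprod^R X$. But the bookkeeping you flag is real. The quickest way through it is to note two facts:
\begin{itemize}
\item $r_!(r^\ast u)\circ\hat\beta=H_\bullet(r^\ast u\circ\beta)$;
\item $r^\ast u\circ\beta=1\extsmashprod^R(c\circ\tilde u)$, which you can check after composing with $c$.
\end{itemize}
This puts you back at the paper's argument. The point-set comparison, and the passage to virtual bundles via $S^{-\beta}$, can then be dropped.

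What does need fixing is your sign bookkeeping. The key identity holds on the nose, with no sign. A leftover sign there would make (ii), which carries no sign, false.

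In (i), Proposition~\ref{prop:circandextsmashcompat} produces no sign either. For $(x\smashprod^R 1)\circ(1\smashprod^R u)$ the sign is $(-1)^{\deg(\phi)\deg(\psi')}$ with $\phi=\psi'=1$, so $x\circ\tilde u_\bullet$ represents $x\cupprod u$ exactly. The factor $(-1)^{kd}$ appears on both sides of (i) purely by convention, and the two occurrences cancel:
\begin{itemize}
\item once in the normalization making the Thom isomorphism a map of $R^\ast(B)$--modules;
\item once in the Koszul convention defining $(\tilde u_\bullet)^\sharp$.
\end{itemize}
Inserting an extra $(-1)^{kd}$ from Proposition~\ref{prop:circandextsmashcompat}, as you propose, would give $(\tilde u_\bullet)^\sharp(x)=x\cupprod u$. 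That contradicts (i) whenever $kd$ is odd.

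For (ii), write $u\capprod z$ as $\lambda\circ(u\smashprod^R 1)\circ\chi\circ\hat\Delta_\xi\circ z$. Then use $(u\smashprod^R 1)\circ\chi=\chi\circ(1\smashprod^R u)$, which is sign-free because the identity has degree $0$, together with $\lambda\circ\chi=\rho$.
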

In part (\ref{it:thomisoinrcoh}), the sign in the formula
$x\mapsto (-1)^{\deg(x) d} x \cupprod u$
for the Thom isomorphism is required to make the Thom isomorphism
a degree $d$ map of $R^\ast(B)$--modules, and the sign in the 
formula for the precomposition map is in keeping with the Koszul 
sign rule, which instructs us to define 
$(\tilde{u}_\bullet)^\sharp(x) = (-1)^{\deg(x) \deg(\tilde{u}_\bullet)}x \circ \tilde{u}_\bullet$.
The proof of Proposition~\ref{prop:thomisoreinterpretations}
is based on Lemma~\ref{lm:thomdiagonalcomposite} below.
Before stating the lemma, we recall the construction of 
the $R$--module Thom diagonal map.

\begin{defn}[Thom diagonal map]
\label{def:thomdiagonal}
Suppose $\xi$ is a virtual bundle over a space $B$.
Then the \emph{$R$--module Thom diagonal map of $\xi$} is 
the $R$--module map
\begin{equation}
\label{eq:thomdiagonal}
	\hat{\Delta}_\xi
	\colon 
	R\smashprod B^\xi 
	\longto 
	(R \smashprod B_+) \smashprod^R (R\smashprod B^\xi) 
\end{equation}
obtained as follows.
Let $c \colon R_B \to R$ be the canonical cartesian morphism covering
the unique map $B\to \pt$. As equivalences and external smash products
of cartesian morphisms are cartesian, the composite map
\begin{equation}
\label{eq:cextsmashidunitconstr}
	R_B \extsmashprod^R (R\smashprod_\fw S^\xi) 
	\xto{\ c \extsmashprod^R 1\ }
	R \extsmashprod^R (R\smashprod_\fw S^\xi) 
	\xto{\ \homot\ }
	R\smashprod_\fw S^\xi
\end{equation}
covering the projection $B\times B \to B$ onto the second coordinate
is cartesian. The second map in \eqref{eq:cextsmashidunitconstr}
is given by the left unit constraint of $\extsmashprod^R$.
Consequently, we may factor the identity map of $R\smashprod_\fw S^\xi$
as a composite of a unique map
\[
	\tilde{\Delta}_\xi 
	\colon 
	R\smashprod_\fw S^\xi
	\longto
	R_B \extsmashprod^R (R\smashprod_\fw S^\xi)
\]
covering the diagonal map $B\to B\times B$
and \eqref{eq:cextsmashidunitconstr}.
The Thom diagonal map $\hat{\Delta}_\xi$
is now given by the composite
\begin{multline*}
	\quad
	\hat{\Delta}_\xi
	\colon
	R\smashprod B^\xi
	\homot
	H_\bullet(B; R\smashprod_\fw S^\xi)
	\xto{\ (\Delta,\tilde{\Delta}_\xi)_\bullet\ }
	H_\bullet(B\times B; R_B \extsmashprod^R (R\smashprod_\fw S^\xi))
	\\
	\homot
	H_\bullet(B; R_B) \smashprod^R H_\bullet(B; R\smashprod_\fw S^\xi)
	\homot
	(R\smashprod B_+) \smashprod^R (R \smashprod B^\xi) 
	\quad
\end{multline*}
where $\Delta \colon B\to B\times B$ is the diagonal map; 
the equivalences between 
$R\smashprod B^\xi$ and $H_\bullet(B; R\smashprod_\fw S^\xi)$
are instances of \eqref{eq:hbulletandfcommute};
the first equivalence on the second row is 
the inverse of the monoidality constraint \eqref{eq:hbulletmonconstr};
and the equivalence between $H_\bullet(B; R_B)$ and $R\smashprod B_+$
arises from Example~\ref{ex:hbulletfortrivialcoeffs2}.
\end{defn}

\begin{lemma}
\label{lm:thomdiagonalcomposite}
Suppose $\xi$ is a virtual bundle over a space $B$.
Let $v\colon R\smashprod B^\xi \to R$ be a degree $k$ map of $R$--modules,
and let $\tilde{v} \colon R \smashprod_\fw S^\xi \to R_B$ be the right adjunct of $v$
under the $(r_!, r^\ast)$ adjunction for $r$ the unique map $B\to \pt$.
Then the composite
\begin{equation}
\label{eq:thomdiagonalandidsmashv}
\xymatrix@1@C+0.5em{
	R\smashprod B^\xi 
	\xto{\ \hat{\Delta}_\xi\ }
	(R\smashprod B_+) \smashprod^R (R\smashprod B^\xi)
	\ar[r]_-k^-{\ 1 \smashprod^R v\ }
	&
	(R\smashprod B_+) \smashprod^R R
	\xto{\ \homot\ }
	R\smashprod B_+
}
\end{equation}
where the first map is the Thom diagonal map for $\xi$ and 
the last map is a unit constraint for $\smashprod^R$
agrees with the composite 
\[\xymatrix@C+1em{
	R\smashprod B^\xi 
	\homot
	H_\bullet(B; R\smashprod_\fw S^\xi) 
	\ar[r]^-{(\id,\tilde{v})_\bullet}_-k
	&
  	H_\bullet(B; R_B)
	\homot
   	R\smashprod B_+
}\]
where the two equivalences are as in Definition~\ref{def:thomdiagonal}.
\end{lemma}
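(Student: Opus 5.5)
The plan is to push everything into $\hpMod^R$ and use functoriality of $H_\bullet$ together with the monoidality constraint \eqref{eq:hbulletmonconstr}. First, I would rewrite the map $1\smashprod^R v$ in \eqref{eq:thomdiagonalandidsmashv} in terms of $H_\bullet$. Under the equivalences $R\smashprod B_+\homot H_\bullet(B;R_B)$ and $R\smashprod B^\xi\homot H_\bullet(B;R\smashprod_\fw S^\xi)$, the map $v$ equals $(r,\tilde v^\flat)_\bullet$, where $\tilde v^\flat\colon R\smashprod_\fw S^\xi\to R$ is the degree $k$ morphism of $\hpMod^R$ covering $r\colon B\to\pt$ obtained by composing $\tilde v$ with the canonical cartesian morphism $c\colon R_B\to R$. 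This holds because $v$ and $\tilde v$ are adjuncts, so $v$ is the unique factorization of $\tilde v^\flat$ through the opcartesian morphism $R\smashprod_\fw S^\xi\to r_!(R\smashprod_\fw S^\xi)$, exactly as in Definition~\ref{def:hbullethc}. For nonzero degree I would reduce to degree $0$ by replacing $S^\xi$ with $\suspension^{k}$-shifted versions through Remark~\ref{rk:mapsofdegreekalt}. The signs that appear are harmless here because the degree $k$ map sits in the last tensor factor.

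Next, by naturality of the monoidality constraint of $H_\bullet$ (Proposition~\ref{prop:hbulletsm}), the composite $(1\smashprod^R v)\circ\hat\Delta_\xi$, followed by the unit constraint, becomes $H_\bullet$ of the composite in $\hpMod^R$
\[
	R\smashprod_\fw S^\xi
	\xto{\ \tilde\Delta_\xi\ }
	R_B\extsmashprod^R(R\smashprod_\fw S^\xi)
	\xto{\ 1\extsmashprod^R \tilde v^\flat\ }
	R_B\extsmashprod^R R
	\xto{\ \homot\ }
	R_B,
\]
which covers $B\xto{\Delta}B\times B\xto{\pi_1}B$, that is, the identity. I also need the unit-constraint compatibility: the equivalence $H_\bullet(B;R_B)\smashprod^R H_\bullet(\pt;R)\homot H_\bullet(B;R_B)$ is $H_\bullet$ of the right unit constraint of $\extsmashprod^R$. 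This follows from the symmetric monoidal structure of $H_\bullet$.

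The remaining task is to show that this vertical composite equals $\tilde v$. Write $1\extsmashprod^R\tilde v^\flat=(1\extsmashprod^R c)\circ(1\extsmashprod^R\tilde v)$. The morphism $R_B\extsmashprod^R R_B\xto{1\extsmashprod c}R_B\extsmashprod^R R\homot R_B$ is cartesian over $\pi_1$. I would then postcompose both candidate maps with this cartesian morphism and use uniqueness in its universal property. To identify the two sides, I would use the naturality of $\extsmashprod$ to move $\tilde v$ past $\tilde\Delta_\xi$: the composite $(1\extsmashprod\tilde v)\circ\tilde\Delta_\xi$ equals $\tilde\Delta_{R_B}\circ\tilde v$, where $\tilde\Delta_{R_B}$ is the analogous diagonal factorization for $R_B$. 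This follows from the defining factorization of $\tilde\Delta_\xi$ through the cartesian morphism \eqref{eq:cextsmashidunitconstr}, together with its naturality in the second variable. After that, the identity $(\text{projection to first factor})\circ\tilde\Delta_{R_B}=\id$ follows from the symmetry of the construction, because both projections of $R_B\extsmashprod^R R_B$ onto $R_B$ restrict along the diagonal to unit constraints.

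I expect the main obstacle to be this last step. It requires checking carefully that the first and second projections, after pulling back along $\Delta$, both give the identity of $R_B$ up to the unit constraints. This reduces to the compatibility of the internal tensor product with \eqref{eq:internaltensorformula} and the unit axioms. Bookkeeping the nonzero degree $k$ adds some signs, but these are trivial because $v$ sits in the last factor.
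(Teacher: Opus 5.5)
Your proposal is correct, but it organizes the key step differently from the paper. Write $\lambda$ and $\rho$ for the left and right unit constraints of $\extsmashprod^R$. Both arguments reduce the lemma to the identity $\rho_{R_B}\circ(1\extsmashprod^R c)\circ(1\extsmashprod^R\tilde v)\circ\tilde\Delta_\xi=\tilde v$ in $\hpMod^R$. The paper does this reduction in one sentence (``applying $H_\bullet$ to the top row''). Your first two paragraphs spell out why it holds: $v=(r,c\tilde v)_\bullet$, naturality of the monoidality constraint of $H_\bullet$, and its compatibility with unit constraints.

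The paper then proves the identity with a single lifting argument. It postcomposes the whole composite with the cartesian morphism $c\colon R_B\to R$ and slides $c\extsmashprod^R 1$ past $1\extsmashprod^R\tilde v$. It then uses naturality of $\lambda$ and $\rho$, $\lambda_R=\rho_R$ on $R\extsmashprod^R R$, and $\lambda(c\extsmashprod^R 1)\tilde\Delta_\xi=\id$ to reach $c\circ\tilde v$, and concludes by cartesianness of $c$.

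You instead use two lifting arguments. First, the naturality identity $(1\extsmashprod^R\tilde v)\tilde\Delta_\xi=\tilde\Delta_{R_B}\tilde v$ follows from uniqueness of lifts through the analogue of \eqref{eq:cextsmashidunitconstr} for $R_B$, which is cartesian over $\pi_2$; this reduces everything to the universal case $\tilde v=\id$. Second, you need the identity $\rho_{R_B}(1\extsmashprod^R c)\tilde\Delta_{R_B}=\id_{R_B}$. Your $\tilde\Delta_{R_B}$ is exactly the $\tilde\Delta$ of Lemma~\ref{lm:thomandregulardiag}, and its symmetric characterization there via $c\extsmashprod^R c$ is what yields this second identity. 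So your route isolates a reusable fact at the cost of one extra uniqueness argument, while the paper's is shorter.

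Two corrections. First, drop the sentence about postcomposing ``both candidate maps'' with $\rho_{R_B}(1\extsmashprod^R c)\colon R_B\extsmashprod^R R_B\to R_B$, the cartesian morphism over $\pi_1$. Uniqueness of lifts through it only compares maps into $R_B\extsmashprod^R R_B$. For your naturality identity, its two postcompositions are precisely the two sides of the final equation, so using it there would be circular. The morphism over $\pi_2$, as in your next sentence, is the right one.

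Second, the step you flag as the main obstacle needs no detour through \eqref{eq:internaltensorformula}. Naturality of the unit constraints and $\lambda_R=\rho_R$ give $c\,\rho_{R_B}(1\extsmashprod^R c)\tilde\Delta_{R_B}=\rho_R(c\extsmashprod^R c)\tilde\Delta_{R_B}=\lambda_R(c\extsmashprod^R c)\tilde\Delta_{R_B}=c\,\lambda_{R_B}(c\extsmashprod^R 1)\tilde\Delta_{R_B}=c$. Both $\rho_{R_B}(1\extsmashprod^R c)\tilde\Delta_{R_B}$ and $\id_{R_B}$ cover $\id_B$, so cartesianness of $c$ finishes the argument. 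This is exactly the right-hand part of the paper's diagram.
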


\begin{proof}
Let $c \colon R_B \to R$ be the canonical cartesian morphism, and 
consider the commutative diagram
\[\xymatrix@C+0.5em{
	R\smashprod_\fw S^\xi
	\ar[r]^-{\tilde{\Delta}_\xi}
	\ar@/_/[rdd]_\id
	&
	R_B \extsmashprod^R (R\smashprod_\fw S^\xi)
	\ar[r]^-{1\extsmashprod^R \tilde{v}}_-k
	\ar[d]_{c\extsmashprod^R 1}
	&
	R_B\extsmashprod^R R_B
	\ar[r]^-{1\extsmashprod^R c}
	\ar[d]_{c\extsmashprod^R 1}
	&
	R_B\extsmashprod^R R
	\ar[r]^-\homot
	\ar[d]_{c\extsmashprod^R 1}
	&
	R_B
	\ar[dd]^c
	\\
	&
	R \extsmashprod^R (R\smashprod_\fw S^\xi)
	\ar[r]^-{1\extsmashprod^R \tilde{v}}_-k
	\ar[d]_\homot
	&
	R\extsmashprod^R R_B
	\ar[r]^-{1\extsmashprod^R c}
	\ar[d]_\homot
	&
	R\extsmashprod^R R
	\ar[dr]^-\homot
	\\
	&
	R\smashprod_\fw S^\xi
	\ar[r]^-{\tilde{v}}
	&
	R_B
	\ar[rr]^c
	&&
	R
}\]
where $\tilde{\Delta}_\xi$ is as in Definition~\ref{def:thomdiagonal};
the vertical equivalences are given by left unit constraints;
the last map in the top row is given by the right unit constraint;
and the diagonal equivalence is given by the left unit constraint
(which in this case agrees with the right unit constraint).
As the map $c$ is cartesian, from the commutativity of the diagram
we deduce that the composite of the maps in the top row is 
equal to $\tilde{v}$. 
The claim now follows by observing that applying $H_\bullet$ to 
the top row yields \eqref{eq:thomdiagonalandidsmashv}.
\end{proof}

\begin{proof}[Proof of Proposition~\ref{prop:thomisoreinterpretations}]
Recall that $x\cupprod u$ is the class represented by
the composite
\[
	x\cupprod u
	\colon
	R\smashprod B^\xi 
	\xto{\ \hat{\Delta}_\xi\ }
	(R\smashprod B_+) \smashprod^R (R\smashprod B^\xi)
	\xto{\ x \smashprod^R u\ }
	R \smashprod^R R
	\xto{\ \homot\ }
	R
\]
while $u \capprod z$ is the class represented by the composite
\begin{multline*}
	u \capprod z
	\colon
	R
	\xto{\ z\ }
	R \smashprod B^\xi
	\xto{\ \hat{\Delta}_\xi\ }
	(R\smashprod B_+) \smashprod^R (R\smashprod B^\xi)
	\\
	\xto{\ \chi\ }
	(R\smashprod B^\xi)  \smashprod^R (R\smashprod B_+)
	\xto{\ u\smashprod^R 1\ }
	R \smashprod^R (R\smashprod B_+)
	\xto{ \homot\ }	
	R\smashprod B_+ 
\end{multline*}
where $\chi$ denotes the symmetry constraint for $\smashprod^R$.
The claim now follows easily from Lemma~\ref{lm:thomdiagonalcomposite}
with $v = u$.
\end{proof}

\subsection{Comparison with umkehr maps arising from Poincaré duality}
\label{subsec:pdcomp}
Our aim in this subsection is to prove Theorem~\ref{thm:pdumkcomp} below
generalizing Theorem~\ref{thm:classicalumkehrmapscomparison}(\ref{it:pdumkcomparison}) 
from $H\bbk$ to general commutative ring spectra $R$.

Suppose $M$ and  $N$ are closed smooth manifolds of dimensions $m$ and $n$, 
respectively, and let $f\colon M \to N$ be a continuous map.
Suppose furthermore that $R$ is a ring spectrum and $M$ and $N$ are oriented with 
respect to $R$, meaning that the tangent bundles $\tau_M$ and $\tau_N$
have been equipped with $R$--orientations.
By the proof of Corollary~\ref{cor:manifoldscart},
there exists a cartesian morphism
$\kappa \colon S^{-\tau_M} \oto S^{-\tau_N}$
in $\hpSpectra^\dfop$ 
fitting into a commutative triangle
\begin{equation}
\label{eq:mnhypercart}
	\vcenter{\xymatrix@!0@C=3em@R=6ex{
		S^{-\tau_M} 
		\ar[rr]|{\circdec}^\kappa
		\ar[dr]|{\circdec}
		&&
		S^{-\tau_N}
		\ar[dl]|{\circdec}
		\\
		&
		S
	}}
	\qquad\text{covering}\qquad
	\vcenter{\xymatrix@!0@C=3em@R=6ex{
		M
		\ar[rr]^f
		\ar[dr]
		&&
		N
		\ar[dl]
		\\
		&
		\pt
	}}
\end{equation}
where the morphisms into $S$ are hypercartesian.
From $\kappa$, we obtain a map
\[
	(f,\kappa)^\leftarrow \co H_\bullet (N; S^{-\tau_N}) \longto H_\bullet (M; S^{-\tau_M})
\]
in $\ho(\Spectra)$.
Notice that $H_\bullet (N; S^{-\tau_N})$ and $H_\bullet (M; S^{-\tau_M})$
are the Thom spectra $N^{-\tau_N}$ and $M^{-\tau_M}$, respectively.
The $R$--orientations for $\tau_M$ and $\tau_N$ induce ones for 
$-\tau_M$ and $-\tau_N$,
and combining the maps induced by $(f,\kappa)^\leftarrow$ 
with the resulting Thom isomorphisms yields umkehr maps
\begin{equation}
\label{eq:thomumkthom}
\xymatrix@!0@C=4.6em{
	R_{\ast+n} (N)
	\ar[rr]^-{\mathrm{Thom}^{-1}}_-\isom
	&&
	R_\ast (N^{-\tau_N})
	\ar[rrr]^{R_\ast((f,\kappa)^\leftarrow)}
	&&&
	R_\ast (M^{-\tau_M})
	\ar[rr]^-{\mathrm{Thom}}_-\isom
	&&
	R_{\ast+m} (M)
}
\end{equation}
and
\begin{equation}
\label{eq:thomumkthomrcoh}
\xymatrix@!0@C=4.6em{
	R^{\ast+m} (M)
	\ar[rr]^-{\mathrm{Thom}}_-\isom
	&&
	R^\ast (M^{-\tau_M})
	\ar[rrr]^{R^\ast((f,\kappa)^\leftarrow)}
	&&&
	R^\ast (N^{-\tau_N})
	\ar[rr]^-{\mathrm{Thom}^{-1}}_-\isom
	&&
	R_{\ast+n} (N)
}
\end{equation}
associated to $f$.

\begin{thrm}
\label{thm:pdumkcomp}
The umkehr maps of equations \eqref{eq:thomumkthom} and \eqref{eq:thomumkthomrcoh}
agree with the composites
\begin{equation}
\label{eq:pdindpd}
\xymatrix@C+1.1em{
	R_{\ast+n} (N)
	\ar[r]^-{(\mathrm{P.D.})^{-1}}_-\isom
	&
	R^{-\ast} (N)
	\ar[r]^{R^\ast(f)}
	&
	R^{-\ast} (M)
	\ar[r]^-{\mathrm{P.D.}}_-\isom
	&
	R_{\ast+m} (M)
}
\end{equation}
and
\begin{equation}
\label{eq:pdindpdrcoh}
\xymatrix@C+1.1em{
	R^{\ast+m} (M)
	\ar[r]^-{\mathrm{P.D.}}_-\isom
	&
	R_{-\ast} (M)
	\ar[r]^{R_\ast(f)}
	&
	R_{-\ast} (N)
	\ar[r]^-{(\mathrm{P.D.})^{-1}}_-\isom
	&
	R^{\ast+n} (N),
}
\end{equation}
respectively,
where the isomorphisms are given by Poincaré duality.
\end{thrm}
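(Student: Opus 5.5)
The plan is to reduce everything to duality theory in $\ho(\Mod^R)$ and then compare with Poincaré duality via the Thom diagonal. First apply the symmetric monoidal left adjoint $R\smashprod(-)\colon\Spectra\to\Mod^R$. By Proposition~\ref{prop:hypercartmorpreservation} the triangle \eqref{eq:mnhypercart} goes to a triangle in $(\hpMod^R)^\dfop$ whose legs into $R$ are still hypercartesian. Proposition~\ref{prop:hypercartandfwduality} (with $B=\pt$), combined with Example~\ref{ex:unitfactorsthroughthomdiagonal}, then gives dual pairs $(R\smashprod M_+, R\smashprod M^{-\tau_M})$ and $(R\smashprod N_+, R\smashprod N^{-\tau_N})$ in $\ho(\Mod^R)$. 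In each, the unit is the Pontryagin--Thom collapse followed by the Thom diagonal $\hat\Delta_{-\tau}$. Proposition~\ref{prop:dualmaps} with $B=\pt$ and $g=f$ then identifies $R\smashprod(f,\kappa)^\leftarrow$, which is the map \eqref{eq:inducedbytheta} after unwinding Definition~\ref{def:hprime}, with the dual of $R\smashprod f_+$.

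The remaining work is to recognize Poincaré duality in these terms. For $M$, let $\tilde u_\bullet\colon R\smashprod M^{-\tau_M}\to R\smashprod M_+$ be the degree $m$ equivalence of Proposition~\ref{prop:thomisoreinterpretations} coming from the orientation of $-\tau_M$. Let $\eta_M\colon R\to (R\smashprod M_+)\smashprod^R(R\smashprod M^{-\tau_M})$ be the duality unit. I claim that the composite $(1\smashprod^R\tilde u_\bullet)\circ\eta_M$ is the fundamental class $[M]\in R_m(M)$, up to a sign to be tracked. To see this, use Lemma~\ref{lm:thomdiagonalcomposite}: it rewrites $\tilde u_\bullet$ as $(1\smashprod^R u)\circ\hat\Delta_{-\tau_M}$. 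Coassociativity of the Thom diagonal then reduces the composite to the standard description of $[M]$ as the Thom class $u$ capped against the collapse class $R\to R\smashprod M^{-\tau_M}$. This is where the choice of orientation of $-\tau_M$ induced from $\tau_M$ must be matched with the orientation class of $M$. Once this is done, the duality isomorphisms $R^{-\ast}(M)\cong R_\ast(M^{-\tau_M})$ and $R_\ast(M)\cong R^{-\ast}(M^{-\tau_M})$ coming from the dual pair, composed with the Thom isomorphisms of Proposition~\ref{prop:thomisoreinterpretations}, become cap product with $[M]$, that is, the Poincaré duality map.

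The conclusion is then formal. Write the homology umkehr \eqref{eq:thomumkthom} as $\mathrm{Thom}\circ(Df)_\ast\circ\mathrm{Thom}^{-1}$. Use the duality-isomorphism description of $R_\ast(X^{-\tau})$ as $R^{-\ast}(X)$ for both $X=M$ and $X=N$. Naturality of duality identifies $(Df)_\ast$ on $R_\ast$ of the duals with $f^\ast$ on $R^{-\ast}$ of $M_+,N_+$. By the previous paragraph, the outer maps are Poincaré duality, which gives \eqref{eq:pdindpd}. The cohomological statement \eqref{eq:pdindpdrcoh} follows by the same argument with pre- and postcomposition interchanged, or by dualizing.

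The main obstacle is the middle step: pinning down that the unit, after the Thom isomorphism, is exactly $[M]$ and not $\pm[M]$ or some twisted variant. This requires careful bookkeeping of the Koszul signs of Proposition~\ref{prop:circandextsmashcompat}, of the relation between the orientations of $\tau$ and $-\tau$, and of the identification of the Pontryagin--Thom map with the duality unit (\cite[Prop.~18.3.2]{MaySigurdsson}). I would fix sign conventions so that Poincaré duality is defined as capping with the class obtained in this way, and then check that this class agrees with the classical fundamental class.
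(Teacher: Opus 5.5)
Your proposal is essentially the paper's proof. The reduction via Proposition~\ref{prop:dualmaps} and naturality of the duality isomorphisms is the same. Your central computation is also the paper's: the Atiyah duality unit factors as the collapse $c\colon R\to R\smashprod M^{-\tau_M}$ followed by $\hat\Delta_{-\tau_M}$, and $(1\smashprod^R\tilde u_\bullet)\circ\hat\Delta_{-\tau_M}=\Delta\circ\tilde u_\bullet$. That identity is Lemma~\ref{lm:thomandregulardiag}, which is what your coassociativity step amounts to, and it is exactly how the paper proves the square \eqref{eq:bsandthomisos}. The only difference is the anchor to classical Poincar\'e duality. You identify $\tilde u_\bullet\circ c$ with $[M]$ and treat both directions symmetrically. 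The paper instead cites $\mathrm{P.D.}=B\circ\mathrm{Thom}$ as well known and uses \eqref{eq:bsandthomisos} to get the other factorization.

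One correction: $(1\smashprod^R\tilde u_\bullet)\circ\eta_M$ is not $[M]$ itself but the class $\Delta_\ast(\tilde u_\bullet\circ c)\in R_m(M\times M)$. It is precisely this form that turns both duality-plus-Thom composites into $x\mapsto \pm(x\smashprod^R 1)\circ\Delta_\ast[M]$, i.e.\ cap product with $[M]$.
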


In the proof of Theorem~\ref{thm:pdumkcomp}, we will need 
the following lemma.

\begin{lemma}
\label{lm:thomandregulardiag}
Suppose $\xi$ is a virtual bundle of dimension $d$ over a space $B$
and let $u \in R^d(B^\xi)$ be an $R$--orientation of $\xi$.
Then the Thom diagonal map $\hat{\Delta}_\xi$
of Definition~\ref{def:thomdiagonal}
and the equivalence $\tilde{u}_\bullet$ of 
Proposition~\ref{prop:thomisoreinterpretations} fit 
into a commutative diagram
\begin{equation}
\label{eq:diagonalandthomdiagonal}
\vcenter{\xymatrix{
	R \smashprod B^\xi
	\ar[r]^-{\hat{\Delta}_\xi}
	\ar[d]_{\tilde{u}_\bullet}^\homot
	&
	(R \smashprod B_+) \smashprod^R (R\smashprod B^\xi) 
	\ar[d]^{1 \smashprod^R \tilde{u}_\bullet}_\homot
	\\
	R\smashprod B_+
	\ar[r]^-{\Delta}
	&
	(R\smashprod B_+) \smashprod^R (R\smashprod B_+)
}}
\end{equation}
where $\Delta$ is the map induced by the diagonal map of $B$.
\end{lemma}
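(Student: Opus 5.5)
We work at the level of morphisms in $\hpMod^R$ (allowing nonzero degree as in Definitions~\ref{def:mapsofdegreek}--\ref{def:gradedmapextsmashprod}) and then apply $H_\bullet$. Both composites around \eqref{eq:diagonalandthomdiagonal} are images under $H_\bullet$ of maps covering the diagonal $\Delta\colon B\to B\times B$, up to the fixed identifications \eqref{eq:hbulletandfcommute}, \eqref{eq:hbulletmonconstr} and Example~\ref{ex:hbulletfortrivialcoeffs2}. It therefore suffices to show that the two parametrized composites
\[
	R\smashprod_\fw S^\xi \xto{\ \tilde{\Delta}_\xi\ } R_B\extsmashprod^R (R\smashprod_\fw S^\xi) \xto{\ 1\extsmashprod^R \tilde{u}\ } R_B\extsmashprod^R R_B
\]
and
\[
	R\smashprod_\fw S^\xi \xto{\ \tilde{u}\ } R_B \xto{\ \tilde{\Delta}_0\ } R_B\extsmashprod^R R_B
\]
agree as degree $-d$ maps covering $\Delta$. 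Here $\tilde{\Delta}_0$ is the analogue of $\tilde{\Delta}_\xi$ for the trivial bundle, so that $H_\bullet(\Delta,\tilde{\Delta}_0)$ is the map $\Delta$ in the diagram. To identify $H_\bullet$ of the first composite with $(1\smashprod^R\tilde{u}_\bullet)\circ\hat{\Delta}_\xi$, use functoriality of $H_\bullet$ together with naturality of the monoidality constraint \eqref{eq:hbulletmonconstr} with respect to $1\extsmashprod^R\tilde{u}$, extended to nonzero degree via Proposition~\ref{prop:circandextsmashcompat}. No sign appears here, since the degree-zero factor stands first.

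To compare the two parametrized maps, use the universal property of a cartesian morphism. The composite
\[
	R_B\extsmashprod^R R_B \xto{\ c\extsmashprod^R 1\ } R\extsmashprod^R R_B \homot R_B
\]
covering the second projection $\pi_2$ is cartesian, by the argument of Definition~\ref{def:thomdiagonal}. Two maps covering $\Delta$ into its source are therefore equal as soon as their composites with it, which cover $\pi_2\Delta=\id_B$, are equal. For the second composite, $\tilde{\Delta}_0$ followed by this cartesian map is the identity of $R_B$ by definition, so the composite is $\tilde{u}$. For the first composite, naturality of $c\extsmashprod^R(-)$ and of the left unit constraint commute $1\extsmashprod^R\tilde{u}$ past the cartesian map. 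The composite then becomes $\tilde{u}\circ(\text{unit})\circ(c\extsmashprod^R1)\circ\tilde{\Delta}_\xi=\tilde{u}\circ\id=\tilde{u}$, again by the defining property of $\tilde{\Delta}_\xi$. This is the same diagram chase as in the proof of Lemma~\ref{lm:thomdiagonalcomposite}.

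The main obstacle is bookkeeping for nonzero-degree morphisms. The uniqueness clause in the definition of a cartesian morphism is stated for degree-zero maps, so I would first convert each degree $-d$ map $\phi$ into its underlying degree-zero map $\phi_0$ out of $\suspension^{-d}_B$ (Remark~\ref{rk:mapsofdegreekalt}). I would then check that composition with a degree-zero cartesian map, and with the external product $1\extsmashprod^R(-)$, is compatible with this conversion, using Definitions~\ref{def:gradedmapcomp} and~\ref{def:gradedmapextsmashprod}. The sign of Proposition~\ref{prop:circandextsmashcompat} is trivial because the factor $1$ has degree $0$. Once this is in place, uniqueness gives equality of the two parametrized maps, and applying $H_\bullet$ yields the commutativity of \eqref{eq:diagonalandthomdiagonal}.
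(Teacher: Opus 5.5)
Your proposal is correct and is essentially the paper's own argument. The paper likewise proves the parametrized identity $(1\extsmashprod^R\tilde u)\circ\tilde\Delta_\xi=\tilde\Delta\circ\tilde u$ by postcomposing with the cartesian morphism $R_B\extsmashprod^R R_B\to R\extsmashprod^R R_B\homot R_B$ over the second projection, and then applies $H_\bullet$; the only cosmetic difference is that it first defines $\tilde\Delta$ by factoring $c$ through the cartesian composite of $c\extsmashprod^R c$ with the unit constraint, and then observes that $\tilde\Delta$ agrees with your $\tilde\Delta_0$.
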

\begin{proof}
Let $c \colon R_B \to R$ be the canonical cartesian morphism.
The composite map
\begin{equation}
\label{eq:rbrbtor}
	R_B \extsmashprod^R R_B
	\xto{\ c \extsmashprod^R c\ }
	R \extsmashprod^R R
	\xto{\ \homot\ }
	R 
\end{equation}
where the equivalence is the left (or what is the same, right) unit
constraint for $\extsmashprod^R$ is cartesian, and we define 
\[
	\tilde{\Delta} \colon R_B \longto R_B \extsmashprod^R R_B
\]
to be the unique map covering the diagonal map $B \to B\times B$
factoring the map $c$ through \eqref{eq:rbrbtor}. 
The map \eqref{eq:rbrbtor} agrees with the composite
\[
	R_B \extsmashprod^R R_B
	\xto{\ c \extsmashprod^R 1\ }
	R \extsmashprod^R R_B
	\xto{\ \homot\ }
	R_B
	\xto{\ c\ }
	R
\]
where the equivalence is the left unit constraint for $\extsmashprod^R$,
so the map $\tilde{\Delta}$ can be equivalently described
as the unique morphism covering the diagonal map $B \to B\times B$ 
making the rectangle on the right in the following diagram commute:
\[\xymatrix@!0@C=5em@R=10ex{
	&
    R\smashprod_\fw S^\xi
   	\ar[rrr]^{\tilde{u}}_\homot
	\ar[d]_{\tilde{\Delta}_\xi}
	\ar `l[ld] `[rddd]_{\id} [ddd]
	&&&
	R_B
	\ar[d]^{\tilde{\Delta}}
	\ar `r[rd] `[lddd]^{\id} [ddd]
	&
	\\
	&
	R_B \extsmashprod^R (R\smashprod_\fw S^\xi)
	\ar[rrr]^{1 \extsmashprod^R \tilde{u}}_\homot
	\ar[d]_{c \extsmashprod^R 1}
	&&&
	R_B \extsmashprod^R R_B
	\ar[d]^{c \extsmashprod^R 1}
	&
	\\
	&
	R \extsmashprod^R (R\smashprod_\fw S^\xi)
	\ar[rrr]^{1 \extsmashprod^R \tilde{u}}_\homot
	\ar[d]_\homot
	&&&
	R \extsmashprod^R R_B
	\ar[d]^\homot
	\\
	&
	R\smashprod_\fw S^\xi
	\ar[rrr]^{\tilde{u}}_\homot
	&&&
	R_B
}\]
Here $\tilde{\Delta}_\xi$ is as in Definition~\ref{def:thomdiagonal};
the vertical equivalences are given by unit constraints for $\extsmashprod^R$;
the rectangle on the left commutes by the construction of $\tilde{\Delta}_\xi$;
and the bottom two rectangles in the middle commute by 
Proposition~\ref{prop:circandextsmashcompat}
and the naturality of the left unit constraint.
The composite of the bottom two maps on the right hand column is cartesian,
so the commutativity of the rest of the diagram now implies that 
the square on the top commutes. Applying $H_\bullet$ to this square 
and recognizing the result as \eqref{eq:diagonalandthomdiagonal}
now yields the claim.
\end{proof}

\begin{proof}[Proof of Theorem~\ref{thm:pdumkcomp}]
Recall that given a dual pair of spectra $(X,Y)$, we have for each $k \in \Z$ 
an isomorphism 
\[
	B  = B_{(X,Y)} \colon R^k(X) \xto{\ \isom\ } R_{-k}(Y)
\]
given by sending the cohomology class represented by a degree $-k$ 
morphism $x \colon X \to R$  of spectra to 
the homology class represented by its adjunct 
$S \to R \smashprod Y$, that is, the composite
\[
	S \xto{\ \eta\ } X \smashprod Y \xto{\ x \smashprod 1\ } R \smashprod Y
\]
where $\eta$ is the unit of the duality between $X$ and $Y$.
Alternatively, using  $R$--module maps to represent $R$--cohomology and 
$R$--homology classes, $B$ is the map sending
the cohomology class represented by a degree $-k$ 
map $\tilde{x} \colon R \smashprod X \to R$ of $R$--modules
to the homology class represented by the composite
\[
	R
	\xto{\ \tilde{\eta}\ } 
	(R\smashprod X) \smashprod^R (R\smashprod Y)
	\xto{\ \tilde{x}\smashprod^R 1\ }
	R \smashprod^R (R\smashprod Y)
	\homot
	R\smashprod Y
\]
where $\tilde{\eta}$ is the $R$--module map induced by $\eta$ 
and the final equivalence is given by the left unit constraint of $\smashprod^R$.
If $(X',Y')$ is another dual pair of spectra, then for dual morphisms
$h \colon X \to X'$ and $h^{\vee} \colon Y' \to Y$ the square 
\begin{equation}
\label{eq:dualmorphismsandb} 
\vcenter{\xymatrix@C+2em{
	R^\ast(X')
	\ar[r]^{R^\ast(h)}
	\ar[d]_{B_{(X',Y')}}^{\isom}
	&
	R^\ast(X)
	\ar[d]^{B_{(X,Y)}}_{\isom}
	\\
	R_{-\ast}(Y') 
	\ar[r]^{R_\ast(h^{\vee})}
	&
	R_{-\ast}(Y)
}}
\end{equation}
commutes. 
Applying Proposition~\ref{prop:hypercartandfwduality} to 
the hypercartesian morphisms $S^{-\tau_M} \oto S$ and $S^{-\tau_N} \oto S$
of equation \eqref{eq:mnhypercart}, we recover Atiyah duality \cite{atiyah61}
stating that  $(\suspension^\infty_+ M, M^{-\tau_M})$
and $(\suspension^\infty_+ N, N^{-\tau_N})$
are dual pairs of spectra.
In view of Proposition~\ref{prop:dualmaps}, as special cases of 
\eqref{eq:dualmorphismsandb} we therefore have commutative squares
\begin{equation}
\label{eq:fandfumkandb}
\vcenter{\xymatrix@C+2.6em{
	R^\ast(N)
	\ar[r]^{R^\ast(f)}
	\ar[d]_{B
	}^\isom
	&
	R^\ast(M)
	\ar[d]^{B 
	}_\isom
	\\
	R_{-\ast}(N^{-\tau_N})
	\ar[r]^{R_\ast((f,\kappa)^\leftarrow)}
	&
	R_{-\ast}(M^{-\tau_M})
}}
\qquad\text{and}\qquad
\vcenter{\xymatrix@C+2.6em{
	R^{\ast}(M^{-\tau_M})
	\ar[r]^{R^\ast((f,\kappa)^\leftarrow)}
	\ar[d]_{B
	}^\isom
	&
	R^{\ast}(N^{-\tau_N})
	\ar[d]^{B
	}_\isom
	\\
	R_{-\ast}(M)
	\ar[r]^{R_\ast(f)}
	&
	R_{-\ast}(N)
}}
\end{equation}

It is well known that for a closed smooth $R$--oriented $p$--manifold $P$,
the Poincaré duality isomorphism $R^\ast(P) \xto{\,\isom\,} R_{p-\ast}(P)$
is given by the composite 
\[\xymatrix@C+2em{
	R^\ast(P)
	\ar[r]^-{\mathrm{Thom}}_-\isom
	&
	R^{\ast-p}(P^{-\tau_P})
	\ar[r]^-{B}_-\isom
	&
	R_{p-\ast}(P)
}\]
where the first map is the Thom isomorphism.
Thus the claim that the composites \eqref{eq:thomumkthomrcoh} 
and \eqref{eq:pdindpdrcoh} 
agree follows from the commutativity of 
the right hand square in \eqref{eq:fandfumkandb}.
Moreover, the remaining claim that 
the composites \eqref{eq:thomumkthom} and \eqref{eq:pdindpd}
agree follows from the commutativity of 
the left hand square in \eqref{eq:fandfumkandb}
once we know that the Poincaré duality isomorphism
also agrees with the composite
\[\xymatrix@C+2em{
	R^\ast(P)
	\ar[r]^-{B}_-\isom
	&
	R_{-\ast}(P^{-\tau_P})
	\ar[r]^-{\mathrm{Thom}}_-\isom
	&
	R_{p-\ast}(P),
}\]
that is, that the square 
\begin{equation}
\label{eq:bsandthomisos}
\vcenter{\xymatrix@C+2em{
	R^{\ast}(P)
	\ar[r]^{\mathrm{Thom}}_\isom
	\ar[d]_B^\isom
	&
	R^{\ast-p}(P^{-\tau_P})
	\ar[d]^B_\isom
	\\
	R_{-\ast}(P^{-\tau_P})
	\ar[r]^{\mathrm{Thom}}_\isom
	&
	R_{p-\ast}(P)
}}
\end{equation}
commutes.

Showing the commutativity of \eqref{eq:bsandthomisos} occupies the 
rest of the proof. Let 
\[
	\tilde{\eta} 
	\colon
	R 
	\longto
	(R\smashprod P_+) \smashprod^R (R\smashprod P^{-\tau_P})
\]
be the $R$--module map induced by the unit of the dual pair 
$(\suspension^\infty_+ P, P^{-\tau_P})$
and let
\[
	\tilde{u}_\bullet
	\colon
	R\smashprod P^{-\tau_P}
	\xto{\ \homot\ }
	R \smashprod P_+
\]
be the equivalence of equation \eqref{eq:tildeubullet}
for $-\tau_P$ arising from the orientation of $-\tau_P$.
Using Proposition~\ref{prop:thomisoreinterpretations} and the formula for the maps $B$,
we see that showing  the commutativity of \eqref{eq:bsandthomisos} amounts
to proving that the outer rectangle in the diagram
\begin{equation*}
\vcenter{\xymatrix@!0@C=6.3em@R=3.5ex{
	R
	\ar[rr]^-{\tilde{\eta}}
	\ar[dddd]_{\tilde{\eta}}
	&&
	(R\smashprod P_+) \smashprod^R (R\smashprod P^{-\tau_p})
	\ar[rr]^\chi
	\ar[rdd]_(0.4){1\smashprod^R \tilde{u}_\bullet}
	&&
	(R\smashprod P^{-\tau_p})\smashprod^R (R\smashprod P_+)
	\ar[dddd]^{\tilde{u}_\bullet\smashprod^R 1}
	\\ \\
	&&&
	(R\smashprod P_+) \smashprod^R (R\smashprod P_+)
	\ar[rdd]_(0.4)\chi
	\\ \\
	(R\smashprod P_+) \smashprod^R (R\smashprod P^{-\tau_p})
	\ar[rrrr]^{1\smashprod^R \tilde{u}_\bullet}
	\ar[ddd]_{x\smashprod^R 1}
	&&&&
	(R\smashprod P_+) \smashprod^R (R\smashprod P_+)
	\ar[ddd]^{x\smashprod^R 1}
	\\ \\ \\ 
	R \smashprod^R (R\smashprod P^{-\tau_p})
	\ar[rrrr]^{1\smashprod^R \tilde{u}_\bullet}
	\ar[ddd]_{\homot}
	&&&&
	R \smashprod^R (R\smashprod P_+)
	\ar[ddd]^{\homot}
	\\ \\ \\
	R\smashprod P^{-\tau_p}
	\ar[rrrr]^{\tilde{u}_\bullet}
	&&&&
	R\smashprod P_+
}}
\end{equation*}
commutes up to sign $(-1)^{\deg(x) \dim(P)}$ for every map 
of $R$--modules $x \colon R \smashprod P_+ \to R$ (possibly of
non-zero degree). Here the maps labeled $\chi$ are 
symmetry constraints  and the vertical equivalences
are unit constraints for $\smashprod^R$.
The bottom rectangle commutes by naturality, as does the 
triangle in the top right hand corner, 
while the middle rectangle commutes up to 
sign $(-1)^{\deg(x) \dim(P)}$
by Proposition~\ref{prop:circandextsmashcompat}.
It remains to show that the trapezoid in the top left hand 
corner commutes.
From Example~\ref{ex:unitfactorsthroughthomdiagonal}
it follows that the map $\tilde{\eta}$ factors through
the Thom diagonal map as a composite
\[
	\tilde{\eta} 
	\colon 
	R 
	\longto 
	R \smashprod P^{-\tau_P}
	\xto{\ \hat{\Delta}_{-\tau_P}\ }
	(R\smashprod P_+) \smashprod^R (R\smashprod P^{-\tau_p}).
\]
Therefore the claim follows from Lemma~\ref{lm:thomandregulardiag}
and the observation that $\chi \Delta = \Delta$.
\end{proof}

\begin{rem}
Theorem~\ref{thm:pdumkcomp}
in particular implies that
in the case $m = n$ and $R = H\Q$, 
\eqref{eq:thomumkthom} recovers
Hopf's original ``Umkehrungshomomorphismus'' \cite{Hopf30}.
\end{rem}

\subsection{Comparison with integration along fibre maps}
\label{subsec:iafcomp}
Our aim in this subsection is to relate our umkehr maps 
to classical integration along fibre maps. 
We will prove two theorems doing so:
Theorem~\ref{thm:integrationalongfibremfldbdlecomp}, 
which is a more precisely formulated restatement of 
Theorem~\ref{thm:classicalumkehrmapscomparison}(\ref{it:iafumkcomparison}),
and Theorem~\ref{thm:umkehrmapcomparison}, which will 
find application in \cite{stringtop}.
Throughout the subsection $\bbk$ will denote a fixed field.

\begin{thrm}
\label{thm:integrationalongfibremfldbdlecomp}
Suppose $p \colon E \to B$ is a bundle of smooth closed $d$--manifolds
for which the vertical tangent bundle $\tau_p$ is $\bbk$--oriented,
and let $\theta_p\colon S^{-\tau_p} \oto S_B$ be a cartesian
morphism of $\hpSpectra^\dfop$ covering $p$ afforded by
Theorem~\ref{thm:thetapc}(\ref{it:thetapformanifoldbundles}). Then 
the composite maps
\[\xymatrix@C+3em{
	H_\ast(B;\bbk)
	\ar[r]^-{((p,\theta_p)^\leftarrow)_\ast}
	&
	H_\ast(E^{-\tau_p};\bbk)
	\ar[r]^-{\mathrm{Thom}}_-{\isom}
	&
	H_{\ast+d}(E;\bbk)
}\]
and
\[\xymatrix@C+3em{
	H^{\ast+d}(E;\bbk)
	\ar[r]^-{\mathrm{Thom}}_-{\isom}
	&
	H^\ast(E^{-\tau_p};\bbk)
	\ar[r]^-{((p,\theta_p)^\leftarrow)^\ast}
	&
	H^\ast(B;\bbk)
}\]
agree with the integration along fibre maps 
$(p,o_p)^!$ and $(p,o^p)_!$
of Definitions~\ref{def:intalongfibrehomology}
and \ref{def:intalongfibrecohomology}
where the maps $o_p$ and $o^p$ are given by
Definition~\ref{def:mapsop}.
\end{thrm}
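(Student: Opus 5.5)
The plan is to reduce to the fibrewise situation over $B$ and compare both sides with the Serre spectral sequence description of integration along the fibre. The definitions of $(p,o_p)^!$ and $(p,o^p)_!$ presumably go through edge homomorphisms of the Serre spectral sequence of Theorem~\ref{thm:serresss}, applied to suitable parametrized $H\bbk$--modules over $B$. The maps $o_p$ and $o^p$ of Definition~\ref{def:mapsop} presumably identify the top fibre homology $H_d(p^{-1}(b);\bbk)$ with $\bbk$ using the orientation.

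The first step is to move the umkehr map into $H\bbk$--modules. Apply $H\bbk\smashprod_\fw(-)$ to $\theta_p$. By Proposition~\ref{prop:hypercartmorpreservation} the result is still hypercartesian. Then use the $H\bbk$--orientation of $-\tau_p$: by Proposition~\ref{prop:thomclassastrivialization} it gives a degree $d$ equivalence $\tilde u\colon H\bbk\smashprod_\fw S^{-\tau_p}\to H\bbk_E$. Proposition~\ref{prop:thomisoreinterpretations} turns the Thom isomorphisms into the maps induced by $\tilde u$ on $H_\bullet$. So both composites in the theorem become $H_\bullet$ applied to a single morphism of $(\hpMod^{H\bbk})^\dfop$ of degree $d$ covering $p$. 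This is a degree-shifted morphism $H\bbk_E\oto H\bbk_B$ whose determined vertical map is $\tilde\eta\colon H\bbk_B\to \suspension^{-d}_B p_!H\bbk_E$ (up to the sign conventions of Definition~\ref{def:gradedmapcomp}).

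Second, I would identify $\tilde\eta$ fibrewise. Restrict along each point $b\in B$ using Proposition~\ref{prop:smallfibcartcrit} and base change (Proposition~\ref{prop:superandhypercartmorprops}(\ref{it:hypercartbc})). Over a point, $\tilde\eta_b$ is the Pontryagin--Thom collapse for the fibre $F_b=p^{-1}(b)$ composed with the Thom isomorphism (Theorem~\ref{thm:abgpthc2}). By the argument in the proof of Theorem~\ref{thm:pdumkcomp}, this is the map $H\bbk\to H\bbk\smashprod\suspension^{-d}F_{b+}$ representing the fundamental class determined by the orientation. On homotopy, the induced map of local systems $\calL_\ast(H\bbk,H\bbk_B)\to\calL_{\ast+d}(H\bbk,p_!H\bbk_E)$ is therefore concentrated in degree $0$. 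It sends $1$ to the fundamental class $[F_b]$, which is the map used to define $o_p$. Dually, on cohomology local systems it is evaluation on $[F_b]$, matching $o^p$.

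Third, apply the functoriality of the Serre spectral sequences in $X$ with respect to maps of arbitrary degree (Corollary~\ref{cor:ssfunctorialityalldegrees}). The map $\tilde\eta$ induces a map from the spectral sequence of $H\bbk_B$, which is concentrated on the row $t=0$, to that of $p_!H\bbk_E$, shifted by $d$. On $E^2$ it is $H_s(B;\bbk)\to H_s(B;\calL_d)$ induced by $o_p$, which lands in the top row $t=d$. Because the fibre homology vanishes above degree $d$, this row consists of permanent cycles surviving to the top filtration quotient. So the abutment map is the composite of the $E^2$ map with the edge map, which is the definition of $(p,o_p)^!$. The cohomological case is dual, using strong convergence; the objects are bounded below since the fibres are finite complexes.

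The main obstacle I expect is bookkeeping. I would need to check that the abutment identification through Corollary~\ref{cor:hbulletandshriek} ($H_\bullet(E;X)\homot H_\bullet(B;p_!X)$), the suspension isomorphisms of Proposition~\ref{prop:sssusp}, and the Koszul signs of Proposition~\ref{prop:thomisoreinterpretations} together produce exactly the normalisations of Definitions~\ref{def:intalongfibrehomology}--\ref{def:mapsop} and not a sign-twisted variant. I would first verify this in the case $B=\pt$, where the statement reduces to Theorem~\ref{thm:pdumkcomp} for $N=\pt$, and then use naturality in $B$ to propagate the signs.
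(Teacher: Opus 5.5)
Your Steps 1 and 3 are the paper's proof. You pass to $H\bbk$--modules and recognise the Thom isomorphism as $((\id_B,\tilde u)_\bullet)_\ast$ via Proposition~\ref{prop:thomisoreinterpretations}. You factor the umkehr map through the vertical morphism $\gamma$ and the equivalence of Corollary~\ref{cor:hbulletandshriek}. You then compare via the functoriality of the Serre spectral sequence for maps of nonzero degree (Corollary~\ref{cor:ssfunctorialityalldegrees}), using that $H\bbk_B$ is concentrated in the row $t=0$ and that $t=d$ is the top row for $p_!H\bbk_E$.

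The difference is your Step~2, which is a detour the paper does not need. Definition~\ref{def:mapsop} \emph{defines} $o_p$ to be the composite $\bbk\isom\calL_0(H\bbk_B)\xto{\gamma_\ast}\calL_0(p_!(H\bbk\smashprod_\fw S^{-\tau_p}))\xto{(p_!\tilde u)_\ast}\calL_d(p_!H\bbk_E)$, and $o^p$ dually. So the comparison on $E^2$--pages holds by definition. No fibrewise identification of $\tilde\eta$ with a Pontryagin--Thom collapse or with $[F_b]$ is required.

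Your route would buy the extra information that $o_p$ is the fibrewise fundamental class. The price is that you must then match signs between that geometric description and the spectral sequence conventions. The method you propose for this cannot work. The family $(-1)^{sd}(p,o_p)^!$ is exactly as natural in $B$ as $(p,o_p)^!$, and it agrees with $(p,o_p)^!$ when $B=\pt$, since only $s=0$ occurs there. So checking $B=\pt$ and propagating by naturality cannot exclude degree-dependent signs of this kind, which are precisely the ones at issue (cf.\ the remark after Definition~\ref{def:intalongfibrecohomology}). In the paper the problem never arises: $o_p$ is assembled from the same maps $\gamma$ and $\tilde u$ that produce the umkehr and Thom maps. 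The remaining signs are handled by the suspension isomorphisms of Proposition~\ref{prop:sssusp} built into Corollary~\ref{cor:ssfunctorialityalldegrees}.

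What you should add instead is the piece of bookkeeping the paper does carry out. You need to show that $H_s(B;\bbk)\isom E^2_{s,0}(H\bbk_B)\to E^\infty_{s,0}(H\bbk_B)\to H\bbk_sH_\bullet(B;H\bbk_B)$ is the inverse of the isomorphism coming from Example~\ref{ex:hbulletfortrivialcoeffs2}. This is checked by tracing the cellular description of the $E^2$--page over the CW approximation $\Gamma B$.
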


Before proving Theorem~\ref{thm:integrationalongfibremfldbdlecomp}, 
we define precisely the integration along fibre maps
$(p,o_p)^!$ and $(p,o^p)_!$.

\begin{defn}[Integration along fibre maps in homology]
\label{def:intalongfibrehomology}
Suppose $p\colon E\to B$ is a fibration such that  
$H_k(F;\bbk) = 0$ for $k > d$ for all fibres $F$ of $p$. 
Write $\calH_\ast(F;\bbk)$ for the local coefficient system 
of graded $\bbk$--modules over $B$ defined by
the homology of the fibres of $p$, and let 
$o\colon \bbk \to \calH_d(F;\bbk)$ be a map of local 
coefficient systems, where we have written $\bbk$
for the constant local coefficient system over $B$ 
given by $\bbk$. Then the 
\emph{integration along fibre map}
$(p,o)^!$ is defined to be the composite
\begin{equation}
\label{map:intalongfibrehomology}
	p^!
	=
	(p,o)^!
	\colon
	H_s(B;\bbk)
	\xto{\ o_\ast\ }
	H_s(B;\calH_d(F;\bbk))
	=
	E^2_{s,d}
	\longto
	E^\infty_{s,d}
	\longincl
	H_{s+d}(E;\bbk).
\end{equation}
Here $E^2$ and $E^\infty$ refer to pages in the 
Serre spectral sequence of $p$, and the last two maps
exist and are an epimorphism and a monomorphism, respectively,
by the assumption that $H_k(F;\bbk) = 0$ for $k>d$.
\end{defn}

\begin{defn}[Integration along fibre maps in cohomology]
\label{def:intalongfibrecohomology}
Suppose $p\colon E\to B$ is a fibration such that  
$H^k(F;\bbk) = 0$ for $k > d$ for all fibres $F$ of $p$. 
Write $\calH^\ast(F;\bbk)$ for the local coefficient system 
of graded $\bbk$--modules over $B$ defined by
the cohomology of the fibres of $p$, and let 
$o\colon \calH^d(F;\bbk) \to \bbk$ be a map of local 
coefficient systems. Then the 
\emph{integration along fibre map}
$(p,o)_!$ is defined to be the composite
\begin{equation}
\label{map:intalongfibrecohomology}
	p_!
	=
	(p,o)_!
	\colon
	H^{s+d}(E;\bbk)
	\longto
	E_\infty^{s,d}
	\longincl
	E_2^{s,d}
	= 
	H^s(B;\calH^d(F;\bbk))
	\xto{\ o_\ast\ }
	H^s(B;\bbk).
\end{equation}
Here $E_2$ and $E_\infty$ refer to pages in the 
Serre spectral sequence of $p$, and the first two maps
exist and are an epimorphism and a monomorphism, respectively,
by the assumption that $H^k(F;\bbk) = 0$ for $k>d$.
\end{defn}

\begin{rem}
Frequently
Definitions~\ref{def:intalongfibrehomology}
and~\ref{def:intalongfibrecohomology}
are applied in the case where the fibres $F$ of $p$
are closed connected $d$--manifolds and the map $o$ is an isomorphism
of local coefficient systems. Then $o$ amounts to a 
consistent choice of orientation for each fibre.
\end{rem}

\begin{rem}
For signs to work out correctly, in 
Definitions~\ref{def:intalongfibrehomology}
and~\ref{def:intalongfibrecohomology}
one should
regard the maps $o$ as shifting degrees by $d$. In particular,
when the local coefficient system
$\calH_d(F;\bbk)$ is trivial, the map  $o_\ast$ in 
\eqref{map:intalongfibrehomology} corresponds, under 
the isomorphisms afforded by the universal coefficient theorem,
to the map
\[
	H_s(B;\bbk)\tensor_\bbk \bbk
	\xto{\ \id\tensor_\bbk o\ }
	H_s(B;\bbk)\tensor_\bbk H_d(F;\bbk),
	\qquad
	(\id\tensor_\bbk o)(z\tensor_\bbk 1) 
	= 
	(-1)^{d\deg(z)} z \tensor_\bbk o(1).
\]
Similarly, when $\calH^d(F;\bbk)$ is trivial,
the map $o_\ast$ in 
\eqref{map:intalongfibrecohomology} corresponds, under 
the universal coefficient theorem,
to the map
\[
	H^s(B;\bbk)\tensor_\bbk H^d(F;\bbk)
	\xto{\ \id\tensor_\bbk o\ }
	H^s(B;\bbk)\tensor_\bbk \bbk,
	\qquad
	(\id\tensor_\bbk o)(x\tensor_\bbk y) 
	= (-1)^{d \deg(x)} x \tensor_\bbk o(y).
\]
Compare with \cite[Appendices~A.1 and A.2]{KM19}.
\end{rem}

\begin{defn}[The maps $o_p$ and $o^p$ of 
Theorem~\ref{thm:integrationalongfibremfldbdlecomp}]
\label{def:mapsop}
Suppose $p\colon E\to B$, $\tau_p$, and $\theta_p \colon S^{-\tau_p} \oto S_B$
are as in Theorem~\ref{thm:integrationalongfibremfldbdlecomp}.
Let $\gamma$ be the degree $d$ morphism of parametrized $H\bbk$--modules over $B$
given by the composite
\[\xymatrix@C+1.4em{
	\gamma
	\colon
	H\bbk_B
	\ar[r]^-\homot
	&
	H\bbk \smashprod_{\fw} S_B
	\ar[r]^-{H\bbk \smashprod_{\fw} \beta_p}
	&
	H\bbk \smashprod_{\fw} p_! S^{-\tau_p}
	\ar[r]^-{\homot}
	&
	p_!(H\bbk \smashprod_{\fw} S^{-\tau_p})
}\]
where $\beta_p\colon S_B \to p_!S^{-\tau_p}$ 
is the vertical morphism defined by $\theta_p$
in the sense of Definition~\ref{def:dfopmordata}.
Here the first unlabeled equivalence is given by 
the unit constraint for the functor $H\bbk\smashprod_{\fw}$
and the second one is an instance of \eqref{eq:fshriektcomm}.
Moreover, let
\begin{equation}
\label{eq:tildeumap} 
\xymatrix@C+1em{
	\tilde{u}
	\colon
	H\bbk \smashprod_{\fw} S^{-\tau_p}
	\ar[r]^-\homot_-{d}
	&
	H\bbk_E	
}
\end{equation}
be the degree $d$ equivalence 
of Proposition~\ref{prop:thomclassastrivialization}
induced by the $\bbk$--orientation
of $-\tau_p$ induced by the orientation of $\tau_p$.
Writing for brevity $\calL_\ast(X)$ and $\calL^\ast(X)$
for the local coefficient systems
$\calL_\ast(H\bbk,X)$ and $\calL^\ast(H\bbk,X)$
of Definition~\ref{def:lmxs},
we define the maps $o_p$ and $o^p$ to be the composites
\[\xymatrix@C+0.5em{
	o_p 
	\colon
	\bbk 
	\ar[r]^-{\isom}
	&
	\calL_0(H\bbk_B)
	\ar[r]^-{\gamma_\ast}
	&
	\calL_0(p_!(H\bbk \smashprod_{\fw} S^{-\tau_p}))
	\ar[r]^-{(p_!\tilde{u})_\ast}
	&
	\calL_d(p_! H\bbk_E)
	=
	\calH_d(F;\bbk)
}\]
and
\[\xymatrix@C+0.5em{
	o^p 
	\colon 
	\calH^d(F;\bbk)
	= 
	\calL^d( p_! H\bbk_E)
	\ar[r]^-{(p_!\tilde{u})^\ast}
	&
	\calL^0(p_!(H\bbk \smashprod_{\fw} S^{-\tau_p}))
	\ar[r]^-{\gamma^\ast}
	&
	\calL^0(H\bbk_B)
	\ar[r]^-{\isom}
	&
	\bbk.
}\]
Here the first isomorphism in the definition of $o_p$
and the last isomorphism in the definition of $o^p$
are
induced by the canonical isomorphisms $H\bbk_0(H\bbk)\isom\bbk$
and $H\bbk^0(H\bbk)\isom \bbk$, respectively.
\end{defn}

\begin{proof}[Proof of Theorem~\ref{thm:integrationalongfibremfldbdlecomp}]
We will prove the homological version of the statement; the
cohomological one is proven similarly.
Observe that given a space $A$, we have a natural isomorphism
\begin{equation}
\label{eq:hbbkhbulletisohast3}
H\bbk_\ast H_\bullet(A;H\bbk_A)
=
\pi_\ast (H\bbk \smashprod^{H\bbk} H_\bullet(A;H\bbk_A))
\isom
\pi_\ast  H_\bullet(A;H\bbk_A)
\isom
H_\ast(A;\bbk)
\end{equation}
where the first isomorphism is given by the unit constraint for $H\bbk$--modules
and the last one by Example~\ref{ex:hbulletfortrivialcoeffs2}.
Keeping the notation of Definition~\ref{def:mapsop},  consider the diagram
\begin{equation}
\label{diag:ssumkpf}
\vcenter{
\xymatrix@!0@C=2.3em@R=8ex{
	&&&
	H_s(B;\bbk)
	\ar `l[llldddd] `[dddd]_{(o_p)_\ast} [dddd]
	\ar[d]_\isom
	\ar@/^/[drrrrrrrrrrrr]^\isom
	\\
	&&&
	H_s(B;\calL_0(H\bbk_B))
	\ar@{=}[rrrr]
	\ar[d]_{\gamma_\ast}
	&&&&
	E^2_{s,0}(H\bbk_B)
	\ar[rrrr]^-\isom
	&&&&
	E^\infty_{s,0}(H\bbk_B)
	\ar[rrrr]^-(0.45)\isom
	&&&&
	(H\bbk)_s H_\bullet(B;H\bbk_B)
	\ar[d]^{((\id_B,\gamma)_\bullet)_\ast}
	\\
	&&&
	H_s(B;\calL_0(p_!(H\bbk \smashprod_\fw S^{-\tau_p})))
	\ar[d]_{(p_! \tilde{u})_\ast}^\isom
	&&&& &&&& &&&&
	(H\bbk)_s H_\bullet(B;p_!(H\bbk \smashprod_\fw S^{-\tau_p}))
	\ar[d]^{((\id_B,p_! \tilde{u})_\bullet)_\ast}_\isom
	\\
	&&&
	H_s(B;\calL_d(p_! H\bbk_E))
	\ar@{=}[d]
	&&&& &&&& &&&&
	(H\bbk)_{s+d} H_\bullet(B;p_! H\bbk_E)
	\ar[d]_\isom
	\\
	&&&
	H_s(B;\calH_d(F))
	\ar@{=}[rrrr]
	&&&&
	E^2_{s,d}
	\ar[rrrr]
	&&&&
	E^\infty_{s,d}
	\ar[rrrr]
	&&&&
	H_{s+d}(E;\bbk)	
}}
\end{equation}
where the unlabeled vertical isomorphism on the top left is induced 
by the first isomorphism in the definition of the map $o_p$; the maps in the bottom 
row are as in Definition~\ref{def:intalongfibrehomology};
the corresponding maps in the top row are the 
canonical ones arising from the fact that the Serre
spectral sequence of $H\bbk_B$ is concentrated on the 
line $t=0$; the vertical isomorphism in the bottom right
corner is  part of the identification of 
the spectral sequence of $p_! H\bbk_E$ with the Serre
spectral sequence of $p$, that is to say, the composite
\[
	(H\bbk)_{s+d}H_\bullet(B; p_! H\bbk_E)
	\isom
	(H\bbk)_{s+d}H_\bullet(E; H\bbk_E)
	\isom
	H_{s+d}(E;\bbk)
\]
where the first isomorphism is induced by the equivalence of 
Corollary~\ref{cor:hbulletandshriek}
and the second one is given by \eqref{eq:hbbkhbulletisohast3};
and the curved morphism on the top is given by  the inverse of 
\eqref{eq:hbbkhbulletisohast3}.
Observe that the composite map along the left hand side and the bottom row 
in the diagram is precisely $(p,o_p)^!$. Thus to prove the claim,
it suffices to show that the diagram commutes and 
that the composite map along the top and down the right hand column
agrees with the composite 
\begin{equation}
\label{eq:umkandthom}
\xymatrix@C+3em{
	H_\ast(B;\bbk)
	\ar[r]^-{((p,\theta_p)^\leftarrow)_\ast}
	&
	H_\ast(E^{-\tau_p};\bbk)
	\ar[r]^-{\mathrm{Thom}}_-{\isom}
	&
	H_{\ast+d}(E;\bbk)
}
\end{equation}

We start by arguing that the diagram commutes.
The rectangle on the left is commutative by the definition 
of the map $o_p$. Moreover, the rectangle on the right can be 
filled with rows similar to the top and bottom ones,
so the rectangle commutes by the functoriality of 
the Serre spectral sequence asserted in 
Corollary~\ref{cor:ssfunctorialityalldegrees}.
The required morphisms from the $E^2$--page to the $E^\infty$--page
and further to the target of the spectral sequence in the 
right hand column exist since in each case we are working with 
the top non-zero row of the spectral sequence.
To understand why the triangle at the top commutes,
recall from our construction of the Serre spectral sequence
that $H_s(B;\calL_0(H\bbk_B))$ was described as a cellular homology group 
of $\Gamma B$ where $\Gamma B$ is a functorial CW approximation of $B$, 
so the map $H_s(B;\bbk) \to H_s(B;\calL_0(H\bbk_B))$
should be understood as the composite of the standard isomorphism from 
$H_s(B;\bbk)$ to the corresponding cellular homology group of $\Gamma B$
and the isomorphism (in cellular homology) induced by the map of coefficients
$\bbk \to \calL_0(H\bbk_B)$. Altogether, taking into account the 
way $E^2_{s,0}(H\bbk_B)$ was identified with $H_\ast(B;\calL_0(H\bbk_B))$,
the map from $H_s(B;\bbk)$ to $E^2_{s,0}(H\bbk_B)$ in the diagram
is given by the composite
\begin{equation*}
	H_s(B;\bbk)
	\isom
	H_s(\Gamma B;\bbk)
	\isom
	H_s( C_\ast )
	\isom
	H_s( D_\ast	)
	=
	E^2_{s,0}(H\bbk_B)
\end{equation*}
where $C_\ast$ and $D_\ast$ are chain complexes
\begin{align*}
	C_\ast &= (\cdots \to H_n(\Gamma B^{(n)}, \Gamma B^{(n-1)};\bbk) \to \cdots)
	\\
	D_\ast &= (
		\cdots \to 
		H\bbk_n H_\bullet(\Gamma B^{(n)}, \Gamma B^{(n-1)};\underline{H\bbk}) 
		\to \cdots
	);
\end{align*}
the first isomorphism is induced by the weak equivalence $\Gamma B \to B$;
the second isomorphism is the isomorphism between the homology
$H_\ast(-;\bbk)$ and the cellular homology derived from it; and the 
isomorphism from $H_s(C_\ast)$ to $H_s(D_\ast)$ is induced by the chain map
given by the isomorphisms
\[
	H_n(\Gamma B^{(n)}, \Gamma B^{(n-1)};\bbk)
	\xto{\ \isom\ }
	H\bbk_n H_\bullet(\Gamma B^{(n)}, \Gamma B^{(n-1)};\underline{H\bbk}) 
\]
given by the inverse of the relative analogue of \eqref{eq:hbbkhbulletisohast3}
obtained by using Example~\ref{ex:relativehbulletfortrivialcoeffs}
in place of Example~\ref{ex:hbulletfortrivialcoeffs2}.
On the other hand, tracing through the 
workings of the spectral sequence, the composite map
$E^2_{s,0}(H\bbk_B) \to  E^\infty_{s,0}(H\bbk_B) \to  (H\bbk)_s H_\bullet(B;H\bbk_B)$ 
in the diagram is given by the composite
\[
	E^2_{s,0}(H\bbk_B) = H_s(D_\ast) 
	\isom
	H\bbk_s(\Gamma B; \underline{H\bbk})
	\isom
	H\bbk_s(B; \underline{H\bbk})
\]
where the first isomorphism is 
the isomorphism between the homology $H\bbk_\ast H_\bullet(-;\underline{H\bbk})$
and the corresponding cellular homology 
and the second isomorphism is induced by the weak equivalence $\Gamma B \to B$.
All told, the composite map from 
$H_s(B;\bbk)$ to $H\bbk_s(B; \underline{H\bbk})$
amounts to the inverse of an instance of \eqref{eq:hbbkhbulletisohast3} as claimed.

It remains to show that the composite map along the top and down the right 
hand column of  diagram \eqref{diag:ssumkpf}
agrees with the composite \eqref{eq:umkandthom}.
Consider the diagram
\begin{equation}
\label{diag:rightcolext}
\vcenter{\xymatrix{
	(H\bbk)_s H_\bullet(B;H\bbk_B)
	\ar[d]_{((\id_B,\gamma)_\bullet)_\ast}
	\ar[dr]^{((p,\theta_p^{H\bbk})^{\leftarrow})_\ast}
	\\
	(H\bbk)_s H_\bullet(B; p_!(H\bbk \smashprod_\fw S^{-\tau_p}))
	\ar[r]^\isom
	\ar[d]_{((\id_B, p_!\tilde{u})_\bullet)_\ast}
	&
	(H\bbk)_s H_\bullet(E; H\bbk \smashprod_\fw S^{-\tau_p})
	\ar[d]^{((\id_B, \tilde{u})_\bullet)_\ast}
	\\
	(H\bbk)_{s+d} H_\bullet(B;p_!H\bbk_E)
	\ar[r]^\isom
	&
	(H\bbk)_{s+d} H_\bullet(E;H\bbk_E)
}}
\end{equation}
where the maps in the left hand column are part of the right hand column
of \eqref{diag:ssumkpf}; the horizontal isomorphisms are induced by 
the equivalence of Corollary~\ref{cor:hbulletandshriek};
and  $\theta_p^{H\bbk}$ is the composite
\begin{equation}
\label{eq:thetaphk} 
\xymatrix@C+3em{
	\theta_p^{H\bbk} 
	\colon 
	H\bbk\smashprod_{\fw} S^{-\tau_p}
	\ar[r]|-{\circdec}^-{ H\bbk \smashprod_\fw \theta_p }
	&
	H\bbk\smashprod_{\fw} S_B
	\ar[r]|-{\circdec}^-{ \homot }
	&
	H\bbk_B
}
\end{equation}
where the equivalence is given by the unit constraint of $H\bbk\smashprod_{\fw}$.
The map $\gamma$ is the vertical morphism determined by $\theta_p^{H\bbk}$
in the sense of Definition~\ref{def:dfopmordata}, 
so the triangle at the top of \eqref{diag:rightcolext}
commutes by the definition of the map $(p,\theta_p^{H\bbk})^{\leftarrow}$
(Definition~\ref{def:hprime}).
On the other hand, the square at the bottom of  \eqref{diag:rightcolext}
commutes by the naturality of the equivalence of 
Corollary~\ref{cor:hbulletandshriek}.

In view of the commutativity of diagram \eqref{diag:rightcolext},
to prove the claim, it suffices to show that the composite 
of the slanted and right hand vertical maps in 
\eqref{diag:rightcolext} corresponds, under 
isomorphisms given by  \eqref{eq:hbbkhbulletisohast3},
to the composite \eqref{eq:umkandthom}.
Notice that we have a composite isomorphism
\begin{equation}
\label{eq:middleiso}
\begin{split}
	(H\bbk)_s H_\bullet(E; H\bbk \smashprod_\fw S^{-\tau_p})
	= 
	\pi_s(H\bbk \smashprod^{H\bbk} H_\bullet(E; H\bbk \smashprod_\fw S^{-\tau_p}))
	\xto{\ \isom\ }
	\pi_s(H_\bullet(E; H\bbk \smashprod_\fw S^{-\tau_p}))
	\\
	\xto{\ \isom\ }
	\pi_s(H\bbk \smashprod H_\bullet^{\Spectra}(E; S^{-\tau_p}))
	=
	\pi_s(H\bbk \smashprod E^{-\tau_p})
	=
	H_s(E^{-\tau_p};\bbk)
\end{split}
\end{equation}
where the various equalities hold by definition and the first
isomorphism is given by the left unit constraint for $\smashprod^{H\bbk}$
and the second isomorphism is induced by an instance of equivalence
\eqref{eq:hbulletandfcommute}.
Moreover, notice that the equivalence
\[
	H_\bullet(B; H\bbk_B) \xto{\ \homot\ } H\bbk \smashprod \suspension^\infty_+ B
\]
underlying Example~\ref{ex:hbulletfortrivialcoeffs2}
can be described as the unique dashed morphism making the diagram 
\[\xymatrix{
	H\bbk_B
	\ar[d]
	\ar[r]^-\homot
	&
	H\bbk \smashprod_\fw S_B
	\ar[r]^-\homot
	&
	H\bbk \smashprod_\fw (\suspension^\infty_+)_\fw (B,\id_B)
	\ar[d]
	\\
	H_\bullet(B; H\bbk_B)
	\ar@{-->}[rr]^\homot
	&&
	H\bbk \smashprod \suspension^\infty_+ B
}\]
commute where 
the vertical morphism on the left is the defining
opcartesian morphism of $H_\bullet(B; H\bbk_B)$;
the vertical morphism on the right is the opcartesian 
morphism obtained by applying 
the functor $H\bbk \smashprod_\fw (\suspension^\infty_+)_\fw(-)$
to the opcartesian morphism $(B,\id_B) \to (B,r_B)$
of parametrized spaces covering the unique map $r_B \colon B \to \pt$;
the first equivalence in the top row is given by the 
inverse of the unit constraint for the functor $(H\bbk \smashprod)_B$;
and the second equivalence in the top row is 
induced by the unit constraint for the functor $(\suspension^\infty_+)_B$.
Using the resulting description of  \eqref{eq:hbbkhbulletisohast3},
one can show that under 
isomorphims \eqref{eq:middleiso} and \eqref{eq:hbbkhbulletisohast3},
the map $((p,\theta_p^{H\bbk})^{\leftarrow})_\ast$ of diagram 
\eqref{diag:rightcolext}
corresponds to the map $((p,\theta_p)^\leftarrow)_\ast$ of
equation \eqref{eq:umkandthom}.
Moreover, making use of Proposition~\ref{prop:thomisoreinterpretations},
one sees that under \eqref{eq:middleiso} and \eqref{eq:hbbkhbulletisohast3},
the map $((\id_B, \tilde{u})_\bullet)_\ast$ of diagram \eqref{diag:rightcolext}
corresponds to the Thom isomorphism of equation \eqref{eq:umkandthom}.
Thus the claim follows.
\end{proof}

Having finished the proof of Theorem~\ref{thm:integrationalongfibremfldbdlecomp},
we now turn our sights to establishing Theorem~\ref{thm:umkehrmapcomparison}
which in \cite{stringtop} plays a pivotal role in comparing different
constructions for a string product pairing.
Like 
Theorem~\ref{thm:integrationalongfibremfldbdlecomp},
the theorem compares our umkehr maps to integration along fibre maps,
but instead of a bundle of manifolds with an oriented vertical tangent bundle, 
the theorem applies to a fibration $p\colon E \to B$ covered by 
a supercartesian morphism
$	
	\theta
	\colon
	\suspension^{n}_E H\bbk_E 
	\oto 
	\suspension^{m}_B H\bbk_B.
$
We note that in the context of Theorem~\ref{thm:integrationalongfibremfldbdlecomp},
such a morphism $\theta$ can be obtained as the composite
\[\xymatrix{
	\suspension^{-d}_E H\bbk_E
	\ar[r]^-\homot
	&
	H\bbk\smashprod_{\fw} S^{-\tau_p}
	\ar[r]|-{\circdec}^-{\theta_p^{H\bbk}}
	&
	H\bbk_B
}\]
where the equivalence is induced by the equivalence $\tilde{u}$ of 
equation~\eqref{eq:tildeumap}
and $\theta_p^{H\bbk}$ is the map of equation~\eqref{eq:thetaphk}.
Continuing to write $\calL_\ast(X)$ and $\calL^\ast(X)$
for the local coefficient systems
$\calL_\ast(H\bbk,X)$ and $\calL^\ast(H\bbk,X)$,
we begin by proving the following lemma.

\begin{lemma}
\label{lm:lcspd}
Suppose $p\colon E \to B$ is a fibration and 
\[
	\theta
	\colon
	\suspension^{n}_E H\bbk_E 
	\longoto 
	\suspension^{m}_B H\bbk_B.
\] 
is a supercartesian morphism of 
$\hpMod^{H\bbk}$
covering $p$.
Then for all $s \in \Z$, there is an isomorphism
\begin{equation}
\label{eq:lcspd2} 
	\calH^s(F;\bbk)
	=
	\calL^{s}(p_!H\bbk_E)
	\isom
	\calL_{m-n-s}(p_!H\bbk_E)
	=
	\calH_{m-n-s}(F;\bbk).
\end{equation}
In particular, the 
$\bbk$--homology and $\bbk$--cohomology 
of the fibres of $p$ vanish 
in degrees higher than $d=m-n$
as required 
for the existence of the integration along fibre maps
of Definitions~\ref{def:intalongfibrehomology}
and~\ref{def:intalongfibrecohomology}.
\end{lemma}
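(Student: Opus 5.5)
The plan is to turn the supercartesian morphism into a fibrewise duality statement and then read off the isomorphism on homotopy of fibres. By Proposition~\ref{prop:superandhypercartmorprops}(\ref{it:cartoslashhypercart}), taking the $\oslash$--product with the canonical cartesian morphism $\suspension^{-m}_E H\bbk_E \to \suspension^{-m}_B H\bbk_B$ covering $p$ (an invertible, hence dualizable, target) gives a supercartesian morphism
\[
	\omega_p := \suspension^{n-m}_E H\bbk_E \longoto H\bbk_B
\]
covering $p$. By Proposition~\ref{prop:dualizingobjectsandcartesianmorphisms}, $\omega_p$ is a dualizing object for $p$. Proposition~\ref{prop:dualizingobjectfwdual} then shows that $p_!H\bbk_E$ and $p_!\omega_p \homot \suspension^{n-m}_B p_! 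H\bbk_E$ are dual objects in $(\ho(\Mod^{H\bbk}_{/B}),\smashprod^{H\bbk}_B)$. The commutation $p_!\suspension^{k}_E \homot \suspension^k_B p_!$ follows from Remark~\ref{rk:opcartpreservation2}.

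Next, I restrict to fibres. For $b\in B$, the pullback $b^\ast$ is symmetric monoidal and so preserves dual pairs. The fibre $(p_!H\bbk_E)_b$ is therefore dualizable in $\ho(\Mod^{H\bbk})$, with dual $\suspension^{n-m}(p_!H\bbk_E)_b$. Over a field, the duality gives natural isomorphisms
\[
	H\bbk^s(Y) \isom \pi_{-s}F^{H\bbk}(Y,H\bbk) \isom \pi_{-s}(DY) \isom \pi_{-s}\suspension^{n-m}Y \isom H\bbk_{m-n-s}(Y)
\]
with $Y=(p_!H\bbk_E)_b$. These are the desired isomorphisms between the stalks of $\calL^s(p_!H\bbk_E)$ and $\calL_{m-n-s}(p_!H\bbk_E)$.

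The main obstacle is naturality: the fibrewise isomorphisms must assemble into an isomorphism of local coefficient systems over $\Pi_1(B)$. I would obtain this by doing the identification $F^{H\bbk}_B(p_!H\bbk_E, H\bbk_B)\homot \suspension^{n-m}_B p_!H\bbk_E$ once in $\ho(\Mod^{H\bbk}_{/B})$ using the duality, then applying $\pi_\ast$ of fibres functorially as in Definition~\ref{def:lmxs}. This requires $b^\ast F_B \homot F(b^\ast,b^\ast)$, which holds by \eqref{eq:bc-homs}. The identifications $\calL_\ast(p_!H\bbk_E)=\calH_\ast(F;\bbk)$, and its cohomological counterpart, come from $(p_!H\bbk_E)_b \homot H_\bullet(F_b;H\bbk_{F_b})$ via \eqref{eq:commrelshriek}.

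For the vanishing statement, note that $(p_!H\bbk_E)_b$ has homotopy $H_\ast(F_b;\bbk)$, which is concentrated in degrees $\geq 0$. Its cohomology is therefore concentrated in degrees $\ge 0$, so $H_{m-n-s}(F;\bbk)=0$ for $s<0$, that is, for homological degrees above $d=m-n$. Symmetrically, $H^s(F;\bbk)\isom H_{d-s}(F;\bbk)=0$ for $s>d$.
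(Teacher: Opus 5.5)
Your proposal is correct and follows the paper's proof. The steps match: you take the $\oslash$--product with the cartesian morphism over $p$ to get a supercartesian morphism $\suspension^{n-m}_E H\bbk_E \oto H\bbk_B$, then use Propositions~\ref{prop:dualizingobjectsandcartesianmorphisms} and~\ref{prop:dualizingobjectfwdual} to make $p_!H\bbk_E$ and $\suspension^{n-m}_B p_!H\bbk_E$ dual in $\ho(\Mod^{H\bbk}_{/B})$. The paper then simply cites Remark~\ref{rk:calldualpair} together with \eqref{eq:callsuspisohomology}, whereas you unpack this fibrewise, including the naturality check via \eqref{eq:bc-homs}, and you spell out the degree-vanishing consequence explicitly. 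Your aside that the target $\suspension^{-m}_B H\bbk_B$ is dualizable is harmless but unnecessary, since Proposition~\ref{prop:superandhypercartmorprops}(\ref{it:cartoslashhypercart}) applies to any cartesian morphism.
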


In the proof of Lemma~\ref{lm:lcspd}, we will make use of the following remark.

\begin{rem}
\label{rk:calldualpair}
Let $R$ be a commutative ring spectrum.
Generalizing the isomorphism 
\[
	M^\ast(X) \isom M_{-\ast}(Y)
\]
for an $R$--module $M$ and a dual pair of $R$--modules $(X,Y)$
induced by the equivalences
\[
	F^R(X,M) 
	\homot 
	F^R(X,R)\smashprod^R M 
	\homot 
	M \smashprod^R F^R(X,R)
	\homot 
	M\smashprod^R Y,
\]
given an $R$--module $M$ and a dual pair $(X,Y)$ in 
$\ho(\Mod^R_{/B})$ we have an isomorphism
\[
	\calL^\ast(M,X) \isom \calL_{-\ast}(M,Y)
\]
between the local coefficient systems of Definition~\ref{def:lmxs}.
\end{rem}

\begin{proof}[Proof of Lemma~\ref{lm:lcspd}]
The $\oslash$--product of $\theta$ with 
the canonical cartesian morphism 
$p^\ast \suspension_B^{-m} H\bbk_B\to \suspension_B^{-m}H\bbk_B$
covering $p$
is a supercartesian morphism 
$\suspension_E^{n-m} H\bbk_E \oto H\bbk_B$
covering $p$. 
See Proposition~\ref{prop:superandhypercartmorprops}(\ref{it:cartoslashhypercart}).
Now Propositions~\ref{prop:dualizingobjectsandcartesianmorphisms} 
and \ref{prop:dualizingobjectfwdual}
imply that
$p_! H\bbk_E$ and 
$p_! \suspension_E^{n-m} H\bbk_E 
\homot 
\suspension_B^{n-m} p_! H\bbk_E$
are dual to each other in $\ho(\Mod^{H\bbk}_{/B})$.
The desired isomorphism thus follows from 
Remark~\ref{rk:calldualpair} 
and isomorphism~\eqref{eq:callsuspisohomology}.
\end{proof}

We continue by defining the maps of local coefficient systems
which will feature in the integration along fibre maps of
Theorem~\ref{thm:umkehrmapcomparison}.

\begin{defn}[Maps $o_\theta$ and $o^\theta$ of local coefficient systems]
\label{def:othetat}
Let $p\colon E \to B$ and
$\theta	\colon \suspension^{n}_E H\bbk_E \oto \suspension^{m}_B H\bbk_B$
be as in Lemma~\ref{lm:lcspd}.
Let 
\[
	\beta
	\colon 
	\suspension^{m}_B H\bbk_B 
	\longto 
	p_!\suspension^{n}_E H\bbk_E
\]
be the vertical morphism determined by $\theta$ in the sense of 
Definition~\ref{def:dfopmordata}.
We define 
$o_{\theta}\colon \bbk \to \calH_{m-n}(F;\bbk)$ 
to be the map of local coefficient
systems given by the composite
\begin{multline*}
    \xymatrix@C+2em{
    	o_{\theta}
    	\colon
    	\bbk
    	\ar[r]^-\isom
		&
    	\calL_0(H\bbk_B)
		\ar[r]^-\isom
		&
		\calL_m(\suspension_B^m H\bbk_B)
    	\ar[r]^-{\calL_m(\beta)}
    	&
    	\calL_m(p_!\suspension^{n}_E H\bbk_E)
    }
    \\
    \xymatrix@C+2em{
    	\ar[r]^-\isom
		&
    	\calL_{m}(\suspension^{n}_B p_! H\bbk_E)
    	\ar[r]^-\isom
		&
		\calL_{m-n}(p_!H\bbk_E)
		= 
		\calH_{m-n}(F;\bbk).
    }
\end{multline*}
Here the first isomorphism is induced by the 
canonical isomorphism $H\bbk_0(H\bbk)\isom\bbk$,
the second one is an instance of \eqref{eq:callsuspisohomology},
the first isomorphism on the second line is
induced by an instance of \eqref{eq:fshriektcomm},
and the second isomorphism on the second line
is the inverse of an instance of \eqref{eq:callsuspisohomology}.
Similarly, we define
$o^{\theta} \colon \calH^{m-n}(F;\bbk) \to \bbk$
to be the map of local coefficient systems given by the composite
\begin{multline*}
    \xymatrix@C+2em{
    	o^{\theta}
    	\colon
		\calH^{m-n}(F;\bbk)
		=
		\calL^{m-n}(p_! H\bbk_E)
		\ar[r]^-\isom
		&
    	\calL^{m}(\suspension^{n}_B p_! H\bbk_E)
		\ar[r]^-\isom
		&
    	\calL^{m}( p_! \suspension^{n}_E H\bbk_E)
    }
    \\
    \xymatrix@C+2em{
		\ar[r]^-{\calL^m(\beta)}
		&
		\calL^m(\suspension_B^m H\bbk_B)
		\ar[r]^-\isom
		&
		\calL^0(H\bbk_B)
		\ar[r]^-\isom
		&
		\bbk.
    }
\end{multline*}
Here the first isomorphisms on both lines are induced by 
instances of 
\eqref{eq:callsuspisocohomology},
the second isomorphism on the first line is induced 
by an instance of \eqref{eq:fshriektcomm},
and the second isomorphism on the second line is
induced by the canonical isomorphism $H\bbk^0(H\bbk) \isom \bbk$.
\end{defn}

\begin{rem}
\label{rk:othetaaltdesc}
Using Proposition~\ref{prop:dualmaps},
we can alternatively describe $o_\theta$ 
as the map given on fibres over $b\in B$ 
by the composite
\[
	\bbk
	\isom
	H^0(\{b\};\bbk) 
	\xto{\quad r^\ast_{F_b}\quad}
	H^0(F_b;\bbk)
	\isom
	H_{m-n}(F_b;\bbk)
\]
where  $F_b = p^{-1}(b)$ and the second isomorphism is
given by \eqref{eq:lcspd2}.
Similarly, the map $o^\theta$ can be described as the 
map given on fibres over $b\in B$ by 
the composite
\[
	H^{m-n}(F_b;\bbk)
	\isom
	H_0(F_b;\bbk)
	\xto{\quad r^{F_b}_\ast\quad }
	H_0(\{b\};\bbk)
	\isom
	\bbk
\]
where the first isomorphism is given by \eqref{eq:lcspd2}.
Here $r_{F_{b}} = r^{F_b}$ denotes the unique map $F_b \to \pt$.
In particular, when the fibres of $p$ are connected,
the maps $o_\theta$ and $o^\theta$ are isomorphisms.
\end{rem}

Next we will define the umkehr maps which we will
compare with the integration along fibre maps.
\begin{defn}
\label{def:psharpumkehrmaps}
Let $p\colon E\to B$ 
be a continuous map and let 
$
	\theta
	\colon
	\suspension^{n}_E H\bbk_E 
	\oto 
	\suspension^{m}_B H\bbk_B
$
be a supercartesian morphism
in $(\hpMod^{H\bbk})^\dfop$ covering $p$.
We define the umkehr maps
\[
	(p,\theta)^\sharp
	 \colon H_\ast(B;\bbk) 
	\longto
	H_{\ast+m-n}(E;\bbk) 
	\qquad\text{and}\qquad
	(p,\theta)_\sharp
	\colon
	H^{\ast+m-n}(E;\bbk) 
	\longto
	H^\ast(B;\bbk) 
\]
to be the unique morphisms making the following
diagrams commutative:
\[\xymatrix@C+1.5em{
	H_{\ast-m}(B;\bbk)
	\ar@{-->}[r]^{(p,\theta)^\sharp}
	\ar[d]_\isom
	&
	H_{\ast-n}(E;\bbk)
	\ar[d]^\isom
	\\
	H_{\ast-m} H_\bullet(B;H\bbk_B)
	\ar[d]_{\bar{\sigma}^m}^\isom
	&
	H_{\ast-n} H_\bullet(E;H\bbk_E)
	\ar[d]^{\bar{\sigma}^n}_\isom
	\\
	H_\ast H_\bullet(B;	\suspension^{m}_B H\bbk_B)
	\ar[r]^{\ ((p,\theta)^{\leftarrow})_\ast\ }
	&
	H_\ast H_\bullet(E;\suspension^{n}_E H\bbk_E)
}
\quad
\xymatrix@C+1.5em{
	H^{\ast-n}(E;\bbk)
	\ar@{-->}[r]^{(p,\theta)_\sharp}
	\ar[d]_\isom
	&
	H^{\ast-m}(B;\bbk)
	\ar[d]^\isom
	\\
	H^{\ast-n} H_\bullet(E;H\bbk_E)
	\ar[d]_{\bar{\sigma}^n}^\isom
	&
	H^{\ast-m} H_\bullet(B;H\bbk_B)
	\ar[d]^{\bar{\sigma}^m}_\isom
	\\
	H^\ast H_\bullet(E;\suspension^{n}_E H\bbk_E)
	\ar[r]^{\ ((p,\theta)^{\leftarrow})^\ast\ }
	&
	H^\ast H_\bullet(B;	\suspension^{m}_B H\bbk_B)
}\]
Here the upper vertical isomorphisms
follow from Example~\ref{ex:hbulletfortrivialcoeffs2}
and the lower ones are instances of 
\eqref{eq:barsigmauhomology}
and 
\eqref{eq:barsigmaucohomology}.
\end{defn}

\begin{thrm}
\label{thm:umkehrmapcomparison}
Let $p\colon E \to B$ a fibration and let 
$\theta	\colon \suspension^{n}_E H\bbk_E \oto \suspension^{m}_B H\bbk_B$
be a supercartesian morphism of $(\hpMod^{H\bbk})^\dfop$
covering $p$. Then
\[
    (p,\theta)^\sharp 
    =
    (p,o_{\theta})^! 
    \colon 
    H_s(B;\bbk) 
    \longto 
    H_{s+m-n}(E;\bbk)
\]
and
\[
    (p,\theta)_\sharp 
    =
    (p,o^{\theta})_! 
    \colon 
    H^{s+m-n}(E;\bbk)
    \longto 
    H^s(B;\bbk) 
\]
for all $s$.
\end{thrm}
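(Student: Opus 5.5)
We propose to follow the template of the proof of Theorem~\ref{thm:integrationalongfibremfldbdlecomp}, replacing the Thom-isomorphism bookkeeping there by the suspension isomorphisms of Proposition~\ref{prop:sssusp}. We treat the homological statement; the cohomological one is formally dual, with $E_r$ replaced by $E^r$ and arrows reversed. Write $d=m-n$ and let $\beta\colon \suspension^m_B H\bbk_B \to p_!\suspension^n_E H\bbk_E$ be the vertical morphism determined by $\theta$. By Lemma~\ref{lm:lcspd}, the fibres of $p$ have vanishing $\bbk$--homology above degree $d$, so $(p,o_\theta)^!$ is defined.

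\textbf{Step 1 (factor the umkehr map).} By Definition~\ref{def:hprime}, $(p,\theta)^\leftarrow$ is the composite of
\[
(\id_B,\beta)_\bullet\colon H_\bullet(B;\suspension^m_B H\bbk_B)\to H_\bullet(B;p_!\suspension^n_E H\bbk_E)
\]
with the inverse of the equivalence of Corollary~\ref{cor:hbulletandshriek}. We will also use the equivalence $p_!\suspension^n_E\homot \suspension^n_B p_!$ from \eqref{eq:fshriektcomm}. This lets us regard $\beta$ as a degree $d$ morphism $\gamma\colon H\bbk_B\to p_!H\bbk_E$ of parametrized $H\bbk$--modules over $B$, in the sense of Definition~\ref{def:mapsofdegreek}. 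Combined with the identification $H\bbk_\ast H_\bullet(B;p_!H\bbk_E)\isom H_\ast(E;\bbk)$ from the proof of Theorem~\ref{thm:integrationalongfibremfldbdlecomp}, this exhibits $(p,\theta)^\sharp$ as the map induced on $H\bbk_\ast H_\bullet(B;-)$ by $\gamma$. The suspension identifications in Definition~\ref{def:psharpumkehrmaps} are exactly those absorbed when $\gamma$ is read as a degree $d$ map.

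\textbf{Step 2 (spectral sequence comparison).} Apply Corollary~\ref{cor:ssfunctorialityalldegrees} to $\gamma$. This gives a map from the Serre spectral sequence of $H\bbk_B$ to that of $p_!H\bbk_E$ which shifts $t$ by $d$ and is compatible with the abutment maps, via Proposition~\ref{prop:sssusp}. We then draw a diagram analogous to \eqref{diag:ssumkpf}:
\begin{itemize}
\item The top row is $H_s(B;\bbk)\isom E^2_{s,0}(H\bbk_B)\isom E^\infty_{s,0}\isom H\bbk_sH_\bullet(B;H\bbk_B)$. Its commutativity with the curved isomorphism to $H_s(B;\bbk)$ is verified exactly as in the earlier proof, through cellular chains of $\Gamma B$.
\item The bottom row is the integration along fibre composite.
\end{itemize}
Because the source spectral sequence is concentrated on $t=0$ and the target has top nonzero row $t=d$, the induced map on $E^2$ is $H_s(B;\calL_0(H\bbk_B))\to H_s(B;\calL_d(p_!H\bbk_E))$. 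This map is induced by the coefficient map $\calL_\ast(\gamma)$, and it factors compatibly through $E^\infty$ and the filtrations into the abutment.

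\textbf{Step 3 (identify the coefficient map; the main obstacle).} It remains to check that $\calL_\ast(\gamma)$, precomposed with $\bbk\isom\calL_0(H\bbk_B)$, equals $o_\theta$. Unwinding Definition~\ref{def:othetat}, $o_\theta$ is built from $\calL_m(\beta)$ and the isomorphisms \eqref{eq:callsuspisohomology}, while the spectral sequence map for a degree $d$ morphism is built from the suspension isomorphisms of Proposition~\ref{prop:sssusp}. On $E^2$ these are induced by the same isomorphisms \eqref{eq:callsuspisohomology}, via Lemma~\ref{lm:ecsuspisos}.

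The difficulty is sign bookkeeping. Lemma~\ref{lm:ecsuspisos} introduces signs $(-1)^u$ in $k$, but these affect only the differentials, not the $E^2$ edge identifications. We expect the $\bar\sigma^m$ and $\bar\sigma^n$ suspensions in Definition~\ref{def:psharpumkehrmaps} to combine with the $E^2$ identifications without residual sign, since both are defined through the same instances of \eqref{eq:hbulletandfcommute} and \eqref{eq:tbonfibres}. We would verify this first on a single cell, using the explicit chain of isomorphisms in the proof of Theorem~\ref{thm:serresss}.
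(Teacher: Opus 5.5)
Your proposal is correct and follows the paper's proof. Like the paper, you factor $(p,\theta)^\leftarrow$ via Definition~\ref{def:hprime} as $(\id_B,\beta)_\bullet$ followed by the inverse of the equivalence of Corollary~\ref{cor:hbulletandshriek}, and then run the argument of Theorem~\ref{thm:integrationalongfibremfldbdlecomp}, using functoriality of the Serre spectral sequences and Proposition~\ref{prop:sssusp} for compatibility with suspensions. The only difference is cosmetic: you bundle $\beta$ and the suspensions into a single degree-$d$ morphism $\gamma$ and invoke Corollary~\ref{cor:ssfunctorialityalldegrees}, whereas the paper applies functoriality and Proposition~\ref{prop:sssusp} separately to each constituent map of Definitions~\ref{def:othetat} and~\ref{def:psharpumkehrmaps}, matching them term by term and avoiding the extra identification $\suspension^{n}_B\suspension^{d}_B\homot\suspension^{m}_B$ (and its sign check) that your repackaging introduces.
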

\begin{proof}%
The proof is similar to that of Theorem~\ref{thm:integrationalongfibremfldbdlecomp},
the main points being the functoriality of the spectral sequences 
with respect to the maps featuring in the definition of 
the maps $o_\theta$ and $o^\theta$
and the factorization of $(p,\theta)^{\leftarrow}$ (by Definition~\ref{def:hprime})
as the composite
\[
	(p,\theta)^{\leftarrow}
	\colon
	H_\bullet(B;\suspension^{m}_B H\bbk_B)
	\xto{\ (\id_B,\beta)_\bullet\ }
	H_\bullet(B;p_!\suspension^{n}_E H\bbk_E)
	\xto{\ \homot\ }
	H_\bullet(E; \suspension^{n}_E H\bbk_E)	
\]
where the latter map is the equivalence of 
Corollary~\ref{cor:hbulletandshriek}.
Proposition~\ref{prop:sssusp} provides the necessary compatibility 
between the spectral sequences and suspension isomorphisms.
\end{proof}

\appendix

\section{The framed bicategories \texorpdfstring{$\Ex_B(\calC)$}{ExB(C)}}
\label{app:exbc}

Our aim in this appendix is to construct the 
framed bicategories $\Ex_B(\calC)$
which provide the basis for our main
existence result for umkehr maps,
Theorem~%
\ref{thm:hypercartexistence}.
We start by outlining the definition of a framed bicategory.
For details, the reader should consult \cite{Shulman}.
The bicategories $\Ex_B(\calC)$ considered here 
are elaborations and generalizations of the bicategories
$\mathscr{E}\:\!\! x_B$ introduced by May and Sigurdsson \cite{MaySigurdsson}.

\begin{defn}
 A \emph{pseudo double category} $\bbD$ consists of 
a category $\bbD_0$, called the \emph{vertical category},
whose objects are called \emph{objects} or
\emph{$0$--cells} and whose morphisms are called 
\emph{vertical morphism};
a category $\bbD_1$ whose objects are called
\emph{horizontal $1$--cells} or just \emph{$1$--cells}
and whose morphisms are called \emph{$2$--cells};
functors $L,R\colon \bbD_1 \to \bbD_0$ sending 
$1$--cells and $2$--cells to their \emph{source} (or \emph{left frame}) 
and \emph{target} (or \emph{right frame}),
respectively; a \emph{horizontal composition} functor
$\odot \colon \bbD_1\times_{\bbD_0}\bbD_1 \to \bbD_1$
for $1$--cells and $2$--cells, where the pullback is over
$\bbD_1 \xto{\ R\ } \bbD_0 \xot{\ L\ }  \bbD_1$;
a functor $U \colon \bbD_0 \to \bbD_1$ sending a $0$--cell
to the corresponding unit $1$--cell;
and natural associativity and unitality isomorphisms for 
$\odot$ and $U$. These data are supposed to satisfy various axioms
making $\bbD$, roughly, a weak category object in the category 
categories. A \emph{framed bicategory} is a pseudo double category $\bbD$
such that the functor $(L,R) \colon \bbD_1 \to \bbD_0 \times \bbD_0$
is a bifibration, that is, a fibration and an opfibration.
\end{defn}

Given a framed bicategory $\bbD$,
we write 
\[
	A \xhto{\ M\ } B
\]
for a $1$--cell $M$ with $L(M) = A$ and $R(M) = B$, and 
\begin{equation}
\label{eq:2cell}
\vcenter{\xymatrix{
	A
	\ar@{}[dr]|{\Downarrow \alpha}
	\ar[d]_f
	\ar[r]|-@{|}^M
	&
	B
	\ar[d]^g
	\\
	C
	\ar[r]|-@{|}_(0.48)N %
	&
	D
}}
\end{equation}
for a $2$--cell $\alpha \colon M \to N$ with $L(\alpha) = f \colon A\to C$
and $R(\alpha) = g \colon B \to D$.
Following Shulman, we compose $1$--cells in the diagrammatic order, so that
\[
	(A \xhto{\ M\ } B) \odot (B\xhto{\ N\ } C)
	= 
	(A \xhto{\ M\odot N\ } C).
\]
A $2$--cell $\alpha$ is \emph{globular} if $L(\alpha)$ and $R(\alpha)$ are
both identity maps. The $0$--cells, $1$--cells, and globular $2$--cells
of a framed bicategory $\bbD$ 
form a bicategory called the \emph{horizontal bicategory} of $\bbD$.

\begin{example}
Any bicategory can be viewed as a framed bicategory 
whose vertical category is discrete.
In particular, a monoidal category 
amounts to the same thing as a framed bicategory whose
vertical category is the terminal category.
\end{example}

\begin{example}
Rings, ring homomorphisms, bimodules, and bimodule homomorphisms
assemble into a framed bicategory $\frbMod$.
The vertical category of $\frbMod$ is the category of 
rings and ring homomorphisms,
and the $1$--cells are bimodules. $2$--cells 
of the form \eqref{eq:2cell}
are group homomorphisms $\alpha\colon M\to N$ such that 
$\alpha(amb) = f(a)\alpha(m)g(b)$
for all $a\in A$, $m\in M$, and $b\in B$.
Horizontal composition of $1$--cells is given by tensor product
of bimodules: 
\[
	(A \xhto{M} B) \odot (B\xhto{N} C) = (A \xhto{M\tensor_B N} C).
\]
\end{example}

\begin{defn}[Base change objects]
\label{def:basechangeobjects}
Suppose $f\colon A\to B$ is a vertical morphism in a framed bicategory
$\bbD$. We write ${}_f B \colon A\hto B$ and $B_f \colon B \hto A$ 
for the $1$--cells characterized up to unique isomorphism by 
the existence of cartesian $2$--cells
\[
    \vcenter{\xymatrix{
    	A
    	\ar@{}[dr]|{\cart}
    	\ar[d]_f
    	\ar[r]|-@{|}^{{}_f B}
    	&
    	B
    	\ar@{=}[d]
    	\\
    	B
    	\ar[r]|-@{|}_(0.54){U_B} %
    	&
    	B
    }}
    \qquad\text{and}\qquad
    \vcenter{\xymatrix{
    	B
    	\ar@{}[dr]|{\cart}
    	\ar@{=}[d]
    	\ar[r]|-@{|}^{B_f}
    	&
    	A
    	\ar[d]^f
    	\\
    	B
    	\ar[r]|-@{|}_(0.54){U_B} %
    	&
    	B
    }}
\]
respectively.
We call ${}_f B$ and $B_f$ the \emph{base change objects}
associated to $f$.
\end{defn}

\begin{rem}
\label{rk:basechangeobjectsopcartdesc}
The $1$--cells  $B_f$ and ${}_f B$ of Definition~\ref{def:basechangeobjects}
are equivalently characterized by the existence of \emph{opcartesian}
$2$--cells
\[
   \vcenter{\xymatrix{
    	A
    	\ar@{}[dr]|{\opcart}
    	\ar[d]_f
    	\ar[r]|-@{|}^{U_A}
    	&
    	A
    	\ar@{=}[d]
    	\\
    	B
    	\ar[r]|-@{|}_(0.54){B_f} %
    	&
    	A
    }}
    \qquad\text{and}\qquad
    \vcenter{\xymatrix{
    	A
    	\ar@{}[dr]|{\opcart}
    	\ar@{=}[d]
    	\ar[r]|-@{|}^{U_A}
    	&
    	A
    	\ar[d]^f
    	\\
    	A
    	\ar[r]|-@{|}_(0.54){{}_f B} %
    	&
    	B
    }}
\]
respectively. 
See \cite[\S 4]{Shulman}.
\end{rem}

The virtue of the base change objects 
of Definition~\ref{def:basechangeobjects}
is that they allow us to express base change and cobase change
in the bifibration $(L,R)\colon \bbD_1 \to \bbD_0 \times \bbD_0$
in terms of horizontal composition.

\begin{prop}[{\cite[\S 4]{Shulman}}]
\label{prop:odotandbasechange}
Suppose $\bbD$ is a framed bicategory. Let $f\colon A\to B$ and 
$g\colon C\to D$ be vertical morphisms in $\bbD$. Then 
for every $1$--cell $M\colon B\hto D$ there exists a natural cartesian
$2$--cell
\[\xymatrix@C+2.4em{
	A
	\ar@{}[dr]|{\cart}
	\ar[d]_f
	\ar[r]|-@{|}^{{}_f B \odot M \odot D_g}
	&
	C
	\ar[d]^g
	\\
	B
	\ar[r]|-@{|}_{M} 
	&
	D
}\]
and 
for every $1$--cell $N\colon A\hto C$ there exists a natural opcartesian
$2$--cell
\[
	\pushQED{\qed} 
    \vcenter{\xymatrix@C+2.4em{
    	A
    	\ar@{}[dr]|{\opcart}
    	\ar[d]_f
    	\ar[r]|-@{|}^{N}
    	&
    	C
    	\ar[d]^g
    	\\
    	B
    	\ar[r]|-@{|}_{B_f \odot N\odot {}_g D}
    	&
    	D
    }}
	\qedhere
	\popQED
\]
\end{prop}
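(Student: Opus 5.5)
The plan is to derive both statements directly from the universal properties of cartesian and opcartesian $2$--cells in the bifibration $(L,R)\colon \bbD_1 \to \bbD_0\times\bbD_0$. The intermediate claim is that horizontal composition with a cartesian $2$--cell is again cartesian in the appropriate sense, and similarly for opcartesian cells. It suffices to treat one side at a time. The first case is $g=\id$, where we compare ${}_f B\odot M$ with $f^\ast M$. The general case then follows by composing with the analogous cell for $M\odot D_g$, using that composites of cartesian morphisms are cartesian (Proposition~\ref{prop:cartmorprops}(\ref{it:compiscart})).

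\textbf{Step 1: constructing the candidate cell.} Take the cartesian cell ${}_fB \Rightarrow U_B$ with frames $(f,\id_B)$ from Definition~\ref{def:basechangeobjects}. Horizontally compose it with the identity $2$--cell of $M$ and then with the unitor $U_B\odot M\cong M$. This gives a $2$--cell ${}_fB\odot M\Rightarrow M$ with frames $(f,\id_D)$.

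\textbf{Step 2: showing it is cartesian.} I would avoid verifying the universal property by hand. Instead, I would use the opcartesian description of Remark~\ref{rk:basechangeobjectsopcartdesc}, which yields a companion/conjoint structure: unit and counit cells exhibiting ${}_fB$ as left adjoint to $B_f$ in the horizontal bicategory, satisfying the framed triangle identities. Let $\beta\colon N\Rightarrow M$ be a cell with frames $(f h,k)$. Precomposing with the opcartesian cell $U_A\Rightarrow {}_fB$, horizontally composed with $N$, transposes $\beta$ into a cell $N\Rightarrow {}_fB\odot M$ with frames $(h,k)$. Conversely, the triangle identities show that this transposition is inverse to postcomposition with our candidate cell. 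This gives existence and uniqueness of the factorization, so the cell is cartesian.

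The opcartesian statement is formally dual. One composes the opcartesian cells $U_A\Rightarrow B_f$ and $U_C\Rightarrow {}_gD$ with $N$ on either side and runs the same transposition argument with cartesian and opcartesian cells swapped. Naturality in $M$ and $N$ follows from functoriality of $\odot$ and naturality of the unitors.

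The main obstacle is Step 2, specifically checking the triangle identities relating the cartesian and opcartesian cells for ${}_fB$ and $B_f$ and keeping track of the coherence isomorphisms. I would first try to quote this directly from \cite[\S 4]{Shulman}, where companions and conjoints are constructed.
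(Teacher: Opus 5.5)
Your argument is correct and is essentially the standard companion/conjoint proof of \cite[\S 4]{Shulman}, which is all the paper relies on here (it states the result by citation and gives no proof of its own). One correction to Step 2: the cells you actually use are the non-globular companion cells $p\colon {}_fB\Rightarrow U_B$ and $q\colon U_A\Rightarrow {}_fB$ (the latter precomposed with $U_h$), and the identities you need are the companion equations $p\circ q=U_f$ and $q\odot p=\rho_{{}_fB}^{-1}\circ\lambda_{{}_fB}$, not the unit and counit of the horizontal adjunction ${}_fB\dashv B_f$; moreover, an arbitrary opcartesian $q$ as in Remark~\ref{rk:basechangeobjectsopcartdesc} may differ from a compatible one by an automorphism of ${}_fB$, so you should define $q$ as the factorization of $U_f$ through the cartesian cell $p$, after which the second equation follows from the cartesianness of $p\circ\rho_{{}_fB}$.
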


Let us now proceed to the construction of the 
framed double category $\Ex_B(\calC)$.
Our approach is to apply the construction $\Fr$ of
\cite[Thm.~14.2]{Shulman} to a certain fibred category $\hp_B(\calC)$
which we will now introduce. 
Let $B$ be a space,
and let $\calC$
be a presentable
symmetric monoidal $\infty$--category.
We write $(\calT/B)^\fib$ for the full subcategory of the 
overcategory $\calT/B$ 
spanned by fibrations $E\to B$,
and equip it with the monoidal structure given by 
fibred product over $B$, so that 
\[
	(E_1 \to B) \tensor (E_2\to B) = (E_1 \times_B E_2 \to B).
\]
We define
$\hp_B\calC \to (\calT/B)^\fib$ to be the symmetric monoidal
fibration obtained by applying the Grothendieck construction 
of Theorem~\ref{thm:smgrothendieckconstr}
to the pseudofunctor
\begin{equation}
\label{eq:hpbcfun} 
	((\calT/B)^\fib)^\op \longto \smCat,
	\quad
	(E\to B) \longmapsto \ho(\calC_{/E}),
	\quad
	\left(\vcenter{\xymatrix@!0@C=1.4em@R=2em{
			E_1 \ar[dr] \ar[rr]^f && E_2\ar[dl]\\ &B
	}}\right)
	\longmapsto f^\ast.
\end{equation}
If $\tensor$ denotes the symmetric monoidal product in $\calC$,
we write $\exttensor_B$ for the symmetric monoidal product in 
$\hp_B\calC$.
The underlying fibration $\hp_B\calC \to (\calT/B)^\fib$
can alternatively be described as the pullback
\[\xymatrix{
	\hp_B\calC
	\ar[d]
	\ar[r]^{\rho_B}
	&
	\hpC
	\ar[d]
	\\
	(\calT/B)^\fib 
	\ar[r]
	&
	\calT
}\]
of the fibration $\hpC \to \calT$ along the forgetful functor
$(\calT/B)^\fib \to \calT$. A morphism in $\hp_B\calC$ is 
cartesian or opcartesian iff its image in $\hpC$
under $\rho_B$ is.
\begin{lemma}
\label{lm:hpbcprops}
In the terminology of \cite{Shulman}, the symmetric monoidal fibration 
$\hp_B\calC \to (\calT/B)^\fib$ is a 
symmetric monoidal $\ast$--bifibration which is weakly BC and 
both internally and externally closed.
\end{lemma}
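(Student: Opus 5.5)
The plan is to check each of the conditions in Shulman's terminology one at a time, reducing each to facts about $\hpC$ already established in Section~\ref{subsec:hpcffw}. The key observation is that $\hp_B\calC$ is the pullback of $\hpC$ along the forgetful functor $(\calT/B)^\fib\to\calT$. A morphism of $\hp_B\calC$ is (op)cartesian iff its image under $\rho_B$ is. Hence properties phrased purely in terms of (op)cartesian morphisms transfer directly.

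\textbf{$\ast$--bifibration.} The fibration $\hp_B\calC\to(\calT/B)^\fib$ is also an opfibration, since each $f^\ast$ has the left adjoint $f_!$ (Remark~\ref{rk:fibsandbasechange}). For the monoidal part we must show that $\exttensor_B$ preserves opcartesian morphisms. I would write
\[
E_1\times_B E_2\incl E_1\times E_2,
\]
so that $X\exttensor_B Y\isom \iota^\ast(X\exttensor Y)$, where $\iota$ is this inclusion. Given opcartesian $\phi,\psi$ covering $f_1\colon E_1\to E_1'$ and $f_2\colon E_2\to E_2'$ over $B$, the product $\phi\exttensor\psi$ is opcartesian by Proposition~\ref{prop:exttensoropcarts}. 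We then need the square formed by $E_1\times_B E_2\to E_1'\times_B E_2'$ and the inclusions into the products to be homotopy cartesian. This holds because all the maps to $B$ are fibrations: the square is an honest pullback along the fibration $E_1'\times E_2'$-restricted map, or equivalently it can be compared with homotopy pullbacks over $B\times B$ along the diagonal. Proposition~\ref{prop:commrelshriekinterpretation2} then gives that the restricted morphism is opcartesian.

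\textbf{Weakly BC.} This asks that the Beck--Chevalley map $h_!g^\ast\to f^\ast k_!$ be an isomorphism for pullback squares in $(\calT/B)^\fib$. Here I would use the same homotopy-cartesian argument: a strict pullback of fibrations over $B$ is homotopy cartesian in $\calT$. It therefore follows from \eqref{eq:commrelshriek}, or equivalently Proposition~\ref{prop:commrelshriekinterpretation2}.

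\textbf{Closedness.} Internal closedness is the statement that each fibre $\ho(\calC_{/E})$ is closed symmetric monoidal and that $f^\ast$ is closed, i.e.\ \eqref{eq:bc-homs}; both are recorded in Section~\ref{subsubsec:basechangefunctors}. External closedness follows, by Shulman's criterion, from internal closedness together with the existence of right adjoints $f_\ast$ to $f^\ast$. It can also be checked directly: the external hom is obtained as $(\pi_1)_\ast F(\pi_2^\ast X,\,-)$ on $E_1\times_B E_2$.

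The main obstacle I expect is bookkeeping: matching Shulman's precise definitions (which squares must satisfy BC, and the exact formulation of ``$\ast$--bifibration'') with the homotopy-cartesian statements available here. The geometric input I would verify carefully is that the relevant strict pullbacks of fibrations over $B$ are homotopy cartesian, for instance via \cite[Prop.~13.3.14]{Hirschhorn}-type arguments.
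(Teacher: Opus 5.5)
Your weak BC step has an error. You claim that a strict pullback of fibrations over $B$ is homotopy cartesian in $\calT$, and hence that Beck--Chevalley holds for \emph{all} pullback squares in $(\calT/B)^\fib$. This is false.

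Take $B=\pt$, so that $(\calT/\pt)^\fib=\calT$, and the pullback square with corners $\emptyset$, $\{0\}$, $\{1\}$, $[0,1]$. It is not homotopy cartesian, and Beck--Chevalley fails for it. For $X\in\ho(\calC_{/\{1\}})$, pulling back to $\emptyset$ and pushing forward to $\{0\}$ gives the initial object. Pushing forward to $[0,1]$ and restricting to $\{0\}$ gives an object equivalent to $X$. So the fibration is not BC, and your argument as written would prove a false statement.

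The lemma survives because Shulman's \emph{weak} BC only asks for Beck--Chevalley on the squares coming from the cartesian monoidal structure of the base. Here these are the squares $E_1\times_B E_3\to E_2\times_B E_3$ over $f\colon E_1\to E_2$. They are pullbacks along the projection $E_2\times_B E_3\to E_2$, which is a base change of the fibration $E_3\to B$ and hence a fibration. So they are homotopy cartesian by right properness of $\calT$, and \eqref{eq:commrelshriek} applies. This is what the paper's one-line appeal to \eqref{eq:commrelshriek} means.

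Weak BC is also a hypothesis of Shulman's criterion for external closedness (\cite[Prop.~13.15]{Shulman}). So in your closedness step you must cite weak BC, not just internal closedness and the existence of the right adjoints $f_\ast$. Those right adjoints are exactly what the ``$\ast$'' in $\ast$--bifibration refers to, so they belong in your first step as well.

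Apart from this, your outline agrees with the paper except in how you show that $\exttensor_B$ preserves opcartesian morphisms. The paper deduces it from external closedness via \cite[Prop.~13.17]{Shulman}. You prove it directly from Proposition~\ref{prop:exttensoropcarts} and Proposition~\ref{prop:commrelshriekinterpretation2}.

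That direct argument works, but only with your second justification. Neither the inclusion $E_1'\times_B E_2'\incl E_1'\times E_2'$ nor $f_1\times f_2$ is a fibration, so your square is not a pullback along a fibration. Instead, paste it with the pullback of $\Delta\colon B\to B\times B$ along the fibration $E_1'\times E_2'\to B\times B$ given by the structure maps. The composite rectangle is the pullback of $\Delta$ along the fibration $E_1\times E_2\to B\times B$. Then \cite[Prop.~13.3.15]{Hirschhorn} shows your square is homotopy cartesian.

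Your route is more self-contained; the paper's is shorter once Shulman's machinery is available.
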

\begin{proof}
Write $\Phi_B$ for the symmetric monoidal fibration
$\hp_B\calC \to (\calT/B)^\fib$.
As the base change functors $f^\ast$ between the fibres of $\Phi_B$ have
both left adjoints $f_!$ and right adjoints $f_\ast$, the fibration $\Phi_B$ is 
a $\ast$--bifibration.  Moreover, since the fibres of $\Phi_B$
are closed symmetric monoidal categories and 
the base change functors $f^\ast$ are closed symmetric monoidal,
$\Phi_B$ is internally closed.
The commutation relation~\eqref{eq:commrelshriek}
implies that $\Phi_B$ is weakly BC. Now \cite[Prop.~13.15]{Shulman}
implies that $\Phi_B$ is externally closed.
By \cite[Prop.~13.17]{Shulman} it follows that 
the tensor product on $\hp_B\calC$
preserves opcartesian morphisms, so $\Phi_B$ 
is a symmetric monoidal $\ast$--bifibration.
\end{proof}

\begin{defn}[The framed bicategory $\Ex_B(\calC)$]
Writing $\Phi_B$ for the symmetric monoidal fibration
$\hp_B\calC \to (\calT/B)^\fib$,
we define $\Ex_B(\calC)$ to be the framed bicategory $\Fr(\Phi_B)$
of \cite[Thm.~14.2]{Shulman}. Notice that \cite[Thm.~14.2]{Shulman}
applies since $\Phi_B$ is frameable by Lemma~\ref{lm:hpbcprops}.
\end{defn}

Explicitly, the framed bicategory  $\Ex_B(\calC)$
can be described as follows: 
\begin{itemize}

\item $\Ex_B(\calC)_0  = (\calT/B)^\fib$, and 
	$\Ex_B(\calC)_1$ and the functors $L$, $R$
	are defined by the pullback square
	\[\xymatrix{
		\Ex_B(\calC)_1 
		\ar[r]
		\ar[d]_{(L,R)}
		&
		\hp_B\calC
		\ar[d]
		\\
		(\calT/B)^\fib \times (\calT/B)^\fib
		\ar[r]^-{\times_B}
		&
		(\calT/B)^\fib
	}\]
	In particular, an object in 
	$\Ex_B(\calC)$ is a fibration $E \to B$
	(which we will sometimes denote just by $E$, leaving the 
	projection to $B$ implicit);
	a vertical morphism $E_1 \to E_2$
	is a continuous map compatible with the projections from
	$E_1$ and $E_2$ to $B$;
	a $1$--cell $E_1 \hto E_2$ in $\Ex_B(\calC)$
	is an object in 
	$\hp_B(\calC)_{E_1 \times_B E_2} = \ho(\calC_{/E_1\times_B E_2})$;
	and a $2$--cell 
    \[\xymatrix{
    	E_1
    	\ar@{}[dr]|{\Downarrow \alpha}
    	\ar[d]_f
    	\ar[r]|-@{|}^M
    	&
    	E_2
    	\ar[d]^g
    	\\
    	E'_1
    	\ar[r]|-@{|}_{M'}
    	&
    	E'_2
    }\]
    is a morphism $\alpha \colon M \to M'$ in $\hp_B\calC$ covering 
    the map 
    $f\times_B g \colon E_1\times_B E_2\to E'_1\times_B E'_2.$
\item The horizontal composition of $1$--cells
	$M \colon E_1\hto E_2$ and $N\colon E_2\hto E_3$
	is given by
	\[
		M\odot_B N = \pi_!\delta^\ast (M\exttensor_B N)
	\]
	where 
	$\delta\colon E_1 E_2 E_3 \to E_1 E_2 E_2 E_3$
	is given by the diagonal map of $E_2$
	and 
	$\pi \colon E_1 E_2 E_3 \to E_1 E_3$
	is the projection away from $E_2$,
	and similarly for the horizontal composition of $2$--cells.
	(Here we have omitted the $\times_B$--signs.)
	More explicitly,
	taking into account the construction of $M\exttensor_B N$,
	we have a globular natural equivalence
    \begin{equation}
    \label{eq:modotbnformula}
		M\odot_B N 
		\homot 
		(\pi_{13})_! (
			\pi_{12}^\ast M \tensor_{E_1 E_2 E_3} \pi_{23}^\ast N
		)
    \end{equation}
	where $\pi_{ij}$ is the projection 
	$\pi_{ij} \colon E_1 E_2 E_3 \to E_i E_j$.
\item The horizontal unit associated to an object $E \to B$ is 
	\[
		U^B_E = \delta_! \pi^\ast I
	\]
	where $I \in \hp_B(\calC)_B$ is the monoidal unit in $\hp_B\calC$,
	$\pi \colon E\to B$ is the projection, and $\delta \colon E \to EE$
	is the diagonal map.
	More explicitly, writing $S_E$ for the monoidal unit in 
	$\ho(\calC_{/E})$, we have
    \begin{equation}
    \label{eq:uformula}
		U^B_E \homot \delta_! S_E.   
    \end{equation}
\item Given a map $f\colon D \to E$ over $B$, we have
	\[
		{}_f E \homot (\id,f)_! S_D
		\qquad\text{and}\qquad
		E_f \homot (f,\id)_! S_D		
	\]
	where
	\[
		(\id,f) \colon D \to D E
		\qquad\text{and}\qquad
		(f,\id) \colon D \to E D
	\]
	are the maps defined by $f$ and the identity map of $D$.
	See Remark~\ref{rk:basechangeobjectsopcartdesc}.
\item $\odot_B$ has adjoints
	 $\vartriangleright_B$ and $\vartriangleleft_B$,
	 so that given $1$--cells
	 $M\colon E_1 \hto E_2$, 
	 $N\colon E_2 \hto E_3$,
	 and
	 $P \colon E_1 \hto E_3$,
	  the set of globular $2$--cells $M\odot_B N \to P$
	 is in natural bijection with the set of globular $2$--cells
	 $M\to N \vartriangleright_B P$ and 
	 as well as the set of globular $2$--cells
	 $N \to P \vartriangleleft_B M$.
	 Explicitly, in view of the formula
	 \eqref{eq:modotbnformula}, we have 
     \begin{equation}
     \label{eq:ntrianglerightpformula}
     	N \vartriangleright_B  P
		\homot 
		(\pi_{12})_\ast F_{E_1 E_2 E_3}(\pi_{23}^\ast N, \pi_{13}^\ast P)
     \end{equation}
	 and
     \begin{equation}
     \label{eq:ptriangleleftmformula}
	 	P \vartriangleleft_B  M
		\homot 
		(\pi_{23})_\ast F_{E_1 E_2 E_3}(\pi_{12}^\ast M, \pi_{13}^\ast P)
     \end{equation}
	 where $F_{E_1 E_2 E_3}$ refers to the internal hom 
	 in $\ho(\calC_{/E_1 E_2 E_3})$ and the maps
	 $\pi_{ij}$ are as above.
\item $\Ex_B(\calC)$ has an involution $(-)^\op$ 
	which is the identity on the vertical category
	and which is given by pullback along 
	the coordinate interchange homeomorphisms $E_1 E_2 \to E_2 E_1$
	on $\Ex_B(\calC)_1$, so that for $M \colon E_1 \hto E_2$,
	we have $M^\op \colon E_2 \hto E_1$. There are natural equivalences
    \begin{align*}
    	(M^\op)^\op &\homot M
		\\
		(M\odot N)^\op &\homot N^\op \odot M^\op
		\\
	 	(N \vartriangleright_B  P)^\op &\homot N^\op \vartriangleleft_B  P^\op
		\\
	 	(P \vartriangleleft_B  M)^\op &\homot (M^\op \vartriangleright_B  P^\op)
		\\
		({}_f E)^\op &\homot E_f
		\\
		(E_f)^\op &\homot {}_f E.
    \end{align*}	
\end{itemize}

\begin{defn}[The framed bicategory $\Ex(\calC)$]
\label{def:exboverpt}
We write $\Ex(\calC)$ for $\Ex_\pt(\calC)$ and 
$\odot$, $\vartriangleright$, and $\vartriangleleft$
for
$\odot_\pt$, $\vartriangleright_\pt$, and $\vartriangleleft_\pt$,
respectively.
\end{defn}

\begin{rem}[$\Ex_B(\calC)$ and base change functors]
\label{rk:exbcandbasechange}
Given a fibration $E\to B$, the canonical 
homeomorphism $E\times_B B \homeom E$ allows us to
identify the category
$\ho(\calC_{/E})$ with the category of $1$--cells 
$E\hto B$ in $\Ex_B(\calC)$. Indeed, the whole fibration
$\hp_B\calC \to (\calT/B)^\fib$ can be identified with the pullback
of the fibration 
$\Ex_B(\calC)_1 \to (\calT/B)^\fib \times (\calT/B)^\fib$
along  the functor
$(\calT/B)^\fib \to (\calT/B)^\fib \times (\calT/B)^\fib$,
$(E\to B) \mapsto (E\to B, B\xto{\id} B)$.
If $E \to B$ and $E' \to B$ are fibrations
and $g\colon E\to E'$ is a map over $B$,
from
Proposition~\ref{prop:odotandbasechange} 
we now obtain natural equivalences
\begin{equation}
\label{eq:basechangeformula1}
	g_! X \homot E'_g \odot_B X
	\qquad\text{and}\qquad
	g^\ast Y \homot {}_g E' \odot_B Y
\end{equation}
for $X \in \ho(\calC_{/E})$ and $Y \in \ho(\calC_{/E'})$.
By uniqueness of right adjoints, the latter equivalence 
implies that we also have a natural equivalance
\begin{equation}
\label{eq:basechangeformula2}
	g_\ast X \homot X \vartriangleleft_B {}_g E'
\end{equation}
for $X \in \ho(\calC_{/E})$.
Compare with  \cite[Cor.~17.4.4]{MaySigurdsson}.
\end{rem}

Intuitively, everything in $\Ex_B(\calC)$ happens fibrewise
over the space $B$. Our next aim is to construct
a ``base change functor''
$f^{[\ast]} \colon \Ex_B(\calC) \to \Ex_A(\calC)$
associated to a continuous map $f\colon A\to B$.
We want $f^{[\ast]}$ to be a strong  framed functor in the 
sense of \cite[Def.~6.5]{Shulman}.
This means that we need to specify the following:
\begin{itemize}
\item Functors
	\[
		f^{[\ast]}_i \colon \Ex_B(\calC)_i \longto \Ex_A(\calC)_i,
		\quad
		i = 0,1,
	\]
	such that  $Lf^{[\ast]}_1 = f^{[\ast]}_0 L$ and 
	$Rf^{[\ast]}_1 = f^{[\ast]}_0 R$.
\item Globular natural equivalences
	\[
		f^{[\ast]}_\odot
		\colon 
		f^{[\ast]}_1 M \odot_A f^{[\ast]}_1 N 
		\xto{\ \homot\ } 
		f^{[\ast]}_1 (M \odot_B N)
		\qquad\text{and}\qquad
		U^A_{f^{[\ast]}_0 E}
		\xto{\ \homot\ }
		f^{[\ast]}_1 (U^B_E)
	\]
	satisfying the usual coherence axioms for a strong monoidal functor
	(see e.g.\ \cite[\S XI.2]{MacLane}).
\end{itemize}

The functor 
$f^{[\ast]}_0 \colon \Ex_B(\calC)_0 \to \Ex_A(\calC)_0$
is the functor 
$f^\ast \colon (\calT/B)^\fib \to	(\calT/A)^\fib$
 given by base change of fibrations along $f$.
Explicitly,
we choose for every fibration 
$E\to B$ 
a pullback
square
\[\xymatrix{
	f^\ast E 
	\ar[r]^-{f_E}
	\ar[d]%
	&
	E
	\ar[d]%
	\\
	A
	\ar[r]^f
	&
	B
}\]
in $\calT$
and define $f^{[\ast]}_0$ on objects by setting
$f^{[\ast]}_0(E\to  B) = (f^\ast E \to A)$
and on morphisms in the evident way using the universal property
of pullbacks.
The functor $f^{[\ast]}_1$
is defined on $1$--cells by setting
\[
	f^{[\ast]}_1(M)
	= 
	(f_{E_1} \times_f f_{E_2})^\ast M
	\colon 
	f^\ast E_1 \longhto f^\ast E_2,
\]
for  $M \colon E_1 \hto E_2$
where 
$f_{E_1} \times_f f_{E_2}
\colon 	
f^\ast E_1 \times_A f^\ast E_2
\to
E_1 \times_B  E_2$
is the map induced by $f_{E_1}$ and $f_{E_2}$.
To define $f^{[\ast]}_1$ on morphisms, recall that for a space $C$,
we have a forgetful functor
$\rho_C\colon \hp_C\calC \to \hpC$, $(D\to C,Z) \mapsto (D,Z)$
which preserves and reflects cartesian and opcartesian morphisms;
for a map $h\colon D \to D'$ over $C$, $\rho_C$ restricts to a 
bijection between the sets of morphisms covering $h$.
Given a $2$--cell
\[\xymatrix{
	E_1
	\ar@{}[dr]|{\Downarrow \alpha}
	\ar[d]_{g_1}
	\ar[r]|-@{|}^M
	&
	E_2
	\ar[d]^{g_2}
	\\
	E'_1
	\ar[r]|-@{|}_{M'}
	&
	E'_2
}\]
we now define $f^{[\ast]}(\alpha)$ to be the unique morphism 
making 
\begin{equation}
\label{eq:fast1mordef}
\vcenter{\xymatrix@!C=8em{
	(f_{E_1} \times_f f_{E_2})^\ast M 
	\ar[r]^-{\cart}
	\ar@{-->}[d]_{\rho_B(f^{[\ast]}_1(\alpha))}
	&
	M
	\ar[d]^{\rho_A(\alpha)}
	\\
	(f_{E'_1} \times_f f_{E'_2})^\ast M'
	\ar[r]^-{\cart}
	&
	M'
}}
\end{equation}
a commutative square in $\hpC$
covering the square
\begin{equation}
\label{eq:fast1mordefbase}
\vcenter{\xymatrix@!C=8em{
	f^\ast E_1 \times_A f^\ast E_2
	\ar[r]^-{f_{E_1} \times_f f_{E_2}}
	\ar[d]_{f^\ast(g_1) \times_A f^\ast(g_2)}
	&
	E_1\times_B E_2
	\ar[d]^{g_1\times_B g_2}
	\\
	f^\ast E'_1 \times_A f^\ast E'_2
	\ar[r]^-{f_{E'_1} \times_f f_{E'_2}}
	&
	E'_1\times_B E'_2
}}
\end{equation}
in $\calT$. Here the morphisms labeled `cart' in 
\eqref{eq:fast1mordef} are the canonical cartesian 
morphisms covering the respective morphisms in 
\eqref{eq:fast1mordefbase}.

To define $f^{[\ast]}_\odot$ and $f^{[\ast]}_U$, first notice that 
the forgetful functor
$\rho_C \colon \hp_C\calC \to \hpC$
is an oplax monoidal functor: The monoidality constraint
\[
	(\rho_C)_\tensor\colon \rho_C(X \exttensor_C Y) 
	\longto
	(\rho_C) X \exttensor (\rho_C) Y
\]
for objects $(E_1\to A, X)$ and $(E_2 \to A, Y) \in \hp_C\calC$
covers the inclusion 
$\iota_C\colon E_1 \times_C E_2 \incl E_1 \times E_2$
and is defined  by  the composite
\[\xymatrix{
	X\exttensor_C Y
	= 
	\tilde{\pi}_1^\ast X 
	\tensor_{E_1\times_C E_2}
	\tilde{\pi}_2^\ast Y
	\xto{\ \homot\ }
	\iota_C^\ast {\pi}_1^\ast X 
	\tensor_{E_1\times_C E_2}
	\iota_C^\ast {\pi}_2^\ast Y	
	\xto{\ \homot\ }
	\iota_C^\ast (
		{\pi}_1^\ast X 
		\tensor_{E_1\times E_2}
		{\pi}_2^\ast Y
	)
	=
	\iota_C^\ast (X\exttensor Y)
}\]
where $\tilde{\pi}_i \colon E_1\times_C E_2 \to E_i$
and $\pi_i \colon E_1\times E_2 \to E_i$, $i=1,2$
denote the projections, while
the unit constraint
\[
	(\rho_C)_I \colon \rho_C(S_C) \longto S_\pt
\]
is defined by the equivalence
$S_C \xto{\homot} r_C^\ast S_\pt$,
where $S_D$ for a space $D$ denotes the monoidal unit
in $\ho(\calC_{/D})$ and $r_C\colon C \to \pt$ is the unique map
onto the one-point space. Notice that $(\rho_C)_\tensor$ and
$(\rho_C)_I$ both consist of cartesian morphisms.
The map $f^{[\ast]}_\odot$
is now defined by the commutative diagram
\begin{equation}
\label{eq:fastodotdef}
    \vcenter{\xymatrix@C-1em{
    	&
		f^{[\ast]}_1 (M \odot_B N)
		\ar[dr]^{\cart}
		\\
		\ar@{-->}[ur]^{f^{[\ast]}_\odot}_\homot
    	f^{[\ast]}_1 M \odot_A f^{[\ast]}_1 N
    	\ar@{-->}[rr]^-{\cart}
    	&&
    	M\odot_B N
    	\\
    	\delta^\ast(f_1^{[\ast]} M \exttensor_A f_1^{[\ast]} N)
    	\ar[d]_{\cart}
    	\ar@{-->}[rr]^-{\cart}
    	\ar[u]^{\opcart}
    	&&
    	\delta^\ast(M \exttensor_B  N)
    	\ar[d]^{\cart}
    	\ar[u]_{\opcart}
    	\\
    	f_1^{[\ast]} M \exttensor_A f_1^{[\ast]} N
    	\ar@{-->}[rr]^-{\cart}
    	\ar[d]_{\cart}^{(\rho_A)_\tensor}
    	&&
    	M\exttensor_B N
    	\ar[d]^{\cart}_{(\rho_B)_\tensor}
    	\\
    	f_1^{[\ast]} M \exttensor f_1^{[\ast]} N
    	\ar[rr]^-{\cart\exttensor\cart}
    	&&
    	M\exttensor N
    }}
\end{equation}
in $\hpC$ 
covering the diagram
\begin{equation}
\label{eq:fastodotdefbase}
    \vcenter{\xymatrix{
    	&
    	f^\ast E_1   f^\ast E_3
    	\ar[dr]^-{f_{E_1} \times_f f_{E_3}}
		\\
		\ar@{=}[ur]
    	f^\ast E_1   f^\ast E_3
    	\ar[rr]^-{f_{E_1}\times_f f_{E_3}}
    	&&
    	E_1   E_3
    	\\
    	f^\ast E_1   f^\ast E_2   f^\ast E_3
    	\ar[rr]^-{f_{E_1}\times_f f_{E_2} \times_f f_{E_3}}
		\ar[u]^\pi
		\ar[d]_\delta
    	&&
    	E_1   E_2   E_3
		\ar[u]_\pi
		\ar[d]^\delta
    	\\
    	f^\ast E_1   f^\ast E_2  f^\ast E_2   f^\ast E_3
    	\ar[rr]^-{f_{E_1}\times_f f_{E_2}\times_f f_{E_2} \times_f f_{E_3}}
		\ar[d]_{\iota_A}
    	&&
    	E_1   E_2  E_2   E_3
		\ar[d]^{\iota_B}
    	\\
    	(f^\ast E_1   f^\ast E_2)\times (f^\ast E_2   f^\ast E_3)
    	\ar[rr]^-{(f_{E_1}\times_f f_{E_2})\times (f_{E_2} \times_f f_{E_3})}
    	&&
    	(E_1   E_2)\times (E_2   E_3)
    }}
\end{equation}
in $\calT$ 
where we have omitted  from notation the functors
$\rho_A$ and $\rho_B$  in diagram \eqref{eq:fastodotdef}
and the symbols $\times_A$ and $\times_B$ in 
diagram~\eqref{eq:fastodotdefbase}.
The bottom horizontal morphism in \eqref{eq:fastodotdef}
is the $\exttensor$--product of the canonical cartesian morphisms
$f^{[\ast]}_1 M \to M$ and $f^{[\ast]}_1 N \to N$,
and the solid diagonal arrow at the top of \eqref{eq:fastodotdef}
is the canonical cartesian morphism
$f^{[\ast]}_1 (M \odot_B N) \to M\odot_B N$.
The dashed arrows in diagram~\eqref{eq:fastodotdef} are now 
induced in succession from the bottom upwards
by the universal properties of cartesian and opcartesian 
morphisms.
The bottom horizontal arrow is cartesian because $\exttensor$ preserves
cartesian morphisms; the middle two are cartesian 
by Proposition~\ref{prop:cartmorprops}(\ref{it:compiscart}) 
and (\ref{it:cartfactor});
the top horizontal morphism is cartesian 
by Proposition~\ref{prop:commrelshriekinterpretation2};
and the dashed diagonal arrow is an equivalence 
by Proposition~\ref{prop:cartmorprops}(\ref{it:cartfactor}) 
and (\ref{it:cartoveriso}).

The equivalence
\[
	f^{[\ast]}_U 
	\colon 	
	U^A_{f^{[\ast]}_0 E}
	\xto{\ \homot\ }
	f^{[\ast]}_1 (U^B_E)
\]
can be constructed similarly, with the identity map of 
the monoidal unit $S_\pt \in \hpC$ playing the 
role of the bottom horizontal map in 
diagram~\eqref{eq:fastodotdef}.
In the construction, one needs to know that the top square in
the commutative diagram
\[\xymatrix@C+2em{
	f^\ast E 
	\ar[r]^{f_E}
	\ar[d]_\delta
	&
	E
	\ar[d]^\delta
	\\	
	f^\ast E \times_A f^\ast E
	\ar[r]^-{f_E\times_f f_E}
	\ar[d]
	&
	E\times_B E
	\ar[d]
	\\
	A
	\ar[r]^f
	&
	B
}\]
is homotopy cartesian. But this follows since 
the bottom square and the composite square are both 
homotopy cartesian, as they are pullback squares 
where the vertical map to $B$ is a fibration.

We record the following compatibility between 
$f^{[\ast]}$ and $\vartriangleright_A$ and  $\vartriangleright_B$. 
See \cite[Prop.~28]{LahtinenThesis} for a proof.
\begin{prop}
\label{prop:fastclosed}
The framed functor $f^{[\ast]}\colon \Ex_B(\calC) \to \Ex_A(\calC)$
is closed in the sense that the adjoint of the composite
\[
	f^{[\ast]} (N\vartriangleright_B P) \odot_A f^{[\ast]} N
	\xto[\homot]{\ f^{[\ast]}_\odot\ }
	f^{[\ast]} ((N\vartriangleright_B P) \odot_B N)
	\xto{\ f^{[\ast]}(\varepsilon)\ }
	f^{[\ast]} P
\]
where $\varepsilon$ denotes the counit of the $(\odot_B N, N\vartriangleright_B)$ adjunction
is an equivalence
\[
	f^{[\ast]} (N\vartriangleright_B P)
	\xto{\ \homot\ }
	f^{[\ast]} N \vartriangleright_A f^{[\ast]} P,
\]
and similarly for $\vartriangleleft_A$ and $\vartriangleleft_B$.
\qed
\end{prop}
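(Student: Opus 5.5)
The claim is that for $f\colon A\to B$ the framed functor $f^{[\ast]}$ is closed: the canonical comparison $f^{[\ast]}(N\vartriangleright_B P)\to f^{[\ast]}N\vartriangleright_A f^{[\ast]}P$ is an equivalence. The plan is to reduce this to a Beck--Chevalley statement for the right adjoints $(\pi_{12})_\ast$. That commutation relation is already available as \eqref{eq:commrelstar}.

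Let $N\colon E_2\hto E_3$ and $P\colon E_1\hto E_3$. Write $D=E_1E_2E_3$ (fibre products over $B$) and $D'=f^\ast E_1 f^\ast E_2 f^\ast E_3$ (over $A$). Let $\phi\colon D'\to D$ be the map induced by the $f_{E_i}$.

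First I would identify both sides explicitly using \eqref{eq:ntrianglerightpformula}:
\[
	f^{[\ast]}(N\vartriangleright_B P)\homot (\phi_{12})^\ast(\pi_{12})_\ast F_D(\pi_{23}^\ast N,\pi_{13}^\ast P),
\]
\[
	f^{[\ast]}N\vartriangleright_A f^{[\ast]}P\homot (\pi'_{12})_\ast F_{D'}(\pi'^\ast_{23}\phi_{23}^\ast N,\pi'^\ast_{13}\phi_{13}^\ast P).
\]
Here $\phi_{ij}$ denote the induced maps on pairwise fibre products and $\pi'_{ij}$ the projections out of $D'$. The square with horizontal maps $\phi,\phi_{12}$ and vertical maps $\pi_{12},\pi'_{12}$ is a pullback, since $D'\homeom f^\ast E_1f^\ast E_2\times_{E_1E_2}D$. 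It is homotopy cartesian because $\pi_{12}\colon D\to E_1E_2$ is a base change of the fibration $E_3\to B$.

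Second, I would compute the right-hand side. By \eqref{eq:bc-homs} and $\pi_{ij}\phi=\phi_{ij}\pi'_{ij}$,
\[
	F_{D'}(\pi'^\ast_{23}\phi_{23}^\ast N,\pi'^\ast_{13}\phi_{13}^\ast P)\homot\phi^\ast F_D(\pi_{23}^\ast N,\pi_{13}^\ast P).
\]
Then \eqref{eq:commrelstar} gives $(\phi_{12})^\ast(\pi_{12})_\ast\homot(\pi'_{12})_\ast\phi^\ast$, so the two sides are abstractly equivalent.

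The main obstacle is the third step: checking that the composite of these equivalences agrees with the specific comparison map in the statement, namely the adjoint of $f^{[\ast]}(\varepsilon)\circ f^{[\ast]}_\odot$. For this I would use the fibred-category description. Under $\rho$, the map $f^{[\ast]}_\odot$ is built from cartesian and opcartesian morphisms as in \eqref{eq:fastodotdef}. The counit $\varepsilon$ of $\odot_B N\dashv N\vartriangleright_B$ decomposes into the counit of $((\pi_{13})_!,\pi_{13}^\ast)$, the evaluation map of $F_D$, and the counit of $(\pi_{12}^\ast,(\pi_{12})_\ast)$. Each of these is compatible with pullback along $\phi$ via the canonical cartesian morphisms; this uses Proposition~\ref{prop:reformulations} and Proposition~\ref{prop:commrelshriekinterpretation2} for the $(\pi_{13})_!$ part. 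Passing to adjoints, this identifies the comparison map with the Beck--Chevalley mate \eqref{eq:astastcomm} composed with \eqref{eq:bc-homs}, and hence shows it is an equivalence.

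The case of $\vartriangleleft$ follows by applying the involution $(-)^\op$, which commutes with $f^{[\ast]}$.
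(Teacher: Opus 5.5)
Your proposal is correct. The paper does not prove this statement itself; it defers to \cite[Prop.~28]{LahtinenThesis}. Your reduction is the natural argument for it:
\begin{itemize}
\item the explicit formula \eqref{eq:ntrianglerightpformula};
\item the closedness equivalence \eqref{eq:bc-homs} for $\phi^\ast$;
\item the equivalence \eqref{eq:commrelstar} applied to the pullback square $D'\to D$ over $f^\ast E_1 f^\ast E_2\to E_1E_2$. This square is homotopy cartesian precisely because $\pi_{12}$ is a base change of the fibration $E_3\to B$.
\end{itemize}

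The step you flag as the main obstacle goes through as you indicate. The Beck--Chevalley equivalence for $(\pi_{13})_!$ that enters $f^{[\ast]}_\odot$ is a mate. Hence the adjoint of $\phi_{13}^\ast(\varepsilon)\circ f^{[\ast]}_\odot$ under $((\pi'_{13})_!,\pi'^\ast_{13})$ is simply $\phi^\ast$ of the following map: the evaluation map precomposed with the counit of $(\pi_{12}^\ast,(\pi_{12})_\ast)$. Transposing this further, first through the tensor--hom adjunction on $D'$ and then through the $(\pi'^\ast_{12},(\pi'_{12})_\ast)$ adjunction, gives exactly $(\pi'_{12})_\ast$ of \eqref{eq:bc-homs} precomposed with the mate \eqref{eq:astastcomm}. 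The two standard formulas for this mate agree by the triangle identities.

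For $\vartriangleleft$, you need not check that $f^{[\ast]}_\odot$ is compatible with the $(-)^\op$ equivalences. You can instead rerun the same argument with \eqref{eq:ptriangleleftmformula}, using that $\pi_{23}$ is a base change of the fibration $E_1\to B$.
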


\begin{prop}
\label{prop:exbf}
Suppose $F\colon \calC \to \calD$ is a symmetric monoidal
functor between symmetric monoidal presentable $\infty$--categories
which admits a right adjoint. Then $F$ induces a strong 
framed functor (see \cite[Def.~6.5]{Shulman})
\[
	\Ex_B(F) \colon \Ex_B(\calC) \longto \Ex_B(\calD)
\]
which is given by the identity on $0$--cells and by
\[
	\Ex_B(F)(X) = F_{E_1\times_B E_2} (X)
\]
on $1$--cells $X\colon E_1 \hto E_2$.
\end{prop}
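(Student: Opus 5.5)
The plan is to construct $\Ex_B(F)$ the same way $f^{[\ast]}$ was constructed above, working through the fibred-category description of $\Ex_B(\calC)$ rather than building a framed functor by hand. The input is the morphism of symmetric monoidal fibrations $F_\fw\colon \hpC \to \hpD$ of Definition~\ref{def:fwfun}. Pulling back along $(\calT/B)^\fib \to \calT$, it restricts to a functor $F_{\fw,B}\colon \hp_B\calC \to \hp_B\calD$ over $(\calT/B)^\fib$.

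First I check the properties of $F_{\fw,B}$. Since $F$ is symmetric monoidal, Theorem~\ref{thm:smgrothendieckconstr} makes $F_\fw$ symmetric monoidal for $\exttensor$. Hence $F_{\fw,B}$ is symmetric monoidal for $\exttensor_B$, because $\exttensor_B$ is obtained from $\exttensor$ by pulling back along $\iota_B$ and $F_\fw$ preserves cartesian morphisms. Since $F$ has a right adjoint, Proposition~\ref{prop:opcartpreservation} shows that $F_{\fw,B}$ also preserves opcartesian morphisms.

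Next I define $\Ex_B(F)$. On $\Ex_B(\calC)_0 = (\calT/B)^\fib$ it is the identity. On $\Ex_B(\calC)_1$ it is the map on pullbacks along $\times_B$ induced by $F_{\fw,B}$. On a $1$--cell $X\colon E_1\hto E_2$ this gives $F_{E_1\times_B E_2}(X)$, and on $2$--cells it gives $F_\fw$ applied to morphisms. The conditions $L\circ\Ex_B(F)_1 = L$ and $R\circ\Ex_B(F)_1 = R$ hold because the construction lies over the base.

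The main step is producing the constraints and checking they are equivalences. By \eqref{eq:modotbnformula}, $M\odot_B N \homot \pi_!\delta^\ast(M\exttensor_B N)$. Pick the canonical zigzag of cartesian and opcartesian morphisms $M\exttensor_B N \leftarrow \delta^\ast(M\exttensor_B N) \to M\odot_B N$ and apply $F_{\fw,B}$. Since cartesian and opcartesian morphisms are preserved and $F_{\fw,B}$ commutes with $\exttensor_B$, the universal properties give a vertical comparison $\Ex_B(F)M\odot_B\Ex_B(F)N \to \Ex_B(F)(M\odot_B N)$. Proposition~\ref{prop:cartmorprops}(\ref{it:cartsourceiso}) and its opcartesian analogue show this comparison is an isomorphism. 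Concretely it is the composite of $f_!F\homot Ff_!$ from \eqref{eq:fshriektcomm} with the monoidality of $F_\fw$, in the same pattern as diagram~\eqref{eq:fastodotdef}. The unit $U^B_E \homot \delta_! S_E$ from \eqref{eq:uformula} is handled the same way, using $F_E(S_E)\homot S_E$ and \eqref{eq:fshriektcomm}.

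Finally, the coherence axioms of a strong monoidal functor (associativity and the two unit triangles) follow because every map involved is determined uniquely by universal properties of cartesian and opcartesian morphisms over fixed maps of spaces. Two composites that agree on the defining zigzags are therefore equal. If needed, this can instead be phrased as functoriality of Shulman's $\Fr$ construction \cite[Thm.~14.2]{Shulman} with respect to symmetric monoidal morphisms of $\ast$--bifibrations that preserve opcartesian arrows. I expect this coherence bookkeeping to be the only tedious part, and I would try first to cite Shulman's functoriality directly.
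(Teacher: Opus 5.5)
Your proposal is correct and matches the paper's proof. The paper also notes that $F$ induces a strong monoidal morphism of bifibrations $\hp_B\calC\to\hp_B\calD$ over $(\calT/B)^\fib$, gets preservation of opcartesian morphisms from Proposition~\ref{prop:opcartpreservation}, and then cites Shulman's functoriality of $\Fr$, which it gives as \cite[Thm.~14.9]{Shulman} (your \cite[Thm.~14.2]{Shulman} is the construction of $\Fr$ itself). Your hand-built comparison maps and coherence argument are just an unpacking of that theorem.
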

\begin{proof}[Sketch of proof]
It is easily checked that $F$ induces a 
strong monoidal morphism of bifibrations
\cite[Def.~12.5]{Shulman}
\begin{equation}
\label{eq:hpbf}
\vcenter{\xymatrix@C-2em{
	\hp_B\calC
	\ar[rr]
	\ar[dr]
	&&
	\hp_B\calD
	\ar[dl]
	\\
	&
	(\calT/B)^\fib
}}
\end{equation}
so the claim follows from 
\cite[Thm.~14.9]{Shulman}. 
That \eqref{eq:hpbf} preserves
opcartesian morphisms as required follows from
Proposition~\ref{prop:opcartpreservation}.
\end{proof}

\section{Duality in bicategories}
\label{app:dualizability}

In this appendix, we briefly discuss
duality in the context of
bicategories.
For a more detailed discussion, 
we refer the reader to \cite[\S 16.4]{MaySigurdsson}.

\begin{defn}
\label{def:dualpair}
A \emph{dual pair} $(M,N)$ in a bicategory $\calB$
consists of $1$--cells $M\colon A\hto B$ and $N\colon B\hto A$
together with unit and counit maps:
\[
	\eta\colon U_A \longto M\odot N
	\qquad\text{and}\qquad
	\varepsilon\colon N\odot M \longto U_B
\]
such that the following composites are identity maps:
\begin{gather*}
\xymatrix{
	M 
	\isom 
	U_A \odot M
	\ar[r]^{\eta\odot 1}
	&
	M\odot N \odot M
	\ar[r]^{1\odot \varepsilon}
	&
	M \odot U_B
	\isom 
	M	
}
\\
\xymatrix{
	N 
	\isom 
	N\odot U_A
	\ar[r]^{1\odot\eta}
	&
	N\odot M\odot N
	\ar[r]^{\varepsilon \odot 1}
	&
	U_B\odot N
	\isom 
	N.
}
\end{gather*}
Here the displayed isomorphism are given by 
the left and right unit constraints, as appropriate.
If $(M,N)$ is a dual pair, 
we call $N$ the \emph{right dual} of $M$ and 
$M$ the \emph{left dual} of $N$, and say that 
$M$ is \emph{right dualizable} and that $N$ is \emph{left dualizable}.
The map $\eta$ is called the \emph{unit} or \emph{coevaluation map}
and the map $\varepsilon$ the \emph{counit} or \emph{evaluation map}
of the dual pair.
By a dual pair in a framed bicategory we mean a 
dual pair in its horizontal bicategory.
\end{defn}

\begin{example}
Definition~\ref{def:dualpair} should remind the reader of 
the definition of an adjoint pair of functors.
Indeed, a dual pair in the sense of 
Definition~\ref{def:dualpair}
in the $2$--category of categories
is precisely an adjoint pair of functors.
\end{example}

Many familiar formal properties of adjunctions generalize
to dual pairs in bicategories. For example,
if $(M,N)$ and $(M',N')$ are dual pairs,
then morphisms $M \to M'$ are in natural bijection
with morphisms $N' \to N$. Moreover, a right dual,
if it exists, is unique up to a unique isomorphism
compatible with the unit and counit maps, and similarly 
for left duals.

\begin{rem}
\label{rk:dualityadjunction}
If $M\colon A\hto B$ and $N\colon B\hto A$
form a dual pair $(M,N)$ in a bicategory $\calB$, 
then $\eta\odot$ and $\varepsilon\odot$ make 
$(N\odot, M\odot)$ into an adjoint pair of functors,
so that for all $1$--cells $P\colon B\hto C$ 
and $Q\colon A\hto C$, there is a natural bijection
\[
	\calB(N\odot P, Q) \isom \calB(P,M\odot Q).
\]
\end{rem}

\begin{prop}[{\cite[Prop.~5.3]{Shulman}}]
\label{prop:basechangeobjdualpair}
If $f\colon A\to B$ is a vertical morphism in a 
framed bicategory $\bbD$, then 
$({}_f B,B_f)$ is naturally a dual pair. \qed
\end{prop}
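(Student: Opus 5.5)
The statement to prove is the last one displayed: Proposition~\ref{prop:basechangeobjdualpair}, that $({}_f B, B_f)$ is naturally a dual pair in the framed bicategory $\bbD$. The plan is to write down unit and counit $2$--cells using the cartesian and opcartesian $2$--cells of Definition~\ref{def:basechangeobjects} and Remark~\ref{rk:basechangeobjectsopcartdesc}. Then we check the triangle identities using only the uniqueness in the universal properties.

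For the unit $\eta\colon U_A \to {}_f B\odot B_f$, first take the opcartesian $2$--cell $U_A\Rightarrow {}_fB$ with frames $(\id_A,f)$. Horizontally compose it with the opcartesian $2$--cell $U_A\Rightarrow B_f$ with frames $(f,\id_A)$. Precomposing with the unit isomorphism $U_A\cong U_A\odot U_A$ gives a $2$--cell $U_A\to {}_fB\odot B_f$ whose frames are $(\id_A,\id_A)$, so it is globular. For the counit $\varepsilon\colon B_f\odot {}_fB\to U_B$, horizontally compose the cartesian $2$--cells $B_f\Rightarrow U_B$ (frames $(\id_B,f)$) and ${}_fB\Rightarrow U_B$ (frames $(f,\id_B)$). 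This gives a $2$--cell into $U_B\odot U_B\cong U_B$ with frames $(\id_B,\id_B)$, which is again globular. Naturality in $f$ comes from the canonicity of these choices, since the cells are unique up to unique isomorphism.

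To verify the first triangle identity, that ${}_fB\to {}_fB\odot B_f\odot{}_fB\to{}_fB$ is the identity, compose the resulting globular endomorphism of ${}_fB$ with the cartesian cell ${}_fB\Rightarrow U_B$. Interchange and the pseudo double category coherence rewrite this composite as a horizontal composite of cells lying over the frames $(f,\id_B)$. In that composite, the opcartesian and cartesian pieces of $B_f$ and ${}_fB$ paste to the identity $2$--cell of $U_B$ over $f$, or more precisely to the unique cell $U_A\Rightarrow U_B$ with frames $(f,f)$ given by $U_f$. The outcome is that the composite equals the cartesian cell itself. Uniqueness of factorizations through a cartesian cell then forces the endomorphism to be the identity. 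The second identity follows by the same argument applied to $B_f$, or by symmetry using the involution $(-)^{\op}$ where it is available.

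The main obstacle is the pasting step: showing that composing an opcartesian cell $U_A\Rightarrow B_f$ with a cartesian cell $B_f\Rightarrow U_B$ yields $U_f$, and tracking the unitors correctly. I would handle it by noting that the composite is a $2$--cell $U_A\Rightarrow U_B$ with frames $(f,f)$. Because $B_f\Rightarrow U_B$ is cartesian, such cells correspond uniquely to cells $U_A\Rightarrow B_f$ with frames $(f,\id)$. Because $U_A\Rightarrow B_f$ is opcartesian, those cells correspond in turn to globular cells $U_A\to U_A$. Under these bijections both $U_f$ and the composite map to $\id$, so the two are equal. This is essentially Shulman's argument.
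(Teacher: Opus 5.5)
Your unit and counit have the right shape, and your strategy for the triangle identities (postcompose with the cartesian cell, simplify by interchange and unitor coherence, conclude by uniqueness of factorization) is the standard one behind the cited result. The gap is the pasting step you flag, and your justification for it does not work.

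Write $\psi_1\colon U_A\Rightarrow{}_fB$, $\psi_2\colon{}_fB\Rightarrow U_B$, $\chi_1\colon U_A\Rightarrow B_f$, $\chi_2\colon B_f\Rightarrow U_B$ for your four cells. If the opcartesian cells are chosen independently of the cartesian ones, $\chi_2\circ\chi_1=U_f$ can be false, because $\chi_1$ is determined only up to a globular automorphism of $B_f$. When it fails, the triangle identities fail too.

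For instance, in $\frbMod$ with $f=\id_{\Z}$, take $\psi_1=\psi_2=\chi_2=\id$ and $\chi_1=-\id$, which is an isomorphism and hence opcartesian. Your unit is then $a\mapsto a\otimes(-1)$, and the first triangle composite is $-\id$.

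Your bijection argument cannot detect this. The opcartesian property of $\chi_1$ puts cells $U_A\Rightarrow B_f$ over $(f,\id_A)$ in bijection with globular endomorphisms of $B_f$, not of $U_A$. Under this bijection $\chi_2\circ\chi_1\mapsto\id_{B_f}$ holds by construction. The claim ``$U_f\mapsto\id_{B_f}$'' is just a restatement of the equation $U_f=\chi_2\circ\chi_1$ you are trying to prove, so the argument is circular. Likewise, $U_f$ is not the unique cell $U_A\Rightarrow U_B$ over $(f,f)$: in the example, multiplication by any integer is such a cell.

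The repair is to tie the cells together rather than choose them separately. Define $\psi_1$ and $\chi_1$ as the unique factorizations of $U_f$ through the cartesian cells $\psi_2$ and $\chi_2$. These exist because $(f,f)=(f,\id_B)\circ(\id_A,f)=(\id_B,f)\circ(f,\id_A)$. Then $\psi_2\circ\psi_1=U_f=\chi_2\circ\chi_1$ holds by definition, and your computation goes through:
\begin{itemize}
\item The first triangle composite $e$ satisfies $\psi_2\circ e=\lambda_{U_B}\circ(U_f\odot\psi_2)\circ\lambda_{{}_fB}^{-1}=\psi_2$, using $\lambda_{U_B}=\rho_{U_B}$. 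Hence $e=\id$, because $\psi_2$ is cartesian.
\item The second triangle identity follows by the mirror argument with $\chi_2$.
\end{itemize}
Opcartesianness of $\psi_1$ and $\chi_1$ is never used. It does hold for these particular cells, which is what underlies Remark~\ref{rk:basechangeobjectsopcartdesc}. The word ``naturally'' in the statement should be read with respect to this canonical choice.
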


Dual pairs can be composed:
given dual pairs 
$(M,N)$ and $(M',N')$
with 
$M\colon A \hto B$, $N\colon B\hto A$
and $M' \colon B \hto C$, $N' \colon C \hto B$
in a bicategory $\calB$, 
we have a dual pair 
$(M\odot M', N' \odot N)$ in $\calB$.
In view of equation~\eqref{eq:basechangeformula1},
Proposition~\ref{prop:basechangeobjdualpair}
therefore has the following corollary.
\begin{cor}
\label{cor:shriekduals}
Suppose $\calC$ is a symmetric monoidal presentable 
$\infty$--category.
Let $g \colon E_1 \to E_2$ be a map over $B$,
and suppose $X,Y \in \ho(\calC_{/E_1})$,
interpreted as $1$--cells $E_1 \hto B$, form a 
dual  pair  $(X^\op,Y)$ in $\Ex_B(\calC)$.
Then $((g_! X)^\op,g_!Y)$ 
is a dual pair in $\Ex_B(\calC)$. \qed
\end{cor}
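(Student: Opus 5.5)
The plan is to compose the dual pair $(X^\op,Y)$ with a base change dual pair, as the sentence preceding the statement suggests. By Proposition~\ref{prop:basechangeobjdualpair}, applied in the framed bicategory $\Ex_B(\calC)$ to the vertical morphism $g\colon E_1\to E_2$, we have a dual pair $({}_g E_2, (E_2)_g)$. Here ${}_g E_2\colon E_1\hto E_2$ and $(E_2)_g\colon E_2\hto E_1$.

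Our given pair $(X^\op,Y)$ has $X^\op\colon B\hto E_1$ and $Y\colon E_1\hto B$. Composing dual pairs in the order dictated by the frames gives the dual pair
\[
	\bigl(X^\op\odot_B {}_g E_2,\ (E_2)_g\odot_B Y\bigr),
\]
with $X^\op\odot_B {}_g E_2\colon B\hto E_2$ and $(E_2)_g\odot_B Y\colon E_2\hto B$. The unit and counit are the usual composites of those for the two pairs. That the composite of dual pairs is a dual pair is a routine check of the triangle identities using associativity and unit constraints; I take it as the standard fact recalled just before the statement.

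It then remains to identify the two composites.
\begin{itemize}
\item By the first equivalence in \eqref{eq:basechangeformula1}, $(E_2)_g\odot_B Y\homot g_!Y$.
\item For the other one, apply the involution $(-)^\op$, which reverses $\odot_B$ and satisfies $({}_gE_2)^\op\homot (E_2)_g$. This gives $(X^\op\odot_B{}_gE_2)^\op\homot (E_2)_g\odot_B X\homot g_!X$. Hence $X^\op\odot_B{}_gE_2\homot (g_!X)^\op$.
\end{itemize}
Transporting the dual pair structure along these equivalences of $1$--cells (a dual pair structure transports along isomorphisms of $1$--cells) yields the dual pair $((g_!X)^\op,g_!Y)$.

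The only mildly delicate point is bookkeeping of left/right frames, so that the composite is formed in the correct order and the $\op$-compatibility is applied to the right factor. The listed natural equivalences for $(-)^\op$ handle this directly, so I expect no real obstacle.
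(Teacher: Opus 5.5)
Your proposal is correct and follows the paper's argument. You compose $(X^\op,Y)$ with the base change dual pair $({}_gE_2,(E_2)_g)$ of Proposition~\ref{prop:basechangeobjdualpair} and identify the composites via \eqref{eq:basechangeformula1}. Your use of the $(-)^\op$ equivalences to identify $X^\op\odot_B{}_gE_2$ with $(g_!X)^\op$ spells out a step the paper leaves implicit.
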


\begin{defn}
A bicategory $\calB$ is called \emph{right closed} if 
composition of $1$--cells has an adjoint on the right,
so that for all $1$--cells 
$M\colon A\hto B$,  $N\colon B\hto C$ and $P \colon A\hto C$,
there is a natural bijection
\[
	\calB(M\odot N, P) \isom \calB(M,N\vartriangleright P).
\] 
\end{defn}

\begin{rem}
\label{rk:candidaterightdual}
Every $1$--cell $M\colon A\hto B$ in a right closed bicategory
has a canonical candidate for a right dual, namely
$M\vartriangleright U_B$. If $M$ indeed has a right dual 
$N$, then there exists a unique isomorphism
$N \isom M\vartriangleright U_B$ under which 
the counit $N\odot M \hto U_B$
of the dual pair $(M,N)$
corresponds to the counit 
$(M\vartriangleright U_B)\odot M\to U_B$
of the $(\odot\, M,M\,{\vartriangleright})$ adjunction.
\end{rem}

We record the following criterion for right dualizability
in a right closed bicategory.

\begin{prop}[{\cite[Prop.~16.4.12]{MaySigurdsson}}]
\label{prop:closedbicatdualcrit}
Let $\calB$ be a right closed bicategory. Then a $1$--cell
$M\colon A\hto B$ is right dualizable if and only if the map
\[
	\mu 
	\colon 
	M \odot (M\vartriangleright U_B) 
	\longto
	M \vartriangleright M
\]
adjoint to the composite
\begin{equation}
\label{eq:muadjoint}
	M \odot (M\vartriangleright U_B) \odot M
	\xto{\ M\odot\varepsilon\ }
	M \odot U_B
 	\xto{\ \isom\ }
	M
\end{equation}
is an isomorphism.
Here $\varepsilon$ denotes the counit of the 
$(\odot\, M,M\,{\vartriangleright})$
adjunction, and the isomorphism in \eqref{eq:muadjoint}
is given by the unit constraint in $\calB$.
\end{prop}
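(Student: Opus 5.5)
The plan is to compare the two adjunctions that the statement puts side by side. One is the closed-structure adjunction $\calB(X\odot M,P)\isom\calB(X,M\vartriangleright P)$, with counit $\varepsilon_P\colon (M\vartriangleright P)\odot M\to P$. The other is the adjunction that a dual pair produces. By the evident mirror image of Remark~\ref{rk:dualityadjunction}, a dual pair $(M,N)$ gives a natural bijection $\calB(X\odot M,P)\isom\calB(X,P\odot N)$, obtained by composing with $\eta$ and $\varepsilon$ on the appropriate side. Throughout I will write $\varepsilon$ for $\varepsilon_{U_B}$, as in the statement.

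\emph{Forward direction.} Suppose $(M,N)$ is a dual pair. Uniqueness of right adjoints to $-\odot M$ gives a natural isomorphism $M\vartriangleright P\isom P\odot N$. Under this isomorphism the closed counit $\varepsilon_P$ corresponds to
\[
P\odot N\odot M\xto{\ 1\odot\varepsilon_{\mathrm{dual}}\ }P\odot U_B\isom P .
\]
Taking $P=U_B$ recovers Remark~\ref{rk:candidaterightdual}, namely $N\isom M\vartriangleright U_B$. Taking $P=M$ gives $M\vartriangleright M\isom M\odot N$. I will then check that under these two identifications the map $\mu$ becomes the identity of $M\odot N$. This comes down to comparing adjoints: the adjoint of the identity of $M\odot N$ is $(1\odot\varepsilon_{\mathrm{dual}})$ followed by the unit constraint, and this is exactly the composite \eqref{eq:muadjoint} once $\varepsilon$ has been identified with $\varepsilon_{\mathrm{dual}}$. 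Hence $\mu$ is an isomorphism.

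\emph{Converse.} Suppose $\mu$ is an isomorphism, and set $N=M\vartriangleright U_B$.
\begin{itemize}
\item Let $\eta'\colon U_A\to M\vartriangleright M$ be the adjoint of the unit constraint $U_A\odot M\isom M$.
\item Define the unit by $\eta=\mu^{-1}\eta'\colon U_A\to M\odot N$.
\item Take the counit of the duality to be $\varepsilon\colon N\odot M\to U_B$.
\end{itemize}

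The first triangle identity is checked as follows. By definition of $\mu$ we have $\varepsilon_M\circ(\mu\odot 1)=(\text{unit constraint})\circ(1\odot\varepsilon)$, where $\varepsilon_M$ is the closed counit at $P=M$. Therefore
\[
(1\odot\varepsilon)(\eta\odot1)=\varepsilon_M(\mu\odot1)(\mu^{-1}\eta'\odot1)=\varepsilon_M(\eta'\odot1),
\]
up to unit constraints, and the right-hand side is the unit constraint by the definition of $\eta'$.

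For the second triangle identity, the composite $g=(\varepsilon\odot1)(1\odot\eta)\colon N\to N$ is a map into $N=M\vartriangleright U_B$. Such a map is determined by its adjoint, so it suffices to show $\varepsilon\circ(g\odot1)=\varepsilon$. Using the interchange law, I rewrite $\varepsilon\circ(\varepsilon\odot1\odot1)$ as $\varepsilon\circ(1\odot(1\odot\varepsilon))$. This gives
\[
\varepsilon\circ(g\odot1)=\varepsilon\circ\bigl(1\odot[(1\odot\varepsilon)(\eta\odot1)]\bigr),
\]
and by the first triangle identity the bracketed map is the identity of $M$, so the right-hand side equals $\varepsilon$.

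The main obstacle will be the coherence bookkeeping: tracking associativity and unit constraints in the bicategory through these manipulations, and in the forward direction making the identification of $\varepsilon_M$ with $1\odot\varepsilon_{\mathrm{dual}}$ precise. The remaining steps are formal adjunction calculus.
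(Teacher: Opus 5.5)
Your proof is correct, including the coherence bookkeeping you flag, which goes through with the standard unit and associativity identities. The paper gives no proof of its own and simply quotes the statement from \cite{MaySigurdsson}. Your argument is the standard formal proof of that criterion: for necessity, identify the two right adjoints $M\vartriangleright -$ and $-\odot N$ of $-\odot M$, under which $\mu$ becomes the identity; for sufficiency, take $\eta=\mu^{-1}\eta'$, get the first triangle identity from the defining property of $\mu$, and get the second by comparing adjuncts of maps into $M\vartriangleright U_B$.
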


From Proposition~\ref{prop:closedbicatdualcrit},
we obtain the following criterion for checking 
right dualizability in $\Ex_B(\calC)$.
Compare with\ \cite[Lemma~6.3]{LindMalkiewich}.

\begin{prop}
\label{prop:exbdualizabilitycriterion}
Let $\calC$ be a 
symmetric monoidal presentable $\infty$--category,
let $B$ be a space, and let $M \colon E_1\hto E_2$ be a 
$1$--cell in $\Ex_B(\calC)$. Then $M$ is right dualizable in 
$\Ex_B(\calC)$ if and only if $b^{[\ast]}(M)$ is right
dualizable in $\Ex(\calC)$ for all maps $b\colon \pt \to B$.
\end{prop}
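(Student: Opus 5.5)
We will apply Proposition~\ref{prop:closedbicatdualcrit} to the right closed horizontal bicategory of $\Ex_B(\calC)$ and reduce the invertibility of the relevant map $\mu$ to a fibrewise statement. Set $M'=b^{[\ast]}(M)$ for $b\colon\pt\to B$. Proposition~\ref{prop:closedbicatdualcrit} tells us that $M$ is right dualizable precisely when
\[
	\mu_M\colon M\odot_B(M\vartriangleright_B U^B_{E_2})\longto M\vartriangleright_B M
\]
is an isomorphism. This is a globular $2$--cell, hence a morphism in $\ho(\calC_{/E_1\times_B E_1})$. Its analogue $\mu_{M'}$ in $\Ex(\calC)$ lives over $b^\ast E_1\times b^\ast E_1=F_1\times F_1$, where $F_i$ is the fibre of $E_i$ over $b$.

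The first step is to identify $b^{[\ast]}(\mu_M)$ with $\mu_{M'}$. Since $b^{[\ast]}$ is a strong framed functor, it carries $\odot_B$ to $\odot$ via $b^{[\ast]}_\odot$ and units to units via $b^{[\ast]}_U$. By Proposition~\ref{prop:fastclosed} it also carries $\vartriangleright_B$ to $\vartriangleright$. Under these equivalences, the counit $\varepsilon$ of the $(\odot_B M,\,M\vartriangleright_B)$ adjunction goes to the counit for $M'$; this is the standard mate argument for a closed strong monoidal functor. Consequently $b^{[\ast]}$ sends the composite \eqref{eq:muadjoint} for $M$ to the one for $M'$. Taking adjoints, and using Proposition~\ref{prop:fastclosed} once more, we get that $b^{[\ast]}(\mu_M)\simeq\mu_{M'}$ up to the canonical equivalences.

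The second step is to recall how $b^{[\ast]}_1$ acts on a $1$--cell $X\colon E_1\hto E_1$: it is pullback along the inclusion $F_1\times F_1=b^\ast E_1\times b^\ast E_1\to E_1\times_B E_1$. Each point of $E_1\times_B E_1$ lies over some $b\in B$, so it lies in the image of one of these inclusions. Equivalences in $\ho(\calC_{/E_1\times_B E_1})$ are detected on fibres over points. A morphism $\phi$ in this category is therefore an equivalence if and only if $b^{[\ast]}_1(\phi)$ is an equivalence for every $b$, since the fibres of $b^{[\ast]}_1\phi$ are among the fibres of $\phi$. Applying this to $\phi=\mu_M$ and combining with the first step gives the claim in both directions.

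The main obstacle is the first step, namely the coherent compatibility of $b^{[\ast]}$ with $\mu$. Proposition~\ref{prop:fastclosed} gives the closedness equivalence precisely as the adjoint of $b^{[\ast]}(\varepsilon)\circ b^{[\ast]}_\odot$, so the identification should reduce to a diagram chase with unit constraints. I would carry this out by checking that the two ways of writing the map into $M'$ agree under the adjunction bijection.
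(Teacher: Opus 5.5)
Your proposal is correct and follows the paper's proof essentially step for step. The paper also reduces, via Proposition~\ref{prop:closedbicatdualcrit}, to invertibility of $\mu$, detects that invertibility fibrewise after applying $b^{[\ast]}$, and identifies $b^{[\ast]}(\mu)$ with $\mu_{b^{[\ast]}(M)}$ using the closedness of $b^{[\ast]}$ from Proposition~\ref{prop:fastclosed}; it defers to \cite[Lemma~30]{LahtinenThesis} the coherence check that you propose to carry out by a mate argument.
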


\begin{proof}
As equivalences in
the categories $\ho(\calC_{/X})$
are detected fibrewise,
the map
\[
	\mu 
	\colon 
	M \odot_B (M\vartriangleright_B U_{E_2}) 
	\longto
	M \vartriangleright_B M
\]
of Proposition~\ref{prop:closedbicatdualcrit}, a map in 
$\ho(\calC_{/E_1 \times_B E_2})$, is an equivalence precisely when 
\[
	b^{[\ast]}(\mu) 
	\colon 
	b^{[\ast]}(
		M \odot_B (M\vartriangleright_B U_{E_2}) 
	)
	\longto
	b^{[\ast]}(
		M \vartriangleright_B M
	),
\]
a map in $\ho(\calC_{/b^\ast E_1 \times b^\ast E_2})$,
is an equivalence for every $b\colon \pt \to B$.
But since by Proposition~\ref{prop:fastclosed}
the functor $b^{[\ast]}$ is closed, the 
map $b^{[\ast]}(\mu)$ above agrees under the appropriate
coherence isomorphisms with the map
\[
	\mu
	\colon 
	b^{[\ast]}(M) \odot (b^{[\ast]}(M) \vartriangleright U_{b^\ast E_2})
	\longto
	b^{[\ast]}(M) \vartriangleright b^{[\ast]}(M).
\]
See \cite[Lemma~30]{LahtinenThesis}. The claim follows.
\end{proof}

Let us now specialize to the case of a closed symmetric monoidal 
category $\calC$ with
tensor product $\tensor$, unit object $I$, symmetry constraint $\chi$,
and internal hom $F$. 
By interpreting $\calC$ as a one-object bicategory, from
Definition~\ref{def:dualpair} we obtain the notion
of a dual pair of objects in $\calC$.
Since $\calC$ is symmetric, the notions of a left and a right
dual of an object of $\calC$ coincide, however,
and we talk of an object being dualizable or being the 
dual of another object without reference to a handedness.
Every object $X$ of $\calC$ has a canonical candidate
for a dual, namely the object 
\[
	DX = F(X,I),
\]
and if $X$ indeed has a dual $Y$, then $Y$ is
isomorphic to $DX$ via an isomorphism under
which the evaluation map $\varepsilon \co Y\tensor X \to I$ corresponds
to the usual evaluation map $\varepsilon \co DX \tensor X \to I$.

In the present context, 
Proposition~\ref{prop:closedbicatdualcrit}
specializes to the following lemma.
\begin{lemma}
\label{lm:dcrit}
Let $\calC$ be a closed symmetric monoidal category.
Then an object $X$ of $\calC$ is dualizable if and only if the map
\[
	\mu\co X \tensor DX \longto F(X,X)
\]
adjoint to the composite
\[
	X \tensor DX \tensor X \xto{\ 1\tensor \varepsilon\ } X \tensor I \xto{\ \isom\ } X
\]
is an isomorphism. \qed
\end{lemma}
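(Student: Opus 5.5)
This is the special case of Proposition~\ref{prop:closedbicatdualcrit} where the bicategory is $\calC$ viewed as a one-object bicategory. In that bicategory the unique $0$--cell has unit $1$--cell $U = I$, and horizontal composition is $\odot = \tensor$. First I will check that this bicategory is right closed with $N \vartriangleright P = F(N,P)$. Any morphism $M\tensor N \to P$ corresponds to $M \to F(N,P)$ by the tensor--hom adjunction, naturally in all variables. So the hypothesis of Proposition~\ref{prop:closedbicatdualcrit} is met.

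Next I will identify the ingredients. The canonical candidate $M \vartriangleright U$ from Remark~\ref{rk:candidaterightdual} becomes $F(X,I) = DX$. The counit of the $(\odot X, X\vartriangleright)$ adjunction is $F(X,I)\tensor X \to I$, which is the usual evaluation $\varepsilon$. The map $M\odot (M\vartriangleright U)\to M\vartriangleright M$ is $\mu\colon X\tensor DX \to F(X,X)$, adjoint to $(1\tensor\varepsilon)$ followed by the unit isomorphism. This is exactly the map in the statement. Proposition~\ref{prop:closedbicatdualcrit} then says that $X$ is right dualizable if and only if $\mu$ is an isomorphism.

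The remaining step is to pass from right dualizability to plain dualizability. Because $\calC$ is symmetric, I will conjugate with $\chi$. If $(X,Y)$ is a dual pair, then $(Y,X)$ is also a dual pair, with unit $\chi\circ\eta$ and counit $\varepsilon\circ\chi$. The triangle identities carry over using the coherence of $\chi$ with the unit and associativity constraints. So right and left dualizability coincide with the handedness-free notion of dualizability used in the text, and the lemma follows.

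The only point needing any care is this last symmetry check. It is routine coherence, and I expect no real obstacle.
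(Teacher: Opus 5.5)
Your proposal is correct and is the paper's own argument: the paper obtains Lemma~\ref{lm:dcrit} directly by specializing Proposition~\ref{prop:closedbicatdualcrit} to $\calC$ viewed as a one-object right closed bicategory (with $\vartriangleright = F$ and $M \vartriangleright U = DX$). It handles the passage from right dualizability to dualizability just as you do, via the remark immediately before the lemma that left and right duals coincide because $\calC$ is symmetric.
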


Recall that an object $X$ of a symmetric monoidal category $\calC$
is called \emph{invertible} if there exists an object $Y$ of $\calC$
and an isomorphism $Y\tensor X \isom I$. In this case,
$X$ and $Y$ are duals, with the aforementioned isomorphism giving the
counit of the dual pair. In particular, we have

\begin{lemma}
\label{lm:invcrit}
Let $\calC$ be a closed symmetric monoidal category.
Then an object 
$X$ of $\calC$ is invertible if and only if the evaluation map
\[
	\varepsilon \colon DX \tensor X \longto  I
\]
is an isomorphism.\qed
\end{lemma}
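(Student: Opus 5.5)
The statement is that an object $X$ of a closed symmetric monoidal category is invertible if and only if the evaluation map $\varepsilon\colon DX\tensor X\to I$ is an isomorphism. I would prove the two implications separately, using the facts about dual pairs recalled just before the statement.

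The reverse implication is immediate. If $\varepsilon$ is an isomorphism, then $Y=DX$ together with $\varepsilon$ is an isomorphism $Y\tensor X\isom I$, which is exactly the definition of $X$ being invertible.

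For the forward implication, suppose $\alpha\colon Y\tensor X\isom I$. The remark preceding the lemma says $X$ and $Y$ are duals with counit $\alpha$. I would check this concretely: define the unit $\eta\colon I\to X\tensor Y$ by transporting $\alpha^{-1}$ through the symmetry $\chi$. The triangle identities may then hold only up to an automorphism of $X$. If so, I would correct this in the standard way: the composite $X\to X\tensor Y\tensor X\to X$ is an automorphism $\phi$ of $X$, and replacing $\eta$ by $(\phi^{-1}\tensor 1)\eta$ gives a genuine dual pair. This rectification, and making sure the counit is still $\alpha$, is the only place needing care, and it is the step I expect to be the main obstacle.

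Once $(X,Y)$ is a dual pair with counit $\alpha$, I would invoke the fact recalled above: $Y$ is isomorphic to $DX$ via an isomorphism $\psi\colon Y\to DX$ under which the counit $\alpha$ corresponds to the canonical evaluation, i.e.\ $\varepsilon\circ(\psi\tensor 1)=\alpha$. Since $\alpha$ and $\psi\tensor 1$ are isomorphisms, so is $\varepsilon$, which completes the argument.
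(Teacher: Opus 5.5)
Your proof is correct and follows the paper's argument exactly. An isomorphism $\alpha\colon Y\tensor X\isom I$ makes $(X,Y)$ a dual pair with counit $\alpha$, and the uniqueness of duals compatible with counits then gives $\alpha=\varepsilon\circ(\psi\tensor 1)$, so $\varepsilon$ is an isomorphism; the converse is immediate.

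The one point you leave implicit is the second triangle identity after rectifying $\eta$. It follows from the standard fact that when unit and counit are both invertible (as here) and one triangle identity holds, the composite $(\varepsilon\tensor 1_Y)(1_Y\tensor\eta)$ is an invertible idempotent and hence the identity.
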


Since equivalences in $\ho(\calC_{/B})$ for 
$\calC$ a presentable
symmetric monoidal $\infty$--category
are detected fibrewise,
Lemmas~\ref{lm:dcrit} and~\ref{lm:invcrit}
implies the following criterion for 
detecting dualizable and invertible objects in $\ho(\calC_{/B})$.
\begin{cor}
\label{cor:fwdinvcrit}
Let $\calC$ be a presentable
symmetric monoidal $\infty$--category,
and let $B$ be a space.
Then an object of $\ho(\calC_{/B})$
is dualizable (resp.\ invertible) if and only if all its fibres are.
\qed
\end{cor}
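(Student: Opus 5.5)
The statement is the last one: Corollary~\ref{cor:fwdinvcrit}, that an object of $\ho(\calC_{/B})$ is dualizable (resp.\ invertible) if and only if all its fibres are. The plan is to combine the criteria of Lemmas~\ref{lm:dcrit} and~\ref{lm:invcrit} with two facts: pullback functors preserve the relevant structure, and equivalences in $\ho(\calC_{/B})$ are detected on fibres.

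\emph{Forward direction.} For $b\in B$ the functor $i_b^\ast\colon \ho(\calC_{/B})\to\ho(\calC)$ is strong symmetric monoidal by \eqref{eq:bc-unit} and \eqref{eq:bc-product}. A strong monoidal functor sends dual pairs to dual pairs, since it carries the unit, counit and triangle identities along. It also sends an isomorphism $Y\tensor_B X\isom S_B$ to an isomorphism $Y_b\tensor X_b\isom S$. So dualizable or invertible objects have dualizable or invertible fibres.

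\emph{Converse.} I apply Lemma~\ref{lm:dcrit} in the closed symmetric monoidal category $(\ho(\calC_{/B}),\tensor_B,F_B)$. It suffices to show that
\[
	\mu\colon X\tensor_B D_BX\longto F_B(X,X),\qquad D_BX=F_B(X,S_B),
\]
is an isomorphism, and since equivalences are detected fibrewise, that $i_b^\ast\mu$ is an equivalence for every $b$.

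The key step is identifying $i_b^\ast\mu$ with the corresponding map $\mu$ for $X_b$ in $\ho(\calC)$. Here $i_b^\ast$ is \emph{closed} symmetric monoidal: \eqref{eq:bc-homs} gives $i_b^\ast F_B(Y,Z)\homot F(Y_b,Z_b)$, built as the adjoint of pulled-back evaluation. Hence $i_b^\ast D_BX\homot DX_b$, with the evaluation map corresponding to the evaluation map. By naturality of the adjunction, $i_b^\ast\mu$ corresponds under these identifications to $\mu_{X_b}\colon X_b\tensor DX_b\to F(X_b,X_b)$. That map is an isomorphism by Lemma~\ref{lm:dcrit}, because $X_b$ is dualizable.

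\emph{Invertibility.} The same argument works with Lemma~\ref{lm:invcrit} and the evaluation map $\varepsilon\colon D_BX\tensor_B X\to S_B$. Its pullback $i_b^\ast\varepsilon$ is identified with $\varepsilon_{X_b}$, which is an isomorphism since $X_b$ is invertible.

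The main obstacle is the coherence check that $i_b^\ast\mu$ and $i_b^\ast\varepsilon$ really agree with the fibrewise maps. I would handle it by unwinding the definition of \eqref{eq:bc-homs} as an adjoint of the evaluation composite and using the standard fact that a closed monoidal functor commutes with mates. Everything else is formal.
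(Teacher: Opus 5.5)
Your proposal is correct and is essentially the paper's argument. The paper deduces the corollary in one line from Lemmas~\ref{lm:dcrit} and~\ref{lm:invcrit} together with the fact that equivalences in $\ho(\calC_{/B})$ are detected on fibres. Your identification of $i_b^\ast\mu$ and $i_b^\ast\varepsilon$ with $\mu_{X_b}$ and $\varepsilon_{X_b}$, via the closedness \eqref{eq:bc-homs} of $i_b^\ast$, spells out the step the paper leaves implicit.
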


\bibliographystyle{alpha}
\bibliography{stringtop}
\end{document}